\def\amsbb{\use@mathgroup \M@U \symAMSb}
\renewcommand{\mathbb}{\amsbb}
\newtheorem{cor}[subsubsection]{Corollary}
\newtheorem{lem}[subsubsection]{Lemma}
\newtheorem{prop}[subsubsection]{Proposition}
\newtheorem{thm}[subsubsection]{Theorem}
\theoremstyle{plain}
\newtheorem{Thm*}{Theorem}[section]
\newtheorem{Thm'}[Thm]{"Theorem"}
\newtheorem*{thm*}{Theorem}
\newtheorem*{cor*}{Corollary}
\theoremstyle{definition}
\numberwithin{equation}{section}
\newcommand{\nc}{\newcommand}
\nc{\lm}{\lambda}
\newcommand{\surj}{\twoheadrightarrow}
\newcommand{\wt}{\widetilde}
\renewcommand{\P}{P}
\newcommand{\Spec}{\operatorname{Spec}}
\newcommand{\Gr}{\operatorname{Gr}}
\newcommand{\Sch}{\operatorname{Sch}}
\newcommand{\Lie}{\operatorname{Lie}}
\newcommand{\Fl}{\operatorname{Fl}}
\newcommand{\Ind}{\operatorname{Ind}}
\newcommand{\Hom}{\operatorname{Hom}}
\newcommand{\cD}{\mathcal{D}}
\newcommand{\Res}{\operatorname{Res}}
\theoremstyle{definition}
\theoremstyle{remark}
\newtheorem{rem}[subsubsection]{Remark}
\newcommand{\thmref}[1]{Theorem~\ref{#1}}
\newcommand{\secref}[1]{Sect.~\ref{#1}}
\newcommand{\lemref}[1]{Lemma~\ref{#1}}
\newcommand{\propref}[1]{Proposition~\ref{#1}}
\newcommand{\corref}[1]{Corollary~\ref{#1}}
\numberwithin{equation}{section}
\nc{\renc}{\renewcommand}
\nc{\ssec}{\subsection}
\nc{\sssec}{\subsubsection}
\nc{\on}{\operatorname}
\nc\ol{\overline}
\nc\tboxtimes{\wt{\boxtimes}}
\nc\tstar{\wt{\star}}
\nc{\alp}{a}
\nc{\ZZ}{{\mathbb Z}}
\nc{\NN}{{\mathbb N}}
\nc{\OO}{{\mathbb O}}
\renc{\SS}{{\mathbb S}}
\nc{\DD}{{\mathbb D}}
\nc{\GG}{{\mathbb G}}
\nc{\Fq}{{\mathbb F}_q}
\nc{\Fqb}{\ol{{\mathbb F}_q}}
\nc{\Ql}{\ol{{\mathbb Q}_\ell}}
\nc{\id}{\text{id}}
\nc\X{\mathcal X}
\nc{\Loc}{\on{Loc}}
\nc{\Pic}{\on{Pic}}
\nc{\Bun}{\on{Bun}}
\nc{\IC}{\on{IC}}
\nc{\Fls}{\on{Fl}^{\frac{\infty}{2}}}
\nc{\ICs}{\on{IC}^{\frac{\infty}{2}}}
\nc{\ICsl}{\on{IC}^{\lambda+\frac{\infty}{2}}}
\nc{\ICslm}{\on{IC}^{\lambda+\frac{\infty}{2},-}}
\nc{\ICsm}{\on{IC}^{\frac{\infty}{2},-}}
\nc{\Sh}{\on{Sh}}
\nc{\pos}{{\on{pos}}}
\nc{\Conv}{\on{Conv}}
\nc{\Sph}{\on{Sph}}
\nc{\Sat}{\on{Sat}}
\nc{\Sym}{\on{Sym}}
\nc{\BunBb}{\overline{\Bun}_B}
\nc{\BunNb}{\overline{\Bun}_N}
\nc{\BunTb}{\overline{\Bun}_T}
\nc{\BunBbm}{\overline{\Bun}_{B^-}}
\nc{\BunBbel}{\overline{\Bun}_{B,el}}
\nc{\BunBbmel}{\overline{\Bun}_{B^-,el}}
\nc{\Buno}{\overset{o}{\Bun}}
\nc{\BunPb}{{\overline{\Bun}_P}}
\nc{\BunBM}{\Bun_{B(M)}}
\nc{\BunBMb}{\overline{\Bun}_{B(M)}}
\nc{\BunPbw}{{\widetilde{\Bun}_P}}
\nc{\BunBP}{\widetilde{\Bun}_{B,P}}
\nc{\GUb}{\overline{G/U}}
\nc{\GUPb}{\overline{G/U(P)}}
\nc{\Hhom}{\underline{\on{Hom}}}
\nc\syminfty{\on{Sym}^{\infty}}
\nc\lal{\ol{\kappa_x}}
\nc\xl{\ol{x}}
\nc\thl{\ol{\theta}}
\nc\nul{\ol{\nu}}
\nc\mul{\ol{\mu}}
\nc{\oX}{\overset{\circ}{X}{}}
\nc{\hl}{\overset{\leftarrow}h{}}
\nc{\hr}{\overset{\rightarrow}h{}}
\nc{\M}{{\mathcal M}}
\nc{\N}{{\mathcal N}}
\nc{\F}{{\mathcal F}}
\nc{\D}{{\mathcal D}}
\nc{\Y}{{\mathcal Y}}
\nc{\G}{{\mathcal G}}
\nc{\E}{{\mathcal E}}
\nc{\CalC}{{\mathcal C}}
\nc\Dh{\widehat{\D}}
\nc{\C}{{\mathcal C}}
\nc{\K}{{\mathcal K}}
\renewcommand{\H}{{\mathcal H}}
\nc{\T}{{\mathcal T}}
\nc{\V}{{\mathcal V}}
\renc{\P}{{\mathcal P}}
\nc{\A}{{\mathcal A}}
\nc{\B}{{\mathcal B}}
\nc{\U}{{\mathcal U}}
\nc{\frn}{{\check{\mathfrak u}(P)}}
\nc{\fC}{\mathfrak C}
\nc{\p}{\mathfrak p}
\nc{\q}{\mathfrak q}
\nc\f{{\mathfrak f}}
\nc{\qo}{{\mathfrak q}}
\nc{\po}{{\mathfrak p}}
\nc{\s}{{\mathfrak s}}
\nc\w{\text{w}}
\renewcommand{\mod}{{\on{-mod}}}
\nc\Mod{\on{Mod}}
\nc{\pw}{\widetilde{\mathfrak p}}
\nc{\qw}{\widetilde{\mathfrak q}}
\nc{\jw}{\widetilde j}
\nc{\grb}{\overline{\Gr}}
\nc{\I}{\mathcal I}
\nc{\kappach}{{\check\kappa_x}}
\nc{\Lambdach}{{\check\Lambda}{}}
\nc{\much}{{\check\mu}}
\nc{\omegach}{{\check\omega}}
\nc{\nuch}{{\check\nu}}
\nc{\etach}{{\check\eta}}
\nc{\alphach}{{\checka}}
\nc{\oblvtach}{{\check\oblvta}}
\nc{\pich}{{\check\pi}}
\nc{\ch}{{\check h}}
\nc{\Hb}{\overline{\H}}
\nc{\BA}{{\mathbb{A}}}
\nc{\BC}{{\mathbb{C}}}
\nc{\BE}{{\mathbb{E}}}
\nc{\BF}{{\mathbb{F}}}
\nc{\BG}{{\mathbb{G}}}
\nc{\BM}{{\mathbb{M}}}
\nc{\BO}{{\mathbb{O}}}
\nc{\BD}{{\mathbb{D}}}
\nc{\BL}{{\mathbb{L}}}
\nc{\BN}{{\mathbb{N}}}
\nc{\BP}{{\mathbb{P}}}
\nc{\BQ}{{\mathbb{Q}}}
\nc{\BR}{{\mathbb{R}}}
\nc{\BV}{{\mathbb{V}}}
\nc{\BW}{{\mathbb{W}}}
\nc{\BZ}{{\mathbb{Z}}}
\nc{\BS}{{\mathbb{S}}}
\nc{\CA}{{\mathcal{A}}}
\nc{\CB}{{\mathcal{B}}}
\nc{\CE}{{\mathcal{E}}}
\nc{\CF}{{\mathcal{F}}}
\nc{\CG}{{\mathcal{G}}}
\nc{\CH}{{\mathcal{H}}}
\nc{\CL}{{\mathcal{L}}}
\nc{\CC}{{\mathcal{C}}}
\nc{\CM}{{\mathcal{M}}}
\nc{\CN}{{\mathcal{N}}}
\nc{\cCN}{\check{{\mathcal{N}}}}
\nc{\CK}{{\mathcal{K}}}
\nc{\CO}{{\mathcal{O}}}
\nc{\CP}{{\mathcal{P}}}
\nc{\CQ}{{\mathcal{Q}}}
\nc{\CR}{{\mathcal{R}}}
\nc{\CS}{{\mathcal{S}}}
\nc{\CT}{{\mathcal{T}}}
\nc{\CU}{{\mathcal{U}}}
\nc{\CV}{{\mathcal{V}}}
\nc{\CW}{{\mathcal{W}}}
\nc{\CX}{{\mathcal{X}}}
\nc{\CY}{{\mathcal{Y}}}
\nc{\CZ}{{\mathcal{Z}}}
\nc{\CI}{{\mathcal{I}}}
\nc{\CJ}{{\mathcal{J}}}
\nc{\csM}{{\check{\mathcal A}}{}}
\nc{\oM}{{\overset{\circ}{\mathcal M}}{}}
\nc{\obM}{{\overset{\circ}{\mathbf M}}{}}
\nc{\oCA}{{\overset{\circ}{\mathcal A}}{}}
\nc{\obA}{{\overset{\circ}{\mathbf A}}{}}
\nc{\ooM}{{\overset{\circ}{M}}{}}
\nc{\osM}{{\overset{\circ}{\mathsf M}}{}}
\nc{\vM}{{\overset{\bullet}{\mathcal M}}{}}
\nc{\nM}{{\underset{\bullet}{\mathcal M}}{}}
\nc{\oD}{{\overset{\circ}{\mathcal D}}{}}
\nc{\obD}{{\overset{\circ}{\mathbf D}}{}}
\nc{\oA}{{\overset{\circ}{\mathbb A}}{}}
\nc{\op}{{\overset{\bullet}{\mathbf p}}{}}
\nc{\cp}{{\overset{\circ}{\mathbf p}}{}}
\nc{\oU}{{\overset{\bullet}{\mathcal U}}{}}
\nc{\oZ}{{\overset{\circ}{\mathcal Z}}{}}
\nc{\ofZ}{{\overset{\circ}{\mathfrak Z}}{}}
\nc{\oF}{{\overset{\circ}{\fF}}}
\nc{\fa}{{\mathfrak{a}}}
\nc{\fb}{{\mathfrak{b}}}
\nc{\fd}{{\mathfrak{d}}}
\nc{\ff}{{\mathfrak{f}}}
\nc{\fg}{{\mathfrak{g}}}
\nc{\fgl}{{\mathfrak{gl}}}
\nc{\fh}{{\mathfrak{h}}}
\nc{\fj}{{\mathfrak{j}}}
\nc{\fl}{{\mathfrak{l}}}
\nc{\fm}{{\mathfrak{m}}}
\nc{\fn}{{\mathfrak{n}}}
\nc{\fu}{{\mathfrak{u}}}
\nc{\fp}{{\mathfrak{p}}}
\nc{\fr}{{\mathfrak{r}}}
\nc{\fs}{{\mathfrak{s}}}
\nc{\ft}{{\mathfrak{t}}}
\nc{\fz}{{\mathfrak{z}}}
\nc{\fsl}{{\mathfrak{sl}}}
\nc{\hsl}{{\widehat{\mathfrak{sl}}}}
\nc{\hgl}{{\widehat{\mathfrak{gl}}}}
\nc{\hg}{{\widehat{\mathfrak{g}}}}
\nc{\htt}{{\widehat{\mathfrak{t}}}}
\nc{\chg}{{\widehat{\mathfrak{g}}}{}^\vee}
\nc{\hn}{{\widehat{\mathfrak{n}}}}
\nc{\chn}{{\widehat{\mathfrak{n}}}{}^\vee}
\nc{\fA}{{\mathfrak{A}}}
\nc{\fB}{{\mathfrak{B}}}
\nc{\fD}{{\mathfrak{D}}}
\nc{\fE}{{\mathfrak{E}}}
\nc{\fF}{{\mathfrak{F}}}
\nc{\fG}{{\mathfrak{G}}}
\nc{\fK}{{\mathfrak{K}}}
\nc{\fL}{{\mathfrak{L}}}
\nc{\fM}{{\mathfrak{M}}}
\nc{\fN}{{\mathfrak{N}}}
\nc{\fP}{{\mathfrak{P}}}
\nc{\fR}{{\mathfrak{R}}}
\nc{\fU}{{\mathfrak{U}}}
\nc{\fV}{{\mathfrak{V}}}
\nc{\fX}{{\mathfrak{X}}}
\nc{\fZ}{{\mathfrak{Z}}}
\nc{\ba}{{\mathbf{a}}}
\nc{\bc}{{\mathbf{c}}}
\nc{\bd}{{\mathbf{d}}}
\nc{\bbf}{{\mathbf{f}}}
\nc{\be}{{\mathbf{e}}}
\nc{\bi}{{\mathbf{i}}}
\nc{\bj}{{\mathbf{j}}}
\nc{\bn}{{\mathbf{n}}}
\nc{\bo}{{\mathbf{o}}}
\nc{\bp}{{\mathbf{p}}}
\nc{\bq}{{\mathbf{q}}}
\nc{\bu}{{\mathbf{u}}}
\nc{\bv}{{\mathbf{v}}}
\nc{\bx}{{\mathbf{x}}}
\nc{\bs}{{\mathbf{s}}}
\nc{\by}{{\mathbf{y}}}
\nc{\bw}{{\mathbf{w}}}
\nc{\bA}{{\mathbf{A}}}
\nc{\bK}{{\mathbf{K}}}
\nc{\bB}{{\mathbf{B}}}
\nc{\bF}{{\mathbf{F}}}
\nc{\bC}{{\mathbf{C}}}
\nc{\bG}{{\mathbf{G}}}
\nc{\bD}{{\mathbf{D}}}
\nc{\bE}{{\mathbf{E}}}
\nc{\bH}{{\mathbf{H}}}
\nc{\bI}{{\mathbf{I}}}
\nc{\bL}{{\mathbf{L}}}
\nc{\bM}{{\mathbf{M}}}
\nc{\bN}{{\mathbf{N}}}
\nc{\bO}{{\mathbf{O}}}
\nc{\bV}{{\mathbf{V}}}
\nc{\bW}{{\mathbf{W}}}
\nc{\bX}{{\mathbf{X}}}
\nc{\bZ}{{\mathbf{Z}}}
\nc{\bS}{{\mathbf{S}}}
\nc{\sA}{{\mathsf{A}}}
\nc{\sB}{{\mathsf{B}}}
\nc{\sC}{{\mathsf{C}}}
\nc{\sD}{{\mathsf{D}}}
\nc{\sF}{{\mathsf{F}}}
\nc{\sG}{{\mathsf{G}}}
\nc{\sH}{{\mathsf{H}}}
\nc{\sK}{{\mathsf{K}}}
\nc{\sM}{{\mathsf{M}}}
\nc{\sO}{{\mathsf{O}}}
\nc{\sW}{{\mathsf{W}}}
\nc{\sQ}{{\mathsf{Q}}}
\nc{\sP}{{\mathsf{P}}}
\nc{\sZ}{{\mathsf{Z}}}
\nc{\sr}{{\mathsf{r}}}
\nc{\sh}{{\mathsf{h}}}
\nc{\bk}{{\mathsf{k}}}
\nc{\sg}{{\mathsf{g}}}
\nc{\sff}{{\mathsf{f}}}
\nc{\sfe}{{\mathsf{e}}}
\nc{\sfj}{{\mathsf{j}}}
\nc{\sfb}{{\mathsf{b}}}
\nc{\sfc}{{\mathsf{c}}}
\nc{\sd}{{\mathsf{d}}}
\nc{\sv}{{\mathsf{v}}}
\nc{\sfX}{{\mathsf{X}}}
\nc{\BK}{{\bar{K}}}
\nc{\tA}{{\widetilde{\mathbf{A}}}}
\nc{\tB}{{\widetilde{\mathcal{B}}}}
\nc{\tg}{{\widetilde{\mathfrak{g}}}}
\nc{\tG}{{\widetilde{G}}}
\nc{\TM}{{\widetilde{\mathbb{M}}}{}}
\nc{\tO}{{\widetilde{\mathsf{O}}}{}}
\nc{\tU}{{\widetilde{\mathfrak{U}}}{}}
\nc{\TZ}{{\tilde{Z}}}
\nc{\tx}{{\tilde{x}}}
\nc{\tbv}{{\tilde{\bv}}}
\nc{\tfP}{{\widetilde{\mathfrak{P}}}{}}
\nc{\tz}{{\tilde{\zeta}}}
\nc{\tmu}{{\tilde{\mu}}}
\nc{\urho}{\underline{\pi}}
\nc{\uB}{\underline{B}}
\nc{\uC}{{\underline{\mathbb{C}}}}
\nc{\ui}{\underline{i}}
\nc{\uj}{\underline{j}}
\nc{\ofP}{{\overline{\mathfrak{P}}}}
\nc{\oB}{{\overline{\mathcal{B}}}}
\nc{\og}{{\overline{\mathfrak{g}}}}
\nc{\oI}{{\overline{I}}}
\nc{\eps}{\varepsilon}
\nc{\hrho}{{\hat{\pi}}}
\nc{\one}{{\mathbf{1}}}
\nc{\two}{{\mathbf{t}}}
\nc{\Hilb}{{\mathop{\operatorname{\rm Hilb}}}}
\nc{\CHom}{{\mathop{\operatorname{{\mathcal{H}}\it om}}}}
\nc{\de}{{\mathop{\operatorname{\rm def}}}}
\nc{\length}{{\mathop{\operatorname{\rm length}}}}
\nc{\supp}{{\mathop{\operatorname{\rm supp}}}}
\nc{\Cliff}{{\mathsf{Cliff}}}
\nc{\Fib}{{\mathsf{Fib}}}
\nc{\Coh}{{\on{Coh}}}
\nc{\QCoh}{{\on{QCoh}}}
\nc{\IndCoh}{{\on{IndCoh}}}
\nc{\FCoh}{{\mathsf{FCoh}}}
\nc{\reg}{{\text{\rm reg}}}
\nc{\cplus}{{\mathbf{C}_+}}
\nc{\cminus}{{\mathbf{C}_-}}
\nc{\cthree}{{\mathbf{C}_*}}
\nc{\Qbar}{{\bar{Q}}}
\nc\Eis{\on{Eis}}
\nc\Eisb{\ol\Eis{}}
\nc\Eisr{\on{Eis}^{rat}{}}
\nc\wh{\widehat}
\nc{\barZ}{\overline{Z}{}}
\nc{\barbarZ}{\overline{\barZ}{}}
\nc{\barpi}{\overline\iota}
\nc{\barbarpi}{\overline\barpi}
\nc{\barpip}{\overline\iota{}^+}
\nc{\barpim}{\overline\iota{}^-}
\nc{\fq}{\mathfrak q}
\nc{\fqb}{\ol{\fq}{}}
\nc{\fpb}{\ol{\fp}{}}
\nc{\fpr}{{\fp^{rat}}{}}
\nc{\fqr}{{\fq^{rat}}{}}
\nc{\hattimes}{\wh\otimes}
\nc{\bh}{{\bar{h}}}
\nc{\bOmega}{{\overline{\Omega(\check \fn)}}}
\nc{\seq}[1]{\stackrel{#1}{\sim}}
\nc{\cT}{{\check{T}}}
\nc{\cG}{{\check{G}}}
\nc{\cM}{{\check{M}}}
\nc{\cB}{{\check{B}}}
\nc{\cN}{{\check{N}}}
\nc{\ct}{{\check{\mathfrak t}}}
\nc{\cg}{{\check{\fg}}}
\nc{\hcg}{{\widehat{\check{\fg}}}}
\nc{\cb}{{\check{\fb}}}
\nc{\cn}{{\check{\fn}}}
\nc{\cLambda}{{\check\Lambda}}
\nc{\cla}{{\check\kappa_x}}
\nc{\cmu}{{\check\mu}}
\nc{\clambda}{{\check\lambda}}
\nc{\cnu}{{\check\nu}}
\nc{\ceta}{{\check\eta}}
\nc{\DefbE}{{\on{Def}_{\cB}(E_\cT)}}
\nc{\imathb}{{\ol{\imath}}}
\nc{\rlr}{\overset{\longrightarrow}{\underset{\longrightarrow}\longleftarrow}}
\nc{\KG}{K\backslash G}
\nc{\comult}{{co\text{-}mult}}
\nc{\counit}{{co\text{-}unit}}
\nc{\uHom}{{\underline{\Maps}}}
\nc{\dgSch}{\on{Sch}}
\nc{\affdgSch}{\on{Sch}^{\on{aff}}}
\nc{\affSch}{\on{Sch}^{\on{aff}}}
\nc{\Groupoids}{\on{Grpd}}
\nc{\inftygroup}{\on{Spc}}
\nc{\inftyCat}{\infty\on{-Cat}}
\nc{\StinftyCat}{\inftyCat^{\on{St}}}
\nc{\MoninftyCat}{\infty\on{-Cat}^{\on{Mon}}}
\nc{\SymMoninftyCat}{\infty\on{-Cat}^{\on{SymMon}}}
\nc{\SymMonStinftyCat}{\on{DGCat}^{\on{SymMon}}}
\nc{\MonStinftyCat}{\on{DGCat}^{\on{Mon}}}
\nc{\inftystack}{\on{Stk}}
\nc{\inftystackalg}{Stk^{1\text{-}alg}}
\nc{\inftyprestack}{\on{PreStk}}
\nc{\inftydgnearstack}{\on{NearStk}}
\nc{\inftydgstack}{\on{Stk}}
\nc{\inftydgstackalg}{DGStk^{1\text{-}alg}}
\nc{\inftydgprestack}{\on{PreStk}}
\nc{\dgindSch}{\on{indSch}}
\nc{\indSch}{{}^{\on{cl}}\!\on{indSch}}
\nc{\infSch}{\on{infSch}}
\nc{\dr}{{\on{dR}}}
\nc{\mmod}{{\on{-}\!{\mathbf{mod}}}}
\nc{\starr}{\text{\dh}}
\nc{\Spectra}{\on{Spectra}}
\nc{\Crys}{\on{Crys}}
\nc{\oblv}{{\mathbf{oblv}}}
\nc{\ind}{{\mathbf{ind}}}
\nc{\coind}{{\mathbf{coind}}}
\nc{\inv}{{\mathbf{inv}}}
\nc{\triv}{{\mathbf{triv}}}
\nc{\CMaps}{{\mathcal Maps}}
\nc{\Maps}{\on{Maps}}
\nc{\bMaps}{\mathbf{Maps}}
\nc{\BMaps}{\ul{\on{Maps}}}
\nc{\Grid}{\on{Grid}}
\nc{\hGrid}{\on{Grid}^{\geq\,\on{dgnl}}}
\nc{\Diag}{\on{Diag}}
\nc{\bDelta}{\mathbf{\Delta}}
\nc{\tCateg}{(\infty\on{-2)-Cat}}
\nc{\ul}{\underline}
\nc{\Seg}{\on{Seq}}
\nc{\biSeg}{\on{bi-Seq}}
\nc{\triSeg}{\on{tri-Seq}}
\nc{\quadSeg}{\on{quad-Seq}}
\nc{\nSeg}{\on{n-Seq}}
\nc{\Segm}{\on{Seg}^{\on{mkd}}}
\nc{\fLm}{\fL^{\on{mkd}}}
\nc{\inftyCatm}{\inftyCat^{\on{mkd}}}
\nc{\Blocks}{\mathbf{Blocks}}
\nc{\Snakes}{\mathbf{Snakes}}
\nc{\bifL}{\on{bi-}\!\fL}
\nc{\Sets}{\on{Sets}}
\nc{\Ran}{{\on{Ran}}}
\nc{\Vect}{\on{Vect}}
\nc{\Shv}{\on{Shv}}
\nc{\unn}{\mathbf{union}}
\nc{\Spc}{\on{Spc}}
\nc{\ppart}{(\!(t)\!)}
\nc{\qqart}{[\![t]\!]}
\nc{\Dmod}{\on{D-mod}}
\nc{\ocD}{\overset{\circ}{\cD}}
\nc{\sfp}{\mathsf{p}}
\nc{\sfq}{\mathsf{q}}
\nc{\DGCat}{\on{DGCat}}
\renc{\det}{\on{det}}
\nc{\Conf}{\on{Conf}}
\nc{\Whit}{\on{Whit}}
\nc{\Reg}{\on{Reg}}
\nc{\BunNbx}{(\BunNb)_{\infty\cdot x}}
\nc{\bHecke}{\overset{\bullet}{\on{Hecke}}}
\nc{\Hecke}{\on{Hecke}}
\nc{\bCZ}{\ol\CZ}
\nc{\oCZ}{\overset{\circ}\CZ}
\nc{\boCZ}{\ol{\oCZ}}
\nc{\sotimes}{\overset{!}\otimes}
\nc{\semiinf}{\frac{\infty}{2}}
\nc{\coInd}{\on{coInd}}
\nc{\bCM}{\overset{\bullet}\CM{}}
\nc{\bCF}{\overset{\bullet}\CF{}}
\nc{\SI}{\on{SI}}
\nc{\KL}{\on{KL}}
\nc{\Av}{\on{Av}}
\nc{\Inv}{\on{Inv}}
\nc{\KM}{\on{KM}}
\nc{\LocSys}{\on{LocSys}}
\nc{\ofX}{\overset{\circ}{\fX}}
\renc{\sG}{G}
\begin{document}

\title{Connections on moduli spaces and infinitesimal Hecke modifications}
\author[Nick Rozenblyum]{Nick Rozenblyum \\
\MakeLowercase{with an appendix by} Dennis Gaitsgory}

\date{\today}

\maketitle

\begin{abstract}
Let $X$ be a proper scheme and $\CZ$ a prestack over $X$ equipped with a flat connection.  We give a local-to-global description of D-modules on the prestack $\CS(\CZ)$ of flat sections of $\CZ$.  Examples of $\CS(\CZ)$
include the moduli stacks of principal $G$-bundles and de Rham local systems on $X$.  We show that the category of D-modules is equivalent to the category of ind-coherent sheaves which are equivariant with respect to infinitesimal Hecke groupoids parametrized by finite subsets of $X$. We describe a number of applications to geometric representation theory and conformal field theory,
including a derived enhancement of the Verlinde formula: the derived space of conformal blocks (a.k.a. chiral
homology) of the WZW model is isomorphic to the cohomology of the corresponding line bundle on
$\on{Bun}_G$, the moduli stack of $G$-bundles.
\end{abstract}

\tableofcontents

\section*{Introduction}

Let $X$ be a proper scheme over an algebraically closed field $k$ of characteristic 0.  In many situations, one is
interested in studying various moduli spaces associated to $X$; for instance, the moduli stack of principal $G$-bundles, for an algebraic group $G$.  Many such moduli spaces admit the following common origin (see \secref{ss:intro applications}).

\medskip

Let $\CZ_{\on{crys}} \to X$ be a prestack, equipped with a flat connection along $X$.  
A convenient way to package the data of such a flat connection is as a prestack $\CZ$ 
over the de Rham space $X_\dr$ of $X$ (see \eqref{e:dR}) such that
$$\CZ_{\on{crys}}:=X\underset{X_\dr}\times \CZ.$$
We can then form the corresponding moduli space $\CS(\CZ)$, which is the prestack of sections of
$\CZ \to X_\dr$, i.e., the prestack of flat sections of $\CZ_{\on{crys}} \to X$.
In this paper, we study the differential algebraic geometry of the prestacks $\CS(\CZ)$.

\medskip

Suppose that the prestack $\CS(\CZ)$ is \emph{laft:=locally almost of finite type}\footnote{Note that we are not assuming that $\CZ$
itself is laft, and it is indeed not such in a large class of examples of interest, see \secref{ss:jets}.}, 
so that one can consider the categories 
$$\IndCoh(\CS(\CZ)) \text{ and } \Dmod(\CS(\CZ)),$$
as developed in \cite[Chapter 4]{Vol2}.

\medskip

Furthermore, suppose that the prestack $\CZ$ \emph{admits deformation theory}
(we call such prestacks \emph{laft-def}). In this case, 
by \cite[Chapter 4, Sect. 3.1]{Vol2}, the forgetful functor
$$\oblv_{\CS(\CZ)}:\Dmod(\CS(\CZ))\to \IndCoh(\CS(\CZ))$$
admits a left adjoint, denoted 
$$\ind_{\CS(\CZ)}:\IndCoh(\CS(\CZ)) \to \Dmod(\CS(\CZ)),$$
and the category $\Dmod(\CS(\CZ))$ identifies with the category of modules
for the monad (called \emph{the infinitesimal monad})
$$\on{Diff}_{\CS(\CZ)}:=\oblv_{\CS(\CZ)}\circ \ind_{\CS(\CZ)}$$ 
acting on $\IndCoh(\CS(\CZ))$. 

\medskip

The monad $\on{Diff}$ should be thought of as tensoring with the
algebra of differential operators.
This is literally the case if $\CS(\CZ)$ were a smooth
scheme. However, one can make sense of this even for a general laft-def prestacks,
see \cite[Chapter 8, Sect. 4]{Vol2}. 

\medskip

The goal of this paper is to describe the monad $\on{Diff}_{\CS(\CZ)}$ acting on 
$\IndCoh(\CS(\CZ))$ in terms that are \emph{local} with respect to $X$ (even though
$\CS(\CZ)$ is itself a global object).  We also establish a number of applications of this description
to geometric representation theory and conformal field theory.

\ssec{Description of the main result}

\sssec{}

Let $\ul{x}$ be a finite collection of points on $X$, and let $\oX$ denote the corresponding punctured curve. 
Let $\overset{\circ}\CS(\CZ)$ denote the prestack of sections of $\CZ\to X_{\dr}$ \emph{over} $\oX_\dr$. 
Let $\on{Hecke}(\CZ)_{\ul{x}}$ denote the fiber product
$$\CS(\CZ)\underset{\overset{\circ}\CS(\CZ)}\times \CS(\CZ).$$
%

Finally, let
$$\widehat{\on{Hecke}}(\CZ)_{\ul{x}}$$ 
denote the formal completion of $\on{Hecke}(\CZ)_{\ul{x}}$ along the unit section
$$\CS(\CZ)\to \on{Hecke}(\CZ)_{\ul{x}}.$$ 
Let
$$\partial_s,\partial_t:\widehat{\on{Hecke}}(\CZ)_{\ul{x}}\to \CS(\CZ)$$
denote the two projections. 

\sssec{}

In \secref{sss:defn sect laft} we will introduce a condition on $\CZ$ 
(called \emph{sectional left-ness}) that guarantees that the prestack
$\widehat{\on{Hecke}}(\CZ)_{\ul{x}}$ is laft-def. (However, this condition
does \emph{not} necessarily imply that the entire $\on{Hecke}(\CZ)_{\ul{x}}$ is laft!).

\medskip

Then we can consider the endofunctor of $\IndCoh(\CS(\CZ))$ given by 
$$\CH^{\on{inf}}(\CZ)_{\ul{x}}:=(\partial_t)^\IndCoh_*\circ \partial_s^!:\IndCoh(\CS(\CZ))\to \IndCoh(\CS(\CZ)),$$
which is easily seen to have the structure of a monad. 

\medskip

Because of the presence of the formal completion, we have a (tautologically defined) map of monads
\begin{equation} \label{e:inf Hecke to dr Intro}
\CH^{\on{inf}}(\CZ)_{\ul{x}}\to \on{Diff}_{\CS(\CZ)}.
\end{equation} 

\sssec{} \label{sss:main intro}
Our main result can be informally stated as saying that the map \eqref{e:inf Hecke to dr Intro} becomes
an isomorphism after ``integrating the left-hand side over the space of all possible $\ul{x}$'s''.

\medskip

Specifically, all choices of $\ul{x}$ are parametrized by the $\emph{Ran space}$, denoted $\on{Ran}(X)$ 
(which we describe in \secref{s:Ran space schemes}).  The $\CH^{\on{inf}}(\CZ)_{\ul{x}}$ assemble to a family of monads $\CH^{\on{inf}}(\CZ)_{\Ran}$ 
parametrized by the Ran space.  Applying the functor of compactly supported de Rham cohomology along 
$\Ran(X)$, we obtain the ``integrated'' monad\footnote{The fact that this procedure produces another
monad is not evident from this description; to obtain the structure of a monad, we use the additional 
structure of $\CH^{\on{inf}}(\CZ)_{\Ran}$ that it is defined over the \emph{unital} Ran space (see 
\secref{sss:intro Ran variants}).} $\Gamma_{c, \Ran}(\CH^{\on{inf}}(\CZ)_{\Ran})$.  Our main result is the following ``local-to-global'' description of $\on{Diff}_{\CS(\CZ)}$:

\begin{thm*}[\thmref{t:main}]
The natural map of monads
$$ \Gamma_{c, \Ran}(\CH^{\on{inf}}(\CZ)_{\Ran}) \to \on{Diff}_{\CS(\CZ)} $$ is an isomorphism. 
\end{thm*}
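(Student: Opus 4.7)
The plan is to reformulate the theorem as an equivalence of formal groupoids over $\CS(\CZ)$, and then to verify the equivalence at the level of tangent complexes via a local-to-global cohomological principle for $\Ran(X)$. By the formal-geometry framework of \cite[Chapter 8]{Vol2}, the monad $\on{Diff}_{\CS(\CZ)}$ corresponds to the infinitesimal groupoid $\CS(\CZ)^{\wedge}_{\Delta}$ --- the formal completion of the diagonal in $\CS(\CZ) \times \CS(\CZ)$ --- while $\CH^{\on{inf}}(\CZ)_{\ul{x}}$ corresponds to $\widehat{\on{Hecke}}(\CZ)_{\ul{x}}$ regarded as a formal groupoid. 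The tautological map \eqref{e:inf Hecke to dr Intro} is induced by the obvious map of formal groupoids, so the theorem amounts to the assertion that, after $\Ran(X)$-integration with compact supports, the family $\{\widehat{\on{Hecke}}(\CZ)_{\ul{x}}\}_{\ul{x}}$ recovers $\CS(\CZ)^{\wedge}_{\Delta}$.

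First I would verify this on tangent complexes. At a flat section $s \colon X_\dr \to \CZ$, deformation theory identifies $T_s \CS(\CZ)$ with $\Gamma_\dr(X, s^*T_{\CZ/X_\dr})$, while the tangent complex of $\widehat{\on{Hecke}}(\CZ)_{\ul{x}}$ along the unit section is the local-cohomology variant, consisting of flat sections of $s^*T_{\CZ/X_\dr}$ supported at $\ul{x}$. The tangent-level statement then reduces to a Beilinson--Drinfeld style identity: for any $\cD$-module on the proper scheme $X$, its global de Rham cohomology is the compactly supported $\Ran(X)$-cohomology of the family of local cohomologies. Properness of $X$ is what makes this identity hold.

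To upgrade the tangent-complex equivalence to an equivalence of formal prestacks, I would use deformation theory. Both sides admit deformation theory --- the right-hand side by hypothesis on $\CS(\CZ)$, the left-hand side by the sectional laftness assumption --- so an equivalence of their tangent Lie algebroids suffices. The Lie algebroid structures on both sides arise from the relevant groupoid compositions (on the Hecke side, from the union operation $\ul{x}_1 \cup \ul{x}_2$ built into the unital $\Ran$-space formalism), and the comparison map preserves these by construction.

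The hard part will be the $\Ran$-theoretic assembly: ensuring that $\Gamma_{c,\Ran}(\CH^{\on{inf}}(\CZ)_\Ran)$ carries a genuine monad structure compatible with the one on $\on{Diff}_{\CS(\CZ)}$. This depends crucially on the unital $\Ran$-space formalism --- without the unit, only a coalgebroid structure is visible on the Hecke side --- and one must show that the union map $\Ran(X) \times \Ran(X) \to \Ran(X)$ induces, after integration with compact supports, precisely the composition in the infinitesimal groupoid of $\CS(\CZ)$. Establishing this compatibility, together with the required vanishing/factorization properties on $\Ran(X)$, is the technical core of the argument.
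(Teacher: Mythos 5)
Your overall shape is right---first a tangent-complex comparison (which is precisely \thmref{t:tangent vanishing} in the paper, and your ``Beilinson--Drinfeld style identity'' is what the paper encodes via linear factorization sheaves, \thmref{t:image of Dmod in Ran}), then an upgrade to the full monad isomorphism. But the upgrade step as you describe it has a real gap. You assert that ``an equivalence of their tangent Lie algebroids suffices'' and that the comparison preserves Lie algebroid structures ``by construction.'' Neither is automatic. The relevant object on the left side is the monad $\Gamma_{c,\Ran}(\CH^{\on{inf}}(\CZ)_{\Ran})$ --- it is not a priori the monad of a formal groupoid over $\CS(\CZ)$, and even granting a groupoid reformulation (which would amount to proving \thmref{t:push-out} directly, a statement the paper \emph{deduces} from \thmref{t:main}, not the other way around), reducing a map of Lie algebroids to a map of underlying tangent complexes requires matching the anchor, bracket, and all higher coherences. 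That is additional data, and nothing in the construction visibly controls it.

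The paper sidesteps this entirely by a different device: \emph{deformation to the normal cone} (\cite[Chapter 9]{Vol2}). It equips both $\CH^{\on{inf}}(\CZ)_{\on{untl}}$ and $\on{Diff}_{\CS(\CZ)}\boxtimes\omega_{\Ran^{\on{untl}}}$ with compatible non-negative filtrations, with associated graded given by $\on{Sym}$ of the relative tangent complex. Because the associated-graded functor on non-negatively filtered objects is conservative and $\Gamma_{c,\Ran^{\on{untl}}}$ is strictly symmetric monoidal, the map of monads is an isomorphism iff the map of relative tangent complexes becomes an isomorphism after $\Gamma_{c,\Ran^{\on{untl}}}$. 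This is exactly the reduction you want, but it makes no reference to Lie algebroid structure: the algebra structure on the associated graded is the free one, so nothing beyond the underlying object needs comparing. That filtration argument is the missing ingredient in your proposal, and without it the phrase ``preserves these by construction'' is carrying the whole weight of the proof without justification.

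Two smaller points. First, your claim that at the tangent level one compares global de Rham cohomology with compactly supported $\Ran$-cohomology of local cohomologies is the right heuristic, but what the paper actually shows is that the relative tangent complex of $\fr_{\Ran^{\on{untl}}}$ lies in the essential image of $i_!\colon \Dmod(X)\to\Dmod(\Ran^{\on{untl}}(X))$ (Propositions~\ref{p:tangent pushforward to Ran}, \ref{p:tangent on diag}), i.e. it is a \emph{linear factorization sheaf}; the collapse under $\Gamma_{c,\Ran^{\on{untl}}}$ then follows from contractibility of $\Ran^{\on{untl}}$. Second, you correctly identify that the unital Ran space is needed for the monad structure; the paper confirms this and makes the point that $\Gamma_{c,\Ran^{\on{untl}}}$ is \emph{strictly} (not just lax) symmetric monoidal, which is exactly what transports the monad structure through the integration.
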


\sssec{}
As variants of \thmref{t:main}, we give descriptions of the category $\Dmod(\CS(\CZ))$ and its non-linear counterpart, the de Rham prestack $\CS(\CZ)_{\dr}$.

\medskip

The first variant describes the category $\Dmod(\CS(\CZ))$.  Roughly, it states that
a D-module on $\CS(\CZ)$ is given by the data of its underlying ind-coherent sheaf, together with
equivariance with respect to the groupoids $\widehat{\on{Hecke}}(\CZ)_{\ul{x}}$
for all finite collections of points $\ul{x}$.

\begin{thm*}[\thmref{t:main non-unital}]
There is a natural isomorphism
$$ \Dmod(\CS(\CZ)) \simeq \IndCoh(\CZ(\CS))\underset{\IndCoh(\CZ(\CS)\times \Ran(X))}\times 
\IndCoh(\CS(\CZ)\times \Ran(X))^{\widehat{\on{Hecke}}(\CZ)_{\Ran(X)}} ,$$
where the second factor is the category of ind-coherent sheaves on $\CS(\CZ)\times \Ran(X)$
equivariant with respect to the groupoid $\widehat{\on{Hecke}}(\CZ)_{\Ran(X)}$.
\end{thm*}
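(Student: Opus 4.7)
The plan is to deduce this variant from the preceding main theorem together with general facts about modules over monads integrated along the unital Ran space. Since $\CS(\CZ)$ is laft-def, the forgetful functor $\oblv_{\CS(\CZ)}\colon \Dmod(\CS(\CZ))\to \IndCoh(\CS(\CZ))$ is monadic with monad $\on{Diff}_{\CS(\CZ)}$, so $\Dmod(\CS(\CZ))\simeq \on{Diff}_{\CS(\CZ)}\mmod$. The main theorem identifies $\on{Diff}_{\CS(\CZ)}$ with $\Gamma_{c,\Ran}(\CH^{\on{inf}}(\CZ)_{\Ran})$ as monads on $\IndCoh(\CS(\CZ))$. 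Hence the task reduces to recognizing modules over the integrated Hecke monad as the stated fiber product.

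For this, one first observes that $\CH^{\on{inf}}(\CZ)_{\Ran}$ is naturally packaged as a relative monad over $\Ran(X)$ acting on $\IndCoh(\CS(\CZ)\times \Ran(X))$, whose category of relative modules is precisely
$$\IndCoh(\CS(\CZ)\times \Ran(X))^{\widehat{\on{Hecke}}(\CZ)_{\Ran(X)}}.$$
The integrated monad on $\IndCoh(\CS(\CZ))$ is obtained from this relative monad via $\Gamma_c$-integration, and carries its monad structure by virtue of the \emph{unital} structure of $\Ran(X)$: the empty finite subset provides a distinguished basepoint (hence a unit), while disjoint union furnishes the commutative semigroup operation (hence the multiplication). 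The key general claim is then that modules over such an integrated unital-family monad are precisely the fiber product of $\IndCoh(\CS(\CZ))$ with the equivariant category over $\IndCoh(\CS(\CZ)\times \Ran(X))$, glued via pullback along the first projection and the forgetful functor, respectively.

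The main obstacle lies in establishing this last categorical claim: that modules over the $\Gamma_{c,\Ran}$-integration of a unital family of monads reassemble into the stated fiber product. This involves three intertwined steps: (a) verifying that the unit section (empty subset) recovers the $\IndCoh(\CS(\CZ))$ factor as the restriction of the module datum; (b) checking that the semigroup structure on $\Ran(X)$ promotes fiberwise Hecke-equivariance into the multiplicative compatibility required for a module over the integrated monad; and (c) combining (a) and (b) with the descent properties of $\IndCoh(-\times \Ran(X))$ along unital Ran constructions to produce the explicit fiber product presentation. Step (c) is the most delicate, and I would handle it by invoking the factorization-category and unital-Ran formalism already deployed in the proof of the main theorem.
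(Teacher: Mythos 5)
Your high-level plan is right in spirit — reduce to monad-module formalism, identify $\Dmod(\CS(\CZ))$ with modules over the integrated Hecke monad, and recognize the module category as a fiber product — and you correctly note that the monad structure comes from the unital Ran space. But the proposal has a genuine gap in how the passage from the unital to the non-unital picture is carried out.

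The paper's actual argument first establishes the \emph{unital} fiber product presentation
$$
\Dmod(\CS(\CZ)) \simeq \IndCoh(\CS(\CZ)) \underset{\IndCoh(\CS(\CZ))\otimes \Dmod(\Ran^{\on{untl}})}{\times} \IndCoh\bigl(\overset{\circ}{\CS}(\CZ)\strut^{\wedge}_{\Ran^{\on{untl}}}\bigr)
$$
as a formal consequence of \thmref{t:main} (this is \corref{c:unital Ran fiber product}, via the identification \eqref{e:modules for integrated monad}), and then passes to the non-unital Ran space by applying $\on{Id}\otimes\iota^!$. The fact that this restriction functor induces an equivalence on the fiber products is exactly \corref{c:Ran cofinality}, which in turn rests on \thmref{t:Ran cofinality new}: the inclusion $\iota\colon\Ran(X)\to\Ran^{\on{untl}}(X)$ is universally homologically cofinal. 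Your plan never names this ingredient; instead it gestures at ``descent properties of $\IndCoh(-\times\Ran(X))$ along unital Ran constructions,'' which is too vague to close the argument and is not what the proof uses.

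More concretely, your step (a) is ill-posed: the empty subset $\emptyset$ is a point of $\Ran^{\on{untl}}(X)$ (and of $\Ran_\emptyset(X)=\Ran(X)\sqcup\{\emptyset\}$), but it does \emph{not} lie in $\Ran(X)$. So there is no ``unit section (empty subset)'' of $\CS(\CZ)\times\Ran(X)$ providing a basepoint, and the way $\IndCoh(\CS(\CZ))$ appears as a factor in the fiber product is not via restriction to a unit point in $\Ran(X)$ — it enters along the constant pullback $(\on{id}\times p_{\Ran})^!$. This is precisely why the unital Ran space is indispensable as an intermediary: the empty subset gives a unit object only there, which is what makes $\Gamma_{c,\Ran^{\on{untl}}}$ strictly monoidal (\corref{c:Gamma on Ran untl mon}) while $\Gamma_{c,\Ran}$ is only left-lax monoidal. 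Attempting to build the unit, multiplication, and compatibility (your steps (a)–(b)) directly on $\Ran(X)$ will run into this obstruction; the correct route is to prove everything on $\Ran^{\on{untl}}(X)$ and then apply the cofinality of $\iota$.
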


\sssec{}
The second variant gives an analogous description of the entire prestack $\CS(\CZ)_{\dr}$ as a pushout:

\begin{thm*}[\thmref{t:push-out}]
The natural map
$$\CS(\CZ)\underset{\CS(\CZ)\times \Ran(X)}\sqcup \overset{\circ}\CS(\CZ)\strut^\wedge_{\Ran(X)}\to 
\CZ(\CS)_\dr$$
is an isomorphism, where:

\begin{itemize}

\item $\overset{\circ}\CS(\CZ)\strut^\wedge_{\Ran(X)}$ is the formal completion of 
$\overset{\circ}\CS(\CZ)_{\Ran(X)}$ along the restriction map $$\CS(\CZ)\times \Ran(X)\to \overset{\circ}\CS(\CZ)_{\Ran(X)};$$

\item The pushout is taken in the category of laft-def prestacks.

\end{itemize} 

\end{thm*}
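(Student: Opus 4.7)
The plan is to deduce \thmref{t:push-out} from \thmref{t:main non-unital} by transporting the equivalence of DG categories of the latter into an equivalence of the underlying laft-def prestacks, using the Koszul-duality-type correspondence between formal groupoid quotients in laft-def prestacks and equivariant ind-coherent sheaves.

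First, I would identify $\overset{\circ}\CS(\CZ)\strut^\wedge_{\Ran(X)}$ with the quotient, taken in laft-def prestacks, of $\CS(\CZ)\times\Ran(X)$ by the formal groupoid $\widehat{\on{Hecke}}(\CZ)_{\Ran(X)}$. This is essentially the realization of a formal completion as the nerve of an infinitesimal equivalence relation. By the general theory of formal groupoids acting on laft-def prestacks developed in \cite{Vol2}, this identification produces a natural equivalence
\[
\IndCoh(\overset{\circ}\CS(\CZ)\strut^\wedge_{\Ran(X)}) \simeq \IndCoh(\CS(\CZ)\times\Ran(X))^{\widehat{\on{Hecke}}(\CZ)_{\Ran(X)}}.
\]

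Next, I apply the contravariant functor $\IndCoh$ (with its $!$-pullback functoriality) to the proposed pushout. Since this functor converts colimits of laft-def prestacks into limits of DG categories, the pushout is sent to the fiber product
\[
\IndCoh(\CS(\CZ)) \underset{\IndCoh(\CS(\CZ)\times\Ran(X))}\times \IndCoh(\overset{\circ}\CS(\CZ)\strut^\wedge_{\Ran(X)}),
\]
which, by the previous step, is precisely the fiber product appearing in \thmref{t:main non-unital}. That theorem identifies it with $\Dmod(\CS(\CZ)) \simeq \IndCoh(\CS(\CZ)_\dr)$. Hence the natural map of \thmref{t:push-out} induces an equivalence on $\IndCoh$. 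To upgrade this categorical equivalence to an isomorphism of laft-def prestacks, I would invoke reconstruction: both sides have the same reduced truncation $\CS(\CZ)_\red$, and a laft-def prestack is determined by its reduced truncation together with its relative deformation theory over that truncation, which is encoded by $\IndCoh$ with its $!$-pullback structure.

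The main obstacle I anticipate is this final reconstruction step: the passage from an equivalence of $\IndCoh$-categories to an equivalence of laft-def prestacks requires compatibility with the natural symmetric monoidal structure and with the deformation-theoretic data (e.g.\ the cotangent complex). These compatibilities should follow from the naturality of all the constructions involved, but the bookkeeping is delicate and must be carried out explicitly. As a fallback, one can argue directly on functors of points: for an affine test scheme $S$ almost of finite type, a map $S\to \CS(\CZ)_\dr$ is a map $S_\red\to\CS(\CZ)$, and the universal property of the pushout provides exactly the gluing data between such a section and its formal Hecke modifications along finite subsets $\ul{x}\subset X$, which is precisely the content of being a flat section over $S_\dr \times X_\dr$.
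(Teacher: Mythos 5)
Your proposal follows the same moral chain of ideas as the paper (apply $\IndCoh$ to the proposed pushout, invoke \thmref{t:main non-unital}, then pass back to prestacks), but there are two substantive gaps that you gloss over, both of which the paper handles with separate non-trivial results.

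First, your claim that ``this functor converts colimits of laft-def prestacks into limits of DG categories'' is not automatic: the pushout is taken in $\on{PreStk}_{\on{laft-def}}$, not in $\on{PreStk}$, and the right-Kan-extension definition of $\IndCoh$ only trivially converts colimits in $\on{PreStk}$ to limits. The paper proves \propref{p:indcoh on push-out} for exactly this purpose, and the proof requires specific hypotheses (the top arrow is a nil-isomorphism, the left arrow $\CS(\CZ)\times\Ran\to\CS(\CZ)$ is pseudo-proper) and a genuine argument via decomposing nil-isomorphisms into square-zero extensions and using the mapping-out description of square-zero pushouts from \cite[Chapter 8]{Vol2}. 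You cannot simply assert this step.

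Second, the reconstruction step you correctly flag as ``the main obstacle'' is not a matter of bookkeeping naturality. The precise tool the paper uses is \thmref{t:indcoh iso}: a nil-isomorphism of laft \emph{inf-schemes} inducing an equivalence on $\IndCoh$ is an isomorphism. The crucial word is ``inf-schemes''---the proof in \secref{ss:iso lemma proof} uses the infinitesimal inertia group, the inf-affineness of formal groups, and properties of $\IndCoh$ pushforward that are specific to inf-schemes. You cannot apply such a result directly to the global prestack $\CS(\CZ)$. This is precisely why the paper's proof has a two-step structure: Step~1 establishes the pushout claim for each affine $S\to\CS(\CZ_1)$ (where all prestacks in sight are inf-schemes), and Step~2 passes to the colimit over $(\affSch_{\on{aft}})_{/\CS(\CZ_1)}$, checking that all three corners of the square commute with this colimit. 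Your fallback ``functor-of-points'' argument is too vague to close this gap: verifying the universal property of a pushout in $\on{PreStk}_{\on{laft-def}}$ against a general laft-def target is exactly what the deformation-theoretic analysis in \propref{p:indcoh on push-out} is for, and it does not reduce to an elementary statement about flat sections.
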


\ssec{Applications}\label{ss:intro applications}

\sssec{}\label{sss:when apply}
As already mentioned, in order for the above results to apply to a given $\CZ$, it needs
to be sectionally laft. We will formulate this condition in \secref{sss:explain fake}.
For now, we just remark that it consists of two conditions, which can be checked in practical situations:

\medskip

The first condition is that the classical prestack underlying $\CS(\CZ)$ should be locally of finite type. The second
condition is of \emph{linear} nature: it is equivalent to demanding the existence of an antecedent of 
the tangent complex of $\CS(\CZ)$, denoted $\Theta(\CZ)$, and which is an object of the
category
$$\IndCoh(\CS(\CZ)\times X_\dr)\simeq \IndCoh(\CS(\CZ))\otimes \Dmod(X),$$
see \secref{ss:Theta}. 

\medskip

We can describe two families of examples that are sectionally laft. 

\medskip

One family consists of $\CZ$ that are themselves laft, see \corref{c:laft => sect laft}. Another family is given
by the \emph{jet construction}. Namely, we start with a laft prestack $\CZ'\to X$ and set
$$\CZ:=\Res^{X}_{X_\dr}(\CZ')$$
to be the Weil restriction of $\CZ'$ along the projection $X\to X_\dr$ (see \secref{sss:Weil restr} for a review
of Weil restriction). Such $\CZ$ will typically \emph{not} be laft, but it will be sectionally laft, see \propref{p:jets sect laft}. 

\sssec{The stack of local systems}
Let
$$\CZ = (\on{pt}/\sG) \times X_\dr,$$
where $\sG$ is an algebraic group.

\medskip

In this case, the resulting prestack $\CS(\CZ)$ is the derived Artin stack $\LocSys_\sG(X)$ classifying 
$\sG$-local systems on $X$.

\medskip

It is not difficult to see that the forgetful map
$$\widehat{\on{Hecke}}(\on{pt}/\sG)_{\Ran(X)}\to \on{Hecke}(\on{pt}/\sG)_{\Ran(X)}$$
is an isomorphism.

\medskip

Thus, \thmref{t:main} (along with its relative variant \thmref{t:rel main}) give a local-to-global
description of the infinitesimal monad of $\LocSys_\sG(X)$ in terms of Hecke modifications
of local systems at points of $x$.

\medskip

Such a description of the (relative) infinitesimal monad for $$\LocSys_{\sP}(X)\to \LocSys_{\sG}(X)$$
(here $\sG$ is a reductive group and $\sP\subset \sG$ is a parabolic subgroup)
is a crucial ingredient in the study of the spectral side of geometric Langlands,
see 
\cite[Sect. 6.5]{Outline}. 

\sssec{The stack of $G$-bundles} \label{sss:BunG intro}

Another important class of examples is the following.  Let
$$\CZ:=\Res^{X}_{X_\dr}((\on{pt}/G)\times X),$$
where $G$ is, as before, an algebraic group.

\medskip

In this case, $\CS(\CZ)$ is $\Bun_G(X)$, the moduli stack of $G$-bundles on $X$. 

\sssec{}
We now specialize to the case that $X$ is a smooth curve.

\medskip

\thmref{t:main} can be used to give a very convenient description of the localization functor from the 
Kazhdan-Lusztig category of Kac-Moody representations to D-modules on $\Bun_G(X)$. We will discuss this in a subsequent work.

\sssec{}
The entire Hecke stack (as opposed to the infinitesimal version we've considered so far) of $\Bun_G(X)$ over the Ran space also defines a family
of groupoids on $\Bun_G(X)$.

\medskip

Gaitsgory \cite{Contr} proved that the corresponding category of equivariant D-modules
$$ \Dmod(\Bun_G(X))^{\Hecke}$$
is equivalent to $\Vect$.  Using \thmref{t:main} and Gaitsgory's theorem, we show the analogous statement for \emph{quasi-coherent sheaves} on $\Bun_G(X)$:

\begin{thm*}[\thmref{t:Hecke equiv}]
There is an equivalence of categories
$$ \QCoh(\Bun_G(X))^{\Hecke} \simeq \Vect$$
given by $k \mapsto \mathcal{O}_{\Bun_G(X)}$.
\end{thm*}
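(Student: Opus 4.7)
The plan is to bootstrap Gaitsgory's D-module result \cite{Contr} to the QCoh setting via \thmref{t:main non-unital}. The structure sheaf $\O_{\Bun_G(X)}$ carries a canonical $\Hecke$-equivariant structure, since every Hecke correspondence pulls $\O$ back to $\O$; tensoring with $k$-vector spaces therefore yields
$$ F : \Vect \longrightarrow \QCoh(\Bun_G(X))^{\Hecke}, \qquad V \mapsto V \otimes_k \O_{\Bun_G(X)}, $$
and the task is to show $F$ is an equivalence.

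The first key step is to trade $\IndCoh$-equivariance for a D-module structure. By construction, $\widehat{\on{Hecke}}(\CZ)_{\Ran(X)}$ is the formal completion of $\on{Hecke}(\CZ)_{\Ran(X)}$ along the unit section, so any full $\Hecke_{\Ran(X)}$-equivariant structure on an ind-coherent sheaf restricts to an infinitesimal one. Applied to $\CZ = \Res^X_{X_\dr}((\pt/G)\times X)$, which is sectionally laft by the jet construction of \secref{sss:when apply}, \thmref{t:main non-unital} identifies $\widehat{\on{Hecke}}_{\Ran(X)}$-equivariance on $\IndCoh(\Bun_G(X))$ with a D-module structure. Repackaging the datum yields an equivalence
$$ \IndCoh(\Bun_G(X))^{\Hecke_{\Ran(X)}} \simeq \Dmod(\Bun_G(X))^{\Hecke_{\Ran(X)}}, $$
and Gaitsgory's theorem makes the right-hand side $\Vect$.

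To transfer from $\IndCoh$ to $\QCoh$, I use the right adjoint $\Xi : \QCoh(\Bun_G(X)) \to \IndCoh(\Bun_G(X))$ to the natural functor $\Psi$. Since $\Bun_G(X)$ is smooth, $\Xi$ is fully faithful and preserves the structure sheaf; it also intertwines the Hecke pullback/pushforward operations, hence restricts to a fully faithful functor $\Xi : \QCoh(\Bun_G(X))^{\Hecke} \to \IndCoh(\Bun_G(X))^{\Hecke} \simeq \Vect$. To see this is an equivalence, it suffices to check that $\Xi(\O_{\Bun_G(X)})$ with its tautological Hecke-equivariance corresponds to the generator $k \in \Vect$, since $k$ generates $\Vect$; composing with $\Vect \simeq \IndCoh(\Bun_G(X))^{\Hecke}$ then furnishes the inverse of $F$.

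The main obstacle is precisely this last identification: confirming that $\Xi(\O_{\Bun_G(X)})$ corresponds to $k$ under Gaitsgory's equivalence, equivalently that its Hecke-equivariant endomorphism complex is $k$ concentrated in degree zero. This requires tracking the generator through the proof in \cite{Contr}, and for non-simply-connected $G$ one must handle the discrete components of $\Bun_G(X)$ separately. The essential inputs are the contractibility of the Ran space together with connectedness of the relevant components of $\Bun_G(X)$.
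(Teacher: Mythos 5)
Your plan follows the paper's own conceptual outline almost exactly: the paper's remark after Theorem~\ref{t:Hecke equiv dr} says precisely that $\widehat{\on{Hecke}}(\Bun_G)_{\on{Ran}^{\on{untl}}}$ sits inside $\on{Hecke}(\Bun_G)_{\on{Ran}^{\on{untl}}}$ as a normal subgroupoid with de~Rham quotient, and that Theorem~\ref{t:Hecke equiv} is deduced from Gaitsgory's D-module result and Corollary~\ref{c:Loc BunG}. So the strategy is the right one. However, the step you describe as ``repackaging the datum yields an equivalence $\IndCoh(\Bun_G)^{\Hecke_{\Ran}} \simeq \Dmod(\Bun_G)^{\Hecke_{\Ran}}$'' is where essentially all of the paper's work lies, and it is not a formal reduction: making the ``normal subgroupoid/quotient-is-de-Rham'' heuristic rigorous is precisely what the lengthy monad diagram chase in the paper's proof accomplishes, with Corollary~\ref{c:Loc BunG} (itself a direct consequence of Theorem~\ref{t:main}) providing the key factorization of $\fr$-push-pull through $(\Bun_G)_{\dr}$. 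Asserting this equivalence by fiat is circular: granting Gaitsgory, the equivalence you want to ``repackage'' your way into \emph{is} the content of Theorem~\ref{t:Hecke equiv}.

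Two further points. First, $\Xi$ is the \emph{left} adjoint to $\Psi$, not the right adjoint; more importantly, the functor you actually want is $\Upsilon$, which sends $\O_{\Bun_G}$ to $\omega_{\Bun_G}=p_{\Bun_G}^!(k)$ and commutes with $!$-pullback — this is what makes the introduction's $\QCoh$ statement ($k\mapsto\O$) match the body's $\IndCoh$ statement ($k\mapsto\omega$). The functor $\Xi$ sends $\O$ to $\O^{\IndCoh}$, which differs from $\omega$ by a shift and does not intertwine $!$-pullbacks, so the bookkeeping becomes unnecessarily delicate. Second, the ``main obstacle'' you identify — tracking the generator through \cite{Contr} and handling non-simply-connected $G$ separately — is a red herring. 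Gaitsgory's Theorem~\ref{t:Hecke equiv dr} already covers general reductive $G$, and in the paper's formulation the comparison is a tautologically defined \emph{map of monads} $\Gamma_{c,\Ran^{\on{untl}}}(\CH(\Bun_G)_{\Ran^{\on{untl}}}) \to p^!_{\Bun_G}\circ\Gamma_{c,\Bun_G}$, so there is nothing to track: the theorem asserts this map is an isomorphism of endofunctors, and the generator correspondence falls out automatically. The genuine work is in the monad chase, not in identifying the generator afterward.
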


\sssec{}
Let $\Gr_{G,\Ran(X)}$ denote the Ran version of the affine Grassmannian.  Using \thmref{t:Hecke equiv}, we prove:

\begin{thm*}[\thmref{t:uniformization}]
The uniformization map
$$\pi:\Gr_{G,\Ran(X)}\to \Bun_G(X)$$
induces an isomorphism
$$\CHom_{\QCoh(\Bun_G(X))}(\CF,\CE)\to
\CHom_{\QCoh(\Gr_{G,\Ran(X)})}(\pi^*(\CF),\pi^*(\CE)),$$
for $\CF\in \QCoh(\Bun_G(X))$ and $\CE\in \on{Perf}(\Bun_G(X))\subset \QCoh(\Bun_G(X))$. 
\end{thm*}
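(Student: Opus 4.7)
The plan is to reduce full-faithfulness of $\pi^*$ on $\on{Perf}(\Bun_G(X))$ to the single identity $\pi_*\mathcal{O}_{\Gr_{G,\Ran(X)}}\simeq \mathcal{O}_{\Bun_G(X)}$, and then to deduce that identity from \thmref{t:Hecke equiv}. First I would invoke the adjunction $(\pi^*,\pi_*)$ together with the projection formula, which applies since $\CE$ is perfect, to rewrite
$$\CHom_{\QCoh(\Gr_{G,\Ran(X)})}(\pi^*\CF,\pi^*\CE)\simeq \CHom_{\QCoh(\Bun_G(X))}\bigl(\CF,\,\CE\otimes \pi_*\mathcal{O}_{\Gr_{G,\Ran(X)}}\bigr).$$
It will then suffice to verify that the unit map $\mathcal{O}_{\Bun_G(X)}\to \pi_*\mathcal{O}_{\Gr_{G,\Ran(X)}}$ is an isomorphism in $\QCoh(\Bun_G(X))$.

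Next I would equip $\pi_*\mathcal{O}_{\Gr_{G,\Ran(X)}}$ with a natural Hecke-equivariant structure. A point of $\Gr_{G,\Ran(X)}$ is a triple $(\underline{x},\CP,\alpha)$ with $\alpha$ a trivialization of $\CP$ off $\underline{x}$; given a Hecke modification at $\underline{y}$ taking $\CP$ to $\CP'$, the new bundle $\CP'$ inherits a canonical trivialization off $\underline{x}\cup \underline{y}$. Fusing the Ran coordinate with $\underline{y}$ then produces an identification of the pullbacks of $\Gr_{G,\Ran(X)}$ along the two legs of the Hecke groupoid, coherent with composition. Consequently $\pi_*\mathcal{O}_{\Gr_{G,\Ran(X)}}$ lies in $\QCoh(\Bun_G(X))^{\Hecke}$, and \thmref{t:Hecke equiv} forces
$$\pi_*\mathcal{O}_{\Gr_{G,\Ran(X)}}\simeq V\otimes_k \mathcal{O}_{\Bun_G(X)}$$
for some $V\in \Vect$, with the unit corresponding to a distinguished vector $1\in V$.

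The final and main step is to check that $V\simeq k$. Restricting to a $k$-point $\CP_G\colon \Spec k\to \Bun_G(X)$, the vector space $V$ is computed as the cohomology of the structure sheaf on the fiber $\pi^{-1}(\CP_G)$, which parametrizes pairs $(\underline{x},\alpha)$ trivializing $\CP_G$ away from $\underline{x}$. The hard part will be verifying that this cohomology equals $k$: this requires combining the cohomological contractibility of $\Ran(X)$ (Beilinson--Drinfeld, Gaitsgory) with the Beauville--Laszlo-type fact that every $G$-bundle on the smooth projective curve $X$ is trivial on the complement of some finite subset, so that the fiber is cohomologically acyclic enough to have structure-sheaf cohomology equal to $k$. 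Once this is in hand, $V\simeq k$, the unit is an isomorphism, and $\pi^*$ is fully faithful on $\on{Perf}(\Bun_G(X))$.
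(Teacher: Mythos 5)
Your route is genuinely different from the paper's, even though both hinge on \thmref{t:Hecke equiv}. The paper never forms $\pi_*$ of anything (and in fact works in $\IndCoh$ with $\pi^!$ rather than in $\QCoh$ with $\pi^*$): it rewrites $\CHom_{\IndCoh(\Bun_G)}(\CF,V\otimes\omega_{\Bun_G})$ as a mapping complex in $\IndCoh(\Bun_G)^{\on{Hecke}}$, applies the equivalence $\IndCoh(\Bun_G)^{\on{Hecke}}\simeq\Vect$ of \corref{c:Hecke equiv} at the trivial bundle, and then uses a base-change diagram exhibiting $\Gr_{G,\Ran}$ as the fiber of the \emph{ind-proper} map $\partial_t\colon\on{Hecke}(\Bun_G)_{\Ran}\to\Bun_G\times\Ran$ over $\{\on{triv}\}\times\Ran$. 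Because $\partial_t$ is ind-proper, $\IndCoh$ base change is automatic, and one never needs to evaluate anything on the fiber of $\pi$ over a single $k$-point.

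That fiber computation is exactly where your Step~4 breaks down. Having produced $\pi_*\CO_{\Gr}\simeq V\otimes\CO_{\Bun_G}$, you propose to identify $V$ with $\Gamma(\pi^{-1}(\CP_G),\CO)$ and then prove that equals $k$ by invoking Ran-space contractibility together with Beauville--Laszlo. Two problems. First, the identification requires $\QCoh$ base change for $\pi_*$ against the non-flat, non-proper $k$-point $\Spec k\to\Bun_G$, with $\pi$ a map out of a non-quasi-compact prestack; none of this is formal. Second, and more seriously, what you need is a statement about \emph{coherent} cohomology of $\pi^{-1}(\CP_G)$, whereas \thmref{t:Ran contr} and the results of \cite{Contr} give \emph{de Rham} (D-module) contractibility. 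The fiber $\pi^{-1}(\CP_G)$ is a twisted form of $\underline{\Maps}^{\on{gen}}(X,G)_{\Ran}$, which fibers over $\Ran$ with infinite-dimensional fibers; whether $\Vect\to\QCoh(\underline{\Maps}^{\on{gen}}(X,G)_{\Ran})$, $k\mapsto\CO$, is fully faithful is precisely the question the paper flags as \emph{open} in the remark following \corref{c:uniformization}. So this step is not a detail to be filled in later: your proof, as designed, reduces the theorem to a statement the authors explicitly say they cannot currently prove, and the paper's adjunction-and-base-change argument exists precisely to avoid it.
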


\sssec{WZW conformal blocks}

Now, let $\CL$ be a factorizable line bundle on $\Gr_{G,\Ran(X)}$ (see \secref{sss:fact line bundles}); in the case that
$G$ is semi-simple and simply connected, this is equivalent to a choice of a level $\kappa \in \mathbb{Z}$.

\medskip

To this data,
we can associate a factorization algebra (in the sense of \cite{BD})
$$\CA_\CL:=q^{\IndCoh}_*(\CL\otimes \omega_{\Gr_{G,\Ran}}).$$
In the case that $G$ is semi-simple and simply connected and $\kappa$ is non-positive, this is the factorization algebra corresponding to the
vertex algebra given by the integrable quotient of the Kac-Moody vertex algebra corresponding to $G$ at
level $-\kappa$.  In other words, this is the factorization algebra of the Wess-Zumino-Witten (WZW) model in conformal field theory.

\medskip
Moreover, to the data of $\CL$, one can associate a line bundle $\CL_{glob}$ on $\Bun_G(X)$.  When $G$
is semi-simple and simply connected, this is the corresponding theta line bundle; i.e., the $\kappa$-th power of the determinant line bundle on $\Bun_G(X)$

\medskip

We use \thmref{t:uniformization} to compute $\on{C}^{\on{Fact}}_\cdot(X,\CA_{\CL,X})$, the chiral homology (a.k.a. factorization homology) of $\CA_{\CL}$:

\begin{thm*}[\thmref{t:WZW'}]
There exists a canonical isomorphism\footnote{The functor $\Gamma_c$ of compactly supported cohomology for quasi-coherent sheaves on $\Bun_G(X)$ is defined in \secref{sss:qcoh gamma-c}}
$$\on{C}^{\on{Fact}}_\cdot(X,\CA_{\CL,X})\to \Gamma_c(\Bun_G(X),\CL_{\on{glob}}\otimes \omega_{\Bun_G}).$$
\end{thm*}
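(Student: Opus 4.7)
The plan is to recognize both sides of the claimed isomorphism as pushforwards of the \emph{same} sheaf on $\Gr_{G,\Ran(X)}$, via the uniformization map $\pi: \Gr_{G,\Ran(X)} \to \Bun_G(X)$, and then invoke \thmref{t:uniformization} to carry out the identification. First, I would unwind the definition of $\on{C}^{\on{Fact}}_\cdot(X, \CA_{\CL,X})$. Since $\CA_\CL = q^{\IndCoh}_*(\CL \otimes \omega_{\Gr_{G,\Ran}})$ is already constructed as a pushforward from the Ran Grassmannian along $q: \Gr_{G,\Ran(X)} \to \Ran(X)$, iterated pushforward through $\Ran(X) \to \pt$ identifies
$$\on{C}^{\on{Fact}}_\cdot(X,\CA_{\CL,X}) \simeq p^{\IndCoh}_*(\CL \otimes \omega_{\Gr_{G,\Ran(X)}}),$$
with $p: \Gr_{G,\Ran(X)} \to \pt$ the structure map, once one uses the unital Ran-space conventions so that factorization homology really coincides with this integration.

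Second, I would factor $p$ as $\Gr_{G,\Ran(X)} \xrightarrow{\pi} \Bun_G(X) \xrightarrow{p'} \pt$ and exploit two compatibilities: (i) $\CL \simeq \pi^*(\CL_{\on{glob}})$, which is the defining relation between the factorizable line bundle and its global counterpart; and (ii) $\omega_{\Gr_{G,\Ran(X)}} \simeq \pi^!(\omega_{\Bun_G(X)})$, which is functoriality of the dualizing sheaf under $!$-pullback. Together with the identity $\pi^*(\CM) \otimes \pi^!(\CN) \simeq \pi^!(\CM \otimes \CN)$ for a line bundle $\CM$ in $\QCoh$, this rewrites the integrand as $\pi^!(\CL_{\on{glob}} \otimes \omega_{\Bun_G(X)})$. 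The projection formula in $\IndCoh$ then yields
$$\pi^{\IndCoh}_* \pi^!(\CL_{\on{glob}} \otimes \omega_{\Bun_G(X)}) \simeq \CL_{\on{glob}} \otimes \pi^{\IndCoh}_*(\omega_{\Gr_{G,\Ran(X)}}),$$
so the theorem reduces to establishing $\pi^{\IndCoh}_*(\omega_{\Gr_{G,\Ran(X)}}) \simeq \omega_{\Bun_G(X)}$; applying $p'^{\IndCoh}_*$ to both sides would then deliver $\Gamma_c(\Bun_G(X), \CL_{\on{glob}} \otimes \omega_{\Bun_G})$ by the definition recalled in \secref{sss:qcoh gamma-c}.

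Third, for this remaining identification I would invoke \thmref{t:uniformization}. Taking $\CE = \mathcal{O}_{\Bun_G(X)} \in \on{Perf}(\Bun_G(X))$ and varying $\CF$, that theorem shows that the unit $\mathcal{O}_{\Bun_G(X)} \to \pi_*(\mathcal{O}_{\Gr_{G,\Ran(X)}})$ is an isomorphism in $\QCoh(\Bun_G(X))$. Using smoothness of $\Bun_G(X)$, the functor $\Upsilon: \QCoh \to \IndCoh$ defined by $\CF \mapsto \CF \otimes \omega$ is fully faithful, and I would check that it suitably intertwines $\pi_*^{\QCoh}$ with $\pi_*^{\IndCoh}$ on the relevant objects, thereby upgrading the $\QCoh$ isomorphism to the desired $\IndCoh$ statement for $\omega$-twisted pushforward.

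The main obstacle is precisely this last transfer from $\QCoh$ to $\IndCoh$. Since $\Gr_{G,\Ran(X)}$ is neither laft nor proper over $\Bun_G(X)$, the $\Upsilon$-compatibility with direct images is not automatic and must be justified by writing $\omega_{\Gr_{G,\Ran(X)}}$ as an appropriate colimit over its finite-dimensional strata and using the smoothness of $\Bun_G(X)$. Any genuine technical subtlety in the proof is likely to concentrate here; once it is in place, the remaining steps are formal manipulations of pushforward, $!$-pullback, and the projection formula.
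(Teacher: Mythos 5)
Your opening move is correct and coincides with the paper: using $\CA_{\CL}=q^{\IndCoh}_*(\CL\otimes\omega_{\Gr_{G,\Ran}})$ and composing with the further pushforward along $\Ran(X)\to\pt$ identifies $\on{C}^{\on{Fact}}_\cdot(X,\CA_{\CL,X})$ with $\Gamma^\IndCoh(\Gr_{G,\Ran},\CL\otimes\omega_{\Gr_{G,\Ran}})$ (this is precisely \eqref{e:pre WZW}), and the integrand is indeed $\pi^!(\CL_{\on{glob}}\otimes\omega_{\Bun_G})$. After that, the argument goes astray.

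Your key step reduces the theorem, via the projection formula for the left adjoint $\pi_!$ of $\pi^!$ (which you write as $\pi^{\IndCoh}_*$), to the internal isomorphism $\pi_!(\omega_{\Gr_{G,\Ran}})\simeq\omega_{\Bun_G}$. This is not a reduction but a strengthening. Since $\pi^!\CF=\omega_{\Gr_{G,\Ran}}\sotimes\pi^!\CF$ and $\pi_!$ obeys the projection formula, $\pi_!(\omega_{\Gr_{G,\Ran}})\simeq\omega_{\Bun_G}$ is equivalent to the counit $\pi_!\pi^!\CF\to\CF$ being an isomorphism for every $\CF\in\IndCoh(\Bun_G)$, i.e.\ to $\pi^!$ being fully faithful on all of $\IndCoh(\Bun_G)$. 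The paper raises exactly this question in the remark after \corref{c:uniformization} and states explicitly that the answer is not known. So the difficulty you flag at the end -- upgrading the $\QCoh$ statement $\CO_{\Bun_G}\xrightarrow{\sim}\pi_*\CO_{\Gr_{G,\Ran}}$ to an $\IndCoh$ statement about $\omega$ -- is not a technicality awaiting a stratification argument; it \emph{is} that open question, and it cannot be obtained from \thmref{t:uniformization}.

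What \thmref{t:uniformization} gives is strictly weaker: the counit $\pi_!\pi^!\CF\to\CF$ becomes an isomorphism only \emph{after} applying $\Gamma_{c,\Bun_G}$. Indeed, $\CHom(\CF,V\otimes\omega_{\Bun_G})\simeq\CHom_\Vect(\Gamma_{c,\Bun_G}(\CF),V)$, similarly on $\Gr_{G,\Ran}$, and $\Gamma^\IndCoh(\Gr_{G,\Ran},-)\simeq\Gamma_{c,\Bun_G}\circ\pi_!$ since both are left adjoint to $p^!_{\Gr_{G,\Ran}}=\pi^!\circ p^!_{\Bun_G}$. Thus \thmref{t:WZW'} is exactly the assertion that $\Gamma_{c,\Bun_G}\bigl(\pi_!\pi^!(\CL_{\on{glob}}\otimes\omega_{\Bun_G})\bigr)\to\Gamma_{c,\Bun_G}(\CL_{\on{glob}}\otimes\omega_{\Bun_G})$ is an isomorphism, which is \thmref{t:uniformization} applied to $\CF:=\CL_{\on{glob}}\otimes\omega_{\Bun_G}$. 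The paper's proof therefore runs the comparison one level up, at the level of $\Gamma_c$'s, and never needs (or has access to) the internal statement $\pi_!\omega_{\Gr_{G,\Ran}}\simeq\omega_{\Bun_G}$ that your plan requires.
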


\medskip

Recall that for a vertex operator algebra $V$, the space of conformal blocks of $V$ is given by the \emph{dual} of the zero-th factorization homology of the corresponding factorization algebra.  Dualizing the above, we obtain a derived enhancement of the Verlinde formula for the space of conformal blocks:
$$ \on{C}^{\on{Fact}}_\cdot(X,\CA_{\CL,X})^{\vee} \simeq \Gamma(\Bun_G(X),\CL_{\on{glob}}^{-1}).$$
In the simply-connected, semi-simple case, this recovers the usual Verlinde formula by taking the zeroth cohomology $H^0$.  In fact, a theorem of Teleman \cite{Te} asserts that in this case, the higher cohomology of the line bundle $\CL_{\on{glob}}^{-1}$ vanishes.  As a result, we obtain:

\begin{cor*}[\corref{c:WZW pos pos}]
Let $G$ be semi-simple and simply-connected and $\kappa$ a positive level. Then the chiral homology of the
integrable quotient of the Kac-Moody chiral algebra at level $\kappa$ is concentrated in
cohomological degree $0$. 
\end{cor*}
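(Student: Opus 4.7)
The plan is to deduce the corollary by combining \thmref{t:WZW'} with Teleman's vanishing theorem \cite{Te}, following the logic sketched in the paragraph preceding the statement. The approach has three main steps, none of which require new technical input beyond what has already been set up in the body of the paper.

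First, I would apply \thmref{t:WZW'} to the factorizable line bundle $\CL$ on $\Gr_{G,\Ran(X)}$ corresponding (in the conventions of \secref{sss:BunG intro}) to the chosen positive Kac-Moody level; this identifies the chiral homology with $\Gamma_c(\Bun_G(X),\CL_{\on{glob}}\otimes \omega_{\Bun_G})$, where $\CL_{\on{glob}}^{-1}$ is a positive power of the determinant line bundle on $\Bun_G(X)$. Second, I would dualize: the functor $\Gamma_c$ on $\QCoh(\Bun_G(X))$ recalled in \secref{sss:qcoh gamma-c} is by construction the Serre/Verdier counterpart of the ordinary global sections functor $\Gamma$, yielding a canonical identification
$$\Gamma_c(\Bun_G(X),\CL_{\on{glob}}\otimes \omega_{\Bun_G})^{\vee}\;\simeq\;\Gamma(\Bun_G(X),\CL_{\on{glob}}^{-1}).$$
Third, I would invoke Teleman's theorem, which asserts that for $G$ simply-connected and semi-simple the higher cohomology on $\Bun_G(X)$ of a positive power of the determinant line bundle vanishes; this forces $\Gamma(\Bun_G(X),\CL_{\on{glob}}^{-1})$ to be concentrated in cohomological degree $0$, and since dualization preserves being concentrated in a single degree, the same holds for the chiral homology.

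The main (essentially bookkeeping) obstacle is to verify that the duality in the second step holds as an equivalence of complexes rather than merely on the level of cohomology groups, together with a careful tracking of sign conventions relating the parameter $\kappa$ used in \secref{sss:BunG intro} to the level of the Kac-Moody vertex algebra (so that ``positive level'' in the statement corresponds to $\CL_{\on{glob}}^{-1}$ being a positive power of the determinant bundle). Both of these are purely formal in the framework of ind-coherent sheaves on Artin stacks locally almost of finite type, so I expect the verification to be routine given the machinery already deployed in the proofs of \thmref{t:uniformization} and \thmref{t:WZW'}.
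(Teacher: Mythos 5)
Your proposal is correct and follows exactly the paper's route: Theorem~\ref{t:WZW'} is dualized (this is precisely the content of Corollary~\ref{c:WZW}, which the paper has already established at the level of complexes), and then Teleman's theorem (quoted as Theorem~\ref{t:Teleman} for $G$ semi-simple and non-negative definite level, applied here to $\CL_{\on{glob}}^{-1}$) gives concentration in degree~$0$. The sign-convention bookkeeping you flag as the main obstacle is already settled in Remark~\ref{r:int}, which identifies $\CA_{\CL,X}$ with the integrable quotient at level $-\kappa$, so ``positive Kac-Moody level'' corresponds to $\CL_{\on{glob}}^{-1}$ having non-negative definite level exactly as Teleman's theorem requires.
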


\ssec{Organization of the paper and overview of contents}

\sssec{}\label{sss:intro Ran variants}

As explained above, the statement of our main theorem involves the Ran space.
However, there are two versions of the Ran space, both of which play a role in this paper, denoted
$$\Ran(X) \text{ and } \Ran^{\on{untl}}(X),$$
respectively.

\medskip

The first of the two is a prestack that assigns to an affine test-scheme $S$ the set (=discrete category)
of finite subsets of the set $\Hom(S,X_\dr)$. The second is a \emph{categorical} prestack: it 
assigns to $S$ the \emph{category} of finite subsets of the set $\Hom(S,X_\dr)$, with the morphisms
given by inclusion. 

\medskip

Prestacks such as $\Ran(X)$ are familiar objects in algebraic geometry, but categorical 
prestacks are not. However, one can define and work with categories of sheaves on them. 
The appearance of the categorical prestack $\Ran^{\on{untl}}(X)$ in this paper is due to the fact that
the category of D-modules on it has several remarkable properties that are \emph{not} shared by $\Ran(X)$.

\medskip

One of these properties is that the functor
$$\Gamma_{c,\Ran^{\on{untl}}}:\Dmod(\Ran^{\on{untl}}(X))\to \Vect,$$
left adjoint to the ``constant functor" $k\mapsto \omega_{\Ran^{\on{untl}}(X)}$,
is \emph{strictly} symmetric monoidal, and not lax symmetric monoidal (which would
be the case for essentially any prestack; in particular, $\Ran(X)$). 

\medskip

Another property of $\Ran^{\on{untl}}(X)$ is that the direct image functor along the
tautological map $X\to \Ran^{\on{untl}}(X)$ is fully faithful and identifies 
$\Dmod(X)$ with the full subcategory of $\Dmod(\Ran^{\on{untl}}(X))$ 
that consists of \emph{linear factorization sheaves}. 

\medskip

Sections \ref{s:Ran} and \ref{s:Ran space schemes} are devoted to a discussion of Ran spaces. 
In \secref{s:Ran}, we consider the Ran spaces attached to a \emph{space} (i.e., a homotopy 
type) $\sfX$, and study its basic properties. In \secref{s:Ran space schemes} we apply this
in the context of algebraic geometry (the spaces $\sfX$ in question arise as $\Hom(S,X_\dr)$,
where $S$ is a test affine scheme). 

\sssec{}

In \secref{s:sect laft}, we introduce the key finiteness condition: we define what it means for
a prestack $\CZ\to X_\dr$ to be sectionally laft. 

\medskip

This condition ensures that the prestack $\CS(\CZ)$ is laft-def, as well as the prestack
$$\overset\circ{\CS}(\CZ)\strut^\wedge_{\Ran(X)},$$
obtained by taking the completion of the prestack $\overset\circ{\CS}(\CZ)_{\Ran(X)}$ of sections
over the punctured $X_\dr$, along the restriction map
$$\CS(\CZ)\times \Ran(X)\to \overset\circ{\CS}(\CZ)_{\Ran(X)}.$$

\medskip

In this section we also introduce the object
$$\Theta(\CZ)\in \IndCoh(\CS(\CZ))\otimes \Dmod(X),$$
which governs the tangent complex of $\CS(\CZ)$ and the relative tangent complex
of $$\CS(\CZ)\to \overset\circ{\CS}(\CZ)\strut^\wedge_{\Ran(X)}.$$

\medskip

Namely, the tangent complex of $\CS(\CZ)$ identifies with the de Rham pushforward of
$\Theta(\CZ)$ along $\CS(\CZ)\times X\to \CS(\CZ)$. For a fixed
$$\ul{x}=\{x_1,...,x_n\}\in \Ran(X),$$
the relative tangent complex of 
\begin{equation} \label{e:identify tan intro}
\CS(\CZ)\to \overset\circ{\CS}(\CZ)\strut^\wedge_{\ul{x}}
\end{equation}
is identified with
$$\underset{k}\oplus\, (\on{Id}\otimes i_{x_k}^!)(\Theta(\CZ))\in \IndCoh(\CS(\CZ)),$$
where $i_k$ is the embedding of the point $x_{i_k}$ into $X$. 

\sssec{}

In \secref{s:main} we state our main result, \thmref{t:main}, along with its relative and parameterized
variants, and discuss various reformulations, mentioned above.

\medskip

\secref{s:proof} is devoted to the proof of \thmref{t:main}. The key steps are as follows. 

\medskip

Recall that we have to show that a certain canonically defined map of monads
is an isomorphism. Using the procedure of \emph{deformation to the normal cone} from
\cite[Chapter 9]{Vol2}, we show that both monads are (non-negatively) filtered and 
our map preserves the filtrations. Hence, it suffices to show that the map is an isomorphism
at the associated graded level.

\medskip

Next we observe, following \cite[Chapter 9]{Vol2}, that the associated graded monads
are both given by tensoring with symmetric algebras of certain canonically defined linear objects. 
Hence, it suffices to show that the induced map between these linear objects is an isomorphism.

\medskip

Finally, the required isomorphism follows from the identification \eqref{e:identify tan intro}
and the theory of linear factorization sheaves developed in \secref{s:Ran space schemes}. 

\sssec{}

In \secref{s:examples} we consider the two families of examples of sectionally laft $\CZ$ mentioned
in \secref{sss:when apply}, and we apply our main result to
$$\CZ:=(\on{pt}/\sG) \times X_\dr \text{ and } \CZ:=\Res^X_{X_\dr}((\on{pt}/G) \times X).$$ 

\medskip

In \secref{s:WZW}, we specialize to the case that $X$ is a smooth projective curve. 
We use the results concerning $\CZ:=\Res^X_{X_\dr}((\on{pt}/G) \times X)$ (so that
$\CS(\CZ)=\Bun_G(X)$) to prove the results about the category $\QCoh(\Bun_G(X))$
and the WZW conformal blocks described above.

\sssec{} \label{sss:explain laft}

In Appendix \ref{s:finiteness} we develop the theory needed to introduce the object
$$\Theta(\CZ)\in \IndCoh(\CS(\CZ))\otimes \Dmod(X),$$
mentioned above.

\medskip 

Namely, for an affine scheme $S$ almost of finite type, consider the category
$$\on{Pro}(\QCoh(S\times X_\dr)^-).$$
%
We introduce a full subcategory
$$\on{Pro}(\QCoh(S\times X_\dr)^-)_{\on{laft}}\subset \on{Pro}(\QCoh(S\times X_\dr)^-).$$

We then prove a key finiteness result, \thmref{t:finiteness product}, which ultimately allows to identify the
opposite category of $\on{Pro}(\QCoh(S\times X_\dr)^-)_{\on{laft}}$ with 
$$\IndCoh(S \times X_\dr) \simeq \IndCoh(S)\otimes \Dmod(X),$$
by means of a functor that we call \emph{Serre-Verdier duality}.

\medskip

It is this operation that will ultimately produce the object $\Theta(\CZ)$ from the cotangent complex 
$T^*(\CZ)$ of $\CZ$, using one additional construction, introduced in \secref{s:fake}.

\sssec{} \label{sss:explain fake}

In Appendix \ref{s:fake} we introduce the final ingredient needed in order to define the object $\Theta(\CZ)$.

\medskip

Let $\CY$ be a prestack that admits deformation theory. In particular, for an affine test-scheme $S$
with a map $y: S\to \CY$,
we have a well-defined object
$$T^*_y(\CY)\in \on{Pro}(\QCoh(S)^-).$$

These objects assemble into the cotangent complex of $\CY$, which is an object
$$T^*(\CY)\in \underset{(S,y)}{\on{lim}}\, \on{Pro}(\QCoh(S)^-)=: \on{Pro}(\QCoh(\CY)^-)^{\on{fake}}.$$

The above category $\on{Pro}(\QCoh(\CY)^-)^{\on{fake}}$ admits a natural evaluation functor
$$\oblv^{\on{fake}}:\on{Pro}(\QCoh(\CY)^-)^{\on{fake}}\to \on{Pro}(\QCoh(\CY)^-),$$
but this functor is in general not an equivalence.

\medskip

In \secref{s:fake} we study the category $\on{Pro}(\QCoh(\CY)^-)^{\on{fake}}$, its functoriality with
respect to maps of prestacks, and its interaction with deformation theory. 

\medskip

In particular, for a map $f:\CY\to \CZ$ we introduce a functor
$$f^{\#}:\on{Pro}(\QCoh(\CZ)^-)^{\on{fake}}\to \on{Pro}(\QCoh(\CY)^-)^{\on{fake}}.$$

\medskip

This functor is used in the construction of $\Theta(\CZ)$ as follows.

\medskip

Let $\CZ$ be as in the main body of the paper. The Serre-Verdier predual of $\Theta(\CZ)$ is comprised of 
a compatible collection of objects of 
$$\on{Pro}(\QCoh(S \times X_\dr)^-)$$
for $s:S\to \CS(\CZ)$, where $S\in \affSch_{\on{aft}}$. The sought-for objects are given by
\begin{equation} \label{e:pre Theta intro}
\oblv^{\on{fake}}\circ (\on{ev}_s)^{\#}(T^*(\CZ)),
\end{equation}
where $\on{ev}_s$ denotes the map 
$$S\times X_\dr\to \CZ$$
corresponding to $s$. 

\medskip 

Now, the condition that $\CZ$ is sectionally laft asserts that the objects \eqref{e:pre Theta intro}
actually belong to $\on{Pro}(\QCoh(S\times X_\dr)^-)_{\on{laft}}$ (see \secref{sss:explain laft} above), so that the operation
of Serre-Verdier duality is well-defined on them. 

\sssec{}

Appendix \ref{s:categ prestacks} is a summary of the theory of \emph{categorical prestacks}, which are
(accessible) functors from the category opposite to that of affine schemes to the $\infty$-category
of $\infty$-categories. 

\medskip

As mentioned earlier, the need for categorical prestacks arises in this paper because we want
to use the unital Ran space, $\Ran^{\on{untl}}(X)$. All other categorical prestacks
that will appear in this paper will be ``modeled on $\Ran^{\on{untl}}(X)$" in that they will be either
Cartesian or coCartesian fibrations in groupids over it. 

\sssec{}

Appendix \ref{s:BBW}, written by Dennis Gaitsgory, gives a proof of the affine version of the Borel-Weil-Bott theorem
that we use in this paper.  To the best of our knowledge, the original proof is due to Kumar and uses the
technique of Frobenius splitting.

\medskip

The proof Gaitsgory gives is representation-theoretic, and instead of Frobenius splitting it uses
the Kashiwara-Tanisaki theory of localization at the positive level.

\ssec{Background, conventions and notation}

\sssec{}

This paper heavily uses the language of derived algebraic geometry, and in that relies 
on the machinery developed in the book \cite{Vol1,Vol2}. The notation in this paper 
follows closely that of {\it loc. cit.}. 

\medskip

Below follows a glossary of terms and references to where they are introduced.

\sssec{Higher category theory}

By a ``category" we will always mean an $\infty$-category, and by a ``groupoid" 
an $\infty$-groupoid (a.k.a. space). Categories form an $\infty$-category denoted
$\on{Cat}$, and groupoids form an $\infty$-category denoted $\Spc$. See 
\cite[Chapter 1, Sect. 1]{Vol1}. 

\medskip

Various properties of functors between categories (such as what it means to be 
a (co)-Cartesian fibration, etc) are reviewed in \cite[Chapter 1, Sect. 1.3]{Vol1}. 

\sssec{Higher algebra}

Of fundamental interest are $k$-linear categories (a.k.a. DG categories) 
that arise from algebraic geometry. We follow the conventions of 
\cite[Chapter 1, Sect. 10]{Vol1}. 

\medskip

In particular, we have the notions of
(symmetric) monoidal DG category, an algebra in a monoidal DG 
category, a module for an algebra within a module category. 
These notions are reviewed in \cite[Chapter 1, Sect. 3]{Vol1}. 

\sssec{(Derived) algebraic geometry}

Let $\affSch$ denote the category of affine (derived) schemes over $k$,
see \cite[Chapter 2, Sect. 1.1.3]{Vol1}; we will just call them schemes. 

\medskip

Let $\on{PreStk}$ denote the category of prestacks over $k$, i.e., the
category of accessible functors
$$(\affSch)^{\on{op}}\to \Spc.$$

\medskip

A crucial role in this paper is played by the subcategory of affine schemes
\emph{almost of finite type} $\affSch_{\on{aft}}\subset \affSch$, 
see \cite[Chapter 2, Sect. 1.7.1]{Vol1}. The corresponding notion for
prestacks is being  \emph{laft:=locally almost of finite type},
see \cite[Chapter 2, Sect. 1.7.2]{Vol1}. It is on these algebro-geometric 
objects that we have a well-defined theory of ind-coherent sheaves
and D-modules.  

\medskip

The associated notions (such as convergence, truncations) can also be found 
in \cite[Chapter 2, Sects. 1.2-1.4]{Vol1}. 

\sssec{Ind-coherent sheaves} 

The category of ind-coherent sheaves is initially defined on laft schemes
(see  \cite[Chapter 4]{Vol1}), and is then extended to laft prestacks 
(see  \cite[Chapter 5, Sect. 3.4]{Vol1})). 

\medskip

The assignment $\CX\mapsto \IndCoh(\CX)$ is by construction functorial
with respect to !-pullbacks of prestacks. However, there is one more functoriality
that plays a crucial role in this paper.

\medskip

In \cite[Chapter 2, Sect. 3]{Vol2}, the notion of \emph{inf-scheme} is introduced
(an inf-scheme is a laft prestack that \emph{admits deformation theory} and
whose underlying reduced prestack is a scheme). A morphism between laft prestacks
is \emph{inf-schematic} if its base change by an object of $\affSch_{\on{aft}}$ yields
an inf-scheme.

\medskip

In \cite[Chapter 3, Sect. 3.4]{Vol2}, it is explained how to construct a direct image
functor
$$f^\IndCoh_*:\IndCoh(\CX_1)\to \IndCoh(\CX_2)$$
when $f:\CX_1\to \CX_2$ is an inf-schematic map.

\medskip

The functors $f^\IndCoh_*$ satisfy base-change against !-pullbacks. Moreover, if $f$
is proper (at the reduced level), the functor $f^\IndCoh_*$ is left adjoint to $f^!$.

\sssec{D-modules} 

For a prestack $\CX$ we set $\CX_\dr$ to be the prestack defined by
$\CX_\dr(S):=\CX({}^{\on{red}}\!S)$. One easily shows that if $\CX$ is laft, then 
so is $\CX_\dr$.  

\medskip

We define $\Dmod(\CX):=\IndCoh(\CX_\dr)$. Pullback with respect to the natural 
projection $p_{\CX,\dr}:\CX\to \CX_\dr$ defines a forgetful functor
$$\oblv_\CX:\Dmod(\CX)\to \IndCoh(\CX).$$

If $\CX$ admits deformation theory, then the morphism $p_{\CX,\dr}$ is inf-schematic,
and hence the functor $(p_{\CX,\dr})^\IndCoh_*$ is defined, and provides a left adjoint
to $\oblv_\CX$. This is the functor of induction
$$\ind_\CX:\IndCoh(\CX)\to \Dmod(\CX).$$

This theory is developed in  \cite[Chapter 4, Sects. 1 and 4]{Vol2}. 

\sssec{}

Finally, we make essential use of deformation theory. We refer the reader to
\cite[Chapter 1, Sect. 7]{Vol2} for what we mean for a prestack to admit
deformation theory, to \cite[Chapter 1, Sect. 4]{Vol2} to what we mean by a cotangent complex, 
and to \cite[Chapter 1, Sect. 4.4]{Vol2} to what we mean by the \emph{tangent} complex
(the latter is defined when the prestack in question is laft). 

\ssec{Acknowledgements}
Many of the ideas in this paper arose from discussions with Dennis Gaitsgory.
I would like to express my utmost gratitude to him for generously sharing his ideas, for our continuing collaboration, and for contributing Appendix \ref{s:BBW}.

\section{Ran categories of spaces} \label{s:Ran}

In this section we will introduce the general construction of the Ran category and its associated groupoid,
called the \emph{unital Ran space} and the (usual) \emph{Ran space} and denoted $\on{Ran}^{\on{untl}} (\sfX)$
and $\on{Ran}(\sfX)$, respectively, associated to a space $\sfX$. 

\medskip

This is will be done in a greater generality than is necessary for the main applications in this paper: in our case,
the space $\sfX$ will be discrete, i.e., a plain set. 

\ssec{The Ran category a.k.a. unital Ran space}

\sssec{}

Let $\sfX$ be a space.  Define the \emph{unital Ran space} to be the category
$$ \on{Ran}^{\on{untl}} (\sfX):= \{\text{poset of finite subsets of } \pi_0(\sfX) \} ,$$
where the morphisms are given by inclusion.

\medskip

Similarly, define
$$ \on{Ran}(\sfX) :=  (\on{Ran}^{\on{untl}} (\sfX))^{\on{grpd}} - \{\emptyset\}, $$
the set of non-empty finite subsets of $\pi_0(\sfX)$.

\sssec{}

Since the category $ \on{Ran}(\sfX)^{\on{untl}}$ admits finite coproducts, we obtain: 

\begin{lem}\label{l:Ran sifted}
The category $\on{Ran}^{\on{untl}}(\sfX)$ is sifted.
\end{lem}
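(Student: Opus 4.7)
The plan is to reduce the assertion to the standard fact that any (nonempty) $\infty$-category admitting finite coproducts is sifted. Concretely, recall that an $\infty$-category $\CalC$ is sifted if and only if $\CalC$ is nonempty and, for every pair of objects $c_1,c_2\in\CalC$, the comma $\infty$-category $\CalC_{(c_1,c_2)/}$ consisting of objects $d$ equipped with morphisms $c_1\to d$ and $c_2\to d$ is weakly contractible. This is the criterion that makes the diagonal $\CalC\to \CalC\times\CalC$ cofinal.

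I would first verify nonemptyness: the empty subset $\emptyset\subset\pi_0(\sfX)$ is an object of $\on{Ran}^{\on{untl}}(\sfX)$ (and in fact is an initial object). Next, I would identify the coproducts: for two finite subsets $I_1,I_2\subset\pi_0(\sfX)$, their union $I_1\cup I_2$ together with the inclusions $I_k\hookrightarrow I_1\cup I_2$ satisfies the universal property of a coproduct in the poset, since a cocone consists of a finite subset $J$ containing both $I_1$ and $I_2$, which is the same as a finite subset $J$ containing $I_1\cup I_2$. Thus $\on{Ran}^{\on{untl}}(\sfX)$ admits all finite coproducts.

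Then I would invoke the general principle: for any category $\CalC$ with finite coproducts, the comma category $\CalC_{(c_1,c_2)/}$ has an initial object, namely the coproduct $c_1\sqcup c_2$ equipped with the two structure maps. Any $\infty$-category with an initial object is weakly contractible, so $\CalC_{(c_1,c_2)/}$ is weakly contractible. Applied to $\CalC=\on{Ran}^{\on{untl}}(\sfX)$ with $c_k=I_k$ and $c_1\sqcup c_2 = I_1\cup I_2$, this yields siftedness.

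There is no real obstacle; the only point requiring care is the interpretation of "finite coproduct" at the $\infty$-categorical level, but since $\on{Ran}^{\on{untl}}(\sfX)$ is just a poset (its nerve is a $1$-category), the $1$-categorical verification suffices and the standard $\infty$-categorical fact about comma categories with initial objects applies directly.
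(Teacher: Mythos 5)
Your proof is correct and takes the same approach as the paper, which simply observes that $\on{Ran}^{\on{untl}}(\sfX)$ admits finite coproducts (given by unions, with $\emptyset$ as initial object) and invokes the standard fact that this implies siftedness. You have merely spelled out the details that the paper leaves implicit.
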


\sssec{}

Let $\on{Fin}$ denote the category of finite sets, and let
$$ \on{Fin}^{s}  \subset \on{Fin} $$
denote the subcategory of non-empty finite sets and surjective maps.  For a space $X$, we have the functor
$$ \sfX^{(-)}: \on{Fin}^{\on{op}}  \to \on{Spc} $$
given by $I \mapsto \sfX^I$.
Let $\on{Fin}_{/\sfX} \to \on{Fin}$ denote the Cartesian fibration in groupoids corresponding to this functor.  As the notation suggests, 
it is also the full subcategory of the slice category $\on{Spc}_{/\sfX}$, consisting of finite sets mapping to $\sfX$.
Similarly, let
$$ \on{Fin}^s_{/\sfX} := \on{Fin}_{/\sfX} \underset{\on{Fin}}{\times} \on{Fin}^s $$
denote the Cartesian fibration in groupoids corresponding to the restrction of this functor to $\on{Fin}^s$.

%
%
%
%
%

\sssec{}

We now express the unital Ran space $\on{Ran}^{\on{untl}} (\sfX)$ in terms of $\on{Fin}_{/\sfX}$. 

\begin{prop}\label{p:unital ran pres}
For a space $X$, the unital Ran space is equivalent to the localization of $\on{Fin}_{/\sfX}$ by morphims over surjective maps
of finite sets, i.e.,
$$  \on{Ran}^{\on{untl}} (\sfX) \simeq \on{Fin}_{/\sfX}[(\on{Fin}^s_{/\sfX})^{-1}] .$$
\end{prop}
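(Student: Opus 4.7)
The plan is to identify both the source and target via universal properties. The natural candidate functor $\phi: \on{Fin}_{/\sfX} \to \on{Ran}^{\on{untl}}(\sfX)$ sends $(I, f: I \to \sfX)$ to the image of $\pi_0(f): I \to \pi_0(\sfX)$, a finite subset. A morphism $(\alpha, \eta): (I, f) \to (J, g)$ (with $\alpha: I \to J$ and homotopy $\eta: g\alpha \simeq f$) yields, on $\pi_0$, the equality $\pi_0(f) = \pi_0(g) \circ \pi_0(\alpha)$, hence an inclusion $\on{image}(\pi_0 f) \subseteq \on{image}(\pi_0 g)$, which is a morphism in the poset $\on{Ran}^{\on{untl}}(\sfX)$. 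When $\alpha$ is surjective (i.e., when $(\alpha, \eta) \in \on{Fin}^s_{/\sfX}$), the two images agree, so $\phi$ sends such a morphism to an identity. By the universal property of localization, $\phi$ factors uniquely as $\bar\phi: \on{Fin}_{/\sfX}[(\on{Fin}^s_{/\sfX})^{-1}] \to \on{Ran}^{\on{untl}}(\sfX)$, and the task is to show $\bar\phi$ is an equivalence.

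The cleanest proof proceeds by universal properties. First, $\on{Fin}_{/\sfX}$, being the unstraightening of $I \mapsto \sfX^I$, is the free $\infty$-category with finite coproducts generated by $\sfX$ (embedded via $x \mapsto (\{*\}, x)$), with coproduct given by disjoint union. Second, every morphism in $\on{Fin}^s_{/\sfX}$ decomposes, under disjoint union, as a coproduct of ``based fold'' maps $(n, \text{const}) \to (\{*\}, x)$ for $x \in \sfX$; these in turn are built from the fundamental fold map $\{*\} \sqcup \{*\} \to \{*\}$ together with morphisms of $\sfX$. Hence inverting $\on{Fin}^s_{/\sfX}$ is precisely imposing the relation $A \sqcup A \simeq A$ (coproduct-idempotency) on the free-coproduct completion of $\sfX$. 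Third, $\on{Ran}^{\on{untl}}(\sfX)$---the poset of finite subsets of $\pi_0(\sfX)$ ordered by inclusion, with join given by union---is the free join-semilattice on $\pi_0(\sfX)$. The identification of the latter two descriptions follows because in any $\infty$-category with idempotent coproducts, any two points in the same connected component of $\sfX$ become canonically isomorphic (via the diagonal cospan combined with idempotency), so the free such $\infty$-category on $\sfX$ depends only on $\pi_0(\sfX)$ and coincides with the free join-semilattice on that set. Aligning the three universal properties identifies $\bar\phi$ as an equivalence.

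The principal obstacle lies in the $\infty$-categorical bookkeeping, specifically: (i) making precise that $\on{Fin}^s_{/\sfX}$ is, under the symmetric monoidal structure of disjoint union, generated by fold maps up to all higher coherences; and (ii) verifying the equivalence between the free idempotent-coproduct $\infty$-category on $\sfX$ and that on $\pi_0(\sfX)$, which requires controlling the higher coherences coming from the nontrivial homotopy of $\sfX$. A concrete alternative, should the universal-property approach prove unwieldy, is a Quillen-Theorem-A-style argument: for each $S \in \on{Ran}^{\on{untl}}(\sfX)$, show that the full subcategory $C_S \subseteq \on{Fin}_{/\sfX}$ of objects with image contained in $S$ becomes contractible after inverting $\on{Fin}^s_{/\sfX} \cap C_S$. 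Contractibility follows because $C_S$ admits disjoint unions, and any two objects with image equal to $S$ are connected by surjective projections from their disjoint union, so $C_S$ is sifted in a manner that collapses under localization to a single point.
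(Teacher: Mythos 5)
Your universal-property approach is a genuinely different strategy from the paper's. The paper's argument is direct: it exhibits the functor $\on{im}\colon \on{Fin}_{/\sfX} \to \on{Ran}^{\on{untl}}(\sfX)$ as a Cartesian fibration, shows each fiber $\on{im}^{-1}(I)$ (objects with image exactly $I$) admits finite non-empty coproducts and is therefore contractible, which makes $\on{im}$ a localization; it then checks that $\on{Fin}^s_{/\sfX}$ generates the class of inverted morphisms by choosing a basepoint in each component of $\sfX$ to produce a section $s\colon \pi_0(\sfX) \to \sfX$, so that any $I \to \sfX$ factors as $I \twoheadrightarrow \on{im}(I) \xrightarrow{s} \sfX$.

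Your universal-property route can be made to work, but the real content is compressed into the third bullet and left unproven. The claim that the free $\infty$-category with idempotent finite coproducts on $\sfX$ coincides with the free join-semilattice on $\pi_0(\sfX)$ is doing all the work, and ``via the diagonal cospan combined with idempotency'' is not yet an argument. What you need, and should write out, is the observation that any $\infty$-category $D$ whose codiagonals $\nabla_x\colon x \sqcup x \to x$ are equivalences is automatically a poset: both coprojections $i_1, i_2\colon x \to x \sqcup x$ agree (each is $\nabla_x^{-1}$), so for $f,g\colon x \to y$ one computes $\nabla_y \circ (f \sqcup g) \circ i_1 \simeq f$ and $\nabla_y \circ (f \sqcup g) \circ i_2 \simeq g$, forcing $f \simeq g$ via a contractible space of choices. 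Once $D$ is a preorder, any map $\sfX \to D^{\simeq}$ factors through $\pi_0(\sfX)$ and the comparison with the poset of finite subsets becomes a $1$-categorical exercise. You also ought to say why the localization $\on{Fin}_{/\sfX}[(\on{Fin}^s_{/\sfX})^{-1}]$ still carries finite coproducts (now idempotent) and inherits the right universal property; this follows because $\on{Fin}^s_{/\sfX}$ is closed under disjoint union, but it needs stating.

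Your fallback Quillen-Theorem-A sketch is closer to the paper but is misformulated: the relevant subcategory $C_S$ must be objects whose image is \emph{equal} to $S$ (the fiber of $\on{im}$), not \emph{contained} in $S$. The latter contains $(\emptyset \to \sfX)$ as an initial object, so it is contractible for trivial reasons that make no use of $\on{Fin}^s_{/\sfX}$ and yield no information. The fiber, by contrast, has non-empty coproducts but no initial object, and its contractibility is the Beilinson--Drinfeld lemma (Lemma \ref{l:contractibility}); moreover it is contractible outright, not merely ``after inverting.'' Finally, contractibility of fibers alone gives only a weak equivalence on classifying spaces; the paper's conclusion that $\on{im}$ is a localization also uses that it is a Cartesian fibration.
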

\begin{proof}
Consider the natural functor
$$\on{im}:\on{Fin}_{/\sfX} \to \on{Ran}^{\on{untl}} (\sfX),$$
which maps a finite set to its image in $\pi_0(\sfX)$.  It is clearly a Cartesian fibration.  For a finite subset $I \subset \pi_0(\sfX)$, 
the preimage $\on{im}^{-1}(I)$ is the full subcategory
$$ \on{im}^{-1}(I) \subset \on{Fin}_{/\sfX} $$
consisting of maps $J \to X$ such that the image of $J$ in $\pi_0(\sfX)$ is $I$.  This category admits finite (non-empty) coproducts and is therefore contractible.  
It follows that $\on{im}$ is a localization.

\medskip

It remains to show that the subcategory $\on{Fin}^s_{/\sfX}$ generates all morphisms that map to isomorphisms under $\on{im}$.  A choice of basepoint in each connected component of $\sfX$ gives a map $s: \pi_0(\sfX) \to \sfX$,
which is an isomorphism on $\pi_0$.  The desired assertion follows from the fact that any map $I \to \sfX$
in $\on{Fin}_{/\sfX}$ factors (non-canonically) as $I \surj \on{im}(I) \overset{s}{\to} \sfX$.

\end{proof}

\begin{cor}\label{c:unital Ran sifted}
The functor $\on{Ran}^{\on{untl}} : \on{Spc} \to \on{Cat}$ commutes with sifted colimits.  
In particular, for any $\sfX\in \on{Spc}$, $$ \on{Ran}^{\on{untl}} (\sfX) \simeq \underset{I \in \on{Fin}_{/\sfX}}{\on{colim}} \on{Ran}^{\on{untl}} (I).$$
\end{cor}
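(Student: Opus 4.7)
The plan is to deduce both assertions from \propref{p:unital ran pres}, reducing to two ingredients: that the constructions $\sfX \mapsto \on{Fin}_{/\sfX}$ and $\sfX \mapsto \on{Fin}^s_{/\sfX}$, viewed as functors $\on{Spc}\to \on{Cat}$, preserve sifted colimits; and that the localization construction preserves colimits.

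For the first ingredient, I would pass through straightening/unstraightening: the functor $\sfX\mapsto \on{Fin}_{/\sfX}$ (viewed as a right fibration over $\on{Fin}$) corresponds to the presheaf-valued functor $\sfX\mapsto \sfX^{(-)}\in \on{Fun}(\on{Fin}^{\on{op}},\on{Spc})$. Since finite products in $\on{Spc}$ commute with sifted colimits, each pointwise functor $\sfX\mapsto \sfX^I$ preserves sifted colimits, and therefore so does the presheaf-valued functor. Unstraightening is an equivalence and so preserves all colimits. The forgetful functor $\on{Cat}_{/\on{Fin}}\to \on{Cat}$ is a left adjoint---its right adjoint sends $\mathcal{C}$ to $\mathcal{C}\times \on{Fin}$ equipped with the projection---and hence also preserves colimits. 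The same reasoning applies verbatim to $\sfX\mapsto \on{Fin}^s_{/\sfX}$.

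For the second ingredient, the localization functor $(\mathcal{C},\mathcal{W}) \mapsto \mathcal{C}[\mathcal{W}^{-1}]$ from pairs to $\on{Cat}$ is left adjoint to $\mathcal{D}\mapsto (\mathcal{D},\on{iso}_\mathcal{D})$, so it preserves colimits. Combining the two ingredients with the description $\on{Ran}^{\on{untl}}(\sfX) \simeq \on{Fin}_{/\sfX}[(\on{Fin}^s_{/\sfX})^{-1}]$ from \propref{p:unital ran pres} proves that $\on{Ran}^{\on{untl}}:\on{Spc}\to \on{Cat}$ preserves sifted colimits.

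For the displayed formula, I would invoke the density of $\on{Fin}$ in $\on{Spc}$: every space is canonically the colimit
$$\sfX \simeq \on{colim}_{(I,f)\in \on{Fin}_{/\sfX}} I.$$
The indexing category $\on{Fin}_{/\sfX}$ admits finite coproducts (given by disjoint union), and is therefore sifted by the argument of \lemref{l:Ran sifted}. Applying the first part of the corollary to this sifted colimit yields the displayed formula. The only potentially subtle point is the compatibility of straightening/unstraightening with the forgetful functor to $\on{Cat}$, but this is standard and does not present a real obstacle.
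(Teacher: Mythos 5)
Your proof is correct and follows essentially the same route as the paper: $\sfX\mapsto\sfX^{(-)}$ preserves sifted colimits because finite products commute with sifted colimits in $\on{Spc}$, the Grothendieck construction (unstraightening, then forgetting to $\on{Cat}$) preserves colimits, and localization preserves colimits by being a left adjoint. The one place you go beyond the paper's proof is in justifying the ``in particular'' clause via the canonical colimit presentation $\sfX\simeq\on{colim}_{\on{Fin}_{/\sfX}}I$ (the statement that $\on{Spc}=P_\Sigma(\on{Fin})$, so that $\on{Fin}$ is dense and $\on{Fin}_{/\sfX}$ is sifted); the paper leaves this step implicit, and it is worth spelling out, exactly as you do.
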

\begin{proof}
Note that the functor
$$ \on{Spc} \to \on{Fun}(\on{Fin}^{\on{op}} , \on{Spc}) $$
given by $\sfX \mapsto \sfX^{(-)}$ commutes with sifted colimits.  Moreover, for any category $\bC$, the Grothendieck construction
$$ \on{Fun}(\bC^{\on{op}} , \on{Cat}) \to \on{Cat}_{/\bC} $$
commutes with colimits.  Thus, the functors
$$ X \mapsto \on{Fin}_{/\sfX} \mbox{ and } X\mapsto \on{Fin}^s_{/\sfX} $$
commute with sifted colimits.  Since localization also commutes with all colimits, we obtain that
$$ X \mapsto \on{Fin}_{/\sfX}[(\on{Fin}^s_{/\sfX})^{-1}] \simeq \on{Ran}^{\on{untl}} (\sfX) $$
commutes with sifted colimits.
\end{proof}

%

\sssec{}\label{sss:maps from Ran}
Let $\bC$ be a category.  By the definition of $\on{Fin}_{/\sfX}$ as a Grothendieck construction, a functor
$\Phi:\on{Fin}_{/\sfX}\to \bC$ is given by a natural transformation between the following two functors
$\on{Fin}^{\on{op}}\to \on{Cat}$:
$$(I\mapsto \sfX^I) \Rightarrow (I\mapsto \bC).$$

In other words, a functor $\Phi: \on{Fin}_{/\sfX} \to \bC$ is given by a collection of
functors
$$ \Phi_I: \sfX^I \to \bC $$
for $I \in \on{Fin}$, natural transformations
$$ \nu_f: \Phi_I \circ \Delta_f \to \Phi_J$$
for every map $f: I \to J$, and higher coherence data for compositions of maps in $\on{Fin}$, where
$$ \Delta_f: \sfX^{J} \to \sfX^I$$
is the diagonal map given by $f$.

\medskip

By \propref{p:unital ran pres}, a functor $\Phi$ as above factors through a functor 
$$\on{Ran}^{\on{untl}} (\sfX) \to \bC$$
if and only if the natural transformation $\nu_f$ is a natural isomorphism whenever $f:I\to J$ is surjective.

\ssec{The usual Ran space}

\sssec{}

We will now establish analogous facts about the usual (non-unital) Ran space.  
To do so, we will need a version of the contractibility of the Ran space.  Recall the following fact (\cite[Lemma 3.4.1]{BD}):

\begin{lem}[Beilinson-Drinfeld]\label{l:contractibility}
Let $Y$ be a connected space together with a map
$$ a: Y\times Y \to Y$$
such that $a$ is symmetric up to homotopy and the composite
$$ Y \overset{\Delta}{\to} Y\times Y \overset{a}{\to} Y $$
is homotopic to the identity.  Then $Y$ is contractible.
\end{lem}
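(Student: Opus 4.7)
My plan is to show that $Y$ is contractible by exhibiting the vanishing of all its homotopy groups; since $Y$ is connected, Whitehead's theorem will then conclude the argument. I will extract the vanishing of $\pi_n(Y)$ from the symmetric-idempotent structure on $a$ via an Eckmann--Hilton style argument.

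Concretely, I first fix a basepoint $y_0 \in Y$ and use the hypothesis $a \circ \Delta \simeq \id$ to ensure $a(y_0, y_0) = y_0$, so that $a$ is a pointed map $(Y, y_0)^2 \to (Y, y_0)$. For each $n \geq 1$ this induces a group homomorphism $a_* : \pi_n(Y, y_0)^2 \to \pi_n(Y, y_0)$ out of the componentwise-product group. Any homomorphism out of a product of groups factors as $(\alpha, \beta) \mapsto f(\alpha) \cdot g(\beta)$ for endomorphisms $f, g$ of $\pi_n(Y, y_0)$ whose images commute pointwise. The symmetry of $a$ gives $f = g$; the diagonal identity $a_*(\alpha, \alpha) = \alpha$ gives $f(\alpha)^2 = \alpha$; and for $n = 1$ the commuting-images conclusion already forces $\pi_1(Y)$ to be abelian. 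Writing everything additively this reads $2 f = \id_{\pi_n(Y)}$, so multiplication by $2$ is in particular an automorphism of every $\pi_n(Y)$, and $f$ is a canonical ``halving'' endomorphism.

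To finally conclude $\pi_n(Y) = 0$, I would leverage the higher coherence implicit in the phrase ``symmetric up to homotopy'' (working in the $\infty$-category of spaces $\Spc$) to upgrade $a$ to a commutative idempotent multiplication on $Y$ admitting a unit. The unit can be built as the homotopy colimit of iterates of the self-map $\phi(x) := a(y_0, x)$, which is a weak equivalence since its induced map on each $\pi_n(Y)$ equals the invertible endomorphism $f$ from the previous step. With a unit in hand, the Eckmann--Hilton principle identifies the $a$-operation on $\pi_n(Y)$ with the standard group operation, so the diagonal identity reads $\alpha + \alpha = \alpha$ and forces $\alpha = 0$.

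The hard part will be this last step: extracting an honest unit for $a$ from the bare symmetric-idempotent data. A single interchange homotopy witnessing symmetry together with a single homotopy $a \circ \Delta \simeq \id$ is not on its own quite enough, as one can write down formal binary operations with exactly these properties on non-contractible spaces; the essential additional input is the full tower of coherences that ``symmetric up to homotopy'' carries in $\Spc$, or equivalently the general fact that any commutative idempotent monoid in $\Spc$ is componentwise contractible. Once that input is accepted, the Eckmann--Hilton collapse closes the argument cleanly.
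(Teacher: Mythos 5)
The paper does not prove this lemma; it quotes it verbatim from Beilinson--Drinfeld \cite[Lemma 3.4.1]{BD}, so there is no internal proof to compare against, and your proposal must be judged on its own. Your computation on homotopy groups is sound: after transferring basepoints along the track of the idempotency homotopy, $a_*$ is a homomorphism $\pi_n(Y)^2 \to \pi_n(Y)$, you factor it as $a_*(\alpha,\beta) = f(\alpha)\cdot g(\beta)$ with commuting images, symmetry gives $f = g$, the diagonal identity gives $f(\alpha)^2 = \alpha$ (so $f$ is injective, and also surjective since $\alpha = f(\alpha)^2 \in \on{Im}(f)$; the commuting-images condition then makes $\pi_1$ abelian), and additively $2f = \on{id}$. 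But you are right that this only shows each $\pi_n(Y)$ is a uniquely $2$-divisible abelian group. More to the point, your suspicion that the stated hypotheses alone are insufficient is \emph{correct}: taking $Y = K(\mathbb{Q},1)$ and $a$ the (essentially unique) map inducing $(\alpha,\beta)\mapsto \tfrac12(\alpha+\beta)$ on $\pi_1$, one has $a\simeq a\circ\sigma$ and $a\circ\Delta\simeq\on{id}$ automatically because both sides induce equal maps on $\pi_1$ of Eilenberg--MacLane spaces, yet $Y$ is not contractible. The lemma as literally stated is therefore missing a hypothesis, and that hypothesis is \emph{homotopy associativity} of $a$, which is present in the paper's actual application (where $a$ comes from disjoint union on $\on{Fin}^s_{/\sfX}$, a symmetric monoidal structure, and which the above example visibly lacks).

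This diagnosis also shows your proposed repair is heavier than necessary and that its key step does not work. First, ``symmetric up to homotopy'' in the lemma specifies a single homotopy $a \simeq a\circ\sigma$; you cannot legitimately read an $E_\infty$-structure into that phrase. You would have to \emph{add} coherence as a strengthened hypothesis, but a single associativity homotopy already suffices: on abelian $\pi_n(Y)$, with $a_*(\alpha,\beta)=f(\alpha)+f(\beta)$, associativity yields
$$f^2(\alpha)+f^2(\beta)+f(\gamma)=f(\alpha)+f^2(\beta)+f^2(\gamma),$$
so (set $\gamma=0$) $f^2=f$; combined with $2f=\on{id}$ this forces $f=\on{id}$ and hence $\pi_n(Y)=0$, and Whitehead finishes. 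No unit and no Eckmann--Hilton are needed. Second, your unit construction via $\on{hocolim}(Y\xrightarrow{\phi}Y\xrightarrow{\phi}\cdots)$ accomplishes nothing: since $\phi_*=f$ is an automorphism of each $\pi_n(Y)$, $\phi$ is a weak equivalence and the telescope is weakly equivalent to $Y$, with no new point that acts as a unit. Indeed in the intended application a unit cannot exist, because $\emptyset\notin\on{Fin}^s$; the correct version of the lemma must be formulated for \emph{non-unital} commutative, associative, idempotent multiplications, so the ``commutative idempotent monoid in $\Spc$ is contractible'' route (which presupposes a unit) is not directly available.
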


From this we obtain:

\begin{prop}\label{p:contr fin surj}
Let $\sfX$ be a connected space.  Then the category $\on{Fin}^s_{/\sfX}$ is contractible.
\end{prop}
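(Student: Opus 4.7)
The plan is to apply the Beilinson--Drinfeld contractibility criterion (\lemref{l:contractibility}) to the classifying space $Y := |\on{Fin}^s_{/\sfX}|$; the necessary symmetric binary operation will come from the disjoint union of finite sets. Concretely, I would first construct a functor
$$ \sqcup: \on{Fin}^s_{/\sfX} \times \on{Fin}^s_{/\sfX} \to \on{Fin}^s_{/\sfX}, \qquad ((I,f),(J,g)) \mapsto (I \sqcup J,\, f \sqcup g), $$
which is well-defined because disjoint unions of surjections in $\on{Fin}^s$ are again surjections, and which carries an evident symmetry from the swap $I \sqcup J \simeq J \sqcup I$. Passing to classifying spaces yields a map $a: Y \times Y \to Y$ symmetric up to homotopy.

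Next I would verify the diagonal identity $a \circ \Delta \simeq \id_Y$ from \lemref{l:contractibility}. The fold surjection $\nabla: I \sqcup I \to I$ lies in $\on{Fin}^s$ and satisfies $f \sqcup f = f \circ \nabla$, so it defines a morphism $(I \sqcup I, f \sqcup f) \to (I, f)$ in $\on{Fin}^s_{/\sfX}$, natural in $(I, f)$. This produces a natural transformation from the endofunctor $(I,f) \mapsto (I \sqcup I, f \sqcup f)$ to the identity, which after passage to $Y$ gives the required homotopy.

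The subtle step is proving that $Y$ is connected, since by construction morphisms in $\on{Fin}^s_{/\sfX}$ only go from objects with larger underlying sets to objects with smaller ones. Since $\sfX$ is connected, each fiber $\sfX^I = \on{Fin}^s_{/\sfX} \times_{\on{Fin}^s}\{I\}$ is connected, so its image in $Y$ lies in a single component $C_I$. Given arbitrary $(I,f)$ and $(J,g)$, pick any surjection $\pi: I \sqcup J \twoheadrightarrow I$ whose restriction to $I$ is the identity (sending $J$ to a chosen element $i_0 \in I$); then $(I \sqcup J,\, f \circ \pi) \to (I, f)$ is a morphism in $\on{Fin}^s_{/\sfX}$, so $C_{I \sqcup J} = C_I$, and the symmetric construction starting from $(J,g)$ yields $C_{I \sqcup J} = C_J$. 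Hence $C_I = C_J$ and $Y$ is connected. The hypotheses of \lemref{l:contractibility} are then satisfied, and $Y$ is contractible, which is exactly the contractibility of the category $\on{Fin}^s_{/\sfX}$.
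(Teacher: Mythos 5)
Your proof is correct and follows the same approach as the paper: apply \lemref{l:contractibility} to $|\on{Fin}^s_{/\sfX}|$ with the symmetric map coming from the coproduct functor and the diagonal homotopy from the fold surjection $I \sqcup I \surj I$.

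The only place you diverge is in establishing connectedness of the classifying space. You argue fiberwise — each fiber $\sfX^I$ is connected, giving one component $C_I$ per finite set $I$, and then you link $C_I$ to $C_J$ by exhibiting both as equal to $C_{I \sqcup J}$ via surjections $I \sqcup J \surj I$ and $I \sqcup J \surj J$ (using connectedness of the fiber over $I \sqcup J$ to identify the two resulting objects over $I \sqcup J$). This works, but the paper's argument is shorter: fix a basepoint $\{\star\} \to \sfX$; since the mapping space $\sfX^I$ is connected, every $f: I \to \sfX$ is homotopic to the constant map and hence factors as $I \surj \{\star\} \to \sfX$, giving a morphism $(I,f) \to (\{\star\},\star)$ in $\on{Fin}^s_{/\sfX}$. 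Thus every object maps to a single object, and the classifying space is connected in one step. Either route is fine; the terminal-object trick just avoids the auxiliary bookkeeping with $I \sqcup J$.
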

\begin{proof}
Let $\{\star\} \to \sfX$ be a basepoint of $\sfX$.  Since $\sfX$ is connected, the space of maps $I\to X$ is connected.  Therefore,
 every map $I \to \sfX$ admits a factorization as $I \surj \{\star\} \to \sfX$.  It follows that the classifying space of $\on{Fin}^s_{/\sfX}$ is connected.

Now, the coproduct on $\on{Fin}_{/\sfX}$ induces a symmetric functor
$$ \on{Fin}^s_{/\sfX} \times \on{Fin}^s_{/\sfX} \to \on{Fin}^s_{/\sfX}, $$
which in turn gives a symmetric map on classifying spaces.  Moreover, the fold map $I \sqcup I \surj I$ gives a natural transformation between the composite
$$ \on{Fin}^s_{/\sfX} \overset{\Delta}{\to} \on{Fin}^s_{/\sfX} \times \on{Fin}^s_{/\sfX} \to \on{Fin}^s_{/\sfX} $$
and the identity.  Thus, the desired result follows by \lemref{l:contractibility}.
\end{proof}

\begin{prop}\label{p:ran pres}
Let $\sfX$ be a space.  The Ran space $\on{Ran}(\sfX)$ is isomorphic to the classifying space
$$ \on{Ran}(\sfX) \simeq |\on{Fin}^s_{/\sfX}|. $$
In other words, $Ran(\sfX)$ is isomorphic to the colimit
\begin{equation} \label{e:Ran as colim}
\on{Ran}(\sfX) \simeq \underset{I \in (\on{Fin}^s)^{\on{op}} }{\on{colim}}\  \sfX^I.
\end{equation}
\end{prop}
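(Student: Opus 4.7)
The plan is to prove the equivalence $\on{Ran}(\sfX) \simeq |\on{Fin}^s_{/\sfX}|$ by decomposing $\on{Fin}^s_{/\sfX}$ according to the image in $\pi_0(\sfX)$ and reducing each component to the connected case handled by \propref{p:contr fin surj}. The colimit formula \eqref{e:Ran as colim} is automatic from the construction of $\on{Fin}^s_{/\sfX}$ as the Grothendieck construction on $I \mapsto \sfX^I$: the classifying space of a Cartesian fibration in groupoids computes the colimit of the associated functor over $(\on{Fin}^s)^{\on{op}}$.

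First I would consider the functor
$$ \on{im}: \on{Fin}^s_{/\sfX} \to \on{Ran}(\sfX) $$
(with $\on{Ran}(\sfX)$ regarded as a discrete category) sending $(I, f:I\to \sfX)$ to the image of $I \to \sfX \to \pi_0(\sfX)$. Since every morphism in $\on{Fin}^s_{/\sfX}$ is a surjection over $\sfX$, it preserves images in $\pi_0(\sfX)$; as $\on{Ran}(\sfX)$ is discrete, this exhibits a decomposition into connected components
$$ \on{Fin}^s_{/\sfX} \simeq \bigsqcup_{T \in \on{Ran}(\sfX)} (\on{Fin}^s_{/\sfX})_T, $$
where $(\on{Fin}^s_{/\sfX})_T$ is the full subcategory of objects with image $T$.

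Next, writing $\sfX \simeq \bigsqcup_{\alpha \in \pi_0(\sfX)} \sfX_\alpha$ with each $\sfX_\alpha$ connected, the natural decomposition
$$ \sfX^I \simeq \bigsqcup_{g: I \to \pi_0(\sfX)} \prod_{\alpha \in \pi_0(\sfX)} \sfX_\alpha^{g^{-1}(\alpha)} $$
transports through the Grothendieck construction to yield an equivalence of $\infty$-categories
$$ (\on{Fin}^s_{/\sfX})_T \simeq \prod_{t \in T} \on{Fin}^s_{/\sfX_t}. $$
By \propref{p:contr fin surj} each $|\on{Fin}^s_{/\sfX_t}|$ is contractible, and since the classifying space functor preserves finite products, $|(\on{Fin}^s_{/\sfX})_T|$ is contractible for every $T$. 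Passing to classifying spaces in the disjoint union decomposition then gives
$$ |\on{Fin}^s_{/\sfX}| \simeq \bigsqcup_{T \in \on{Ran}(\sfX)} \on{pt} \simeq \on{Ran}(\sfX), $$
as desired.

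The only point requiring genuine care is verifying the product decomposition of the fibers $(\on{Fin}^s_{/\sfX})_T$ as an equivalence of $\infty$-categories, rather than a mere bijection on isomorphism classes; this however is forced by the corresponding decomposition of the associated functor $I \mapsto \sfX^I$ together with the compatibility of the Grothendieck construction with disjoint unions and products of fiber-valued functors. With this verified, the rest of the argument is formal.
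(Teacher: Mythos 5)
Your proof is correct and follows essentially the same route as the paper's: both consider the functor $\on{im}: \on{Fin}^s_{/\sfX} \to \on{Ran}(\sfX)$, identify the fiber over a finite subset $T$ with the product $\prod_{t\in T}\on{Fin}^s_{/\sfX_t}$, and invoke \propref{p:contr fin surj} to conclude contractibility of each fiber. Your write-up is somewhat more explicit about the disjoint-union decomposition and about deducing the colimit formula from the Grothendieck construction, but these are details the paper leaves implicit rather than a different argument.
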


\begin{rem}
Note that \propref{p:ran pres} says something a bit surprising: since $\sfX$ was not assumed discrete,
the terms on the colimit \eqref{e:Ran as colim} are not discrete either. However, the colimit is discrete, and moreover
it only depends on $\pi_0(\sfX)$.
\end{rem}

\begin{proof}[Proof of \propref{p:ran pres}]
Consider the functor
$$ \on{im}: \on{Fin}^s_{/\sfX} \to \on{Ran}(\sfX),$$
which maps a finite set to its image in $\pi_0(\sfX)$.  We need to show that the fibers are contractible categories.  For $I \subset \pi_0(\sfX)$,
the preimage $\on{im}^{-1}(I)$ is isomorphic to the product
$$ \on{im}^{-1}(I) \simeq \prod_{i\in I} \on{im}^{-1}(\{i\}) .$$
Moreover, for each $i \in I$, the category $\on{im}^{-1}(\{i\}) \simeq \on{Fin}^s_{/\sfX_i}$, where $\sfX_i$ is the component of $\sfX$ corresponding to 
$i\in I \subset \pi_0(\sfX)$.  Thus, by \propref{p:contr fin surj}, we obtain that each $\on{im}^{-1}(\{i\})$ is contractible and therefore so is $\on{im}^{-1}(I)$.
\end{proof}

By the same argument as \corref{c:unital Ran sifted}, we obtain:

\begin{cor}\label{c:Ran sifted}
The functor $\on{Ran}: \on{Spc} \to \on{Spc}$ commutes with sifted colimits.  
In particular, for any $X\in \on{Spc}$, $$ \on{Ran}(\sfX) \simeq \underset{I \in \on{Fin}_{/\sfX}}{\on{colim}} \on{Ran}(I).$$
\end{cor}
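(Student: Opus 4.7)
The plan is to imitate the proof of \corref{c:unital Ran sifted} verbatim, substituting the presentation of \propref{p:ran pres}, namely $\on{Ran}(\sfX) \simeq |\on{Fin}^s_{/\sfX}|$, for the localization formula of \propref{p:unital ran pres} used there. The three moving parts are: (i) a functor out of $\Spc$ into presheaves on $\on{Fin}^s$; (ii) the Grothendieck construction; and (iii) the classifying space of a category. Each either preserves sifted colimits for trivial reasons, or preserves all colimits.

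In detail, I would first observe that the functor
$$ \sfX \mapsto \sfX^{(-)} \colon \Spc \to \Fun((\on{Fin}^s)^{\on{op}}, \Spc) $$
commutes with sifted colimits, since each component functor $\sfX \mapsto \sfX^I$ is a finite product (as $I$ is a non-empty finite set) and therefore preserves sifted colimits. Next, the Grothendieck construction
$$ \Fun((\on{Fin}^s)^{\on{op}}, \Cat) \to \Cat_{/\on{Fin}^s} $$
commutes with all colimits, so the assignment $\sfX \mapsto \on{Fin}^s_{/\sfX}$ commutes with sifted colimits as a functor into $\Cat$. Finally, the classifying space functor $\Cat \to \Spc$ is a left adjoint to the inclusion $\Spc \hookrightarrow \Cat$, hence preserves all colimits. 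Composing these three steps and invoking \propref{p:ran pres}, I conclude that $\on{Ran}(-)$ commutes with sifted colimits.

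For the ``in particular'' clause, I would use that any $\sfX \in \Spc$ is canonically the sifted colimit $\sfX \simeq \underset{I \in \on{Fin}_{/\sfX}}{\on{colim}}\, I$; siftedness of $\on{Fin}_{/\sfX}$ follows from the fact that it admits finite coproducts, given by disjoint unions of finite sets over $\sfX$. Applying the first assertion then yields the stated formula.

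I do not anticipate any serious obstacle. The argument is a direct translation of the one used in \corref{c:unital Ran sifted}, with ``classifying space preserves all colimits'' playing the role that was played there by ``localization preserves all colimits.'' The only minor point worth double-checking is that the indexing category $\on{Fin}_{/\sfX}$ used in the second half really is sifted, which is immediate from the existence of finite coproducts.
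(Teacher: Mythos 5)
Your proof is correct and is precisely the argument the paper has in mind: the paper proves \corref{c:Ran sifted} with the single line ``By the same argument as \corref{c:unital Ran sifted}, we obtain,'' and your write-up is exactly that argument, with \propref{p:ran pres} replacing \propref{p:unital ran pres} and ``classifying space preserves colimits'' replacing ``localization preserves colimits.'' Nothing to add.
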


\ssec{The pointed version}

\sssec{}

In what follows, it will be useful to also consider the pointed Ran category.  For $\sfX\in \on{Spc}$, we have
the functor
$$ \on{Ran}^{\on{untl}} (\sfX) \to \on{Spc} $$
given by $(I \subset \pi_0(\sfX)) \mapsto \sfX_I \subset \sfX$, where $\sfX_I$ is the union of connected components corresponding to $I$.

\medskip

The pointed Ran category $\on{Ran}^{\on{untl}}_{\star}(\sfX)$ is the total category of the coCartesian fibration in groupoids corresponding to this functor.

\sssec{}

We have the natural functor $\on{Ran}^{\on{untl}}_{\star}(\sfX) \to \sfX$ and the fiber over $x\in \sfX$ is the full subcategory
$$ \on{Ran}_x^{\on{untl}} (\sfX) \subset \on{Ran}^{\on{untl}} (\sfX) $$
consisting of subsets $I \in \pi_0(\sfX)$ containing the image of $x$ in $\pi_0(\sfX)$.

\medskip

Set 
$$\on{Ran}_{\star}(\sfX):=(\on{Ran}^{\on{untl}}_{\star}(\sfX))^{\on{grpd}}.$$

\sssec{}
Let $\on{Fin}_{\star}$ and $\on{Fin}_{\star}^s$ denote the categories of pointed finite sets and pointed finite sets and surjections, respectively.  
By the same argument as \propref{p:unital ran pres} and \propref{p:ran pres}, we have:

\begin{prop}\label{p:pointed Ran pres}
Let $X$ be a space.  We have
$$ \on{Ran}_\star(\sfX) \simeq (\on{Fin}_{\star})_{/\sfX}[((\on{Fin}_\star^{s})_{/\sfX})^{-1}] , \mbox{ and} $$
$$ \on{Ran}_\star(\sfX) \simeq \underset{I \in (\on{Fin}_{\star}^s)^{\on{op}} }{\on{colim}}\ \sfX^I. $$
\end{prop}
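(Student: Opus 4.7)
The plan is to mimic the proofs of \propref{p:unital ran pres} and \propref{p:ran pres} almost verbatim in the pointed setting, so I will explain the needed adaptations.

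For the first identification, I would introduce the natural ``pointed image'' functor
$$\on{im}_\star : (\on{Fin}_{\star})_{/\sfX} \longrightarrow \on{Ran}^{\on{untl}}_\star(\sfX), \qquad (I, i_0, f: I\to \sfX) \longmapsto (\on{im}(f)\subset \pi_0(\sfX), f(i_0)),$$
and verify that it is a Cartesian fibration, exactly as in the proof of \propref{p:unital ran pres}. For a fixed $(J,x)\in\on{Ran}^{\on{untl}}_\star(\sfX)$, the fiber $\on{im}_\star^{-1}(J,x)$ consists of triples $(I,i_0,f)$ with $\on{im}(f)=J$ and $f(i_0)=x$; this fiber admits non-empty finite coproducts (disjoint union of underlying sets, retaining $i_0$ as basepoint, with $f$'s amalgamated), hence its classifying space is contractible. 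Therefore $\on{im}_\star$ is a localization. A choice of basepoint $s:\pi_0(\sfX)\to \sfX$ in each component then lets one factor any object $(I,i_0,f)$ as
$$(I,i_0,f) \;\twoheadrightarrow\; (\on{im}(f), [f(i_0)], \tilde f) \;\longrightarrow\; \sfX,$$
where the surjection lies in $(\on{Fin}_\star^s)_{/\sfX}$ and $\tilde f$ uses $s$ on components other than that of $i_0$, and $f(i_0)$ itself on the component of $i_0$. This shows that $(\on{Fin}_\star^s)_{/\sfX}$ generates all morphisms that are inverted by $\on{im}_\star$, yielding the first equivalence.

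For the second identification, I would restrict to
$$\on{im}_\star : (\on{Fin}_\star^s)_{/\sfX} \longrightarrow \on{Ran}_\star(\sfX),$$
and, following the strategy of \propref{p:ran pres}, show that its fibers are contractible as $\infty$-groupoids. The fiber over $(J,x)$ decomposes as a product over $j\in J$: for $j=[x]$ one collects pointed surjections $(I^{[x]},i_0)\twoheadrightarrow \sfX_{[x]}$ sending $i_0\mapsto x$, and for the remaining $j\in J\setminus\{[x]\}$ one collects (unpointed) surjections $I^j\twoheadrightarrow \sfX_j$. The latter factors are contractible by \propref{p:contr fin surj}, so the whole problem reduces to the pointed analog of \propref{p:contr fin surj}. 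The Grothendieck construction description of the colimit $\underset{I\in (\on{Fin}_\star^s)^{\on{op}}}{\on{colim}}\ \sfX^I$ then gives the second formula.

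The main point of work, and the only genuinely new ingredient, is the pointed analog of \propref{p:contr fin surj}: for a connected space $\sfX$ with basepoint $x$, the category of pointed surjections $(I,i_0)\twoheadrightarrow \sfX$ carrying $i_0\mapsto x$ is contractible. The plan is to re-run the argument of \propref{p:contr fin surj}: the category is non-empty (take $I=\{i_0\}$), and its classifying space is connected because any $(I,i_0,f)$ admits, after picking any factorization $I\twoheadrightarrow\{x\}\to \sfX$ compatible with basepoints, a zig-zag to $(\{i_0\},i_0,x)$. The coproduct $(I,i_0)\sqcup_{i_0}(I',i_0')$ (fusing basepoints) supplies a homotopy-symmetric multiplication, and the pointed fold map $(I,i_0)\sqcup_{i_0}(I,i_0)\twoheadrightarrow (I,i_0)$ gives a natural transformation from the composite $\Delta\circ \mathrm{mult}$ to the identity. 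Invoking \lemref{l:contractibility} then yields contractibility. Everything else is a direct transcription of the non-pointed arguments.
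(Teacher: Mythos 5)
Your proposal is correct and supplies exactly the argument the paper has in mind: the paper's own proof of this Proposition simply asserts that it follows ``by the same argument as \propref{p:unital ran pres} and \propref{p:ran pres},'' and you have carried out that adaptation faithfully, correctly identifying the one genuinely new ingredient as the pointed analog of \propref{p:contr fin surj} and proving it via \lemref{l:contractibility} with the wedge-sum multiplication and pointed fold map. Two minor typographical points: the first displayed formula of the Proposition as stated should read $\on{Ran}^{\on{untl}}_\star(\sfX)$ on the left-hand side (which is what your localization via $\on{im}_\star$ in fact establishes, and is the correct pointed analog of \propref{p:unital ran pres}, since the localization is a genuine category and not a groupoid), and in your appeal to \lemref{l:contractibility} the relevant composite is $\mathrm{mult}\circ\Delta$ rather than $\Delta\circ\mathrm{mult}$.
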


\section{Ran (categorical) prestacks} \label{s:Ran space schemes}

In this section we will introduce the (several versions of the) Ran space, associated with
a separated scheme. 

\ssec{The Ran prestacks}

\sssec{}

Let $X$ be a (separated) scheme. Define the \emph{Ran space of} $X$ to be the prestack given by 
$$\on{Ran}(X)(S) := \on{Ran}(\Maps(S,X_\dr)),$$
where 
\begin{equation} \label{e:dR}
X_{\dr}(S):=\Maps({}^{\on{red}}\!S,X)
\end{equation} 
is the deRham prestack of $X$.

\medskip

Define the \emph{unital Ran space of} $X$ to be the \emph{categorical} prestack given by 
$$\on{Ran}^{\on{untl}} (X)(S) := \on{Ran}^{\on{untl}}(\Maps(S,X_\dr)).$$

\sssec{}

Note that by definition, $\on{Ran}(X)$ and $\on{Ran}^{\on{untl}}(X)$ depend only on the de Rham
prestack $X_{\dr}$ of $X$.

\medskip

Tautologically, the maps
\begin{equation} \label{e:Ran to dR}
\on{Ran}(X)\to \on{Ran}(X)_\dr \text{ and } \on{Ran}^{\on{untl}}(X)\to \on{Ran}^{\on{untl}}(X)_\dr
\end{equation} 
are isomorphisms. 

\medskip

\begin{lem} 
Assume that $X$ is locally of finite type. Then $\on{Ran}^{\on{untl}} (X)$ and $\on{Ran}(X)$ are laft.
\end{lem}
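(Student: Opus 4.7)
The plan is to verify the two defining conditions of laft directly from the definitions of $\on{Ran}(X)$ and $\on{Ran}^{\on{untl}}(X)$: convergence on affine schemes, and the property that their restriction to each truncation is a left Kan extension from affine schemes of finite type. I will treat both cases simultaneously, using the notion of laft for categorical prestacks (Appendix \ref{s:categ prestacks}) for the unital version; this notion reduces, just as in the prestack case, to convergence plus preservation of filtered colimits of truncated test schemes of finite type.

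For convergence, observe that by definition both $\on{Ran}(X)(S)$ and $\on{Ran}^{\on{untl}}(X)(S)$ depend on $S$ only through the set $\pi_0$ of the space $\Maps(S,X_\dr)=\Maps({}^{\on{red}}\!S,X)$. Since ${}^{\on{red}}({}^{\leq n}\!S)={}^{\on{red}}\!S$ for all $n$, the values $\on{Ran}(X)({}^{\leq n}\!S)$ and $\on{Ran}^{\on{untl}}(X)({}^{\leq n}\!S)$ are independent of $n$, so the convergence condition is automatic.

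For the finite-type condition, let $S$ be a truncated affine scheme, and write it as a cofiltered limit $S=\lim_\alpha S_\alpha$ of affine schemes of finite type. Since $X$ is laft, so is $X_\dr$ (a standard fact recalled in the conventions section), hence $X_\dr(S)=\on{colim}_\alpha X_\dr(S_\alpha)$ in $\on{Spc}$ is a filtered, in particular sifted, colimit. Now apply Corollary~\ref{c:Ran sifted} (respectively Corollary~\ref{c:unital Ran sifted}), which asserts that $\on{Ran}\colon \on{Spc}\to\on{Spc}$ and $\on{Ran}^{\on{untl}}\colon \on{Spc}\to\on{Cat}$ commute with sifted colimits. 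This yields
$$\on{Ran}(X)(S)=\on{Ran}\bigl(\on{colim}_\alpha X_\dr(S_\alpha)\bigr)=\on{colim}_\alpha \on{Ran}(X)(S_\alpha),$$
and similarly for $\on{Ran}^{\on{untl}}(X)$, which is precisely the left Kan extension condition.

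The only potential obstacle is checking that the notion of laft for the categorical prestack $\on{Ran}^{\on{untl}}(X)$ really does amount to the two conditions above; this is a straightforward matter of unpacking the definition from Appendix~\ref{s:categ prestacks}, where the laft condition for a functor into $\on{Cat}$ is formulated in the obvious analog of the groupoid-valued case, so the same argument applies verbatim.
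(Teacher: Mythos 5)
Your proof is correct and follows essentially the same route as the paper's (terse) argument: convergence is automatic because both Ran prestacks depend only on $X_\dr$ and hence only on ${}^{\on{red}}\!S$, and the finite-type condition is reduced, via laft-ness of $X_\dr$, to Corollaries~\ref{c:Ran sifted} and~\ref{c:unital Ran sifted} on sifted colimits. You've merely spelled out the two steps the paper leaves implicit.
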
 

\begin{proof}

Convergence is automatic. We only need to check that the functors in question commute with
filtered colimits at the classical level. However, this is guaranteed by 
Corollaries \ref{c:unital Ran sifted} and \ref{c:Ran sifted}.

\end{proof} 

\sssec{}

Thus, since $\on{Ran}^{\on{untl}} (X)$ and $\on{Ran}(X)$ are laft, we have well-defined categories
$$\IndCoh(\on{Ran}(X)) \text{ and } \Dmod(\on{Ran}(X)),$$
and similarly for $\on{Ran}^{\on{untl}}(X)$.

\medskip

Note however, that the forget functors
$$\Dmod(\on{Ran}(X))\to \IndCoh(\on{Ran}(X)) \text{ and }
\Dmod(\on{Ran}^{\on{untl}}(X))\to \IndCoh(\on{Ran}^{\on{untl}}(X))$$
are equivalences, since the maps \eqref{e:Ran to dR} are isomorphisms. 

\ssec{The universal punctured space} \label{ss:punctured fibration}

In this subsection we will make a digression and define a level-wise Cartesian fibration in groupoids
$$\overset{\circ}{\mathfrak{X}}_{\on{Ran}^{\on{untl}} } \to \on{Ran}^{\on{untl}} (X)$$
that will be extensively used later. 

\sssec{} \label{sss:punctured curve}

For each $I \in \on{Fin}$, consider the scheme $X\times X^I$.  Let
$$ x: X\times X^I \to X \mbox{ and } y_i: X\times X^I \to X , i\in I$$
denote the projections to the first component, and the component corresponding to $i\in I$.  Let
$$ \overset{\circ}{\mathfrak{X}}_I \subset X \times X^I_\dr $$
denote the open subfunctor given by the complement of the closed subschemes given by $\{x = y_i\}, i\in I$.

\sssec{}

The assignment $I \mapsto \overset{\circ}{\mathfrak{X}}_I$ defines a functor
$$ \on{Fin}^{\on{op}}  \to \on{PreStk} .$$
Let $\overset{\circ}{\mathfrak{X}}_{\on{Fin}}$ denote the categorical prestack that assigns to $S$ the total 
category of the Cartesian fibration in groupoids given by the composite
$$ \on{Fin}^{\on{op}}  \to \on{PreStk} \overset{\on{Maps}(S,-)}{\to} \on{Spc} .$$

\sssec{}\label{sss:fin over X}

Let $\on{Fin}_{/X_\dr}$ denote the categorical prestack given by $S \mapsto \on{Fin}_{/X_\dr(S)}$. 

\medskip

By construction, we have a map of categorical prestacks
$$ \overset{\circ}{\mathfrak{X}}_{\on{Fin}} \to \on{Fin}_{/X_\dr}, $$
which is a level-wise Cartesian fibration in groupoids.  

\medskip

Note that for a surjective map $I\surj J$, we have that the map
$$ \overset{\circ}{\mathfrak{X}}_I \underset{X^I_\dr}{\times} X^J_\dr \to \overset{\circ}{\mathfrak{X}}_J $$
is an isomorphism.  Hence, it follows from \propref{p:unital ran pres} there exists a canonically 
defined level-wise Cartesian fibration in groupoids
\begin{equation}
\overset{\circ}{\mathfrak{X}}_{\on{Ran}^{\on{untl}} } \to \on{Ran}^{\on{untl}} (X)
\end{equation}
so that
$$ \overset{\circ}{\mathfrak{X}}_{\on{Ran}^{\on{untl}} } \underset{\on{Ran}^{\on{untl}} (X)}{\times} \on{Fin}_{/X_\dr} \simeq \overset{\circ}{\mathfrak{X}}_{\on{Fin}} .$$

\sssec{}

Note that by construction, the fiber of $\overset{\circ}{\mathfrak{X}}_{\on{Ran}^{\on{untl}} }$ over a $k$-point of $\Ran(X)$ 
given by a finite subset of $X(k)$ consists 
of the complement of that subset.  For this reason, we will refer to $\overset{\circ}{\mathfrak{X}}_{\on{Ran}^{\on{untl}} }$ as the \emph{universal punctured space of $X$}.

\medskip

By construction, we have a map
\begin{equation}
\overset{\circ}{\mathfrak{X}}_{\on{Ran}^{\on{untl}} } \to X \times \on{Ran}^{\on{untl}} (X)
\end{equation}
over $\on{Ran}^{\on{untl}} (X)$, which is a level-wise Cartesian fibration in groupoids.

\ssec{Geometric properties of the Ran space}

\sssec{}

We retain the assumptions on $X$ and consider the prestack $\Ran(X)$. 
We will establish the following proposition, which says that many natural maps
associated with $\Ran(X)$ are \emph{pseudo-proper} (see \secref{ss:pseudo-proper} for
a review of the notion of pseudo-properness):

\begin{prop}  \label{p:Ran diag ps-proper} \hfill
\begin{enumerate}[label={(\alph*)}]
\item
For any finite set $I$, the natural map $X^I_\dr \to \on{Ran}(X)$ is pseudo-proper.

\item
The diagonal map $\on{Ran}(X) \to \on{Ran}(X) \times \on{Ran}(X)$ is pseudo-proper.

\item
The union map $ \cup: \on{Ran}(X) \times \on{Ran}(X) \to \on{Ran}(X)$ is pseudo-proper.
\end{enumerate}
\end{prop}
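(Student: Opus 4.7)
The plan is to reduce all three claims to the following key computation. Using \propref{p:ran pres}, for any $I, J \in \on{Fin}^s$ one has
\[
X^I_\dr \underset{\on{Ran}(X)}\times X^J_\dr \simeq \underset{[I \twoheadrightarrow K \twoheadleftarrow J]}{\on{colim}}\, X^K_\dr,
\]
where the colimit runs over the (small) category of cospans of surjections in $\on{Fin}^s$, and each $X^K_\dr$ embeds as the de Rham prestack of the closed subscheme $X^K \hookrightarrow X^I \times X^J$ cut out by the two diagonals $X^K \hookrightarrow X^I$ and $X^K \hookrightarrow X^J$. This encodes the set-theoretic statement that two ordered tuples in $X^I$ and $X^J$ have the same underlying finite subset iff they factor through a common $K$ via surjections from $I$ and $J$.

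Given this, for part (a) any map $S \to \on{Ran}(X)$ with $S \in \affSch_{\on{aft}}$ factors (after choosing an enumeration of the classifying finite subset) through some $X^n_\dr$. Since colimits and fiber products in $\on{PreStk}$ are both computed levelwise in $\on{Spc}$ and hence commute,
\[
X^I_\dr \underset{\on{Ran}(X)}\times S \simeq \underset{K}{\on{colim}}\, \bigl(X^K_\dr \underset{X^n_\dr}\times S\bigr).
\]
Each factor $X^K_\dr \underset{X^n_\dr}\times S$ is the formal completion of $S$ along the closed subscheme $Z_K \subset {}^{\on{red}}\!S$ obtained as the pullback of $X^K \hookrightarrow X^n$. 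Writing such a completion as the filtered colimit of its infinitesimal thickenings, each of which is a closed (hence proper) subscheme of $S$, exhibits it as pseudo-proper over $S$. Taking a further colimit over the finite indexing category of cospans preserves pseudo-properness, proving (a).

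For part (b), given $S \to \on{Ran}(X) \times \on{Ran}(X)$ coming from two finite subsets of $X_\dr(S)$ of sizes $m$ and $n$, factor through $X^m_\dr \times X^n_\dr$ and use the diagonal identity
\[
\on{Ran}(X) \underset{\on{Ran}(X) \times \on{Ran}(X)}\times (X^m_\dr \times X^n_\dr) \simeq X^m_\dr \underset{\on{Ran}(X)}\times X^n_\dr.
\]
The right-hand side is, by the key computation, a colimit of $X^K_\dr$'s sitting as closed subschemes of $X^m \times X^n$; base-changing to $S$ produces a colimit of formal completions of $S$, each pseudo-proper over $S$, as in part (a). Part (c) is analogous: since $X^{I_1}_\dr \times X^{I_2}_\dr \simeq X^{I_1 \sqcup I_2}_\dr$ and the union map is induced by disjoint union, for any $S \to X^p_\dr \to \on{Ran}(X)$ the base change is $\on{colim}_{(I_1, I_2)}\, X^{I_1 \sqcup I_2}_\dr \underset{X^p_\dr}\times S$, again a colimit of formal completions to which the argument for (a) applies.

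The main technical point is the key computation itself: both that the indicated cospans of surjections parametrize precisely the strata of the fiber product (on the nose, not merely up to nilpotents), and that the exhaustion of each resulting formal completion by its infinitesimal neighborhoods genuinely witnesses pseudo-properness. Once this is in place, parts (b) and (c) are immediate from (a) via standard manipulations with diagonals and the fact that pseudo-properness is preserved under base change, products, and colimits.
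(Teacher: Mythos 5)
Your proposal takes a substantially more involved route than the paper, and the route has a genuine gap at the step you yourself flag as ``the main technical point.'' You assert the colimit formula
\[
X^I_\dr \underset{\on{Ran}(X)}\times X^J_\dr \simeq \underset{[I \twoheadrightarrow K \twoheadleftarrow J]}{\on{colim}}\, X^K_\dr
\]
but do not prove it, and proving it is genuinely delicate: you would need to pin down the morphisms of the cospan category (there are several plausible conventions, and the variance matters), and then show that the resulting colimit, taken levelwise in $\on{Spc}$, is $0$\nobreakdash-truncated and has the correct $\pi_0$. Concretely, the colimit is the classifying space of the Grothendieck construction of the diagram, and discreteness requires each connected component of that category to be contractible; this can be arranged (cospans of surjections are rigid because $\sigma\alpha=\alpha$ with $\alpha$ surjective forces $\sigma = \on{id}$, and one can locate a terminal object in each component given by the injective $c\colon K\hookrightarrow \Maps(S,X_\dr)$), but none of that is free, and your sketch does not carry it out.

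More to the point, the paper never computes any fiber product $X^I_\dr \underset{\on{Ran}(X)}{\times} S$ at all. For (a), by \propref{p:ran pres} one has $\on{Ran}(X) = \on{colim}_{(\on{Fin}^s)^{\on{op}}} X^I_\dr$, and the transition maps are diagonals of schemes, hence proper closed immersions; \propref{p:ps-pr colimit} then gives the claim immediately. For (b), since pseudo-properness is closed under colimits in the target of a base change, it suffices to show each $X^I_\dr \to \on{Ran}(X)\times\on{Ran}(X)$ is pseudo-proper, and this factors as $X^I_\dr \to X^I_\dr \times X^I_\dr \to \on{Ran}(X)\times\on{Ran}(X)$, a proper map followed by a product of two maps pseudo-proper by (a), so \lemref{l:pseudo-proper ops} applies. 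For (c), $X^I_\dr \times X^J_\dr \xrightarrow{\ \sim\ } X^{I\sqcup J}_\dr \to \on{Ran}(X)$ and one invokes (a) again. Your identification of $X^K_\dr \underset{X^n_\dr}{\times} S$ with a formal completion is correct and the exhaustion-by-infinitesimal-neighborhoods argument is sound, but it is work you can avoid: once the cospan colimit formula is granted, you are re-deriving a special case of \propref{p:ps-pr colimit} by hand. You should either supply a proof of the cospan formula (stating precisely what the index category is) or, better, replace the whole mechanism with the direct appeal to \propref{p:ps-pr colimit} and the closure properties in \lemref{l:pseudo-proper ops}.
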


\sssec{Proof of \propref{p:Ran diag ps-proper}(a)}

By \propref{p:ran pres},
$$ \on{Ran}(X) = \underset{I \in (\on{Fin}^s)^{op}}{\on{colim}} X^I_{\dr} .$$
The result now follows by \propref{p:ps-pr colimit}.

\sssec{Proof of \propref{p:Ran diag ps-proper}(b)}

Since pseudo-proper maps are closed under colimits, to show that $\on{Ran}(X) \to \on{Ran}(X) \times \on{Ran}(X)$ is pseudo-proper, it suffices to show that for each $I \in \on{Fin}$, the map
$$ X^I_\dr \to \on{Ran}(X) \times \on{Ran}(X) $$
is pseudo-proper.  This map factors as
$$ X^I_\dr \to X^I_\dr \times X^I_\dr \to \on{Ran}(X) \times \on{Ran}(X),$$
where the first map is proper and the second is pseudo-proper by part (a).  Thus, the composite is pseudo-proper by \lemref{l:pseudo-proper ops}(2).

\sssec{Proof of \propref{p:Ran diag ps-proper}(c)}
For $I, J \in \on{Fin}$, the composite
$$ X^I_\dr \times X^J_\dr \to \on{Ran}(X) \times \on{Ran}(X) \overset{\cup}{\to} \on{Ran}(X) $$
is given by
$$ X^I_\dr \times X^J_\dr \to X^{I\sqcup J}_\dr \to \on{Ran}(X),$$
which is pseudo-proper since the first map is an isomorphism and the second is pseudo-proper.  It follows that $\cup: \on{Ran}(X)\times \on{Ran}(X) \to \on{Ran}(X)$ is pseudo-proper.

\ssec{Sheaves on Ran prestacks}

In this section, we study the basic properties of D-modules on the prestacks $\on{Ran}^{\on{untl}} (X)$ and $\on{Ran}(X)$.
As before, $X$ is a (separated) scheme almost of finite type.

\sssec{}\label{sss:dmod on Ran as finite prods}
By \secref{sss:maps from Ran}, an object $M \in \Dmod( \on{Ran}^{\on{untl}}(X))$ is a collection of D-modules
$$ M_I \in \Dmod(X^I) $$
for each $I\in \on{Fin}$, together with maps
$$ \nu(M)_f: \Delta_f^!(M_I) \to M_J \in \Dmod(X^J)$$
for every $f: I \to J$ in $\on{Fin}$, which are isomorphisms when $f$ is surjective, together with higher coherence
for composition of maps in $\on{Fin}$.

\medskip

By contrast, an object $M \in \Dmod( \on{Ran}(X))$ is a similar collection of D-modules $M_I$, but the maps
$\nu(M)_f$ are defined \emph{only} when $f$ is surjective (and are required to be isomorphisms). 

\sssec{}

Since the prestack $\on{Ran}(X)$ is a colimit of inf-schemes along proper maps, we have:

\begin{lem} \label{l:ran dualizable}
The category $\Dmod(\on{Ran}(X))$ is compactly generated and in particular dualizable.
\end{lem}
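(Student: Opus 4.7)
The plan is to exploit the presentation of $\on{Ran}(X)$ as a colimit of the schemes $X^I_\dr$ provided by \propref{p:ran pres} and to convert this, on the level of $\Dmod$, into a colimit along left adjoints to which standard compact generation results apply.

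First, by \propref{p:ran pres} we have
$$\on{Ran}(X) \simeq \underset{I\in (\on{Fin}^s)^{\on{op}}}{\on{colim}}\, X^I_\dr,$$
with transition functors given by the diagonal embeddings $\Delta_f: X^J_\dr \to X^I_\dr$ associated to surjections $f: I \surj J$. Since $X$ is separated, each $\Delta_f$ is a closed immersion, hence proper. Applying $\Dmod(-)$ (which sends colimits of prestacks to limits of categories under $!$-pullback) yields
$$\Dmod(\on{Ran}(X)) \simeq \underset{I\in \on{Fin}^s}{\on{lim}}\, \Dmod(X^I_\dr),$$
with transition functors $\Delta_f^!$.

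Next, since each $\Delta_f$ is proper, the functor $\Delta_f^!$ admits a continuous left adjoint $(\Delta_f)_*^{\on{dR}}$. By the standard duality in $\DGCat^{\on{cont}}$ between limits with left-adjointable transition functors and colimits, the above limit may be rewritten as a colimit in $\DGCat^{\on{cont}}$ along the left adjoints:
$$\Dmod(\on{Ran}(X)) \simeq \underset{I\in (\on{Fin}^s)^{\on{op}}}{\on{colim}}\, \Dmod(X^I_\dr),$$
with transition functors $(\Delta_f)_*^{\on{dR}}$. Each $X^I$ is a scheme almost of finite type, so each $\Dmod(X^I_\dr)$ is compactly generated. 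Moreover, properness of $\Delta_f$ ensures that $\Delta_f^!$ is continuous, hence its left adjoint $(\Delta_f)_*^{\on{dR}}$ preserves compact objects. Since a colimit in $\DGCat^{\on{cont}}$ of compactly generated categories along compact-object-preserving functors is compactly generated (the images of the compact generators from each term form a set of compact generators of the colimit), we conclude that $\Dmod(\on{Ran}(X))$ is compactly generated. Compact generation implies dualizability, finishing the proof.

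The only nontrivial input is the conversion of the limit to a colimit and the preservation of compactness under proper pushforward; both are standard consequences of the formalism of $\Dmod$ on laft prestacks developed in \cite{Vol2}.
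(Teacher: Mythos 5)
Your proof is correct and is exactly the expansion of the paper's one-line argument: $\on{Ran}(X)$ is a colimit of the inf-schemes $X^I_\dr$ along the proper (closed) diagonal maps, so $\Dmod(\on{Ran}(X))$ is a colimit in $\DGCat^{\on{cont}}$ along compact-preserving pushforwards, hence compactly generated. Nothing to add.
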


Our next goal will be to show that the category $\Dmod(\on{Ran}^{\on{untl}} (X))$ is also dualizable.

\sssec{}\label{sss:ps-pr Ran}
Let $\on{Ran}_\emptyset(X):= (\on{Ran}^{\on{untl}} (X))^{\on{grpd}} \simeq \on{Ran}(X) \sqcup \{\emptyset\}$ be the space of all finite subsets of $X$.  
We have the natural inclusion
$$ \iota: \on{Ran}_{\emptyset}(X) \hookrightarrow \on{Ran}^{\on{untl}} (X).$$
Clearly, the functor
$$ \iota^!: \Dmod(\on{Ran}^{\on{untl}} (X)) \to \Dmod(\on{Ran}_{\emptyset}(X)) $$
is conservative.  We will show that $\iota^!$ admits a left adjoint, following \cite[Sect. 4.3]{AB}.

\medskip
Consider the categorical prestack $\on{Ran}^{\to}(X)$ defined as the fiber product
$$ \xymatrix{
\on{Ran}^{\to}(X) \ar[r]\ar[d]_{\phi} & \on{Ran}_{\emptyset}(X) \times \on{Ran}^{\on{untl}} (X) \ar[d]^{\cup \times p_2} \\
\on{Ran}^{\on{untl}} (X) \ar[r]^-{\Delta} & \on{Ran}^{\on{untl}} (X) \times \on{Ran}^{\on{untl}} (X)
}$$
Explicitly, $\on{Ran}^{\to}(X)(S)$ is the category whose objects are pairs of nested subsets $I \subset J \subset \Hom(S,X_\dr)$,
and whose morphisms
$$(I \subset J \subset \Hom(S,X_\dr)) \to (I' \subset J' \subset \Hom(S,X_\dr))$$ are given by inclusions $J \subset J'$ as subsets of $\Hom(S,X_\dr)$, 
such that the image of $I$ is $I'$.

\medskip

The map $$\phi: \on{Ran}^{\to}(X) \to \on{Ran}^{\on{untl}} (X), \quad \phi(I \subset J)=J$$
is a level-wise coCartesian fibration in groupoids. Moreover, we see using \propref{p:Ran diag ps-proper} and \lemref{l:pseudo-proper ops} that it is pseudo-proper. 
In particular, the functor
$$ \phi^!: \Dmod(\on{Ran}^{\on{untl}} (X)) \to \Dmod(\on{Ran}^{\to}(X)) $$
admits a left adjoint $\phi_!$, by \lemref{l:pseudo-proper pushforward}. 

\medskip

Let $\xi$ denote the map
$$\on{Ran}^{\to}(X) \to \on{Ran}_{\emptyset}(X), \quad \xi(I\subset J)=I.$$

We claim:

\begin{prop} \label{p:pushfrwrd from non-untl}
The restriction map $\iota^!: \on{Ran}^{\on{untl}} (X) \to \on{Ran}_{\emptyset}(X)$ admits a left adjoint given by
$$ \iota_! := \phi_! \circ \xi^!: \Dmod(\on{Ran}_{\emptyset}(X)) \to \Dmod(\on{Ran}^{\on{untl}} (X)) .$$
\end{prop}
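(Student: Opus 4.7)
The plan is to establish the adjunction $\phi_! \circ \xi^! \dashv \iota^!$ by exhibiting a natural isomorphism of mapping spaces. Since $\phi$ is pseudo-proper (as noted right above the statement), we have the adjunction $\phi_! \dashv \phi^!$ by \lemref{l:pseudo-proper pushforward}. Consequently, for $F \in \Dmod(\on{Ran}_\emptyset(X))$ and $G \in \Dmod(\on{Ran}^{\on{untl}}(X))$, it suffices to produce a natural isomorphism
\[
\Maps_{\Dmod(\on{Ran}^{\to}(X))}(\xi^! F,\, \phi^! G) \;\simeq\; \Maps_{\Dmod(\on{Ran}_\emptyset(X))}(F,\, \iota^! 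G).
\]

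The key construction is the diagonal section $\sigma: \on{Ran}_\emptyset(X) \to \on{Ran}^{\to}(X)$, $I \mapsto (I \subset I)$, which satisfies $\xi \circ \sigma = \id_{\on{Ran}_\emptyset(X)}$ and $\phi \circ \sigma = \iota$. Applying $\sigma^!$ gives a canonical comparison map
\[
\Maps(\xi^! F,\, \phi^! G) \longrightarrow \Maps(\sigma^! \xi^! F,\, \sigma^! \phi^! G) \;=\; \Maps(F,\, \iota^! G).
\]
The core of the proof is to show this comparison map is an equivalence. The geometric reason is that $\sigma$ exhibits $\on{Ran}_\emptyset(X)$ as the inclusion of the ``initial fibers'' of the coCartesian fibration $\xi$: for every $S \in \affSch$ and $I \in \on{Ran}_\emptyset(X)(S)$, the fiber $\xi^{-1}(I)$ is the poset of subsets $J \supset I$ in $\Hom(S, X_\dr)$, which has $I = \sigma(I)$ as its initial object, hence is contractible.

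The main obstacle is to convert this cofinality observation into an honest equivalence of mapping spaces of D-modules on the \emph{categorical} prestack $\on{Ran}^{\to}(X)$. To handle this, I would use the presentation of $\on{Ran}^{\on{untl}}(X)$ as the localization $\on{Fin}_{/X_\dr}[(\on{Fin}^s_{/X_\dr})^{-1}]$ from \propref{p:unital ran pres}, together with the corresponding description of D-modules from \secref{sss:dmod on Ran as finite prods}. Unwinding both sides into compatible data indexed by $\on{Fin}_{/X_\dr}$, the comparison map becomes, levelwise on each $X^I_\dr$, an explicit isomorphism of Hom-spaces, where the contribution of $(I \subsetneq J)$ to the left-hand side is absorbed into the $(I \subseteq I)$-contribution because the associated restriction maps are isomorphisms on surjective arrows. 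Equivalently, one can verify the triangle identities directly: the unit $F \to \iota^! \phi_! \xi^! F$ comes from $\sigma$, and the counit $\phi_! \xi^! \iota^! G \to G$ is produced from the tautological 2-morphism $\iota \circ \xi \Rightarrow \phi$ (encoding the inclusion $I \subset J$ in $\on{Ran}^{\on{untl}}(X)$) together with the unit of $\phi_! \dashv \phi^!$, and conservativity of $\iota^!$ then reduces checking the triangle identities to a check after restriction to $\on{Ran}_\emptyset(X)$, where the section $\sigma$ trivializes the situation.
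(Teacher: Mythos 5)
Your proposal follows essentially the same route as the paper: factor $\iota$ as $\phi\circ\sigma$ (the paper denotes your $\sigma$ by $\nu$), use pseudo-properness of $\phi$ to get $\phi_! \dashv \phi^!$, and reduce to showing $\xi^! \dashv \nu^!$. Where you over-complicate is in treating this last step as an obstacle requiring either a levelwise unwinding through $\on{Fin}_{/X_\dr}$ or a hands-on check of triangle identities. The paper simply observes that $(\nu,\xi)$ form an adjoint pair of maps of categorical prestacks---your ``initial fibers'' observation is precisely this adjunction, with unit the identity $\on{id} = \xi\nu$ and counit the tautological $2$-morphism $\nu\xi \Rightarrow \on{id}_{\on{Ran}^{\to}(X)}$ coming from the arrow $(I\subset I)\to(I\subset J)$---and then $\xi^!\dashv\nu^!$ is an immediate formal consequence of the contravariant functoriality of $\Dmod$ under $!$-pullback, which turns an adjunction of maps into an adjunction of pullback functors with handedness reversed. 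This makes your worry moot. It is also fortunate that the formal route works, because the ``conservativity of $\iota^!$ reduces the triangle identities to a check after restriction'' step in your second alternative is not quite right as stated: conservativity detects isomorphisms, not whether a given endomorphism of a functor is the identity, so that reduction would need a different justification.
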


\begin{proof} 

The inclusion map
$$\on{Ran}_{\emptyset}(X) \overset{\iota}\to \on{Ran}^{\on{untl}} (X) $$
factors as
$$ \on{Ran}_{\emptyset}(X) \overset{\nu}{\to} \on{Ran}^{\to}(X) \overset{\phi}{\to} \on{Ran}^{\on{untl}} (X) ,$$
where $\nu: \on{Ran}_{\emptyset}(X) \to \on{Ran}^{\to}(X)$ is the map given by $I \mapsto (I \subset I)$.

\medskip

Hence, in order to prove the proposition, it remains to show that the functor 
$$\xi^!:\Dmod(\on{Ran}_{\emptyset}(X))\to \on{Ran}^{\to}(X)$$
is the left adjoint of the functor 
$$\nu^!: \Dmod(\on{Ran}^{\to}(X)) \to \Dmod(\on{Ran}_{\emptyset}(X)).$$

However, this follows from the fact that the maps $(\nu,\xi)$ form an adjoint pair
as maps between categorical prestacks. 

\end{proof}

\begin{cor}\label{c:unital Ran dualizable}
The category $\Dmod(\on{Ran}^{\on{untl}} (X))$ is compactly generated, and in particular dualizable.
\end{cor}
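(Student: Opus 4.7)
The plan is to use the adjunction $(\iota_!, \iota^!)$ constructed in \propref{p:pushfrwrd from non-untl} to transport compact generators from $\Dmod(\on{Ran}_\emptyset(X))$ to $\Dmod(\on{Ran}^{\on{untl}}(X))$. Since $\on{Ran}_\emptyset(X) \simeq \on{Ran}(X) \sqcup \{\emptyset\}$, we have a decomposition
$$\Dmod(\on{Ran}_\emptyset(X)) \simeq \Dmod(\on{Ran}(X)) \oplus \Vect,$$
and the right-hand side is compactly generated by \lemref{l:ran dualizable}. Fix a set $\{\CF_\alpha\}$ of compact generators.

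The key ingredient is that $\iota^!$ preserves filtered colimits. Granting this, its left adjoint $\iota_!$ sends compact objects to compact objects, so each $\iota_!(\CF_\alpha)$ is compact in $\Dmod(\on{Ran}^{\on{untl}}(X))$. Moreover, these objects generate: if $M$ satisfies $\Maps(\iota_!(\CF_\alpha), M) = 0$ for all $\alpha$, then by adjunction $\Maps(\CF_\alpha, \iota^!(M)) = 0$ for all $\alpha$, forcing $\iota^!(M) = 0$, and hence $M = 0$ by conservativity of $\iota^!$. This exhibits $\Dmod(\on{Ran}^{\on{untl}}(X))$ as compactly generated, and thus dualizable.

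The main technical point requiring care is the continuity of $\iota^!$ in the setting of categorical prestacks. It reflects the general principle that $!$-pullback between laft (categorical) prestacks is continuous, built into the Kan-extension definition of $\Dmod$. If preferred, one can avoid appealing to this principle by using the factorization $\iota = \phi \circ \nu$ from the proof of \propref{p:pushfrwrd from non-untl}: it then suffices to check continuity for $\phi^!$ and $\nu^!$ separately, each of which is a $!$-pullback of D-modules between laft (categorical) prestacks to which continuity of $!$-pullback applies more directly.
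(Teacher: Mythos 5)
Your proposal is correct and takes essentially the same route as the paper: the paper's one-line proof ("Follows from the fact that $\Dmod(\on{Ran}_{\emptyset}(X))$ is compactly generated, while $i^!$ is conservative") is exactly the argument you spelled out, using \propref{p:pushfrwrd from non-untl} to get the left adjoint $\iota_!$, continuity of $\iota^!$ to transport compactness, and conservativity of $\iota^!$ to transport generation. The extra care you took in justifying continuity of $\iota^!$ is a reasonable unpacking of what the paper leaves implicit.
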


\begin{proof}
Follows from the fact that $\Dmod(\on{Ran}_{\emptyset}(X))$ is compactly generated, while $i^!$ is conservative.
\end{proof}

As another corollary, we obtain: 

\begin{cor}  \label{c:Gamma defined on Ran untl}
Let $X$ be proper. Then the functor 
$$\Gamma_{c,\on{Ran}^{\on{untl}}}:\Dmod(\on{Ran}^{\on{untl}} (X)) \to \on{Vect},$$
left adjoint to 
$$p^!_{\on{Ran}^{\on{untl}}}:\Vect\to \Dmod(\on{Ran}^{\on{untl}} (X)),$$
is well-defined.
\end{cor}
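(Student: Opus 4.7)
The plan is to apply the adjoint functor theorem: since $\Dmod(\on{Ran}^{\on{untl}}(X))$ is presentable by \corref{c:unital Ran dualizable} and $p^!_{\on{Ran}^{\on{untl}}}$ is continuous on $\Vect$ (hence accessible), it suffices to verify that $p^!_{\on{Ran}^{\on{untl}}}$ preserves small limits. I would deduce this from the analogous statement for $\on{Ran}_\emptyset(X)$ via the adjunction $(\iota_!, \iota^!)$ of \propref{p:pushfrwrd from non-untl}.

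First, the properness hypothesis on $X$ ensures that the structure map $p_{\on{Ran}_\emptyset} : \on{Ran}_\emptyset(X) \to \on{pt}$ is pseudo-proper: \propref{p:ran pres} presents $\on{Ran}(X)$ as $\colim_{I \in (\on{Fin}^s)^{\on{op}}} X^I_\dr$, and since pseudo-properness is closed under colimits and each $X^I$ is proper, the map $\on{Ran}_\emptyset(X) = \on{Ran}(X) \sqcup \{\emptyset\} \to \on{pt}$ is pseudo-proper. Consequently $p^!_{\on{Ran}_\emptyset}$ admits a left adjoint $\Gamma_{c,\on{Ran}_\emptyset}$, and in particular preserves all small limits.

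Next, by \propref{p:pushfrwrd from non-untl} the functor $\iota^!$ is a right adjoint (its left adjoint being $\iota_! = \phi_! \circ \xi^!$), hence preserves limits. Combined with the conservativity of $\iota^!$ recorded in \secref{sss:ps-pr Ran}, a standard argument shows that $\iota^!$ also \emph{reflects} limits: given a cone $\mathcal{F} \to \lim_\alpha \mathcal{F}_\alpha$ whose image under $\iota^!$ is a limit cone, comparison with the canonical limit cone (which $\iota^!$ also preserves) produces a map that is sent to an isomorphism by $\iota^!$, and hence is already an isomorphism. Since $p_{\on{Ran}^{\on{untl}}} \circ \iota = p_{\on{Ran}_\emptyset}$, we have $\iota^! \circ p^!_{\on{Ran}^{\on{untl}}} = p^!_{\on{Ran}_\emptyset}$; because the right-hand side preserves limits and $\iota^!$ reflects them, $p^!_{\on{Ran}^{\on{untl}}}$ itself preserves limits, which is what we needed.

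I do not anticipate a substantive obstacle in this argument. The one point that needs to be checked carefully is that the adjoint functor theorem genuinely applies in the setting of categorical prestacks, i.e., that $p^!_{\on{Ran}^{\on{untl}}}$ is accessible in the appropriate sense; this is routine from the construction of $\Dmod$ on a categorical prestack as a limit of $\Dmod$ on its affine test schemes, combined with compact generation from \corref{c:unital Ran dualizable}.
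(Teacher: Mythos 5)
Your proof is correct but takes a genuinely different route from the paper's. The paper does not invoke the adjoint functor theorem via limit-preservation; instead it uses compact generation (\corref{c:unital Ran dualizable}) to reduce to \emph{constructing} the left adjoint on the compact generators $\iota_!(\CG)$, where it is computed by noting that $\Gamma_{c,\on{Ran}^{\on{untl}}}\circ \iota_!$ should be left adjoint to $\iota^!\circ p^!_{\on{Ran}^{\on{untl}}}\simeq p^!_{\on{Ran}_\emptyset}$, and the latter has a left adjoint since $\on{Ran}_\emptyset(X)$ is pseudo-proper. You instead verify that $p^!_{\on{Ran}^{\on{untl}}}$ preserves small limits, using that $\iota^!$ is a limit-preserving conservative functor (hence reflects limits) and that $p^!_{\on{Ran}_\emptyset}=\iota^!\circ p^!_{\on{Ran}^{\on{untl}}}$ preserves limits. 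Both arguments rest on the same two facts — the adjunction $(\iota_!,\iota^!)$ from \propref{p:pushfrwrd from non-untl} and pseudo-properness of $\on{Ran}_\emptyset(X)$ — and both are sound; the paper's is more explicit in that it also tells you how to compute $\Gamma_{c,\on{Ran}^{\on{untl}}}$ on generators, which is used in subsequent sections, whereas yours establishes existence more economically. One small remark: the worry in your last paragraph is misplaced — once one has the compactly generated $\infty$-category $\Dmod(\on{Ran}^{\on{untl}}(X))$ and the continuous functor $p^!_{\on{Ran}^{\on{untl}}}$ (continuity is built into the construction of $\IndCoh$/$\Dmod$ via !-pullback), the adjoint functor theorem for presentable $\infty$-categories applies verbatim with no further input from the categorical-prestack formalism.
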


\begin{proof}

It is enough to show that the functor $\Gamma_{c,\on{Ran}^{\on{untl}}}$ is well-defined on the generators of
$\Dmod(\on{Ran}^{\on{untl}} (X))$, i.e., on the essential image of the functor $\iota_!$. Hence, it suffices
to show that the functor
$$\Gamma_{c,\on{Ran}^{\on{untl}}}\circ \iota_!,$$
left adjoint to $\iota^!\circ p^!_{\on{Ran}^{\on{untl}}}\simeq p^!_{\on{Ran}_\emptyset}$.

\medskip

However, this follows from the fact that $\on{Ran}_\emptyset(X)$ is pseudo-proper. 

\end{proof}

Combining Corollaries \ref{c:unital Ran dualizable} and \ref{c:Gamma defined on Ran untl}, and \lemref{l:product dualizable}, 
we obtain;

\begin{cor} \label{c:Gamma defined on Ran untl rel}
For any laft categorical prestack $\CY$, the functor
$$(\on{id}_\CY\times p_{\on{Ran}^{\on{untl}}})_!:\Dmod(\CY\times \on{Ran}^{\on{untl}} (X))\to \Dmod(\CY),$$
left adjoint to $(\on{id}_\CY\times p_{\on{Ran}^{\on{untl}}})^!$ is well-defined, is compatible with base change,
and satisfies the projection formula.
\end{cor}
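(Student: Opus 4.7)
The plan is to deduce the relative statement from the absolute one (Corollary \ref{c:Gamma defined on Ran untl}) by means of the tensor-product decomposition for dualizable module categories. First I would invoke \lemref{l:product dualizable} together with \corref{c:unital Ran dualizable} to obtain a canonical identification
$$ \Dmod(\CY\times \on{Ran}^{\on{untl}}(X)) \simeq \Dmod(\CY)\otimes \Dmod(\on{Ran}^{\on{untl}}(X)) $$
under which the !-pullback functor $(\on{id}_\CY\times p_{\on{Ran}^{\on{untl}}})^!$ corresponds to $\on{id}_{\Dmod(\CY)}\otimes p^!_{\on{Ran}^{\on{untl}}}$. Since by \corref{c:Gamma defined on Ran untl} the functor $p^!_{\on{Ran}^{\on{untl}}}$ admits a continuous left adjoint $\Gamma_{c,\on{Ran}^{\on{untl}}}$, and tensoring a continuous left adjoint with an identity functor preserves the property of being a left adjoint between dualizable categories, the left adjoint in question exists and is canonically identified with $\on{id}_{\Dmod(\CY)}\otimes \Gamma_{c,\on{Ran}^{\on{untl}}}$.

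Next, I would verify base change. Given a map $f:\CY'\to \CY$ of laft categorical prestacks, the natural Beck--Chevalley transformation
$$ (f\times \on{id})^! \circ (\on{id}_\CY\times p_{\on{Ran}^{\on{untl}}})_! \longrightarrow (\on{id}_{\CY'}\times p_{\on{Ran}^{\on{untl}}})_! \circ (f\times \on{id})^! $$
becomes, under the tensor-product identification, the tensor product of the natural transformation $f^!\circ \on{id}\to \on{id}\circ f^!$ (manifestly an isomorphism) with the absolute adjunction $(\Gamma_{c,\on{Ran}^{\on{untl}}}, p^!_{\on{Ran}^{\on{untl}}})$, hence is an isomorphism. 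Analogously, the projection formula
$$ (\on{id}_\CY\times p_{\on{Ran}^{\on{untl}}})_!\bigl(p_\CY^!(M)\,\overset{!}{\otimes}\, N\bigr) \simeq M\,\overset{!}{\otimes}\, (\on{id}_\CY\times p_{\on{Ran}^{\on{untl}}})_!(N) $$
for $M\in\Dmod(\CY)$ and $N\in\Dmod(\CY\times \on{Ran}^{\on{untl}}(X))$ follows because under the decomposition as $\Dmod(\CY)\otimes \Dmod(\on{Ran}^{\on{untl}}(X))$, the !-tensor product with $p_\CY^!(M)$ acts only on the first tensor factor, while $(\on{id}_\CY\times p_{\on{Ran}^{\on{untl}}})_!$ acts only on the second; the two operations commute tautologically.

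The only non-formal input is the identification $\Dmod(\CY\times \on{Ran}^{\on{untl}}(X))\simeq \Dmod(\CY)\otimes \Dmod(\on{Ran}^{\on{untl}}(X))$, which is where \lemref{l:product dualizable} is used; once that is in hand, everything else is a mechanical consequence of the fact that all the structures in question (pullback, left adjoint, !-tensor product) are compatible with the Lurie tensor product of presentable DG categories. I expect the main subtlety to be confirming that the \emph{categorical} prestack nature of $\on{Ran}^{\on{untl}}(X)$ does not interfere with the above formalism; this is ensured by the general theory of sheaves on laft categorical prestacks developed in Appendix \ref{s:categ prestacks}, where Künneth-type statements for $\IndCoh$/$\Dmod$ are extended from ordinary prestacks to this setting.
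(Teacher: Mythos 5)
Your proof is correct and follows exactly the route the paper takes: the paper's own justification is the one-line "Combining Corollaries \ref{c:unital Ran dualizable} and \ref{c:Gamma defined on Ran untl}, and \lemref{l:product dualizable}," and you have simply supplied the formal details of why the tensor-product decomposition $\Dmod(\CY\times \on{Ran}^{\on{untl}}(X))\simeq \Dmod(\CY)\otimes \Dmod(\on{Ran}^{\on{untl}}(X))$, together with the absolute left adjoint $\Gamma_{c,\on{Ran}^{\on{untl}}}$, yields the relative left adjoint along with base change and the projection formula.
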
 


\ssec{Contractibility results for the Ran space, the unital version}

\sssec{}

Recall the notions of \emph{universally homologically contractible} and 
\emph{universally homologically cofinal} morphism, see \cite[Sects. 3.4 and 3.5]{AB}. 
We claim:

\begin{prop} \label{p:Ran untl contr} \hfill
\begin{enumerate}[label={(\alph*)}]
\item
The categorical prestack $\on{Ran}^{\on{untl}} (X)$ is universally homologically contractible.

\item
The diagonal map $\on{Ran}^{\on{untl}} (X)\to \on{Ran}^{\on{untl}} (X)\times \on{Ran}^{\on{untl}} (X)$
is universally homologically cofinal.
\end{enumerate}
\end{prop}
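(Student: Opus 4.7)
The plan is to prove part (b) first, and then deduce part (a) from it via an idempotent algebra argument exploiting the commutative monoid structure on $\on{Ran}^{\on{untl}}(X)$.

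For part (b), I would observe that for each test scheme $S$, the union functor
$$\cup_S \colon \on{Ran}^{\on{untl}}(X)(S) \times \on{Ran}^{\on{untl}}(X)(S) \to \on{Ran}^{\on{untl}}(X)(S)$$
is left adjoint to the diagonal $\Delta_S$, since
$$\Hom((J_1, J_2), \Delta_S(K)) = \{J_1 \subset K\} \times \{J_2 \subset K\} = \{J_1 \cup J_2 \subset K\} = \Hom(\cup_S(J_1, J_2), K).$$
This adjunction assembles over varying $S$ into an adjunction of morphisms of categorical prestacks, since $\cup$ is defined uniformly. Consequently, $\Delta_S$ is fiberwise cofinal: for every pair $(J_1, J_2)$, the comma category $(J_1, J_2)/\Delta_S$ admits $J_1 \cup J_2$ as an initial object and hence is contractible. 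By the theory of sheaves on categorical prestacks developed in Appendix \ref{s:categ prestacks}, such fiberwise cofinality for a base-change-compatible morphism of categorical prestacks upgrades to universal homological cofinality.

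For part (a), given (b), I would show that the object $A := (p_{\on{Ran}^{\on{untl}}})_!(\omega_{\on{Ran}^{\on{untl}}(X)}) \in \Vect$ is an idempotent commutative algebra with a nonzero unit, and is therefore isomorphic to the unit object $k$. The commutative monoid structure on $\on{Ran}^{\on{untl}}(X)$ provided by $\cup$ endows $A$ with a commutative algebra structure, while the global point $e := \emptyset\colon \on{pt} \to \on{Ran}^{\on{untl}}(X)$ (satisfying $p \circ e = \on{id}_{\on{pt}}$) yields a canonical unit map $k \to A$. The K\"unneth formula together with (b) then gives the chain of identifications
$$A \otimes A \simeq (p \times p)_!(\omega \boxtimes \omega) \simeq (p \times p)_!\Delta_!(\omega) \simeq p_!(\omega) = A,$$
and an unwinding of the identifications shows that the resulting map is the algebra multiplication, so $A$ is idempotent. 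Since $\Hom_\Vect(A, k) \simeq \End_{\Dmod(\on{Ran}^{\on{untl}}(X))}(\omega) \ne 0$, we have $A \ne 0$, and the only nonzero idempotent commutative algebra in $\Vect$ is $k$. The same computation runs relatively over any categorical prestack $\CY$ using the base-change-compatible pushforward from \corref{c:Gamma defined on Ran untl rel}, yielding universal homological contractibility.

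The main technical obstacle will be the step in part (b) passing from the fiberwise cofinality of $\Delta_S$ to the universal homological statement for $\Delta$. This requires the framework of Appendix \ref{s:categ prestacks}, and in particular the compatibility of homological cofinality with base change, which is ultimately grounded in the pseudo-properness results of \propref{p:Ran diag ps-proper} and the properties of $!$-pushforward established in \corref{c:Gamma defined on Ran untl rel}.
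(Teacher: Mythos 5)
Your part (b) is essentially the paper's argument: the paper records it as ``the diagonal is level-wise cofinal because $\on{Ran}^{\on{untl}}(\sfX)$ admits finite coproducts,'' which is precisely your adjunction $\cup_S \dashv \Delta_S$ restated. One small correction: the level-wise-to-universal upgrade is not an appendix result of this paper but is quoted from \cite{AB} (Propositions 3.4.9 and Corollary 3.5.12), which the paper invokes in the corollary immediately following.

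For part (a), you take a genuinely different and substantially heavier route than the paper. The paper observes that $\on{Ran}^{\on{untl}}(\sfX)$ has an \emph{initial object} -- the empty subset -- hence is contractible as a category, and this level-wise contractibility is all that is needed. Your idempotent-algebra argument bypassing this observation is valid (it is indeed true that in $\Vect_k$ a nonzero idempotent commutative algebra is the unit: tensoring with $A$ is conservative over a field, and the unit $\eta\colon k \to A$ has $\eta \otimes \id_A$ invertible, forcing $\fib(\eta) \otimes A = 0$ hence $\eta$ an isomorphism), and it is a pleasant illustration of how (a) follows formally from (b) plus the existence of the unit section $\emptyset$; but it imports Künneth, dualizability, and the algebra formalism where a one-line category-theoretic observation suffices. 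However, one step in your chain is wrong as written: $\omega \boxtimes \omega$ is \emph{not} isomorphic to $\Delta_!(\omega)$ (the latter is supported on the diagonal). What you actually need is that cofinality of $\Delta$ makes the counit $\Delta_!\Delta^!(\omega \boxtimes \omega) \to \omega \boxtimes \omega$ an isomorphism after applying $(p\times p)_!$; since $\Delta^!(\omega\boxtimes\omega) = \omega$ and $(p \times p)\circ\Delta = p$, this gives $A \to A\otimes A$ an isomorphism, from which $A \simeq k$ follows as you say. The conclusion is right; the intermediate identification is not.
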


\begin{proof}

Both assertions of the proposition are valid level-wise; i.e., for a \emph{space} $\sfX$, the category 
$\on{Ran}^{\on{untl}} (\sfX)$ is contractible, and the diagonal map
$$\on{Ran}^{\on{untl}} (\sfX)\to \on{Ran}^{\on{untl}} (\sfX)\times \on{Ran}^{\on{untl}} (\sfX)$$
is cofinal.

\medskip

The first assertion is evident as $\on{Ran}^{\on{untl}} (\sfX)$ has an initial object (the empty subset).
The second assertion follows from the fact that $\on{Ran}^{\on{untl}} (\sfX)$ admits finite coproducts. 

\end{proof}

Combining with \cite[Proposion 3.4.9 and Corollary 3.5.12]{AB}, we obtain:

\begin{cor} \hfill \label{c:Gamma on Ran untl mon}
\begin{enumerate}[label={(\alph*)}]
\item
The functor 
$p^!_{\on{Ran}^{\on{untl}}}:\Vect\to \Dmod(\on{Ran}^{\on{untl}} (X))$
is fully faithful. 

%

\item
Let $X$ be proper, so that the functor 
$$\Gamma_{c,\on{Ran}^{\on{untl}}}:\Dmod(\on{Ran}^{\on{untl}} (X)) \to \on{Vect}$$
is well-defined. Then this functor is symmetric monoidal. 

%
\end{enumerate}
\end{cor}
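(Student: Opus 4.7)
The plan is to deduce both parts directly from Proposition \ref{p:Ran untl contr} by invoking the general results of \cite[Proposition 3.4.9 and Corollary 3.5.12]{AB}, which are machines that convert universal homological contractibility/cofinality of a (categorical) prestack into functor-theoretic properties of $p^!$ and $\Gamma_c$. In effect, the work has already been done in Proposition \ref{p:Ran untl contr}; what remains is only the translation.

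For part (a), the plan is to apply \cite[Proposition 3.4.9]{AB} to Proposition \ref{p:Ran untl contr}(a). The content of that proposition is that universal homological contractibility of $\CY$ is precisely equivalent to the unit map $\on{id}_{\Vect} \to \Gamma_{c,\CY} \circ p^!_\CY$ being an isomorphism, which is the same as the fully faithfulness of $p^!_\CY$. Note that properness of $X$ is not needed here: even though $\Gamma_{c,\on{Ran}^{\on{untl}}}$ is not defined on all of $\Dmod(\on{Ran}^{\on{untl}}(X))$ in the non-proper case, the composite $\Gamma_{c,\on{Ran}^{\on{untl}}} \circ p^!_{\on{Ran}^{\on{untl}}}$ is computed inside $\Vect$ via the contractibility assertion.

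For part (b), the plan is to first identify the symmetric monoidal structure on $\Dmod(\on{Ran}^{\on{untl}}(X))$ — namely the star product $M \star N := \cup_!(M \boxtimes N)$ induced by the commutative semigroup structure on $\on{Ran}^{\on{untl}}(X)$ given by the union map. This star product is well-defined thanks to \propref{p:Ran diag ps-proper}(c) combined with \lemref{l:pseudo-proper pushforward} (and more generally Corollary \ref{c:Gamma defined on Ran untl rel}). With respect to this structure, $\Gamma_{c,\on{Ran}^{\on{untl}}}$ is automatically \emph{lax} symmetric monoidal by general nonsense (pushforward-to-the-point composed with $\cup_!$). To upgrade lax to strict, one applies \cite[Corollary 3.5.12]{AB}: the lax structure map $\Gamma_c(M) \otimes \Gamma_c(N) \to \Gamma_c(M \star N)$ is an isomorphism whenever the diagonal $\on{Ran}^{\on{untl}}(X) \to \on{Ran}^{\on{untl}}(X) \times \on{Ran}^{\on{untl}}(X)$ is universally homologically cofinal, and this is exactly the content of Proposition \ref{p:Ran untl contr}(b).

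The only point that requires any care — and the closest thing to an obstacle — is ensuring that the notions of \emph{universally homologically contractible} and \emph{universally homologically cofinal} from \cite{AB}, as well as the conclusions of loc. cit., are applicable in our \emph{categorical} prestack setting rather than the plain prestack setting. This is not a genuine difficulty: the formalism of \cite{AB} is set up in the requisite generality, and it was already used in the same context in Corollary \ref{c:Gamma defined on Ran untl} above.
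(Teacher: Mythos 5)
Your proposal is correct and follows exactly the paper's own route: the paper proves Proposition~\ref{p:Ran untl contr} and then invokes \cite[Proposition 3.4.9 and Corollary 3.5.12]{AB} verbatim; you have merely spelled out the translation (the star-product monoidal structure, the lax-to-strict upgrade via cofinality of the diagonal, and the applicability in the categorical-prestack setting), which the paper leaves implicit.

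One very minor wording issue in part (a): since $p^!$ is \emph{right} adjoint to $\Gamma_{c,\on{Ran}^{\on{untl}}}$, its full faithfulness corresponds to the \emph{counit} $\Gamma_{c,\on{Ran}^{\on{untl}}}\circ p^!_{\on{Ran}^{\on{untl}}}\to\on{id}_{\Vect}$ being an isomorphism, not the unit; this doesn't affect the substance.
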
 

Combining with \corref{c:unital Ran dualizable} and \lemref{l:product dualizable}, we obtain:

\begin{cor} Let $X$ be proper. Let $\CY$ be any laft categorical prestack. 
\begin{enumerate}[label={(\alph*)}]

\item
For any $\CF\in \Dmod(\CY)$, the map
$$(\on{id}_\CY\times p_{\on{Ran}^{\on{untl}}})_!(\CF\boxtimes \omega_{\on{Ran}^{\on{untl}}(X)})\to \CF$$
is an isomorphism. 

\item
For any $\CF\in \Dmod(\CY\times \on{Ran}^{\on{untl}} (X)\times \on{Ran}^{\on{untl}} (X))$
the map
$$(\on{id}_\CY \times p_{\on{Ran}^{\on{untl}}})_! \circ (\on{id}_\CY\times \Delta_{\on{Ran}^{\on{untl}}})^!(\CF)\to
(\on{id}_\CY\times p_{\on{Ran}^{\on{untl}}}\times p_{\on{Ran}^{\on{untl}}})_!(\CF)$$
is an isomorphism.
\end{enumerate}
\end{cor}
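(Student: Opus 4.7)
My plan is to deduce both claims as parameterized (in $\CY$) versions of the absolute assertions of \corref{c:Gamma on Ran untl mon}, using the dualizability of $\Dmod(\on{Ran}^{\on{untl}}(X))$ (\corref{c:unital Ran dualizable}) to transfer information from the absolute to the parameterized setting. Concretely, by \lemref{l:product dualizable}, we have a canonical equivalence
$$\Dmod(\CY\times \on{Ran}^{\on{untl}}(X)^n)\simeq \Dmod(\CY)\otimes \Dmod(\on{Ran}^{\on{untl}}(X))^{\otimes n},$$
under which the functors $(\on{id}_\CY\times f)_!$ and $(\on{id}_\CY\times f)^!$ correspond to $\on{id}_{\Dmod(\CY)}\otimes f_!$ and $\on{id}_{\Dmod(\CY)}\otimes f^!$ respectively; the base-change and projection-formula properties needed to see this are the content of \corref{c:Gamma defined on Ran untl rel}.

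For part (a): By the projection formula of \corref{c:Gamma defined on Ran untl rel}, there is a canonical isomorphism
$$(\on{id}_\CY\times p_{\on{Ran}^{\on{untl}}})_!(\CF\boxtimes \omega_{\on{Ran}^{\on{untl}}(X)})\simeq \CF\otimes_k \Gamma_{c,\on{Ran}^{\on{untl}}}(\omega_{\on{Ran}^{\on{untl}}(X)}),$$
and the natural map in the statement is obtained by tensoring $\on{id}_\CF$ with the counit $\Gamma_{c,\on{Ran}^{\on{untl}}}(\omega_{\on{Ran}^{\on{untl}}(X)})\to k$. Writing $\omega_{\on{Ran}^{\on{untl}}(X)}=p^!_{\on{Ran}^{\on{untl}}}(k)$, this counit is an isomorphism precisely because $p^!_{\on{Ran}^{\on{untl}}}$ is fully faithful, which is the content of \corref{c:Gamma on Ran untl mon}(a).

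For part (b): Again invoking the tensor product decomposition above, both of the endofunctors comparing pushforward along the diagonal and along the product projection are of the form $\on{id}_{\Dmod(\CY)}\otimes(-)$, and the natural transformation between them is obtained by tensoring $\on{id}_{\Dmod(\CY)}$ with the absolute natural transformation
$$(p_{\on{Ran}^{\on{untl}}})_!\circ \Delta^!_{\on{Ran}^{\on{untl}}}\to (p_{\on{Ran}^{\on{untl}}}\times p_{\on{Ran}^{\on{untl}}})_!.$$
Hence it suffices to show that the latter is an isomorphism on all of $\Dmod(\on{Ran}^{\on{untl}}(X)\times \on{Ran}^{\on{untl}}(X))$. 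This is exactly the statement that $\Delta_{\on{Ran}^{\on{untl}}}$ is universally homologically cofinal (\propref{p:Ran untl contr}(b)); equivalently, by adjunction it encodes the fact that $\Gamma_{c,\on{Ran}^{\on{untl}}}$ is symmetric monoidal with respect to $!$-tensor products, i.e., \corref{c:Gamma on Ran untl mon}(b), applied to objects of the form $\CF_1\boxtimes \CF_2$ (which generate by dualizability).

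The only substantive point to verify is that the two natural transformations genuinely decompose as tensor products with $\on{id}_{\Dmod(\CY)}$ after invoking dualizability; this rests on the continuity of both functors involved and their compatibility with exterior products, which are precisely what \corref{c:Gamma defined on Ran untl rel} provides.
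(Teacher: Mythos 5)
Your argument is correct and matches the paper's intended (but unspelled-out) proof: the paper derives this corollary by citing \corref{c:Gamma on Ran untl mon} together with \corref{c:unital Ran dualizable} and \lemref{l:product dualizable}, which is precisely the dualizability-based reduction you carry out. The only slight imprecision is the parenthetical "which generate by dualizability" in part (b): box products generate the tensor product of any two DG categories, independent of dualizability; what dualizability buys you is the identification $\Dmod(\CY\times \Ran^{\on{untl}}(X)^n)\simeq \Dmod(\CY)\otimes \Dmod(\Ran^{\on{untl}}(X))^{\otimes n}$ of \lemref{l:product dualizable} and the fact that $\on{id}\otimes p_!$ is left adjoint to $\on{id}\otimes p^!$, both of which you invoke correctly.
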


\begin{rem}
The key difference between $\Ran(X)$ and $\Ran^{\on{untl}}(X)$ is that the functor 
$\Gamma_{c,\on{Ran}^{\on{untl}}}$ is (strictly) symmetric monoidal, whereas the functor 
$$\Gamma_{c,\on{Ran}}: \Dmod(\on{Ran}(X)) \to \on{Vect}$$
is only left-lax symmetric monoidal. This feature of $\Gamma_{c,\on{Ran}^{\on{untl}}}$ will play an essential role
in the main theorem.
\end{rem} 

\ssec{Contractibility results for the Ran space, the \emph{non}-unital version}

The contractibility results for the usual Ran space $\Ran(X)$ hold under the assumption
that $X$ is \emph{connected}, which will be imposed for the duration of this subsection. 

\sssec{}

We have the following fundamental contractibility statement about the usual Ran space:

\begin{thm} \label{t:Ran contr}
The prestack $\on{Ran}(X)$ is universally homologically contractible.
\end{thm}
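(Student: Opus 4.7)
The plan is to adapt the topological Beilinson--Drinfeld contractibility argument (\lemref{l:contractibility}) to the sheaf-theoretic, universally-parameterized setting. Concretely, one wants to prove that $p_!(\omega_{\on{Ran}(X)}) \simeq k$ and that this isomorphism persists after base change along any map $\CY \to \on{pt}$ with $\CY$ laft. The three structural inputs are: symmetry of the union map $\cup: \on{Ran}(X) \times \on{Ran}(X) \to \on{Ran}(X)$, the idempotence relation $\cup \circ \Delta = \on{id}_{\on{Ran}(X)}$ (holding tautologically since $I \cup I = I$), and the universal connectedness of $\on{Ran}(X)$, which is precisely where the hypothesis that $X$ is connected enters.

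First I would verify universal connectedness of $\on{Ran}(X)$. Because $X$ is connected, for any test scheme $S$ and any two finite subsets $I, J \subset \Maps(S, X_\dr)$, both are contained in $I \cup J$; hence the set $\on{Ran}(X)(S)$ is combinatorially connected. Sheaf-theoretically, this must be upgraded to the statement that, after base change to any laft prestack $\CY$, the unit map $\omega_\CY \to (p_\CY)_! \circ (p_\CY)^!(\omega_\CY)$ is an isomorphism in cohomological degree zero, where $p_\CY: \on{Ran}(X) \times \CY \to \CY$. This step is likely the main obstacle, and probably goes by an explicit inductive argument over the colimit presentation $\on{Ran}(X) \simeq \underset{I \in (\on{Fin}^s)^{\on{op}}}{\on{colim}} X^I_\dr$ of \eqref{e:Ran as colim}, exploiting that unions provide simplicial-style resolutions linking the various $X^I_\dr$.

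Next I would bring the union map into play. By \propref{p:Ran diag ps-proper}, both $\cup$ and $\Delta$ are pseudo-proper, hence $\cup_!$ and $\Delta_!$ are defined and satisfy base change (in the sense of \lemref{l:pseudo-proper pushforward}). This equips $p_!(\omega_{\on{Ran}(X)})$ with a commutative algebra structure in $\Vect$, and the identity $\cup \circ \Delta = \on{id}$ yields an ``idempotence'' relation: composing multiplication with the ``diagonal comultiplication'' recovers the identity functor on $p_!(\omega_{\on{Ran}(X)})$. A connected commutative algebra in $\Vect$ satisfying this relation is forced to be $k$, by the cohomological analog of \lemref{l:contractibility}.

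For the universal version, I would replace $\on{pt}$ by an arbitrary laft $\CY$ and observe that all the structural ingredients, namely symmetry, idempotence, and connectedness, are stable under base change thanks to pseudo-proper base change. The hardest part of this plan is the connectedness step: while the combinatorial fact is elementary, translating it into a universal homological statement requires machinery to track $\pi_0$-level information through the $!$-sheaf formalism, and it is where the connectedness hypothesis on $X$ is essentially used.
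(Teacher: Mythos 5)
The paper does not give its own proof of \thmref{t:Ran contr} — it is cited from \cite[Sect. 6]{Contr}, together with the remark that the argument there is "essentially a variation of \lemref{l:contractibility}". Your plan is consistent with that cited approach in spirit: use the union map $\cup$, its symmetry, the idempotence $\cup\circ\Delta=\on{id}$, pseudo-properness of $\cup$ and $\Delta$ for the push-pull and base-change formalism, and connectedness of $X$ to run a Beilinson--Drinfeld-style contractibility argument on $p_!(\omega_{\on{Ran}(X)})$ universally. So you have correctly identified the strategy.

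However, the "cohomological analog of \lemref{l:contractibility}" you invoke is not correct as literally stated, and this is where the real content lies. A connective commutative algebra $A$ with $H^0(A)=k$, a \emph{symmetric} binary comultiplication $\delta:A\to A\otimes A$, and $m\circ\delta=\on{id}$ need not be $k$: take the split square-zero extension $A=k\oplus V[1]$ with $\delta(v)=\tfrac{1}{2}(v\otimes 1+1\otimes v)$, which satisfies both binary relations but is not $k$ if $V\neq 0$. This example fails coassociativity of $\delta$, and that is exactly the extra structure your sketch omits. What one actually needs — and what $\cup_!$ and $\Delta_!$ do supply, because union and diagonal are \emph{strictly} associative and $S_n$-equivariant — is the full hierarchy of $n$-ary operations: coassociativity and $S_n$-invariance of the iterated comultiplication $\delta_n:A\to A^{\otimes n}$ together with $m_n\circ\delta_n=\on{id}$. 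Comparing $\delta_3$ computed by iterating $\delta_2$ against $S_3$-invariance forces $H^{-1}(A)=0$ (over characteristic zero), and inductively all negative cohomology vanishes. You should state and use this stronger structure explicitly rather than appealing to a binary analogue. A smaller slip: the map you write in the "universal connectedness" step should be the \emph{counit} $(p_\CY)_!(p_\CY)^!(\omega_\CY)\to\omega_\CY$, going in the opposite direction; universal homological contractibility is precisely the assertion that this counit and all its base changes are isomorphisms.
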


For the proof see \cite[Sect. 6]{Contr}.

\begin{rem}
The proof of \thmref{t:Ran contr} is essentially a variation of \lemref{l:contractibility}.
\end{rem}

\sssec{}

Now, we claim:

\begin{thm} \label{t:Ran cofinality new}
The inclusion
$$\iota:\on{Ran}(X) \to \on{Ran}^{\on{untl}}(X)$$
is universally homologically cofinal.
\end{thm}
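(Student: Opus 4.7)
The plan is to verify universal homological cofinality of $\iota$ by unwinding the definition at the level of affine test schemes, and then reducing to the contractibility statements already at our disposal. By \cite[Sect. 3.5]{AB}, the required condition is that for every affine scheme $S$ equipped with a map $S \to \on{Ran}^{\on{untl}}(X)$---equivalently, a finite subset $I \subset \on{Hom}(S, X_{\dr})$---the base-changed projection
\[
\on{pr}_S: S \underset{\on{Ran}^{\on{untl}}(X)}{\times} \on{Ran}(X) \to S
\]
is universally homologically contractible. A $T$-point of this fiber product is given by a morphism $T \to S$ together with a non-empty finite subset $J \subset \on{Hom}(T, X_{\dr})$ containing the pullback $I|_T$.

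I would then split the analysis according to whether $I$ is the empty subset, a well-defined dichotomy for affine $S$. If $I = \emptyset$, the constraint $J \supset I|_T$ is vacuous, so the fiber product identifies with $S \times \on{Ran}(X)$, and universal contractibility of $\on{pr}_S$ follows from \thmref{t:Ran contr} combined with the K\"unneth-type compatibility of pseudo-proper pushforwards established in \corref{c:Gamma defined on Ran untl rel} (and its $\on{Ran}(X)$-analog). If instead $I \neq \emptyset$, then $I|_T$ remains non-empty for every $T \to S$, so the non-emptiness of $J$ is automatic, and the fiber product acquires a canonical section $s: S \to S \times_{\on{Ran}^{\on{untl}}(X)} \on{Ran}(X)$ sending $T \mapsto I|_T$. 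To verify that $s$ is a universal homological equivalence, I would imitate the strategy of \propref{p:pushfrwrd from non-untl}: the section $s$ arises by base change from $\nu: \on{Ran}_\emptyset(X) \to \on{Ran}^{\to}(X)$, $K \mapsto (K \subset K)$, while $\on{pr}_S$ arises from $\phi$; the adjoint pair $(\nu, \xi)$ of morphisms of categorical prestacks over $\on{Ran}^{\on{untl}}(X)$ then shows that the counit $s_! s^! \to \on{id}$ is an equivalence on dualizing sheaves.

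The main obstacle will be to combine these two cases into a uniform argument that applies not only to affine $S$ but also to general categorical-prestack test objects $\mathcal{Y} \to \on{Ran}^{\on{untl}}(X)$, where the empty vs.\ non-empty dichotomy need not split cleanly. To handle this, I plan to use the factorization $\iota = \iota' \circ j$, with $j: \on{Ran}(X) \hookrightarrow \on{Ran}_\emptyset(X)$ the open inclusion of the non-empty component and $\iota'$ the morphism of \propref{p:pushfrwrd from non-untl}, together with the explicit formula $\iota'_! = \phi_! \circ \xi^!$ recorded there. Under this factorization, the ``empty-locus'' contribution is precisely controlled by the absolute contractibility of $\on{Ran}(X)$ (\thmref{t:Ran contr}), while the ``non-empty'' contribution is controlled by the adjunction between $\nu$ and $\xi$; taken together, these yield the universal homological cofinality of $\iota$.
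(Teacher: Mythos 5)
The paper does not give its own proof of this statement: it cites \cite[Theorem 4.6.2]{AB} and records only the remark that the proof there is an ``immediate consequence'' of \thmref{t:Ran contr}. So the comparison is to the intent of that remark, and on that score your starting point is right---the lax (comma) fiber product $\CP_S := S \underset{\Ran^{\on{untl}}(X)}{\times}\Ran(X)$ over a point $S \xrightarrow{I} \Ran^{\on{untl}}(X)$ is indeed the object to control, and \thmref{t:Ran contr} is the input. But your execution has a genuine gap and also misses the reason the deduction is ``immediate.''

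The gap is in the non-empty case and in the proposed patch. First, the claim that the adjoint pair $(\nu,\xi)$ shows ``the counit $s_!s^! \to \operatorname{id}$ is an equivalence on dualizing sheaves'' cannot be right as stated: $s$ is a closed section of $\on{pr}_S$, not an isomorphism, so $s_!\omega_S$ is supported on the image of $s$ and is not $\omega_{\CP_S}$. What you want is that $\on{pr}_S$ is universally homologically contractible, and the existence of a section is not by itself enough. Second, the proposed factorization $\iota = \iota' \circ j$ cannot be used to split the verification between the two pieces: neither $j\colon \Ran(X)\hookrightarrow \Ran_\emptyset(X)$ nor $\iota'\colon \Ran_\emptyset(X)\hookrightarrow \Ran^{\on{untl}}(X)$ is itself universally homologically cofinal. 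For $j$, the comma fiber over the $\emptyset$-component is empty. For $\iota'$, one can already see the failure on the test object $\delta_\emptyset\in\Dmod(\Ran^{\on{untl}}(X))$ (i.e.\ $J\mapsto\omega$ at $J=\emptyset$ and $0$ otherwise, with zero transition maps): here $\Gamma_{c,\Ran^{\on{untl}}}(\delta_\emptyset)=0$ because the compatibility with the transition map $\emptyset\subset J$ forces any map $\delta_\emptyset\to p^!(V)$ to vanish, whereas $\Gamma_{c,\Ran_\emptyset}((\iota')^!\delta_\emptyset)=k$. So cofinality does not compose along this factorization, and the final paragraph is not a proof.

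Both the gap and the case split can be avoided by one uniform observation, which I believe is what makes the deduction immediate. For every $S$-point $I$ (empty or not), $\CP_S$ embeds in $\Ran(X)\times S$ by forgetting the containment constraint, and there is a retraction $q\colon \Ran(X)\times S\to \CP_S$ given by $J'\mapsto J'\cup I|_{T'}$; one has $q\circ\tau = \operatorname{id}_{\CP_S}$, and when $I=\emptyset$ both maps are the identity. Since universal homological contractibility over $S$ is equivalent to $p^!\colon\Dmod(S)\to\Dmod(-)$ being fully faithful, and full faithfulness is stable under retracts (a formal argument: $q^!$ on mapping spaces is split mono by $\tau^!$ and becomes an isomorphism after further composing with $p_{\CP_S}^!$, hence is itself an isomorphism), the universal homological contractibility of $\Ran(X)\times S\to S$ from \thmref{t:Ran contr} passes to $\CP_S\to S$. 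No case analysis, no factorization, and no appeal to $(\nu,\xi)$ is needed.
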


For the proof, see \cite[Theorem 4.6.2]{AB}. 

\begin{rem}
Obviously, \thmref{t:Ran cofinality new} combined with \propref{p:Ran untl contr} implies
\thmref{t:Ran contr}. However, the proof of \thmref{t:Ran cofinality new} given
in \cite{AB} is an immediate consequence of \thmref{t:Ran contr}.

\end{rem}

\sssec{}

As a first consequence of \thmref{t:Ran cofinality new}, we obtain:

\begin{cor}\label{c:Ran cofinality}
Let $\CY$ be an arbitrary laft prestack, $\CF'\in \Dmod(\CY)$ and 
$$\CF \in \Dmod(\CY)\otimes \Dmod(\on{Ran}^{\on{untl}}(X))\simeq \Dmod(\CY\times \on{Ran}^{\on{untl}}(X)).$$
Then the map
\begin{multline*}
\on{Maps}_{\Dmod(\CY)\otimes \Dmod(\on{Ran}^{\on{untl}}(X))}(\CF, \CF'\boxtimes \omega_{\on{Ran}^{\on{untl}} (X)})) \to \\
\to \on{Maps}_{\Dmod(\CY)\otimes \Dmod(\on{Ran}(X))}(\iota^!(\CF), \iota^!(\CF')\boxtimes \omega_{\on{Ran}(X)})) 
\end{multline*}
is an isomorphism.
\end{cor}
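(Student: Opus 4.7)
The plan is to rewrite both mapping spaces using $!$-pullback/$!$-pushforward adjunctions over the base $\CY$, reducing the assertion to the universal homological cofinality of $\iota$ from \thmref{t:Ran cofinality new}, base-changed along $\CY \to \on{pt}$.

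Write $\iota_\CY := \on{id}_\CY \times \iota$, $p_{\on{untl}} := \on{id}_\CY \times p_{\on{Ran}^{\on{untl}}} : \CY \times \on{Ran}^{\on{untl}}(X) \to \CY$, and $p_{\on{n}} := \on{id}_\CY \times p_{\on{Ran}} : \CY \times \on{Ran}(X) \to \CY$, so that $p_{\on{n}} = p_{\on{untl}} \circ \iota_\CY$. Identify $\CF' \boxtimes \omega_{\on{Ran}^{\on{untl}}(X)} = p_{\on{untl}}^!(\CF')$ and $\CF' \boxtimes \omega_{\on{Ran}(X)} = p_{\on{n}}^!(\CF') = \iota_\CY^! p_{\on{untl}}^!(\CF')$. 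By \corref{c:Gamma defined on Ran untl rel}, the adjunction $(p_{\on{untl}})_! \dashv p_{\on{untl}}^!$ is defined; assuming $X$ proper, so that $\on{Ran}(X)$ is pseudo-proper as a colimit of the pseudo-proper $X^I_\dr$, the analogous adjunction $(p_{\on{n}})_! \dashv p_{\on{n}}^!$ is also available via pseudo-proper pushforward. Under these adjunctions, the displayed map becomes
$$\on{Maps}_{\Dmod(\CY)}\bigl((p_{\on{untl}})_!(\CF),\, \CF'\bigr) \to \on{Maps}_{\Dmod(\CY)}\bigl((p_{\on{n}})_!(\iota_\CY^!\CF),\, \CF'\bigr),$$
so it suffices to prove that the canonical comparison $(p_{\on{n}})_!(\iota_\CY^!\CF) \to (p_{\on{untl}})_!(\CF)$ is an isomorphism in $\Dmod(\CY)$.

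Using $p_{\on{n}} = p_{\on{untl}} \circ \iota_\CY$ and the resulting factorization $(p_{\on{n}})_! = (p_{\on{untl}})_! \circ (\iota_\CY)_!$ (where $(\iota_\CY)_!$ is the relative version of the left adjoint constructed in \propref{p:pushfrwrd from non-untl}), this canonical comparison is precisely $(p_{\on{untl}})_!$ applied to the counit $(\iota_\CY)_!\iota_\CY^!\CF \to \CF$. Hence the corollary reduces to showing that this counit becomes an isomorphism after applying $(p_{\on{untl}})_!$.

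But this is the translation of \thmref{t:Ran cofinality new} into the present relative setting: the universality of the cofinality of $\iota$ is precisely the statement that after base-change along $\CY \to \on{pt}$, the induced $\iota_\CY$ remains homologically cofinal, which by definition means that $(p_{\on{untl}})_!$ annihilates the cofiber of $(\iota_\CY)_!\iota_\CY^! \to \on{id}$. The only non-formal content is thus the unpacking of ``universally homologically cofinal'' in the sense of \cite{AB}; granted this, the proof is a direct adjunction chase.
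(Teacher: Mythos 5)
Your overall strategy — reduce the assertion to the universal homological cofinality of $\iota$ from \thmref{t:Ran cofinality new} by base change along $\CY \to \on{pt}$ — is indeed the intended route, and the paper treats the corollary as an immediate consequence of that theorem. But your execution introduces two problems.

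First, you insert the hypothesis ``$X$ proper'' in order to have the left adjoints $(p_{\on{untl}})_!$ and $(p_{\on{n}})_!$ at your disposal. This hypothesis is not part of \corref{c:Ran cofinality}: the standing assumption in the subsection is only that $X$ is \emph{connected}. Indeed \thmref{t:Ran cofinality new}, from which the corollary is deduced, carries no properness assumption either, whereas \corref{c:Gamma defined on Ran untl} and \corref{c:Gamma on Ran untl mon}(b) state properness explicitly exactly because $\Gamma_{c,\on{Ran}^{\on{untl}}}$ requires it. So your proof genuinely loses generality. The only left adjoint you actually need is $(\iota_\CY)_!$, which — unlike $(p_{\on{untl}})_!$ — is available with no properness on $X$, since $\iota$ is pseudo-proper (cf.\ the constructions around \propref{p:pushfrwrd from non-untl} and the proof of \thmref{t:image of Dmod in Ran}, none of which invoke properness). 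Using only $(\iota_\CY)_!$, identify the target of the map with $\on{Maps}\bigl((\iota_\CY)_!\iota_\CY^!\CF,\ \CF'\boxtimes\omega_{\on{Ran}^{\on{untl}}(X)}\bigr)$; the corollary then says the counit becomes an isomorphism against objects of the form $\CF'\boxtimes\omega_{\on{Ran}^{\on{untl}}(X)}$, which is precisely the relative form of homological cofinality for $\iota_\CY$ over $\CY$.

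Second, your final paragraph asserts that universal homological cofinality of $\iota$ ``by definition means that $(p_{\on{untl}})_!$ annihilates the cofiber of $(\iota_\CY)_!\iota_\CY^!\to\on{id}$.'' This cannot be the definition, since the definition of cofinality in \cite{AB} must make sense when $(p_{\on{untl}})_!$ does not exist (e.g.\ $X$ non-proper). Stated as you have it, the step is circular: you replace a statement about mapping spaces by an equivalent statement about a pushforward and then declare that the latter ``is'' cofinality. What is true is that, when $(p_{\on{untl}})_!$ exists, cofinality \emph{implies} that it kills the cofiber — that is the content of Remark~\ref{r:non-unital monad} and the manipulations elsewhere in the paper — but the corollary itself is the more primitive statement, phrased as an isomorphism of mapping spaces with ``constant'' targets $\CF'\boxtimes\omega$, and it is that form which follows directly from base-changing the cofinality of $\iota$ over $\CY$, with no $\Gamma_c$-type functors entering at all.
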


\ssec{Linear factorization sheaves} \label{ss:lin fact}

In this subsection, we will study the left adjoint to the pullback functor along the inclusion $$i:X\to \on{Ran}^{\on{untl}} (X).$$  
In particular, we will show that it is fully faithful and identify the essential image.

\sssec{}

Consider the union map:
$$ \cup: \on{Ran}^{\on{untl}} (X) \times \on{Ran}^{\on{untl}} (X) \to \on{Ran}^{\on{untl}} (X) $$
Evidently, since $I \subset I \cup J$ and $J \subset I\cup J$, we have
morphisms
$$ p_1 \to \cup \mbox{ and } p_2 \to \cup$$
in the category $\on{Maps}(\on{Ran}^{\on{untl}} (X)\times \on{Ran}^{\on{untl}} (X), \on{Ran}^{\on{untl}} (X))$
where $p_1$ and $p_2$ are the two projections.

\medskip

In particular, for $\CF\in \Dmod(\on{Ran}^{\on{untl}} (X))$, we have the canonically defined maps
$$p_i^!(\CF) \to \cup^!(\CF), \quad i=1,2$$
which give a map
\begin{equation} \label{e:lin fact}
p_1^!(\mathcal{F}) \oplus p_2^!(\mathcal{F}) \to \cup^!(\mathcal{F}).
\end{equation} 

\sssec{}

Let
$$(\on{Ran}^{\on{untl}} (X) \times \on{Ran}^{\on{untl}} (X))_{\on{disj}} \subset \on{Ran}^{\on{untl}} (X) \times \on{Ran}^{\on{untl}} (X)$$
denote the full sub-categorical prestack consisting of pairwise disjoint subsets.

\medskip

We define the full subcategory of $\Dmod(\on{Ran}^{\on{untl}} (X))$, to be called the category of \emph{linear factorization sheaves}, to be denoted 
$\on{LFS}(X)$, to consist of those objects $\CF\in \Dmod(\on{Ran}^{\on{untl}} (X))$, for which the map \eqref{e:lin fact} 
restricts to an isomorphism over $(\on{Ran}^{\on{untl}} (X) \times \on{Ran}^{\on{untl}} (X))_{\on{disj}}$.

\medskip

Note that for $\CF \in \on{LFS}(X)$, restricting the isomorphism \eqref{e:lin fact} along the map
$$ \on{Ran}^{\on{untl}}(X) \overset{(\emptyset, \on{id})}{\longrightarrow} (\on{Ran}^{\on{untl}} (X) \times \on{Ran}^{\on{untl}} (X))_{\on{disj}} $$
shows that the restriction of $\CF$ along
$$\on{pt} \overset{\emptyset}\hookrightarrow \on{Ran}^{\on{untl}} (X)$$
vanishes. 

\sssec{} \label{sss:explicit LFS}

We can describe the subcategory 
$$\on{LFS}(X) \subset \Dmod(\on{Ran}^{\on{untl}} (X))$$
in a more hands-on way, in the spirit of \secref{sss:dmod on Ran as finite prods}, as follows.

\medskip

Suppose we have a $D$-module
$$ M \in \Dmod(\on{Ran}^{\on{untl}} (X)) .$$
Then $M$ lies in $\on{LFS}(X)$ iff
for every partition $I = I_1 \sqcup I_2$ of a finite set,
the map
$$ \nu(M)_{f_1} \oplus \nu(M)_{f_2}:
 \Delta_{f_1}^!(M_{I_1}) \oplus \Delta_{f_2}^!(M_{I_2}) \to M_{I} $$
becomes an isomorphism when restricted to $(X^{I_1}\times X^{I_2})_{\on{disj}}$, 
where $$f_i: I_i \to I$$ are the inclusion maps for $i=1,2$, and $f_i:X^I\to X^{I_i}$ are the corresponding projections. 

\begin{rem}

\thmref{t:image of Dmod in Ran} is the second main reason that the unital Ran space appears in this paper:

\medskip

One could define linear factorization sheaves on the non-unital Ran space, but in this case, this will
be extra structure and not a property, hence much more cumbersome to work with.

\medskip

The relevance of linear factorization sheaves is explained by \thmref{t:image of Dmod in Ran} below.

\end{rem} 

\sssec{}

We claim:

\begin{thm}\label{t:image of Dmod in Ran}
Let $X$ be a (separated) laft scheme.  The restriction functor
$$ i^!: \Dmod(\on{Ran}^{\on{untl}} (X)) \to \Dmod(X) $$
admits a fully faithful left adjoint
$$ i_!: \Dmod(X) \to \Dmod(\on{Ran}^{\on{untl}} (X)),$$
whose essential image equals $\on{LFS}(X)$. 
\end{thm}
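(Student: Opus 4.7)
The plan is to construct $i_!$ by factoring the map $i:X\to \on{Ran}^{\on{untl}}(X)$ as
$$ X \xrightarrow{j} \on{Ran}_\emptyset(X) \xrightarrow{\iota} \on{Ran}^{\on{untl}}(X),$$
where $j$ is the inclusion of the component of singleton subsets. The map $j$ is an open-and-closed embedding in $\on{Ran}_{\emptyset}(X)=\on{Ran}(X)\sqcup\{\emptyset\}\sqcup X\sqcup\ldots$ (for $X$ connected, the clopen component; in general, pseudo-proper by the disjoint union presentation of $\on{Ran}_\emptyset(X)$ via \propref{p:ran pres}). Hence $j_!$ exists as left adjoint to $j^!$. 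Combined with \propref{p:pushfrwrd from non-untl}, this yields
$$i_! \;:=\; \iota_!\circ j_! \;=\; \phi_!\circ \xi^!\circ j_!,$$
proving the existence assertion.

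Next I would unpack $i_!(M)$ explicitly. Since $j_!(M)$ is supported on the singleton component, base-change along the Cartesian diagram defining $\on{Ran}^{\to}(X)$ identifies $\xi^!(j_!(M))$ with the restriction of $M$ to the locus $\{\{x\}\subset J\}$. Pushing forward via $\phi:(I\subset J)\mapsto J$, which along the singleton-fiber is pseudo-proper, produces at each $J=\{x_1,\ldots,x_n\}$ a "sum over points of $J$" of the !-fiber of $M$. Using the Cartesian fibration presentation of $\overset{\circ}{\mathfrak{X}}_{\on{Ran}^{\on{untl}}}$ from \secref{ss:punctured fibration}, one identifies the restriction of $i_!(M)$ to $X^I$ with the !-pushforward along the map $X\times X^I_{\on{disj}}\to X^I_{\on{disj}}$ of $M\boxtimes\omega_{X^I_{\on{disj}}}$, extended across diagonals by the functorial structure of $\Ran^{\on{untl}}$.

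From this formula I would verify the three substantive claims. \emph{(i) Image lies in $\on{LFS}(X)$}: on the disjoint locus $(\on{Ran}^{\on{untl}}(X)\times\on{Ran}^{\on{untl}}(X))_{\on{disj}}$, the "sum over points in $I\cup J$" splits as a sum over $I$ plus a sum over $J$, giving the factorization isomorphism \eqref{e:lin fact}. The value at $\emptyset$ vanishes because the sum over an empty set is zero. \emph{(ii) Full faithfulness}: the unit $M\to i^! i_!(M)$ is an isomorphism because restricting to the singleton stratum leaves only the contribution of $\{x\}\subset\{x\}$ in the explicit formula, matching $M$ by base change (equivalently, $j^!\iota^!\iota_!j_!\simeq j^!j_!\simeq\on{id}$ since $j_!$ is fully faithful and $\iota^!\iota_!$ restricted to the singleton component is the identity, by base-change against $\phi,\xi$). \emph{(iii) Essential image equals $\on{LFS}(X)$}: given $\CF\in\on{LFS}(X)$, I need to show the counit $i_! i^!(\CF)\to\CF$ is an isomorphism. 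Iterating the factorization axiom along partitions $I=I_1\sqcup\ldots\sqcup I_n$ into singletons, the value $\CF_I|_{X^I_{\on{disj}}}$ is determined as $(i^!\CF)^{\boxtimes I}|_{X^I_{\on{disj}}}$, which agrees with $(i_! i^!\CF)_I|_{X^I_{\on{disj}}}$ by step (i) and the formula from the previous paragraph. To promote the isomorphism from the disjoint locus to all of $X^I$, I would use that $\CF$ is determined, as an object of $\Dmod(\on{Ran}^{\on{untl}}(X))$, by its restriction to $\bigsqcup_I X^I_{\on{disj}}$ together with the coherent compatibility encoded by the structure maps $\nu(\CF)_f$ for surjections $f$, and this data on both sides agrees tautologically.

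The main obstacle will be step (iii): upgrading agreement on the open disjoint loci to agreement globally on each $X^I$. The cleanest route is to show that the full sub-$2$-category of $\Dmod(\on{Ran}^{\on{untl}}(X))$-generating data is precisely the disjoint-locus data together with the Ran surjection structure, so that both $i_!i^!\CF$ and $\CF$, having the same underlying disjoint-locus data (as both are linear factorization sheaves with the same value at the singleton stratum), must coincide. A subtle bookkeeping point, which must be handled carefully in the categorical-prestack setting of \secref{s:categ prestacks}, is the interaction between $\phi_!$, the inclusion $j$, and the structure maps $\nu_f$ for non-surjective $f$: the vanishing of $\CF|_{\emptyset}$ (a consequence of the LFS axiom) is what guarantees compatibility with those non-surjective structure maps.
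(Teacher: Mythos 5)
Your skeleton agrees with the paper's after unwinding: you factor $i$ as $j$ followed by $\iota$ and set $i_! = \phi_!\circ\xi^!\circ j_!$, while the paper factors $i$ through the pointed Ran space $\on{Ran}^{\on{untl}}_{\star}(X)$ and sets $i_! = q_!\circ\on{pr}^!$; base-changing the Cartesian square that defines $\on{Ran}^{\on{untl}}_{\star}(X)$ (as $\on{Ran}^{\to}(X)\underset{\on{Ran}_\emptyset(X)}{\times}X_\dr$) shows these are the same functor. However, the claim that $j:X\to\on{Ran}_\emptyset(X)$ is ``an open-and-closed embedding'' in a decomposition $\on{Ran}_\emptyset(X)=\on{Ran}(X)\sqcup\{\emptyset\}\sqcup X\sqcup\ldots$ is false: $\on{Ran}_\emptyset(X)$ is just $\on{Ran}(X)\sqcup\{\emptyset\}$, and the singleton locus $X_\dr$ sits inside $\on{Ran}(X)$ as a \emph{closed} sub-prestack (the main diagonal in each $X^I_\dr$), not a clopen component --- two points can degenerate to one. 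Your fallback to pseudo-properness (via \propref{p:Ran diag ps-proper}(a)) is what actually gives $j_!$, and full faithfulness of $j_!$ then comes from $j$ being an ind-closed immersion, not from openness; you should say this explicitly.

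On the essential image, you correctly isolate the obstacle (passing from $X^I_{\on{disj}}$ to all of $X^I$), but your proposed resolution --- ``two $\on{LFS}$-objects with the same disjoint-locus data must coincide'' --- is not quite an argument, since having the same data doesn't produce a map; you already have the counit $i_!i^!\CF\to\CF$ and the issue is to show it is an isomorphism. The paper organizes this more cleanly: it first proves that $i^!$ is \emph{conservative} on $\on{LFS}(X)$ (if $i^!\CF=0$, then by the $\on{LFS}$ condition plus the unital structure maps $\nu_f$ for surjections, $\CF_{X^I}$ vanishes on every stratum of the diagonal stratification, hence vanishes); then, once one knows that $i_!$ lands in $\on{LFS}(X)$ and that the unit $\on{id}\to i^!i_!$ is an isomorphism, applying $i^!$ to the counit $i_!i^!\CF\to\CF$ gives an isomorphism, so by conservativity the counit is an isomorphism. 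This is exactly the stratum-by-stratum content you gesture at, but packaged so as to avoid the reconstruction step. I would replace your step (iii) by the conservativity argument; your step (i) (image lies in $\on{LFS}$) is the right idea but needs the precise base-change computation over the disjoint locus, as in the paper.
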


\ssec{Proof of \thmref{t:image of Dmod in Ran}}

\sssec{}

Consider the pointed unital Ran categorical prestack of $X$:
$$ \on{Ran}^{\on{untl}}_{\star}(X)(S) = \on{Ran}_{\star}(\Maps(S,X_\dr))$$
We have the Cartesian square
$$
\xymatrix{
\on{Ran}^{\on{untl}}_{\star}(X) \ar[r]^-{\on{pr}}\ar[d] & X_\dr \ar[d] \\
\on{Ran}^{\to}(X) \ar[r]\ar[d]^{\phi} & \on{Ran}_\emptyset(X) \\
\on{Ran}^{\on{untl}}(X)},$$
where the composite vertical map, to be denoted $q$, is the natural projection 
$$\on{Ran}^{\on{untl}}_{\star}(X)\to \on{Ran}^{\on{untl}}(X).$$

\medskip

As we saw in \secref{sss:ps-pr Ran}, the map $\phi$ is pseudo-proper.  The map $X_\dr\to \on{Ran}_\emptyset(X)$
is pseudo-proper by \propref{p:Ran diag ps-proper}(a). Hence the map $q$ is also pseudo-proper. 

\medskip

We will show that the functor $q_! \circ \on{pr}^!$ provides a left adjoint to $i^!$. 

\sssec{}

The inclusion $i:X_\dr \to \on{Ran}^{\on{untl}} (X)$ factors as
$$ X_\dr \overset{i_0}{\to} \on{Ran}^{\on{untl}}_{\star}(X) \overset{q}{\to} \on{Ran}^{\on{untl}} (X).$$
The map $\on{pr}$ is the right adjoint to the inclusion map $i_0: X \to \on{Ran}_\star(X)$.  Therefore,
$$ \on{pr}^!: \Dmod(\on{Ran}^{\on{untl}}_{\star}(X)) \to \Dmod(X) $$
is the left adjoint to $i_0^!$.  Moreover, since $q$ is pseudo-proper, $q^!$ admits a left adjoint $q_!$.
Therefore, the left adjoint to $i^!$ exists and is given by $q_!\circ \on{pr}^!$.

\sssec{}
We will now show that this functor is fully faithful.
Consider the fiber product
$$ \on{Ran}^{\on{untl}}_{\star}(X) \underset{\on{Ran}^{\on{untl}} (X)}{\times} X_\dr.$$
It is easy to see, however, that the projection 
$$\on{Ran}^{\on{untl}}_{\star}(X)\underset{\on{Ran}^{\on{untl}} (X)}{\times} X_\dr\to X_\dr$$
is an isomorphism. Hence, so is the map
$$X_\dr\overset{i_0\times \on{id}}\to \on{Ran}^{\on{untl}}_{\star}(X) \underset{\on{Ran}^{\on{untl}} (X)}{\times} X_\dr.$$

\medskip

Hence, the diagram
$$
\CD
X_{\on{dR}} @>{i_0}>>  \on{Ran}^{\on{untl}}_{\star}(X)_{\on{dR}}  \\
@V{\on{id}}VV    @VV{q}V \\
X_{\on{dR}} @>{i}>> \Ran^{\on{untl}}(X) 
\endCD
$$
is a pullback square. 

\medskip

Since $q$ is pseudo-proper, the functor $q_!$ satisfies base change.  Hence, from the above diagram, we obtain: 
$$ i^! \circ q_! \simeq i_0^! ,$$
and thus
$$ i^! \circ i_! \simeq i^! \circ q_! \circ \on{pr}^! \simeq i_0^! \circ \on{pr}^! \simeq \on{id} ,$$
and therefore $i_!$ is fully faithful.

\sssec{}

Finally, let's show that the essential image of $i_!$ equals
$$\on{LFS}(X) \subset \Dmod(\on{Ran}^{\on{untl}} (X)).$$ 

First, note that the restriction functor
$$ i^!: \on{LFS}(X) \to \Dmod(X) $$
is conservative.  

\medskip

Indeed, suppose that $\mathcal{F}$ is an object in $\on{LFS}(X)$ such that
$i^!(\mathcal{F})=0$.  By the conditions defining $\on{LFS}$ (and induction), for a finite set $I$ we obtain that the object
$\mathcal{F}_{X^I}$ restricts to $0$ on each stratum of the diagonal stratification of $X^I$.  Therefore, $\mathcal{F}=0$.

\sssec{}

Thus, it remains to show that the essential image of $i_!$ lies in $\on{LFS}(X)$. Set
\begin{multline*}
(\on{Ran}^{\on{untl}}_{\star}(X) \times \on{Ran}^{\on{untl}} (X))_{\on{disj}}:= \\
\left(\on{Ran}^{\on{untl}}_{\star}(X) \times \on{Ran}^{\on{untl}} (X)\right)
\underset{\on{Ran}^{\on{untl}}_{\star}(X) \times \on{Ran}^{\on{untl}} (X)}\times
(\on{Ran}^{\on{untl}}(X) \times \on{Ran}^{\on{untl}} (X))_{\on{disj}}.
\end{multline*}

Consider the following diagram
$$
\CD
& & X \\
& & @A{\on{pr}}AA \\
(\on{Ran}^{\on{untl}}_{\star}(X) \times \on{Ran}^{\on{untl}} (X))_{\on{disj}} \sqcup (\on{Ran}^{\on{untl}} (X) \times \on{Ran}^{\on{untl}}_{\star}(X))_{\on{disj}} @>{\cup}>>
\on{Ran}^{\on{untl}}_{\star}(X)  \\
@V{(q \times \on{id}) \sqcup (\on{id} \times q)}VV @VV{q}V \\
(\on{Ran}^{\on{untl}} (X) \times \on{Ran}^{\on{untl}} (X))_{\on{disj}} @>{\cup}>> \on{Ran}^{\on{untl}}(X)
\endCD
$$
%
where the square is Cartesian.  

\medskip

By base change for the pseudo-proper map $q$, we have 
$$ \cup^! \circ i_! = \cup^! \circ q_! \circ \on{pr}^! \simeq ((q\times \on{id})_! \boxplus (\on{id}\times q)_!) \circ \cup^! \circ \on{pr}^! .$$
Note that the composite
$$ (\on{Ran}^{\on{untl}}_{\star}(X) \times \on{Ran}^{\on{untl}} (X))_{\on{disj}} \sqcup (\on{Ran}^{\on{untl}} (X) \times \on{Ran}^{\on{untl}}_{\star}(X))_{\on{disj}} 
\overset{\on{pr}\circ \cup}\longrightarrow X $$
is isomorphic to $(\on{pr} \circ p_1) \sqcup (\on{pr}\circ p_2)$.  It follows that
$$ ((q\times \on{id})_! \boxplus (\on{id}\times q)_!) \circ \cup^! \circ \on{pr}^! \simeq (p_1^!\circ q_! \circ \on{pr}^!) \oplus (p_2^! \circ q_!\circ \on{pr}^!)=
(p_1^!\circ i^!) \oplus (p_2^! \circ i^!), $$
which gives the desired isomorphism.

\qed[\thmref{t:image of Dmod in Ran}]

%
%
%
%
%
%
%
%
%
%
%

\section{D-prestacks and prestacks of (flat) sections} \label{s:sect laft}

\ssec{Sectionally finite type D-prestacks} \label{ss:sect laft}

\sssec{} \label{sss:geom setup}

Let $X$ be a (separated) scheme of finite type.  Let $\CZ \to X_{\dr}$ be a prestack over $X_{\dr}$.

\medskip

Let $\CS(\CZ)$ denote the prestack of flat sections of $\CZ \to X_{\dr}$,
$$\mathcal{S}(\CZ)(S) := \on{Maps}_{/X_{\on{dR}}}(S \times X_{\on{dR}}, \CZ).$$

In other words, $\CS(\CZ)$ is the Weil restriction
$$\on{Res}^{X_{\dr}}_{\on{pt}}(\CZ)$$
of $\CZ \in \on{PreStk}_{/X_{\dr}}$ along $p_X:X_{\dr}\to \on{pt}$.

\medskip

In this subsection we will formulate a key finiteness condition on $\CZ$ that will ensure that
$\CS(\CZ)$ (along with its relatives that arise from punctured sections) 
has good infinitesimal properties. 

\sssec{}

First, we assume that $\CZ$ admits deformation theory. 

\medskip

Note that since the deformation
theory of $X_\dr$ is trivial, we could have equivalently demanded that $\CZ$ admit
deformation theory relative to $X_\dr$. 

\medskip

Similarly, since $T^*(X_{\on{dR}})=0$, we have $T^*(\CZ) \simeq T^*(\CZ/X_{\on{dR}})$.

\sssec{} \label{sss:defn sect laft}

We will say that $\CZ$ is \emph{sectionally laft} if:
\begin{enumerate}[label=(\roman*)]
\item
The prestack of sections $\mathcal{S}(\CZ)$ is laft.

\smallskip

\item
For every $(s: S \to \mathcal{S}(\CZ)) \in (\affSch_{\on{aft}})_{/\mathcal{S}(\CZ)}$, we have
$$ \oblv^{\on{fake}}\circ \on{ev}_s^{\#}(T^*(\CZ)) \in \on{Pro}(\QCoh(S \times X_{\on{dR}})^-)_{\on{laft}} ,$$
where 
\begin{itemize}
\item $\on{ev}_s: S \times X_{\on{dR}} \to \CZ$ is the section corresponding to $s$;

\item $\on{Pro}(\QCoh(S \times X_{\on{dR}})^-)_{\on{laft}}  \subset \on{Pro}(\QCoh(S \times X_{\on{dR}})^-)$
is the subcategory defined in \secref{sss:pro laft cat}.

\item $\oblv^{\on{fake}}$ and $f^{\#}$ (for a morphism $f$) are the functors introduced in \secref{ss:fake}. 

\end{itemize}

\end{enumerate}

In \secref{s:examples}, we will see examples of sectionally laft prestacks that arise in practice. 

%
%
%

\ssec{The antecedent of the tangent complex} \label{ss:Theta}

\sssec{} \label{sss:Theta on S}

Recall (see \corref{c:Serre Verdier}) that Serre-Verdier duality gives an equivalence
$$\BD^{\on{SV}}:\on{Pro}(\QCoh(S \times X_{\on{dR}})^-)_{\on{laft}} \simeq \IndCoh(S \times X_{\on{dR}})^{\on{op}}  $$
Moreover, for a map of affine schemes almost of finite type $f: S\to S'$, we have a commutative diagram
$$ \xymatrix{
\IndCoh(S' \times X_{\on{dR}})^{\on{op}}  \ar[r]^{(f^!)^{\on{op}} }\ar[d] & \IndCoh(S \times X_{\on{dR}})^{\on{op}}  \ar[d] \\
\on{Pro}(\QCoh(S' \times X_{\on{dR}})^-)_{\on{laft}} \ar[r]^{f^{\#}} & \on{Pro}(\QCoh(S \times X_{\on{dR}})^-)_{\on{laft}} 
}$$
which is functorial in $f$.

\sssec{}

Hence, we obtain that for every $(s: S \to \mathcal{S}(\CZ)) \in (\affSch_{\on{aft}})_{/\mathcal{S}(\CZ)}$ as above,
the dual 
$$\BD^{\on{SV}}(\oblv^{\on{fake}}\circ \on{ev}_s^{\#}(T^*(\CZ)))$$
is a well-defined object, to be denoted
$$(s\times \on{id})^!(\Theta(\CZ)) \in \IndCoh(S \times X_{\on{dR}}).$$

Moreover, these objects are compatible under pullbacks along maps $f:S\to S'$, thereby giving rise to an object
\begin{equation} \label{e:Theta}
\Theta(\CZ)\in \IndCoh(\mathcal{S}(\CZ) \times X_{\on{dR}})\simeq \IndCoh(\mathcal{S}(\CZ))\otimes \Dmod(X).
\end{equation} 

\begin{rem}

Note that unless $\CZ$ itself is laft, there is no object on $\CZ$
that  $\Theta(\CZ)$ is the pull-back of. If $\CZ$ is laft, $\Theta(\CZ)$ is the pull-back of
the tangent sheaf of $\CZ$, up to a twist, see \propref{p:target ft}. 

\end{rem} 

\sssec{}

The object $\Theta(\CZ)$ will play a similar role to the usual tangent complex.  Note that the usual tangent complex is defined as the 
dual of the cotangent complex, whereas $\Theta(\CZ)$ is defined as the \emph{$D$-module} dual of the cotangent 
complex and these dualities are in general quite different.

%
%
%
%

\medskip

Applying \corref{c:Weil def}, we obtain:

\begin{prop}\label{p:sect tangent}
Let $\CZ \to X_{\on{dR}}$ be a sectionally laft prestack.  Then $\CS(\CZ)$ admits deformation 
theory, and its tangent complex is given by
$$ T(\CS(\CZ)) \simeq  (\on{id}\times p_{X_\dr})^\IndCoh_*(\Theta(\CZ)),$$
where
$$\on{id}\times p_{X_\dr}:S\times X_\dr\to S$$
is the projection map. Equivalently, $T(\CS(\CZ)) \simeq (\on{Id}\otimes p_{\dr,*})(\Theta(\CZ))$, 
where 
$$(p_X)_{\dr,*}:\Dmod(X)\to \Vect$$
is the functor of de Rham cohomology. 
\end{prop}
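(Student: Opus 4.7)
The plan is to obtain both assertions (existence of deformation theory, and the formula for $T(\CS(\CZ))$) as direct consequences of the general Weil restriction deformation theory statement, Corollary \ref{c:Weil def}, combined with the Serre-Verdier duality established in \secref{ss:Theta}. The key point is that $\CS(\CZ) = \on{Res}^{X_\dr}_{\on{pt}}(\CZ)$ is, by construction, a Weil restriction along the map $p_X: X_\dr \to \on{pt}$, and our sectional laft hypothesis is precisely designed to make the two functorial operations (fake cotangent and Serre-Verdier) behave compatibly in this setting.

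First, I would spell out what \corref{c:Weil def} gives. For $s: S \to \CS(\CZ)$ corresponding to $\on{ev}_s: S \times X_\dr \to \CZ$, the Weil restriction machinery produces a pro-cotangent object at $s$, namely the image of $T^*(\CZ)$ under
$$\oblv^{\on{fake}} \circ \on{ev}_s^{\#}: \on{Pro}(\QCoh(\CZ)^-)^{\on{fake}} \longrightarrow \on{Pro}(\QCoh(S \times X_\dr)^-),$$
followed by direct image along the projection $p_{X_\dr}: S \times X_\dr \to S$. Condition (i) of sectional laft, combined with this formula, already verifies that $\CS(\CZ)$ admits a pro-cotangent complex at every point, and the assumption that $\CS(\CZ)$ is laft upgrades this to full deformation theory (cf.\ the criterion in \cite[Chapter 1]{Vol2} that for laft prestacks, having a pro-cotangent complex plus the infinitesimal lifting property extracted from $\CZ$ suffices).

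Next, condition (ii) of sectional laft asserts that $\oblv^{\on{fake}} \circ \on{ev}_s^{\#}(T^*(\CZ))$ lies in the subcategory $\on{Pro}(\QCoh(S \times X_\dr)^-)_{\on{laft}}$. Hence Serre-Verdier duality (\corref{c:Serre Verdier}) is applicable, and by the very definition of $\Theta(\CZ)$ in \secref{sss:Theta on S} its Serre-Verdier dual is $(s \times \on{id})^!(\Theta(\CZ)) \in \IndCoh(S \times X_\dr)$. Serre-Verdier duality intertwines pro-coherent direct image along $p_{X_\dr}$ with $\IndCoh$ direct image along the same map; dualizing the formula for the pro-cotangent therefore yields
$$T_s(\CS(\CZ)) \simeq (\on{id} \times p_{X_\dr})^{\IndCoh}_* \bigl((s \times \on{id})^!(\Theta(\CZ))\bigr),$$
which, being natural in $s$, is the desired identification. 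The second form of the statement follows from the identification $\IndCoh(S \times X_\dr) \simeq \IndCoh(S) \otimes \Dmod(X)$ together with the fact that under this identification $(\on{id} \times p_{X_\dr})^{\IndCoh}_*$ becomes $\on{Id} \otimes (p_X)_{\dr,*}$.

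The main obstacle is the compatibility of Serre-Verdier duality with the direct image functor along $p_{X_\dr}$, that is, checking that dualizing at the level of $\on{Pro}(\QCoh(-)^-)_{\on{laft}}$ sends $(p_{X_\dr})_*$ to $(p_{X_\dr})^{\IndCoh}_*$. This is a compatibility statement for the Serre-Verdier equivalence with pushforward along a proper morphism (which $p_X: X_\dr \to \on{pt}$ is, since $X$ is proper) and is essentially the reason for introducing the subcategory $\on{Pro}(\QCoh(S \times X_\dr)^-)_{\on{laft}}$ in Appendix \ref{s:finiteness} in the first place. Once this compatibility is in hand, the proof is a formal assembly of the inputs.
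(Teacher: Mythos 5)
Your approach is essentially the one the paper takes: the paper's entire proof is a citation of \corref{c:Weil def}, and you correctly unpack this as (a) identifying $\CS(\CZ)=\on{Res}^{X_\dr}_{\on{pt}}(\CZ)$ as a Weil restriction, (b) reading off the pro-cotangent as $(\on{id}\times p_{X_\dr})_\#\circ\on{ev}_s^\#(T^*(\CZ))$, and (c) passing to the tangent via the Serre--Verdier equivalence, using condition (ii) of sectional laft-ness to land in $\on{Pro}(\QCoh(S\times X_\dr)^-)_{\on{laft}}$.

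You also correctly flag the step the paper's terse proof elides: one must check that $\BD^{\on{SV}}$ and the Serre duality on $S$ intertwine $(\on{id}\times p_{X_\dr})_\#$ with $(\on{id}\times p_{X_\dr})^{\IndCoh}_*$. This is real work, and the paper verifies the analogous compatibility in \propref{p:target ft abs} and \propref{p:tangent Jets abs} by a direct computation with $\Psi$, $\Upsilon$, and the $\on{lim}_n\,\tau^{\geq -n}$ description of the pairing, but not for the particular map $p_{X_\dr}$.

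However, your justification of that compatibility is off. You invoke properness of $X$ so that $p_{X_\dr}$ would be proper, suggesting the compatibility should come from the adjunction between $(\on{id}\times p_{X_\dr})^\IndCoh_*$ and $(\on{id}\times p_{X_\dr})^!$. But \propref{p:sect tangent} does \emph{not} assume $X$ proper (the standing assumption in \secref{sss:geom setup} is only that $X$ is a separated scheme of finite type; properness enters later for the main theorems). For non-proper $X$, $(\on{id}\times p_{X_\dr})^\IndCoh_*$ is merely the inf-schematic direct image of \cite[Chapter 3, Sect.\ 3.4]{Vol2}, not an adjoint of $(\on{id}\times p_{X_\dr})^!$. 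The compatibility you need therefore cannot be obtained by dualizing the pullback compatibility of \secref{sss:Theta on S} via adjunction; it must be proved directly by the kind of pairing-level computation carried out in \propref{p:tangent Jets abs} (reduce to coherent $M$, reduce to external products, match the $S$-factor and $X$-factor separately). With that substitution your argument goes through, and is consistent with the second form of the statement, where $(\on{Id}\otimes(p_X)_{\dr,*})$ is the $*$-de Rham pushforward, which is the correct functor regardless of properness.
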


\begin{rem}
Combining \corref{c:Weil def} and \cite[Chapter 1, Corollary 9.1.3]{Vol2} we obtain that in the definition of
what it means to be sectionally laft in \secref{sss:defn sect laft}, given the second condition, 
we can replace the first condition by the following seemingly weaker one:

\begin{enumerate}
\item[(i')]
The \emph{classical prestack} underlying $\mathcal{S}(\CZ)$ is locally of finite type.
\end{enumerate}

\end{rem}

\ssec{Punctured sections}

\sssec{}
In addition to sections over all of $X_{\on{dR}}$, we will consider sections over complements of points in $X_{\on{dR}}$.  
Specifically, given a set of points $\{x_1, \ldots, x_n\} \subset X$, we can consider the prestack
$$ \overset{\circ}{\CS}(\CZ)_{\{x_1, \ldots, x_n\}}:= \mathcal{S}(\CZ_{| X_{\on{dR}} -\{x_1, \ldots, x_n\}}) $$
of sections over the complement of $\{x_1, \ldots, x_n\}$.  As the set of points vary, these prestacks assemble into a prestack over the (unital) 
Ran space $\on{Ran}^{\on{untl}}(X)$.

\medskip

Specifically, let $\gamma: (\overset{\circ}{\mathfrak{X}}_{\on{Ran}^{\on{untl}} })_{\dr} \to \on{Ran}^{\on{untl}} (X)$ be the 
(deRham space of) the universal punctured space constructed in \secref{ss:punctured fibration}.  The map $\gamma$ is a 
level-wise Cartesian fibration in groupoids.

\medskip

We have the projection map $(\overset{\circ}{\mathfrak{X}}_{\on{Ran}^{\on{untl}} })_{\dr} \to X_{\dr}$, so that 
the map
\begin{equation}\label{e:Z over punctured}
\CZ \underset{X_{\dr}}{\times} (\overset{\circ}{\mathfrak{X}}_{\on{Ran}^{\on{untl}} })_{\dr} \to (\overset{\circ}{\mathfrak{X}}_{\on{Ran}^{\on{untl}} })_{\dr}
\end{equation}
is a level-wise coCartesian fibration in groupoids.  Let
\begin{equation}\label{e:punctured sect}
\overset{\circ}{\CS}(\CZ)_{\on{Ran}^{\on{untl}} (X)} \to \on{Ran}^{\on{untl}}(X)
\end{equation}
denote the Weil restriction of \eqref{e:Z over punctured} along $\gamma$, i.e.,
$$\overset{\circ}{\CS}(\CZ)_{\on{Ran}^{\on{untl}}(X)}:=\on{Res}_\gamma(\overset{\circ}{\CS}(\CZ)_{\on{Ran}^{\on{untl}} (X)}),$$
see \secref{sss:Weil categ}. By construction, 
this is a level-wise coCartesian fibration in groupoids.

\sssec{}

We have a level-wise Cartesian fibration in groupoids
$$ (\overset{\circ}{\mathfrak{X}}_{\on{Ran}^{\on{untl}} })_{\dr} \to X_{\on{dR}} \times \on{Ran}^{\on{untl}}(X) .$$
Therefore, by adjunction, we have the restriction map
\begin{equation}\label{e:ran restr}
\CS(\CZ) \times \on{Ran}^{\on{untl}}(X) \to \overset{\circ}{\CS}(\CZ)_{\on{Ran}^{\on{untl}} (X)}
\end{equation}
of level-wise coCartesian fibrations in groupoids over $\Ran^{\on{untl}}(X)$.

\medskip
 
Explicitly, over a point $(x_1, \ldots, x_n) \in \on{Ran}(X)$, the above restriction map is given by the evident restriction of sections
$$ \CS(\CZ) \to \CS(\CZ_{| X_{\on{dR}} - \{x_1, \ldots, x_n\}}) .$$

\sssec{}\label{sss:punctured sects finite powers}

Here is a concrete description of the categorical prestack $\overset{\circ}{\CS}(\CZ)_{\on{Ran}^{\on{untl}} (X)}$ 
and the restriction map \eqref{e:ran restr}.  

\medskip

By \secref{sss:maps from Ran}, a level-wise coCartesian fibration in 
groupoids $\CY \to \on{Ran}^{\on{untl}}(X)$ can be described as the following data:
\begin{itemize}
\item
for every $I \in \on{Fin}$,
a prestack $\CY_I \to X^I_{\on{dR}}$ over $X^I_{\on{dR}}$;
\item
for every morphism $f: I \to J$ in $\on{Fin}$ a map of prestacks
$$ \CY_I \underset{X^I_{\on{dR}}}{\times} X^J_{\on{dR}} \to \CY_J $$
over $X^I_{\on{dR}}$, which are isomorphisms when $f$ is surjective,
\end{itemize}
equipped with a data of homotopy coherence for compositions of maps in $\on{Fin}$.

\medskip

In these terms, the level-wise coCartesian fibration in groupoids \eqref{e:punctured sect} is given as follows.  For every $I \in \on{Fin}$, the corresponding prestack
$$ \overset{\circ}{\CS}(\CZ)_{I} \to X^I_{\on{dR}} $$
is the Weil restriction of $\CZ \underset{X_{\on{dR}}}{\times} (\overset{\circ}{\mathfrak{X}}_I)_{\dr}$
along the map $\gamma_I: (\overset{\circ}{\mathfrak{X}}_I)_{\dr} \to X^I$, where
$$ \overset{\circ}{\mathfrak{X}}_I \subset X \times X^I $$
is the open subset from \secref{sss:punctured curve}, i.e.,
$$\overset{\circ}{\CS}(\CZ)_{I}=\on{Res}_{\gamma_I}(\overset{\circ}{\mathfrak{X}}_I),$$
see \secref{sss:Weil restr}.

\medskip

By adjunction, we have the canonical evaluation map
\begin{equation}
\on{ev}_{I}: \overset{\circ}{\CS}(\CZ)_{I} \underset{X^I_{\on{dR}}}{\times} (\overset{\circ}{\mathfrak{X}}_I)_{\dr}
\to \CZ
\end{equation}
over $X_{\on{dR}}$.

\medskip

Given a morphism, $f: I \to J$ in $\on{Fin}$, we are supposed to have a map
$$ \overset{\circ}{\CS}(\CZ)_{I} \underset{X^I_{\on{dR}}}{\times} X^J_{\on{dR}} \to \overset{\circ}{\CS}(\CZ)_{J} $$
over $X^J_{\on{dR}}$.
By adjunction, such a map is equivalent to a map 
$$
(\overset{\circ}{\CS}(\CZ)_{I} \underset{X^I_{\on{dR}}}{\times} X^J_{\on{dR}}) \underset{X^J_{\on{dR}}}{\times} (\overset{\circ}{\mathfrak{X}}_J)_{\dr} \simeq \overset{\circ}{\CS}(\CZ)_{I} \underset{X^I_{\on{dR}}}{\times} (\overset{\circ}{\mathfrak{X}}_J)_{\dr} \to \CZ 
$$
over $X_{\on{dR}}$.
This map is given by the composite
\begin{equation}\label{e:monodromy for punctured sects}
 \overset{\circ}{\CS}(\CZ)_{I} \underset{X^I_{\on{dR}}}{\times} (\overset{\circ}{\mathfrak{X}}_J)_{\dr} \to
\overset{\circ}{\CS}(\CZ)_{I} \underset{X^I_{\on{dR}}}{\times} (\overset{\circ}{\mathfrak{X}}_I)_{\dr} \overset{\on{ev}_I}{\to} \CZ
\end{equation}

\medskip
Simiarly, the restriction map \eqref{e:ran restr}
over $X^I_{\on{dR}}$ 
$$ \CS(\CZ) \times X^I_{\on{dR}} \to \overset{\circ}{\CS}(\CZ)_{I} .$$
is, by adjunction, given by the map
\begin{equation}\label{e:restr map on fin}
\CS(\CZ) \times (\overset{\circ}{\mathfrak{X}}_I)_{\dr} \to \CS(\CZ) \times X_{\dr} \overset{\on{ev}}{\to} \CZ ,
\end{equation}
which coincides with \eqref{e:monodromy for punctured sects} for the map $\emptyset \to I$ in $\on{Fin}$.

\ssec{Formal punctured sections}

In this subsection we will define one of the key geometric players for this paper.

\sssec{}

Let $\overset{\circ}{\CS}(\CZ)\strut^{\wedge}_{\on{Ran}^{\on{untl}}}$ denote the formal completion of $\overset{\circ}{\CS}(\CZ)_{\on{Ran^{\on{untl}}}}$ along the map
$$\CS(\CZ) \times \on{Ran}^{\on{untl}}(X)\to \overset{\circ}{\CS}(\CZ)_{\on{Ran^{\on{untl}}}},$$
see \secref{sss:formal compl categ}. 

\medskip

By \lemref{l:form compl fibration}, the maps
$$ \CS(\CZ) \times \on{Ran}^{\on{untl}}(X) \to \overset{\circ}{\CS}(\CZ)\strut^{\wedge}_{\on{Ran}^{\on{untl}} } \to \on{Ran}^{\on{untl}}(X)$$
are level-wise coCartesian fibrations in groupoids.

%

\sssec{}

Assume that $\CZ$ is sectionally laft (see \secref{ss:sect laft}). This assumption, by definition, guarantees that $\CS(\CZ)$ is a laft prestack.
But it does not imply a similar property for $\overset{\circ}{\CS}(\CZ)_{\on{Ran^{\on{untl}}}}$. However, it does imply that 
$\overset{\circ}{\CS}(\CZ)\strut^{\wedge}_{\on{Ran}^{\on{untl}}}$ is laft, see \propref{p:punctured sects laft-def} below. 

\medskip

The laft-ness property of $\overset{\circ}{\CS}(\CZ)\strut^{\wedge}_{\on{Ran}^{\on{untl}}}$
ensures that the category $\IndCoh$, and various monads associated with it, are well-defined on 
$\overset{\circ}{\CS}(\CZ)\strut^{\wedge}_{\on{Ran}^{\on{untl}}}$. 

\begin{prop} \label{p:punctured sects laft-def}
The categorical prestack $\overset{\circ}{\CS}(\CZ)\strut^{\wedge}_{\on{Ran}^{\on{untl}} }$ is laft-def.
\end{prop}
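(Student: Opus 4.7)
The plan is to invoke the general yoga of formal completions, adapted to the categorical prestack setting of \secref{sss:formal compl categ}. The relevant criterion is: given a map $\rho: \CX \to \CY$ of (categorical) prestacks such that $\CX$ is laft-def, $\CY$ admits deformation theory along $\rho$, and the relative cotangent complex pulls back to an object of $\on{Pro}(\QCoh(\CX)^-)_{\on{laft}}$, the formal completion $\CY^\wedge_\CX$ is automatically laft-def. Crucially, this criterion does not require $\CY$ itself to be laft-def, which is essential here since $\overset{\circ}{\CS}(\CZ)_{\on{Ran}^{\on{untl}}}$ is generally not laft.

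First I would verify laft-def-ness of the source $\CX := \CS(\CZ) \times \on{Ran}^{\on{untl}}(X)$ of the restriction map \eqref{e:ran restr}. Laft-ness of $\CS(\CZ)$ is clause (i) of sectional laft-ness of $\CZ$, and its deformation theory is provided by \propref{p:sect tangent}. Since $\on{Ran}^{\on{untl}}(X)$ is laft and coincides with its own de Rham prestack, it contributes only trivially to deformations, so the product is laft-def.

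Next, I would compute the relative cotangent of the restriction map level-wise over $\on{Fin}$, using the description of $\overset{\circ}{\CS}(\CZ)_I$ as a Weil restriction along $\gamma_I$ given in \secref{sss:punctured sects finite powers}. For a test scheme $S \in \affSch_{\on{aft}}$ and a point $(s,\ul{x})$ of $\CS(\CZ)(S) \times X^I_\dr(S)$, the Serre--Verdier dual of the relative tangent is expressible as a cofiber of restriction maps for $\oblv^{\on{fake}} \circ \on{ev}_s^{\#}(T^*(\CZ))$ between $S \times X_\dr$ and $S \times (\overset{\circ}{\mathfrak{X}}_I)_\dr$. By the localization sequence separating $X$ into the formal neighborhood of $\ul{x}$ and the punctured curve $\overset{\circ}{\mathfrak{X}}_I$, this cofiber is supported on the formal neighborhood of $\ul{x}$ and built from $i_{x_k}^!$-type contributions of $\Theta(\CZ)$ at each marked point $x_k \in \ul{x}$. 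Clause (ii) of sectional laft-ness is precisely what ensures these contributions lie in $\on{Pro}(\QCoh(-)^-)_{\on{laft}}$, giving the required laft-ness of the relative cotangent.

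The main obstacle is compatibility with the coCartesian-fibration structure over $\on{Ran}^{\on{untl}}(X)$ as $I$ varies: one must check that the level-wise cotangent computations assemble into a coherent pro-coherent object on the categorical prestack, and that the formal-completion criterion from \cite[Chapter 5]{Vol2} extends cleanly to this categorical setting. Both are essentially formal consequences of \lemref{l:form compl fibration} (which already identifies $\overset{\circ}{\CS}(\CZ)^\wedge_{\on{Ran}^{\on{untl}}}$ as a level-wise coCartesian fibration over $\on{Ran}^{\on{untl}}(X)$) together with the pseudo-properness of the universal punctured space established in \propref{p:Ran diag ps-proper}. Modulo this bookkeeping, the general criterion above yields the laft-def property of $\overset{\circ}{\CS}(\CZ)^\wedge_{\on{Ran}^{\on{untl}}}$.
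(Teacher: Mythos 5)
Your reduction strategy matches the paper's: \corref{c:fibration laft-def} reduces to the fibers over finite $I\in\on{Fin}$, the nil-isomorphism $\CS(\CZ)\times X^I_\dr\to \overset{\circ}{\CS}(\CZ)\strut^\wedge_I$ together with \cite[Chapter 1, Theorem 9.1.2]{Vol2} reduces to laft-ness of the pro-coherent cotangent at affine test schemes, and clause (ii) of sectional laft-ness is indeed the source of the required finiteness. But the central technical step is glossed over in a way that leaves a genuine gap. After pulling back along the commutative square relating $S\times X_\dr$, $S\times (\overset{\circ}{\mathfrak{X}}_I)_\dr$, and $\CZ$, the paper expresses the full cotangent of $\overset{\circ}{\CS}(\CZ)\strut^\wedge_I$ at $x:S\to\CS(\CZ)\times X^I_\dr$ as $p_\#\circ j_\#\circ j^\#\circ(\on{ev}_S)^\#(T^*(\CZ))$, and the entire weight of the proof falls on the assertion that the operation $j_\#\circ j^\#$ along the schematic, quasi-compact quasi-separated open embedding $j:\overset{\circ}{\mathfrak{X}}_I\hookrightarrow X\times X^I_\dr$ preserves the property that $\oblv^{\on{fake}}$ lands in $\on{Pro}(\QCoh(-)^-)_{\on{laft}}$; this is \lemref{l:pro oblv functors}. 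Your proposal never supplies this. You instead invoke a ``localization sequence'' that identifies the relative cotangent with $i_{x_k}^!$-type contributions of $\Theta(\CZ)$ and claim clause (ii) gives laft-ness — but clause (ii) only controls $\oblv^{\on{fake}}\circ\on{ev}_s^\#(T^*(\CZ))$ on $S\times X_\dr$, not the result of punctured-curve restriction and taking the cofiber, and the identification with $\Theta(\CZ)$ contributions (in the form you want to use it) is \propref{p:tangent on diag}, which in the paper is established only \emph{after} the present proposition — precisely because one needs laft-def-ness to have the tangent complex in $\IndCoh$ at all. So your route is circular without an independent verification that the $j_\#\circ j^\#$ operation preserves the laft subcategory.

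Two smaller inaccuracies: the citation to \cite[Chapter 5]{Vol2} should be \cite[Chapter 1, Theorem 9.1.2]{Vol2}; and \propref{p:Ran diag ps-proper} concerns pseudo-properness of maps associated to the Ran space itself (not the universal punctured space) and plays no role in the paper's proof of this proposition.
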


The rest of this subsection is devoted to the proof of \propref{p:punctured sects laft-def}. 

\sssec{}

We first carry out a series of reduction steps. 

\medskip 

The categorical prestack $\on{Ran}^{\on{untl}}(X)$ is laft-def.
Therefore, by \corref{c:fibration laft-def},
it suffices to show that for every $S \in \affSch_{\on{aft}}$ and a map $S \to \on{Ran}^{\on{untl}}(X)$,
the prestack
\begin{equation}\label{e:punct Ran fiber}
S \underset{\on{Ran}^{\on{untl}}(X)}{\times} \overset{\circ}{\CS}(\CZ)\strut^{\wedge}_{\on{Ran}^{\on{untl}} }
\end{equation}
is laft-def. 

\medskip

We have $\on{Ran}^{\on{untl}}(X)^{\on{grpd}} = \{\emptyset\} \sqcup \on{Ran}(X)$.  The fiber of
$\overset{\circ}{\CS}(\CZ)\strut^{\wedge}_{\on{Ran}^{\on{untl}} }$ over $\{\emptyset\}$ is $\CS(\CZ)$, which is
laft-def by the hypothesis on $\CZ$.  Thus, we need to show that for every $S \to \on{Ran}(X)$, the prestack \eqref{e:punct Ran fiber} is laft-def.

\medskip

By \propref{p:ran pres}, every map $S \to \on{Ran}(X)$ factors through $X^I_{\on{dR}} \to \on{Ran}(X)$ for some $I \in \on{Fin}$.  
Thus, it suffices to show that
$$ \overset{\circ}{\CS}(\CZ)\strut^{\wedge}_I := X^I_{\on{dR}} \underset{\on{Ran}^{\on{untl}}(X)}{\times} \overset{\circ}{\CS}(\CZ)\strut^{\wedge}_{\on{Ran}^{\on{untl}} } $$
is laft-def for every $I \in \on{Fin}$.

\sssec{}

By construction, we have a nil-isomorphism
\begin{equation}\label{e:I restr}
\CS(\CZ) \times X^I_{\on{dR}} \to \overset{\circ}{\CS}(\CZ)\strut^{\wedge}_I .
\end{equation}
By the assumption on $\CZ$ and \corref{c:Weil def}, the prestack $\CS(\CZ)$ is laft-def and $\overset{\circ}{\CS}(\CZ)\strut^{\wedge}_I$ admits deformation theory.  
Therefore, by \cite[Chapter 1, Theorem 9.1.2]{Vol2}, it suffices to show that for every $S \in \affSch_{\on{aft}}$ and a map $x:S \to \CS(\CZ) \times X^I_{\on{dR}}$, 
the restriction of the cotangent complex of $\overset{\circ}{\CS}(\CZ)\strut^{\wedge}_I$ to $S$
$$ T^*_{\overset{\circ}{x}}(\overset{\circ}{\CS}(\CZ)\strut^{\wedge}_I) \in \on{Pro}(\QCoh(S)^-) $$
lies in $\on{Pro}(\QCoh(S)^-)_{\on{laft}} \subset \on{Pro}(\QCoh(S)^-)$, where $\overset{\circ}{x}: S \to \overset{\circ}{\CS}(\CZ)\strut^{\wedge}_I$ 
is the composite of $x$ and \eqref{e:I restr}. 

\medskip

At this point we are done with the reduction steps and we proceed to the proof of the stated property of 
$ T^*_{\overset{\circ}{x}}(\overset{\circ}{\CS}(\CZ)\strut^{\wedge}_I)$. 

\sssec{}

We can describe $T^*_{\overset{\circ}{x}}(\overset{\circ}{\CS}(\CZ)\strut^{\wedge}_I)$ as follows.  Consider the commutative diagram
$$\xymatrix{
S \underset{X^I_{\on{dR}}}{\times} {(\overset{\circ}{\mathfrak{X}}_I)_{\on{dR}}} \ar[r]^-j \ar[d]_{\overset{\circ}{p}} & S \times X_{\on{dR}} \ar[r]^-{\on{ev}_S}\ar[d]^{p} & \CZ \\
S \ar@{=}[r] & S,
}$$
where $\on{ev}_S$ is the composite
$$S \times X_{\on{dR}} \to \CS(\CZ) \times X_{\on{dR}} \overset{\on{ev}}\to \CZ.$$

By \eqref{e:restr map on fin}, the top composite map is the adjoint to the composite
$$ S\times X^I_{\on{dR}} \to \CS(\CZ) \times X^I_{\on{dR}} \to \overset{\circ}{\CS}(\CZ)_I $$
By \corref{c:Weil def}, 
$$ T^*_{\overset{\circ}{x}}(\overset{\circ}{\CS}(\CZ)\strut^{\wedge}_I) \simeq (\overset{\circ}{p})_{\#}\circ j^{\#}\circ (\on{ev}_S)^{\#}(T^*(\CZ)) .$$
By \lemref{l:pro base change}, 
$$ (\overset{\circ}{p})_{\#}\circ j^{\#}\circ (\on{ev}_S)^{\#}(T^*(\CZ)) \simeq p_{\#} \circ j_{\#}\circ j^{\#}\circ (\on{ev}_S)^{\#}(T^*(\CZ)). $$

Therefore, since $p$ is eventually coconnective, by \lemref{l:pro oblv functors}(a),
in order to show that $T^*_{\overset{\circ}{x}}(\overset{\circ}{\CS}(\CZ)\strut^{\wedge}_I) \in \on{Pro}(\QCoh(S)^-)_{\on{laft}}$,
it suffices to show that
$$ \oblv^{\on{fake}}(j_{\#}\circ j^{\#}\circ (\on{ev}_S)^{\#}(T^*(\CZ))) \in \on{Pro}(\QCoh(S \times X_{\on{dR}})^-)_{\on{laft}} .$$
By the assumption on $\CZ$, we have 
$$ \oblv^{\on{fake}}((\on{ev}_S)^{\#}(T^*(\CZ))) \in \on{Pro}(\QCoh(S \times X_{\on{dR}})^-)_{\on{laft}} .$$
Thus, it suffices to show that for $\mathcal{F} \in \on{Pro}(\QCoh(S \times X_{\on{dR}})^{-})^{\on{fake}}$ such that
$ \oblv^{\on{fake}}(\mathcal{F}) \in \on{Pro}(\QCoh(S \times X_{\on{dR}})^-)_{\on{laft}}$, we have
$$ \oblv^{\on{fake}} (j_{\#}\circ j^{\#}(\mathcal{F})) \in \on{Pro}(\QCoh(S \times X_{\on{dR}})^-)_{\on{laft}} .$$
Now, since the map $j$ is schematic and quasi-compact quasi-separated, this follows from \lemref{l:pro oblv functors}.

\qed[\propref{p:punctured sects laft-def}]

\section{The main result} \label{s:main}

We let $X$ be a separated scheme of finite type. As $X$ will be fixed, we will abbreviate $\on{Ran}(X)$ 
(resp., $\on{Ran}^{\on{untl}}(X)$) to simply $\on{Ran}$ (resp., $\on{Ran}^{\on{untl}}$). 

\medskip

Throughout this section we let $\CZ$ be a sectionally laft prestack over $X_\dr$. 

\ssec{The infinitesimal Hecke prestack and monad}

\sssec{}

By \propref{p:punctured sects laft-def}, we have a map of laft-def categorical prestacks
\begin{equation}
\fr_{\on{Ran}^{\on{untl}} }: \CS(\CZ) \times \Ran^{\on{untl}} \to \overset{\circ}{\CS}(\CZ)\strut^{\wedge}_{\on{Ran}^{\on{untl}}}
\end{equation}
which is a level-wise coCartesian fibration in groupoids and a nil-isomorphism.  In particular, it is pseudo-proper (see \secref{sss:pseudo proper def}).  

\medskip

Therefore, by \lemref{l:pseudo-proper pushforward},
the functor
$$ \fr_{\on{Ran}^{\on{untl}}}^!: \IndCoh(\overset{\circ}{\CS}(\CZ)\strut^{\wedge}_{\on{Ran}^{\on{untl}} }) \to \IndCoh(\CS(\CZ) \times \Ran^{\on{untl}}) $$
admits a left adjoint $(\fr_{\on{Ran}^{\on{untl}} })^{\IndCoh}_*$, which is a functor of $\IndCoh(\CS(\CZ) \times \Ran^{\on{untl}})$-module categories.  In particular, 
both $\fr_{\on{Ran}^{\on{untl}}}^!$ and $(\fr_{\on{Ran}^{\on{untl}}})^{\IndCoh}_*$ are functors of $\Dmod(\on{Ran}^{\on{untl}} (X))$-module categories.  

\sssec{}

Set
\begin{multline*} 
\CH^{\on{inf}}(\CZ)_{\on{untl}}:=\\
=\fr_{\on{Ran}^{\on{untl}} }^! \circ (\fr_{\on{Ran}^{\on{untl}}})^{\IndCoh}_* \in 
\on{Alg}\left(\on{End}_{\Dmod(\on{Ran}^{\on{untl}} (X))}\left(\IndCoh(\CS(\CZ) \times \Ran^{\on{untl}})\right)\right)
\end{multline*}
denote the corresponding monad (in the 2-category of $\Dmod(\on{Ran}^{\on{untl}} (X))$-linear DG categories).
We will refer to $\CH^{\on{inf}}(\CZ)_{\on{untl}}$ as the \emph{infinitesimal Hecke monad of $\CZ$}.

\sssec{}
Concretely, $\CH^{\on{inf}}(\CZ)_{\on{untl}}$ can be described as follows.  Let
$$ \widehat{\on{Hecke}}(\CZ)_{\on{Ran}^{\on{untl}} }:= (\CS(\CZ) \times \Ran^{\on{untl}}) 
\underset{ \overset{\circ}{\CS}(\CZ)\strut^{\wedge}_{\on{Ran}^{\on{untl}} }}{\times} (\CS(\CZ) \times \Ran^{\on{untl}})$$
denote the groupoid of infinitesimal Hecke modifications. In other words, an $S$-point of $\widehat{\on{Hecke}}(\CZ)_{\on{Ran}^{\on{untl}} }$ consists of the following data:
\begin{itemize}
\item
An $S$-point of $\Ran^{\on{untl}}$; let $\on{Graph} \subset S \times X_{\on{dR}}$ denote the
union of the graphs of the $S$-points of $X$ comprising the given $S$-point of the Ran space.
\item
A pair of sections
$$z_1,z_2: S \times X_{\on{dR}} \to \CZ $$
\item
An isomorphism
$$ \alpha: z_1|_{S \times X_{\on{dR}} - \on{Graph}} \simeq z_2|_{S \times X_{\on{dR}} - \on{Graph}}$$
\item
An isomorphism
$$ \beta: z_1|_{S_{\on{red}} \times X_{\on{dR}}} \simeq z_2|_{S_{\on{red}} \times X_{\on{dR}}} $$
\item
An identification
$$ \alpha_{|S_{\on{red}} \times X_{\on{dR}} - \on{Graph}} \simeq \beta_{|S_{\on{red}} \times X_{\on{dR}} - \on{Graph}}. $$
\end{itemize}

\medskip

Note that since $\fr_{\on{Ran}^{\on{untl}} }$ is pseudo-proper, the two maps
$$\partial_s,\partial_ t: \widehat{\on{Hecke}}(\CZ)_{\on{Ran}^{\on{untl}} } \to \CS(\CZ) \times \Ran^{\on{untl}}$$
are pseudo-proper.
We then have
$$ \CH^{\on{inf}}(\CZ)_{\on{untl}} \simeq (\partial_t)^{\IndCoh}_* \circ (\partial_s)^!,$$
as endofunctors of $\IndCoh(\CS(\CZ) \times \Ran^{\on{untl}})$. 

\sssec{} \label{sss:nil monad}

Recall that if 
$$f:\CY_1\to \CY_2$$
is a level-wise coCartesian fibration in groupoids and a nil-isomorphism between laft-def categorical prestacks, the corresponding adjoint pair
$$f^{\IndCoh}_*:\IndCoh(\CY_1) \leftrightarrows \IndCoh(\CY_2):f^!$$
is monadic, i.e., if we denote by $\sM_f$ the resulting monad on $\IndCoh(\CY_1)$ (so that as a plain endofunctor, $\sM_f\simeq f^!\circ f^{\IndCoh}_*$), 
the functor
$$(f^!)^{\on{enh}}:\IndCoh(\CY_2)\to \sM_f\mod(\IndCoh(\CY_1))$$
is an equivalence. 

\sssec{}

Applying this to the morphism
$$\fr_{\on{Ran}^{\on{untl}}}: \CS(\CZ) \times \Ran^{\on{untl}} \to \overset{\circ}{\CS}(\CZ)\strut^{\wedge}_{\on{Ran}^{\on{untl}}},$$
we obtain that $\fr_{\on{Ran}^{\on{untl}}}^!$ induces an equivalence
$$(\fr_{\on{Ran}^{\on{untl}}}^!)^{\on{enh}}:\IndCoh(\overset{\circ}{\CS}(\CZ)\strut^{\wedge}_{\on{Ran}^{\on{untl}}})\simeq
\CH^{\on{inf}}(\CZ)_{\on{untl}}\mod \left(\IndCoh(\CS(\CZ) \times \Ran^{\on{untl}})\right).$$

\ssec{Creating a monad on the space of sections} \label{ss:pushforward monad}

In this subsection, we will start from the monad $\CH^{\on{inf}}(\CZ)_{\on{untl}}$, which acts on 
$\IndCoh(\CS(\CZ) \times \Ran^{\on{untl}})$, and produce from it a monad that acts on 
$\IndCoh(\CS(\CZ))$; i.e., we will ``integrate out" the Ran space directions.

\medskip

It is for this manipulation that it is crucially important that we are working with the unital version of the Ran space.

\sssec{}

By \corref{c:unital Ran dualizable}, the category $\Dmod(\on{Ran}^{\on{untl}} (X))$ is dualizable.  Therefore, by \lemref{l:product dualizable},
\begin{equation}\label{e:sect Ran product}
\IndCoh(\CS(\CZ) \times \Ran^{\on{untl}}) \simeq \IndCoh(\CS(\CZ)) \otimes \Dmod(\on{Ran}^{\on{untl}} (X)) .
\end{equation}

\medskip

In what follows, we will make use of the following basic fact:

\begin{lem}\label{l:dualizable endomorphisms}
Let $\CA$ be a monoidal DG category such that $\CA$ is dualizable (as a DG category).  Then for any DG category $\bC$, the natural monoidal functor
$$ \on{End}(\bC) \otimes \CA \to \on{End}_{\CA\on{-mod}}(\CA \otimes \bC) $$
is an equivalence.
\end{lem}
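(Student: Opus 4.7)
The plan is to identify both sides with an internal hom in $\DGCat$ by combining the free-forgetful adjunction with the dualizability hypothesis on $\CA$.

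First, by the adjunction between the forgetful functor $\oblv_{\CA}:\CA\mod\to \DGCat$ and the free module functor $\bC\mapsto \CA\otimes \bC$, for any $\CA$-module $\M$ there is a natural equivalence
$$\on{Fun}_{\CA\mod}(\CA\otimes \bC,\M)\simeq \on{Fun}(\bC,\oblv_{\CA}(\M)).$$
Specializing to $\M=\CA\otimes \bC$ yields $\on{End}_{\CA\mod}(\CA\otimes \bC)\simeq \on{Fun}(\bC,\CA\otimes \bC)$.

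Next, since $\CA$ is dualizable with dual $\CA^\vee$, the tensor product $\CA\otimes \bC$ is identified with the internal hom $\on{Fun}(\CA^\vee,\bC)$ in $\DGCat$. Using this together with the closed symmetric monoidal structure on $\DGCat$, I obtain
$$\on{Fun}(\bC,\CA\otimes \bC)\simeq \on{Fun}(\bC,\on{Fun}(\CA^\vee,\bC))\simeq \on{Fun}(\bC\otimes \CA^\vee,\bC)\simeq \on{Fun}(\CA^\vee,\on{End}(\bC))\simeq \on{End}(\bC)\otimes \CA,$$
where the last step applies dualizability of $\CA$ once more.

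Finally, I would verify that the composite equivalence agrees with the natural monoidal functor in the statement, i.e., that it sends $F\otimes a\in \on{End}(\bC)\otimes \CA$ to the $\CA$-linear endofunctor of $\CA\otimes \bC$ determined by $a'\otimes c\mapsto (aa')\otimes F(c)$. The expected obstacle is tracking the monoidal structures across the chain of equivalences; this should follow formally from the fact that each adjunction used is compatible with composition of endofunctors on one side and with the monoidal structure of $\CA$ on the other, so that each equivalence in the chain is itself monoidal.
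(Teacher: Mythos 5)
The paper states Lemma \ref{l:dualizable endomorphisms} as a ``basic fact'' and gives no proof, so there is no argument in the text to compare against. Your proof is correct and uses the standard route: the free-forgetful adjunction identifies $\on{End}_{\CA\mod}(\CA\otimes\bC)$ with $\on{Fun}(\bC,\CA\otimes\bC)$, and dualizability of $\CA$ together with the hom-tensor adjunction in $\DGCat$ unwinds the latter to $\on{End}(\bC)\otimes\CA$. Two small remarks. First, the explicit formula you would verify should be $a'\otimes c\mapsto (a'a)\otimes F(c)$ rather than $(aa')\otimes F(c)$: the free module $\CA\otimes\bC$ carries the \emph{left} $\CA$-action through the first factor, and a functor of the form $a'\otimes c\mapsto (aa')\otimes F(c)$ is not left-$\CA$-linear unless $\CA$ is commutative; right multiplication by $a$ is what commutes with left multiplication, by associativity. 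Second, the worry you flag about the monoidality of each equivalence in the chain is not actually needed: the target map in the statement is already given to you as a monoidal functor, so once you check that your composite equivalence agrees with that canonical map on objects (which your formula, corrected as above, does), the monoidality is inherited and you do not have to track it through the intermediate steps. With these cosmetic adjustments, the argument is complete.
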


\sssec{} \label{sss:End as tensor product}

Applying \lemref{l:dualizable endomorphisms} to $\CA = \Dmod(\on{Ran}^{\on{untl}} (X))$ and $\bC = \IndCoh(\CS(\CZ))$, 
and using \eqref{e:sect Ran product}, we obtain an equivalence of monoidal categories
\begin{multline*}
\on{End}_{\Dmod(\on{Ran}^{\on{untl}} (X))}\left(\IndCoh(\CS(Z) \times \Ran^{\on{untl}})\right) \simeq \\
\simeq \on{End}(\IndCoh(\CS(\CZ))) \otimes \Dmod(\on{Ran}^{\on{untl}} (X)).
\end{multline*}
In particular, we can think of the infinitesimal Hecke monad of $\CZ$ is an algebra object in this monoidal category:
$$\CH^{\on{inf}}(\CZ)_{\on{untl}} \in \on{Alg}(\on{End}(\IndCoh(\CS(\CZ))) \otimes \Dmod(\on{Ran}^{\on{untl}} (X))).$$

\sssec{}

Now assume that $X$ is proper. Recall that in this case, we have a well-defined functor 
$$\Gamma_{c,\on{Ran}^{\on{untl}}}:\Dmod(\on{Ran}^{\on{untl}} (X))\to \Vect,$$
which is symmetric monoidal (see \corref{c:Gamma on Ran untl mon}).

\medskip

Hence, we can apply the functor
$$\on{Id}_{\on{Alg}(\on{End}(\CS(\CZ))}\otimes \Gamma_{c,\on{Ran}^{\on{untl}}}$$
to $\CH^{\on{inf}}(\CZ)_{\on{untl}}$ and obtain a monad, to be denoted
$$\Gamma_{c,\on{Ran}^{\on{untl}}}(\CH^{\on{inf}}(\CZ)_{\on{untl}})\in \on{Alg}(\on{End}(\IndCoh(\CS(\CZ))).$$

\medskip

The underlying endofunctor of $\IndCoh(\CS(\CZ))$ is given by
\begin{equation} \label{e:monad downstairs}
(\on{Id}\otimes \Gamma_{c,\on{Ran}^{\on{untl}}})\circ \CH^{\on{inf}}(\CZ)_{\on{untl}} \circ
(\on{Id}\otimes p^!_{\Ran^{\on{untl}}}),
\end{equation}
and the monad structure on \eqref{e:monad downstairs}
comes from the monad structure on $\CH^{\on{inf}}(\CZ)_{\on{untl}}$ using 
the monoidal structure on the functor $\Gamma_{c,\on{Ran}^{\on{untl}}}$. 

\sssec{}  \label{sss:modules for pushforward monad}

Note that by construction, we have a canonical identification
\begin{multline} \label{e:modules for integrated monad}
\Gamma_{c,\on{Ran}^{\on{untl}}}(\CH^{\on{inf}}(\CZ)_{\on{untl}})\mod\left(\IndCoh(\CS(\CZ))\right) \simeq \\
\simeq 
\IndCoh(\CS(\CZ)) \underset{\IndCoh(\CS(\CZ) \times \Ran^{\on{untl}})}\times 
\CH^{\on{inf}}(\CZ)_{\on{untl}}\mod\left(\IndCoh(\CS(\CZ)\times \Ran^{\on{untl}})\right),
\end{multline}
where
$$\IndCoh(\CS(\CZ))\to \IndCoh(\CS(\CZ) \times \Ran^{\on{untl}})$$
is the functor $(\on{id}_{\CS(\CZ)}\times p_{\on{Ran}^{\on{untl}}})^!$, and the functor
$$\CH^{\on{inf}}(\CZ)_{\on{untl}}\mod\left(\IndCoh(\CS(\CZ)\times \Ran^{\on{untl}})\right)\to 
\IndCoh(\CS(\CZ) \times \Ran^{\on{untl}})$$
is $\oblv_{\CH^{\on{inf}}(\CZ)}$.

\ssec{The main result: first version}

\sssec{}

Recall that if $\CY$ is a laft-def prestack, we can consider the map
$$p_{\CY,\dr}:\CY\to \CY_{\dr},$$
which fits into the formalism of \secref{sss:nil monad}.

\medskip

We will denote the resulting monad on $\IndCoh(\CY)$ by $\on{Diff}_{\CY}$, so that
$$\on{Diff}_{\CY}\mod(\IndCoh(\CY))\simeq \Dmod(\CY).$$

\sssec{}

We apply this to $\CY=\CS(\CZ)$. Multiplying by $\Ran^{\on{untl}}$, we obtain the monad
$$\on{Diff}_{\CS(\CZ)}\otimes \omega_{\Ran^{\on{untl}}}\in 
\on{Alg}\left(\on{End}(\IndCoh(\CS(\CZ)))\otimes \Dmod(\on{Ran}^{\on{untl}} (X))\right),$$
so that
\begin{multline*}
(\on{Diff}_{\CS(\CZ)}\otimes \omega_{\Ran^{\on{untl}}})\mod\left(\IndCoh(\CS(\CZ)\times \Ran^{\on{untl}})\right) \simeq \\
\simeq \IndCoh(\CS(\CZ))\otimes \Dmod(\Ran^{\on{untl}}).
\end{multline*}

We can also think of $\on{Diff}_{\CS(\CZ)}\otimes \omega_{\Ran^{\on{untl}}}$ as obtained from 
$$\on{Diff}_{\CS(\CZ)}\in \on{Alg}(\on{End}(\IndCoh(\CS(\CZ))))$$ by applying the (monoidal) functor
\begin{multline}\label{e:pullback monoidal} 
\on{Id}_{\on{End}(\IndCoh(\CS(\CZ)))}\otimes (p_{\Ran^{\on{untl}}})^!:
\on{End}(\IndCoh(\CS(\CZ)))\to \\
\to \on{End}(\IndCoh(\CS(\CZ)))\otimes \Dmod(\Ran^{\on{untl}}).
\end{multline}

\sssec{}

Note that the map 
$$\CS(\CZ) \times \Ran^{\on{untl}} \to \CS(\CZ)_{\on{dR}} \times \Ran^{\on{untl}}$$
factors via the map 
$$\fr_{\on{Ran}^{\on{untl}}}: \CS(\CZ) \times \Ran^{\on{untl}} \to \overset{\circ}{\CS}(\CZ)\strut^{\wedge}_{\on{Ran}^{\on{untl}}}.$$

\medskip

From here, we obtain a map 
\begin{equation}\label{e:hecke to dmod}
\CH^{\on{inf}}(\CZ)_{\on{untl}} \to \on{Diff}_{\CS(\CZ)} \boxtimes \ \omega_{\Ran^{\on{untl}}}
\end{equation}
in $\on{Alg}\left(\on{End}(\IndCoh(\CS(\CZ)))\otimes \Dmod(\Ran^{\on{untl}})\right)$.

\sssec{}

Now, suppose that $X$ is proper.  In this case, we have the monoidal(!) functor 
\begin{multline}\label{e:pushforward monoidal} 
\on{Id}_{\on{End}(\IndCoh(\CS(\CZ)))}\otimes \Gamma_{c,\on{Ran}^{\on{untl}}}:
\on{End}(\IndCoh(\CS(\CZ)))\otimes \Dmod(\Ran^{\on{untl}})\to \\
\to \on{End}(\IndCoh(\CS(\CZ))),
\end{multline}
left adjoint to \eqref{e:pullback monoidal}. 

\medskip

Hence, by adjunction, from \eqref{e:hecke to dmod} we obtain a map
\begin{equation}\label{e:hecke to dmod downstairs}
\Gamma_{c,\on{Ran}^{\on{untl}}}(\CH^{\on{inf}}(\CZ)_{\on{untl}})\to \on{Diff}_{\CS(\CZ)}
\end{equation}
in $\on{Alg}(\on{End}(\IndCoh(\CS(\CZ)))$.

\sssec{}

The main result of this paper is:

\begin{thm}\label{t:main}
Let $X$ be a proper connected scheme and $\CZ \to X_{\on{dR}}$ a sectionally laft prestack.  Then the map
\eqref{e:hecke to dmod downstairs} is an isomorphism.
\end{thm}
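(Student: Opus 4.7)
The plan is to reduce \thmref{t:main} from an isomorphism of monads to an isomorphism of underlying linear objects, via deformation to the normal cone, and then invoke the linear factorization theory developed in \secref{ss:lin fact}.

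First, observe that both monads arise from nil-isomorphisms of laft-def (categorical) prestacks. On one side, $\on{Diff}_{\CS(\CZ)}$ is the monad of the nil-isomorphism $p_{\CS(\CZ),\dr}:\CS(\CZ)\to \CS(\CZ)_\dr$. On the other, $\CH^{\on{inf}}(\CZ)_{\on{untl}}$ is the monad of $\fr_{\on{Ran}^{\on{untl}}}$. Applying the deformation-to-the-normal-cone construction of \cite[Chapter 9]{Vol2} to each nil-isomorphism simultaneously equips both monads with a canonical non-negative filtration; by naturality of this construction and the fact that $\Gamma_{c,\on{Ran}^{\on{untl}}}$ is strictly symmetric monoidal (\corref{c:Gamma on Ran untl mon}), the map \eqref{e:hecke to dmod downstairs} is filtered. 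Hence it suffices to prove that the induced map of associated graded monads is an isomorphism.

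Second, by \cite[Chapter 9]{Vol2}, the associated graded of the monad attached to a nil-isomorphism is freely generated by the shifted relative tangent complex. Concretely,
$$\on{gr}(\on{Diff}_{\CS(\CZ)}) \simeq \on{Sym}_{\O_{\CS(\CZ)}}(T(\CS(\CZ))),$$
and likewise
$$\on{gr}(\CH^{\on{inf}}(\CZ)_{\on{untl}}) \simeq \on{Sym}(T_{\fr_{\on{Ran}^{\on{untl}}}}),$$
where the relative tangent $T_{\fr_{\on{Ran}^{\on{untl}}}}$ is naturally an object of $\IndCoh(\CS(\CZ))\otimes \Dmod(\on{Ran}^{\on{untl}}(X))$. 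Since $\Gamma_{c,\on{Ran}^{\on{untl}}}$ is strictly symmetric monoidal, it commutes with $\on{Sym}$, so the induced map on associated gradeds is $\on{Sym}$ applied to a morphism of linear objects. Thus it suffices to prove that the induced linear map
$$\Gamma_{c,\on{Ran}^{\on{untl}}}\bigl(T_{\fr_{\on{Ran}^{\on{untl}}}}\bigr) \to T(\CS(\CZ))$$
is an isomorphism in $\IndCoh(\CS(\CZ))$.

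Third, I will identify $T_{\fr_{\on{Ran}^{\on{untl}}}}$ as a linear factorization sheaf. By the formula recalled in \secref{sss:explain laft}, the fiber of $T_{\fr_{\on{Ran}^{\on{untl}}}}$ over a point $\ul{x}=\{x_1,\ldots,x_n\}$ is $\bigoplus_{k} (\on{id}\otimes i_{x_k}^!)(\Theta(\CZ))$. This is precisely the pattern of a linear factorization sheaf generated by $\Theta(\CZ)\in \IndCoh(\CS(\CZ))\otimes \Dmod(X)$: letting $i:\CS(\CZ)\times X \to \CS(\CZ)\times \on{Ran}^{\on{untl}}(X)$ denote the parameterized version of the inclusion of \thmref{t:image of Dmod in Ran}, the assembly of these fibers into an object over $\on{Ran}^{\on{untl}}(X)$ yields a canonical isomorphism
$$T_{\fr_{\on{Ran}^{\on{untl}}}} \simeq i_!(\Theta(\CZ)).$$
Since $X$ is proper, $\Gamma_{c,\on{Ran}^{\on{untl}}}\circ i_!$ equals the de Rham pushforward along $\CS(\CZ)\times X\to \CS(\CZ)$, so the left-hand side computes $(\on{id}\otimes (p_X)_{\dr,*})(\Theta(\CZ))$, which by \propref{p:sect tangent} is canonically $T(\CS(\CZ))$. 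One then checks that this identification agrees with the map produced by \eqref{e:hecke to dmod downstairs}, completing the proof.

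The main obstacle is the third step: establishing globally (not just fiberwise over $\ul{x}$) that the relative tangent of $\fr_{\on{Ran}^{\on{untl}}}$ is the linear factorization sheaf $i_!(\Theta(\CZ))$. This requires packaging the fiberwise identification coherently over the categorical prestack $\on{Ran}^{\on{untl}}(X)$, compatibly with the restriction/union functorialities that define the factorization structure, and invoking \thmref{t:image of Dmod in Ran} in its parameterized form. Once this identification is in place, the remaining computations are formal consequences of the symmetric monoidal structure of $\Gamma_{c,\on{Ran}^{\on{untl}}}$ and the tangent complex formula of \propref{p:sect tangent}.
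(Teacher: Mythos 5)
Your overall strategy matches the paper's: apply deformation to the normal cone to filter the monads, pass to associated graded (reducing to a statement about tangent complexes), identify the relative tangent complex as a linear factorization sheaf generated by $\Theta(\CZ)$, and then compute $\Gamma_{c,\on{Ran}^{\on{untl}}}$ of it via \thmref{t:image of Dmod in Ran} and \propref{p:sect tangent}. However, there is a genuine gap in your first step, and you have correctly flagged (but not resolved) the hard part of the third.

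The gap: you apply the deformation-to-the-normal-cone construction of \cite[Chapter 9]{Vol2} to the nil-isomorphism $\fr_{\on{Ran}^{\on{untl}}}$, which lives in $(\on{PreStk}_{\on{laft-def}})^{\on{nil-isom}}_{\CS(\CZ)\times \Ran^{\on{untl}} // \Ran^{\on{untl}}}$. But $\Ran^{\on{untl}}$ is a \emph{categorical} prestack, not a prestack, and the cited construction and \thmref{t:def normal cone} are stated and proved only for genuine laft-def prestacks. The paper is quite deliberate about this: it first pulls back along $\iota:\Ran\to\Ran^{\on{untl}}$ (a universally homologically cofinal map, \thmref{t:Ran cofinality new}) to reduce to the non-unital Ran space, which \emph{is} a genuine prestack, runs $\on{DefNorm}$ there, and only afterwards returns to $\Ran^{\on{untl}}$ to exploit the strict symmetric monoidality of $\Gamma_{c,\on{Ran}^{\on{untl}}}$ when commuting with $\on{Sym}$. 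Your proposal collapses this back-and-forth by assuming $\on{DefNorm}$ works over categorical prestacks; either you need to extend that construction (nontrivial — the whole reason the paper introduces both versions of the Ran space), or you should re-route through $\Ran$ as the paper does.

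On the third step: your identification $T_{\fr_{\on{Ran}^{\on{untl}}}} \simeq i_!(\Theta(\CZ))$ is exactly what the paper establishes, split into two pieces — \propref{p:tangent pushforward to Ran} (the relative tangent lies in $\on{LFS}$, which requires the explicit computation of the tangent over $X^I$ via $\overset{\circ}{\mathfrak{X}}_I$ and the factorization identity \eqref{e:factor incidence}, combined with \thmref{t:image of Dmod in Ran}) and \propref{p:tangent on diag} (the $I=\{\ast\}$ case gives $\Theta(\CZ)$ and identifies the comparison map as the $(\,(p_X)_{\dr,*},(p_X)^!\,)$ unit). You name this the main obstacle but leave it to be done; it is indeed the computational heart, occupying \secref{ss:proof of tangent} of the paper.
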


\sssec{}

Combining with \eqref{e:modules for integrated monad}, we obtain:

\begin{cor}\label{c:unital Ran fiber product}
Let $X$ be a proper connected scheme and $\CZ \to X_{\on{dR}}$ a sectionally laft prestack.  Then
then the pullback functor 
$$ \Dmod(\CS(\CZ)) \to \IndCoh(\CS(\CZ)) \underset{\IndCoh(\CS(\CZ)) \otimes \Dmod(\on{Ran}^{\on{untl}} (X))}{\times} \IndCoh(\overset{\circ}{\CS}(\CZ)\strut^{\wedge}_{\on{Ran}^{\on{untl}} }) $$
is an equivalence of categories.
\end{cor}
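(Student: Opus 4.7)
The strategy is to deduce the equivalence from \thmref{t:main} by showing that both sides of the claimed equivalence are modules over canonically isomorphic monads on $\IndCoh(\CS(\CZ))$. By the general formalism recalled in \secref{sss:nil monad} applied to $p_{\CS(\CZ),\dr}$, the left-hand side is tautologically $\on{Diff}_{\CS(\CZ)}\mod(\IndCoh(\CS(\CZ)))$, i.e., modules over the monad appearing on the right of \eqref{e:hecke to dmod downstairs}.

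To put the right-hand side in the analogous form, I would proceed in two steps. First, I invoke the monadicity of the level-wise coCartesian nil-isomorphism $\CS(\CZ)\times \Ran^{\on{untl}} \to \overset{\circ}{\CS}(\CZ)\strut^{\wedge}_{\on{Ran}^{\on{untl}}}$ established in \secref{sss:nil monad}, which gives
\[ \IndCoh(\overset{\circ}{\CS}(\CZ)\strut^{\wedge}_{\on{Ran}^{\on{untl}}}) \;\simeq\; \CH^{\on{inf}}(\CZ)_{\on{untl}}\mod\bigl(\IndCoh(\CS(\CZ)\times \Ran^{\on{untl}})\bigr). \]
Substituting this into the fiber product on the right-hand side of the corollary and then applying the general identification \eqref{e:modules for integrated monad}, I obtain
\[ \IndCoh(\CS(\CZ)) \underset{\IndCoh(\CS(\CZ)) \otimes \Dmod(\on{Ran}^{\on{untl}}(X))}{\times} \IndCoh(\overset{\circ}{\CS}(\CZ)\strut^{\wedge}_{\on{Ran}^{\on{untl}}}) \;\simeq\; \Gamma_{c,\on{Ran}^{\on{untl}}}(\CH^{\on{inf}}(\CZ)_{\on{untl}})\mod\bigl(\IndCoh(\CS(\CZ))\bigr). \]

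With this identification in hand, \thmref{t:main} supplies an isomorphism of monads between $\Gamma_{c,\on{Ran}^{\on{untl}}}(\CH^{\on{inf}}(\CZ)_{\on{untl}})$ and $\on{Diff}_{\CS(\CZ)}$, which passes to an equivalence of their module categories. The only remaining point is to check that the equivalence produced by this route agrees with the pullback functor named in the statement; this is built into the definitions, since the map of monads \eqref{e:hecke to dmod downstairs} was constructed by adjunction between \eqref{e:pullback monoidal} and \eqref{e:pushforward monoidal} from \eqref{e:hecke to dmod}, which in turn records the factorization of the projection $\CS(\CZ)\times \Ran^{\on{untl}} \to \CS(\CZ)_{\dr}\times \Ran^{\on{untl}}$ through $\overset{\circ}{\CS}(\CZ)\strut^{\wedge}_{\on{Ran}^{\on{untl}}}$. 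Once \thmref{t:main} is granted, there is no further obstacle: the corollary is a formal consequence, and all of the substantive work has been done in establishing the isomorphism of integrated monads.
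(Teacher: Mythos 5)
Your proof is correct and matches the paper's (unstated but implicit) argument exactly: the paper deduces the corollary by combining \thmref{t:main} with \eqref{e:modules for integrated monad}, using the monadicity of $\fr_{\on{Ran}^{\on{untl}}}$ from \secref{sss:nil monad} to identify $\IndCoh(\overset{\circ}{\CS}(\CZ)\strut^{\wedge}_{\on{Ran}^{\on{untl}}})$ with modules over $\CH^{\on{inf}}(\CZ)_{\on{untl}}$, and the analogous monadicity of $p_{\CS(\CZ),\dr}$ to identify $\Dmod(\CS(\CZ))$ with modules over $\on{Diff}_{\CS(\CZ)}$. Your final remark verifying that the resulting equivalence is the named pullback functor is a sensible addition that the paper leaves tacit.
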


\ssec{Localization functor} \label{ss:loc}

Here we will discuss a reformulation of \thmref{t:main}. 

\sssec{}

Consider again the map
$$\fr_{\Ran^{\on{untl}}}:\CS(\CZ)\times \Ran^{\on{untl}}\to \overset{\circ}{\CS}(\CZ)\strut^{\wedge}_{\on{Ran}^{\on{untl}}}.$$

We denote by $\on{pre-Loc}$ the functor
$$\IndCoh(\overset{\circ}{\CS}(\CZ)\strut^{\wedge}_{\on{Ran}^{\on{untl}}})\to \IndCoh(\CS(\CZ))$$
equal to
$$(\on{Id}\otimes \Gamma_{c,\Ran^{\on{untl}}})\circ \fr^!_{\Ran^{\on{untl}}}.$$

We will show that this functor factors as
$$p_{\CS(\CZ),\dr}^!\circ \on{Loc}$$
for a canonically defined functor
$$\on{Loc}:\IndCoh(\overset{\circ}{\CS}(\CZ)\strut^{\wedge}_{\on{Ran}^{\on{untl}}})\to
\IndCoh( \CS(\CZ)_\dr)=\Dmod(\CS(\CZ)),$$
where $p_{\CS(\CZ),\dr}$ is the projection
$$\CS(\CZ)\to \CS(\CZ)_\dr,$$
so that $p_{\CS(\CZ),\dr}^!$ is the forgetful functor
$$\oblv:\Dmod(\CS(\CZ))\to \IndCoh(\CS(\CZ)).$$

Moreover, we will describe the above functor $\on{Loc}$ explicitly.

\sssec{}

We denote by $\fq$ the map
$$\overset{\circ}{\CS}(\CZ)\strut^{\wedge}_{\on{Ran}^{\on{untl}}} \to  \CS(\CZ)_\dr\times \Ran^{\on{untl}}.$$

%

\medskip
Consider the commutative triangle
$$\xymatrix{
\CS(\CZ)\times \Ran^{\on{untl}} \ar[r]^{\fr_{\Ran^{\on{untl}}}} 
\ar[dr]_{p_{\CS(\CZ),\dr}\times \on{id}} 
&  \overset{\circ}{\CS}(\CZ)\strut^{\wedge}_{\on{Ran}^{\on{untl}}} \ar[d]^{\fq} \\
& \CS(\CZ)_\dr\times \Ran^{\on{untl}}}.$$

Define the functor 
$$\on{Loc}:\IndCoh(\overset{\circ}{\CS}(\CZ)\strut^{\wedge}_{\on{Ran}^{\on{untl}}})\to
\IndCoh( \CS(\CZ)_\dr)=\Dmod(\CS(\CZ))$$
to be $$(\on{Id}\otimes \Gamma_{c,\Ran^{\on{untl}}})\circ \fq^\IndCoh_*.$$

\medskip

From the above diagram we obtain by adjunction a natural transformation
$$\fr^!_{\Ran^{\on{untl}}}\to (p_{\CS(\CZ),\dr}\times \on{id})^! \circ \fq^{\IndCoh}_*.$$

Composing with the functor $\on{Id}\otimes \Gamma_{c,\Ran^{\on{untl}}}$, we obtain a natural transformation
\begin{multline} \label{e:loc}
(\on{Id}\otimes \Gamma_{c,\Ran^{\on{untl}}})\circ \fr^!_{\Ran^{\on{untl}}} \to \\
\to (\on{Id}\otimes \Gamma_{c,\Ran^{\on{untl}}})\circ (p_{\CS(\CZ),\dr}\times \on{id})^! \circ \fq^{\IndCoh}_*\simeq
(p_{\CS(\CZ),\dr})^! \circ \Loc.
\end{multline}

We claim:

\begin{cor} \label{c:localization}
The above natural transformation
$$(\on{Id}\otimes \Gamma_{c,\Ran^{\on{untl}}})\circ \fr^!_{\Ran^{\on{untl}}} \to (p_{\CS(\CZ),\dr})^! \circ \Loc$$
is an isomorphism.
\end{cor}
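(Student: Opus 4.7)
The plan is to reduce the claim to base-change manipulations followed by an invocation of Theorem~\ref{t:main}. There are three steps: identify $\fq$ as a de Rham projection (so the unit of a standard adjunction appears), apply base change to rewrite the right-hand side, and then use the main theorem to show the resulting map is an isomorphism.

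First, I would note that since $\fr_{\Ran^{\on{untl}}}$ is a nil-isomorphism between laft-def categorical prestacks, it induces an isomorphism on de Rham prestacks, giving the identification
\[
(\overset{\circ}{\CS}(\CZ)\strut^{\wedge}_{\on{Ran}^{\on{untl}}})_\dr \;\simeq\; (\CS(\CZ)\times\Ran^{\on{untl}})_\dr \;\simeq\; \CS(\CZ)_\dr\times\Ran^{\on{untl}}.
\]
Under this identification, $\fq$ is the de Rham projection $p_{\overset{\circ}{\CS}(\CZ)\strut^{\wedge}_{\on{Ran}^{\on{untl}}},\dr}$, so $\fq^{\IndCoh}_*$ is left adjoint to $\fq^!$, and the associated unit $\on{Id}\to\fq^!\circ\fq^{\IndCoh}_*$ is the unit of the Diff monad on $\overset{\circ}{\CS}(\CZ)\strut^{\wedge}_{\on{Ran}^{\on{untl}}}$.

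Next, base change (\corref{c:Gamma defined on Ran untl rel}) along the Cartesian square
\[
\xymatrix{
\CS(\CZ)\times \Ran^{\on{untl}} \ar[r]^-{q} \ar[d]_-{p_{\CS(\CZ),\dr}\times \on{id}} & \CS(\CZ) \ar[d]^-{p_{\CS(\CZ),\dr}} \\
\CS(\CZ)_\dr\times \Ran^{\on{untl}} \ar[r]_-{p} & \CS(\CZ)_\dr
}
\]
yields $p^!_{\CS(\CZ),\dr}\circ(\on{Id}\otimes\Gamma_{c,\Ran^{\on{untl}}}) \simeq (\on{Id}\otimes\Gamma_{c,\Ran^{\on{untl}}})\circ(p_{\CS(\CZ),\dr}\times\on{id})^!$. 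Combined with the factorization $(p_{\CS(\CZ),\dr}\times\on{id})^! = \fr^!_{\Ran^{\on{untl}}}\circ\fq^!$ coming from the commutative triangle of \ssec{ss:loc}, we obtain
\[
p^!_{\CS(\CZ),\dr}\circ\Loc \;\simeq\; (\on{Id}\otimes\Gamma_{c,\Ran^{\on{untl}}})\circ\fr^!_{\Ran^{\on{untl}}}\circ\fq^!\circ\fq^{\IndCoh}_*.
\]
Tracing through \eqref{e:loc}, the natural transformation in the corollary is exactly $(\on{Id}\otimes\Gamma_{c,\Ran^{\on{untl}}})\circ\fr^!_{\Ran^{\on{untl}}}$ applied to the Diff-unit $\on{Id}\to\fq^!\circ\fq^{\IndCoh}_*$.

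The substantive step is to show this becomes an isomorphism: the Diff-unit is emphatically not invertible on its own, but the conclusion follows from Theorem~\ref{t:main}. Indeed, by the monadic equivalence of \sssec{sss:nil monad}, every $\CF\in\IndCoh(\overset{\circ}{\CS}(\CZ)\strut^{\wedge}_{\on{Ran}^{\on{untl}}})$ canonically equips $\fr^!_{\Ran^{\on{untl}}}(\CF)$ with a structure of module over the infinitesimal Hecke monad $\CH^{\on{inf}}(\CZ)_{\on{untl}}$. Applying $(\on{Id}\otimes\Gamma_{c,\Ran^{\on{untl}}})$ converts this, via the monoidality established in \sssec{sss:End as tensor product} and the isomorphism $\Gamma_{c,\Ran^{\on{untl}}}(\CH^{\on{inf}}(\CZ)_{\on{untl}}) \simeq \on{Diff}_{\CS(\CZ)}$ of Theorem~\ref{t:main}, into a $\on{Diff}_{\CS(\CZ)}$-module structure on the object $(\on{Id}\otimes\Gamma_{c,\Ran^{\on{untl}}})\circ\fr^!_{\Ran^{\on{untl}}}(\CF)$ --- i.e., a canonical D-module lift, which one checks is the D-module $\Loc(\CF)$. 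This identification produces the desired isomorphism with the underlying IndCoh $p^!_{\CS(\CZ),\dr}\circ\Loc(\CF)$.

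The main obstacle is precisely this final step: the unit of the Diff monad on $\overset{\circ}{\CS}(\CZ)\strut^{\wedge}_{\on{Ran}^{\on{untl}}}$ is not an isomorphism, and the mechanism by which it becomes one after $(\on{Id}\otimes\Gamma_{c,\Ran^{\on{untl}}})\circ\fr^!_{\Ran^{\on{untl}}}$ encodes the fact that integrating the local infinitesimal Hecke groupoids over the unital Ran space produces the global infinitesimal structure on $\CS(\CZ)$---which is the substantive content of Theorem~\ref{t:main}.
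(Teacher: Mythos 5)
Your preamble is correct and is a useful reformulation: identifying $\fq$ with the de Rham projection $p_{\overset{\circ}{\CS}(\CZ)\strut^{\wedge}_{\on{Ran}^{\on{untl}}},\dr}$ and using base change to see that the natural transformation in question is $(\on{Id}\otimes\Gamma_{c,\Ran^{\on{untl}}})\circ\fr^!_{\Ran^{\on{untl}}}$ applied to the unit of the $(\fq^{\IndCoh}_*,\fq^!)$-adjunction is exactly right. The observation that $(\on{Id}\otimes\Gamma_{c,\Ran^{\on{untl}}})$ carries $\CH^{\on{inf}}(\CZ)_{\on{untl}}$-modules to $\Gamma_{c,\Ran^{\on{untl}}}(\CH^{\on{inf}}(\CZ)_{\on{untl}})$-modules (this does work, because $\Gamma_{c,\Ran^{\on{untl}}}$ is \emph{strictly} monoidal, which makes $\on{Id}\otimes\Gamma_{c,\Ran^{\on{untl}}}$ a functor of module categories over the monoidal functor on endomorphism categories) is also a sound observation.

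However, the final step is a genuine gap, not a routine verification. The clause ``which one checks is the D-module $\Loc(\CF)$'' is the \emph{entire content} of the corollary: you need to show that the \emph{specific, named} natural transformation of the statement becomes an isomorphism, not merely that $(\on{Id}\otimes\Gamma_{c,\Ran^{\on{untl}}})\circ\fr^!_{\Ran^{\on{untl}}}(\CF)$ abstractly underlies \emph{some} D-module. Producing a $\on{Diff}_{\CS(\CZ)}$-module structure on the left-hand side does not by itself tell you that the arrow to $p^!_{\CS(\CZ),\dr}\circ\Loc(\CF)$ is the forgetful image of an isomorphism in $\Dmod(\CS(\CZ))$; the $\on{Diff}$-module structure on the left arises from the $(\fr^{\IndCoh}_*,\fr^!)$-adjunction while the map to the right arises from the $(\fq^{\IndCoh}_*,\fq^!)$-adjunction, and the compatibility between the two is precisely what must be proved. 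The paper resolves this by reducing to generators: since $\IndCoh(\overset{\circ}{\CS}(\CZ)\strut^{\wedge}_{\on{Ran}^{\on{untl}}})$ is generated under colimits by the essential image of $(\fr_{\Ran^{\on{untl}}})^{\IndCoh}_*$, and both sides of the map are continuous, one may assume $\CF=(\fr_{\Ran^{\on{untl}}})^{\IndCoh}_*(\CG)$. In that case $\fr^!_{\Ran^{\on{untl}}}\circ(\fr_{\Ran^{\on{untl}}})^{\IndCoh}_*=\CH^{\on{inf}}(\CZ)_{\on{untl}}$, the left-hand side becomes $\Gamma_{c,\Ran^{\on{untl}}}(\CH^{\on{inf}}(\CZ)_{\on{untl}})\circ(\on{Id}\otimes\Gamma_{c,\Ran^{\on{untl}}})$, the right-hand side becomes $\on{Diff}_{\CS(\CZ)}\circ(\on{Id}\otimes\Gamma_{c,\Ran^{\on{untl}}})$, and the natural transformation is exactly the one from \thmref{t:main} composed with $(\on{Id}\otimes\Gamma_{c,\Ran^{\on{untl}}})$ --- hence an isomorphism. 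This reduction to free $\CH^{\on{inf}}$-modules is where the ``one checks'' is actually carried out, and it is unavoidable: if you try to verify your claimed identification of the D-module lift with $\Loc(\CF)$, you will find yourself checking it on free modules first and then extending by continuity, which is exactly the argument you circumvented.
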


From the above corollary we obtain a canonical isomorphism: 
\begin{equation} \label{e:Loc}
\on{pre-Loc}\simeq p_{\CS(\CZ),\dr}^!\circ \on{Loc},
\end{equation}
as desired. 

\sssec{Proof of \corref{c:localization}}

The category $\IndCoh(\overset{\circ}{\CS}(\CZ)\strut^{\wedge}_{\on{Ran}^{\on{untl}}})$ is generated by
the essential image of the functor 
$$(\fr_{\Ran^{\on{untl}}})^{\IndCoh}_*:\IndCoh(\CS(\CZ)\times \Ran^{\on{untl}})\to 
\IndCoh(\overset{\circ}{\CS}(\CZ)\strut^{\wedge}_{\on{Ran}^{\on{untl}}}).$$

Hence, it is enough to show that the natural transformation
\begin{multline} \label{e:loc again}
(\on{Id}\otimes \Gamma_{c,\Ran^{\on{untl}}})\circ \fr^!_{\Ran^{\on{untl}}} \circ (\fr_{\Ran^{\on{untl}}})^{\IndCoh}_* \to \\
\to (\on{Id}\otimes \Gamma_{c,\Ran^{\on{untl}}})\circ (p_{\CS(\CZ),\dr}\times \on{id})^! \circ \fq^{\IndCoh}_* \circ 
(\fr_{\Ran^{\on{untl}}})^{\IndCoh}_*
\end{multline}
is an isomorphism. 

\medskip

By definition, the left-hand side in \eqref{e:loc again} is
$$(\on{Id}\otimes \Gamma_{c,\Ran^{\on{untl}}})\circ \CH^{\on{inf}}(\CZ)_{\on{untl}}.$$

However, since the functor $\Gamma_{c,\Ran^{\on{untl}}}$ is monoidal, the above expression is canonically
isomorphic to 
$$\Gamma_{c,\Ran^{\on{untl}}}(\CH^{\on{inf}}(\CZ)_{\on{untl}}) \circ (\on{Id}\otimes \Gamma_{c,\Ran^{\on{untl}}}).$$

We rewrite the right-hand side in \eqref{e:loc again} as
$$(\on{Id}\otimes \Gamma_{c,\Ran^{\on{untl}}})\circ (p_{\CS(\CZ),\dr}\times \on{id})^! \circ
(p_{\CS(\CZ),\dr}\times \on{id})^{\IndCoh}_*\simeq 
\on{Diff}_{\CS(\CZ)} \circ (\on{Id}\otimes \Gamma_{c,\Ran^{\on{untl}}}),$$
and the map in \eqref{e:loc again} identifies with the map induced by \eqref{e:hecke to dmod downstairs}.

\medskip

Hence, it is an isomorphism, by \thmref{t:main}.

\qed[\corref{c:localization}]

\ssec{Restricting to the non-unital Ran space} \label{ss:non-unital}

\thmref{t:main} used the unital Ran space in that the definition of the left-hand side, namely, 
the object
$$\Gamma_{c,\on{Ran}^{\on{untl}}}(\CH^{\on{inf}}(\CZ)_{\on{untl}}),$$
when viewed as an \emph{algebra} in $\on{End}(\IndCoh(\CS(\CZ)))$,
relied on the  fact that the functor $\Gamma_{c,\on{Ran}^{\on{untl}}}$ is monoidal. 

\medskip

However, one would like to have a version of this theorem, where instead of $\Ran^{\on{untl}}$
we use $\Ran$, which is a prestack, as opposed to categorical prestack, and thus is a geometric
object in the more conventional sense. 

\sssec{}

Recall that
$$\iota: \Ran \to \Ran^{\on{untl}} $$
denotes the inclusion map. 

\medskip

Denote
$$\overset{\circ}{\CS}(\CZ)\strut^{\wedge}_{\on{Ran}}:=\overset{\circ}{\CS}(\CZ)\strut^{\wedge}_{\on{Ran}^{\on{untl}} } 
\underset{\Ran^{\on{untl}}}{\times} \Ran,$$
and by $\fr_{\on{Ran}}$ the resulting map
\begin{equation} \label{e:res Ran non-untl}
\fr_{\on{Ran}}: \CS(\CZ) \times \Ran \to \overset{\circ}{\CS}(\CZ)\strut^{\wedge}_{\on{Ran}},
\end{equation} 
so that we have a Cartesian diagram
$$ \xymatrix{
\CS(\CZ) \times \Ran \ar[r]\ar[d] & \overset{\circ}{\CS}(\CZ)\strut^{\wedge}_{\on{Ran}} \ar[d] \\
\CS(\CZ) \times \Ran^{\on{untl}} \ar[r] & \overset{\circ}{\CS}(\CZ)\strut^{\wedge}_{\on{Ran}^{\on{untl}} }
}$$

\sssec{}

The main result of this subsection is the following assertion. 

\begin{thm}\label{t:main non-unital}
Let $X$ be a proper connected scheme and $\CZ \to X_{\on{dR}}$ a sectionally laft prestack.  Then
then the functor
$$ \Dmod(\CS(\CZ)) \to \IndCoh(\CS(\CZ)) \underset{\IndCoh(\CS(\CZ)) \otimes \Dmod(\Ran)}{\times} \IndCoh(\overset{\circ}{\CS}(\CZ)\strut^{\wedge}_{\on{Ran}}) $$
is an equivalence of categories.
\end{thm}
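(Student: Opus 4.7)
The plan is to reduce Theorem~\ref{t:main non-unital} to its unital counterpart, Corollary~\ref{c:unital Ran fiber product}, by showing that restriction along $\iota:\Ran\to\Ran^{\on{untl}}$ identifies the unital and non-unital fiber products. Denote these fiber products
\begin{gather*}
\CU:=\IndCoh(\CS(\CZ))\underset{\IndCoh(\CS(\CZ)\times\Ran^{\on{untl}})}{\times}\IndCoh(\overset{\circ}{\CS}(\CZ)\strut^{\wedge}_{\Ran^{\on{untl}}}), \\
\CN:=\IndCoh(\CS(\CZ))\underset{\IndCoh(\CS(\CZ)\times\Ran)}{\times}\IndCoh(\overset{\circ}{\CS}(\CZ)\strut^{\wedge}_{\Ran}).
\end{gather*}
The comparison functor of the theorem factors as $\Dmod(\CS(\CZ))\xrightarrow{\sim}\CU\xrightarrow{F_\iota}\CN$, with the first map an equivalence by Corollary~\ref{c:unital Ran fiber product} and $F_\iota$ induced by $\iota^!$ on the second and third factors via the base-change Cartesian square of~\secref{ss:non-unital}. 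So it suffices to show $F_\iota$ is an equivalence, and the key input will be Corollary~\ref{c:Ran cofinality}.

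Reinterpret both fiber products monadically. By the monadic equivalence of~\secref{sss:nil monad} applied to the pseudo-proper nil-isomorphisms $\fr_{\Ran^{\on{untl}}}$ and $\fr_\Ran$, objects of $\CU$ (resp.\ $\CN$) are pairs $(\CF,\mu)$ with $\CF\in\IndCoh(\CS(\CZ))$ and $\mu$ an action of $\CH^{\on{inf}}(\CZ)_{\on{untl}}$ on $\CF\boxtimes\omega_{\Ran^{\on{untl}}}$ (resp.\ of $\CH^{\on{inf}}(\CZ)_\Ran$ on $\CF\boxtimes\omega_\Ran$). Since $\widehat{\on{Hecke}}(\CZ)_\Ran$ is the $\iota$-pullback of $\widehat{\on{Hecke}}(\CZ)_{\Ran^{\on{untl}}}$, base change for $\IndCoh$ yields a canonical intertwining $\iota^!\circ \CH^{\on{inf}}(\CZ)_{\on{untl}}\simeq \CH^{\on{inf}}(\CZ)_\Ran\circ \iota^!$, under which $F_\iota$ becomes restriction of action data along $\iota$.

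It therefore suffices to show that for each $\CF$ the restriction of actions along $\iota$ is an equivalence. The space of $\CH^{\on{inf}}(\CZ)_{\on{untl}}$-actions on $\CF\boxtimes\omega_{\Ran^{\on{untl}}}$ can be written as a totalization of a cosimplicial diagram whose $n$-th term is
$$\Maps_{\IndCoh(\CS(\CZ)\times\Ran^{\on{untl}})}\!\bigl((\CH^{\on{inf}}(\CZ)_{\on{untl}})^{\circ n}(\CF\boxtimes\omega_{\Ran^{\on{untl}}}),\,\CF\boxtimes\omega_{\Ran^{\on{untl}}}\bigr),$$
and similarly for the non-unital version. The target is visibly of the ``linearly pulled-back'' form $\CF\boxtimes\omega_{\Ran^{\on{untl}}}$, so Corollary~\ref{c:Ran cofinality} (applied with $\CY=\CS(\CZ)$) gives a level-wise isomorphism of these terms with their non-unital counterparts; passage to totalizations yields the equivalence of action spaces, and hence the theorem.

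The main obstacle is the cosimplicial compatibility of these level-wise isomorphisms: one must upgrade the single intertwining $\iota^!\circ \CH^{\on{inf}}(\CZ)_{\on{untl}}\simeq \CH^{\on{inf}}(\CZ)_\Ran\circ \iota^!$ to a coherent match of the entire bar-type cosimplicial diagrams, so that the application of Corollary~\ref{c:Ran cofinality} commutes with face and degeneracy maps. This is handled by the fact that $\CH^{\on{inf}}(\CZ)_{\on{untl}}$ is a $\Dmod(\Ran^{\on{untl}})$-linear monad on $\IndCoh(\CS(\CZ))\otimes \Dmod(\Ran^{\on{untl}})$ (\secref{sss:End as tensor product}) whose iterated compositions remain $\Dmod(\Ran^{\on{untl}})$-linear, combined with the dualizability of $\Dmod(\Ran^{\on{untl}})$ from Corollary~\ref{c:unital Ran dualizable}.
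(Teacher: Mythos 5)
Your overall strategy is the same as the paper's: reduce to the unital statement (Corollary~\ref{c:unital Ran fiber product}), identify $\CH^{\on{inf}}(\CZ)_{\Ran}$ as the $\iota^!$-restriction of $\CH^{\on{inf}}(\CZ)_{\on{untl}}$, and feed this into Corollary~\ref{c:Ran cofinality}. The framing in terms of module categories over the two monads is also what the paper does. Where you diverge is in trying to spell out the last step via a cobar/totalization computation of the space of monad actions on a fixed $\CF\boxtimes\omega$.

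There is, however, a genuine gap in your reduction. You claim that ``it suffices to show that for each $\CF$ the restriction of actions along $\iota$ is an equivalence,'' i.e.\ that matching the groupoid fibers of $\CU\to\IndCoh(\CS(\CZ))$ and $\CN\to\IndCoh(\CS(\CZ))$ over each object $\CF$ establishes the equivalence $F_\iota:\CU\to\CN$. That is not sufficient: these forgetful functors are not Cartesian or coCartesian fibrations, so agreement of fibers does not imply an equivalence of total categories. (Abstractly: for a proper subgroup $H\subset G$, the functor $BH\to BG$ has contractible groupoid fibers over the unique object yet is not an equivalence.) Your argument addresses essential surjectivity of $F_\iota$ (every $\iota^!(\sM)$-module structure on $\CF\boxtimes\omega_{\Ran}$ lifts), and endomorphisms over a fixed $\CF$, but it does not address full faithfulness for pairs of objects lying over distinct $\CF_1, \CF_2$. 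To close the gap you need to compare
$$\Maps_{\CU}\bigl((\CF_1,M_1),(\CF_2,M_2)\bigr)=\Maps_{\IndCoh(\CS(\CZ))}(\CF_1,\CF_2)\underset{\Maps(\CF_1\boxtimes\omega,\CF_2\boxtimes\omega)}{\times}\Maps_{\sM\mod}(M_1,M_2)$$
with its non-unital counterpart. The bar resolution of $\Maps_{\sM\mod}(M_1,M_2)$ has $n$th term $\Maps\bigl(\sM^{\circ n}(\CF_1\boxtimes\omega),\CF_2\boxtimes\omega\bigr)$, whose \emph{target} is linear (pulled back from $\IndCoh(\CS(\CZ))$), so Corollary~\ref{c:Ran cofinality} applies term-by-term; the same for the middle factor. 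This is the same tool you invoke, so the repair is not heavy, but as written the logical reduction to fiber-wise comparison of action spaces is incorrect and this piece of the argument is missing.
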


Note that \thmref{t:main non-unital} is rather a variant of \corref{c:unital Ran fiber product} than of \thmref{t:main}. The rest of this 
subsection is devoted to the proof of \thmref{t:main non-unital}. We will deduce it from \corref{c:unital Ran fiber product}. 

\sssec{} \label{sss:tensored functor}

Let $\CH^{\on{inf}}(\CZ)$ be the monad on $\IndCoh(\CS(\CZ) \times \Ran)$ corresponding
to the map $\fr_{\on{Ran}}$ of \eqref{e:res Ran non-untl} via the paradigm of \secref{sss:nil monad}. 

\medskip

Consider the (symmetric) monoidal functor
$$\iota^!: \Dmod(\on{Ran}^{\on{untl}} (X)) \to \Dmod(\Ran),$$
and the resulting monoidal functor
\begin{multline*}
(\on{Id}\otimes \iota^!):\on{End}(\IndCoh(\CS(\CZ)))\otimes \Dmod(\Ran^{\on{untl}})\to 
\on{End}(\IndCoh(\CS(\CZ)))\otimes \Dmod(\Ran),
\end{multline*}
where we identify the latter category with 
$$\on{End}_{\Dmod(\Ran)}(\IndCoh(\CS(Z) \times \Ran)),$$
as in \secref{sss:End as tensor product}.

\medskip

Consider the object 
$$(\on{Id}\otimes \iota)^!(\CH^{\on{inf}}(\CZ)_{\on{untl}}) \in \on{Alg}(\on{End}(\IndCoh(\CS(\CZ)))\otimes \Dmod(\Ran)).$$

It is easy to see, however, that there is a canonical isomorphism
\begin{equation} \label{e:restr Hecke to Ran}
\CH^{\on{inf}}(\CZ) \simeq (\on{Id}\otimes \iota)^!(\CH^{\on{inf}}(\CZ)_{\on{untl}})
\end{equation}
as objects of $\on{Alg}(\on{End}(\IndCoh(\CS(\CZ)))\otimes \Dmod(\Ran))$.

%
%

\sssec{Proof of \thmref{t:main non-unital}}

Let $\CY$ be a laft prestack and let $\sM$ be an object in 
$$\on{Alg}\left(\on{End}_{\Dmod(\on{Ran}^{\on{untl}})}\left(\IndCoh(\CY) \otimes\Dmod(\Ran^{\on{untl}})\right)\right),$$
and let $\iota^!(\sM)$ be its pullback to an object of
$$\on{Alg}\left(\on{End}_{\Dmod(\on{Ran})}\left(\IndCoh(\CY) \otimes\Dmod(\Ran)\right)\right).$$

It follows from \corref{c:Ran cofinality} that the functor $\on{Id}\otimes \iota^!$
\begin{multline*}
\sM\mod\left(\IndCoh(\CY) \otimes \Dmod(\on{Ran}^{\on{untl}} (X))\right) \to \\ 
\to \iota^!(\sM)\mod\left(\IndCoh(\CY) \otimes \Dmod(\Ran)\right)
\end{multline*} 
restricts to an equivalence
\begin{multline*}
\IndCoh(\CY) \underset{\IndCoh(\CY) \otimes \Dmod(\on{Ran}^{\on{untl}} (X))}{\times} 
\sM\mod\left(\IndCoh(\CY) \otimes \Dmod(\on{Ran}^{\on{untl}} (X))\right) \to \\
\to 
\IndCoh(\CY) \underset{\IndCoh(\CY) \otimes \Dmod(\Ran)}{\times} 
\iota^!(\sM)\mod\left(\IndCoh(\CY) \otimes \Dmod(\Ran)\right).
\end{multline*} 

Applying this to $\CY:=\CS(\CZ)$ and $\sM:=\CH^{\on{inf}}(\CZ)_{\on{untl}}$, 
and using the isomorphism \eqref{e:restr Hecke to Ran}, we obtain the desired result. 

\qed

%
%

\begin{rem} \label{r:non-unital monad}
Recall the monad $\Gamma_{c,\Ran}(\CH^{\on{inf}}(\CZ)_{\on{untl}})$ acting on $\IndCoh(\CS(\CZ))$.  Its underlying endofunctor
is given by \eqref{e:monad downstairs}.

\medskip

By \thmref{t:Ran cofinality new}, we can equivalently rewrite \eqref{e:monad downstairs} as 
\begin{equation} \label{e:monad downstairs non-unital}
(\on{Id}\otimes \Gamma_{c,\on{Ran}})\circ \CH^{\on{inf}}(\CZ) \circ
(\on{Id}\otimes p^!_{\Ran}),
\end{equation}

This is a more economical way to describe the endofunctor of $\IndCoh(\CS(\CZ))$ underlying $\Gamma_{c,\Ran}(\CH^{\on{inf}}(\CZ)_{\on{untl}})$
in that it only involves prestacks (as opposed to categorical prestacks). The disadvantage of \eqref{e:monad downstairs non-unital} is that this expression 
does not immediately describe the monad structure on $\Gamma_{c,\Ran}(\CH^{\on{inf}}(\CZ)_{\on{untl}})$.

\end{rem}

\ssec{A relative version}

For the sequel, we will need two enhancements of \thmref{t:main} having to do with 
the relative and parameterized versions of the set-up of \thmref{t:main}.

\sssec{}

Suppose we have a map
$$ \xymatrix{
\CZ_1 \ar[rr]\ar[rd] && \CZ_2 \ar[dl] \\
& X_{\on{dR}} 
}$$
of sectionally laft prestacks. Denote
$$ \overset{\circ}{\CS}(\CZ_1/\CZ_2)_{\on{Ran}^{\on{untl}} } := \overset{\circ}{\CS}(\CZ_1)_{\on{Ran}^{\on{untl}} } 
\underset{\overset{\circ}{\CS}(\CZ_2)_{\on{Ran}^{\on{untl}} }}{\times} (\CS(\CZ_2)\times \Ran^{\on{untl}}),$$
and let
$$\overset{\circ}{\CS}(\CZ_1/\CZ_2)\strut^{\wedge}_{\on{Ran}^{\on{untl}}}$$
be the formal completion of $ \overset{\circ}{\CS}(\CZ_1/\CZ_2)_{\on{Ran}^{\on{untl}}}$ along the natural map
$$\CS(\CZ_1)\times \Ran^{\on{untl}}\to \overset{\circ}{\CS}(\CZ_1/\CZ_2)_{\on{Ran}^{\on{untl}}}.$$

Equivalently, we have
$$ \overset{\circ}{\CS}(\CZ_1/\CZ_2)\strut^{\wedge}_{\on{Ran}^{\on{untl}} } \simeq \overset{\circ}{\CS}(\CZ_1)\strut^{\wedge}_{\on{Ran}^{\on{untl}} } \underset{\overset{\circ}{\CS}(\CZ_2)\strut^{\wedge}_{\on{Ran}^{\on{untl}} }}{\times} (\CS(\CZ_2)\times \Ran^{\on{untl}}).$$

\sssec{}

It follows from \propref{p:punctured sects laft-def} that $\overset{\circ}{\CS}(\CZ_1/\CZ_2)\strut^{\wedge}_{\on{Ran}^{\on{untl}} }$
is a laft-def categorical prestack and the restriction map
\begin{equation}
\fr_{\on{Ran}^{\on{untl}} }: \CS(\CZ_1) \times \Ran^{\on{untl}} \to \overset{\circ}{\CS}(\CZ_1/\CZ_2)\strut^{\wedge}_{\on{Ran}^{\on{untl}} }
\end{equation}
is a nil-isomorphism and a coCartesian fibration in groupoids.

\medskip

As in the absolute case, we have the \emph{relative infinitesimal Hecke monad}
$$ \CH^{\on{inf}}(\CZ_1/\CZ_2)_{\on{untl}}:= 
\fr_{\on{Ran}^{\on{untl}} }^! \circ (\fr_{\on{Ran}^{\on{untl}} })_! \in \on{Alg}(\on{End}(\IndCoh(\CS(\CZ_1))) \otimes \Dmod(\on{Ran}^{\on{untl}})).$$

\sssec{}

Let
$$ \on{Diff}_{\CS(\CZ_1)/\CS(\CZ_2)} \in \on{Alg}(\on{End}(\IndCoh(\CS(\CZ_1)))) $$
denote the monad corresponding to push-pull along the nil-isomorphism
$$p_{\CS(\CZ_1),\dr/\CS(\CZ_2)}: \CS(\CZ_1) \to (\CS(\CZ_1)/\CS(\CZ_2))_{\on{dR}},$$
where $(\CS(\CZ_1)/\CS(\CZ_2))_{\on{dR}}$ is the relative de Rham prestack, i.e., 
$$(\CS(\CZ_1)/\CS(\CZ_2))_{\on{dR}}:=\CS(\CZ_1)_{\dr}\underset{\CS(\CZ_2)_{\dr}}\times \CS(\CZ_2),$$
which is the same as the formal completion of 
$\CS(\CZ_2)$ along $\CS(\CZ_1)$. 

\sssec{}

We have the commutative diagram
$$
\xymatrix{
\CS(\CZ_1) \times \Ran^{\on{untl}} \ar[r]\ar[d] & 
\overset{\circ}{\CS}(\CZ_1)\strut^{\wedge}_{\on{Ran}^{\on{untl}} } \ar[r]\ar[d] & \CS(\CZ_1)_{\on{dR}} \times \Ran^{\on{untl}} \ar[d] \\
\CS(\CZ_2) \times \Ran^{\on{untl}} \ar[r] & \overset{\circ}{\CS}(\CZ_2)\strut^{\wedge}_{\on{Ran}^{\on{untl}} } \ar[r] & \CS(\CZ_2)_{\on{dR}} \times \Ran^{\on{untl}}
}
$$
and therefore a map
$$ \overset{\circ}{\CS}(\CZ_1/\CZ_2)\strut^{\wedge}_{\on{Ran}^{\on{untl}} } \to (\CS(\CZ_1)/\CS(\CZ_2))_{\on{dR}} \times \Ran^{\on{untl}} $$
under $\CS(\CZ_1) \times \Ran^{\on{untl}}$, which induces a map 
\begin{equation}\label{e:rel hecke to dmod}
\CH^{\on{inf}}(\CZ_1/\CZ_2)_{\on{untl}} \to \on{Diff}_{\CS(\CZ_1)/\CS(\CZ_2)} \boxtimes\  \omega_{\on{Ran}^{\on{untl}} (X)}
\end{equation}
of objects in $\on{Alg}(\on{End}(\IndCoh(\CS(\CZ_1))) \otimes \Dmod(\on{Ran}^{\on{untl}} (X)))$.

\sssec{}

The relative version of \thmref{t:main} is:

\begin{thm}\label{t:rel main}
Let $X$ be a proper connected scheme and $\CZ_1 \to \CZ_2$ a map of sectionally laft prestacks.  Then
the map \eqref{e:rel hecke to dmod} induces an isomorphism of monads
$$ \Gamma_{c,\on{Ran}^{\on{untl}} }(\CH^{\on{inf}}(\CZ_1/\CZ_2)_{\on{untl}}) \simeq \on{Diff}_{\CS(\CZ_1)/\CS(\CZ_2)}.$$
\end{thm}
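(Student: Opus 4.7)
The plan is to mirror the proof of \thmref{t:main} in the relative situation, working over the fixed base $\CS(\CZ_2)$. First, I would apply deformation to the normal cone from \cite[Chapter 9]{Vol2} to the two nil-isomorphisms
$$p_{\CS(\CZ_1),\dr/\CS(\CZ_2)}\colon \CS(\CZ_1)\to (\CS(\CZ_1)/\CS(\CZ_2))_{\dr}$$
and
$$\fr_{\on{Ran}^{\on{untl}}}\colon \CS(\CZ_1) \times \Ran^{\on{untl}} \to \overset{\circ}{\CS}(\CZ_1/\CZ_2)\strut^{\wedge}_{\on{Ran}^{\on{untl}}},$$
thereby equipping both $\on{Diff}_{\CS(\CZ_1)/\CS(\CZ_2)}$ and $\CH^{\on{inf}}(\CZ_1/\CZ_2)_{\on{untl}}$ with compatible non-negative filtrations preserved by the map \eqref{e:rel hecke to dmod}. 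Since the symmetric monoidal functor $\Gamma_{c,\on{Ran}^{\on{untl}}}$ (see \corref{c:Gamma on Ran untl mon}) respects filtered structure, the proof reduces to an isomorphism at the associated graded level.

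Next, by the general formalism of \cite[Chapter 9]{Vol2}, the associated graded of each monad is canonically tensoring with the symmetric algebra of a single linear object of $\IndCoh(\CS(\CZ_1))$. For the $\on{Diff}$-side this object is the shifted relative tangent complex $T(\CS(\CZ_1)/\CS(\CZ_2))[-1]$; for the Hecke side it is the image under $\Gamma_{c,\on{Ran}^{\on{untl}}}$ of the relative tangent complex of $\fr_{\on{Ran}^{\on{untl}}}$, which is a well-defined object of $\IndCoh(\CS(\CZ_1)\times \Ran^{\on{untl}})$. Strict symmetric monoidality of $\Gamma_{c,\on{Ran}^{\on{untl}}}$ lets it commute with $\on{Sym}$, so the question reduces further to a purely linear comparison in $\IndCoh(\CS(\CZ_1))$.

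For this final linear comparison, I would introduce the relative antecedent
$$\Theta(\CZ_1/\CZ_2) := \on{fib}\bigl(\Theta(\CZ_1) \to f^{!}(\Theta(\CZ_2))\bigr) \in \IndCoh(\CS(\CZ_1)) \otimes \Dmod(X),$$
where $f\colon \CS(\CZ_1) \to \CS(\CZ_2)$; its existence follows from the sectional laft-ness of $\CZ_1$ and $\CZ_2$ together with the functoriality of Serre-Verdier duality. A relative version of the identification at the end of \secref{s:sect laft} then identifies the relative tangent of $\fr_{\on{Ran}^{\on{untl}}}$, restricted to each stratum $X^I_{\dr}\to \Ran^{\on{untl}}$, with the direct sum (indexed by $I$) of $!$-restrictions of $\Theta(\CZ_1/\CZ_2)$ along the diagonals. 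As $I$ varies, these assemble into a linear factorization sheaf in the sense of \secref{ss:lin fact}. By \thmref{t:image of Dmod in Ran}, applying $\Gamma_{c,\on{Ran}^{\on{untl}}}$ then returns the de Rham cohomology along $X$ of $\Theta(\CZ_1/\CZ_2)$, which by the relative analogue of \propref{p:sect tangent} coincides with $T(\CS(\CZ_1)/\CS(\CZ_2))$, as required.

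The hardest part will be verifying that the Serre-Verdier formalism of \secref{ss:Theta} and Appendix~\ref{s:fake} extends cleanly to the relative setting: I would need to check that $\Theta(\CZ_1/\CZ_2)$ actually belongs to the subcategory on which duality is defined, that the relative tangent complex of $\fr_{\on{Ran}^{\on{untl}}}$ genuinely assembles into a linear factorization sheaf (with the factorization isomorphisms needed for the unital Ran space, rather than merely a linear sheaf on $\Ran$), and that the filtrations coming from deformation to the normal cone are functorial with respect to the map \eqref{e:rel hecke to dmod}. Once these compatibilities are in place, the chain of identifications above yields \thmref{t:rel main}.
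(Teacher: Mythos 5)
Your outline is correct in spirit and matches the paper's method for the \emph{absolute} statement (\thmref{t:main}): deformation to the normal cone, reduction to the associated graded, symmetric monoidality of $\Gamma_{c,\on{Ran}^{\on{untl}}}$ to pass to a linear statement, and identification via linear factorization sheaves. Where you diverge from the paper is precisely at what you flag as the ``hardest part.'' You propose to develop a relative version $\Theta(\CZ_1/\CZ_2)$ of the Serre--Verdier antecedent, re-run the identification of the relative tangent complex of $\fr_{\on{Ran}^{\on{untl}}}$ in terms of $\Theta(\CZ_1/\CZ_2)$, and redo the linear factorization sheaf argument in the relative setting. The paper sidesteps all of this. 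It proves the parameterized \thmref{t:enh main} (of which \thmref{t:rel main} is the special case $\CY=\CS(\CZ_1)$), reduces it via the same deformation argument to the vanishing
$$\Gamma_{c,\on{Ran}^{\on{untl}} }\circ (\fr_{\on{Ran}^{\on{untl}} })^! \,T\bigl(\overset{\circ}{\CS}(\CZ_1)\strut^{\wedge}_{\on{Ran}^{\on{untl}} }/\overset{\circ}{\CS}(\CZ_2)\strut^{\wedge}_{\on{Ran}^{\on{untl}} }\bigr) \simeq 0,$$
and then observes that this relative tangent sits in a fiber sequence whose outer terms are the absolute relative tangents for $\CZ_1$ and for $\CZ_2$ over the respective de Rham completions. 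The absolute vanishing (\thmref{t:tangent vanishing}) thus gets invoked twice — once for $\CZ_1$, once for $\CZ_2$ — and the relative vanishing follows by two-out-of-three. No relative $\Theta$, no relative Serre--Verdier duality, and no relative factorization sheaf analysis are needed. So your approach would work, but the paper's reduction of the relative statement to the absolute one is strictly cheaper; you would be re-proving a result the paper already extracts formally.

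One minor point: the linear object whose symmetric algebra gives the associated graded of $\on{Diff}_{\CS(\CZ_1)/\CS(\CZ_2)}$ is $T(\CS(\CZ_1)/\CS(\CZ_2))$ with no cohomological shift (this is what \thmref{t:def normal cone} produces), so the $[-1]$ in your proposal should be dropped.
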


Note that \thmref{t:main} is an instance of \thmref{t:rel main} when
$$\CZ_2 = X_{\on{dR}}.$$
In this case, $\CS(\CZ_2) = \on{pt}$ and $\overset{\circ}{\CS}(\CZ_2)\strut^{\wedge}_{\on{Ran}^{\on{untl}}} = \Ran^{\on{untl}}$.

\sssec{}

As in \secref{ss:loc}, from \thmref{t:rel main} we obtain:

\begin{cor} \label{c:loc rel}
The functor 
$$\on{pre-Loc}: (\on{Id}\otimes \Gamma_{c,\on{Ran}^{\on{untl}}})\circ \fr_{\on{Ran}^{\on{untl}}}:
\IndCoh(\overset{\circ}{\CS}(\CZ_1/\CZ_2)\strut^{\wedge}_{\on{Ran}^{\on{untl}}})\to \IndCoh(\CS(\CZ_1))$$
factors canonically as
$$\IndCoh(\overset{\circ}{\CS}(\CZ_1/\CZ_2)\strut^{\wedge}_{\on{Ran}^{\on{untl}}}) \overset{\on{Loc}}\longrightarrow
\IndCoh((\CS(\CZ_1)/\CS(\CZ_2))_{\on{dR}}) \overset{p^!_{\CS(\CZ_1),\dr/\CS(\CZ_2)}}\longrightarrow \IndCoh(\CS(\CZ_1)),$$
where $\on{Loc}$ is the functor
$$(\on{Id}\otimes \Gamma_{c,\on{Ran}^{\on{untl}}}) \circ \fq^\IndCoh_*,$$
where
$$\fq: \overset{\circ}{\CS}(\CZ_1/\CZ_2)\strut^{\wedge}_{\on{Ran}^{\on{untl}}} \to 
(\CS(\CZ_1)/\CS(\CZ_2))_{\on{dR}}\times \Ran^{\on{untl}}.$$
\end{cor}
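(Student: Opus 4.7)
The plan is to imitate verbatim the argument of \corref{c:localization}, substituting \thmref{t:rel main} for \thmref{t:main} at the one step where the absolute main theorem was invoked. The relative setup differs from the absolute one only in that the ``target'' $\CS(\CZ_2)_{\dr}\times\Ran^{\on{untl}}$ replaces $\Ran^{\on{untl}}$; the formal structure of the push-pull diagram is identical.

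First I would set up the relative analogue of the triangle in \secref{ss:loc}, namely
$$
\xymatrix{
\CS(\CZ_1)\times\Ran^{\on{untl}} \ar[r]^-{\fr_{\Ran^{\on{untl}}}}
\ar[dr]_{p_{\CS(\CZ_1),\dr/\CS(\CZ_2)}\times \on{id}}
& \overset{\circ}{\CS}(\CZ_1/\CZ_2)\strut^\wedge_{\Ran^{\on{untl}}} \ar[d]^{\fq} \\
& (\CS(\CZ_1)/\CS(\CZ_2))_{\dr}\times \Ran^{\on{untl}}.
}
$$
Base change and adjunction for this triangle produces a canonical natural transformation
$$
\fr_{\Ran^{\on{untl}}}^! \to (p_{\CS(\CZ_1),\dr/\CS(\CZ_2)}\times \on{id})^!\circ \fq^{\IndCoh}_*,
$$
which after composing with $\on{Id}\otimes \Gamma_{c,\Ran^{\on{untl}}}$ yields the asserted factorization
$$
\on{pre-Loc} \to p^!_{\CS(\CZ_1),\dr/\CS(\CZ_2)}\circ \on{Loc}.
$$
The content of the corollary is then that this map is an isomorphism.

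To verify this, I would check it on a generating family. Exactly as in \corref{c:localization}, the category $\IndCoh(\overset{\circ}{\CS}(\CZ_1/\CZ_2)\strut^\wedge_{\Ran^{\on{untl}}})$ is generated under colimits by the essential image of $(\fr_{\Ran^{\on{untl}}})^{\IndCoh}_*$, since $\fr_{\Ran^{\on{untl}}}$ is a nil-isomorphism (cf.\ \secref{sss:nil monad}). Precomposing both sides with $(\fr_{\Ran^{\on{untl}}})^{\IndCoh}_*$, the left-hand side becomes $(\on{Id}\otimes \Gamma_{c,\Ran^{\on{untl}}})\circ \CH^{\on{inf}}(\CZ_1/\CZ_2)_{\on{untl}}$, and by the monoidality of $\Gamma_{c,\Ran^{\on{untl}}}$ (\corref{c:Gamma on Ran untl mon}) this rewrites canonically as
$$
\Gamma_{c,\Ran^{\on{untl}}}(\CH^{\on{inf}}(\CZ_1/\CZ_2)_{\on{untl}}) \circ (\on{Id}\otimes \Gamma_{c,\Ran^{\on{untl}}}).
$$
On the other hand, the right-hand side becomes
$$
(\on{Id}\otimes \Gamma_{c,\Ran^{\on{untl}}})\circ (p_{\CS(\CZ_1),\dr/\CS(\CZ_2)}\times \on{id})^!\circ (p_{\CS(\CZ_1),\dr/\CS(\CZ_2)}\times \on{id})^{\IndCoh}_* \simeq \on{Diff}_{\CS(\CZ_1)/\CS(\CZ_2)} \circ (\on{Id}\otimes \Gamma_{c,\Ran^{\on{untl}}}),
$$
via the relative-over-$\CS(\CZ_2)$ analogue of the identification of $\on{Diff}$ via push-pull along $p_{\CS(\CZ_1),\dr/\CS(\CZ_2)}$.

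Under these identifications, the natural transformation in question is precisely the one induced by the map of monads \eqref{e:rel hecke to dmod}, which is an isomorphism by \thmref{t:rel main}. The main (essentially bookkeeping) obstacle is to verify that the rewriting of the right-hand side as $\on{Diff}_{\CS(\CZ_1)/\CS(\CZ_2)}\circ(\on{Id}\otimes\Gamma_{c,\Ran^{\on{untl}}})$ is compatible with the map produced by the triangle above; this is a compatibility of base-change morphisms entirely parallel to the absolute case and requires no new geometric input beyond \thmref{t:rel main} and the monoidality of $\Gamma_{c,\Ran^{\on{untl}}}$.
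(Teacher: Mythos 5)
Your proposal is correct, and it reproduces exactly the argument the paper intends: the paper proves \corref{c:loc rel} by the one-line remark ``As in \secref{ss:loc}, from \thmref{t:rel main} we obtain:'', i.e., by running the proof of \corref{c:localization} verbatim with $\CS(\CZ)$, $\overset{\circ}{\CS}(\CZ)\strut^\wedge_{\Ran^{\on{untl}}}$, $\CS(\CZ)_\dr$, and $\on{Diff}_{\CS(\CZ)}$ replaced by their relative counterparts over $\CS(\CZ_2)$, which is precisely what you do. The only slight imprecision is citing \eqref{e:rel hecke to dmod} as ``an isomorphism by \thmref{t:rel main}'' — it is the map \emph{obtained from} \eqref{e:rel hecke to dmod} by applying $\on{Id}\otimes\Gamma_{c,\Ran^{\on{untl}}}$ that \thmref{t:rel main} declares an isomorphism — but your preceding sentence makes clear you understand this.
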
 

\sssec{}
As in the absolute case, we obtain a local-to-global description of the category $\IndCoh((\CS(\CZ_1)/\CS(\CZ_2))_{\on{dR}})$:

\begin{cor}\label{c:unital Ran rel fiber product}
Under the assumptions of \thmref{t:rel main}, the functor
\begin{multline*}
\IndCoh((\CS(\CZ_1)/\CS(\CZ_2))_{\on{dR}})\to \\
\to \IndCoh(\CS(\CZ_1)) \underset{\IndCoh(\CS(\CZ_1)) \otimes \Dmod(\on{Ran}^{\on{untl}} (X))}{\times}
\IndCoh(\overset{\circ}{\CS}(\CZ_1/\CZ_2)\strut^{\wedge}_{\on{Ran}^{\on{untl}} }) 
\end{multline*}
is an equivalence of categories.
\end{cor}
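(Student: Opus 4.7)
The plan is to deduce Corollary \ref{c:unital Ran rel fiber product} from Theorem \ref{t:rel main} in exactly the same formal manner that Corollary \ref{c:unital Ran fiber product} was obtained from Theorem \ref{t:main}. The key is to identify both sides of the claimed equivalence as module categories for the same monad on $\IndCoh(\CS(\CZ_1))$, namely $\on{Diff}_{\CS(\CZ_1)/\CS(\CZ_2)}$, and then transport the isomorphism of monads supplied by Theorem \ref{t:rel main} through this identification.

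First, I would identify the left-hand side. By the monadicity paradigm of \secref{sss:nil monad}, applied to the nil-isomorphism and level-wise coCartesian fibration in groupoids
$$p_{\CS(\CZ_1),\dr/\CS(\CZ_2)}: \CS(\CZ_1) \to (\CS(\CZ_1)/\CS(\CZ_2))_{\on{dR}},$$
the functor $p_{\CS(\CZ_1),\dr/\CS(\CZ_2)}^!$ induces an equivalence
$$\IndCoh((\CS(\CZ_1)/\CS(\CZ_2))_{\on{dR}}) \simeq \on{Diff}_{\CS(\CZ_1)/\CS(\CZ_2)}\mmod(\IndCoh(\CS(\CZ_1))).$$
(Both sides of the equivalence in Corollary \ref{c:unital Ran rel fiber product} receive a natural functor from $\IndCoh((\CS(\CZ_1)/\CS(\CZ_2))_{\on{dR}})$, and I will check that the functor produced by our identifications is the same as the one in the statement.)

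Next, I would identify the right-hand side. Applying monadicity again, this time to $\fr_{\on{Ran}^{\on{untl}}}: \CS(\CZ_1)\times \Ran^{\on{untl}} \to \overset{\circ}{\CS}(\CZ_1/\CZ_2)\strut^{\wedge}_{\on{Ran}^{\on{untl}}}$, we have
$$\IndCoh(\overset{\circ}{\CS}(\CZ_1/\CZ_2)\strut^{\wedge}_{\on{Ran}^{\on{untl}}}) \simeq \CH^{\on{inf}}(\CZ_1/\CZ_2)_{\on{untl}}\mmod\bigl(\IndCoh(\CS(\CZ_1)\times \Ran^{\on{untl}})\bigr).$$
Combined with the general categorical identification (analogous to the one in \secref{sss:modules for pushforward monad})
\begin{multline*}
\Gamma_{c,\on{Ran}^{\on{untl}}}(\CH^{\on{inf}}(\CZ_1/\CZ_2)_{\on{untl}})\mmod(\IndCoh(\CS(\CZ_1))) \simeq \\
\IndCoh(\CS(\CZ_1)) \underset{\IndCoh(\CS(\CZ_1)\times\Ran^{\on{untl}})}\times \CH^{\on{inf}}(\CZ_1/\CZ_2)_{\on{untl}}\mmod(\IndCoh(\CS(\CZ_1)\times \Ran^{\on{untl}})),
\end{multline*}
the right-hand side of our corollary becomes the category of modules over $\Gamma_{c,\on{Ran}^{\on{untl}}}(\CH^{\on{inf}}(\CZ_1/\CZ_2)_{\on{untl}})$ acting on $\IndCoh(\CS(\CZ_1))$.

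Finally, Theorem \ref{t:rel main} provides an isomorphism of monads
$$\Gamma_{c,\on{Ran}^{\on{untl}}}(\CH^{\on{inf}}(\CZ_1/\CZ_2)_{\on{untl}})\simeq \on{Diff}_{\CS(\CZ_1)/\CS(\CZ_2)},$$
and this induces an equivalence between the two module categories just described, which is the assertion of the corollary. The only non-formal step is to check compatibility of the two identifications, so that the resulting equivalence is indeed the one induced by the natural comparison functor; this amounts to tracing through the commutative diagram relating $p_{\CS(\CZ_1),\dr/\CS(\CZ_2)}$, $\fr_{\on{Ran}^{\on{untl}}}$, and the map $\fq$ of \corref{c:loc rel}, and presents no real obstacle, since the same compatibility was implicit in the construction of the map of monads \eqref{e:rel hecke to dmod} in the first place.
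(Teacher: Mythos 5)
Your proposal is correct and is essentially the same argument the paper has in mind (the paper simply writes ``As in the absolute case\dots'', referring to how \corref{c:unital Ran fiber product} was obtained from \thmref{t:main} via the monadicity paradigm of \secref{sss:nil monad} and the identification \eqref{e:modules for integrated monad}). Your chain of identifications---module categories over $\on{Diff}_{\CS(\CZ_1)/\CS(\CZ_2)}$ on the left, the pushforward monad and the fiber-product description on the right, then Theorem \ref{t:rel main} to match the monads---is exactly the intended formal deduction.
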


Finally, as in \secref{ss:non-unital}, from \corref{c:unital Ran rel fiber product} we obtain:

\begin{thm} \label{t:main non-unital rel}
Under the assumptions of \thmref{t:rel main}, the functor
\begin{multline*}
\IndCoh((\CS(\CZ_1)/\CS(\CZ_2))_{\on{dR}})\to \\
\to \IndCoh(\CS(\CZ_1)) \underset{\IndCoh(\CS(\CZ_1)) \otimes \Dmod(\Ran)}{\times}
\IndCoh(\overset{\circ}{\CS}(\CZ_1/\CZ_2)\strut^{\wedge}_{\on{Ran}}) 
\end{multline*}
is an equivalence of categories.
\end{thm}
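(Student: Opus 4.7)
The plan is to deduce \thmref{t:main non-unital rel} from its unital counterpart \corref{c:unital Ran rel fiber product} by exactly the argument used to derive \thmref{t:main non-unital} from \corref{c:unital Ran fiber product}. The key point is that the inclusion $\iota:\Ran\hookrightarrow \Ran^{\on{untl}}$ is universally homologically cofinal by \thmref{t:Ran cofinality new}, which controls how the passage from the unital to the non-unital picture interacts with fiber products of module categories.

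First, I would produce the relative analog of the identification \eqref{e:restr Hecke to Ran}. Namely, using the Cartesian square
$$\xymatrix{
\overset{\circ}{\CS}(\CZ_1/\CZ_2)\strut^{\wedge}_{\on{Ran}} \ar[r]\ar[d] &
\overset{\circ}{\CS}(\CZ_1/\CZ_2)\strut^{\wedge}_{\on{Ran}^{\on{untl}}} \ar[d] \\
\CS(\CZ_1)\times \Ran \ar[r] & \CS(\CZ_1)\times \Ran^{\on{untl}}
}$$
and base change for the pseudo-proper restriction map $\fr$, one obtains a canonical isomorphism of monads
$$\CH^{\on{inf}}(\CZ_1/\CZ_2)\simeq (\on{Id}\otimes \iota)^!(\CH^{\on{inf}}(\CZ_1/\CZ_2)_{\on{untl}})$$
in $\on{Alg}\bigl(\on{End}(\IndCoh(\CS(\CZ_1)))\otimes \Dmod(\Ran)\bigr)$, via the same argument as in \secref{sss:tensored functor}.

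Next, I would invoke the general principle (extracted from the proof of \thmref{t:main non-unital}) that for a laft prestack $\CY$ and a monad $\sM$ on $\IndCoh(\CY)\otimes \Dmod(\Ran^{\on{untl}})$, the pullback functor $\on{Id}\otimes \iota^!$ induces an equivalence
\begin{multline*}
\IndCoh(\CY) \underset{\IndCoh(\CY)\otimes \Dmod(\Ran^{\on{untl}})}{\times} \sM\mod \; \simeq \\
\IndCoh(\CY) \underset{\IndCoh(\CY)\otimes \Dmod(\Ran)}{\times} \iota^!(\sM)\mod.
\end{multline*}
This is a formal consequence of \corref{c:Ran cofinality}, which implies that the natural transformation $(\on{id}\otimes \iota)_!\circ(\on{id}\otimes \iota)^!\to \on{id}$ becomes an isomorphism after tensoring against constant families coming from $\IndCoh(\CY)$.

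Applying this principle with $\CY:=\CS(\CZ_1)$ and $\sM:=\CH^{\on{inf}}(\CZ_1/\CZ_2)_{\on{untl}}$, and combining with \corref{c:unital Ran rel fiber product}, which identifies the left-hand side of the asserted equivalence with the unital fiber product, yields \thmref{t:main non-unital rel}. The only real content specific to the relative setting is the base-change identification of $\CH^{\on{inf}}(\CZ_1/\CZ_2)$ with the restriction of $\CH^{\on{inf}}(\CZ_1/\CZ_2)_{\on{untl}}$ to $\Ran$; everything else is transport of the unital statement along the cofinality property of $\iota$, so the main potential obstacle---namely controlling how taking $\sM$-module categories interacts with pullback along $\iota$---has already been resolved in the absolute case and applies verbatim here.
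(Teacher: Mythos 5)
Your proposal is correct and takes exactly the approach the paper intends: the paper's own proof consists of the single sentence "as in \secref{ss:non-unital}, from \corref{c:unital Ran rel fiber product} we obtain" \thmref{t:main non-unital rel}, i.e., transport the absolute argument verbatim. You have correctly identified the two ingredients (the base-change isomorphism $\CH^{\on{inf}}(\CZ_1/\CZ_2)\simeq(\on{Id}\otimes\iota)^!(\CH^{\on{inf}}(\CZ_1/\CZ_2)_{\on{untl}})$ and the general principle extracted from \corref{c:Ran cofinality}) and assembled them in the same way.
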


\ssec{Parameterized version}

\sssec{}

Let $\CZ_1 \to \CZ_2$ be once again a map of sectionally laft prestacks.  Additionally, let $\CY$ be a laft-def prestack and suppose that
we have a map
$$ \CY \to \CS(\CZ_1) .$$
Let
$$ \overset{\circ}{\CS}(\CY/\CZ_1/\CZ_2)\strut^{\wedge}_{\on{Ran}^{\on{untl}} } := (\CY / \overset{\circ}{\CS}(\CZ_1/\CZ_2)_{\on{Ran}^{\on{untl}} })_{\on{dR}} $$
denote the formal completion of $\overset{\circ}{\CS}(\CZ_1/\CZ_2)_{\on{Ran}^{\on{untl}} }$ along the composite
$$ \CY \times \Ran^{\on{untl}} \to \CS(\CZ_1) \times \Ran^{\on{untl}} \to 
\overset{\circ}{\CS}(\CZ_1/\CZ_2)_{\on{Ran}^{\on{untl}} } ,$$
and let
$$ \CH^{\on{inf}}(\CY/\CZ_1/\CZ_2)_{\on{untl}} \in \on{Alg}(\on{End}(\IndCoh(\CY))\otimes \Dmod(\on{Ran}^{\on{untl}} (X))) $$
denote the monad corresponding to the nil-isomorphism
$$ \CY \times \Ran^{\on{untl}} \to \overset{\circ}{\CS}(\CY/\CZ_1/\CZ_2)\strut^{\wedge}_{\on{Ran}^{\on{untl}} } .$$

\medskip

As before, we have a map of algebras
\begin{equation}\label{e:enh hecke to dmod}
\CH^{\on{inf}}(\CY/\CZ_1/\CZ_2)_{\on{untl}} \to \on{Diff}_{\CY/\CS(\CZ_2)} \boxtimes \ \omega_{\on{Ran}^{\on{untl}} (X)}
\end{equation}
where $\on{Diff}_{\CY/\CS(\CZ_2)} \in \on{Alg}(\on{End}(\IndCoh(\CY))$ is the monad corresponding to push-pull along the nil-isomorphism
$$ \CY \to (\CY/\CS(\CZ_2))_{\on{dR}}:=\CY_\dr\underset{\CS(\CZ_2)_\dr}\times \CS(\CZ_2) .$$

\sssec{}
We can now state the parameterized version of \thmref{t:rel main}.

\begin{thm}\label{t:enh main}
Let $X$ be a proper connected scheme and $\CZ_1 \to \CZ_2$ a map of sectionally laft prestacks.  Let $\CY \to \CS(\CZ_1)$ 
be a laft-def prestack over $\CS(\CZ_1)$. Then
the map \eqref{e:enh hecke to dmod} induces an isomorphism of monads
$$ \Gamma_{c,\on{Ran}^{\on{untl}} }(\CH^{\on{inf}}(\CY/\CZ_1/\CZ_2)_{\on{untl}}) \simeq \on{Diff}_{\CY/\CS(\CZ_2)}.$$
\end{thm}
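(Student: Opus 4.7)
The plan is to follow the three-step strategy from the proof of \thmref{t:rel main} (summarized in the introduction), adapted to the $\CY$-parameterized setting. First, by deformation to the normal cone (\cite[Chapter 9]{Vol2}), both monads appearing in \eqref{e:enh hecke to dmod} acquire non-negative filtrations whose $0$-th pieces are the identity endofunctor, the comparison map preserves these filtrations, and hence it suffices to show the induced map on associated gradeds is an isomorphism. At this level both monads become free commutative algebras: $\Sym$ of $T(\CY/(\CY/\CS(\CZ_2))_\dr)[1]$ on the $\on{Diff}$ side, and $\Gamma_{c,\on{Ran}^{\on{untl}}}$ of $\Sym$ of the shifted relative tangent of the nil-isomorphism $\CY\times\on{Ran}^{\on{untl}}\to \overset{\circ}{\CS}(\CY/\CZ_1/\CZ_2)\strut^{\wedge}_{\on{Ran}^{\on{untl}}}$ on the Hecke side, so it remains to identify these two linear objects compatibly with the comparison map between them.

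Although the monads themselves are not literal base-changes of those for $\CS(\CZ_1)$ (the diagonal groupoids on $\CY$ are only sub-groupoids of the pulled-back ones), the key observation is that at the linear (associated graded) level this distinction disappears: one has Cartesian identifications
\[
(\CY/\CS(\CZ_2))_\dr\simeq(\CS(\CZ_1)/\CS(\CZ_2))_\dr\underset{\CS(\CZ_1)_\dr}{\times}\CY_\dr
\]
and
\[
\overset{\circ}{\CS}(\CY/\CZ_1/\CZ_2)\strut^{\wedge}_{\on{Ran}^{\on{untl}}}\simeq \overset{\circ}{\CS}(\CZ_1/\CZ_2)\strut^{\wedge}_{\on{Ran}^{\on{untl}}}\underset{\CS(\CZ_1)_\dr\times\on{Ran}^{\on{untl}}}{\times}(\CY_\dr\times\on{Ran}^{\on{untl}}),
\]
and the relative tangent complexes of the two nil-isomorphisms are controlled by the normal bundles to these embeddings, which \emph{are} base-changed correctly from the $\CS(\CZ_1)$-setting. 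Consequently, both linear objects are realized as $g^!$-pullbacks of the corresponding linear objects for $\CS(\CZ_1)$, where $g\colon\CY\to\CS(\CZ_1)$ is the structure map, and the comparison map between them is the $g^!$-pullback of the map treated in the proof of \thmref{t:rel main}. That map was shown to be an isomorphism via the theory of linear factorization sheaves (Section~\ref{s:Ran space schemes}) together with the identification of $\Gamma_{c,\on{Ran}^{\on{untl}}}$ of a diagonal-supported factorization sheaf with the de Rham pushforward along $X$; the same conclusion is preserved by $g^!$, completing the proof.

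The main obstacle is the careful bookkeeping that these relative-tangent base-change compatibilities hold at the level of the $\Theta$-construction of \secref{ss:Theta}, which is defined through the Serre--Verdier duality of Appendix~\ref{s:finiteness} rather than the standard duality. Concretely, one must verify that the functor $g^{\#}$ of Appendix~\ref{s:fake}, applied to $T^*(\CZ_1/\CZ_2)$, represents the relative cotangent of the parameterized punctured sections $\overset{\circ}{\CS}(\CY/\CZ_1/\CZ_2)\strut^{\wedge}_{\on{Ran}^{\on{untl}}}$, and that it matches $g^!(\Theta(\CZ_1/\CZ_2))$ after Serre--Verdier duality. Once this is established, the parameterized comparison reduces formally to \thmref{t:rel main}.
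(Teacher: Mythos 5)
Your proposal follows essentially the same route as the paper: deformation to the normal cone reduces the isomorphism of monads to an isomorphism of associated gradeds; since the symmetric monoidal functor $\Gamma_{c,\on{Ran}^{\on{untl}}}$ commutes with $\on{Sym}$, this reduces further to an isomorphism at the level of (shifted) relative tangent complexes; and the Cartesian squares you write down show that the parameterized relative tangent $T(\CY\times\Ran^{\on{untl}}/\,\overset{\circ}{\CS}(\CY/\CZ_1/\CZ_2)\strut^{\wedge}_{\on{Ran}^{\on{untl}}})$ is the $g^!$-pullback of the corresponding object for $\CS(\CZ_1)$, and similarly for the $\on{Diff}$ side. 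The paper phrases the endgame a little differently: instead of invoking \thmref{t:rel main} as a black box, it states an ``absolute'' vanishing result (\thmref{t:tangent vanishing}) for a single $\CZ$, deduces the vanishing of the relative cone by applying it to $\CZ_1$ and $\CZ_2$ and the fiber sequence for $T(\overset{\circ}{\CS}(\CZ_1)\strut^\wedge/\overset{\circ}{\CS}(\CZ_2)\strut^\wedge)$, and then observes that the parameterized expression is the $f^!$-pullback along $f:\CY\to\CS(\CZ_1)$, which commutes with $\Gamma_{c,\on{Ran}^{\on{untl}}}$ since $\CY$ and $\on{Ran}^{\on{untl}}$ appear as independent tensor factors. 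So your reduction to the $\CY=\CS(\CZ_1)$ case is exactly the move the paper makes, just packaged one step earlier.

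One small remark on your third paragraph: the anticipated bookkeeping with $g^{\#}$, $\oblv^{\on{fake}}$ and Serre--Verdier duality is not actually needed for the $\CY$-to-$\CS(\CZ_1)$ reduction. The $\Theta$-machinery of Appendix~\ref{s:finiteness}--\ref{s:fake} is used to describe and control $T(\overset{\circ}{\CS}(\CZ_1)\strut^{\wedge}_{\on{Ran}^{\on{untl}}})$ itself (and hence to prove \thmref{t:tangent vanishing} via linear factorization sheaves), but once that is in hand the passage to the parameterized case is a plain $!$-pullback of objects of $\IndCoh$, using only the Cartesian squares you already wrote down and base change for $\Gamma_{c,\on{Ran}^{\on{untl}}}$. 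So the obstacle you flag dissolves; the argument is as formal as you hoped.
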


Note that \thmref{t:rel main} is an instance of \thmref{t:enh main} in the case that $\CY = \CS(\CZ_1)$.

\sssec{}

As in the absolute case, from \thmref{t:enh main}, we obtain:

\begin{cor}\label{c:unital Ran enh fiber product}
Under the assumptions of \thmref{t:enh main}, the functor
$$ \IndCoh((\CY/\CS(\CZ_2))_{\on{dR}})\to \IndCoh(\CY) \underset{\IndCoh(\CY) \otimes \Dmod(\on{Ran}^{\on{untl}} (X))}{\times} 
\IndCoh(\overset{\circ}{\CS}(\CY/\CZ_1/\CZ_2)\strut^{\wedge}_{\on{Ran}^{\on{untl}} }) $$
is an equivalence of categories.
\end{cor}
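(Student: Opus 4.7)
The plan is to deduce \corref{c:unital Ran enh fiber product} from \thmref{t:enh main} in exactly the same formal way that \corref{c:unital Ran fiber product} is deduced from \thmref{t:main}, using the general monadicity paradigm described in \secref{sss:nil monad} together with the identification of modules over the ``integrated'' monad given in \eqref{e:modules for integrated monad}.

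First, I would use monadicity for the nil-isomorphism $\CY \to (\CY/\CS(\CZ_2))_{\on{dR}}$ between laft-def prestacks (the target is laft-def since $\CY$ and $\CS(\CZ_2)$ are). Since this map is a level-wise coCartesian fibration in groupoids and a nil-isomorphism, \secref{sss:nil monad} gives an equivalence
$$ \IndCoh((\CY/\CS(\CZ_2))_{\on{dR}}) \;\simeq\; \on{Diff}_{\CY/\CS(\CZ_2)}\mod(\IndCoh(\CY)). $$
Next, by \thmref{t:enh main}, the map \eqref{e:enh hecke to dmod} induces an isomorphism
$$\on{Diff}_{\CY/\CS(\CZ_2)} \simeq \Gamma_{c,\on{Ran}^{\on{untl}}}(\CH^{\on{inf}}(\CY/\CZ_1/\CZ_2)_{\on{untl}})$$
as objects of $\on{Alg}(\on{End}(\IndCoh(\CY)))$, so taking modules gives an equivalence of the right-hand side with $\Gamma_{c,\on{Ran}^{\on{untl}}}(\CH^{\on{inf}}(\CY/\CZ_1/\CZ_2)_{\on{untl}})\mod(\IndCoh(\CY))$.

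The final step is to rewrite these modules as the claimed fiber product. By the exact analogue of the identification \eqref{e:modules for integrated monad} (which in turn uses the monoidality of $\Gamma_{c,\on{Ran}^{\on{untl}}}$ per \corref{c:Gamma on Ran untl mon} and the construction in \secref{sss:End as tensor product} applied to the monoidal category $\on{End}(\IndCoh(\CY))\otimes \Dmod(\on{Ran}^{\on{untl}} (X))$), we obtain
\begin{multline*}
\Gamma_{c,\on{Ran}^{\on{untl}}}(\CH^{\on{inf}}(\CY/\CZ_1/\CZ_2)_{\on{untl}})\mod(\IndCoh(\CY)) \simeq \\
\IndCoh(\CY) \underset{\IndCoh(\CY) \otimes \Dmod(\on{Ran}^{\on{untl}} (X))}\times \CH^{\on{inf}}(\CY/\CZ_1/\CZ_2)_{\on{untl}}\mod(\IndCoh(\CY)\otimes \Dmod(\on{Ran}^{\on{untl}} (X))).
\end{multline*}
Applying the monadicity paradigm of \secref{sss:nil monad} once more, this time to the nil-isomorphism $\CY \times \on{Ran}^{\on{untl}} \to \overset{\circ}{\CS}(\CY/\CZ_1/\CZ_2)\strut^{\wedge}_{\on{Ran}^{\on{untl}}}$, identifies the second factor in the fiber product with $\IndCoh(\overset{\circ}{\CS}(\CY/\CZ_1/\CZ_2)\strut^{\wedge}_{\on{Ran}^{\on{untl}}})$, which yields the corollary.

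The one point requiring care, and the main bookkeeping obstacle, is to verify that the composite equivalence constructed above agrees with the canonical functor from $\IndCoh((\CY/\CS(\CZ_2))_{\on{dR}})$ to the fiber product (induced by $!$-pullback along the tautological maps). This is a diagram-chase: one must check that the unit of the adjunction
$(p_{\CY,\dr/\CS(\CZ_2)})^\IndCoh_* \dashv (p_{\CY,\dr/\CS(\CZ_2)})^!$ matches, under the isomorphism of \thmref{t:enh main}, the base-change map produced from the pair of nil-isomorphisms $\CY\times \on{Ran}^{\on{untl}} \to \overset{\circ}{\CS}(\CY/\CZ_1/\CZ_2)\strut^{\wedge}_{\on{Ran}^{\on{untl}}}$ and $\CY\to (\CY/\CS(\CZ_2))_{\on{dR}}$. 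This compatibility is already built into the construction of the map \eqref{e:enh hecke to dmod}, so the argument reduces to naturality of the $(p^\IndCoh_*, p^!)$-adjunction, which is guaranteed by \cite[Chapter 3, Sect.\ 3.4]{Vol2}.
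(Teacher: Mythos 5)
Your proof is correct and follows essentially the same route as the paper's (implicit) argument: the paper derives \corref{c:unital Ran enh fiber product} from \thmref{t:enh main} exactly "as in the absolute case," meaning via the monadicity paradigm of \secref{sss:nil monad} for the two nil-isomorphisms $\CY \to (\CY/\CS(\CZ_2))_\dr$ and $\CY\times\Ran^{\on{untl}} \to \overset{\circ}{\CS}(\CY/\CZ_1/\CZ_2)\strut^\wedge_{\Ran^{\on{untl}}}$, the monad isomorphism of \thmref{t:enh main}, and the identification of modules over the integrated monad as a fiber product per \eqref{e:modules for integrated monad}. Your closing remark on matching the composite equivalence against the canonical $!$-pullback functor is a legitimate bookkeeping point that the paper leaves tacit; it does indeed reduce to naturality of the $(p^\IndCoh_*, p^!)$-adjunction under the maps used to build \eqref{e:enh hecke to dmod}.
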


\begin{thm} \label{t:main non-unital enh}
Under the assumptions of \thmref{t:enh main}, the functor
$$ \IndCoh((\CY/\CS(\CZ_2))_{\on{dR}})\to \IndCoh(\CY) \underset{\IndCoh(\CY) \otimes \Dmod(\Ran)}{\times} 
\IndCoh(\overset{\circ}{\CS}(\CY/\CZ_1/\CZ_2)\strut^{\wedge}_{\on{Ran}}) $$
is an equivalence of categories.
\end{thm}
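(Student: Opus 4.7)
The plan is to mimic verbatim the argument used in Section~\ref{ss:non-unital} to deduce Theorem~\ref{t:main non-unital} from Corollary~\ref{c:unital Ran fiber product}, only with $\CS(\CZ_1)$ replaced by $\CY$ and the absolute infinitesimal Hecke monad replaced by the relative parameterized one $\CH^{\on{inf}}(\CY/\CZ_1/\CZ_2)_{\on{untl}}$. The starting point is Corollary~\ref{c:unital Ran enh fiber product}, which gives the desired equivalence in the unital Ran setting. What must be produced is a general base-change/cofinality principle that transports such an equivalence from $\Ran^{\on{untl}}$ to $\Ran$.

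First, I would check that the geometric objects involved base-change correctly along $\iota:\Ran\to\Ran^{\on{untl}}$. By construction of the formal completion as a level-wise operation and by the Cartesian diagram defining $\overset{\circ}\CS(\CZ_1/\CZ_2)_{\Ran}$ inside $\overset{\circ}\CS(\CZ_1/\CZ_2)_{\Ran^{\on{untl}}}$, one obtains a Cartesian square
$$\xymatrix{
\overset{\circ}{\CS}(\CY/\CZ_1/\CZ_2)\strut^{\wedge}_{\Ran} \ar[r]\ar[d] & \overset{\circ}{\CS}(\CY/\CZ_1/\CZ_2)\strut^{\wedge}_{\on{Ran}^{\on{untl}}} \ar[d] \\
\Ran \ar[r]^{\iota} & \Ran^{\on{untl}}
}$$
which, exactly as in \secref{sss:tensored functor}, yields a canonical identification of monads
$$\CH^{\on{inf}}(\CY/\CZ_1/\CZ_2) \;\simeq\; (\on{Id}\otimes \iota)^!\bigl(\CH^{\on{inf}}(\CY/\CZ_1/\CZ_2)_{\on{untl}}\bigr)$$
in $\on{Alg}(\on{End}(\IndCoh(\CY))\otimes \Dmod(\Ran))$, and in particular an identification of the two sides of the fiber product on the right of the statement after restriction along $\iota$.

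Second, the main categorical input is the following abstract statement (already used in the proof of \thmref{t:main non-unital}): for any laft prestack $\CY$ and any $\sM\in \on{Alg}(\on{End}_{\Dmod(\Ran^{\on{untl}})}(\IndCoh(\CY)\otimes\Dmod(\Ran^{\on{untl}})))$, the restriction functor $\on{Id}\otimes \iota^!$ induces an equivalence
\begin{multline*}
\IndCoh(\CY)\underset{\IndCoh(\CY)\otimes\Dmod(\Ran^{\on{untl}})}\times \sM\mod\bigl(\IndCoh(\CY)\otimes\Dmod(\Ran^{\on{untl}})\bigr) \;\longrightarrow\; \\
\IndCoh(\CY)\underset{\IndCoh(\CY)\otimes\Dmod(\Ran)}\times \iota^!(\sM)\mod\bigl(\IndCoh(\CY)\otimes\Dmod(\Ran)\bigr).
\end{multline*}
This is a formal consequence of \corref{c:Ran cofinality} (itself based on \thmref{t:Ran cofinality new}): cofinality of $\iota$ ensures that for any $\CF\in \IndCoh(\CY)$ and any $\iota^!(\sM)$-module $\CM$, mapping spaces and extensions computed over $\Ran$ agree with those over $\Ran^{\on{untl}}$, so one can reconstruct the unital-side fixed points from the non-unital ones.

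Third, I combine the two. Apply the abstract principle to $\CY$ (in the role of $\CY$ in the statement above) and $\sM=\CH^{\on{inf}}(\CY/\CZ_1/\CZ_2)_{\on{untl}}$, then invoke the identification of Step~1 to rewrite the right-hand side as $\IndCoh(\overset{\circ}\CS(\CY/\CZ_1/\CZ_2)\strut^{\wedge}_{\Ran})$ fibered over $\IndCoh(\CY)\otimes\Dmod(\Ran)$ (via monadicity for the nil-isomorphism $\CY\times\Ran\to \overset{\circ}\CS(\CY/\CZ_1/\CZ_2)\strut^{\wedge}_{\Ran}$, which is the paradigm of \secref{sss:nil monad}). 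The unital-side fiber product is $\IndCoh((\CY/\CS(\CZ_2))_\dr)$ by \corref{c:unital Ran enh fiber product}, and tracing through the identifications shows that the resulting equivalence is exactly the functor in the theorem. The only step that requires some care is the compatibility of the monadic descriptions (via \secref{sss:nil monad}) of $\IndCoh(\overset{\circ}\CS(\CY/\CZ_1/\CZ_2)\strut^{\wedge}_{\Ran^{\on{untl}}})$ and $\IndCoh(\overset{\circ}\CS(\CY/\CZ_1/\CZ_2)\strut^{\wedge}_{\Ran})$ with the natural pullback functor between them; this is where the Cartesian square from Step~1 and the base-change property of $\fr^{\IndCoh}_*$ along pseudo-proper maps are used, and it is the only bookkeeping obstacle worth mentioning.
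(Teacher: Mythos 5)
Your proposal is correct and matches the paper's (implicit) argument: the paper leaves \thmref{t:main non-unital enh} unproved, understanding it to follow from \corref{c:unital Ran enh fiber product} exactly as \thmref{t:main non-unital} follows from \corref{c:unital Ran fiber product} in \secref{ss:non-unital}, which is precisely the three-step route you take — Cartesian base change of the Hecke groupoid and its monad along $\iota:\Ran\to\Ran^{\on{untl}}$, the cofinality-based equivalence of fiber products from \corref{c:Ran cofinality}, and the monadicity identifications from \secref{sss:nil monad} made compatible via base change for pseudo-proper pushforward. Your flag of the compatibility of the two monadic descriptions as the one point needing care is also the right place to look, and the Cartesian square from Step 1 together with $\iota$-base-change for $\fr^{\IndCoh}_*$ disposes of it as you indicate.
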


\ssec{The push-out picture}

\sssec{}
In this subsection, we will prove the following variant of \thmref{t:main non-unital rel}.

\begin{thm}\label{t:push-out}
Let $X$ be a proper connected scheme and $\CZ_1 \to \CZ_2$ be a map of sectionally laft prestacks.
Then the diagram
$$
\xymatrix{
 \CS(\CZ_1) \times \Ran \ar[r]\ar[d] & \overset{\circ}{\CS}(\CZ_1/\CZ_2)\strut^{\wedge}_{\on{Ran}} \ar[d] \\
\CS(\CZ_1) \ar[r] & (\CS(\CZ_1)/\CS(\CZ_2))_{\on{dR}}
}$$
is a push-out square in $\on{PreStk}_{\on{laft-def}}$.
\end{thm}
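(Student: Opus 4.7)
The plan is to deduce \thmref{t:push-out} from \thmref{t:main non-unital rel} via the general principle that $\IndCoh$ on laft-def prestacks detects pushouts along nil-isomorphisms.

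First, I would observe that both horizontal maps are nil-isomorphisms: the top one $\fr_{\on{Ran}}$ by construction of the formal completion, and the bottom one because $(\CS(\CZ_1)/\CS(\CZ_2))_{\on{dR}}$ is identified with the formal completion of $\CS(\CZ_2)$ along the map $\CS(\CZ_1) \to \CS(\CZ_2)$. Note that $\overset{\circ}{\CS}(\CZ_1/\CZ_2)\strut^{\wedge}_{\on{Ran}}$ is laft-def by the relative variant of \propref{p:punctured sects laft-def}, and $(\CS(\CZ_1)/\CS(\CZ_2))_{\on{dR}}$ is laft-def by standard considerations. I would then form the pushout
$$\CP := \CS(\CZ_1) \underset{\CS(\CZ_1) \times \Ran}{\sqcup} \overset{\circ}{\CS}(\CZ_1/\CZ_2)\strut^{\wedge}_{\on{Ran}} \in \on{PreStk}_{\on{laft-def}},$$
which exists because we are gluing along a nil-schematic nil-isomorphism (the top map is in fact a level-wise coCartesian fibration in groupoids). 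The universal property produces a canonical comparison map $\pi: \CP \to (\CS(\CZ_1)/\CS(\CZ_2))_{\on{dR}}$, which is itself a nil-isomorphism since both source and target have the same reduced classical truncation.

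Next, I would apply \thmref{t:main non-unital rel} to identify $\IndCoh((\CS(\CZ_1)/\CS(\CZ_2))_{\on{dR}})$ with the fiber product
$$\IndCoh(\CS(\CZ_1)) \underset{\IndCoh(\CS(\CZ_1)) \otimes \Dmod(\Ran)}{\times} \IndCoh(\overset{\circ}{\CS}(\CZ_1/\CZ_2)\strut^{\wedge}_{\on{Ran}}),$$
using the identification $\IndCoh(\CS(\CZ_1) \times \Ran) \simeq \IndCoh(\CS(\CZ_1)) \otimes \Dmod(\Ran)$. The same fiber product computes $\IndCoh(\CP)$, because $\IndCoh$ on laft-def prestacks converts pushouts of nil-isomorphisms into pullbacks via !-pullback (a consequence of the monadicity statement of \secref{sss:nil monad} applied to the nil-iso $\CS(\CZ_1) \times \Ran \to \CP$). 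Hence $\pi^!: \IndCoh((\CS(\CZ_1)/\CS(\CZ_2))_{\on{dR}}) \to \IndCoh(\CP)$ is an equivalence of DG categories.

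Finally, I would conclude that $\pi$ is itself an isomorphism by invoking the fact that a nil-isomorphism of laft-def prestacks which induces an equivalence on $\IndCoh$ via !-pullback is an isomorphism; equivalently, the relative pro-cotangent complex of $\pi$ vanishes, and this vanishing is detected by the $\IndCoh$ equivalence together with deformation theory. The step I expect to require the most care is the rigorous invocation of the pushout-to-pullback property of $\IndCoh$ at the level of $\CP$, since one must check that the formation of $\CP$ in $\on{PreStk}_{\on{laft-def}}$ is compatible with the identifications used in \thmref{t:main non-unital rel}; this amounts to verifying that the nil-iso $\CS(\CZ_1) \times \Ran \to \CP$ enjoys the same monadicity property as its image in $(\CS(\CZ_1)/\CS(\CZ_2))_{\on{dR}}$, which follows from the general framework of \cite[Chapter 5]{Vol2}.
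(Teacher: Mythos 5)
The key gap in your argument is the final step. You invoke "the fact that a nil-isomorphism of laft-def prestacks which induces an equivalence on $\IndCoh$ via !-pullback is an isomorphism," but this is not a fact the paper establishes at that level of generality. The relevant result, \thmref{t:indcoh iso}, is stated and proved \emph{only for laft inf-schemes}, and the proof relies essentially on that hypothesis: Step~1 of its proof uses the full-faithfulness of the pushforward $\IndCoh(\CX \underset{\CX_\dr}{\times} \CX) \to \IndCoh(\CX \times \CX)$, which holds "since $\CX$ is an inf-scheme," and Step~2 uses that the infinitesimal inertia is a formal group, hence inf-affine. For a general stack $\CS(\CZ_1)$, neither $\CP$ nor $(\CS(\CZ_1)/\CS(\CZ_2))_{\on{dR}}$ is an inf-scheme, so you cannot conclude.

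The paper gets around this exactly by not working globally. In Step~1 of its proof, it fixes an affine $S \in \affSch_{\on{aft}}$ with a map $S \to \CS(\CZ_1)$, so that the analogous square now lives among laft inf-schemes (everything has reduced prestack $^{\on{red}}S$); here \thmref{t:indcoh iso} genuinely applies. This is also the reason the paper needs the parameterized version \thmref{t:main non-unital enh} (with an auxiliary laft-def $\CY \to \CS(\CZ_1)$, taken to be $S$) rather than \thmref{t:main non-unital rel} alone, which is what you invoke. Step~2 then deduces the global statement by writing $\CS(\CZ_1)$ as a colimit over $(\affSch_{\on{aft}})_{/\CS(\CZ_1)}$ and checking that each of the three corners commutes with this colimit, using \cite[Chapter 1, Theorem 9.1.4]{Vol2} for $(-)\times\Ran$ and \cite[Lemma 1.1.4]{Crys} for the relative de Rham constructions. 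Your proof is missing this reduction to the inf-scheme case and the subsequent colimit argument; the rest of the outline (forming the pushout $\CP$, using \propref{p:indcoh on push-out} and the main theorem to identify $\IndCoh$ of both sides) is sound.
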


\sssec{}
Specializing to the case that $\CZ_2 = X_{\on{dR}}$, we obtain:

\begin{thm}
Let $X$ be a proper connected scheme and $\CZ \to X_{\on{dR}}$ a sectionally laft prestack.
Then the diagram
$$
\xymatrix{
 \CS(\CZ) \times \Ran \ar[r]\ar[d] & \overset{\circ}{\CS}(\CZ)\strut^{\wedge}_{\on{Ran}} \ar[d] \\
\CS(\CZ) \ar[r] & \CS(\CZ)_{\on{dR}}
}$$
is a push-out square in $\on{PreStk}_{\on{laft-def}}$.
\end{thm}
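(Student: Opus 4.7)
The stated theorem is the specialization of \thmref{t:push-out} to $\CZ_2 = X_\dr$, so the plan has two parts. First, verify that the specialization produces exactly the claimed square. One computes $\CS(X_\dr)(S) = \on{Maps}_{/X_\dr}(S \times X_\dr, X_\dr) = \on{pt}$, so $\CS(\CZ_2) = \on{pt}$ and consequently $(\CS(\CZ)/\CS(\CZ_2))_\dr = \CS(\CZ)_\dr$. Similarly, by direct inspection of the Weil-restriction definition of $\overset{\circ}{\CS}(-)_{\on{Ran}}$, one has $\overset{\circ}{\CS}(X_\dr)_{\on{Ran}} \simeq \on{Ran}$, and therefore $\overset{\circ}{\CS}(\CZ/X_\dr)\strut^{\wedge}_{\on{Ran}} = \overset{\circ}{\CS}(\CZ)\strut^{\wedge}_{\on{Ran}}$. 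Hence the square produced by specializing \thmref{t:push-out} is exactly the square in the theorem statement.

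Second, I describe how I would prove \thmref{t:push-out} itself, so that the specialization is justified. The idea is to convert the pushout question in $\on{PreStk}_{\on{laft-def}}$ into a pullback question about categories of ind-coherent sheaves, which is the content of \thmref{t:main non-unital}. The key ingredient is that on laft-def prestacks, the assignment $\CY \mapsto \IndCoh(\CY)$, equipped with the comonad structure from $!$-pullback and $\IndCoh$-pushforward along nil-isomorphisms, is fully faithful enough to detect pushouts when one of the two maps being glued is a nil-isomorphism --- this is the reconstruction package for laft-def prestacks developed in \cite{Vol2}.

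With that input the pushout claim reduces to two checks. The reduced check: both $\CS(\CZ) \to \CS(\CZ)_\dr$ and $\fr_{\on{Ran}}: \CS(\CZ) \times \on{Ran} \to \overset{\circ}{\CS}(\CZ)\strut^{\wedge}_{\on{Ran}}$ are nil-isomorphisms, so the reduced/classical level of both sides of the proposed pushout equals $\CS(\CZ)^{\on{red}}$. The categorical check: apply \thmref{t:main non-unital}, which gives exactly
$$\IndCoh(\CS(\CZ)_\dr) \;=\; \Dmod(\CS(\CZ)) \;\simeq\; \IndCoh(\CS(\CZ)) \underset{\IndCoh(\CS(\CZ) \times \on{Ran})}{\times} \IndCoh(\overset{\circ}{\CS}(\CZ)\strut^{\wedge}_{\on{Ran}}).$$

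The main obstacle will be packaging the reconstruction statement for laft-def prestacks precisely enough to convert the $\IndCoh$ pullback back into a pushout in $\on{PreStk}_{\on{laft-def}}$; specifically, one must confirm that the candidate pushout has the correct $\IndCoh$-category and that its deformation theory matches the one extracted from the fiber product on the categorical side. Once that formal framework is in place, the result follows directly from \thmref{t:main non-unital} together with the bookkeeping above, and the stated absolute theorem falls out as the specialization described in the first paragraph.
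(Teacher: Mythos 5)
Your first paragraph is fine: the identifications $\CS(X_\dr)=\on{pt}$, $(\CS(\CZ)/\on{pt})_\dr=\CS(\CZ)_\dr$ and $\overset{\circ}{\CS}(X_\dr)^{\wedge}_{\on{Ran}}\simeq \Ran$ are exactly how the paper obtains this statement from \thmref{t:push-out}. The gap is in the second part: the ``reconstruction package'' you invoke --- that $\IndCoh$ on laft-def prestacks detects pushouts when one leg is a nil-isomorphism --- is not an off-the-shelf fact, and supplying it is precisely the content of the paper's proof of \thmref{t:push-out}. Two separate things are needed. First, one must compute $\IndCoh$ of the candidate pushout; the paper does this in \propref{p:indcoh on push-out}, whose hypotheses include pseudo-properness of the leg $\CS(\CZ)\times\Ran\to\CS(\CZ)$ and whose proof runs through deformation to square-zero extensions and the projection formula for $f_!$. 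Second, and more seriously, one must upgrade ``the map induces an equivalence on $\IndCoh$'' to ``the map is an isomorphism of prestacks.'' The paper's detection result, \thmref{t:indcoh iso}, holds only for nil-isomorphisms of laft \emph{inf-schemes} (its proof uses fully faithfulness of pushforward along the diagonal and inf-affineness of formal groups), and $\CS(\CZ)$ is typically not an inf-scheme (e.g.\ $\Bun_G$). So the plan of applying the absolute \thmref{t:main non-unital} directly over $\CS(\CZ)$ and then ``converting back'' does not go through as stated.

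This is why the paper proves the \emph{parameterized} version \thmref{t:main non-unital enh}: it base-changes the whole picture along an affine test scheme $S\to\CS(\CZ)$, where the candidate pushout and the target $(S/\CS(\CZ_2))_\dr$ are inf-schemes, applies \propref{p:indcoh on push-out} plus \thmref{t:main non-unital enh} to see the comparison map is an $\IndCoh$-equivalence, and then uses \thmref{t:indcoh iso} to conclude it is an isomorphism. It then recovers the global statement by writing $\CS(\CZ)=\underset{S}{\on{colim}}\,S$ over $(\affSch_{\on{aft}})_{/\CS(\CZ)}$ and checking that all three corners of the square commute with this colimit (using \cite[Chapter 1, Theorem 9.1.4]{Vol2} and the fact that $\CY\mapsto(\CY/\CX)_\dr$ preserves colimits). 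Your proposal flags exactly this conversion step as ``the main obstacle'' but offers no mechanism for it; without the reduction to affine test schemes (hence the parameterized theorem) and the two auxiliary results above, the argument is incomplete at its crucial point.
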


\sssec{}

The rest of this subsection is devoted to the proof of \thmref{t:push-out}. However, before we begin the proof, 
we describe how push-outs in $\on{PreStk}_{\on{laft-def}}$ interact with $\IndCoh$:

\begin{prop}\label{p:indcoh on push-out}
Let
$$\xymatrix{
\CX \ar[r]\ar[d]_f & \CZ \ar[d] \\
\CY \ar[r] & \CW
}$$
be a push-out square in $\on{PreStk}_{\on{laft-def}}$ such that the map $\CX \to \CZ$ is a nil-isomorphism
and $f:\CX \to \CY$ is pseudo-proper. Then the pullback functor
$$ \IndCoh(\CW) \to \IndCoh(\CY) \underset{\IndCoh(\CX)}{\times} \IndCoh(\CZ) $$
is an equivalence.
\end{prop}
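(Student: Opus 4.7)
The plan is to prove this via the monadic description of $\IndCoh$ on nil-isomorphisms, combined with a descent argument along the pseudo-proper map $f$.

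First I would establish that the induced map $\beta: \CY \to \CW$ is also a nil-isomorphism. This follows from general properties of $\on{PreStk}_{\on{laft-def}}$: pushouts along nil-isomorphisms produce nil-isomorphisms on the opposite side, since on reduced parts $\CX_{\on{red}} \simeq \CZ_{\on{red}}$ forces $\CY_{\on{red}} \simeq \CW_{\on{red}}$. In particular $\alpha$ and $\beta$ are inf-schematic (by \cite[Chapter 2, Sect. 3]{Vol2}), so $\alpha_*^{\IndCoh}$ and $\beta_*^{\IndCoh}$ are defined as left adjoints to $\alpha^!, \beta^!$, and by the paradigm of \S\ref{sss:nil monad} we obtain monadic equivalences
$$ \IndCoh(\CZ) \simeq \sM_\alpha\mod(\IndCoh(\CX)), \qquad \IndCoh(\CW) \simeq \sM_\beta\mod(\IndCoh(\CY)), $$
where $\sM_\alpha = \alpha^! \alpha_*^{\IndCoh}$ and $\sM_\beta = \beta^! \beta_*^{\IndCoh}$.

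Next I would show that the pushout square is simultaneously Cartesian, so that $\CX \simeq \CY \times_{\CW} \CZ$. This is a general feature of pushouts of nil-isomorphisms in $\on{PreStk}_{\on{laft-def}}$: translating via the equivalence between nil-isomorphisms and formal groupoids (\cite[Chapter 5]{Vol2}), the pushout corresponds to forming the formal groupoid $\G_\beta := \CY \times_{\CX} \G_\alpha \times_{\CX} \CY$ acting on $\CY$ (where $\G_\alpha = \CX \times_{\CZ} \CX$), and the Cartesianness is immediate from this description. Combined with base change for the inf-schematic map $\alpha$ (which holds against arbitrary $!$-pullbacks) and pseudo-properness of $f$ (which yields a well-behaved adjunction $(f_*^{\IndCoh}, f^!)$ with projection formula), the groupoid identification translates into a canonical identification of monads
$$ \sM_\beta \simeq f_*^{\IndCoh} \circ \sM_\alpha \circ f^!, $$
where the right-hand side is a monad via the unit and counit of the adjunction $(f_*^{\IndCoh}, f^!)$ and the monad structure on $\sM_\alpha$.

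Finally, I would invoke a monadic descent argument: for a pair of composable adjunctions, modules over the composite monad are equivalent to the fiber product of module categories. Explicitly, using the identification above and the Barr-Beck equivalence $\IndCoh(\CZ) \simeq \sM_\alpha\mod(\IndCoh(\CX))$, one gets
$$ \sM_\beta\mod(\IndCoh(\CY)) \simeq \IndCoh(\CY) \underset{\IndCoh(\CX)}\times \sM_\alpha\mod(\IndCoh(\CX)) \simeq \IndCoh(\CY) \underset{\IndCoh(\CX)}\times \IndCoh(\CZ), $$
and this equivalence is canonically the pullback functor from $\IndCoh(\CW)$ to the fiber product. The main obstacle is the identification of $\sM_\beta$ with $f_*^{\IndCoh} \circ \sM_\alpha \circ f^!$ as monads (not merely as endofunctors): this requires a careful Beck--Chevalley verification tracking the interaction between the pseudo-proper adjunction along $f$ and the inf-schematic adjunction along $\alpha$, for which the Cartesianness of the original square and the projection formula for pseudo-proper maps are essential.
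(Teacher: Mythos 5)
Your plan runs into two substantive problems, and the fix is not cosmetic; the argument needs to be replaced, not patched.

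First, the claim that the pushout square along a nil-isomorphism in $\on{PreStk}_{\on{laft-def}}$ is automatically Cartesian is neither cited nor proved, and it is not a general fact. Nothing in the hypothesis forces $\CX \to \CY\underset{\CW}{\times}\CZ$ to be an isomorphism; the compatibility you would need between derived pushouts and derived pullbacks of prestacks does not come for free. The paper never uses Cartesianness, which is a hint that it is not available.

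Second, even if you grant the Cartesianness, the monad identification $\sM_\beta \simeq f_*^{\IndCoh}\circ\sM_\alpha\circ f^!$ fails. With $f$ pseudo-proper, the relevant adjunction is $f_!\dashv f^!$ (the left adjoint), so a composite of the form $f_!\circ T\circ f^!$ carries a natural \emph{comonad} structure coming from the counit $f_!f^!\to\Id$, not a monad structure; to make it a monad you would need a map $f^!f_!\to\Id$, which does not exist. Moreover, tracing through base change for the (assumed) Cartesian square gives
$$f_!\,\alpha^!\alpha_*^{\IndCoh}\,f^!\;\simeq\; f_!\,f^!\,\beta^!\beta_*^{\IndCoh}\;\simeq\;(f_!f^!)\circ\sM_\beta,$$
so even at the level of endofunctors the two sides differ by the factor $f_!f^!$, which is not the identity unless $f^!$ is fully faithful. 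The final "monadic descent" step is also left unjustified. The paper sidesteps all of this: it factors the nil-isomorphism $\CX\to\CZ$ into square-zero extensions (via the filtration of \cite[Chapter 9, Sect. 5.1]{Vol2}), uses \cite[Chapter 8, Lemma 5.1.3]{Vol2} together with the adjunction and base change for $f_!$ to identify $\CW$ explicitly as the square-zero extension of $\CY$ classified by $f_!(\CM)\to f_!(T(\CX))\to T(\CY)$, and then invokes \cite[Chapter 8, Theorem 6.3.3]{Vol2} on $\IndCoh$ of square-zero extensions. That is where pseudo-properness (existence of $f_!$, base change, projection formula) is genuinely used, and it avoids any claim about the square being Cartesian.
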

\begin{proof}
Recall from \cite[Chapter 9, Sect. 5.1]{Vol2} that given a nil-isomorphism $\CX \to \CZ$, we have a sequence
of prestacks in $\on{PreStk}_{\on{laft-def}}$
$$ \CX = \CX^{(0)} \to \CX^{(1)} \to \ldots \to \CX^{(n)} \to \ldots \to \CZ $$
such that each $\CX^{(n)}$ is obtained from $\CX^{(n-1)}$ by a square-zero extension (in the sense of \cite[Chapter 8, Sect. 5.1]{Vol2}) and the natural map
$$ \underset{i}{\on{colim}}\  \CX^{(i)} \to \CZ $$
is an isomorphism (where the colimit is taken in $\on{PreStk}$, and in particular, $\on{PreStk}_{\on{laft-def}}$).
Thus, we can assume without loss of generality that the map $\CX \to \CZ$ is a square-zero extension, which is classified
by a map
$$ \CM \to T(\CX) \in \IndCoh(\CX), \quad \CM\in \IndCoh(\CX).$$

In this case, by \cite[Chapter 8, Lemma 5.1.3]{Vol2}, for a laft-def prestack $\CZ'$, the space
of maps
$$ \CW \to \CZ' $$
is identified with the space of pairs
$$ \{ (\CY \to \CZ', \text{null-homotopy of the composite } \CM \to T(\CX) \to T(\CZ')|_{\CX}) \}.$$

Since $f$ is pseudo-proper, the functor $f^!$ admits a left adjoint $f_!$,
which is compatible with base change and satisfies the projection formula.

\medskip
 
By adjunction, the space of  null-homotopies of the composition 
$$\CM \to T(\CX) \to T(\CZ')|_{\CX}$$
is isomorphic to the space of null-homotopies of the composition
$$f_!(\CM) \to f_!(T(\CX)) \to T(\CY) \to T(\CZ')|_{\CY}.$$

Therefore, by \cite[Chapter 8, Lemma 5.1.3]{Vol2}, $\CW$ identifies with the square zero extension of $\CY$
classified by the the composite map
$$f_!(\CM) \to f_!(T(\CX)) \to T(\CY) .$$
The desired result now follows from \cite[Chapter 8, Theorem 6.3.3]{Vol2} (using the projection formula for $f_!$).
\end{proof}

\sssec{}
We will deduce \thmref{t:push-out} from \thmref{t:enh main} using the following partial converse of 
\propref{p:indcoh on push-out} (which will be proved in \secref{ss:iso lemma proof}):

\begin{thm}\label{t:indcoh iso}
Let $f:\CX \to \CY$ be a nil-isomorphism of laft inf-schemes such that
$$ f^!: \IndCoh(\CY) \to \IndCoh(\CX) $$
is an equivalence of categories.  Then $f$ is an isomorphism.
\end{thm}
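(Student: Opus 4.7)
My plan is to reduce to the case of a single square-zero extension and then show that such an extension must be trivial if its $!$-pullback is an equivalence.

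First, recall from \cite[Chapter 9, Sect. 5.1]{Vol2} (already invoked in the proof of \propref{p:indcoh on push-out}) that every nil-isomorphism $f:\CX \to \CY$ of laft inf-schemes can be written as a sequential colimit in $\on{PreStk}_{\on{laft-def}}$
$$\CY \simeq \colim_n \CX^{(n)}, \qquad \CX^{(0)} = \CX,$$
where each transition $\CX^{(n)} \to \CX^{(n+1)}$ is a square-zero extension. It suffices to show that each such transition is an isomorphism. Factoring $f$ as $\CX \to \CX^{(1)} \to \CY$ and using that both intermediate $!$-pullback functors are monadic (\secref{sss:nil monad}), the hypothesis that $f^!$ is an equivalence should imply, via an inductive comparison of the associated monads, that the first transition $(\CX \to \CX^{(1)})^!$ is itself an equivalence. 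Iterating reduces to the case where $f$ is a single square-zero extension.

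Second, for the square-zero extension case, $\CY$ is classified by a morphism $\phi:\CM \to T(\CX)[1]$ for some $\CM \in \IndCoh(\CX)$, as in the proof of \propref{p:indcoh on push-out}. By \cite[Chapter 8, Theorem 6.3.3]{Vol2}, the monad $\sM_f := f^! \circ f^{\IndCoh}_*$ on $\IndCoh(\CX)$ admits an explicit description as a square-zero extension of the identity monad by an endofunctor controlled by $\CM$, with the multiplicative structure encoded by $\phi$. The assumption that $f^!$ is an equivalence translates into $\sM_f$ being isomorphic to the identity monad, which forces the classifying object $\CM$ to vanish. Hence the extension is trivial and $f$ is an isomorphism.

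The step I expect to require the most care is extracting the equivalence of each intermediate $!$-pullback from the equivalence of the total one: a priori, the projection from a sequential limit of categories onto its first term can be an equivalence even when the transition functors are not. To handle this, I would exploit the fact that each transition is both conservative (being a $!$-pullback along a nil-isomorphism of laft inf-schemes) and the right adjoint in a monadic adjunction, and argue by induction on the length of the tower, using at each stage the deformation-theoretic description of the monad of a square-zero extension from \cite[Chapter 8]{Vol2} to force its classifying object to vanish.
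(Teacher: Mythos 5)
Your proposal has a genuine gap, and it sits at the heart of both steps.

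The more fundamental problem is in your Step 2. For a square-zero extension $f:\CX\to\CY$, the monad $\sM_f = f^!\circ f^{\IndCoh}_*$ on $\IndCoh(\CX)$ is \emph{not} a square-zero extension of the identity monad: Koszul duality inverts the shape, turning a square-zero algebra extension into a (completed) \emph{free}-type algebra. Concretely, for $\CX=\Spec(k)\hookrightarrow \CY=\Spec(k[\epsilon]/\epsilon^2)$, the underlying endofunctor of $\sM_f$ is tensoring with $\on{End}_{\IndCoh(\CY)}(f_*^{\IndCoh}k)$, which is a polynomial algebra on a class in cohomological degree $+1$ — infinitely many nonzero graded pieces, not a two-term extension. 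This is consistent with \thmref{t:def normal cone}: the associated graded of $\on{DefNorm}(f)$ is $\Sym(T(\CX/\CY))$, the full symmetric algebra, not $\on{Id}\oplus T(\CX/\CY)$. So the step ``$\sM_f\simeq\on{Id}$ forces the classifying object $\CM$ to vanish because $\sM_f$ is square-zero over $\on{Id}$'' is not available. Worse, one cannot substitute the filtration argument: $\oblv_{\on{Fil}}$ (evaluation at $1\in\BA^1$) is not conservative, and a non-negatively filtered object $A_0\to A_1\to\cdots$ can perfectly well have $\on{colim}\,A_n \simeq A_0$ while $\gr^n\ne 0$ for $n>0$; knowing $\sM_f\simeq\on{Id}$ and $\gr(\on{DefNorm}(f))\simeq\Sym(T(\CX/\CY))$ simultaneously does not contradict $T(\CX/\CY)\ne 0$.

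Your Step 1 reduction is also not established, and you are right to flag it as the delicate point: from a factorization $\CX\to\CX^{(1)}\to\CY$ you only get a monad map $\sM_{\CX\to\CX^{(1)}}\to\sM_f\simeq\on{Id}$ split by the unit, i.e., a retraction $\sM_{\CX\to\CX^{(1)}}\simeq\on{Id}\oplus K$, and in the abstract monadic setting that does not force $K=0$ (e.g.\ $k[x]/x^2\to k$ is a monad retraction with nontrivial kernel). So neither step, as written, closes.

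The paper takes a completely different, geometric route that sidesteps both issues. It never tries to analyze $\sM_f$ internally or to induct along a square-zero tower. Instead: since $f^!$ is an equivalence, base change identifies the kernel of $(p_2)^\IndCoh_*p_1^!$ (push-pull along $\CX\times_\CY\CX$) with the identity, i.e.\ $(p_1\times p_2)^\IndCoh_*(\omega_{\CX\times_\CY\CX})\simeq\Delta^\IndCoh_*(\omega_\CX)$; then, using fully-faithfulness of pushforward into $\CX\times_{\CX_\dr}\CX$ (which is available because $\CX$ is an inf-scheme and $f$ is a nil-isomorphism, so $\CX_\dr\simeq\CY_\dr$), this is upgraded to an isomorphism after pushing only to $\CX\times_{\CX_\dr}\CX$. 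Pulling back along the diagonal identifies the infinitesimal inertia group $\CW=\on{Inert}^{\on{inf}}(\CX)$ with $\CZ=\CX\times_\CY\on{Inert}^{\on{inf}}(\CY)$, and here the crucial input is that \emph{formal groups over $\CX$ are inf-affine} (\cite[Chapter 7, Corollary 3.2.2]{Vol2}), so an isomorphism on $\omega$'s pushforward detects an isomorphism of the groups themselves. Finally \propref{p:inf fiber product conservative} — a conservativity statement for nil-isomorphisms detected by relative tangent complexes — descends the isomorphism $\CW\simeq\CZ$ back to $f$ itself. This is what actually lets one ``read off'' $T(\CX/\CY)=0$ from the categorical equivalence: not via the internal algebra structure of $\sM_f$, but via the loop/inertia space and its inf-affineness. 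I'd recommend reworking your argument along these lines rather than trying to force the monad-theoretic route.
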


\sssec{Proof of \thmref{t:push-out}, Step 1}
Let $S$ be an affine scheme almost of finite type with a map to $\CS(\CZ_1)$.
We have a commutative square
\begin{equation}\label{e:sch push-out square}
\xymatrix{
S \times \Ran \ar[d]\ar[r] & \overset{\circ}{\CS}(S/\CZ_1/\CZ_2)\strut^{\wedge}_{\on{Ran}} \ar[d] \\
S \ar[r]& (S/\CS(\CZ_2))_{\on{dR}}
}
\end{equation}
in $\on{PreStk}_{\on{laft-def}}$.  We claim that \eqref{e:sch push-out square} is a push-out square in $\on{PreStk}_{\on{laft-def}}$.

\medskip

Indeed, let $\CW$ be the push-out
$$S \underset{S \times \Ran}{\sqcup} \overset{\circ}{\CS}(S/\CZ_1/\CZ_2)\strut^{\wedge}_{\on{Ran}} $$
in $\on{PreStk}_{\on{laft-def}}$, and let
$$ f: \CW \to (S/\CS(\CZ_2))_{\on{dR}} $$
denote the map corresponding to \eqref{e:sch push-out square}.  By \propref{p:indcoh on push-out}, the functor
$$ \IndCoh(\CW) \to \IndCoh(S) \underset{\IndCoh(S \times \Ran)}{\times} \IndCoh(\overset{\circ}{\CS}(S/\CZ_1/\CZ_2)\strut^{\wedge}_{\on{Ran}}) $$
is an equivalence.  Thus, by \thmref{t:main non-unital enh}, the functor
$$ f^!: \IndCoh(\CW) \to \IndCoh( (S/\CS(\CZ_2))_{\on{dR}}) $$
is an equivalence.  

\medskip

Hence, $\CW\to (S/\CS(\CZ_2))_{\on{dR}}$ is an isomorphism by \thmref{t:indcoh iso}.

\sssec{Proof of \thmref{t:push-out}, Step 2}

We have
$$\CS(\CZ_1) = \underset{S \in (\affSch_{\on{aft}})_{/\CS(\CZ_1)}}{\on{colim}} S ,$$
where the colimit is taken in $\on{PreStk}$.
To deduce the theorem, it suffices to show that the natural maps
\begin{enumerate}
\item
$\underset{S \in (\affSch_{\on{aft}})_{/\CS(\CZ_1)}}{\on{colim}} (S \times \Ran) \to \CS(\CZ_1) \times \Ran$
\item
$\underset{S \in (\affSch_{\on{aft}})_{/\CS(\CZ_1)}}{\on{colim}} (S/\CS(\CZ_2))_{\on{dR}} \to (\CS(\CZ_1)/\CS(\CZ_2))_{\on{dR}}$
\item
$\underset{S \in (\affSch_{\on{aft}})_{/\CS(\CZ_1)}}{\on{colim}} \overset{\circ}{\CS}(S/\CZ_1/\CZ_2)\strut^{\wedge}_{\on{Ran}} \to \overset{\circ}{\CS}(\CZ_1/\CZ_2)\strut^{\wedge}_{\on{Ran}}$
\end{enumerate}
are isomorphisms, where the colimit is taken in $\on{PreStk}$ (and therefore also $\on{PreStk}_{\on{laft-def}}$).

\medskip

The assertion that (1) is an isomorphism is \cite[Chapter 1, Theorem 9.1.4]{Vol2}, and (2) and (3) follow from the following general 
fact (see \cite[Lemma 1.1.4]{Crys}): given a prestack $\CX$, the functor
$$ \on{PreStk}_{/\CX} \to \on{PreStk} $$
given by
$$ (\CY \to \CX) \mapsto (\CY/\CX)_{\on{dR}}:=\CY_{\on{dR}} \underset{\CX_{\on{dR}}}{\times} \CX $$
preserves colimits.

\qed

\ssec{Proof of \thmref{t:indcoh iso}}  \label{ss:iso lemma proof}

\sssec{Step 1} \label{sss:iso lemma proof 1}

Consider the fiber square
$$ \xymatrix{
\CX \underset{\CY}{\times} \CX \ar[r]^-{p_1} \ar[d]_-{p_2} & \CX \ar[d]^f \\
\CX \ar[r]^f & \CY
}$$
By assumption, $f^!$ is an equivalence.  Therefore, by base change, we have 
$$ (p_2)^{\IndCoh}_* \circ p_1^! \simeq f^! \circ f^{\IndCoh}_*\simeq \on{Id} \in \on{End}(\IndCoh(\CX)) \simeq \IndCoh(\CX \times \CX) .$$
It follows that
$$ (p_1 \times p_2)^{\IndCoh}_*(\omega_{\CX \underset{\CY}{\times} \CX}) \simeq \Delta^\IndCoh_*(\omega_{\CX}), $$
where $\Delta: X_1 \to X_1 \times X_1$ is the diagonal map. 

\medskip

In the above formula, for a nil-isomorphism $\phi:\CX_1\to \CX_2$, the functor 
$\phi^{\IndCoh}_*:\IndCoh(\CX_1)\to  \IndCoh(\CX_2)$
is the left adjoint to $\phi^!$, by \cite[Chapter 3, Sect. 4]{Vol2}. 

\medskip

Since $\CX$ is an inf-scheme, the pushforward functor
$$ (p_1 \times p_2)^{\IndCoh}_*: \IndCoh(\CX \underset{\CX_\dr}{\times} \CX) \to \IndCoh(\CX \times \CX) $$
is fully faithful.  Since $f:\CX \to \CY$ is a nil-isomorphism, $\CX_\dr \simeq \CY_\dr$.  Therefore, we have a map
$$ p_1 \underset{\CX_\dr}{\times} p_2: \CX \underset{\CY}{\times}\CX \to \CX \underset{\CX_{\dr}}{\times}\CX .$$
Thus,
\begin{equation}\label{e:iso on formal square}
(p_1 \underset{\CX_\dr}{\times} p_2)^{\IndCoh}_*(\omega_{\CX \underset{\CY}{\times} \CX}) \simeq \hat{\Delta}^{\IndCoh}_*(\omega_{\CX}),
\end{equation}
as objects of $\IndCoh(\CX \underset{\CX_{\dr}}{\times}\CX)$, 
where $\hat{\Delta}: \CX \to  \CX \underset{\CX_{\dr}}{\times}\CX$ is the diagonal map.

\sssec{Step 2} 

Now, let $\CZ$ and $\CW$ be defined so that both squares in the diagram below are Cartesian
\begin{equation}\label{e:formal loop spaces}
\xymatrix{
\CW \ar[r]^-h\ar[d] & \CZ \ar[r]^-g\ar[d] & \CX \ar[d]^{\hat{\Delta}} \\
\CX \ar[r] & \CX \underset{\CY}{\times}\CX \ar[r] & \CX \underset{\CX_\dr}{\times} \CX
}
\end{equation}
By base change and \eqref{e:iso on formal square}, the natural map
\begin{equation}\label{e:iso on omegas}
(h \circ g)^{\IndCoh}_*(\omega_\CW) \to g^{\IndCoh}_*(\omega_\CZ)
\end{equation}
is an isomorphism. 

\medskip

By definition, $\CW$ is the infinitesimal inertia group of $\CX$ (see \cite[Chapter 8, Sect. 1.2.3]{Vol2}),
and is, in particular, a formal group over $\CX$.  It is straightforward to see that we have a fiber square
$$ \xymatrix{
\CZ \ar[d]_g \ar[r] & \on{Inert}^{\on{inf}}(\CY) \ar[d] \\
\CX \ar[r]^f & \CY
},$$
where $\on{Inert}^{\on{inf}}(\CY)$ is the infinitesimal inertia group of $\CY$.
Therefore, $g: \CZ \to \CX$ is also a formal group over $\CX$.

\medskip

By \cite[Chapter 7, Corollary 3.2.2]{Vol2}, formal groups are inf-affine.  In particular, the isomorphism
\eqref{e:iso on omegas} implies that the map $h: \CW \to \CZ$ is an isomorphism.

\sssec{A digression}

To proceed, we will make use of the following basic fact:

\begin{prop}\label{p:inf fiber product conservative}
Let
$$
\xymatrix{
\CY_1 \ar[r]^{f_1} \ar[d] & \CX_1 \ar[d]\\
\CY_2 \ar[r] & \CX_2
}
$$
be a Cartesian square of laft-def prestacks such that all maps are nil-isomorphisms.  If the map $\CY_1 \to \CY_2$
is an isomorphism, so is the map $\CX_1 \to \CX_2$.
\end{prop}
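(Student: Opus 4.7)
The plan is to build the isomorphism $\CX_1 \simeq \CX_2$ as a sequential colimit of base changes of a square-zero tower interpolating $\CY_2 \to \CX_2$. Applying \cite[Chapter 9, Sect. 5.1]{Vol2} to the nil-isomorphism $\CY_2 \to \CX_2$, I obtain a sequence
$$ \CY_2 = \CX_2^{(0)} \to \CX_2^{(1)} \to \cdots \to \CX_2^{(n)} \to \cdots $$
in $\on{PreStk}_{\on{laft-def}}$, with each step a square-zero extension, such that the natural map $\colim_n \CX_2^{(n)} \to \CX_2$ is an isomorphism in $\on{PreStk}$ (and a fortiori in $\on{PreStk}_{\on{laft-def}}$).

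Next, I set $\CX_1^{(n)} := \CX_1 \underset{\CX_2}{\times} \CX_2^{(n)}$. Since the square in the proposition is Cartesian, $\CX_1^{(0)} = \CX_1 \underset{\CX_2}{\times} \CY_2 \simeq \CY_1$, which by hypothesis is isomorphic to $\CY_2 = \CX_2^{(0)}$. Moreover, by transitivity of fiber products,
$$\CX_1^{(n)} \simeq \CX_1^{(n-1)} \underset{\CX_2^{(n-1)}}{\times} \CX_2^{(n)}$$
for $n \geq 1$, so if $\CX_1^{(n-1)} \to \CX_2^{(n-1)}$ is an isomorphism, then its base change $\CX_1^{(n)} \to \CX_2^{(n)}$ is also an isomorphism. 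By induction on $n$, the map $\CX_1^{(n)} \to \CX_2^{(n)}$ is an isomorphism for every $n$.

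Finally, I pass to the colimit. Colimits in $\on{PreStk}$ are computed pointwise, and sequential colimits of spaces commute with finite limits; hence
$$ \colim_n \CX_1^{(n)} \simeq \CX_1 \underset{\CX_2}{\times} \colim_n \CX_2^{(n)} \simeq \CX_1 \underset{\CX_2}{\times} \CX_2 \simeq \CX_1. $$
Combined with the step-wise isomorphisms $\CX_1^{(n)} \simeq \CX_2^{(n)}$, this yields $\CX_1 \simeq \CX_2$, as required. No single step is the main obstacle; the delicate point is verifying that the sequential colimit is compatible with the Cartesian structure of the original square, and this is handled entirely by the pointwise formula for colimits of prestacks together with the fact that laft-def structures respect such colimits.
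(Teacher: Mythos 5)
Your inductive step does not go through, and the difficulty is not a technicality: it is the whole content of the proposition.

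You write that ``by transitivity of fiber products, $\CX_1^{(n)} \simeq \CX_1^{(n-1)} \underset{\CX_2^{(n-1)}}{\times} \CX_2^{(n)}$.'' This fiber product requires a map $\CX_2^{(n)} \to \CX_2^{(n-1)}$, but the square-zero tower runs in the opposite direction, $\CX_2^{(n-1)} \to \CX_2^{(n)}$, so the displayed fiber product is not even well-posed. What transitivity actually gives is the reverse relation
$$ \CX_1^{(n-1)} \;\simeq\; \CX_1^{(n)} \underset{\CX_2^{(n)}}{\times} \CX_2^{(n-1)}, $$
i.e.\ $\CX_1^{(n-1)} \to \CX_2^{(n-1)}$ is the \emph{base change} of $\CX_1^{(n)} \to \CX_2^{(n)}$ along the square-zero extension $\CX_2^{(n-1)} \to \CX_2^{(n)}$. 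Knowing that a map becomes an isomorphism after base change along a nil-isomorphism does \emph{not} imply the original map was an isomorphism; deducing this is precisely a (one-step) instance of the proposition you are trying to prove. So the induction is circular: at each stage you need the statement of the proposition for the Cartesian square
$$ \CX_1^{(n-1)} \to \CX_1^{(n)}, \qquad \CX_2^{(n-1)} \to \CX_2^{(n)}, $$
and you have supplied no independent argument for this base case. (A counting of directions also shows the colimit step is never what's at issue; the colimit manipulation at the end is fine, but it rests on an unproved induction.)

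Some genuinely new ingredient is needed to get from ``iso after base change along a nil-iso'' to ``iso.'' The paper supplies it via deformation theory: from the Cartesian square one gets $T(\CY_1/\CY_2) \simeq f_1^!(T(\CX_1/\CX_2))$; since $\CY_1 \to \CY_2$ is an isomorphism the left side vanishes; since $f_1$ is a nil-isomorphism the functor $f_1^!$ is conservative, forcing $T(\CX_1/\CX_2) = 0$; and a nil-isomorphism of laft-def prestacks with vanishing relative tangent complex is an isomorphism. Your square-zero tower could in principle be made to work if at each step you invoked a deformation-theoretic comparison of classifying maps, but that is essentially the paper's argument wearing a heavier coat; as written the proposal has a real gap.
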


\begin{proof}
Since $\CY_1 \to \CY_2$ is an isomorphism, we have 
$$ T(\CY_1 /\CY_2) \simeq f_1^!(T(\CX_1/\CX_2)) = 0 .$$
Since $f_1$ is a nil-isomorphism, the functor $f^!_1$ is conservative.  Therefore, $T(\CX_1/\CX_2)=0$ and since
the map $\CX_1 \to \CX_2$ is also a nil-isomorphism, it is an isomorphism.
\end{proof}

\sssec{Step 3} \label{sss:iso lemma proof 3}

Since $h$ is an isomorphism, we can apply \propref{p:inf fiber product conservative} to the two Cartesian squares
$$
\xymatrix{
\CW \ar[r]^h\ar[d] & \CZ\ar[d] \\
\CX \ar[r] & \CX\underset{\CY}{\times} \CX \ar[d]\ar[r] & \CX\ar[d] \\
 & \CX\ar[r]^f & \CY 
}$$
to obtain that $f$ is an isomorphism, as desired.

\qed[\thmref{t:indcoh iso}]

\section{Proof of \thmref{t:enh main}}\label{s:proof}

\ssec{Recollections on deformation to the normal cone}
In this subsection, we recall from \cite[Chapter 9]{Vol2} the construction and basic properties of deformation to the normal cone, a fundamental construction of derived algebraic geometry.  It will play a key role in the proof of \thmref{t:enh main}.  Specifically,
it will allow us to reduce the theorem to a statement about tangent spaces.

\sssec{}
Suppose we have a diagram
$$
\xymatrix{
\CY \ar[rr]^f \ar[dr] && \CW \ar[dl] \\
 & \CZ
}
$$
of laft prestacks such that $f$ is a nil-isomorphism.  In this case, the adjunction $(f^{\IndCoh}_*,f^!)$ gives a monad
on $\IndCoh(\CY)$.  The symmetric monoidal category $\IndCoh(\CZ)$ acts on both $\IndCoh(\CY)$ and $\IndCoh(\CW)$
and both functors $f^{\IndCoh}_*$ and $f^!$ are canonically functors of $\IndCoh(\CZ)$-linear DG categories.  Therefore,
we have an object
$$ \sM_f \in \on{Alg}(\on{End}_{\IndCoh(\CZ)}(\IndCoh(\CY))) $$
such that the underlying endofunctor of $\sM_f$ is given by
$$f^! \circ f^{\IndCoh}_*: \IndCoh(\CY) \to \IndCoh(\CY).$$

\medskip

The above construction of $\sM_f$ is functorial in the followng sense.  Let
$$ (\on{PreStk}_{\on{laft}})^{\on{nil-isom}}_{\CY//\CZ} \subset (\on{PreStk}_{\on{laft}})_{\CY//\CZ} $$
denote the full subcategory consisting of objects of the form $\CY \to \CW \to \CZ$ such that the map
$\CY \to \CW$ is a nil-isomorphism.  We obtain a functor
\begin{equation}\label{e:pushpull monad}
(\on{PreStk}_{\on{laft}})^{\on{nil-isom}}_{\CY//\CZ} \to \on{Alg}(\on{End}_{\IndCoh(\CZ)}(\IndCoh(\CY)))
\end{equation}
which maps $\CY \overset{f}{\to} \CW \to \CZ$ to $\sM_f$.

\medskip
Forgetting the algebra structure, we obtain a functor
\begin{equation}\label{e:pushpull funct}
(\on{PreStk}_{\on{laft}})^{\on{nil-isom}}_{\CY//\CZ} \to \on{End}_{\IndCoh(\CZ)}(\IndCoh(\CY))
\end{equation}

\sssec{}

The construction of \cite[Chapter 9, Section 2]{Vol2} gives a functor
\begin{equation}
\on{DefNorm}: (\on{PreStk_{\on{laft-def}}})^{\on{nil-isom}}_{\CY//\CZ} \to
((\on{PreStk}_{\on{laft-def}})^{\on{nil-isom}}_{\CY \times \mathbb{A}^1//\CZ \times \mathbb{A}^1})^{\mathbb{A}^1_{\on{left-lax}}}
\end{equation}
(see \cite[Chapter 9, Section 1]{Vol2} for an explanation of what $\mathbb{A}^1_{\on{left-lax}}$-equivariance means; here
we will only use a few formal properties described in \secref{sss:left-lax} below).
The composite
$$ (\on{PreStk_{\on{laft-def}}})^{\on{nil-isom}}_{\CY//\CZ} \to
((\on{PreStk}_{\on{laft-def}})^{\on{nil-isom}}_{\CY \times \mathbb{A}^1//\CZ \times \mathbb{A}^1})^{\mathbb{A}^1_{\on{left-lax}}} 
\xrightarrow{- \underset{\mathbb{A}^1}{\times} \{1\}}
(\on{PreStk}_{\on{laft-def}})^{\on{nil-isom}}_{\CY//\CZ}
$$
is isomorphic to the identity, and the composite
$$ (\on{PreStk_{\on{laft-def}}})^{\on{nil-isom}}_{\CY//\CZ} \to
((\on{PreStk}_{\on{laft-def}})^{\on{nil-isom}}_{\CY \times \mathbb{A}^1//\CZ \times \mathbb{A}^1})^{\mathbb{A}^1_{\on{left-lax}}} 
\xrightarrow{- \underset{\mathbb{A}^1}{\times} \{0\}}
(\on{PreStk}_{\on{laft-def}})^{\on{nil-isom}}_{\CY//\CZ}
$$
is given by
$$ (\CY \to \CW \to \CZ) \mapsto (\CY \to \on{Vect}_{\CY}(T(\CY/\CW)) \to \CZ), $$
where $\on{Vect}_{\CY}(T(\CY/\CW))$ is the ``infinitesimal vector prestack'' associated to the relative tangent complex
$T(\CY/\CW) \in \IndCoh(\CY)$ (see \cite[Chapter 7, Section 1.4]{Vol2}) and the map $\on{Vect}_{\CY}(T(\CY/\CW))\to \CZ$ is
the composite
$$ \on{Vect}_{\CY}(T(\CY/\CW)) \to \CY \to \CZ ,$$
where the first map is the canonical projection.

\sssec{}
Composing the functor $\on{DefNorm}$ with \eqref{e:pushpull monad}, we obtain a functor

\begin{equation}\label{e:filtered monad prelim}
(\on{PreStk_{\on{laft-def}}})^{\on{nil-isom}}_{\CY//\CZ} \to \on{Alg}(\on{End}_{\IndCoh(\CZ \times \mathbb{A}^1)}(\IndCoh(\CY\times \mathbb{A}^1)))^{\mathbb{A}^1_{\on{left-lax}}}
\end{equation}

The category $\IndCoh(\mathbb{A}^1)$ is dualizable, and therefore
$$ \IndCoh(\CY \times \mathbb{A}^1) \simeq \IndCoh(\CY) \otimes \IndCoh(\mathbb{A}^1) \simeq \IndCoh(\CY) \otimes \QCoh(\mathbb{A}^1).$$
Moreover, as in \lemref{l:dualizable endomorphisms}, we have:
$$ \on{End}_{\IndCoh(\CZ) \otimes \QCoh(\mathbb{A}^1)}(\IndCoh(\CY) \otimes \QCoh(\mathbb{A}^1)) \simeq
\on{End}_{\IndCoh(\CZ)}(\IndCoh(\CY)) \otimes \QCoh(\mathbb{A}^1) .$$

\sssec{} \label{sss:left-lax}

By \cite[Chapter 9, Lemma 1.5.2(a)]{Vol2}, for a DG category $\bC$,
$$ (\bC \otimes \QCoh(\mathbb{A}^1))^{\mathbb{A}^1_{\on{left-lax}}} \simeq \bC^{\on{Fil}, \geq 0}:=\on{Funct}(\mathbb{Z}_{\geq 0}, \bC), $$
where $\mathbb{Z}_{\geq 0}$ is the category corresponding to the poset of non-negative integers.  In other words,
$(\bC \otimes \QCoh(\mathbb{A}^1))^{\mathbb{A}^{\on{left-lax}}}$ is the category of non-negatively filtered objects
in $\bC$.
With this identification, the forgetful functor
$$ \oblv_{\on{Fil}}: \bC^{\on{Fil}, \geq 0} \to \bC $$
is given by evaluation at $1 \in \mathbb{A}^1$, and the associated graded functor
\begin{equation}\label{e:gr functor}
 \on{gr}: \bC^{\on{Fil}, \geq 0} \to \bC
\end{equation}
is given by evaluation at $0 \in \mathbb{A}^1$.  Note that because we only consider non-negatively filtered objects, the associated graded functor $\eqref{e:gr functor}$ is conservative.

\medskip

In the case when $\bC$ is a monoidal DG category, we have
$$ \on{Alg}(\bC \otimes \QCoh(\mathbb{A}^1))^{\mathbb{A}^1_{\on{left-lax}}} \simeq \on{Alg}(\bC^{\on{Fil, \geq 0}}).$$
Applying the above equivalences, \eqref{e:filtered monad prelim} gives a functor
\begin{equation}\label{e:filtered monad}
(\on{PreStk_{\on{laft-def}}})^{\on{nil-isom}}_{\CY//\CZ} \to \on{Alg}(\on{End}_{\IndCoh(\CZ)}(\IndCoh(\CY))^{\on{Fil, \geq 0}})
\end{equation}

\sssec{}
Below, we will summarize the basic relevant properties of the functor \eqref{e:filtered monad}.  First, we introduce
an additional bit of notation.

\medskip

Given a monoidal functor $\mathcal{A}\to\mathcal{B}$
of monoidal DG categories, we have a canonically defined monoidal functor
$$ \on{Tens}: \mathcal{B} \simeq \on{End}_{\mathcal{B}}(\mathcal{B}) \to \on{End}_{\mathcal{A}}(\mathcal{B}) .$$
Explicitly, for $b \in \mathcal{B}$, the functor $\on{Tens}(b)$ is given by tensoring with $b$:
$$ \on{Tens}(b) = (-) \otimes_{\mathcal{B}} b .$$

Denote by $\on{Alg}(\on{Tens})$ the induced functor
$$\on{Alg}(\CB) \to \on{Alg}(\on{End}_{\mathcal{A}}(\mathcal{B})).$$ 

\sssec{}
The relevant properties of the functor \eqref{e:filtered monad} established in \cite[Chapter 9]{Vol2} can be summarized as follows:
\begin{thm}\label{t:def normal cone}
Let $\CY \to \CZ$ be a morphism of laft-def prestacks.  We then have the following commutative diagram.
$$
\xymatrix{&  \on{Alg}(\on{End}_{\IndCoh(\CZ)}(\IndCoh(\CY))) \\
(\on{PreStk}_{\on{laft-def}})_{\CY//\CZ}^{\on{nil-isom}} \ar[r]^-{\eqref{e:filtered monad}}
\ar[ur]^{\eqref{e:pushpull monad}} \ar[dr]_{\on{Alg}(\on{Tens}) \circ \on{Sym}\circ T_{\CY/}}
& \on{Alg}(\on{End}_{\IndCoh(\CZ)}(\IndCoh(\CY))^{\on{Fil}, \geq 0}) 
\ar[u]_{\oblv_{\on{Fil}}} \ar[d]^{\on{gr}} \\
& \on{Alg}(\on{End}_{\IndCoh(\CZ)}(\IndCoh(\CY))),}
$$
where the pointing down slanted arrow is the composite
\begin{multline*}
(\on{PreStk}_{\on{laft-def}})_{\CY//\CZ}^{\on{nil-isom}}  \overset{T_{\CY/}}\longrightarrow \IndCoh(\CY) 
\overset{\on{Sym}}\longrightarrow  \on{Alg}(\IndCoh(\CY)) \overset{\on{Alg}(\on{Tens})}\longrightarrow \\
\to \on{Alg}(\on{End}_{\IndCoh(\CZ)}(\IndCoh(\CY))).
\end{multline*} 
\end{thm}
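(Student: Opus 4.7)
The plan is to construct the functor \eqref{e:filtered monad} by applying the push-pull monad construction \eqref{e:pushpull monad} fiberwise to the $\mathbb{A}^1$-family produced by $\on{DefNorm}$, and then to establish the two commutativities by restricting this family at $t=1$ and $t=0$. For an input $(\CY\overset{f}{\to}\CW\to\CZ)$, applying $\on{DefNorm}$ yields an $\mathbb{A}^1_{\on{left-lax}}$-equivariant diagram $\CY\times\mathbb{A}^1\to\widetilde\CW\to\CZ\times\mathbb{A}^1$, and the push-pull monad of its left arrow, combined with \lemref{l:dualizable endomorphisms} for $\QCoh(\mathbb{A}^1)$ and the identification $(\bC\otimes\QCoh(\mathbb{A}^1))^{\mathbb{A}^1_{\on{left-lax}}}\simeq\bC^{\on{Fil},\geq 0}$ recalled in \secref{sss:left-lax}, delivers the desired filtered algebra in $\on{End}_{\IndCoh(\CZ)}(\IndCoh(\CY))$.

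The upper triangle (involving $\oblv_{\on{Fil}}$) is formal. Under the filtered/left-lax dictionary, restriction at $\{1\}\hookrightarrow\mathbb{A}^1$ corresponds precisely to $\oblv_{\on{Fil}}$; since $\on{DefNorm}$ at $\{1\}$ recovers the original datum $(\CY\to\CW\to\CZ)$, base change for the relative push-pull adjunction identifies the underlying endofunctor with $\sM_f$, and the algebra structures match by the functoriality of the construction \eqref{e:pushpull monad}.

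The lower triangle is where the main content lies. By the explicit form of $\on{DefNorm}$ at $t=0$ cited above the theorem, the restriction at $\{0\}\hookrightarrow\mathbb{A}^1$ is the nil-isomorphism
\[
\CY \to \on{Vect}_{\CY}(T(\CY/\CW))
\]
of prestacks over $\CZ$, and this restriction corresponds to $\on{gr}$ under the filtered/left-lax dictionary. The claim therefore reduces to the following general assertion: for any $\CF\in\IndCoh(\CY)$, the push-pull monad on $\IndCoh(\CY)$ attached to the zero section $\CY\to\on{Vect}_\CY(\CF)$ is canonically isomorphic, as an algebra in $\on{End}_{\IndCoh(\CZ)}(\IndCoh(\CY))$, to $\on{Alg}(\on{Tens})\circ\on{Sym}(\CF)$. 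This is the Koszul-style computation of $\IndCoh$ on infinitesimal vector prestacks carried out in \cite[Chapter 7]{Vol2}, applied to $\CF = T(\CY/\CW)$.

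The main obstacle is upgrading these underlying-endofunctor comparisons to isomorphisms of algebra objects in a fashion compatible with both the filtered structure and the relative situation over $\CZ$. The monad multiplication is encoded by the Cartesian square $\CY\times\mathbb{A}^1\underset{\widetilde\CW}{\times}\CY\times\mathbb{A}^1$ viewed in the $\mathbb{A}^1$-family; the key point is that $\on{DefNorm}$ is compatible with formation of such fiber products (a coherence property established in Chapter~9 of \cite{Vol2}), so that the associated graded of the monad multiplication automatically matches the multiplication on $\on{Sym}(T(\CY/\CW))$. With this coherence in place, functoriality of $\on{DefNorm}$ transports all structures coherently through $(\on{PreStk}_{\on{laft-def}})_{\CY//\CZ}^{\on{nil-isom}}$, completing the proof.
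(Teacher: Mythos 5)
Your outline is correct and follows the same strategy that the paper itself invokes: the paper does not prove \thmref{t:def normal cone} in-line but states it as a summary of \cite[Chapter 9]{Vol2}, and the steps you lay out—constructing \eqref{e:filtered monad} by applying the push-pull monad functor to the $\mathbb{A}^1_{\on{left-lax}}$-family produced by $\on{DefNorm}$, identifying restriction at $\{1\}$ with $\oblv_{\on{Fil}}$ to recover \eqref{e:pushpull monad}, identifying restriction at $\{0\}$ with $\on{gr}$ and the zero section of $\on{Vect}_\CY(T(\CY/\CW))$, and then reducing the lower triangle to the Koszul-type computation that the push-pull monad of the zero section is tensoring with $\on{Sym}(T(\CY/\CW))$—are precisely the content of that reference. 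Your remark that the substantive point is upgrading the endofunctor identifications to algebra morphisms via the compatibility of $\on{DefNorm}$ with fiber products is exactly where the coherence work in \cite[Chapter 9]{Vol2} lives, so the sketch is faithful.
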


\ssec{Initial Reductions}
We are now ready to begin the proof of \thmref{t:enh main}.  In this subsection, we will reduce the proof of the theorem
to a statement about tangent complexes.

\begin{rem}

The proof of \thmref{t:enh main} will go back-and-forth between the unital and non-unital Ran spaces:

\medskip

On the one hand, the non-unital Ran space is a \emph{prestack}, so is a geometric object of a more 
familiar kind, and is suitable for certain operations (e.g., deformation to the normal cone, discussed
above).

\medskip

On the other hand, the unital Ran space has the feature that the functor $\Gamma_{c,\on{Ran}^{\on{untl}}}$
is (symmetric) monoidal \emph{and} it affords a more convenient description of linear factorization sheaves,
and our proof will make use of both these structures. 

\end{rem} 

\sssec{}

First, we will reduce the theorem to a statement about the non-unital Ran space (though we will later return to the unital Ran space).  

\medskip

By universal homological cofinality of the map $\on{Ran}\to \on{Ran}^{\on{untl}}$ (\thmref{t:Ran cofinality new}),
the functor
$$\Gamma_{c,\on{Ran}^{\on{untl}}}:
 \on{End}(\IndCoh(\CY)) \otimes \Dmod(\on{Ran}^{\on{untl}}) \to \on{End}(\IndCoh(\CY)) $$
is canonically isomorphic to the composite
\begin{multline*}
\on{End}(\IndCoh(\CY)) \otimes \Dmod(\on{Ran}^{\on{untl}}) \overset{\on{Id}\otimes \iota^!}\longrightarrow  \\
\to \on{End}(\IndCoh(\CY)) \otimes \Dmod(\on{Ran}) \overset{\Gamma_{c,\on{Ran}}}\longrightarrow 
\on{End}(\IndCoh(\CY)) .
\end{multline*} 

Recall (see \eqref{e:restr Hecke to Ran}) that
$$(\on{Id}\otimes \iota^!)(\CH^{\on{inf}}(\CY/\CZ_1/\CZ_2)_{\on{Ran}^{\on{untl}}})\simeq  \CH^{\on{inf}}(\CY/\CZ_1/\CZ_2)_{\on{Ran}}.$$

Hence, we need to show that the map of endofunctors
\begin{equation}\label{e:hecke to dmod on Ran int}
\Gamma_{c,\on{Ran}}(\CH^{\on{inf}}(\CY/\CZ_1/\CZ_2)_{\on{Ran}}) \to  \on{Diff}_{\CY/\CS(\CZ_2)}
\end{equation}
arising by adjunction from 
\begin{equation}\label{e:hecke to dmod on Ran}
\CH^{\on{inf}}(\CY/\CZ_1/\CZ_2)_{\on{Ran}} \to \on{Diff}_{\CY/\CS(\CZ_2)} \boxtimes \ \omega_{\on{Ran}}
\end{equation}
is an isomorphism (as objects of $\on{End}(\IndCoh(\CY))$).

\medskip

Note also that since $\Ran$ is universally homologically contractible (\thmref{t:Ran contr}), the map \eqref{e:hecke to dmod on Ran int}
can be obtained from the map \eqref{e:hecke to dmod on Ran} by applying the functor $\Gamma_{c,\on{Ran}}$ to both sides. 

\sssec{}

By definition, the map \eqref{e:hecke to dmod on Ran} arises by applying the functor \eqref{e:pushpull funct} to the map
\begin{equation}\label{e:map of ran prestacks}
\overset{\circ}{\CS}(\CY/\CZ_1/\CZ_2)\strut^{\wedge}_{\on{Ran}} \to (\CY / \CS(\CZ_2))_{\on{dR}} \times \Ran
\end{equation}
in $(\on{PreStk}_{\on{laft-def}})^{\on{nil-isom}}_{\CY \times \Ran // \Ran}$.

\medskip

By \thmref{t:def normal cone}, we have a commutative diagram
$$
\hspace{-2cm}
\xymatrix{
& \on{End}(\IndCoh(\CY)) \otimes \Dmod(\on{Ran})\ar[r]^-{\Gamma_{c,\on{Ran}}} & \on{End}(\IndCoh(\CY)) \\
(\on{PreStk}_{\on{laft-def}})^{\on{nil-isom}}_{\CY \times \Ran// \Ran} \ar[r] \ar@/_2.7pc/[ddr]_(.20){\on{Sym}(T({\CY \times \Ran}/-))} 
\ar@/^1.5pc/[ur]^-{\eqref{e:pushpull funct}} & \on{End}(\IndCoh(\CY)) \otimes \Dmod(\on{Ran})^{\on{Fil}, \geq 0} 
\ar[r]^-{\Gamma_{c,\on{Ran}}}\ar[u]^{\oblv_{\on{Fil}}}\ar[d]_{\on{gr}} & \on{End}(\IndCoh(\CY))^{\on{Fil}, \geq 0} \ar[u]^{\oblv_{\on{Fil}}}\ar[d]_{\on{gr}} \\
& \on{End}(\IndCoh(\CY)) \otimes \Dmod(\on{Ran})\ar[r]^-{\Gamma_{c,\on{Ran}}} & \on{End}(\IndCoh(\CY)) \\
& \IndCoh(\CY) \otimes \Dmod(\on{Ran})\ar[u]^{\on{Tens}}\ar[r]^-{\Gamma_{c,\on{Ran}}} & \IndCoh(\CY) \ar[u]^{\on{Tens}}
}$$
We want to show that the map in $\on{End}(\IndCoh(\CY))$ given by
$$ \Gamma_{c,\on{Ran}} \circ \eqref{e:pushpull funct} (\eqref{e:map of ran prestacks}) $$
is an isomorphism.  Since the functor of associated graded
$$ \on{gr}: \on{End}(\IndCoh(\CY))^{\on{Fil}, \geq 0} \to \on{End}(\IndCoh(\CY)) $$
is conservative, it suffices to show that
$$ \Gamma_{c,\on{Ran}} \circ (\on{Sym}(T(\CY\times \Ran/-))) (\eqref{e:map of ran prestacks}) $$
is an isomorphism in $\IndCoh(\CY)$. 

\medskip

In other words, it suffices to show that $\Gamma_{c,\on{Ran}}$, applied
to the map
\begin{equation}\label{e:map on gr ran}
\on{Sym}(T(\CY \times \Ran/\overset{\circ}{\CS}(\CY/\CZ_1/\CZ_2)\strut^{\wedge}_{\on{Ran}} )) \to 
\on{Sym}(T(\CY \times \Ran/(\CY / \CS(\CZ_2))_{\on{dR}} \times \Ran ))
\end{equation}
in $\IndCoh(\CY)\otimes \Dmod(\on{Ran})$, gives an isomorphism.

\sssec{}

Note that the map \eqref{e:map on gr ran} is the restriction from $\IndCoh(\CY) \otimes \Dmod(\on{Ran}^{\on{untl}})$
of the map

\begin{multline}\label{e:map on gr un-ran}
\on{Sym}(T(\CY \times \Ran^{\on{untl}}/\overset{\circ}{\CS}(\CY/\CZ_1/\CZ_2)\strut^{\wedge}_{\on{Ran}^{\on{untl}} } )) \to \\ 
\to \on{Sym}(T(\CY \times \Ran^{\on{untl}}/(\CY / \CS(\CZ_2))_{\on{dR}} \times \Ran^{\on{untl}} )).
\end{multline}
By \thmref{t:Ran cofinality new}, we have
$$ \Gamma_{c,\on{Ran}}(\eqref{e:map on gr ran}) \simeq \Gamma_{c,\on{Ran}^{\on{untl}} }(\eqref{e:map on gr un-ran}).$$

\medskip

Thus, we wish to show that $\Gamma_{c,\on{Ran}^{\on{untl}}} (\eqref{e:map on gr un-ran})$ is an isomorphism in $\IndCoh(\CY)$.

\sssec{}

Since the functor $\Gamma_{c,\on{Ran}^{\on{untl}} }$ is symmetric monoidal, it suffices to show that the map
\begin{multline}\label{e:map on tangent}
\Gamma_{c,\on{Ran}^{\on{untl}} }(T(\CY \times \Ran^{\on{untl}}/\overset{\circ}{\CS}(\CY/\CZ_1/\CZ_2)\strut^{\wedge}_{\on{Ran}^{\on{untl}} } )) \to \\
\to \Gamma_{c,\on{Ran}^{\on{untl}} }(T(\CY \times \Ran^{\on{untl}}/(\CY / \CS(\CZ_2))_{\on{dR}} \times \Ran^{\on{untl}} ))
\end{multline}
is an isomorphism.  

\medskip

Equivalently, we need to show that the cone of \eqref{e:map on tangent}, i.e., 
\begin{equation}\label{e:relative tangent vanish}
\Gamma_{c,\on{Ran}^{\on{untl}} }\circ 
(\fr_{\on{Ran}^{\on{untl}} })^!(T(\overset{\circ}{\CS}(\CY/\CZ_1/\CZ_2)\strut^{\wedge}_{\on{Ran}^{\on{untl}} }/(\CY/\CS(\CZ_2))_{\on{dR}}\times \Ran^{\on{untl}})) \simeq 0,
\end{equation}
vanishes, where $\fr_{\on{Ran}^{\on{untl}}}$ is the map
$$\CY \times \Ran^{\on{untl}} \to \overset{\circ}{\CS}(\CY/\CZ_1/\CZ_2)\strut^{\wedge}_{\on{Ran}^{\on{untl}} } .$$

\sssec{}
We will use the following key result (which will be proved in \secref{ss:proof of tangent}) to show the desired vanishing of \eqref{e:relative tangent vanish}.

\begin{thm}\label{t:tangent vanishing}
Let $X$ be a proper connected laft scheme and $\CZ \to X_{\on{dR}}$ a sectionally laft prestack.  Then
$$ \Gamma_{c,\on{Ran}^{\on{untl}} }\circ (\fr_{\on{Ran}^{\on{untl}} })^!(T(\overset{\circ}{\CS}(\CZ)\strut^{\wedge}_{\on{Ran}^{\on{untl}} })) \simeq 0 ,$$
where
$$\fr_{\on{Ran}^{\on{untl}} }: \CS(\CZ) \times \Ran^{\on{untl}} \to \overset{\circ}{\CS}(\CZ)\strut^{\wedge}_{\on{Ran}^{\on{untl}} }$$
is the restriction map from sections to punctured sections.
\end{thm}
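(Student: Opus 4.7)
The plan is to compute $\fr_{\on{Ran}^{\on{untl}}}^!(T(\overset{\circ}{\CS}(\CZ)\strut^\wedge_{\on{Ran}^{\on{untl}}}))$ via the tangent fiber sequence for the nil-isomorphism $\fr_{\on{Ran}^{\on{untl}}}$, identify it globally in terms of the antecedent $\Theta(\CZ)$ and the pointed Ran space, and then deduce the vanishing after applying $\Gamma_{c,\on{Ran}^{\on{untl}}}$ via contractibility.

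Since $\fr_{\on{Ran}^{\on{untl}}}$ is a nil-isomorphism of laft-def categorical prestacks (\propref{p:punctured sects laft-def}), the tangent fiber sequence reads
$$T(\CS(\CZ) \times \Ran^{\on{untl}}/\overset{\circ}{\CS}(\CZ)\strut^\wedge_{\on{Ran}^{\on{untl}}}) \to T(\CS(\CZ)) \boxtimes \omega_{\Ran^{\on{untl}}} \to \fr_{\on{Ran}^{\on{untl}}}^!(T(\overset{\circ}{\CS}(\CZ)\strut^\wedge_{\on{Ran}^{\on{untl}}}))$$
in $\IndCoh(\CS(\CZ) \times \Ran^{\on{untl}})$ (working $\Ran^{\on{untl}}$-relatively, which agrees with the absolute tangent since $\Ran^{\on{untl}}$ is de Rham). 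Thus it suffices to show that the leftmost map becomes an isomorphism after $\Gamma_{c,\on{Ran}^{\on{untl}}}$.

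The first step is to globalize the pointwise identification from Section 4.3 of the introduction. Let $\on{ev}\colon \on{Ran}^{\on{untl}}_\star(X) \to X$ and $q_\star\colon \on{Ran}^{\on{untl}}_\star(X) \to \Ran^{\on{untl}}$ denote the two projections from the pointed Ran space (\propref{p:pointed Ran pres}); note that $\on{Ran}^{\on{untl}}_\star(X)$ is canonically identified with the closed complement of $\overset{\circ}{\mathfrak{X}}_{\on{Ran}^{\on{untl}}}$ inside $X \times \Ran^{\on{untl}}$. The claim is that
$$T(\CS(\CZ) \times \Ran^{\on{untl}}/\overset{\circ}{\CS}(\CZ)\strut^\wedge_{\on{Ran}^{\on{untl}}}) \simeq (\on{id}_{\CS(\CZ)} \times q_\star)^{\IndCoh}_* (\on{id}_{\CS(\CZ)} \times \on{ev})^!(\Theta(\CZ)).$$
Over a point $\ul{x} \in \Ran(X)$, the right-hand side restricts to $\bigoplus_k (i^!_{x_k})(\Theta(\CZ))$, matching the pointwise formula recalled in \secref{sss:explain laft}. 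The global version is obtained by assembling these pointwise identifications through the coCartesian fibration description in \secref{sss:punctured sects finite powers}, combined with \corref{c:Weil def} and the Serre--Verdier duality formalism governing $\Theta(\CZ)$.

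Now apply $\Gamma_{c,\on{Ran}^{\on{untl}}}$ to the fiber sequence. The middle term becomes $T(\CS(\CZ))$ by \corref{c:Gamma on Ran untl mon}(b) together with the contractibility $\Gamma_{c,\on{Ran}^{\on{untl}}}(\omega_{\Ran^{\on{untl}}}) \simeq k$ (\propref{p:Ran untl contr}). Using the factorization $p_{\on{Ran}^{\on{untl}}_\star} = p_X \circ \on{ev}$, the left term rewrites as
$$(\on{id}_{\CS(\CZ)} \times p_X)^{\IndCoh}_* \circ (\on{id}_{\CS(\CZ)} \times \on{ev})^{\IndCoh}_* \circ (\on{id}_{\CS(\CZ)} \times \on{ev})^!(\Theta(\CZ)).$$
The crucial observation is that $\on{ev}\colon \on{Ran}^{\on{untl}}_\star(X) \to X$ is universally homologically contractible: its fiber over $x \in X$ is the poset category $\on{Ran}^{\on{untl}}_x \subset \Ran^{\on{untl}}$ of finite subsets containing $x$, which is contractible as it admits the initial object $\{x\}$. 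Hence the counit $\on{ev}^{\IndCoh}_* \circ \on{ev}^! \to \on{id}$ is an isomorphism; the left term therefore reduces to $(\on{id} \times p_X)^{\IndCoh}_*(\Theta(\CZ)) = T(\CS(\CZ))$ by \propref{p:sect tangent}, and the induced map to the middle term is the identity. Consequently the cofiber of the map, which is $\Gamma_{c,\on{Ran}^{\on{untl}}}(\fr_{\on{Ran}^{\on{untl}}}^!(T(\overset{\circ}{\CS}(\CZ)\strut^\wedge_{\on{Ran}^{\on{untl}}})))$, vanishes.

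The principal obstacle will be rigorously establishing the global identification of the relative tangent complex in terms of the pointed Ran space. The pointwise version is a direct application of the description of the relative tangent complex in \secref{sss:explain laft}, but globalizing requires a delicate interaction between the $\Theta(\CZ)$-machinery, the Weil restriction formalism of \corref{c:Weil def}, and the coCartesian fibration structure of $\overset{\circ}{\CS}(\CZ)_{\Ran^{\on{untl}}}$ together with its formal completion in the categorical prestack setting. A secondary technical point is to verify that the universal homological contractibility of $\on{ev}$ yields the counit isomorphism $\on{ev}^{\IndCoh}_* \circ \on{ev}^! \simeq \on{id}$ after tensoring by $\IndCoh(\CS(\CZ))$; this is where the dualizability results of \corref{c:unital Ran dualizable} and \lemref{l:dualizable endomorphisms} are used to ensure the computation commutes with the external factor.
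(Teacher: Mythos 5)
Your proposal follows essentially the same route as the paper. The paper's proof of \thmref{t:tangent vanishing} rests on two propositions — \propref{p:tangent pushforward to Ran} (the relative tangent lies in the essential image of $\on{Id}\otimes i_!$, where $i:X\to\Ran^{\on{untl}}$) and \propref{p:tangent on diag} ($(\on{Id}\otimes i^!)T(\mathrm{rel})\simeq\Theta(\CZ)$, plus the identification of the induced map) — followed by an adjunction chase using $\Gamma_{c,\Ran^{\on{untl}}}\circ(\on{Id}\otimes i_!)\simeq\on{Id}\otimes\Gamma_{c,X}$ and contractibility. Since $i_!=q_!\circ\on{pr}^!$ (as established in the proof of \thmref{t:image of Dmod in Ran}, with $q=q_\star$, $\on{pr}=\on{ev}$), your formula $T(\mathrm{rel})\simeq(\on{id}\times q_\star)^{\IndCoh}_*\circ(\on{id}\times\on{ev})^!(\Theta(\CZ))$ is exactly $(\on{Id}\otimes i_!)(\Theta(\CZ))$, so the global identification you set out to establish is precisely the conjunction of those two propositions. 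You correctly flag this globalization as the technical core.

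There are, however, two genuine problems in the deduction of the vanishing from that identification.

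First, the factorization through $\on{ev}^{\IndCoh}_*$ does not make sense. The map $\on{ev}=\on{pr}:\Ran^{\on{untl}}_\star(X)\to X_\dr$ is \emph{not} pseudo-proper, so there is no pushforward $\on{ev}^{\IndCoh}_*$ that is left adjoint to $\on{ev}^!$; and a left adjoint $\on{ev}_!$ of $\on{ev}^!$ is not established to exist (note $\on{ev}^!$ is itself the \emph{left} adjoint, namely to $i_0^!$, arising from the adjunction of categorical prestacks $i_0\dashv\on{pr}$). The homological contractibility of $\on{ev}$ gives full faithfulness of $\on{ev}^!$, but that is already immediate from $\on{ev}\circ i_0=\on{id}$, and it does not by itself produce $\on{ev}_!$. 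What you actually need — and what the paper uses — is simply that $\Gamma_{c,\Ran^{\on{untl}}}\circ(q_\star)_!\circ\on{ev}^!=(p_{\Ran^{\on{untl}}_\star})_!\circ\on{ev}^!$ is left adjoint to $i_0^!\circ(p_{\Ran^{\on{untl}}_\star})^!=(\on{ev}\circ i_0)^!\circ p_X^!=p_X^!$, whence it equals $(p_X)_{\dr,*}=\Gamma_{c,X}$ by uniqueness of left adjoints. No intermediate $\on{ev}_!$ is needed.

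Second, and more seriously, the sentence ``and the induced map to the middle term is the identity'' is asserted with no argument, yet this is the crux of the whole proof. Having identified source and target with $T(\CS(\CZ))$, you only know the map is \emph{some} endomorphism of $T(\CS(\CZ))$; showing it is the identity is not automatic from the identification of the objects. The paper carries the identification one step further in \propref{p:tangent on diag}(b): the adjoint map $(\on{Id}\otimes i^!)(T(\mathrm{rel}))\to(\on{Id}\otimes i^!)(T(\mathrm{abs}))$ is the unit of the $((p_X)_{\dr,*},(p_X)^!)$-adjunction applied to $\Theta(\CZ)$. Only then, by running the unit/counit chain through $\Gamma_{c,X}$ and $\Gamma_{c,\Ran^{\on{untl}}}$ and invoking the triangle identities, does the composite collapse to $\on{id}_{T(\CS(\CZ))}$. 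Your proof has no analogue of this step, so the vanishing does not actually follow from what you have written.
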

\begin{rem}
One can regard this theorem as a statement of infinitesimal contractibility of the space of rational maps, in the spirit of \cite{Contr}.
\end{rem}

\sssec{}

We now deduce the desired vanishing in \eqref{e:relative tangent vanish} from \thmref{t:tangent vanishing}.

\medskip

Unwinding the definitions, we obtain that the expression in \eqref{e:relative tangent vanish} is the pullback along $f:\CY\to \CS(\CZ_1)$ 
of 
$$ \Gamma_{c,\on{Ran}^{\on{untl}} }\circ 
(\fr_{\on{Ran}^{\on{untl}} })^! (T(\overset{\circ}{\CS}(\CZ_1)\strut^{\wedge}_{\on{Ran}^{\on{untl}} }/\overset{\circ}{\CS}(\CZ_2)\strut^{\wedge}_{\on{Ran}^{\on{untl}} })),$$
where
$$\fr_{\on{Ran}^{\on{untl}} }: \CS(\CZ_1) \times \Ran^{\on{untl}} \to \overset{\circ}{\CS}(\CZ_1)\strut^{\wedge}_{\on{Ran}^{\on{untl}}}.$$

Now the required vanishing follows from \thmref{t:tangent vanishing} applied to $\CZ_1$ and $\CZ_2$ and the fiber squares
$$
\CD
\CY \times \Ran^{\on{untl}} @>>>  \CS(\CZ_i) \times \Ran^{\on{untl}}  \\
@VVV @VVV \\
\CY @>>>  \CS(\CZ_i) 
\endCD
$$
for $i=1,2$. 

\qed

The rest of this section is devoted to the proof of \thmref{t:tangent vanishing}.

\ssec{Towards the proof of \thmref{t:tangent vanishing}} 

\sssec{}

The proof of \thmref{t:tangent vanishing} will consist of the following two assertions.

\medskip


\begin{prop}\label{p:tangent pushforward to Ran}
Let $X$ be a laft scheme, and $\CZ \to X_{\on{dR}}$ a sectionally laft prestack.
The relative tangent complex
$$ T\left(\CS(\CZ)\times \Ran^{\on{untl}}/\overset{\circ}{\CS}(\CZ)\strut^{\wedge}_{\on{Ran}^{\on{untl}}}\right) 
\in \IndCoh(\CS(\CZ)) \otimes \Dmod(\on{Ran}^{\on{untl}} (X)) $$
lies in the essential image of the fully-faithful functor
$$\on{Id} \otimes i_!: \IndCoh(\CS(\CZ)) \otimes \Dmod(X) \to \IndCoh(\CS(\CZ)) \otimes \Dmod(\on{Ran}^{\on{untl}} (X)), $$
where $i: X \to \on{Ran}^{\on{untl}} (X)$ is the diagonal embedding.
\end{prop}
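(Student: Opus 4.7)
The plan is to compute the relative tangent complex explicitly in terms of the antecedent $\Theta(\CZ)$ from \secref{ss:Theta}, to verify that the resulting object satisfies the linear factorization condition of \secref{sss:explicit LFS} (with parameters in $\IndCoh(\CS(\CZ))$), and to conclude via a parameterized version of \thmref{t:image of Dmod in Ran}. By \propref{p:unital ran pres}, it suffices to describe the !-pullback $T_I \in \IndCoh(\CS(\CZ)) \otimes \Dmod(X^I)$ of the relative tangent complex to $\CS(\CZ) \times X^I_{\dr}$ for each $I \in \on{Fin}$ together with the compatibilities along maps $I \to J$ in $\on{Fin}$. Since formal completion does not alter the tangent complex along the source of the completion, $T_I$ equals the relative tangent complex of the restriction map $\CS(\CZ) \times X^I_{\dr} \to \overset{\circ}{\CS}(\CZ)_I$. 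Applying the Weil-restriction tangent formula (\corref{c:Weil def}) to both $\CS(\CZ)$ and $\overset{\circ}{\CS}(\CZ)_I = \on{Res}_{\gamma_I}\bigl(\CZ \underset{X_{\dr}}{\times} (\overset{\circ}{\mathfrak{X}}_I)_{\dr}\bigr)$, together with the de Rham localization triangle for the open embedding $j_I \colon (\overset{\circ}{\mathfrak{X}}_I)_{\dr} \hookrightarrow X_{\dr} \times X^I_{\dr}$ with closed complement $Z_I := \bigcup_{k \in I} \Gamma_{I,k}$ (the union of the graphs of the projections $p_{I,k}\colon X^I \to X$), one identifies
\[
T_I \simeq (\on{id} \times p_{X^I})^{\IndCoh}_*\circ (i_{Z_I})^{\IndCoh}_* \circ i_{Z_I}^!\bigl(\Theta(\CZ) \boxtimes \omega_{X^I_{\dr}}\bigr),
\]
with $p_{X^I}\colon X \times X^I \to X^I$ and $i_{Z_I}\colon Z_I \hookrightarrow X \times X^I$ the obvious morphisms.

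Next, I would verify the linear factorization property. Fix a partition $I = I_1 \sqcup I_2$ with inclusions $f_a \colon I_a \hookrightarrow I$ and induced diagonals $\Delta_{f_a}\colon X^I \to X^{I_a}$. Over the open sub-prestack $(X^{I_1} \times X^{I_2})_{\on{disj}} \subset X^I$, the graphs $\Gamma_{I,k}$ for $k \in I_1$ and $k' \in I_2$ become disjoint (as the corresponding coordinates are forced to be distinct), so $Z_I$ restricts to the disjoint union of the !-pullbacks of $Z_{I_1}$ and $Z_{I_2}$ along $\Delta_{f_1}$ and $\Delta_{f_2}$. Local cohomology then distributes across this decomposition and yields a canonical identification
\[
T_I \bigr|_{(X^{I_1} \times X^{I_2})_{\on{disj}}} \simeq \bigl(\Delta_{f_1}^! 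T_{I_1} \oplus \Delta_{f_2}^! T_{I_2}\bigr)\bigr|_{(X^{I_1} \times X^{I_2})_{\on{disj}}}
\]
that coincides with the comparison map of \secref{sss:explicit LFS}. Combined with $T_{\emptyset} = 0$ and the evident naturality in maps $f\colon I \to J$, this exhibits $\{T_I\}$ as a linear factorization object with parameters in $\IndCoh(\CS(\CZ))$. Since $\IndCoh(\CS(\CZ))$ is dualizable, tensoring the adjunction $i_! \dashv i^!$ from \thmref{t:image of Dmod in Ran} preserves full faithfulness, and the essential image of $\on{id} \otimes i_!$ is characterized by the same linear factorization condition in the tensor-product category, completing the proof.

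The principal technical hurdle is the identification of $T_I$ via the displayed formula. This requires tracking the $\#$-pullback formalism of Appendix \ref{s:fake} and the Serre--Verdier duality (\corref{c:Serre Verdier}) used in the construction of $\Theta(\CZ)$ through the de Rham localization triangle along $Z_I \hookrightarrow X_{\dr} \times X^I_{\dr}$ and the pseudo-proper pushforward $(p_{X^I})^{\IndCoh}_*$. Conceptually this is a compatibility of Weil-restriction tangent-complex constructions along different bases, but the pro-categorical bookkeeping required is non-trivial. Once this identification is secured, the verification of linear factorization on disjoint loci and the conclusion via \thmref{t:image of Dmod in Ran} are formal.
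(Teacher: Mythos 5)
Your proposal is correct and follows essentially the same route as the paper's own proof. Your local-cohomology formula for $T_I$ matches the paper's \eqref{e:exp relative} (your $Z_I$ is the paper's $\sH_I$, the complement of $\overset{\circ}{\mathfrak{X}}_I$ in $X\times X^I$), and the factorization of $Z_I$ over the disjoint locus is precisely \eqref{e:factor incidence}; the paper then concludes from \thmref{t:image of Dmod in Ran} just as you do. The one thing you make explicit that the paper leaves implicit is the upgrade of \thmref{t:image of Dmod in Ran} to the $\IndCoh(\CS(\CZ))$-parameterized setting via dualizability; this is a useful step to spell out, and your justification for it is sound.
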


Recall the object $\Theta(\CZ)\in  \IndCoh(\CS(\CZ)) \otimes \Dmod(X) $, see \eqref{e:Theta}. 

\begin{prop}\label{p:tangent on diag} \hfill
\begin{enumerate}[label={(\alph*)}]
\item
There is a canonical identification
$$(\on{Id} \otimes i^!)\left(T\left(\CS(\CZ)\times \Ran^{\on{untl}}/\overset{\circ}{\CS}(\CZ)\strut^{\wedge}_{\on{Ran}^{\on{untl}}}\right) \right) 
\simeq \Theta(\CZ)$$

\item
Under the identification $T(\CS(\CZ)) \simeq (\on{Id}\otimes (p_X)_{\dr,*})(\Theta(\CZ))$
of \propref{p:sect tangent}, the map
\begin{multline*} 
(\on{Id} \otimes i^!)\left(T\left(\CS(\CZ)\times \Ran^{\on{untl}}/\overset{\circ}{\CS}(\CZ)\strut^{\wedge}_{\on{Ran}^{\on{untl}}}\right)\right) \to \\
\to (\on{Id} \otimes i^!)\left(T\left(\CS(\CZ)\times \Ran^{\on{untl}}\right)\right)\simeq T(\CS(\CZ)) \boxtimes \omega_X
\end{multline*} 
identifies with the canonical map
$$\Theta(\CZ)\to (\on{Id}\otimes (p_X)^!)\circ (\on{Id}\otimes (p_X)_{\dr,*})(\Theta(\CZ))$$
given by the unit of the $((p_X)_{\dr,*},(p_X)^!)$-adjunction. 
\end{enumerate}
\end{prop}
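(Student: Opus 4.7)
My plan is to reduce both parts to a computation at the singleton level $I=\{\ast\}\in\on{Fin}$, using that $i:X\to\on{Ran}^{\on{untl}}(X)$ factors through the component $X^{\{\ast\}}_\dr$ and the explicit Weil-restriction presentation of $\overset{\circ}{\CS}(\CZ)_{\on{Ran}^{\on{untl}}}$ given in \secref{sss:punctured sects finite powers}. By construction $\overset{\circ}{\CS}(\CZ)\strut^\wedge_{\on{Ran}^{\on{untl}}}$ is the formal completion of $\overset{\circ}{\CS}(\CZ)_{\on{Ran}^{\on{untl}}}$ along the restriction map, and since formal completion along a nil-isomorphism preserves relative tangent complexes, one has
$$T(\fr_{\on{Ran}^{\on{untl}}})\simeq T\bigl(\CS(\CZ)\times \on{Ran}^{\on{untl}}\,/\,\overset{\circ}{\CS}(\CZ)_{\on{Ran}^{\on{untl}}}\bigr).$$
At the singleton level, the restriction map $\CS(\CZ)\times X_\dr\to \overset{\circ}{\CS}(\CZ)_{\{\ast\}}$ (as prestacks over $X_\dr$) is induced by the open inclusion $j_{\{\ast\}}:(\overset{\circ}{\mathfrak X}_{\{\ast\}})_\dr\hookrightarrow (X\times X)_\dr$, whose closed complement is the diagonal $\Delta_X$.

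For part (a), I apply the tangent formula for Weil restrictions (Corollary~\ref{c:Weil def}, in the form used in Proposition~\ref{p:sect tangent}) at a section $s:S\to\CS(\CZ)\times X_\dr$ with evaluation map $\on{ev}_s$ into $\CZ$: the relevant tangent complexes at $s$ are the pushforwards of $\on{ev}_s^!\Theta(\CZ)$ along, respectively, the second projection $p_2:(X\times X)_\dr\to X_\dr$ (for $\CS(\CZ)\times X_\dr$) and along $\gamma_{\{\ast\}}=p_2\circ j_{\{\ast\}}$ (for $\overset{\circ}{\CS}(\CZ)_{\{\ast\}}$), and the relative tangent is the fiber of the induced map. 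Applying the open-closed recollement triangle
$$(\Delta_X)^\IndCoh_*(\Delta_X)^!\longrightarrow \on{id}\longrightarrow (j_{\{\ast\}})^\IndCoh_*(j_{\{\ast\}})^!$$
on $\IndCoh((X\times X)_\dr)$ to $\on{ev}_s^!\Theta(\CZ)$ and then pushing forward along $p_2$, this fiber identifies with $(\Delta_X)^!\on{ev}_s^!\Theta(\CZ)$. Since $\on{ev}_s\circ\Delta_X$ recovers the original evaluation $S\times X_\dr\to\CZ$, this is precisely $\Theta(\CZ)|_s$; varying $s$ and $x\in X$ yields (a). Part (b) then follows by inspection of the same triangle: the natural map to $(i^!)\bigl(T(\CS(\CZ)\times \on{Ran}^{\on{untl}})\bigr)\simeq T(\CS(\CZ))\boxtimes\omega_X$ is the inclusion of the fiber into $(p_2)_*\on{ev}_s^!\Theta(\CZ)$, which under the identification $T(\CS(\CZ))\simeq (p_X)_{\dr,*}\Theta(\CZ)$ from Proposition~\ref{p:sect tangent} and base change along the Cartesian square defining $\Delta_X$ becomes exactly the unit of the $((p_X)_{\dr,*},(p_X)^!)$-adjunction applied to $\Theta(\CZ)$.

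The main obstacle will be keeping the Weil-restriction tangent formula compatible with the non-representable antecedent $\Theta(\CZ)$ (which is neither a tangent nor a cotangent of $\CZ$ unless $\CZ$ itself is laft), and verifying that the open-closed recollement interacts correctly with the Weil-restriction pushforwards. This ultimately comes down to pseudo-properness of $\gamma_{\{\ast\}}$ and its analogues at higher levels of $\on{Fin}$, which is precisely what sectional laft-ness provides.
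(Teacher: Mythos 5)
Your proposal is correct and follows essentially the same path as the paper's proof: restrict along $X\to\Ran^{\on{untl}}(X)$, use the Weil-restriction formula for tangent complexes to express all three tangent complexes as pushforwards (along $p_2$, $\overset{\circ}p_2$, and via the closed complement, respectively) of pullbacks of $\Theta(\CZ)$, then invoke the open-closed recollement triangle for $\Delta_X\hookrightarrow X\times X_\dr\hookleftarrow \overset{\circ}{\mathfrak X}_{\{\ast\}}$ and simplify using $p_i\circ\Delta_X=\on{id}$. One small imprecision worth fixing: $\Theta(\CZ)$ lives on $\CS(\CZ)\times X_\dr$, not on $\CZ$, so the notation ``$\on{ev}_s^!\Theta(\CZ)$'' should be replaced by a pullback along $(s\times\on{id}_X)\times p_1$ as in the paper's expression $(\on{Id}\otimes p_1^!)(\Theta(\CZ))$ --- you do flag this as the delicate point, and it is exactly the role that the ``fake'' pro-category and Serre-Verdier duality machinery plays. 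Also, the base change used in (b) to pass from $(p_2)_{\dr,*}\circ(p_1)^!$ to $(p_X)^!\circ(p_X)_{\dr,*}$ is for the Cartesian square of $X\times X_\dr$ over $\on{pt}$, not the one defining $\Delta_X$; the diagonal enters only through the fiber term in the recollement.
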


\sssec{Proof of \thmref{t:tangent vanishing}}

We will deduce \thmref{t:tangent vanishing} from a combination of Propositions \ref{p:tangent pushforward to Ran} and \ref{p:tangent on diag}.

\medskip

By definition, we need to show that the map
\begin{multline*} 
(\on{Id} \otimes \Gamma_{c,\Ran^{\on{untl}}})\left(T\left(\CS(\CZ)\times \Ran^{\on{untl}}/\overset{\circ}{\CS}(\CZ)\strut^{\wedge}_{\on{Ran}^{\on{untl}}}\right)\right)\to \\
\to (\on{Id} \otimes \Gamma_{c,\Ran^{\on{untl}}})\left(T\left(\CS(\CZ)\times \Ran^{\on{untl}}\right)\right)
\end{multline*} 
is an isomorphism. 

\medskip

By \propref{p:tangent pushforward to Ran}, the map
\begin{multline*}
(\on{Id} \otimes i)_!\circ (\on{Id} \otimes i^!)
\left(T\left(\CS(\CZ)\times \Ran^{\on{untl}}/\overset{\circ}{\CS}(\CZ)\strut^{\wedge}_{\on{Ran}^{\on{untl}}}\right)\right)\to \\
\to T\left(\CS(\CZ)\times \Ran^{\on{untl}}/\overset{\circ}{\CS}(\CZ)\strut^{\wedge}_{\on{Ran}^{\on{untl}}}\right)
\end{multline*} 
is an isomorphism. Hence, it is enough to show that the composite map
\begin{multline*}
(\on{Id} \otimes \Gamma_{c,X})\circ 
(\on{Id} \otimes i^!) \left(T\left(\CS(\CZ)\times \Ran^{\on{untl}}/\overset{\circ}{\CS}(\CZ)\strut^{\wedge}_{\on{Ran}^{\on{untl}}}\right)\right)\to \\
\to (\on{Id} \otimes \Gamma_{c,X})\circ (\on{Id} \otimes i^!) \left(T\left(\CS(\CZ)\times \Ran^{\on{untl}}\right)\right) \to \\
\to (\on{Id} \otimes \Gamma_{c,\Ran^{\on{untl}}})\left(T\left(\CS(\CZ)\times \Ran^{\on{untl}}\right)\right)
\end{multline*}
is an isomorphism. (Note that $\Gamma_{c,X}\simeq (p_X)_{\dr,*}$.) 

\medskip

By \propref{p:tangent on diag}, the latter map identifies with the composition
\begin{multline*}
(\on{Id} \otimes \Gamma_{c,X})(\Theta(\CZ))\to (\on{Id} \otimes \Gamma_{c,X})\circ 
(\on{Id}\otimes (p_X)^!)\circ (\on{Id}\otimes (p_X)_{\dr,*})(\Theta(\CZ))\to \\
\to (\on{Id} \otimes \Gamma_{c,\Ran^{\on{untl}}})\circ 
(\on{Id}\otimes (p_{\Ran^{\on{untl}}})^!\circ (\on{Id}\otimes (p_X)_{\dr,*})(\Theta(\CZ)).
\end{multline*}

Now, the contractibility of $\Ran^{\on{untl}}$ (\corref{c:Gamma on Ran untl mon}) implies that the counit of
the $(\Gamma_{c,\Ran^{\on{untl}}},(p_{\Ran^{\on{untl}}})^!)$-adjunction is an isomorphism. Hence, it is enough 
to show that the composition of the last map with
$$(\on{Id} \otimes \Gamma_{c,\Ran^{\on{untl}}})\circ 
(\on{Id}\otimes (p_{\Ran^{\on{untl}}})^!)\circ (\on{Id}\otimes (p_X)_{\dr,*})(\Theta(\CZ))\to (\on{Id}\otimes (p_X)_{\dr,*})(\Theta(\CZ))$$
is an isomorphism. 

\medskip

Note the latter composition identifies with
\begin{multline*}
(\on{Id} \otimes (p_X)_{\dr,*})(\Theta(\CZ)) \to 
(\on{Id} \otimes (p_X)_{\dr,*})\circ (\on{Id}\otimes (p_X)^!)\circ (\on{Id}\otimes (p_X)_{\dr,*})(\Theta(\CZ))\to \\
\to (\on{Id}\otimes (p_X)_{\dr,*})(\Theta(\CZ)),
\end{multline*}
where the last arrow is the counit of the $((p_X)_{\dr,*},(p_X)^!)$-adjunction.

\medskip

However, the last composition is the identity map, by the adjunction axioms.

\qed[\thmref{t:tangent vanishing}]

\ssec{Tangent spaces over the Ran space} \label{ss:proof of tangent}

\sssec{}

In order to prove Propositions \ref{p:tangent pushforward to Ran} and \ref{p:tangent on diag}, we will describe explicitly
the objects
\begin{equation} \label{e:tangents to descr}
(\fr_{\on{Ran}^{\on{untl}} })^!(T(\overset{\circ}{\CS}(\CZ)\strut^{\wedge}_{\on{Ran}^{\on{untl}}}))  \text{ and }
T\left(\CS(\CZ)\times \Ran^{\on{untl}}/\overset{\circ}{\CS}(\CZ)\strut^{\wedge}_{\on{Ran}^{\on{untl}}}\right)
\end{equation} 
of $\IndCoh(\CS(\CZ))\otimes \Dmod(X)$, in the spirit of the description of $T(\CS(\CZ))$, given by \propref{p:sect tangent},
namely:
\begin{equation} \label{e:sect tangent}
T(\CS(\CZ)) \simeq (\on{Id}\otimes (p_X)_{\dr,*})(\Theta(\CZ)).
\end{equation} 

\sssec{}

Specifically, we will describe the restrictions of the objects \eqref{e:tangents to descr} along
$$\Delta_I:X^I\to \Ran\to \Ran^{\on{untl}}$$
for a finite set $I$. 

\medskip

Recall the relative scheme $\overset{\circ}{\mathfrak{X}}_I$ over $X^I_\dr$, see \secref{ss:punctured fibration}. Let $j$ denote the open
embedding
$$\overset{\circ}{\mathfrak{X}}_I\hookrightarrow X\times X^I_\dr.$$

Let $p_1$, and $p_2$ denote the projections from $X\times X^I_\dr$ to $X$ and $X^I_\dr$, respectively, and denote
$$\overset{\circ}p_1:=p_1\circ j \text{ and } \overset{\circ}p_2:=p_2\circ j.$$

\medskip


\sssec{}

With these notations, as in \propref{p:sect tangent}, we obtain that the pullback 
$$(\on{Id}\otimes \Delta_I)^!\circ (\fr_{\on{Ran}^{\on{untl}}})^!(T(\overset{\circ}{\CS}(\CZ)\strut^{\wedge}_{\on{Ran}^{\on{untl}}}))\in
\IndCoh(\CS(\CZ)) \otimes \Dmod(X^I)$$
identifies with
$$(\on{Id}\otimes (\overset{\circ}p_2)_{\dr,*})\circ (\on{Id}\times (\overset{\circ}p_1)^!)(\Theta(\CZ)).$$

\medskip

Under this identification and that of \eqref{e:sect tangent}, the map
\begin{multline*}
T(\CS(\CZ))\boxtimes  \omega_{X^I}\simeq 
(\on{Id}\otimes \Delta_I)^!(T(\CS(\CZ))\boxtimes \omega_{\Ran^{\on{untl}}}) \to \\
\to (\on{Id}\otimes \Delta_I)^!\circ (\fr_{\on{Ran}^{\on{untl}}})^!(T(\overset{\circ}{\CS}(\CZ)\strut^{\wedge}_{\on{Ran}^{\on{untl}}}))
\end{multline*}
is the map
\begin{multline*}
(\on{Id}\otimes (p_2)_{\dr,*})\circ (\on{Id}\otimes (p_1)^!)(\Theta(\CZ)) \to \\
\to (\on{Id}\otimes (p_2)_{\dr,*})\circ (\on{Id}\otimes j_{\dr,*})\circ (\on{Id}\otimes j^!)\circ 
(\on{Id}\otimes (p_1)^!)(\Theta(\CZ)) \simeq \\
\simeq (\on{Id}\otimes (\overset{\circ}p_2)_{\dr,*})\circ (\on{Id}\otimes (\overset{\circ}p_1)^!)(\Theta(\CZ)).
\end{multline*}

\sssec{Proof of \propref{p:tangent pushforward to Ran}}

Let $\sH_I$ denote the complement of $\overset{\circ}{\mathfrak{X}}_I$ in $X\times X^I$, and let $\sh_I$ denote its embedding.

\medskip

From the above we obtain that 
$$(\on{Id}\otimes \Delta_I)^!\left(T\left(\CS(\CZ)\times \Ran^{\on{untl}}/\overset{\circ}{\CS}(\CZ)\strut^{\wedge}_{\on{Ran}^{\on{untl}}}\right)\right)$$
identifies with
\begin{equation} \label{e:exp relative}
(\on{Id}\otimes (p_2)_{\dr,*})\circ (\on{Id}\otimes (\sh_I)_{\dr,*}) \circ (\on{Id}\otimes (\sh_I)^! \circ   (\on{Id}\otimes (p_1)^!)(\Theta(\CZ)).
\end{equation} 

Note that for $I=I_1\sqcup I_2$ and the corresponding open subset
$$(X^{I_1}\times X^{I_2})_{\on{disj}}\subset X^{I_1}\times X^{I_2}\simeq X^I,$$
we have a natural isomorphism
\begin{equation} \label{e:factor incidence}
(\sH^{I_1}\times \sH^{I_2})\underset{X^{I_1}\times X^{I_2}}\times (X^{I_1}\times X^{I_2})_{\on{disj}}
\simeq \sH \underset{X^{I_1}\times X^{I_2}}\times (X^{I_1}\times X^{I_2})_{\on{disj}}.
\end{equation} 

This readily implies the assertion of \propref{p:tangent pushforward to Ran} in view of the characterization of the
essential image of $i_!$ given by \thmref{t:image of Dmod in Ran}. 

\qed[\propref{p:tangent pushforward to Ran}]

\sssec{Proof of \propref{p:tangent on diag}}

For the proof of \propref{p:tangent on diag}, we take $I$ to be a singleton. In this case, the pair
$(\sH_I,\sh_I)$ identifies with $(X,\Delta_X)$. 

\medskip

Hence, in this case, the expression in \eqref{e:exp relative} identifies with
$$(\on{Id}\otimes (p_2)_{\dr,*})\circ (\on{Id}\otimes (\Delta_X)_{\dr,*}) \circ (\on{Id}\otimes (\Delta_X)^! \circ   (\on{Id}\otimes (p_1)^!)(\Theta(\CZ))\simeq
\Theta(\CZ),$$
which gives the assertion of \propref{p:tangent on diag}(a).

\medskip

The assertion of \propref{p:tangent on diag}(b) amounts to the following. Let $\CM$ be an object of $\Dmod(X)$.  Then the unit of the adjunction
$$\CM\to (p_X)^!\circ (p_X)_{\dr,*}(\CM)$$
identifies with the map 
$$\CM\simeq (p_2)_{\dr,*}\circ (\Delta_X)_{\dr,*}\circ (\Delta_X)^!\circ (p_1)^!(\CM) \to
(p_2)_{\dr,*}\circ (p_1)^!(\CM) \simeq (p_X)^!\circ (p_X)_{\dr,*}(\CM).$$

\section{First applications and examples} \label{s:examples}

\ssec{The case of a laft target}

In this subsection, we will show that under certain conditions, if $\CZ$ is a laft-def prestack
over $X_\dr$, then it is also sectionally laft. 

\sssec{}

We now return to the situation in \secref{sss:Theta on S} when $\CZ$ itself is laft; in particular it admits a tangent complex
$$T(\CZ)\in \IndCoh(\CZ).$$

Suppose we have an affine scheme $S$ (almost of finite type) and a map
$s:S\to \CS(\CZ)$. Recall that $\on{ev}_s$ denotes the resulting map
$$S\times X_\dr\to \CZ.$$

\medskip

On the one hand we can consider the object
$$\on{ev}_s^!(T(\CZ))\in \IndCoh(S\times X_\dr),$$
and on the other hand, we have the object 
$$(s\times \on{id})^!(\Theta(\CZ))\in \IndCoh(S\times X_\dr),$$
provided that 
$$ \oblv^{\on{fake}}\circ \on{ev}_s^{\#}(T^*(\CZ)) \in \on{Pro}(\QCoh(S \times X_{\on{dR}})^-)_{\on{laft}}.$$

We claim:

\begin{prop} \label{p:target ft}
The object $(s\times \on{id})^!(\Theta(\CZ))$ exists and is related to $\on{ev}_s^!(T(\CZ))$ by the formula
$$(s\times \on{id})^!(\Theta(\CZ)) \simeq \on{ev}_s^!(T(\CZ))\sotimes \BD^{\on{Verdier}}(\omega_X).$$
\end{prop}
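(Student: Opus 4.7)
The plan is to combine two pieces of structure: (i) the compatibility of Serre--Verdier duality with !-pullback, as recorded in the commutative diagram in \secref{sss:Theta on S}; and (ii) the fact that, for a laft prestack $\CZ$, the cotangent complex $T^*(\CZ)$ is the Serre--Verdier predual of the tangent complex $T(\CZ)$.

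First, I would verify existence. Since $\CZ$ is laft, the object $T^*(\CZ) \in \on{Pro}(\QCoh(\CZ)^-)^{\on{fake}}$ is such that $\oblv^{\on{fake}}(T^*(\CZ))$ lies in the laft pro-category on $\CZ$, with Serre--Verdier dual canonically isomorphic to $T(\CZ)$. This is a feature of laft prestacks established in Appendix \ref{s:finiteness} and is what underlies \corref{c:laft => sect laft}. Applying $\on{ev}_s^{\#}$ and $\oblv^{\on{fake}}$ and using that Serre--Verdier laft-ness is preserved under pullback by $\on{ev}_s$, we obtain that $\oblv^{\on{fake}}(\on{ev}_s^{\#}(T^*(\CZ))) \in \on{Pro}(\QCoh(S\times X_\dr)^-)_{\on{laft}}$, so that $(s\times\on{id})^!(\Theta(\CZ))$ is defined.

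Next, I would carry out the main identification. The compatibility square in \secref{sss:Theta on S} says that $\#$-pullback between aft bases corresponds, under Serre--Verdier duality, to !-pullback on the $\IndCoh$ side. The same compatibility applies along the map $\on{ev}_s: S\times X_\dr \to \CZ$, \emph{provided} one accounts for the difference between the dualizing data used to normalize Serre--Verdier duality on $\CZ$ (a laft prestack over $X_\dr$) and on the product $S\times X_\dr$. The dualizing sheaf on $S\times X_\dr$ is $\omega_S \boxtimes \omega_{X_\dr}$, whereas the dualizing sheaf used to dualize $T^*(\CZ)$ into $T(\CZ)$ on $\CZ$ does not see the $X_\dr$-direction in the same way; the discrepancy along $X_\dr$ is precisely a factor of $\BD^{\on{Verdier}}(\omega_X)$. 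Concretely, unwinding the two Serre--Verdier normalizations, one gets
$$ (s\times \on{id})^!(\Theta(\CZ)) \simeq \on{ev}_s^!\bigl(\BD^{\on{SV}}_{\CZ}(T^*(\CZ))\bigr) \sotimes \BD^{\on{Verdier}}(\omega_X). $$
Combined with the identification $\BD^{\on{SV}}_{\CZ}(T^*(\CZ))\simeq T(\CZ)$ (the defining property of the tangent complex of a laft prestack), this gives the claimed formula.

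The main obstacle is the bookkeeping for the twist factor: verifying that the mismatch between Serre--Verdier duality on $\CZ$ and on $S\times X_\dr$ is exactly $\BD^{\on{Verdier}}(\omega_X)$, rather than a shift thereof or its inverse. This reduces to the normalization of Serre--Verdier duality in Appendix \ref{s:finiteness}, and in particular to the fact that on the $X_\dr$-factor the duality is the standard Verdier duality for $\D$-modules on $X$, which sends $\omega_X$ to $\BD^{\on{Verdier}}(\omega_X)$; once this is pinned down, the rest of the argument is formal manipulation of !-pullbacks and $\sotimes$.
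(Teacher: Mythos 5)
Your high-level decomposition matches the paper's: start from the identity $T^*(\CZ) \simeq \BD^{\on{Serre}}(T(\CZ))$ for laft $\CZ$, use the compatibility of $\BD^{\on{Serre}}$ with $!$-pullback along $\on{ev}_s$ to arrive at $\BD^{\on{Serre}}(\on{ev}_s^!(T(\CZ)))$, and then relate the two dualities on $S\times X_\dr$. This is exactly the reduction the paper makes when it isolates the abstract statement \propref{p:target ft abs}, namely that for $\CF\in \IndCoh(S\times X_\dr)$,
$$\oblv^{\on{fake}}\circ \BD^{\on{Serre}}(\CF)\simeq \BD^{\on{SV}}(\CF\sotimes \BD^{\on{Verdier}}(\omega_X)),$$
and your intermediate formula, once the non-standard notation $\BD^{\on{SV}}_{\CZ}$ is read as $\BD^{\on{Serre}}$, is this very chain.

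There is, however, a genuine gap. You dismiss the step relating $\oblv^{\on{fake}}\circ \BD^{\on{Serre}}$ to $\BD^{\on{SV}}$ as ``bookkeeping'' and ``formal manipulation of $!$-pullbacks and $\sotimes$''; in fact this is the entire content of the proposition. Note that $\BD^{\on{Serre}}$ is defined on any laft prestack (and has codomain $\on{Pro}(\QCoh(-)^-)^{\on{fake}}$), whereas $\BD^{\on{SV}}$ is a specific equivalence $\IndCoh(S\times X_\dr)^{\on{op}} \simeq \on{Pro}(\QCoh(S\times X_\dr)^-)_{\on{laft}}$ produced only after \thmref{t:finiteness product}; there is no ``Serre--Verdier duality on $\CZ$'' to compare to. Relating the two requires: (i) reducing to bounded, then coherent objects and further to external tensor products via \thmref{t:finiteness product}; (ii) separating the $S$ and $X_\dr$ factors, where the normalizations diverge, since one uses $\Psi$ on the $S$-factor but $\Upsilon^{-1}_{X_\dr}$ on the $X_\dr$-factor (cf.~\propref{p:Psi Ups}); (iii) a cofinality argument passing to $({}^{<\infty}\!\affSch_{\on{aft}})_{/X_\dr}$ to control cohomological amplitude; and (iv) establishing \lemref{l:Xi Upsilon} to move the two $\Psi$/$\Upsilon$ normalizations past $(\on{id}\times f)^*$. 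None of this is formal; it is where the factor $\BD^{\on{Verdier}}(\omega_X)$ is actually pinned down. Your proposal correctly names the twist factor but does not supply an argument that would produce it.
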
 

The proposition will be proved in \secref{ss:target ft}. 

\begin{rem}

Note that the object $\BD^{\on{Verdier}}(\omega_X)\in \Dmod(X)$ appearing in the above proposition
is the ``constant sheaf" on $X$. In particular, for $X$ smooth of dimension $d$, we have
$\BD^{\on{Verdier}}(\omega_X) \simeq \omega_X[-2d]$.

\end{rem} 

As an immediate corollary of \propref{p:target ft}, we obtain:

\begin{cor} \label{c:laft => sect laft}
Let $\CZ$ be a laft prestack over $X_\dr$, such that the \emph{classical prestack} underlying
$\CS(\CZ)$ is locally of finite type. Then $\CZ$ is sectionally laft.
\end{cor}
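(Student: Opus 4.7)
My plan is to verify the two conditions in the definition of sectionally laft (\secref{sss:defn sect laft}) directly, using the preceding \propref{p:target ft} as the main input.

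First, I will invoke the Remark following \propref{p:sect tangent}, which says that in the presence of condition (ii), condition (i) (i.e., that $\CS(\CZ)$ is laft) can be replaced by the weaker condition (i'): that the underlying classical prestack of $\CS(\CZ)$ is locally of finite type. This condition (i') is supplied directly by the hypothesis of the corollary, so nothing needs to be done there.

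The substantive point is therefore condition (ii): for every $s : S \to \CS(\CZ)$ with $S \in \affSch_{\on{aft}}$, we must check that
\[
\oblv^{\on{fake}} \circ \on{ev}_s^{\#}(T^*(\CZ)) \in \on{Pro}(\QCoh(S \times X_\dr)^-)_{\on{laft}}.
\]
This is precisely the statement that the Serre-Verdier dual of this object exists as a well-defined element of $\IndCoh(S \times X_\dr)$, i.e., that $(s \times \on{id})^!(\Theta(\CZ))$ makes sense. But \propref{p:target ft} asserts exactly this: since $\CZ$ is laft, its tangent complex $T(\CZ) \in \IndCoh(\CZ)$ is a genuine object, and the formula
\[
(s \times \on{id})^!(\Theta(\CZ)) \simeq \on{ev}_s^!(T(\CZ)) \sotimes \BD^{\on{Verdier}}(\omega_X)
\]
exhibits it as an honest object of $\IndCoh(S \times X_\dr)$. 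Undoing the Serre-Verdier equivalence $\on{Pro}(\QCoh(S \times X_\dr)^-)_{\on{laft}} \simeq \IndCoh(S \times X_\dr)^{\on{op}}$ of \corref{c:Serre Verdier}, this is equivalent to saying that $\oblv^{\on{fake}} \circ \on{ev}_s^{\#}(T^*(\CZ))$ lies in the laft subcategory, which is condition (ii).

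This is essentially immediate once \propref{p:target ft} is granted, so the main obstacle really lies upstream in proving that proposition itself, which requires unraveling the interaction between the $\#$-pullback on $\on{Pro}(\QCoh(-)^-)^{\on{fake}}$ and ordinary $!$-pullback on $\IndCoh$ for laft prestacks, together with the compatibility of Serre-Verdier duality with the evaluation functor $\oblv^{\on{fake}}$. For the corollary itself, however, the argument is a two-line citation: (i') is hypothesis and (ii) is \propref{p:target ft}. \qed
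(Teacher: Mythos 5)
Your proof is correct and matches the paper's own treatment: the paper simply states the corollary as an immediate consequence of \propref{p:target ft}, and your write-up spells out the same two-step logic (condition (i') is the hypothesis via the Remark after \propref{p:sect tangent}, and condition (ii) is exactly what \propref{p:target ft} provides).
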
 

\sssec{}

We will now consider a particular class of examples when the condition of \corref{c:laft => sect laft}
is satisfied:

\begin{prop} \label{p:sect laft ex}
Assume that $\CZ$ is of the form $\CZ'/\sG$, where $\CZ'\to X_\dr$ is laft schematic, and $\sG$ is an
algebraic group. Then $\CZ$ is sectionally laft.
\end{prop}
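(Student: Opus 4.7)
The plan is to apply \corref{c:laft => sect laft}, which reduces the problem to two assertions about $\CZ = \CZ'/\sG$: that $\CZ$ is itself laft over $X_\dr$, and that the classical prestack underlying $\CS(\CZ)$ is locally of finite type. The first assertion is immediate from the hypotheses: $\CZ'\to X_\dr$ is laft schematic and $\sG$ is a finite-type algebraic group, so the quotient stack $\CZ'/\sG$ is laft over $X_\dr$.

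The substantive content is the second assertion. The strategy is to factor through $\LocSys_\sG(X)$ via the natural projection $\CZ'/\sG \to (\on{pt}/\sG) \times X_\dr$ induced by $\CZ'\to X_\dr$. Taking spaces of sections yields a morphism
$$ \pi \colon \CS(\CZ'/\sG) \to \CS((\on{pt}/\sG)\times X_\dr) = \LocSys_\sG(X), $$
whose target is a laft derived Artin stack, so it is enough to show that $\pi$ is schematic and locally of finite type at the classical level. Given $S \in \affSch_{\on{aft}}$ with a map to $\LocSys_\sG(X)$ classifying a $\sG$-local system $\CP$ on $S \times X_\dr$, the base change $\CS(\CZ'/\sG)\underset{\LocSys_\sG(X)}{\times} S$ identifies canonically with the Weil restriction
$$ \Res^{S\times X_\dr}_{S}\bigl(\CZ'\times^{\sG}\CP\bigr), $$
where the twisted form $\CZ'\times^{\sG}\CP \to S\times X_\dr$ is laft schematic because $\CZ'\to X_\dr$ is.

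Everything is thus reduced to showing that the classical prestack underlying a Weil restriction of a laft schematic prestack along $S\times X_\dr \to S$ is locally of finite type. This is the main obstacle. I would handle it by standard techniques: working Zariski-locally on the schematic family, one reduces to the case of closed subschemes of a model of the form $\mathbb{V}(\CE)\times_X X_\dr$ for a vector bundle $\CE$ on $X$, whose spaces of flat sections are controlled by the de~Rham complex of $\CE$ on $X$ and are classically locally of finite type (via finiteness of coherent cohomology when $X$ is proper). The original $\CS(\CZ'/\sG)$ is then cut out inside this by the defining equations of the embedding, establishing the required classical local finite-type property, and the proposition follows from \corref{c:laft => sect laft}.
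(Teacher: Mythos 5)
Your reduction to \corref{c:laft => sect laft}, the factorization through $\LocSys_\sG(X)$, and the identification of the base change $S\underset{\LocSys_\sG(X)}\times \CS(\CZ)$ with the Weil restriction along $S\times X_\dr\to S$ of the twist of $\CZ'$ by the local system classified by $S$ all match the paper's argument exactly. The gap is in the final paragraph, which is precisely the substantive step.

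The reduction you sketch there does not go through. A laft schematic prestack $\CW\to S\times X_\dr$ amounts to a finite-type scheme over $S\times X$ equipped with a flat connection along $X$; even Zariski-locally on $X$, there is no reason for that connection to be induced from the trivial (constant) one on an ambient total space of a vector bundle, so the closed embedding of $\CW$ into a linear D-scheme over $X_\dr$ that your argument relies on is simply not available. Moreover, working Zariski-locally on the family $\CW$ itself is not a legitimate reduction for spaces of sections: a section $S\times X_\dr\to\CW$ need not factor through any given Zariski open of $\CW$, so local models on $\CW$ say nothing about the section space (Zariski locality on $X$ is harmless, but locality on the total space of the family is not). Finally, the appeal to the de Rham complex controls flat sections only in the linear case; "cutting out by the defining equations of the embedding" presupposes the linear ambient D-scheme, which is exactly the unestablished step.

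The paper avoids any such embedding. Since $\CW$ is schematic over $S\times X_\dr$, one writes $\on{Res}^{X_\dr\times S}_S(\CW)$ as the totalization over the infinitesimal groupoid $\hat{X}^\bullet$ of $X$; at the classical level this collapses to an equalizer of the two restrictions $\on{Res}^{X\times S}_S(\CW_1)\rightrightarrows\on{Res}^{\hat{X}^2\times S}_S(\CW_2)$. One then filters $\hat{X}^2$ by the finite-order classical neighborhoods $X_n$ of the diagonal: at each finite stage the relevant Weil restriction is a mapping prestack from a finite-type scheme into a separated finite-type target, so by separatedness the corresponding equalizers are closed immersions into $\on{Res}^{X\times S}_S(\CW_1)$, which is laft by \lemref{l:maps scheme to laft}. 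Passing to the inverse limit over $n$ yields the lft-ness. The infinitesimal-groupoid filtration is the mechanism your proposal is missing; it replaces the (nonexistent) linear ambient model with an inverse system of mapping prestacks from honest finite-type thickenings.
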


Note that a particular example of $\CZ$ as in \secref{p:sect laft ex} is $\CZ=X_\dr\times \on{pt}/\sG$.
The resulting prestack $\CS(\CZ)$ is then $\LocSys_\sG(X)$, the stack of de Rham $\sG$-local systems 
on $X$, studied in more detail in \secref{ss:LocSys}. 

\medskip

Before proving \propref{p:sect laft ex}, we establish the following fact:

\begin{lem}\label{l:maps scheme to laft}
Suppose that $X$ is eventually coconnective.  Let $\CZ$ be a laft prestack satisfying Zariski descent.  Then the
mapping prestack
$$ \underline{\on{Maps}}(X, \CZ) $$
is laft.
\end{lem}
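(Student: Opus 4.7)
The plan is to verify the two defining conditions for laft-ness of $\underline{\on{Maps}}(X,\CZ)$: convergence, and the property that each cohomological truncation is locally of finite type. The hypothesis that $X$ is eventually coconnective (say bounded above in cohomological degree by some $c\geq 0$) will be used in both steps.

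For convergence, I would show that for any $S\in\affSch$, the natural map
$$\on{Maps}(S\times X,\CZ) \to \lim_n \on{Maps}(\tau^{\leq n}S\times X,\CZ)$$
is an equivalence. The key observation is that, with $X$ being $c$-coconnective, for any $m\geq 0$ and any affine scheme $T$ one has $\tau^{\leq m}(T\times X)\simeq \tau^{\leq m}(\tau^{\leq m}T\times X)$, since the tensor product of connective ring spectra preserves $m$-equivalences. Combining this with the convergence of $\CZ$ (automatic from the laft hypothesis) and exchanging the order of limits in the resulting double limit yields the identification: for each fixed $m$, the functor $n\mapsto \on{Maps}(\tau^{\leq m}(\tau^{\leq n}S\times X),\CZ)$ stabilizes to $\on{Maps}(\tau^{\leq m}(S\times X),\CZ)$ once $n\geq m$.

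Next I would address the local-finite-type condition on truncations by reducing to $X$ affine via Zariski descent on $X$. Since $X$ is quasi-compact and separated, it admits a finite affine open cover $X=\bigcup_{\alpha=1}^r U_\alpha$ with all non-empty intersections $U_I = \bigcap_{\alpha\in I}U_\alpha$ affine. Zariski descent for $\CZ$ writes $\underline{\on{Maps}}(X,\CZ)(S)$ as a finite limit, indexed by the poset of non-empty subsets $I\subset\{1,\ldots,r\}$, of the terms $\CZ(S\times U_I)$. Finite limits of spaces commute with filtered colimits in $\on{Spc}$, so the question reduces to verifying local finite type for each $\underline{\on{Maps}}(U_I,\CZ)$, i.e., to the case where $X=\Spec A$ is affine, with $A$ bounded above and of finite type.

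In the affine case, given a filtered system $\{S_i\}$ in $\affSch^{\leq n}$ with colimit $S_\infty$, the product $S_\infty\times X$ is the filtered colimit of the $S_i\times X$ (since product with a fixed affine preserves filtered colimits in $\affSch$), and each $S_i\times X$ lies in $\affSch^{\leq n+c}$. The laft hypothesis on $\CZ$ then gives $\CZ(S_\infty\times X)\simeq \on{colim}_i \CZ(S_i\times X)$, which is exactly what is needed. The main subtlety throughout is the essential use of eventual coconnectivity of $X$, which both controls truncations of the product $S\times X$ and keeps $S_i\times X$ in a bounded range of coconnectivity so that laft-ness of $\CZ$ can be invoked; separatedness is needed to ensure that intersections of the affine cover remain affine.
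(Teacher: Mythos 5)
Your argument is essentially the paper's: both proofs reduce to $X$ affine via Zariski descent (finite limits of laft prestacks are laft), then exploit that for $X \in {}^{\leq m}\affSch$ one has $S\times X \in {}^{\leq m+n}\affSch$ whenever $S \in {}^{\leq n}\affSch$, which handles both convergence and the filtered-colimit condition. Where the paper cites its \lemref{l:convergence} for the convergence step, you inline a direct argument, but it is the same calculation — the stabilization of $\tau^{\geq -m}$ of $A_n\otimes B$ for $B$ bounded.

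One presentational remark: you run the convergence argument \emph{before} reducing to affine $X$, and in doing so you apply convergence of $\CZ$ to the non-affine scheme $S\times X$ and its truncations. The convergence condition on $\CZ$ is by definition stated for affine test schemes; to extend it to qcqs schemes you need the same Zariski descent plus finite-limit-commuting-with-$\lim_m$ argument that you later invoke anyway. It would be cleaner (and is what the paper does) to perform the Zariski reduction once at the start, after which everything is affine and convergence is just the statement of \lemref{l:convergence} applied to the tower $\tau^{\geq -n}(A)\otimes B$.
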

\begin{proof}
Since finite limits of laft prestacks are laft, by Zariski descent, we can assume that $X$ is affine, i.e.
$X \in {}^{\leq m}\affSch$, for some $m \in \mathbb{N}$.

\medskip
By \lemref{l:convergence}, $\on{\underline{Maps}}(X, \CZ)$ is convergent.
Now, suppose that $S_{\alpha}$ is a filtered diagram in ${}^{\leq n}\affSch$ and $S=\on{colim} S_{\alpha}$.  Then $X \times S_\alpha$
is a filtered diagram in ${}^{\leq m+n}\affSch$, and therefore, the map
$$ \on{colim} \underline{\on{Maps}}(X,\CZ)(S_\alpha) \simeq \on{colim} \on{Maps}(X\times S_\alpha, \CZ) \to \on{Maps}(X \times S, \CZ) \simeq \underline{\on{Maps}}(X,\CZ)(S) $$
is an isomorphism, since $\CZ$ is laft.
\end{proof}

\begin{proof}[Proof of \propref{p:sect laft ex}]
By \corref{c:laft => sect laft} we only need to show that the underlying classical prestack of $\CS(\CZ)$ is lft.  We first consider the case when $\CZ=X_\dr\times \on{pt}/\sG$. In this case 
$$\CS(\CZ)=\LocSys_\sG(X),$$
and the assertion that $\LocSys_\sG(X)$ is laft is \cite[Proposition 10.3.8(b)]{AG}. 

\medskip

Thus, it suffices to show that for any affine classical scheme $S$ of finite type and a map
$\sigma: S \to \LocSys_{\sG}(X)$, the fiber product
\begin{equation} \label{e:sect over LocSys}
S\underset{\LocSys_\sG(X)}\times \CS(\CZ)
\end{equation}
is laft.

\medskip

By definition, $\sigma$ corresponds to a map
$$ X_{\dr} \times S \to X_{\dr} \times \on{pt}/G $$
and the fiber product \eqref{e:sect over LocSys} is given by
$$ \on{Res}^{X_{\dr} \times S}_S (\CZ'_{\sigma}), $$
where
$$ \CZ'_{\sigma} =  \CZ'/G \underset{X_{\dr} \times \on{pt}/G}{\times} (X_{\dr} \times S)$$
is the prestack over $X_{\dr} \times S$ obtained by twisting $\CZ'$ using $\sigma$ (which by
assumption is schematic).

\medskip
In what follows, let $\CW:=\CZ'_{\sigma}$.  We need to show that the underlying classical prestack
of
$$ \on{Res}^{X_\dr\times S}_S (\CW) $$
is lft. The assertion only depends on $X_{\dr}$; therefore we can assume that $X$ is classical.

\medskip

Since $\CW$ is laft-def, we have that
$$ \on{Res}^{X_{\dr} \times S}_S(\CW) = \on{Tot}(\on{Res}^{\hat{X}^{\bullet} \times S}_S(\CW_\bullet)) ,$$
where $\hat{X}^\bullet$ is the infinitesimal groupoid of $X$, and
$$ \CW_n = \hat{X}^n \underset{X_{\dr}}{\times} \CW .$$

\medskip
Since $\CW \to X_{\dr} \times S$ is schematic, we have that the classical prestack of ${}^{\on{cl}}\!\on{Res}^{X_\dr \times S}_S(\CW)$
is given by the underlying classical prestack of the equalizer
$$\on{Res}^{X\times S}_S(\CW_1) \rightrightarrows \on{Res}^{\hat{X}^2\times S}_S(\CW_2) .$$

\medskip

Now, let
$$ X = X_0 \subset X_1 \subset \ldots \subset \hat{X}^2 $$
be the (classical) neighborhood filtration of $X \subset \hat{X}^2$.  We thus have that ${}^{\on{cl}}\!\on{Sect}(X_{\dr}, Z)$
is given by the underlying classical prestack of the inverse limit of the equalizers
$$ M_n := \on{eq}(\on{Res}^{X\times S}_S(\CW_1) \rightrightarrows \on{Res}^{X_n\times S}_S(\CW_{2,n})), $$
where $\CW_{2,n} = \CW_2 \underset{\hat{X}^2}{\times} X_n$.

\medskip
Recall that for any finite type schemes $S$ and $T$, where $T$ is separated, the mapping prestack
$$ \underline{\on{Maps}}(S,T) $$
is separated.  In particular, each
$$ \on{Res}^{X_n \times S}_S(\CW_{2,n}) \simeq \underline{\on{Maps}}(X_n, \CW_{2,n}) \underset{\underline{\on{Maps}}(X_n, X_n\times S)}{\times} S $$
is separated.

\medskip
It follows that the maps
$$ M_n \to \on{Res}^{X_n \times S}_S(\CW_{2,n}) $$
are closed immersions.  Therefore,
$$ {}^{\on{cl}}\!\on{Res}^{X_{\dr}\times S}_S(\CW) \simeq \lim_n {}^{\on{cl}}\!M_n \to \on{Res}^{X_n \times S}_S(\CW_{2,n})$$
is also a closed immersion.

\medskip
By \lemref{l:maps scheme to laft}, the prestack
$$ \on{Res}^{X \times S}_S(\CW_1) \simeq \underline{\on{Maps}}(X, \CW_1) \underset{\underline{\on{Maps}}(X,X\times S)}{\times} S $$
is laft.  Therefore, ${}^{\on{cl}}\!\on{Res}^{X_{\dr}\times S}_S(\CW)$ is lft.
\end{proof}

\ssec{Proof of \propref{p:target ft}} \label{ss:target ft}

\sssec{}

Let $\CY$ be a laft prestack, and $\CF$ an object of $\IndCoh(\CY)$. Then by \cite[Chapter 1, Corollary 4.4.2]{Vol2},
to $\CF$ we can attach an object
$$\BD^{\on{Serre}}(\CF)\in \on{Pro}(\QCoh(\CY)^-)^{\on{fake}}).$$ 

From $\BD^{\on{Serre}}(\CF)$ we obtain an object of $\on{Pro}(\QCoh(\CY)^-)$ by applying the functor 
$$\oblv^{\on{fake}}: \on{Pro}(\QCoh(\CY)^-)^{\on{fake}})\to \on{Pro}(\QCoh(\CY)^-).$$

We take $\CY:=S\times X_\dr$, and thus, on the one hand, we obtain a functor
$$\oblv^{\on{fake}}\circ  \BD^{\on{Serre}}:\IndCoh(S\times X_\dr)^{\on{op}}\to \on{Pro}(\QCoh(S\times X_\dr)^-).$$

On the other hand, we have the equivalence
$$\BD^{\on{SV}}: \IndCoh(S\times X_\dr)^{\on{op}}\simeq \on{Pro}(\QCoh(S\times X_\dr)^-)_{\on{laft}},$$
which we can follow by the embedding 
$$\on{Pro}(\QCoh(S\times X_\dr)^-)_{\on{laft}}\hookrightarrow \on{Pro}(\QCoh(S\times X_\dr)^-).$$

The abstract form of \propref{p:target ft} is:

\begin{prop} \label{p:target ft abs}
The above two functors $\IndCoh(S\times X_\dr)^{\on{op}}\to \on{Pro}(\QCoh(S\times X_\dr)^-)$
are related by
\begin{equation} \label{e:SV vs Serre}
\BD^{\on{SV}}(\CF\sotimes \BD^{\on{Verdier}}(\omega_X))\simeq \oblv^{\on{fake}}\circ  \BD^{\on{Serre}}(\CF), \quad \CF\in \IndCoh(S\times X_\dr). 
\end{equation}
In particular, $\oblv^{\on{fake}}$ maps $\on{Pro}(\QCoh(S\times X_{\dr})^-)^{\on{fake}}_{\on{laft}}$ to $\on{Pro}(\QCoh(S\times X_{\dr})^-)_{\on{laft}}$.
\end{prop}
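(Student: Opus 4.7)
The plan is to establish the identity \eqref{e:SV vs Serre} by reducing to a statement on external products and then splitting the duality functors along the product $S \times X_{\dr}$.

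First, both sides of \eqref{e:SV vs Serre}, regarded as contravariant functors $\IndCoh(S \times X_{\dr}) \to \on{Pro}(\QCoh(S \times X_{\dr})^-)$, send filtered colimits to filtered limits. Using the K\"unneth equivalence $\IndCoh(S \times X_{\dr}) \simeq \IndCoh(S) \otimes \Dmod(X)$ together with compact generation of both tensor factors, it suffices to produce a functorial isomorphism when $\CF$ is of the form $\CF_S \boxtimes \CM$ with $\CF_S \in \Coh(S)$ and $\CM$ coherent in $\Dmod(X)$.

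Next, I would factor each duality functor according to the two tensor factors. By construction of $\BD^{\on{SV}}$ on a product, one has
$$\BD^{\on{SV}}(\CF_S \boxtimes \CN) \simeq \BD^{\on{Serre}}_S(\CF_S) \boxtimes \BD^{\on{Verdier}}(\CN),$$
where $\BD^{\on{Serre}}_S$ is ordinary Serre duality on $S$ and $\BD^{\on{Verdier}}$ is Verdier duality on $X$. Analogously, Serre duality on $S \times X_{\dr}$ viewed as an inf-scheme factors, compatibly with $\oblv^{\on{fake}}$, as an external product of $\BD^{\on{Serre}}_S$ with the Serre duality functor $\BD^{\on{Serre}}_{X_{\dr}}$ on the inf-scheme $X_{\dr}$. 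Matching the $S$-factors, the claim reduces to the one-variable identity
$$\BD^{\on{Verdier}}\bigl(\CM \sotimes \BD^{\on{Verdier}}(\omega_X)\bigr) \simeq \oblv^{\on{fake}} \circ \BD^{\on{Serre}}_{X_{\dr}}(\CM)$$
in $\on{Pro}(\QCoh(X_{\dr})^-)$, for $\CM \in \Dmod(X)$.

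To verify this reduced identity, I would unwind the definition of Serre duality on $X_{\dr}$ as an inf-scheme. The fake pro-dualizing sheaf of $X_{\dr}$ has $\oblv^{\on{fake}}$-image whose !-pairing computes the constant D-module $\BD^{\on{Verdier}}(\omega_X)$; the intrinsic Serre pairing on $X_{\dr}$ then coincides with the pairing obtained by $\sotimes$-tensoring with $\BD^{\on{Verdier}}(\omega_X)$ and applying Verdier duality. The principal obstacle lies precisely here: one must carefully track how the fake-pro formalism from Appendix \ref{s:fake} interacts with !-pullback functoriality and identify the dualizing pro-object of $X_{\dr}$ under $\oblv^{\on{fake}}$. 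Once \eqref{e:SV vs Serre} has been established, the final assertion that $\oblv^{\on{fake}}$ sends $\on{Pro}(\QCoh(S \times X_{\dr})^-)^{\on{fake}}_{\on{laft}}$ into $\on{Pro}(\QCoh(S \times X_{\dr})^-)_{\on{laft}}$ follows formally, since the right-hand side of \eqref{e:SV vs Serre} is by construction in the essential image of $\BD^{\on{SV}}$, hence in the laft subcategory by \corref{c:Serre Verdier}.
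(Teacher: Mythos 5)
There is a genuine gap in your argument, and it sits precisely where you place the bulk of the weight: the factorization of the right-hand side along the product.

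Your claim that "Serre duality on $S \times X_{\dr}$ viewed as an inf-scheme factors, compatibly with $\oblv^{\on{fake}}$, as an external product" is not a formal consequence of the definitions and is in fact the substance of the proposition. The functor $\oblv^{\on{fake}}$ is a limit over \emph{all} affine schemes $T \to S\times X_\dr$, and these do not come presented as products $T_S \times T_X$; so $\oblv^{\on{fake}}$ has no a priori compatibility with external tensor products of fake-pro-objects. Making this work requires two concrete inputs that your sketch omits: (i) a cofinality statement, namely that the full subdiagram of affine schemes of the form $S\times T'$ with $(T',g)\in ({}^{<\infty}\!\affSch_{\on{aft}})_{/X_\dr}$ is cofinal in $(\affSch_{\on{aft}})_{/S\times X_\dr}$, which is what lets you split the limit; and (ii) a careful commutation of the various $\Psi$ and $\Upsilon$ identifications relating $\IndCoh$, $\QCoh$, and $\Dmod^l$ across the two factors (this is the content of Lemma~\ref{l:Xi Upsilon}, whose proof also exploits the bounded cohomological amplitude of $f^*$ for $T'$ eventually coconnective). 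Without (i) and (ii), the reduction to the one-variable identity does not go through.

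You also misplace the difficulty: the identity on $X_\dr$ alone (your "reduced identity") is actually the easy part, because over $X_\dr$ one can restrict to eventually coconnective test schemes and the pairing $\langle-,-\rangle_T$ becomes explicitly computable via $\Upsilon_{X_\dr}$ and \eqref{e:restr of pairing}. The hard part is the passage from the one-variable case to $S\times X_\dr$. Finally, you should record the reduction to $M\in\QCoh(S\times X_\dr)^b$ before comparing functors: both sides of \eqref{e:SV vs Serre} are pro-objects, and without first observing that each sends the inverse system $\tau^{\geq -n}M$ to a limit, the reduction to split compact generators in $\CF$ does not directly resolve the $M$-dependence. Your idea to reduce to split objects $\CF_S\boxtimes\CM$ is on the right track — it is how the paper handles the key technical lemma — but as stated the proof needs the two ingredients above to become an actual argument.
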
 

The rest of this subsection is devoted to the proof of \propref{p:target ft abs}. 


%

\sssec{Preliminaries}

We recall the construction of the object $\oblv^{\on{fake}} \circ \BD^{\on{Serre}}(\CF)$ in general
for $\CF\in \IndCoh(\CY)$, where $\CY$ is a left prestack.  

\medskip

First, the object $\BD^{\on{Serre}}(\CF)\in \on{Pro}(\QCoh(\CY)^-)$ is characterized by the requirement
that for $(T\overset{f}\to \CY) \in (\affSch_{\on{aft}})_{/\CY}$, the object
$$f^\#(\BD^{\on{Serre}}(\CF))\in \on{Pro}(\QCoh(T)^-)$$
belongs to
$$\on{Pro}(\QCoh(T)^-)_{\on{laft}}\subset \on{Pro}(\QCoh(T)^-)$$
and equals $\BD^{\on{Serre}}(f^!(\CF))$.

\medskip

Assume that $\CY$ is laft-def. Then by \cite[Chapter 1, Theorem 9.1.4]{Vol2}, the inclusion
$$(\affSch_{\on{aft}})_{/\CY} \hookrightarrow (\affSch)_{/\CY}$$
is cofinal. 

\medskip

Hence, for $M\in \QCoh(\CY)^-$ and $\CF'\in \on{Pro}(\QCoh(\CY)^-)^{\on{fake}}$, 
we can rewrite
$$\oblv^{\on{fake}}(\CF')(M):=\underset{(T\overset{f}\to \CY) \in (\affSch)_{/\CY}}{\on{lim}}\, 
f^\#(\CF')(f^*(M))$$ 
also as  
$$\underset{(T\overset{f}\to \CY) \in (\affSch_{\on{aft}})_{/\CY}}{\on{lim}}\, 
f^\#(\CF')(f^*(M)).$$ 

\medskip

Applying this to $\CF':=\BD^{\on{Serre}}(\CF)$, we obtain that
\begin{equation} \label{e:calc oblv}
\oblv^{\on{fake}}\circ \BD^{\on{Serre}}(\CF)(M)\simeq 
\underset{(T\overset{f}\to \CY) \in (\affSch_{\on{aft}})_{/\CY}}{\on{lim}}\, 
\langle f^!(\CF),f^*(M)\rangle_T,
\end{equation} 
where $\langle -,- \rangle_T$ denotes the canonical pairing
\begin{equation} \label{e:pairing}
\IndCoh(T)\otimes \QCoh(T)^-\to \Vect,
\end{equation}
corresponding to the identification
$$\IndCoh(T)^{\on{op}}\simeq \on{Pro}(\QCoh(T)^-)_{\on{laft}}.$$

Explicitly, \eqref{e:pairing} maps $\CF_T\in \IndCoh(T)$ and $M_T\in \QCoh(T)^-$ to
$$\underset{n}{\on{lim}}\, \Gamma\left(T,\Psi_T(\CF_T\sotimes \Psi_T^{-1}(\tau^{\geq -n}(M_T))\right),$$
where $\Psi_T$ is as in \secref{sss:Psi and Ups}.

\sssec{Reduction to the bounded case}
By \corref{c:Serre Verdier} and \eqref{e:calc oblv}, both sides of \eqref{e:SV vs Serre}, for $M \in \QCoh(S \times X_{\dr})^-$,
map the inverse family
$$ \tau^{\geq -n} M $$
to a limit in $\on{Vect}$.  Hence, to prove \eqref{e:SV vs Serre}, it suffices to prove that their restrictions to $\on{Pro}(\QCoh(S \times X_{\dr})^b)$ are isomorphic.

\sssec{The easy case}

To start, we will consider the case $\CY=X_\dr$.  We have that the inclusion 
$$({}^{\on{cl}}\!\affSch_{\on{aft}})_{/X_\dr}\hookrightarrow (\affSch_{\on{aft}})_{/X_\dr}$$
is cofinal; hence so is 
$$({}^{<\infty}\!\affSch_{\on{aft}})_{/X_\dr}\hookrightarrow (\affSch_{\on{aft}})_{/X_\dr}.$$

\medskip
Thus, we have
$$\oblv^{\on{fake}}\circ \BD^{\on{Serre}}(\CF)(M)\simeq 
 \underset{(T\overset{f}\to \CY) \in ({}^{<\infty}\!\affSch_{\on{aft}})_{/X_{\dr}}}{\on{lim}}\, \langle f^!(\CF), f^*(M) \rangle_T .$$

\medskip
For $M_1 \in \QCoh(T), M_2 \in \QCoh(T)^-$, we have a canonical isomorphism
\begin{equation}\label{e:restr of pairing}
\Psi_T(\Upsilon_T(M_1) \otimes^! \Psi_T^{-1}(M_2)) \simeq M_1 \otimes M_2 .
\end{equation}
Therefore, the composite
$$\QCoh(T)\otimes \QCoh(T)^- \overset{\Upsilon_T\otimes \on{Id}}\longrightarrow \IndCoh(T)\otimes \QCoh(T)^-
\overset{\langle -,- \rangle_T}\longrightarrow \Vect$$
is the functor
$$M_1,M_2\mapsto \lim_{n} \Gamma(T,M_1\otimes \tau^{\geq -n} (M_2)).$$

\medskip
 Given $$\CF\in \IndCoh(X_\dr)=\Dmod(X),$$
write it as $\Upsilon_{X_\dr}(\CF^l)$ for $\CF^l\in  \QCoh(X_\dr)=\Dmod^l(X)$, where $\Upsilon$
is as in \secref{sss:Psi and Ups}.
\medskip

Hence for $T\overset{f}\to X_\dr$, we have
$$f^!(\CF)\simeq \Upsilon_T\circ f^*(\CF^l).$$

\medskip
Note that for $T\overset{f}\to X_\dr$ with $T\in {}^{<\infty}\!\affSch_{\on{aft}}$, the functor
$$f^*:\QCoh(X_\dr)\to \QCoh(T)$$
has a bounded cohomological amplitude; i.e., $f^*$ preserves bounded objects.  Therefore, we obtain that for $\CF \in \IndCoh(X_{\dr})$ and $M \in \QCoh(X_\dr)^b$,
$$\oblv^{\on{fake}}\circ \BD^{\on{Serre}}(\CF)(M)\simeq 
 \underset{(T\overset{f}\to \CY) \in ({}^{<\infty}\!\affSch_{\on{aft}})_{/X_{\dr}}}{\on{lim}}\,  \Gamma(T,f^*(\CF^l\otimes M)),$$
where $\CF^l = \Upsilon_{X_\dr}^{-1}(\CF) \in \QCoh(X_\dr)$ as above.
The latter limit identifies with
$$\Gamma(X_\dr,\CF^l\otimes M)\simeq
\CHom_{\Dmod^l(X)}(\CO_X,\CF^l\otimes M) \simeq
\CHom_{\Dmod(X)}(\omega_X,\CF\sotimes \Upsilon_{X_\dr}(M)),$$
and finally with
\begin{equation} \label{e:limit exp}
\Gamma_\dr\left(X,(\CF\sotimes \BD^{\on{Verdier}}(\omega_X)) \sotimes \Upsilon_{X_\dr}(M)\right).
\end{equation}

By \corref{c:Serre Verdier}, we obtain that
$$\left(\BD^{\on{SV}}(\CF\sotimes \BD^{\on{Verdier}}(\omega_X))\right)(M)$$ 
is also given by \eqref{e:limit exp}, thus establishing \eqref{e:SV vs Serre}. 

\sssec{The general case}

We now consider the general case of $\CY=S\times X_\dr$. Note that the functor
$$(\affSch_{\on{aft}})_{/X_\dr}\to (\affSch_{\on{aft}})_{/S\times X_\dr},\quad (T,f)\mapsto (S\times T,\on{id}\times f)$$
is cofinal.  Moreover, as before, 
$$({}^{<\infty}\!\affSch_{\on{aft}})_{/X_\dr}\hookrightarrow (\affSch_{\on{aft}})_{/X_\dr}$$
is also cofinal.

\medskip

Next, recall that for $T\overset{f}\to X_\dr$ with $T\in {}^{<\infty}\!\affSch_{\on{aft}}$, the functor
$$f^*:\QCoh(X_\dr)\to \QCoh(T)$$
has a bounded cohomological amplitude. Hence, so do the functors
$$(\on{id}\times f)^*:\QCoh(S\times X_\dr)\to \QCoh(S\times T).$$
In particular, for $M\in \QCoh(S\times X_\dr)^b$, we have
$$(\on{id}\times f)^*(M)\in \QCoh(S\times T)^b.$$

\medskip

Hence, the terms in the limit \eqref{e:calc oblv} can be identified with
$$\Gamma\left(S\times T,\Psi_{S\times T}\left((\on{id}\times f)^!(\CF)\sotimes \Psi_{S\times T}^{-1}((\on{id}\times f)^*(M))\right)\right).$$

\medskip

We claim:

\begin{lem} \label{l:Xi Upsilon}
For a given $\CF\in \IndCoh(S\times X_\dr)$, there is a canonical isomorphism of functors
$\QCoh(S\times X_\dr)^b\to \QCoh(S\times T)$: 
\begin{multline*}
\Psi_{S\times T}\left((\on{id}\times f)^!(\CF)\sotimes \Psi_{S\times T}^{-1}((\on{id}\times f)^*(M))\right)
\simeq \\
\simeq (\on{id}\times f)^*\left((\Psi_S\otimes \Upsilon^{-1}_{X_\dr})\left(\CF\sotimes (\Psi_S\otimes \Upsilon^{-1}_{X_\dr})^{-1}(M)\right)\right).
\end{multline*}
\end{lem}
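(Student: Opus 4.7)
The plan is to reduce the claim to the basic scalar identity \eqref{e:restr of pairing} via Künneth decompositions, and then to verify the resulting identity on $X_\dr$ using the standard compatibility of $\Upsilon$ and $\Psi$ with pullback along maps $T \to X_\dr$ from almost of finite type schemes.

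First I will use the Künneth isomorphisms
$$\IndCoh(S \times X_\dr) \simeq \IndCoh(S) \otimes \Dmod(X), \qquad \QCoh(S \times X_\dr) \simeq \QCoh(S) \otimes \Dmod^l(X),$$
and their analogues on $S \times T$, to reduce to the case where $\CF = \CF_S \boxtimes \CF_X$ and $M = M_S \boxtimes M_X^l$ are pure tensors with $M_S\in \QCoh(S)^b$ and $M_X^l\in \Dmod^l(X)^b$; this is legitimate because objects of the form $M_S \boxtimes M_X^l$ generate $\QCoh(S \times X_\dr)^b$ under finite colimits and shifts, and all functors involved are continuous in $M$. The $S$-tensor-factor produces the object $\Psi_S(\CF_S \sotimes \Psi_S^{-1}(M_S))$ on both sides identically (and it is $*$-pulled back trivially along $\on{id}_S$), so the nontrivial content reduces to the $X_\dr$-part, namely an isomorphism
$$\Psi_T\bigl(f^!(\CF_X) \sotimes \Psi_T^{-1}(f^*M_X^l)\bigr) \simeq f^*\bigl(\Upsilon_{X_\dr}^{-1}(\CF_X \sotimes \Upsilon_{X_\dr}(M_X^l))\bigr).$$

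Next I will write $\CF_X = \Upsilon_{X_\dr}(\CF_X^l)$ with $\CF_X^l \in \QCoh(X_\dr) = \Dmod^l(X)$, and use the standard compatibility $f^! \circ \Upsilon_{X_\dr} \simeq \Upsilon_T \circ f^*$ for $f:T \to X_\dr$ with $T \in {}^{<\infty}\!\affSch_{\on{aft}}$. The left-hand side becomes $\Psi_T(\Upsilon_T(f^*\CF_X^l) \sotimes \Psi_T^{-1}(f^*M_X^l))$. Since $f^*\colon \Dmod^l(X) \to \QCoh(T)$ has bounded cohomological amplitude on such $T$, the object $f^*M_X^l$ is bounded, so $\Psi_T^{-1}$ is defined on it; then \eqref{e:restr of pairing} applied with $M_1 = f^*\CF_X^l$ and $M_2 = f^*M_X^l$ collapses the expression to $f^*\CF_X^l \otimes f^*M_X^l \simeq f^*(\CF_X^l \otimes M_X^l)$. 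The right-hand side equals the same object, because $\Upsilon_{X_\dr}^{-1}$ intertwines $\sotimes$ on $\Dmod(X)$ with $\otimes$ on $\Dmod^l(X)$, giving $\Upsilon_{X_\dr}^{-1}(\CF_X \sotimes \Upsilon_{X_\dr}(M_X^l)) \simeq \CF_X^l \otimes M_X^l$.

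The main obstacle is not any single step but the bookkeeping: one must verify that the identifications $f^! \circ \Upsilon_{X_\dr} \simeq \Upsilon_T \circ f^*$, the Künneth decomposition of $\Psi_{S \times T}$ as $\Psi_S \boxtimes \Psi_T$, and the monoidality of $\Upsilon_{X_\dr}$ are genuinely functorial in $\CF$ and $M$, and that the boundedness of $M$ is preserved at every step so that the (only partially defined) functor $\Psi_T^{-1}$ is unambiguous. Once these compatibilities are in place — all of which are recorded in \cite[Chapter 4]{Vol1} and \cite[Chapter 4]{Vol2} — the proof amounts to assembling them around the single identity \eqref{e:restr of pairing}.
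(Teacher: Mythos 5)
The calculational core of your argument — writing $\CF_X = \Upsilon_{X_\dr}(\CF_X^l)$, using $f^! \circ \Upsilon_{X_\dr} \simeq \Upsilon_T \circ f^*$, and collapsing to \eqref{e:restr of pairing} after splitting off the $S$-factor — is exactly what the paper does once everything is a pure tensor. However, the reduction to pure tensors is where your proof has a genuine gap. You assert that objects of the form $M_S \boxtimes M_X^l$ generate $\QCoh(S\times X_\dr)^b$ under \emph{finite} colimits and shifts; this is false. Finite colimits of pure tensors only land you in the thick subcategory generated by pure tensors, which captures coherent (=finitely presented) objects, not all bounded ones: you cannot build an infinite direct sum $\bigoplus_{\BN}\CO$ as a finite colimit of pure tensors. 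To reach arbitrary bounded $M$ you need filtered colimits, and then the appeal to "continuity in $M$" breaks down because $\Psi^{-1}$ and $(\Psi_S\otimes\Upsilon^{-1}_{X_\dr})^{-1}$ are only partially defined (on bounded-below objects) and do not commute with arbitrary filtered colimits — only with \emph{uniformly bounded} ones.

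The paper's proof routes around precisely this. It first observes that both sides commute with uniformly bounded filtered colimits, then uses the coherence of $\QCoh(S\times X_\dr)$ (\propref{p:Psi Ups}(b)) to reduce $M$ to a coherent object, then passes via \propref{p:Psi Ups}(c) to a coherent object $M'$ of $\IndCoh(S\times X_\dr)$, and only then invokes \thmref{t:finiteness product} — the nontrivial assertion, proved by Noetherian induction, that coherent $=$ compact in $\IndCoh(S\times X_\dr)$ — to conclude that $M'$ lies in the thick subcategory generated by pure tensors $M'_S\boxtimes M'_X$. Your proposal omits this chain entirely and in particular makes no use of \thmref{t:finiteness product}, which is the essential input making the Künneth reduction legitimate. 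To repair your argument, replace the unjustified generation claim with: both sides commute with uniformly bounded filtered colimits, hence it suffices to treat $M$ coherent; then apply \propref{p:Psi Ups}(c) and \thmref{t:finiteness product} to put $M$ in the image of a compact object of $\IndCoh(S)\otimes\IndCoh(X_\dr)$, which is where pure-tensor generation is actually available.
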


The lemma will be proved below. 

\sssec{End of the proof}

Applying the lemma, we obtain that $\oblv^{\on{fake}}\circ  \BD^{\on{Serre}}(\CF)(M)$ is given as the limit over $(T,f)\in ({}^{<\infty}\!\affSch_{\on{aft}})_{/X_\dr}$ of 
$$\Gamma\left(S\times T, (\on{id}\times f)^*\left((\Psi_S\otimes \Upsilon^{-1}_{X_\dr})\left(\CF\sotimes (\Psi_S\otimes \Upsilon^{-1}_{X_\dr})^{-1}(M)\right)\right)\right),$$
i.e.,
$$\CHom_{\QCoh(S\times T)}\left(\CO_{S\times T},
(\on{id}\times f)^*\left((\Psi_S\otimes \Upsilon^{-1}_{X_\dr})\left(\CF\sotimes (\Psi_S\otimes \Upsilon^{-1}_{X_\dr})^{-1}(M)\right)\right)\right).$$

By the cofinality above, this limit identifies with
$$\CHom_{\QCoh(S\times X_\dr)}\left(\CO_{S\times X_\dr},
(\Psi_S\otimes \Upsilon^{-1}_{X_\dr})\left(\CF\sotimes (\Psi_S\otimes \Upsilon^{-1}_{X_\dr})^{-1}(M)\right)\right),$$
and further with
$$\CHom_{\QCoh(S)\otimes \IndCoh(X_\dr)}\left(\CO_{S}\boxtimes \omega_X,
(\Psi_S\otimes \on{Id})\left(\CF\sotimes (\Psi_S\otimes \Upsilon^{-1}_{X_\dr})^{-1}(M)\right)\right).$$

Finally, we rewrite the latter expression as
\begin{multline*}
(\Gamma^\IndCoh_S \otimes \Gamma_{X,\dr})
\left(\CF\sotimes \BD^{\on{Verdier}}(\omega_X)\sotimes (\Psi_S\otimes \Upsilon^{-1}_{X_\dr})^{-1}(M)\right) \simeq \\
\simeq
\Gamma^\IndCoh\left(S\times X_\dr,(\CF\sotimes \BD^{\on{Verdier}}(\omega_X)) 
\sotimes (\Psi_S\otimes \Upsilon^{-1}_{X_\dr})^{-1}(M)\right).
\end{multline*}

By \corref{c:Serre Verdier}, this establishes \eqref{e:SV vs Serre}. 

\qed[\propref{p:target ft abs}]

\sssec{Proof of \lemref{l:Xi Upsilon}} 

Both sides in the lemma map uniformly bounded colimits in $M$ (see \secref{sss:unif bdd} for what this means)
to colimits. Hence, by \propref{p:Psi Ups}(a), we can restrict both functors to the coherent subcategory of 
$\QCoh(S\times X_\dr)$. Further, by \propref{p:Psi Ups}(c), we can precompose the two functors in question with
$$(\Psi_S\otimes \Upsilon^{-1}_{X_\dr}): \IndCoh(S\times X_\dr)^b\simeq \QCoh(S\times X_\dr)^b.$$

Thus, it suffices to establish an isomorphism of functors
\begin{multline} \label{e:Xi Upsilon}
\Psi_{S\times T}\left((\on{id}\times f)^!(\CF)\sotimes \Psi_{S\times T}^{-1}((\on{id}\times f)^*\circ (\Psi_S\otimes \Upsilon^{-1}_{X_\dr}) (M'))\right)
\simeq \\
\simeq (\on{id}\times f)^*\left((\Psi_S\otimes \Upsilon^{-1}_{X_\dr})(\CF\sotimes M')\right)
\end{multline}
as $M'$ ranges over the category of coherent objects in $\IndCoh(S\times X_\dr)$. 

\medskip

Now, by \thmref{t:finiteness product}, the category of coherent objects in $\IndCoh(S\times X_\dr)$ equals that of compacts. 
Hence, since 
$$\IndCoh(S\times X_\dr)\simeq \IndCoh(S)\otimes \IndCoh(X_\dr),$$
it suffices to establish \eqref{e:Xi Upsilon} as an isomorphism of bilinear functors
$$\Coh(S)\times \IndCoh(X_\dr)^c\to \QCoh(S\times T),$$
i.e., we can take $M'$ to be of the form
$$M'_S\boxtimes M'_X, \quad M'_S\in \Coh(S),\,\, M'_X\in \IndCoh(X_\dr)^c.$$

\medskip

Similarly, both sides in \eqref{e:Xi Upsilon}, viewed as functors of $\CF\in \IndCoh(S\times X_\dr)$, commute with colimits.
Hence, by the same reasoning, we can take $\CF$ to be of the form
$$\CF_S\boxtimes \CF_X, \quad \CF_S\in \IndCoh(S),\,\, \CF_X\in \IndCoh(X_\dr).$$

Then both sides of \eqref{e:Xi Upsilon} split as tensor products. The $S$-factor is tautologically
$$\Psi_S(\CF_S\sotimes M'_S)$$ 
in both the left-hand side and the right hand side.

\medskip

The $X$-factor gives
$$\Psi_T\left(f^!(\CF_X) \sotimes (\Psi_T^{-1}\circ f^*\circ \Upsilon^{-1}_{X_\dr}(M'_X))\right) \simeq \Psi_T\left(\Upsilon_T \circ f^* \circ \Upsilon_{X_\dr}^{-1}(\CF_X) \sotimes (\Psi_T^{-1}\circ f^*\circ \Upsilon^{-1}_{X_\dr}(M'_X))\right)$$
in the left-hand side, and 
$$
f^*\circ \Upsilon^{-1}_{X_\dr}(\CF_X\sotimes M'_X) \simeq f^* \circ \Upsilon_{X_\dr}^{-1}(\CF_X) \otimes f^* \circ \Upsilon_{X_\dr}^{-1}(M'_X)
$$
in the right-hand side.  The two are isomorphic by \eqref{e:restr of pairing}.

\qed[\lemref{l:Xi Upsilon}]

\ssec{Infinitesimal groupoid on the stack of local systems} \label{ss:LocSys}

\sssec{}

Let $\sG_1\to \sG_2$ be a homomorphism of algebraic groups, and take $\CZ_i:=(\on{pt}/\sG_i)\times X_\dr$.
Consider the resulting map
$$(\on{pt}/\sG_1)\times X_\dr\to (\on{pt}/\sG_2)\times X_\dr.$$

As remarked above, the prestack $\CS(\CZ_i)$ is in this case the (derived) algebraic stack $\LocSys_{\sG_i}(X)$
of $\sG_i$-local systems $X$. So, Theorems \ref{t:rel main} and \ref{t:main non-unital rel}, and \corref{c:loc rel}
are the different ways to give a description of the category
$$\IndCoh((\LocSys_{\sG_1}(X)/\LocSys_{\sG_2}(X))_\dr$$
and the monad
\begin{multline*}
(p_{\LocSys_{\sG_1}(X),\dr/\LocSys_{\sG_1}(X)})^\IndCoh_*:\IndCoh(\LocSys_{\sG_1}(X))\rightleftarrows \\
\rightleftarrows \IndCoh((\LocSys_{\sG_1}(X)/\LocSys_{\sG_2}(X))_\dr): (p_{\LocSys_{\sG_1}(X),\dr/\LocSys_{\sG_1}(X)})^!
\end{multline*}
in terms of the infinitesimal Hecke groupoid. 

\sssec{}

Denote the resulting stack $\overset{\circ}{\CS}(\CZ_1/\CZ_2)\strut^{\wedge}_{\on{Ran}}$ by
$$\LocSys_{\sG_1/\sG_2}(\ofX_{\on{Ran}}).$$

\medskip

Note that the infinitesimal Hecke groupoid
$$\widehat{\on{Hecke}}(\LocSys_{\sG_1}(X)/\LocSys_{\sG_2}(X))_{\on{Ran}}:=
\LocSys_{\sG_1}(X)\underset{\LocSys_{\sG_1/\sG_2}(\ofX)}\times \LocSys_{\sG_1}(X)$$
can be explicitly described as follows:

\medskip

It attaches to a test affine scheme $S$ a finite subset in $I\subset \Maps(S,X_\dr)$, a pair $(\sigma',\sigma'')$ of
$\sG_1$-torsors on $S\times X_\dr$, and compatible identifications:
\begin{itemize}

\item $\on{Ind}^{\sG_2}_{\sG_1}(\sigma') \simeq \on{Ind}^{\sG_2}_{\sG_1}(\sigma'')$;

\item $\sigma'|_{^{\on{red}}\!S\times X_\dr}\simeq \sigma''|_{^{\on{red}}\!S\times X_\dr}$;

\item $\sigma'|_{\ofX_{I,\dr}}\simeq \sigma''|_{\ofX_{I,\dr}}$, where $\ofX_{I,\dr}=S\times X_\dr-\on{Graph}(I)$.

\end{itemize}

\sssec{}

We note the following particular cases of the above situation.

\medskip

One is when $\sG_1=\sG$ and $\sG_2$ is trivial. In this case, $\widehat{\on{Hecke}}(\LocSys_{\sG}(X))_{\on{Ran}}$
is indeed what we would call the infinitesimal Hecke groupoid. It classifies the data of $I\subset \Maps(S,X_\dr)$,
$\sigma',\sigma''\in \Maps(S,\LocSys_\sG(X))$ and a compatible pair of isomorphisms 

\begin{itemize}

\item $\sigma'|_{^{\on{red}}\!S\times X_\dr}\simeq \sigma''|_{^{\on{red}}\!S\times X_\dr}$;

\item $\sigma'|_{\ofX_{I,\dr}}\simeq \sigma''|_{\ofX_{I,\dr}}$.

\end{itemize}

\medskip

The other case is when $\sG_1=\sG$ and $\sG_2=\sG\times \sG$, so that $\sG_1\to \sG_2$ is the diagonal
embedding. In this case, the prestack $\LocSys_{\sG/\sG\times \sG}(\ofX_{\on{Ran}})$ identifies with the infinitesimal 
Hecke groupoid $\widehat{\on{Hecke}}(\LocSys_{\sG}(X))_{\on{Ran}}$ described above. 

\sssec{}

We now restate our main theorems in this specific situation:

\begin{thm} \label{t:main LocSys}
For an object $\CF\in \IndCoh(\LocSys_{\sG_1}(X))$ the tautological maps between the following spaces of data on $\CF$ are isomorphisms:
\begin{enumerate}[label={(\roman*)}]
\item
The data of descent of $\CF$ to an object of $\IndCoh((\LocSys_{\sG_1}(X)/\LocSys_{\sG_2}(X))_\dr)$;

\item
The data of descent of the pullback of $\CF$ to $\LocSys_{\sG_1}(X)\times \Ran^{\on{untl}}$ to an object of
$\IndCoh(\LocSys_{\sG_1/\sG_2}(\ofX_{\on{Ran}^{\on{untl}}}))$;

\item[(ii')]
The data of descent of the pullback of $\CF$ to $\LocSys_{\sG_1}(X)\times \Ran$ to an object of
$\IndCoh(\LocSys_{\sG_1/\sG_2}(\ofX_{\on{Ran}}))$. 
\end{enumerate}
\end{thm}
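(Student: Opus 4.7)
The plan is to deduce Theorem~\ref{t:main LocSys} directly from the main theorems of the paper, specialized to the setting $\CZ_i = (\on{pt}/\sG_i) \times X_\dr$, once we match notation between the explicit moduli description of $\LocSys_{\sG_1/\sG_2}(\ofX_{\on{Ran}})$ and the abstract punctured-sections formal completion $\overset{\circ}{\CS}(\CZ_1/\CZ_2)\strut^{\wedge}_{\on{Ran}}$. So the proof really is a matter of bookkeeping: verify the hypotheses and invoke the general machinery.

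First I would verify that each $\CZ_i = (\on{pt}/\sG_i) \times X_\dr$ is sectionally laft. This follows from \propref{p:sect laft ex} applied to $\CZ' = X_\dr$ with its trivial $\sG_i$-action, since the projection $X_\dr \to X_\dr$ is (trivially) laft schematic. Consequently $\CS(\CZ_i) = \LocSys_{\sG_i}(X)$ is laft-def, and the relative formal completion $\overset{\circ}{\CS}(\CZ_1/\CZ_2)\strut^{\wedge}_{\on{Ran}^{\on{untl}}}$ is laft-def by \propref{p:punctured sects laft-def}. Next I would check, by unwinding the functor of points of the Weil restriction, that $\overset{\circ}{\CS}(\CZ_1/\CZ_2)\strut^{\wedge}_{\on{Ran}^{\on{untl}}}$ agrees with $\LocSys_{\sG_1/\sG_2}(\ofX_{\on{Ran}^{\on{untl}}})$ as defined in the statement (the data of a pair of $\sG_1$-torsors equipped with an identification over the punctured neighborhood, an identification of their induced $\sG_2$-torsors, plus the reduced-compatibility), and similarly in the non-unital case.

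With these identifications in place, the equivalence between (i) and (ii) is exactly the content of \corref{c:unital Ran rel fiber product} applied to $\CZ_1 \to \CZ_2$: an object of $\IndCoh((\LocSys_{\sG_1}(X)/\LocSys_{\sG_2}(X))_\dr)$ is the same as an object of $\IndCoh(\LocSys_{\sG_1}(X))$ together with descent to $\LocSys_{\sG_1/\sG_2}(\ofX_{\on{Ran}^{\on{untl}}})$ after pullback to $\LocSys_{\sG_1}(X) \times \Ran^{\on{untl}}$. The equivalence between (i) and (ii') is likewise an instance of \thmref{t:main non-unital rel}. In both cases the resulting equivalence of categories is by construction compatible with the forgetful functor to $\IndCoh(\LocSys_{\sG_1}(X))$, so passing to fibers over a fixed $\CF$ yields the claimed isomorphism of spaces of descent data.

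There is no real obstacle beyond careful matching of conventions: the only point requiring a moment's thought is the identification in step two, which amounts to recognizing that the Weil restriction of $(\on{pt}/\sG_i) \times X_\dr$ along $S \times X_\dr \to S$ is precisely the groupoid of $\sG_i$-torsors on $S \times X_\dr$, and hence the relative punctured-sections formal completion encodes exactly the compatibility data spelled out in the explicit description of $\widehat{\on{Hecke}}(\LocSys_{\sG_1}(X)/\LocSys_{\sG_2}(X))_{\on{Ran}}$ given before the theorem. Once this is said, all three equivalences are formal consequences of \corref{c:unital Ran rel fiber product} and \thmref{t:main non-unital rel}.
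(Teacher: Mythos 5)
Your proposal is correct and matches the paper's approach: the paper introduces the theorem with the phrase ``We now restate our main theorems in this specific situation'' and gives no independent proof, precisely because it is the specialization of \corref{c:unital Ran rel fiber product} and \thmref{t:main non-unital rel} to $\CZ_i = (\on{pt}/\sG_i)\times X_\dr$, with sectional laft-ness supplied by \propref{p:sect laft ex} as you note.
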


\begin{cor}
The localization functor
$$\on{pre-Loc}: \IndCoh(\LocSys_{\sG_1/\sG_2}(\ofX_{\on{Ran}^{\on{untl}}}))\to \IndCoh(\LocSys_{\sG_1}(X)),$$ defined as  
\begin{multline*}
\IndCoh(\LocSys_{\sG_1/\sG_2}(\ofX_{\on{Ran}^{\on{untl}}})) \overset{\fr^!_{\on{Ran}^{\on{untl}}}}\longrightarrow
\IndCoh(\LocSys_{\sG_1}(X)\otimes \Dmod(\on{Ran}^{\on{untl}}) \to \\
\overset{\on{Id}\otimes \Gamma_{c,\Ran^{\on{untl}}}}\longrightarrow \IndCoh(\LocSys_{\sG_1}(X))
\end{multline*}
factors canonically as
$$(p_{\LocSys_{\sG_1}(X),\dr/\LocSys_{\sG_1}(X)})^! \circ \on{Loc},$$
where $\on{Loc}$ is the functor
$$(\on{Id}\otimes \Gamma_{c,\on{Ran}^{\on{untl}}}) \circ \fq^\IndCoh_*,$$
where
$$\fq: \LocSys_{\sG_1/\sG_2}(\ofX_{\on{Ran}^{\on{untl}}}) \to (\LocSys_{\sG_1}(X)/\LocSys_{\sG_2}(X))_\dr \times \on{Ran}^{\on{untl}}.$$
\end{cor}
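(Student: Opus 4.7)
The plan is to deduce this corollary directly as a specialization of \corref{c:loc rel} to the setting $\CZ_1 := (\on{pt}/\sG_1)\times X_\dr$ and $\CZ_2 := (\on{pt}/\sG_2)\times X_\dr$ equipped with the natural map induced by $\sG_1\to \sG_2$. Concretely, one checks that under these choices the three ingredients in \corref{c:loc rel} acquire their stated names: $\CS(\CZ_i)\simeq \LocSys_{\sG_i}(X)$, the punctured-completed categorical prestack $\overset{\circ}{\CS}(\CZ_1/\CZ_2)\strut^{\wedge}_{\on{Ran}^{\on{untl}}}$ agrees with $\LocSys_{\sG_1/\sG_2}(\ofX_{\on{Ran}^{\on{untl}}})$ (this is basically its definition, unwound via the explicit description of the infinitesimal Hecke groupoid recorded just above the theorem), and the map $\fq$ in \corref{c:loc rel} coincides with the one named in the statement.

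First I would verify the hypothesis of \corref{c:loc rel}, namely that $\CZ_1\to \CZ_2$ is a map of sectionally laft prestacks. Both $\CZ_i$ are of the form $(\on{pt})/\sG_i\times X_\dr$, so they are instances of the setup of \propref{p:sect laft ex} (with $\CZ'=X_\dr$ a laft schematic object over $X_\dr$), which yields the desired sectional laft-ness; alternatively one invokes \corref{c:laft => sect laft} together with the known fact (used e.g.\ in the proof of \propref{p:sect laft ex}) that the classical prestack underlying $\LocSys_{\sG_i}(X)$ is locally of finite type, cf.\ \cite[Proposition 10.3.8(b)]{AG}.

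Next I would match the notations. The prestack $\overset{\circ}{\CS}(\CZ_1/\CZ_2)_{\on{Ran}^{\on{untl}}}$ parametrizes an $I\subset \Hom(S,X_\dr)$ together with a section of $\CZ_1$ over $\ofX_{I,\dr}$ and a section of $\CZ_2$ over all of $S\times X_\dr$ extending its image, while the formal completion along $\CS(\CZ_1)\times \Ran^{\on{untl}}$ adds an isomorphism between the two $\sG_1$-torsors over $^{\on{red}}\!S\times X_\dr$ compatible with their common image in $\sG_2$-torsors. This is precisely $\LocSys_{\sG_1/\sG_2}(\ofX_{\on{Ran}^{\on{untl}}})$ as described in the discussion preceding \thmref{t:main LocSys}. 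Similarly, $(\CS(\CZ_1)/\CS(\CZ_2))_\dr = (\LocSys_{\sG_1}(X)/\LocSys_{\sG_2}(X))_\dr$, and the projection $\fq$ of \corref{c:loc rel} is identified with the $\fq$ in the statement.

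Finally, with these identifications in place, the canonical factorization $\on{pre-Loc}\simeq (p_{\LocSys_{\sG_1}(X),\dr/\LocSys_{\sG_2}(X)})^!\circ \on{Loc}$ with $\on{Loc} = (\on{Id}\otimes \Gamma_{c,\on{Ran}^{\on{untl}}})\circ \fq^\IndCoh_*$ is exactly the content of \corref{c:loc rel} transcribed to the present notation. I do not anticipate a genuine obstacle: the only substantive input beyond bookkeeping is the sectional laft-ness of $\CZ_i$, which is \propref{p:sect laft ex}; the rest is an unwinding of definitions to confirm that the diagrams governing \corref{c:loc rel} specialize to those governing the LocSys localization functor. (The minor subtlety to be careful about is the canonicity of the identification $\overset{\circ}{\CS}(\CZ_1/\CZ_2)\strut^{\wedge}_{\on{Ran}^{\on{untl}}}\simeq \LocSys_{\sG_1/\sG_2}(\ofX_{\on{Ran}^{\on{untl}}})$ as categorical prestacks over $\on{Ran}^{\on{untl}}$, but this is enforced by the Weil restriction description in \secref{sss:punctured sects finite powers} applied fiberwise over finite powers of $X$.)
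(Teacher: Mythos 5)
Your proposal is correct and takes essentially the same route as the paper: the corollary is stated without further argument precisely because it is the specialization of \corref{c:loc rel} to $\CZ_i := (\on{pt}/\sG_i)\times X_\dr$, with sectional laft-ness supplied by \propref{p:sect laft ex} and the notation $\LocSys_{\sG_1/\sG_2}(\ofX_{\on{Ran}^{\on{untl}}})$ being \emph{defined} in the preceding paragraph as $\overset{\circ}{\CS}(\CZ_1/\CZ_2)\strut^{\wedge}_{\on{Ran}^{\on{untl}}}$ (so that last ``identification'' you flag as a subtlety is a notational convention, not something to verify). As a side remark, you also tacitly corrected a typo in the paper's display: the factorization should read $(p_{\LocSys_{\sG_1}(X),\dr/\LocSys_{\sG_2}(X)})^! \circ \on{Loc}$, matching the general form in \corref{c:loc rel}.
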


\ssec{The jet construction} \label{ss:jets}

\sssec{}

We will now consider another family of examples of targets $\CZ$ that are sectionally laft. Namely, let $Z$ be a laft-def prestack over $X$
and set
$$\on{Jets}(Z):=\on{Res}^X_{X_\dr}(Z).$$

\begin{rem}
Typically, $\on{Jets}(Z)$ will \emph{not} be laft over $X_\dr$. For instance if $X$ is a smooth curve, and $Y$ is a smooth
scheme, the fibers of the map $\on{Jets}(X \times Y)$ are given by (twists of) the formal arc space $Y[[t]]$.
\end{rem}

We are going to prove:

\begin{prop} \label{p:jets sect laft}
Suppose $X$ is eventually coconnective and $Z$ satisfies Zariski descent. Then $\on{Jets}(Z)$ is sectionally laft.
\end{prop}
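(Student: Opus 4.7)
The plan is to verify the two defining conditions of sectional laft-ness from \secref{sss:defn sect laft} for $\CZ := \on{Jets}(Z) = \on{Res}^X_{X_\dr}(Z)$, reducing each to the corresponding property of $Z$ itself (which is laft over $X$ by hypothesis).

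For condition (i), I will use the composite Weil-restriction adjunction: for $S \in \affSch$ we have
\[
\CS(\on{Jets}(Z))(S) \;=\; \on{Maps}_{/X_\dr}(S \times X_\dr,\, \on{Res}^X_{X_\dr}(Z)) \;=\; \on{Maps}_{/X}(S \times X,\, Z),
\]
so $\CS(\on{Jets}(Z))$ is simply the prestack of sections of $Z \to X$. Since $Z$ satisfies Zariski descent and products preserve Zariski covers, I can reduce to the case where $Z$ is Zariski-locally an affine laft prestack over $X$. In this case $\CS(\on{Jets}(Z))$ sits as a fiber product of mapping prestacks of the form $\underline{\on{Maps}}(X, -)$ over $\underline{\on{Maps}}(X, X)$, and each such mapping prestack is laft by \lemref{l:maps scheme to laft} (this is where the hypothesis that $X$ is eventually coconnective is used). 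Since laft is closed under finite limits, $\CS(\on{Jets}(Z))$ is laft; in particular its classical truncation is locally of finite type, which, in light of the remark following \propref{p:sect tangent}, suffices.

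For condition (ii), the key input is the Weil-restriction formula for the cotangent complex supplied by \corref{c:Weil def}. A point $s: S \to \CS(\on{Jets}(Z))$ corresponds via adjunction to $\on{ev}_s: S \times X_\dr \to \on{Jets}(Z)$, which in turn corresponds to $\bar{s}: S \times X \to Z$ over $X$. The formula reads
\[
\on{ev}_s^{\#}(T^*(\on{Jets}(Z))) \;\simeq\; (\on{id}_S \times p_X)_{\#} \circ \bar{s}^{\#}(T^*(Z/X))
\]
in $\on{Pro}(\QCoh(S \times X_\dr)^-)^{\on{fake}}$, where $p_X: X \to X_\dr$. Applying $\oblv^{\on{fake}}$ and using its compatibility with $(-)_{\#}$ along schematic quasi-compact quasi-separated maps (\lemref{l:pro oblv functors}, as in the end of the proof of \propref{p:punctured sects laft-def}), this equals $(\on{id}_S \times p_X)_{\#} \circ \oblv^{\on{fake}} \circ \bar{s}^{\#}(T^*(Z/X))$. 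Since $Z$ is laft, \corref{c:laft => sect laft} (which is proved via \propref{p:target ft abs}) guarantees that $\oblv^{\on{fake}} \circ \bar{s}^{\#}(T^*(Z/X))$ lies in $\on{Pro}(\QCoh(S \times X)^-)_{\on{laft}}$. Hence the remaining task is to show that $(\on{id}_S \times p_X)_{\#}$ preserves the laft subcategory.

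The main obstacle is precisely this last point. The map $p_X$ is not schematic, so $(\on{id}_S \times p_X)_{\#}$ does not fall under the lemmas of Appendix \ref{s:fake} used in the previous paragraph; instead I will show that, under Serre-Verdier duality (\corref{c:Serre Verdier}), this functor matches the opposite of $(\on{id}_S \times p_X)^!: \IndCoh(S \times X_\dr) \to \IndCoh(S \times X)$, whose target is $\IndCoh(S) \otimes \Dmod(X)$ via $S \times X \to S \times X_\dr$. Since $(\on{id}_S \times p_X)^!$ factors through the laft equivalence, its opposite lands inside $\on{Pro}(\QCoh(S \times X_\dr)^-)_{\on{laft}}$, yielding the desired conclusion. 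Verifying this identification of $(\on{id}_S \times p_X)_{\#}$ with Serre-Verdier duality along $p_X$ (a non-schematic, but laft and universally homologically contractible, nil-isomorphism) is the principal technical input and essentially reduces, by \propref{p:target ft abs}, to tracing how $\BD^{\on{Verdier}}(\omega_X)$ enters when one converts between the Serre and Serre-Verdier dualities on $S \times X_\dr$ versus $S \times X$.
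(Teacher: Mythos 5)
Your overall architecture matches the paper's: reduce condition (i) to the laft-ness of mapping prestacks via the Weil-restriction adjunction and \lemref{l:maps scheme to laft}, and reduce condition (ii), via \corref{c:Weil def} and base change, to showing that $(\on{id}_S \times p_{X,\dr})_{\#}$ carries the laft subcategory of $\on{Pro}(\QCoh(S\times X)^-)$ into $\on{Pro}(\QCoh(S\times X_\dr)^-)_{\on{laft}}$, identified via Serre--Verdier duality with an $\IndCoh$-level functor. That is the right skeleton, and it is what the paper does.

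However, the key step is misidentified. You claim that under the dualities $(\on{id}_S\times p_X)_{\#}$ corresponds to ``the opposite of $(\on{id}_S\times p_X)^!:\IndCoh(S\times X_\dr)\to \IndCoh(S\times X)$,'' but the source and target do not match: $(\on{id}_S\times p_X)_{\#}$ goes from pro-objects over $S\times X$ to pro-objects over $S\times X_\dr$, and under $\BD^{\on{Serre}}$ on the source and $\BD^{\on{SV}}$ on the target it must correspond to a functor $\IndCoh(S\times X)\to \IndCoh(S\times X_\dr)$, \emph{not} one going the other way. The correct functor is $(\on{id}_S\times p_{X,\dr})^{\IndCoh}_* \simeq \on{Id}\otimes\ind$, i.e.\ the induction (de Rham pushforward), which is precisely the content of \propref{p:tangent Jets abs}:
$$\oblv^{\on{fake}}\circ (\on{id}\times p_{X,\dr})_{\#}\circ \BD^{\on{Serre}}(\CF) \simeq \BD^{\on{SV}} \circ (\on{id}\times p_{X,\dr})^\IndCoh_*(\CF), \qquad \CF\in\IndCoh(S\times X).$$
This is also why $(s\times\on{id})^!(\Theta(\on{Jets}(Z)))$ comes out to be $(\on{Id}\otimes\ind)\circ(\on{ev}'_s)^!(T(Z))$ in \propref{p:tangent Jets}. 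Secondly, your plan to deduce this from \propref{p:target ft abs} (and its twist by $\BD^{\on{Verdier}}(\omega_X)$) does not work cleanly: \propref{p:target ft abs} compares $\BD^{\on{SV}}$ with $\oblv^{\on{fake}}\circ\BD^{\on{Serre}}$ on the \emph{same} prestack $S\times X_\dr$, whereas here the two dualities live on different prestacks joined by $(\on{id}\times p_{X,\dr})$, and the twist $\BD^{\on{Verdier}}(\omega_X)$ does not appear in the correct statement at all --- the paper gives a direct proof by evaluating both sides, reducing to bounded objects, and then to external tensor products. Finally, the reduction in condition (i) to ``$Z$ Zariski-locally an affine laft prestack over $X$'' is garbled; the Zariski-descent hypothesis is used \emph{inside} the proof of \lemref{l:maps scheme to laft} (to reduce the source $X$ to affine), and you should simply invoke that lemma for the fiber product $\CS(\on{Jets}(Z)) \simeq \underline{\on{Maps}}(X,Z)\underset{\underline{\on{Maps}}(X,X)}{\times}\{\on{id}\}$.
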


The rest of this subsection is devoted to the proof of this proposition.

\sssec{}

By adjunction, $\CS(\on{Jets}(Z))$ identifies with the prestack of sections
$$\on{Sect}(X,Z) = \underline{\on{Maps}}(X,Z) \underset{\underline{\on{Maps}}(X,X)}{\times} \{\on{id}\},$$
which is laft by \lemref{l:maps scheme to laft}.

\begin{rem} \label{r:punctured jets} 
Note that the same argument, combined with Corollaries \ref{c:Weil def} and \ref{c:fibration laft-def}, 
shows that for $Z$ as above, the categorical prestack
$\overset{\circ}{\CS}(\on{Jets}(Z))_{\Ran^{\on{untl}}}$ is laft. 
\end{rem}

\sssec{}

It remains to show that for a test laft affine scheme and a map $s:S\to \CS(\on{Jets}(Z))$, we have
$$\oblv^{\on{fake}}\circ \on{ev}_s^{\#}(T^*(\on{Jets}(Z))) \in \on{Pro}(\QCoh(S \times X_{\on{dR}})^-)_{\on{laft}}.$$

In fact, we will describe the Serre-Verdier dual of this object, i.e., 
$$(s\times \on{id})^!(\Theta(\on{Jets}(Z))\in \IndCoh(S)\otimes \Dmod(X)$$
explicitly. 

\begin{prop} \label{p:tangent Jets}
The object $\oblv^{\on{fake}}\circ \on{ev}_s^{\#}(T^*(\on{Jets}(Z)))$ identifies canonically with
$\BD^{\on{SV}}((s\times \on{id})^!(\Theta(\on{Jets}(Z)))$ for
$$(s\times \on{id})^!(\Theta(\on{Jets}(Z)):=(\on{Id}\otimes \ind)\circ (\on{ev}'_s)^!(T(Z))\in \IndCoh(S)\otimes \Dmod(X),$$
where $\on{ev}'_s$ is the map $S\times X\to Z$ corresponding to $s$ by adjunction.
\end{prop}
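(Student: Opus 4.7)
The plan is to compute $\on{ev}_s^{\#}(T^*(\on{Jets}(Z)))$ explicitly by base change through the Cartesian square defining the Weil restriction, then apply $\oblv^{\on{fake}}$, and finally identify the Serre-Verdier dual of the result with $(\on{id}\otimes \ind)\circ (\on{ev}'_s)^!(T(Z))$. Throughout, I exploit that $Z$ is laft, so that $T(Z)$ and $T^*(Z)$ are related by Serre duality, $T^*(Z) \simeq \BD^{\on{Serre}}(T(Z))$, and this Serre duality is compatible with $!$-pullback.

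The first step is to apply \corref{c:Weil def} to the Weil restriction $\on{Jets}(Z) = \on{Res}^X_{X_\dr}(Z)$, which computes $T^*(\on{Jets}(Z))$ (equivalently, using $T^*(X_\dr) = 0$, the relative cotangent $T^*(\on{Jets}(Z)/X_\dr)$) as the $\#$-pushforward of the $\#$-pullback of $T^*(Z)$ along the universal evaluation map $\on{ev}^{\on{univ}}\colon \on{Jets}(Z)\underset{X_\dr}{\times} X \to Z$. Base change for the $\#$-pullback (\lemref{l:pro base change}) applied to the Cartesian square
$$
\xymatrix{
S\times X \ar[r]^-{\on{ev}'_s}\ar[d]_-{q_X} & Z \ar[d]^-{\on{ev}^{\on{univ}}}\\
S\times X_\dr \ar[r]^-{\on{ev}_s} & \on{Jets}(Z),
}
$$
with $q_X = \on{id}_S\times p_{X,\dr}$, then yields
$$\on{ev}_s^{\#}(T^*(\on{Jets}(Z))) \simeq (q_X)_{\#}\circ (\on{ev}'_s)^{\#}(T^*(Z))$$
in $\on{Pro}(\QCoh(S\times X_\dr)^-)^{\on{fake}}$. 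Since $q_X$ is schematic and quasi-compact quasi-separated, \lemref{l:pro oblv functors} allows me to commute $\oblv^{\on{fake}}$ past $(q_X)_{\#}$; combined with $(\on{ev}'_s)^{\#}(T^*(Z)) \simeq \BD^{\on{Serre}}((\on{ev}'_s)^!(T(Z)))$, this rewrites the left-hand side as $(q_X)_{\#}\circ \oblv^{\on{fake}}\circ \BD^{\on{Serre}}((\on{ev}'_s)^!(T(Z)))$.

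To identify this with $\BD^{\on{SV}}_{S\times X_\dr}((\on{id}\otimes \ind)(\on{ev}'_s)^!(T(Z)))$, I apply \propref{p:target ft abs} on $S\times X$ (in its form with trivial de Rham direction along $X$), recasting $\oblv^{\on{fake}}\circ \BD^{\on{Serre}}$ as $\BD^{\on{SV}}_{S\times X}$ composed with a twist by $\BD^{\on{Verdier}}(\omega_X)$. It remains to verify that $(q_X)_{\#}\circ \BD^{\on{SV}}_{S\times X}(-\sotimes \BD^{\on{Verdier}}(\omega_X))$ agrees with $\BD^{\on{SV}}_{S\times X_\dr}\circ (\on{id}\otimes \ind)$. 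Since $\BD^{\on{SV}}$ is a contravariant equivalence exchanging $q_X^!$ with $(q_X)^{\#}$, its contravariant adjunction converts the left adjoint $(q_X)_{\#}$ of $(q_X)^{\#}$ into the left adjoint $\on{id}\otimes\ind$ of $q_X^! = \on{id}\otimes \oblv$; the twist by $\BD^{\on{Verdier}}(\omega_X)$ is absorbed by Verdier duality, which relates $\oblv$ and $\ind$ up to precisely this twist.

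The main obstacle is the final bookkeeping of Verdier twists in this last identification: although each individual ingredient (base change for $\#$, commutation of $\oblv^{\on{fake}}$ with schematic qcqs $\#$-pushforward, Serre duality on $Z$, and the pairing computation of \propref{p:target ft abs}) is directly cited, assembling them requires one to verify that the contravariance of $\BD^{\on{SV}}$ and the twist by $\BD^{\on{Verdier}}(\omega_X)$ conspire to produce exactly $\on{id}\otimes \ind$ with no residual shifts or dualizing sheaves. Once that check is carried out, the formula of the proposition follows formally, and as a by-product the fake pro-object $\oblv^{\on{fake}}\circ \on{ev}_s^{\#}(T^*(\on{Jets}(Z)))$ automatically lies in $\on{Pro}(\QCoh(S\times X_\dr)^-)_{\on{laft}}$, which is the input needed by \propref{p:jets sect laft}.
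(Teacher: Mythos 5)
Your opening moves coincide with the paper's: \corref{c:Weil def} plus the Cartesian square and \lemref{l:pro base change} give $\on{ev}_s^{\#}(T^*(\on{Jets}(Z))) \simeq (q_X)_{\#}\circ(\on{ev}'_s)^{\#}(T^*(Z))$ with $q_X = \on{id}_S\times p_{X,\dr}$, and since $Z$ is laft, $(\on{ev}'_s)^{\#}(T^*(Z))\simeq\BD^{\on{Serre}}((\on{ev}'_s)^!(T(Z)))$. From there, though, the argument does not hold together. A telling slip: you invoke \lemref{l:pro oblv functors} on the grounds that $q_X$ is ``schematic and qcqs'' — but $p_{X,\dr}:X\to X_\dr$ is not schematic, and that hypothesis belongs to part (b) of the lemma anyway. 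The part that applies is (a), with no hypothesis, and it gives $\oblv^{\on{fake}}\circ(q_X)_{\#} = \Res_{q_X^*}\circ\oblv^{\on{fake}}$; your rewriting ``$(q_X)_{\#}\circ\oblv^{\on{fake}}\circ\cdots$'' does not type-check, since after $\oblv^{\on{fake}}$ one is in the non-fake pro-category where $(q_X)_{\#}$ is not defined.

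The more serious gap is the final identification, which you treat as formal but which is exactly the content the paper proves by computation in \propref{p:tangent Jets abs}. First, applying \propref{p:target ft abs} ``on $S\times X$ with trivial de Rham direction'' cannot produce a twist by $\BD^{\on{Verdier}}(\omega_X)$: on the scheme $S\times X$ the functor $\oblv^{\on{fake}}$ is an equivalence and $\oblv^{\on{fake}}\circ\BD^{\on{Serre}}$ is just the Serre-duality equivalence $\IndCoh(S\times X)^{\on{op}}\simeq\on{Pro}(\QCoh(S\times X)^-)_{\on{laft}}$, with no twist. You then claim the spurious twist is ``absorbed by Verdier duality, which relates $\oblv$ and $\ind$ up to precisely this twist'' — but $\ind$ and $\oblv$ are adjoint, not twist-related; it is $\Psi$ and $\Upsilon$ that differ by a twist, a different pair. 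Finally, your assertion that $\BD^{\on{SV}}$ intertwines $q_X^!$ with $q_X^{\#}$ is borrowed from the commutative square in \secref{sss:Theta on S}, but that square governs pullbacks along maps of \emph{affine schemes} $S\to S'$ (the $S$-direction), not along the de Rham map $X\to X_\dr$; the analogous intertwining in the $X$-direction is precisely what needs to be established. The paper does this concretely in \propref{p:tangent Jets abs}: evaluate both sides on a test object $M$, reduce to bounded and tensor-decomposed $M$ and $\CF$, and then check on the $X$-factor that $\langle\CF_X,\oblv^l(M_X)\rangle_X$ agrees with $\Gamma_\dr(X,\CF_X\sotimes\Upsilon_{X_\dr}(M_X))$ — a genuinely non-trivial step involving $\Psi$, $\Upsilon$, and the left-versus-right D-module comparison. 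That verification is the heart of the proposition and cannot be replaced by the adjunction/contravariance argument you sketch.
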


\begin{proof}

Consider the diagram
$$
\CD
S \times X @>>> \on{Jets}(Z)\underset{X_\dr}\times X   @>>>  Z  \\
@V{\on{id}\times p_{X,\dr}}VV @VVV \\
S \times X_\dr @>{\on{ev}_s}>> \on{Jets}(Z),
\endCD
$$
in which the composite horizontal arrow is $\on{ev}'_s$ and the square is Cartesian. 

\medskip

By \corref{c:Weil def} and \lemref{l:pro base change}, we obtain that 
$$\on{ev}_s^{\#}(T^*(\on{Jets}(Z)))\simeq (\on{id}\times p_{X,\dr})_{\#}\circ (\on{ev}'_s)^{\#}(T^*(Z)).$$

\medskip

Hence, \propref{p:tangent Jets} is a particular case of the following general statement
for $\CF:=(\on{ev}'_s)^{\#}(T^*(Z))$:

\begin{prop} \label{p:tangent Jets abs}
For any $\CF\in \IndCoh(S \times X)$, 
\begin{equation} \label{e:fake dir im}
\oblv^{\on{fake}}\circ (\on{id}\times p_{X,\dr})_{\#}\circ \BD^{\on{Serre}}(\CF) \simeq 
\BD^{\on{SV}} \circ (\on{id}\times p_{X,\dr})^\IndCoh_*(\CF).
\end{equation}
\end{prop}

\end{proof}

\sssec{Proof of \propref{p:tangent Jets abs}}
We evaluate both sides of \eqref{e:fake dir im} on an object $M\in \QCoh(S\times X_{\dr})^-$.

\medskip

By \lemref{l:pro oblv functors}(a), the left-hand side gives
$$\langle \CF,(\on{id}\times p_{X,\dr})^*(M)\rangle_{S\times X},$$
where $\langle-,-\rangle_{S\times X}$ is as in \eqref{e:pairing}. 

\medskip

In particular, we obtain that the value of the left-hand side in \eqref{e:fake dir im} on $M$ maps isomorphically to the 
limit of its values on the truncations $\tau^{\geq -n}(M)$. Hence, we can assume that $M\in \QCoh(S\times X_\dr)^b$.

\medskip

In this case, arguing as in the proof of \lemref{l:Xi Upsilon}, we can assume that
$$\CF\simeq \CF_S\boxtimes \CF_X, \quad \CF_S\in \Coh(S),\,\, \CF_X\in \Coh(X)$$
and
$$M\simeq M_S\boxtimes M_X, \quad M_S\in \Coh(S),\,\, M_X \in \Dmod^l(X)^c.$$

Then the evaluation of the left-hand side in \eqref{e:fake dir im} gives
$$\langle \CF_S,M_S\rangle_S \otimes \langle \CF_X,\oblv^l(M_X)\rangle_X,$$
and the evaluation of the right-hand side in \eqref{e:fake dir im} gives
$$\Gamma(S,\Psi_S(\CF_S \sotimes \Psi_S^{-1}(M_S)))\otimes 
\Gamma_\dr(X,\CF_X\sotimes \Upsilon_{X_\dr}(M_X)).$$

The resulting expressions match on the nose.

\qed[\propref{p:tangent Jets abs}]

\ssec{Infinitesimal groupoid on the stack of principal bundles}

In this subsection we will apply \propref{p:jets sect laft} in a key case of interest.

\sssec{} \label{sss:Hecke groupoid}

We take $Z$ to be $X\times \on{pt}/G$, where $G$ is an algebraic group. In this case, the prestack
$$\CS(\on{Jets}(\on{pt}/G))\simeq \underline\Maps(X,\on{pt}/G)$$
is the stack $\Bun_G(X)$ of principal $G$-bundles on $X$. 

\medskip

The resulting groupoid 
$$\widehat{\on{Hecke}}(\Bun_G(X))_{\on{Ran}^{\on{untl}}}:=
\widehat{\on{Hecke}}(\on{Jets}(\on{pt}/G))_{\on{Ran}^{\on{untl}}}$$
can be described explicitly as follows.

\medskip

Let $\on{Hecke}(\on{Jets}(\Bun_G(X))_{\on{Ran}^{\on{untl}}}$ denote the \emph{usual} Hecke groupoid; i.e., 
for an affine test-scheme $S$, an $S$-point of $\on{Hecke}(\Bun_G(X))_{\on{Ran}^{\on{untl}}}$ is given by:

\begin{itemize}
\item
An $S$-point of $\Ran^{\on{untl}}$; let $\on{Graph} \subset S \times X$ denote the (set-theoretic) 
union of the graphs of the points corresponding to this $S$-point of the Ran space.

\item
A pair of $G$-bundles $\CP_1$ and $\CP_2$ on $S\times X$;

\item
An isomorphism
$\alpha: \CP_1|_{S \times X- \on{Graph}} \simeq \CP_2|_{S \times X - \on{Graph}}$.

\end{itemize}

Then $\widehat{\on{Hecke}}(\Bun_G(X))_{\on{Ran}^{\on{untl}}}$ is the completion of $\on{Hecke}(\Bun_G(X))_{\on{Ran}^{\on{untl}}}$
along the unit section
$$\Bun_G(X)\times \on{Ran}^{\on{untl}}\to \on{Hecke}(\Bun_G(X))_{\on{Ran}^{\on{untl}}}.$$

\sssec{} \label{sss:bundles on punctured}

Note that according to Remark \ref{r:punctured jets}, we can also consider the laft-def prestack 
$$\Bun_G(\ofX_{\on{Ran}^{\on{untl}}}):=\overset{\circ}{\CS}(\on{Jets}(\on{pt}/G))_{\Ran^{\on{untl}}};$$
this is the prestack of $G$-bundles ``defined away from a finite collection of points". 

\medskip

By definition, as $S$-point of $\Bun_G(\ofX_{\on{Ran}^{\on{untl}}})$ is given by:

\begin{itemize}

\item
An $S$-point of $\Ran^{\on{untl}}$;

\item 
A $G$-bundle $\CP$ on $S\times X-\on{Graph}$.

\end{itemize} 

Note that
$$ \on{Hecke}(\Bun_G(X))_{\on{Ran}^{\on{untl}}} \simeq \Bun_G(X)\times \on{Ran}^{\on{untl}} \underset{\Bun_G(\ofX_{\on{Ran}^{\on{untl}}})}{\times} \Bun_G(X)\times \on{Ran}^{\on{untl}}.$$
In particular, $\on{Hecke}(\Bun_G(X))_{\on{Ran}^{\on{untl}}}$ is laft-def.

\sssec{}

Let 
$$\Bun_G(\ofX_{\on{Ran}^{\on{untl}}})\strut^{\wedge}:=\overset{\circ}{\CS}(\on{Jets}(\on{pt}/G))\strut^{\wedge}_{\on{Ran}^{\on{untl}}}$$
be the formal completion of $\Bun_G(\ofX_{\on{Ran}^{\on{untl}}})$ along the restriction map
$$\fR:\Bun_G(X)\times \on{Ran}^{\on{untl}}\to \Bun_G(\ofX_{\on{Ran}^{\on{untl}}}).$$

\medskip 

By definition, as $S$-point of $\Bun_G(\ofX_{\on{Ran}})\strut^{\wedge}$ is given by:

\begin{itemize}

\item
An $S$-point of $\Ran^{\on{untl}}$; 

\item 
A $G$-bundle $\CP$ on $S\times X-\on{Graph}$;
 
\item 
A $G$-bundle $\CP_{\on{red}}$ on $^{\on{red}}\!S\times X$;

\item 
An identification 
$$\CP|_{^{\on{red}}\!S\times X-\on{Graph}}\simeq \CP_{\on{red}}|_{^{\on{red}}\!S\times X-\on{Graph}}.$$

\end{itemize} 

\sssec{}

Applying our main theorems to this situation, we obtain:

\begin{thm} \label{t:main Bun_G}
For an object $\CF\in \IndCoh(\Bun_G(X))$ the tautological maps between the following spaces of data on $\CF$ are isomorphisms:
\begin{enumerate}[label={(\roman*)}]
\item
The data of descent of $\CF$ to an object of $\IndCoh((\Bun_G(X))_\dr)$, i.e., the data of exhibiting $\CF$
as $\oblv$ of an object in $\Dmod(\Bun_G(X))$.

\item
The data of descent of the pullback of $\CF$ to $\Bun_G\times \Ran^{\on{untl}}$ to an object of the category
$\IndCoh(\Bun_G(\ofX_{\on{Ran}^{\on{untl}}})\strut^{\wedge})$;

\item[(ii')]
The data of descent of the pullback of $\CF$ to $\Bun_G(X)\times \Ran$ to an object of
$\IndCoh(\Bun_G(\ofX_{\on{Ran}})\strut^{\wedge})$. 
\end{enumerate}
\end{thm}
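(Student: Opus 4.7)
The approach is to recognize \thmref{t:main Bun_G} as a direct specialization of the general results of the paper to the jet-construction input
$$\CZ := \on{Jets}((\on{pt}/G) \times X) = \on{Res}^X_{X_\dr}((\on{pt}/G) \times X).$$
The plan is to verify the hypothesis of sectional laft-ness for this $\CZ$, match up the geometric objects appearing in the statement with those appearing in the general machinery, and then invoke \corref{c:unital Ran fiber product} and \thmref{t:main non-unital}, with \thmref{t:Ran cofinality new} mediating between the unital and non-unital versions.

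First, I would verify that $\CZ$ is sectionally laft. Since $X$ is proper of finite type it is eventually coconnective, and the target $(\on{pt}/G)\times X$ satisfies Zariski descent (as both factors do), so \propref{p:jets sect laft} applies directly and yields sectional laft-ness of $\CZ$. Note that $\CZ$ is genuinely not laft in general (cf.\ the remark in \secref{ss:jets}), so one cannot appeal to \corref{c:laft => sect laft}; the jet-specific proposition is essential here.

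Next, I would identify the relevant geometric objects. By the defining adjunction of the Weil restriction one has $\CS(\on{Jets}(Z)) \simeq \on{Sect}(X,Z)$ for any $Z\to X$, and specializing to $Z = (\on{pt}/G)\times X$ gives $\CS(\CZ) \simeq \underline{\Maps}(X, \on{pt}/G) = \Bun_G(X)$. By the explicit description in \secref{sss:bundles on punctured}, the categorical prestack $\overset{\circ}{\CS}(\CZ)_{\on{Ran}^{\on{untl}}}$ is precisely $\Bun_G(\ofX_{\on{Ran}^{\on{untl}}})$, and hence its formal completion along the restriction map $\Bun_G(X)\times \on{Ran}^{\on{untl}} \to \Bun_G(\ofX_{\on{Ran}^{\on{untl}}})$ is $\Bun_G(\ofX_{\on{Ran}^{\on{untl}}})\strut^{\wedge}$; analogously over the non-unital Ran prestack.

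With these identifications, \corref{c:unital Ran fiber product} yields an equivalence
$$\Dmod(\Bun_G(X)) \simeq \IndCoh(\Bun_G(X)) \underset{\IndCoh(\Bun_G(X))\otimes \Dmod(\on{Ran}^{\on{untl}})}\times \IndCoh(\Bun_G(\ofX_{\on{Ran}^{\on{untl}}})\strut^{\wedge}),$$
and taking the fiber over the fixed $\CF$ gives the claimed isomorphism between the spaces of data in (i) and (ii). The equivalence of (i) with (ii') follows from \thmref{t:main non-unital} in the same way (one can equivalently pass between (ii) and (ii') via \thmref{t:Ran cofinality new}). Because the assertion is a direct specialization of machinery already established, there is no real obstacle to overcome; the main (routine) step is simply the bookkeeping matching $\CS(\on{Jets}(-))$ and its punctured-formal analogue with the bundle-theoretic notations $\Bun_G(X)$ and $\Bun_G(\ofX_{\on{Ran}^{\on{untl}}})\strut^{\wedge}$.
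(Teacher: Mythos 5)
Your proof is correct and follows the same route as the paper, which simply states that \thmref{t:main Bun_G} is obtained by ``applying our main theorems'' to $\CZ = \Res^X_{X_\dr}((\on{pt}/G)\times X)$: sectional laft-ness via \propref{p:jets sect laft}, the identifications $\CS(\CZ)\simeq \Bun_G(X)$ and $\overset{\circ}{\CS}(\CZ)\strut^\wedge_{\Ran^{\on{untl}}}\simeq \Bun_G(\ofX_{\Ran^{\on{untl}}})\strut^\wedge$ made in \secref{sss:Hecke groupoid}--\secref{sss:bundles on punctured}, and then \corref{c:unital Ran fiber product} and \thmref{t:main non-unital}. One very minor caveat: ``proper of finite type $\Rightarrow$ eventually coconnective'' is not automatic for derived schemes, but the standing conventions of the paper ensure $X$ is eventually coconnective wherever \propref{p:jets sect laft} is invoked, so this does not affect the argument.
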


\begin{cor} \label{c:Loc BunG}
The localization functor
$$\on{pre-Loc}: \IndCoh(\Bun_G(\ofX_{\on{Ran}^{\on{untl}}})\strut^{\wedge})\to \IndCoh(\Bun_G(X)),$$ defined as 
\begin{multline*}
\IndCoh(\Bun_G(\ofX_{\on{Ran}^{\on{untl}}})\strut^{\wedge}) \overset{\fr^!_{\on{Ran}^{\on{untl}}}}\longrightarrow
\IndCoh(\Bun_G(X)\otimes \Dmod(\on{Ran}^{\on{untl}}) \to \\
\overset{\on{Id}\otimes \Gamma_{c,\Ran^{\on{untl}}}}\longrightarrow \IndCoh(\Bun_G(X))
\end{multline*}
factors canonically as
$$(p_{\Bun_G,\dr})^! \circ \on{Loc},$$
where $\on{Loc}:\IndCoh(\Bun_G(\ofX_{\on{Ran}^{\on{untl}}})\strut^{\wedge})\to \IndCoh(\Bun_G(X)_\dr)$ is the functor
$$(\on{Id}\otimes \Gamma_{c,\on{Ran}^{\on{untl}}}) \circ \fq^\IndCoh_*,\quad 
\fq: \Bun_G(\ofX_{\on{Ran}^{\on{untl}}})\strut^{\wedge} \to \Bun_G(X)_\dr \times \on{Ran}^{\on{untl}}.$$
\end{cor}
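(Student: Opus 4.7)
The plan is to deduce Corollary \ref{c:Loc BunG} as a direct instance of the general localization statement \corref{c:localization}, applied to the particular target prestack
$$\CZ := \on{Jets}(\on{pt}/G) = \on{Res}^X_{X_\dr}(X \times \on{pt}/G).$$
The first step is to verify the hypotheses. Since $X$ is a proper connected scheme of finite type over a characteristic $0$ field, it is in particular eventually coconnective; and $X \times \on{pt}/G \to X$ satisfies Zariski descent because classifying stacks do. So \propref{p:jets sect laft} applies and $\CZ$ is sectionally laft, which is exactly the hypothesis needed to invoke our main theorem and its consequences.

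Next I would identify the geometric objects. By adjunction for Weil restriction,
$$\CS(\CZ) = \on{Sect}(X_\dr, \on{Res}^X_{X_\dr}(X \times \on{pt}/G)) \simeq \underline{\on{Maps}}(X, \on{pt}/G) = \Bun_G(X),$$
and similarly, working level-wise over $\on{Ran}^{\on{untl}}$ as in \secref{sss:punctured sects finite powers}, the prestack $\overset{\circ}{\CS}(\CZ)_{\on{Ran}^{\on{untl}}}$ agrees by construction with $\Bun_G(\ofX_{\on{Ran}^{\on{untl}}})$ described in \secref{sss:bundles on punctured}, so its formal completion along the restriction map $\fR$ is precisely $\Bun_G(\ofX_{\on{Ran}^{\on{untl}}})\strut^{\wedge}$. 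Under these identifications, the restriction map $\fr_{\on{Ran}^{\on{untl}}}$ of \secref{ss:loc} becomes $\fR$, and the map $\fq$ of \corref{c:localization} is exactly the map
$$\fq: \Bun_G(\ofX_{\on{Ran}^{\on{untl}}})\strut^{\wedge} \to \Bun_G(X)_\dr \times \on{Ran}^{\on{untl}}$$
appearing in the statement.

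With these identifications in place, \corref{c:localization} produces the canonical isomorphism
$$\on{pre-Loc} = (\on{Id}\otimes \Gamma_{c,\on{Ran}^{\on{untl}}}) \circ \fr^!_{\on{Ran}^{\on{untl}}} \;\simeq\; (p_{\Bun_G,\dr})^! \circ \on{Loc},$$
where $\on{Loc} = (\on{Id}\otimes \Gamma_{c,\on{Ran}^{\on{untl}}}) \circ \fq^{\IndCoh}_*$. This gives the desired factorization.

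There is no real obstacle here beyond bookkeeping: the content is entirely contained in \corref{c:localization} (which in turn depends on \thmref{t:main}), and the only substantive verification is the sectional laft-ness of $\on{Jets}(\on{pt}/G)$, which has already been established. If one wanted to give a self-contained argument in this concrete case, the mildly non-trivial point would be checking the identification of the formal completion $\Bun_G(\ofX_{\on{Ran}^{\on{untl}}})\strut^{\wedge}$ with $\overset{\circ}{\CS}(\on{Jets}(\on{pt}/G))\strut^{\wedge}_{\on{Ran}^{\on{untl}}}$ at the level of functors of points; this amounts to unwinding the defining data of a Hecke-type modification (a bundle on the punctured space, a bundle on the reduced scheme, and a compatibility on the intersection) and comparing it with the general formal punctured sections construction of \secref{sss:formal compl categ}, which is routine.
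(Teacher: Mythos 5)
Your proposal is correct and follows exactly the route the paper intends: Corollary~\ref{c:Loc BunG} is presented in the paper without a separate proof, precisely because it is the specialization of Corollary~\ref{c:localization} to $\CZ = \on{Jets}(\on{pt}/G) = \on{Res}^X_{X_\dr}(X\times\on{pt}/G)$, with sectional laft-ness supplied by Proposition~\ref{p:jets sect laft} and the identifications $\CS(\CZ)\simeq\Bun_G(X)$, $\overset{\circ}{\CS}(\CZ)\strut^{\wedge}_{\on{Ran}^{\on{untl}}}\simeq\Bun_G(\ofX_{\on{Ran}^{\on{untl}}})\strut^{\wedge}$ being definitional (see \secref{sss:Hecke groupoid} and \secref{sss:bundles on punctured}). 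One small inaccuracy: you assert that properness and finite type over a characteristic-zero field imply $X$ is eventually coconnective, but in derived algebraic geometry this is an additional hypothesis rather than a consequence; the paper implicitly assumes it in this subsection (in the later applications $X$ is a smooth projective curve, so this is automatic), so the conclusion stands, but the justification should be stated as an assumption rather than a deduction.
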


\section{Uniformization and WZW conformal blocks} \label{s:WZW}

In this section we will let $X$ be a smooth projective curve and $G$ a reductive group. 
We will write $\Bun_G$ instead of $\Bun_G(X)$. 

\ssec{Hecke equivariant sheaves}

\sssec{}

Recall the Hecke groupoid $\on{Hecke}(\Bun_G)_{\on{Ran}^{\on{untl}}}$ acting on 
$\Bun_G\times \on{Ran}^{\on{untl}}$.

\medskip

Pull-push along the two projections
$$\partial_s,\partial_t:\on{Hecke}(\Bun_G)_{\on{Ran}^{\on{untl}}}\to \Bun_G\times \Ran^{\on{untl}}$$
defines a monad, to be denoted $\CH(\Bun_G)_{\Ran^{\on{untl}}}$ acting on 
$$\IndCoh(\Bun_G\times \Ran^{\on{untl}}) \simeq \IndCoh(\Bun_G)\otimes \Dmod(\Ran^{\on{untl}}).$$

By construction, the category
$$\CH(\Bun_G)_{\Ran^{\on{untl}}}\mod(\IndCoh(\Bun_G\times \Ran^{\on{untl}}))$$
consists of objects in $\IndCoh(\Bun_G\times \Ran^{\on{untl}})$ equipped with a data of equivariance with respect to
groupoid $\on{Hecke}(\Bun_G)_{\on{Ran}^{\on{untl}}}$, which we will also denote by
$$\IndCoh(\Bun_G\times \Ran^{\on{untl}})^{\on{Hecke}_{\Ran^{\on{untl}}}}.$$

\sssec{} \label{sss:bundles on punctured again}

Recall the laft-def categorical prestack $\Bun_G(\ofX_{\on{Ran}^{\on{untl}}})$, see \secref{sss:bundles on punctured}. Note that
since $G$ is reductive, the projection 
$$\fR:\Bun_G \times \Ran^{\on{untl}}\to \Bun_G(\ofX_{\on{Ran}^{\on{untl}}})$$
is ind-proper (see \cite[Theorem A.3.7]{GLys}). 

\medskip

Hence, we can consider the monad $\fR^!\circ \fR^{\IndCoh}_*$ acting on $\IndCoh(\Bun_G\times \Ran^{\on{untl}})$. By base change,
this monad identifies with $\CH(\Bun_G)_{\Ran^{\on{untl}}}$.

\begin{rem}
The map $\fR$ is surjective at the level of $k$-points. This implies that the functor $\fR^!$ is conservative, i.e., conditions
of the Barr-Beck-Lurie theorem apply, and we obtain that the functor $\fR^!$ induces an \emph{equivalence}
$$\IndCoh(\Bun_G(\ofX_{\on{Ran}^{\on{untl}}}))\to \CH(\Bun_G)_{\Ran^{\on{untl}}}\mod(\IndCoh(\Bun_G\times \Ran^{\on{untl}})).$$
\end{rem} 

\sssec{}

As in \secref{ss:pushforward monad}, from the monad $\CH(\Bun_G)_{\Ran^{\on{untl}}}$ we can create a monad
$\Gamma_{c,\Ran^{\on{untl}}}(\CH(\Bun_G)_{\Ran^{\on{untl}}})$ acting on $\IndCoh(\Bun_G)$. 

\medskip

The underlying endofunctor of $\IndCoh(\Bun_G)$ is given by 
\begin{equation} \label{e:monad downstairs BunG}
(\on{Id}\otimes \Gamma_{c,\Ran^{\on{untl}}})\circ \CH(\Bun_G)_{\Ran^{\on{untl}}} \circ (\on{Id}\otimes p^!_{\Ran^{\on{untl}}}),
\end{equation} 
see \eqref{e:monad downstairs}. 

The structure of monad on the endofunctor \eqref{e:monad downstairs BunG} is given by the structure of monad
on $\CH(\Bun_G)_{\Ran^{\on{untl}}}$, and the monoidal structure on $\Gamma_{c,\Ran^{\on{untl}}}$.

\begin{rem} \label{r:full Hecke Ran}
Note that by \thmref{t:Ran cofinality new}, the endofunctor of $\IndCoh(\Bun_G)$ underlying the monad 
$\Gamma_{c,\Ran^{\on{untl}}}(\CH(\Bun_G)_{\Ran^{\on{untl}}})$ identifies with 
$$(\on{Id}\otimes \Gamma_{c,\Ran})\circ \CH(\Bun_G)_{\Ran} \circ (\on{Id}\otimes p^!_{\Ran}).$$
\end{rem}

\sssec{}

As in \secref{sss:modules for pushforward monad}, the category
$$\Gamma_{c,\Ran^{\on{untl}}}(\CH(\Bun_G)_{\Ran^{\on{untl}}})\mod(\IndCoh(\Bun_G))$$
identifies with
$$\IndCoh(\Bun_G) \underset{\IndCoh(\Bun_G\times \Ran^{\on{untl}})}\times \IndCoh(\Bun_G\times \Ran^{\on{untl}})^{\on{Hecke}_{\Ran^{\on{untl}}}}.$$

By \secref{ss:non-unital}, we can equivalently rewrite it as
$$\IndCoh(\Bun_G) \underset{\IndCoh(\Bun_G\times \Ran)}\times \IndCoh(\Bun_G\times \Ran^{\on{untl}})^{\on{Hecke}_{\Ran}}.$$

We will use the notation:
$$\Gamma_{c,\Ran^{\on{untl}}}(\CH(\Bun_G)_{\Ran^{\on{untl}}})\mod(\IndCoh(\Bun_G))=:\IndCoh(\Bun_G)^{\on{Hecke}}.$$

We will denote by $\oblv^{\on{Hecke}}$ the forgetful functor
$$\IndCoh(\Bun_G)^{\on{Hecke}}\to \IndCoh(\Bun_G).$$

\sssec{}

Note that the pullback functor
$$p_{\Bun_G}^!:\Vect=\IndCoh(\on{pt})\to \IndCoh(\Bun_G)$$ naturally lifts to a functor
\begin{equation} \label{e:full Hecke equiv}
\Vect \to \IndCoh(\Bun_G)^{\on{Hecke}}.
\end{equation} 

The main result of this subsection is:

\begin{thm} \label{t:Hecke equiv}
The functor \eqref{e:full Hecke equiv} is an equivalence.
\end{thm}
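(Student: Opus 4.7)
The plan is to reduce \thmref{t:Hecke equiv} to Gaitsgory's contractibility theorem \cite{Contr}, asserting $\Dmod(\Bun_G)^{\on{Hecke}} \simeq \Vect$, together with \thmref{t:main Bun_G}.

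First I would rewrite $\IndCoh(\Bun_G)^{\on{Hecke}}$ geometrically. Because $G$ is reductive, the restriction morphism $\fR: \Bun_G \times \Ran^{\on{untl}} \to \Bun_G(\ofX_{\Ran^{\on{untl}}})$ is ind-proper and $\fR^!$ is conservative (see \secref{sss:bundles on punctured again}), so the Barr--Beck--Lurie theorem gives
$$ \IndCoh(\Bun_G \times \Ran^{\on{untl}})^{\on{Hecke}_{\Ran^{\on{untl}}}} \simeq \IndCoh(\Bun_G(\ofX_{\Ran^{\on{untl}}})), $$
and hence
$$ \IndCoh(\Bun_G)^{\on{Hecke}} \simeq \IndCoh(\Bun_G) \underset{\IndCoh(\Bun_G \times \Ran^{\on{untl}})}\times \IndCoh(\Bun_G(\ofX_{\Ran^{\on{untl}}})). $$

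Next I would construct a canonical comparison functor $\Phi: \IndCoh(\Bun_G)^{\on{Hecke}} \to \Dmod(\Bun_G)^{\on{Hecke}}$. Factoring $\fR$ through the formal completion inclusion $j: \Bun_G(\ofX_{\Ran^{\on{untl}}})\strut^{\wedge} \hookrightarrow \Bun_G(\ofX_{\Ran^{\on{untl}}})$ and applying $j^!$ to the second factor of the fiber product above lands in the analogous fiber product with $\Bun_G(\ofX_{\Ran^{\on{untl}}})\strut^{\wedge}$ in place of $\Bun_G(\ofX_{\Ran^{\on{untl}}})$, which by \thmref{t:main Bun_G} is canonically $\Dmod(\Bun_G)$. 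Descent along the ind-proper morphism $\fR_\dr$ promotes the residual $\IndCoh(\Bun_G(\ofX_{\Ran^{\on{untl}}}))$-datum into a D-module Hecke-equivariant structure, producing $\Phi$.

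The main technical step, and the principal obstacle, is showing that $\Phi$ is an equivalence. The conceptual reason is that the full Hecke groupoid is an ``extension'' of its infinitesimal part $\widehat{\on{Hecke}}$ by the D-module Hecke groupoid $\on{Hecke}_\dr$: equivariance against $\widehat{\on{Hecke}}$ on an object of $\IndCoh(\Bun_G)$ produces a D-module enhancement (by \thmref{t:main Bun_G}), while the residual data is precisely D-module Hecke equivariance. Schematically,
$$ \IndCoh(\Bun_G)^{\on{Hecke}} \;\simeq\; \bigl(\IndCoh(\Bun_G)^{\widehat{\on{Hecke}}}\bigr)^{\on{Hecke}/\widehat{\on{Hecke}}} \;\simeq\; \Dmod(\Bun_G)^{\on{Hecke}_\dr}. $$
Rigorously, this requires a cosimplicial comparison of descent along $\fR$ (for $\IndCoh$) with descent along $\fR_\dr$ (for $\Dmod$), with the ``difference'' controlled by the infinitesimal Hecke groupoid via \thmref{t:main Bun_G}, and a compatibility check between $\IndCoh$ on the non-laft-def $\Bun_G(\ofX_{\Ran^{\on{untl}}})$ and $\Dmod$ on its formal completion.

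Once $\Phi$ is an equivalence, Gaitsgory's theorem closes the argument: the composite $\Vect \to \IndCoh(\Bun_G)^{\on{Hecke}} \overset{\Phi}\to \Dmod(\Bun_G)^{\on{Hecke}} \simeq \Vect$ sends the generator to $\omega_{\Bun_G}$ equipped with its canonical Hecke equivariance, which corresponds to the generator of $\Dmod(\Bun_G)^{\on{Hecke}}$ under the equivalence of \cite{Contr}. Consequently the functor \eqref{e:full Hecke equiv} is an equivalence.
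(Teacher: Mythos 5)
Your conceptual picture agrees with the paper's own informal remark (that $\on{Hecke}(\Bun_G)_{\Ran^{\on{untl}}}$ contains $\widehat{\on{Hecke}}(\Bun_G)_{\Ran^{\on{untl}}}$ as a normal subgroupoid, with quotient the de Rham Hecke groupoid), and your inputs---Gaitsgory's D-module Hecke equivariance theorem and \thmref{t:main Bun_G}---are exactly those the paper uses. But you yourself flag the step ``showing $\Phi$ is an equivalence'' as the principal obstacle and then leave it as a sketch, and that step is precisely where the content of the theorem lives. Concretely: (i) the construction of $\Phi$ is not routine---for $\CF\in\IndCoh(\Bun_G)^{\on{Hecke}}$, restricting the equivariance to $\widehat{\on{Hecke}}$ does produce a D-module enhancement of $\oblv^{\on{Hecke}}(\CF)$ via \thmref{t:main Bun_G}, but it is not automatic that the full Hecke equivariance then descends to a \emph{D-module} Hecke equivariance structure on that enhancement; and (ii) the schematic identity $\IndCoh(\Bun_G)^{\on{Hecke}}\simeq(\IndCoh(\Bun_G)^{\widehat{\on{Hecke}}})^{\on{Hecke}/\widehat{\on{Hecke}}}$ would require a rigorous quotient-groupoid formalism for categorical groupoid actions that the paper never develops and that does not follow from Barr--Beck--Lurie alone.

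The paper avoids all of this by working at the level of monads rather than module categories. It identifies the functor \eqref{e:full Hecke equiv} with a map of monads $\Gamma_{c,\Ran^{\on{untl}}}(\CH(\Bun_G)_{\Ran^{\on{untl}}})\to p^!_{\Bun_G}\circ\Gamma_{c,\Bun_G}$ on $\IndCoh(\Bun_G)$, writes the left-hand side as push-pull along an explicit diagram of categorical prestacks factoring through $\Bun_G(\ofX_{\Ran^{\on{untl}}})\strut^{\wedge}$, uses \corref{c:Loc BunG} (the localization form of \thmref{t:main Bun_G}) to replace the formal-completion leg by a de Rham prestack, and then contracts the diagram via ind-proper base change and the D-module result \thmref{t:Hecke equiv dr} down to $p^!_{\Bun_G}\circ\Gamma_{c,\Bun_G}$. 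This turns your intended two-stage comparison of equivariant categories into a computation with endofunctors, where base change and the projection formula suffice and the ``normal subgroupoid'' picture becomes a heuristic guiding the diagram chase rather than a formalism one has to make rigorous.
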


Here is a key corollary of \thmref{t:Hecke equiv}:

\begin{cor} \label{c:Hecke equiv}
For any $k$-point $y\in \Bun_G$, the composite functor
$$\IndCoh(\Bun_G)^{\on{Hecke}}\overset{\oblv^{\on{Hecke}}}\to \IndCoh(\Bun_G) \overset{i_y^!}\to \Vect$$
is an equivalence, where $i_y$ is the map $\on{pt}\to \Bun_G$, corresponding to $y$.
\end{cor}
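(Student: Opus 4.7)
The plan is to derive Corollary \ref{c:Hecke equiv} as an essentially formal consequence of Theorem \ref{t:Hecke equiv}. By that theorem, the lift
$$\Vect \longrightarrow \IndCoh(\Bun_G)^{\on{Hecke}}$$
of the functor $p_{\Bun_G}^!$ (sending $k$ to $\omega_{\Bun_G}$ equipped with its canonical Hecke-equivariant structure) is an equivalence of categories. Thus, to show that the composite $i_y^! \circ \oblv^{\on{Hecke}}$ is an equivalence, it suffices to show that it provides a one-sided inverse, i.e.\ that its composition with the equivalence of Theorem \ref{t:Hecke equiv} is (canonically isomorphic to) the identity on $\Vect$.

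To verify this, I would chase through the two arrows. By construction, the composite
$$\Vect \longrightarrow \IndCoh(\Bun_G)^{\on{Hecke}} \overset{\oblv^{\on{Hecke}}}\longrightarrow \IndCoh(\Bun_G)$$
is canonically isomorphic to $p_{\Bun_G}^!$. Hence the full composite becomes $i_y^! \circ p_{\Bun_G}^!$. Since $p_{\Bun_G} \circ i_y = \on{id}_{\on{pt}}$ as morphisms of prestacks, we obtain a canonical isomorphism $i_y^! \circ p_{\Bun_G}^! \simeq \on{id}_{\Vect}$, by the functoriality of $(-)^!$.

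There is essentially no obstacle here once Theorem \ref{t:Hecke equiv} is in hand; all of the substantive content (use of \thmref{t:main Bun_G}, the Ran-space integration monad, and Gaitsgory's contractibility result from \cite{Contr}) has already been absorbed into the theorem. The corollary is a purely formal consequence of (i) the equivalence supplied by the theorem and (ii) the tautology $p_{\Bun_G} \circ i_y = \on{id}_{\on{pt}}$. One slight point worth making explicit is that the identification of $\oblv^{\on{Hecke}}$ composed with the equivalence as $p_{\Bun_G}^!$ is built into the statement of Theorem \ref{t:Hecke equiv}, so no additional check is required.
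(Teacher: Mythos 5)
Your proposal is correct and essentially identical to the paper's proof: the paper simply observes that the composite $\Vect \overset{p_{\Bun_G}^!}\to \IndCoh(\Bun_G)^{\on{Hecke}}\overset{\oblv^{\on{Hecke}}}\to \IndCoh(\Bun_G) \overset{i_y^!}\to \Vect$ is the identity functor, which together with \thmref{t:Hecke equiv} forces $i_y^!\circ\oblv^{\on{Hecke}}$ to be an equivalence. Your two-step chase — identifying the middle composite with $p_{\Bun_G}^!$ and then using $p_{\Bun_G}\circ i_y=\on{id}_{\on{pt}}$ — is exactly the content of that one-line argument.
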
 

\begin{proof}[Proof of \corref{c:Hecke equiv}]
Indeed, the composite functor
$$\Vect \overset{p_{\Bun_G}^!}\to \IndCoh(\Bun_G)^{\on{Hecke}}\overset{\oblv^{\on{Hecke}}}\to \IndCoh(\Bun_G) \overset{i_y^!}\to \Vect$$
is the identity functor.
\end{proof} 

\ssec{Proof of \thmref{t:Hecke equiv}}

\sssec{}

We can use the groupoid $\on{Hecke}(\Bun_G(X))_{\on{Ran}^{\on{untl}}}$ to obtain a monad acting on the category
$$\Dmod(\Bun_G\times \Ran^{\on{untl}}),$$
to be denoted $\CH((\Bun_G)_\dr)_{\Ran^{\on{untl}}}$, and subsequently a monad, denoted $$\Gamma_{c,\Ran^{\on{untl}}}(\CH((\Bun_G)_\dr)_{\Ran^{\on{untl}}}),$$
acting on $\Dmod(\Bun_G)$. 

\medskip

Consider the corresponding categories of modules, denoted 
$$\Dmod(\Bun_G\times \Ran^{\on{untl}})^{\on{Hecke}_{\Ran^{\on{untl}}}} \text{ and } \Dmod(\Bun_G)^{\on{Hecke}},$$
respectively, where the latter identifies with 
$$\Dmod(\Bun_G)\underset{\Dmod(\Bun_G\times \Ran^{\on{untl}})}\times
 \Dmod(\Bun_G\times \Ran^{\on{untl}})^{\on{Hecke}_{\Ran^{\on{untl}}}},$$
which can also be written as 
$$\Dmod(\Bun_G)\underset{\Dmod(\Bun_G\times \Ran)}\times \Dmod(\Bun_G\times \Ran)^{\on{Hecke}_\Ran}.$$

As in \eqref{e:full Hecke equiv}, we have a tautologically defined functor
\begin{equation} \label{e:full Hecke equiv dr}
\Vect \to \Dmod(\Bun_G)^{\on{Hecke}}.
\end{equation} 

We have the following result of \cite[Theorem 4.4.9]{Contr}: 

\begin{thm}[Gaitsgory] \label{t:Hecke equiv dr}
The functor \eqref{e:full Hecke equiv dr} is an equivalence.
\end{thm}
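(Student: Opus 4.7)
I would reduce the theorem to the two key ingredients already present in the paper: the universal contractibility of the Ran space (\thmref{t:Ran contr}) and the factorization structure on the affine Grassmannian $\Gr_{G,\Ran}$. Specifically, I propose the candidate inverse
$$ \Psi: \Dmod(\Bun_G)^{\on{Hecke}} \to \Vect, \qquad \CF \mapsto \Gamma_{\dr}(\Bun_G, \oblv^{\on{Hecke}}\CF), $$
and verify that it is a quasi-inverse to \eqref{e:full Hecke equiv dr}. The composite $\Vect \to \Dmod(\Bun_G)^{\on{Hecke}} \to \Vect$ reduces to the computation $\Gamma_{\dr}(\Bun_G, \omega_{\Bun_G}) \simeq k$, which holds as soon as $\Bun_G$ is connected (immediate for $G$ semisimple simply connected; the general case follows by a standard reduction via the determinant map). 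The bulk of the work is in showing the other composite: namely, that for every Hecke-equivariant $\CF$, the natural counit
$$ \Gamma_{\dr}(\Bun_G, \oblv^{\on{Hecke}}\CF) \otimes \omega_{\Bun_G} \to \CF $$
is an isomorphism in $\Dmod(\Bun_G)$.

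\textbf{Key steps.} To prove this counit is an isomorphism, I would pull back along the Ran-version of the uniformization $\pi: \Gr_{G,\Ran} \to \Bun_G$, which is surjective on $k$-points (every $G$-bundle is trivializable on the complement of a finite subset of $X$). First, the Hecke equivariance of $\CF$ translates, after applying $\pi^!$, into compatibility with the relative groupoid $\Gr_{G,\Ran} \underset{\Bun_G}\times \Gr_{G,\Ran}$ on $\Gr_{G,\Ran}$. Second, combining this with the $L^+G_{\Ran}$-action and the factorization structure on $\Gr_{G,\Ran}$ over $\Ran^{\on{untl}}$, one shows that $\pi^!\CF$ is canonically a pullback of an object on $\Ran^{\on{untl}}$. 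Third, invoking the universal contractibility of $\Ran^{\on{untl}}$ (\propref{p:Ran untl contr}(a) together with \corref{c:Gamma on Ran untl mon}), the unit of the $(\Gamma_{c,\Ran^{\on{untl}}}, p_{\Ran^{\on{untl}}}^!)$-adjunction is an isomorphism on the essential image of $p_{\Ran^{\on{untl}}}^!$. This collapses $\pi^!\CF$ to a single vector space, which unwinds to $\Gamma_\dr(\Bun_G, \CF)$. A base-change argument along $\pi$, combined with the conservativity of $\pi^!$ on Hecke-equivariant objects, then yields the desired isomorphism on $\Bun_G$.

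\textbf{Main obstacle.} The hardest step is the second one: showing that $\pi^!\CF$ actually descends to $\Ran^{\on{untl}}$. This is the core technical content of \cite{Contr}; it amounts to a strong contractibility statement for the space of rational maps $X \to G$, saying that the Hecke action on $\Bun_G$ is ``generically transitive'' in a homotopically controlled sense. Concretely, one must verify that for varying finite subsets $(x_1, \ldots, x_n) \in \Ran(X)$, the fibers of $\pi^!\CF$ over different trivializations of a $G$-bundle on $X \setminus \{x_i\}$ are canonically identified. Even granting \thmref{t:Ran contr}, this requires a careful analysis of the $L^+G_{\Ran}$-orbits on $\Gr_{G,\Ran}$ and their interaction with the Hecke groupoid --- the combinatorics of merging the $L^+G$-symmetry with the Hecke equivariance is where the real work lies.
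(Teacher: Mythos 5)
The statement you were asked to prove is not actually proved in the paper: it is cited verbatim as \cite[Theorem 4.4.9]{Contr} and used as an external input, so there is no in-paper argument to compare your proposal against. Evaluating your sketch on its own merits, however, there is a genuine error at its very first step.

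You claim that the composite $\Vect \to \Dmod(\Bun_G)^{\on{Hecke}} \to \Vect$ (with your $\Psi = \Gamma_{\dr} \circ \oblv^{\on{Hecke}}$) reduces to the computation $\Gamma_{\dr}(\Bun_G,\omega_{\Bun_G}) \simeq k$, and that this holds whenever $\Bun_G$ is connected. This is false. The object $\Gamma_{\dr}(\Bun_G,\omega_{\Bun_G}) = p_{\dr,*}(\omega_{\Bun_G})$ is the (renormalized Borel--Moore) de~Rham homology of $\Bun_G$, which by the Atiyah--Bott formula is a free graded-commutative algebra on classes spread across infinitely many cohomological degrees --- nothing remotely like $k$. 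Connectedness only controls $H^0$ of the \emph{cohomology with constant coefficients}, not the full complex $p_{\dr,*}(\omega_{\Bun_G})$. The same problem persists if you instead use $\Gamma_{c,\dr}$: $\Bun_G$ is not homologically contractible, so neither $p_{\dr,*}(\omega_{\Bun_G})$ nor $p_{\dr,!}(\omega_{\Bun_G})$ is $k$. Consequently your proposed quasi-inverse $\Psi$ does \emph{not} compose to the identity with \eqref{e:full Hecke equiv dr}, and the subsequent steps cannot be run as stated.

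The correct candidate inverse --- which is the one implicit in \cite{Contr} and which appears in this paper's quasi-coherent analogue, \corref{c:Hecke equiv} --- is the $!$-fiber at a single $k$-point $y \in \Bun_G$: the functor $\CF \mapsto i_y^!(\oblv^{\on{Hecke}}\CF)$. With this choice the composite $\Vect \to \Vect$ sends $k \mapsto i_y^!(\omega_{\Bun_G}) \simeq k$ on the nose. All of the real work then goes into the other composite, and this is indeed, as you say, where the contractibility of the space of rational maps $X \dashrightarrow G$ enters. Your identification of that contractibility statement as the core technical input is correct, and your description of how the $L^+G_{\Ran}$-action, the factorization structure, and the uniformization map $\pi: \Gr_{G,\Ran} \to \Bun_G$ interact is the right high-level picture. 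But the argument needs to be rerouted through the fiber functor at a point rather than through $\Gamma_{\dr}$, and the ``main obstacle'' you identify (descent of $\pi^!\CF$ along the contractible fibers) is precisely the content of the theorem being cited, so what you have is a faithful summary of the shape of Gaitsgory's proof rather than an alternative to it.
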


%

\begin{rem}
We will deduce \thmref{t:Hecke equiv} from \thmref{t:Hecke equiv dr} and \corref{c:Loc BunG}.

Informally, the idea is that
$\on{Hecke}(\Bun_G)_{\on{Ran}^{\on{untl}}}$ contains $\widehat{\on{Hecke}}(\Bun_G)_{\on{Ran}^{\on{untl}}}$ as a normal subgroupoid,
and the quotient gives the de Rham space of $\on{Hecke}(\Bun_G(X))_{\on{Ran}^{\on{untl}}}$. 
\end{rem}

\sssec{}\label{sss:qcoh gamma-c}

Recall from \cite[Corollary 4.3.2]{Contr} that the functor
$$p^!_{\Bun_G,\dr}:\Vect\to \Dmod(\Bun_G),$$
admits a left adjoint; to be denoted $\Gamma_{c,\Bun_G,\dr}$. 

\medskip

The fact that the left adjoint $\Gamma_{c,\Bun_G,\dr}$ to $p^!_{\Bun_G,\dr}$ is well-defined implies that the left adjoint, to be denoted 
$\Gamma_{c,\Bun_G}$, to the functor
$$p^!_{\Bun_G}:\Vect\to \IndCoh(\Bun_G)$$
is well-defined. 

\medskip

Namely, 
$$\Gamma_{c,\Bun_G}\simeq \Gamma_{c,\Bun_G,\dr}\circ \ind,$$
where $\ind$ is the left adjoint to
$$\oblv:\Dmod(\Bun_G)\to \IndCoh(\Bun_G).$$

\sssec{}

The functor \eqref{e:full Hecke equiv} can be interpreted as a map of monads
\begin{equation} \label{e:map of monads}
\Gamma_{c,\Ran^{\on{untl}}}(\CH(\Bun_G)_{\Ran^{\on{untl}}}) \to p^!_{\Bun_G}\circ \Gamma_{c,\Bun_G},
\end{equation} 
and the assertion of \thmref{t:Hecke equiv} is equivalent to the statement that the map \eqref{e:map of monads}
is an isomorphism (at the level of the underlying endofunctors). 

\medskip

Using the interpretation of $\CH(\Bun_G)_{\Ran^{\on{untl}}}$ as $\fR^!\circ \fR^{\IndCoh}_*$ (see \secref{sss:bundles on punctured again}),
we can write the map \eqref{e:map of monads} explicitly as
\begin{multline} \label{e:map of monads again}
(\on{Id}\otimes \Gamma_{c,\Ran^{\on{untl}}})\circ \fR^!\circ \fR^{\IndCoh}_* \circ (\on{Id}\otimes p^!_{\Ran^{\on{untl}}}) \to \\
\to (\on{Id}\otimes \Gamma_{c,\Ran^{\on{untl}}})\circ 
(p_{\Bun_G}^!\otimes p_{\Ran^{\on{untl}}}^!) \circ (\Gamma_{c,\Bun_G}\otimes \Gamma_{c,\Ran^{\on{untl}}}) \circ (\on{Id}\otimes p^!_{\Ran^{\on{untl}}}) \simeq \\
\simeq p^!_{\Bun_G}\circ \Gamma_{c,\Bun_G},
\end{multline}
where the first arrow comes from factoring the projection
$$\Bun_G\times \Ran^{\on{untl}}\to \on{pt}$$ as
$$\Bun_G\times \Ran^{\on{untl}} \overset{\fR}\to \Bun_G(\ofX_{\on{Ran}^{\on{untl}}})\to \on{pt},$$
and the second isomorphism is the counit of the $(\Gamma_{c,\Ran^{\on{untl}}},p^!_{\Ran^{\on{untl}}})$-adjunction,
which is in fact an isomorphism by \propref{p:Ran untl contr}(a). 

\sssec{}

The functor \eqref{e:full Hecke equiv dr} can also be interpreted as a map of monads
\begin{equation} \label{e:map of dr monads}
\Gamma_{c,\Ran^{\on{untl}}}(\CH((\Bun_G)_\dr)_{\Ran^{\on{untl}}})\to p^!_{\Bun_G,\dr}\circ \Gamma_{c,\Bun_G,\dr},
\end{equation} 
and the assertion of \thmref{t:Hecke equiv dr} is equivalent to the statement that \eqref{e:map of dr monads} is an isomorphism
(at the level of the underlying endofunctors). 

\medskip

As in \secref{sss:bundles on punctured again}, we can consider the laft-def categorical prestack $\Bun_G(\ofX_{\on{Ran}^{\on{untl}}})$, and
the adjoint pair of functors
$$\fR_{!,\dr}:\Dmod(\Bun_G\times \Ran^{\on{untl}}) \rightleftarrows \Dmod(\Bun_G(\ofX_{\on{Ran}^{\on{untl}}})):\fR^!_\dr,$$
and the resulting monad 
$\fR^!_\dr\circ \fR_{!,\dr}$ identifies canonically with $\CH((\Bun_G)_\dr)_{\Ran^{\on{untl}}}$. 

\medskip

In terms of this identification, the map \eqref{e:map of dr monads} can be written as 
\begin{multline} \label{e:map of monads dr again}
(\on{Id}\otimes \Gamma_{c,\Ran^{\on{untl}}})\circ \fR^!_\dr\circ \fR_{!,\dr} \circ (\on{Id}\otimes p^!_{\Ran^{\on{untl}}}) \to \\
\to (\on{Id}\otimes \Gamma_{c,\Ran^{\on{untl}}})\circ 
(p_{\Bun_G,\dr}^!\otimes p_{\Ran^{\on{untl}}}^!) \circ (\Gamma_{c,\Bun_G,\dr}\otimes \Gamma_{c,\Ran^{\on{untl}}}) \circ (\on{Id}\otimes p^!_{\Ran^{\on{untl}}}) \simeq \\
\simeq p^!_{\Bun_G,\dr}\circ \Gamma_{c,\Bun_G}. 
\end{multline}

\sssec{}

We write the left-hand side in \eqref{e:map of monads again} as push-pull along the following diagram, where we apply !-pullback along the horizontal arrows and
$\IndCoh$-pushforward along the vertical arrows:
$$
\CD
& & \Bun_G \times \Ran^{\on{untl}} @>>> \Bun_G \\
& & @VV{\fR}V  \\
\Bun_G \times \Ran^{\on{untl}} @>{\fR}>> \Bun_G(\ofX_{\on{Ran}^{\on{untl}}}) \\
@VVV \\
\Bun_G,
\endCD
$$
which we can expand as 
$$
\CD
& & & & \Bun_G \times \Ran^{\on{untl}} @>>> \Bun_G \\
& & & & @VV{\fr}V  \\
& & & & \Bun_G(\ofX_{\on{Ran}^{\on{untl}}})\strut^{\wedge} \\
& & & & @VVV  \\
\Bun_G \times \Ran^{\on{untl}} @>{\fr}>>  \Bun_G(\ofX_{\on{Ran}^{\on{untl}}})\strut^{\wedge} @>>>  \Bun_G(\ofX_{\on{Ran}^{\on{untl}}}) \\
@VVV \\
\Bun_G.
\endCD
$$

We now recall that by \corref{c:Loc BunG}, the push-pull along the latter diagram maps isomorphically to one along
\begin{equation} \label{e:ult diag}
\CD
& & & & \Bun_G \times \Ran^{\on{untl}} @>>> \Bun_G \\
& & & & @VV{\fr}V  \\
& & & & \Bun_G(\ofX_{\on{Ran}^{\on{untl}}})\strut^{\wedge} \\
& & & & @VVV  \\
& & \Bun_G(\ofX_{\on{Ran}^{\on{untl}}})\strut^{\wedge} @>>>  \Bun_G(\ofX_{\on{Ran}^{\on{untl}}}) \\
& & @VVV \\
& & (\Bun_G)_\dr\times \on{Ran}^{\on{untl}} \\
& & @VVV \\
\Bun_G @>>> (\Bun_G)_\dr
\endCD
\end{equation}

\medskip

Using the Cartesian diagram
$$
\CD
\Bun_G(\ofX_{\on{Ran}^{\on{untl}}})\strut^{\wedge} @>>>  \Bun_G(\ofX_{\on{Ran}^{\on{untl}}})  \\
@VVV @VVV \\
(\Bun_G)_\dr\times \on{Ran}^{\on{untl}} @>>> \Bun_G(\ofX_{\on{Ran}^{\on{untl}}})_\dr
\endCD
$$
we replace diagram \eqref{e:ult diag} by 

$$
\CD
& & & & \Bun_G \times \Ran^{\on{untl}} @>>> \Bun_G \\
& & & & @VV{\fr}V  \\
& & & & \Bun_G(\ofX_{\on{Ran}^{\on{untl}}})\strut^{\wedge} \\
& & & & @VVV  \\
& &  & &  \Bun_G(\ofX_{\on{Ran}^{\on{untl}}}) \\
& & & & @VVV \\
& & (\Bun_G)_\dr\times \on{Ran}^{\on{untl}} @>>> \Bun_G(\ofX_{\on{Ran}^{\on{untl}}})_\dr \\
& & @VVV \\
\Bun_G @>>> (\Bun_G)_\dr.
\endCD
$$

\medskip

We contract the latter diagram to:

$$
\CD
& & & & \Bun_G \times \Ran^{\on{untl}} @>>> \Bun_G \\
& & & & @VVV \\
& & & & (\Bun_G)_\dr\times \on{Ran}^{\on{untl}} \\
& & & & @VVV \\
& & (\Bun_G)_\dr\times \on{Ran}^{\on{untl}} @>>> \Bun_G(\ofX_{\on{Ran}^{\on{untl}}})_\dr \\
& & @VVV \\
\Bun_G @>>> (\Bun_G)_\dr.
\endCD
$$

Applying base change, we further replace the latter diagram by:
\begin{equation} \label{e:ult diag next next}
\CD
& & & & & & \Bun_G \\
& & & & & & @VVV \\
& & & & (\Bun_G)_\dr\times \on{Ran}^{\on{untl}} @>>> (\Bun_G)_\dr \\
& & & & @VVV \\
& & (\Bun_G)_\dr\times \on{Ran}^{\on{untl}} @>>> \Bun_G(\ofX_{\on{Ran}^{\on{untl}}})_\dr \\
& & @VVV \\
\Bun_G @>>> (\Bun_G)_\dr.
\endCD
\end{equation}

\sssec{}

We now use the fact that the map \eqref{e:map of monads dr again} is an isomorphism. Hence, we can replace diagram
\eqref{e:ult diag next next} by
$$
\CD
& & & & \Bun_G \\
& & & & @VVV \\
& & & & (\Bun_G)_\dr \\
& & & & @VVV \\
\Bun_G @>>> (\Bun_G)_\dr @>>> \on{pt} \\
\endCD
$$

The latter diagram is the same as
$$
\CD
& & \Bun_G \\
& & @VVV \\
\Bun_G @>>> \on{pt}, \\
\endCD
$$
i.e., the right-hand side in \eqref{e:map of monads again}. 

\sssec{}

It is a straightforward verification that the map from the left-hand side of \eqref{e:map of monads again} to
the right-hand side of \eqref{e:map of monads again} we have constructed by the above diagram chase agrees
with the one in \eqref{e:map of monads again}. 

\qed[\thmref{t:Hecke equiv}]

\ssec{A uniformization result} \label{ss:O contr}

\sssec{}

%
%
%
%
%
%
%
%
%

Let $\Gr_{G,\Ran}$ be the Ran version of the affine Grassmannian of $G$;
i.e., for a test affine scheme $S$, and $S$-point of $\Gr_{G,\Ran}$ is
the data of:

\begin{itemize}

\item 
An $S$-point of $\Ran^{\on{untl}}$; 

\item
A $G$-bundles $\CP$ on $S\times X$;

\item
An isomorphism
$\alpha: \CP|_{S \times X- \on{Graph}} \simeq \CP^0|_{S \times X - \on{Graph}}$,
where $\CP^0$ is the trivial $G$-bundle.  

\end{itemize}

Let $\pi$ denote the forgetful map
$$\Gr_{G,\Ran}\to \Bun_G,$$
and let $\pi$ denote its restriction to $\Gr_{G,\Ran}$.

\sssec{}

From \thmref{t:Hecke equiv} we will now deduce the following uniformization result: 

\begin{thm} \label{t:uniformization}
For $\CF\in \IndCoh(\Bun_G)$ and $V\in \Vect$, the map
$$\CHom_{\IndCoh(\Bun_G)}(\CF,V\otimes \omega_{\Bun_G})\to
\CHom_{\IndCoh(\Gr_{G,\Ran})}(\pi^!(\CF),V\otimes \omega_{\Gr_{G,\Ran}})$$
is an isomorphism.
\end{thm}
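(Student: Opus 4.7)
I will deduce \thmref{t:uniformization} from \thmref{t:Hecke equiv} by realising $\Gr_{G,\Ran}$ as the fibre of the restriction map $\fR$ over the trivial $G$-bundle and tracing this identification through the Hecke equivariance isomorphism. By the $(\Gamma_{c,\Bun_G},p^!_{\Bun_G})$-adjunction of \secref{sss:qcoh gamma-c}, together with the analogous adjunction for $\Gr_{G,\Ran}$ (which exists since $\fR$ is ind-proper, hence so is $\Gr_{G,\Ran}\to \Ran$, and $\Gamma_{c,\Ran}$ is well-defined), both sides of the map in \thmref{t:uniformization} are identified with $\CHom_{\Vect}(-,V)$ applied to $\Gamma_{c,\Bun_G}(\CF)$ and to $\Gamma_{c,\Gr_{G,\Ran}}(\pi^!\CF)$, respectively. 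It thus suffices to exhibit, as an isomorphism, the natural map
$$\Gamma_{c,\Gr_{G,\Ran}}(\pi^!\CF)\longrightarrow \Gamma_{c,\Bun_G}(\CF)$$
induced by functoriality of $\pi^!$.

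Comparing the definitions of $\Gr_{G,\Ran}$ in \secref{ss:O contr} and of $\Bun_G(\ofX_\Ran)$ in \secref{sss:bundles on punctured}, we obtain a Cartesian square
$$
\begin{tikzcd}
\Gr_{G,\Ran}\ar[r,"\widetilde q"]\ar[d,"\widetilde\fR"'] & \Bun_G\times \Ran \ar[d,"\fR"]\\
\Ran \ar[r,"q"'] & \Bun_G(\ofX_{\Ran})
\end{tikzcd}
$$
in which $q$ classifies the trivial $G$-bundle, $\widetilde q$ forgets the trivialisation $\alpha$, $\widetilde\fR$ is the projection to $\Ran$, and $\pi=\on{pr}_{\Bun_G}\circ \widetilde q$. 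Since $\fR$ is ind-proper, so is $\widetilde\fR$, giving $\Gamma_{c,\Gr_{G,\Ran}}\simeq \Gamma_{c,\Ran}\circ \widetilde\fR^{\IndCoh}_*$; base change along the square yields $\widetilde\fR^{\IndCoh}_*\circ \widetilde q^!\simeq q^!\circ \fR^{\IndCoh}_*$. Combining these,
$$\Gamma_{c,\Gr_{G,\Ran}}(\pi^!\CF)\simeq \Gamma_{c,\Ran}\circ q^!\circ \fR^{\IndCoh}_*(\CF\boxtimes \omega_\Ran).$$

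Next, let $y\in \Bun_G$ be the trivial $G$-bundle, $i_y:\on{pt}\to \Bun_G$ its classifying map, and $\sigma:=i_y\times \on{id}_\Ran:\Ran\to \Bun_G\times \Ran$. By construction $q=\fR\circ \sigma$, so $q^!=\sigma^!\circ \fR^!$; the external tensor decomposition $\IndCoh(\Bun_G\times \Ran)\simeq \IndCoh(\Bun_G)\otimes \Dmod(\Ran)$ further yields $\Gamma_{c,\Ran}\circ \sigma^!\simeq i_y^!\circ (\on{Id}_{\IndCoh(\Bun_G)}\otimes \Gamma_{c,\Ran})$. Plugging in,
$$\Gamma_{c,\Gr_{G,\Ran}}(\pi^!\CF)\simeq i_y^!\circ (\on{Id}\otimes \Gamma_{c,\Ran})\circ \fR^!\circ \fR^{\IndCoh}_*\circ (\on{Id}\otimes p^!_\Ran)(\CF).$$
By \thmref{t:Hecke equiv}, combined with \remref{r:full Hecke Ran} and the identification $\CH(\Bun_G)_\Ran\simeq \fR^!\circ \fR^{\IndCoh}_*$ from \secref{sss:bundles on punctured again}, the endofunctor $(\on{Id}\otimes \Gamma_{c,\Ran})\circ \fR^!\circ \fR^{\IndCoh}_*\circ (\on{Id}\otimes p^!_\Ran)$ of $\IndCoh(\Bun_G)$ is isomorphic to $p^!_{\Bun_G}\circ \Gamma_{c,\Bun_G}$. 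Since $i_y^!\circ p^!_{\Bun_G}\simeq \on{Id}_\Vect$, this yields the desired isomorphism.

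The above chain of identifications produces \emph{an} isomorphism $\Gamma_{c,\Gr_{G,\Ran}}(\pi^!\CF)\simeq \Gamma_{c,\Bun_G}(\CF)$; the principal subtlety is to verify it coincides with the canonical map induced by $\pi^!$, i.e.\ with the one arising from the (partial) counit $\pi_!^{\IndCoh}\pi^!\to \on{Id}$. This amounts to tracking how the monad structure map underlying \thmref{t:Hecke equiv}, the base-change isomorphism in the Cartesian square, and the factorisation $q=\fR\circ \sigma$ assemble compatibly; it is a formal but somewhat involved diagram chase, which I expect to be the principal technical obstacle.
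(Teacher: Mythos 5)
Your proof is correct and follows essentially the same route as the paper's: both reduce the statement to the monadic isomorphism of \thmref{t:Hecke equiv} (equivalently \corref{c:Hecke equiv}), restrict to the fibre over the trivial $G$-bundle, and conclude by base change. The only cosmetic difference is that you exhibit $\Gr_{G,\Ran}$ as the fibre product of $\Ran$ and $\Bun_G\times\Ran$ over $\Bun_G(\ofX_\Ran)$ directly, whereas the paper factors through the Hecke stack $\on{Hecke}(\Bun_G)_\Ran = (\Bun_G\times\Ran)\times_{\Bun_G(\ofX_\Ran)}(\Bun_G\times\Ran)$; and the compatibility concern you flag at the end is precisely the ``straightforward verification'' the paper also defers.
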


Recall that for any laft prestack $\CY$, the functor $\Upsilon_\CY:\QCoh(\CY)\to \IndCoh(\CY)$
is fully faithful when restricted to the full subcategory $\on{Perf}(\CY)\subset \QCoh(\CY)$
of monoidally dualizable (a.k.a. perfect) objects. Hence, from \thmref{t:uniformization}
we obtain:

\begin{cor} \label{c:uniformization}
The functor $\pi^*: \QCoh(\Bun_G)\to \QCoh(\Gr_{G,\Ran})$ is fully faithful when restricted to
$\on{Perf}(\Bun_G)\subset \QCoh(\Bun_G)$.
\end{cor}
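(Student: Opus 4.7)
The plan is to deduce this fully-faithfulness statement from Theorem \ref{t:uniformization} by transferring the $\QCoh$-side assertion into $\IndCoh$ via the functor $\Upsilon$. Let $\CE_1, \CE_2 \in \on{Perf}(\Bun_G)$. Since $\CE_1$ is dualizable, the comparison map can be rewritten as
$$\Hom_{\QCoh(\Bun_G)}(\CE_1,\CE_2) \simeq \Gamma(\Bun_G, \CE_1^\vee \otimes \CE_2),$$
and analogously on $\Gr_{G,\Ran}$, using that $\pi^*\CE_1$ is also perfect. Setting $\CE := \CE_1^\vee \otimes \CE_2$, the problem reduces to showing that the canonical map
$$\Gamma(\Bun_G, \CE) \to \Gamma(\Gr_{G,\Ran}, \pi^*\CE)$$
is an isomorphism for every $\CE \in \on{Perf}(\Bun_G)$.

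Next I would translate each side into $\IndCoh$. Using the fully-faithfulness of $\Upsilon$ on perfects (applied to $\CE^\vee$ and to $\CO$), one has
$$\Gamma(\Bun_G, \CE) \simeq \Hom_{\QCoh(\Bun_G)}(\CE^\vee,\CO_{\Bun_G}) \simeq \Hom_{\IndCoh(\Bun_G)}(\CE^\vee \otimes \omega_{\Bun_G},\ \omega_{\Bun_G}),$$
and similarly
$$\Gamma(\Gr_{G,\Ran}, \pi^*\CE) \simeq \Hom_{\IndCoh(\Gr_{G,\Ran})}\bigl(\pi^*(\CE^\vee) \otimes \omega_{\Gr_{G,\Ran}},\ \omega_{\Gr_{G,\Ran}}\bigr).$$
The geometric input matching these two sides is the projection formula for the $!$-pullback of the perfect sheaf $\CE^\vee$, namely
$$\pi^!(\CE^\vee \otimes \omega_{\Bun_G}) \simeq \pi^*(\CE^\vee) \otimes \omega_{\Gr_{G,\Ran}},$$
which is the statement that $\pi^! \circ \Upsilon_{\Bun_G} \simeq \Upsilon_{\Gr_{G,\Ran}} \circ \pi^*$ holds on perfect objects.

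The final step is then a direct application of Theorem \ref{t:uniformization} with $V := k$ and $\CF := \CE^\vee \otimes \omega_{\Bun_G} \in \IndCoh(\Bun_G)$, which yields the required isomorphism. I do not expect a genuine obstacle here: all the substantive work has already been packaged into Theorem \ref{t:uniformization}, and the proof amounts to careful bookkeeping with $\Upsilon$. The one point deserving attention is naturality — one has to check that the chain of identifications above intertwines the map induced by $\pi^*$ on $\QCoh$ with the map induced by $\pi^!$ on $\IndCoh$, so that the comparison arrow appearing in the corollary really does correspond, under these equivalences, to the arrow whose invertibility is asserted by the uniformization theorem.
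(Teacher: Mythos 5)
Your proposal is correct and takes essentially the same route as the paper, which simply observes that $\Upsilon$ is fully faithful on perfect objects and then cites Theorem~\ref{t:uniformization} with $V=k$; you have filled in the intermediate identifications the paper leaves implicit. One minor imprecision: the relation $\pi^!\circ\Upsilon_{\Bun_G}\simeq\Upsilon_{\Gr_{G,\Ran}}\circ\pi^*$ is not a projection formula and holds on all of $\QCoh$ (it is the defining compatibility of $\Upsilon$ with $!$-pullback, cf.\ \secref{sss:Psi and Ups}), not merely on perfect objects — but this has no bearing on the validity of your argument.
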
 

\begin{rem} 
One can ask more ambitiously, whether for a pair of objects $\CF_1,\CF_2\in \IndCoh(\Bun_G)$, the map
\begin{equation} \label{e:contr}
\CHom_{\IndCoh(\Bun_G)}(\CF_1,\CF_2)\to
\CHom_{\IndCoh(\Gr_{G,\Ran})}(\pi^!(\CF_1),\pi^!(\CF_2))
\end{equation} 
is an isomorphism. We do not currently know the answer.

\medskip

This question is equivalent to the following one. Let
$\underline\Maps^{\on{gen}}(X,G)_{\Ran}$ be the prestack that assigns to a test affine scheme $S$
the groupoid of

\begin{itemize}

\item 
An $S$-point of $\Ran^{\on{untl}}$;

\item 
A map $(S \times X- \on{Graph})\to G$.

\end{itemize}

Is it true that the functor
$$\Vect\to \IndCoh(\underline\Maps^{\on{gen}}(X,G)_{\Ran}), \quad k\mapsto \omega_{\underline\Maps^{\on{gen}}(X,G)_{\Ran}}$$
is fully faithful?

\medskip

Since the prestack $\underline\Maps^{\on{gen}}(X,G)_{\Ran}$ is laft and formally smooth, by \cite[Theorem 10.1.1]{IndSch}, the functor 
$$\Upsilon:\QCoh(\underline\Maps^{\on{gen}}(X,G)_{\Ran})\to \IndCoh(\underline\Maps^{\on{gen}}(X,G)_{\Ran})$$
is an equivalence. So, the above question is equivalent to the following one:

\medskip

Is the functor
$$\Vect\to \QCoh(\underline\Maps^{\on{gen}}(X,G)_{\Ran}), \quad k\mapsto \CO_{\underline\Maps^{\on{gen}}(X,G)_{\Ran}}$$
is fully faithful?

\end{rem} 

\sssec{Proof of \thmref{t:uniformization}}

The forgetful functor
$$\IndCoh(\Bun_G)^{\on{Hecke}}\to \IndCoh(\Bun_G)$$
admits a left adjoint; we denote it by $\ind^{\on{Hecke}}$. 

\medskip

Thus, we can rewrite 
$\CHom_{\IndCoh(\Bun_G)}(\CF,V\otimes \omega_{\Bun_G})$ as
$$\CHom_{\IndCoh(\Bun_G)^{\on{Hecke}}}\left(\ind^{\on{Hecke}}(\CF),V\otimes \omega_{\Bun_G}\right).$$

Applying \corref{c:Hecke equiv}, for the map $\on{pt}\to \Bun_G$ corresponding to the trivial $G$-bundle. 
we map the latter expression isomorphically to 
\begin{equation} \label{e:Hom on Gr prel}
\CHom_{\Vect}\left(i_0^!\circ \oblv^{\on{Hecke}}\circ \ind^{\on{Hecke}}(\CF),V\right).
\end{equation} 

Recall that by Remark \ref{r:full Hecke Ran}, the endofunctor $\oblv^{\on{Hecke}}\circ \ind^{\on{Hecke}}$
is isomorphic to 
$$(\on{Id}\otimes \Gamma_{c,\Ran})\circ (\partial_t)^{\IndCoh}_*\circ (\partial_s)^!\circ (\on{Id}\otimes p^!_{\Ran}),$$
where
$$\partial_s,\partial_t:\on{Hecke}(\Bun_G)_{\on{Ran}}\to \Bun_G\times \Ran$$
are the two projections.

\medskip

So, we can further rewrite \eqref{e:Hom on Gr prel} as 
\begin{equation} \label{e:Hom on Gr}
\CHom_{\Vect}\left(i_0^!\circ (\on{Id}\otimes \Gamma_{c,\Ran})\circ (\partial_t)^{\IndCoh}_*\circ (\partial_s)^!\circ (\on{Id}\otimes p^!_{\Ran})(\CF),V\right).
\end{equation} 

Consider the Cartesian diagram
$$
\CD
\Gr_{G,\Ran} @>>> \on{Hecke}(\Bun_G)_{\on{Ran}} @>{\partial_s}>> \Bun_G \times \Ran @>{\on{Id}\otimes p^!_{\Ran}}>> \Bun_G \\
@V{q}VV @VV{\partial_t}V \\
\Ran @>{i_0\times \on{id}}>> \Bun_G \times \Ran \\
@VVV @VVV \\
\on{pt} @>{i_0}>> \Bun_G,
\endCD
$$
where the composite horizontal arrow is $\pi$. Hence, by base change,
$$i_0^!\circ (\on{Id}\otimes \Gamma_{c,\Ran})\circ (\partial_t)^{\IndCoh}_*\circ (\partial_s)^!\circ (\on{Id}\otimes p^!_{\Ran})\simeq
\Gamma_{c,\Ran} \circ q^{\IndCoh}_* \circ \pi^!.$$

Hence, we can map the expression \eqref{e:Hom on Gr} isomorphically to
$$\CHom_{\Vect}(\Gamma_{c,\Ran} \circ q^{\IndCoh}_* \circ \pi^!(\CF),V),$$
which by adjunction is the same as 
$$\CHom_{\IndCoh(\Gr_{G,\Ran})}(\pi^!(\CF),V\otimes \omega_{\Gr_{G,\Ran}}).$$

It is a straightforward verification that the resulting map
$$\CHom_{\IndCoh(\Bun_G)}(\CF,V\otimes \omega_{\Bun_G})\to
\CHom_{\IndCoh(\Gr_{G,\Ran})}(\pi^!(\CF),V\otimes \omega_{\Gr_{G,\Ran}})$$
is given by applying the functor $\pi^!$. 

\qed[\thmref{t:uniformization}]

\ssec{WZW conformal blocks}

\sssec{}\label{sss:fact line bundles}

Let $\on{FactPic}(\Gr_{G,\Ran})$ be the Picard category of factorizable line bundle on $\Gr_{G,\Ran}$, see \cite[Sect. 1.2.5]{K2}. 
According to \cite[Theorem 0.1]{TZ}, we have a fiber sequence of Picard categories 
$$0\to \ul\Hom(\pi_{1,\on{alg}}(G),\on{Pic}(X)) \to \on{FactPic}(\Gr_{G,\Ran})\to \on{Quad}(\Lambda,\BZ)^W\to 0,$$
where:

\begin{itemize}

\item $\Lambda$ is the coweight lattice of $G$ and $W$ is the Weyl group;

\item $\on{Quad}(\Lambda,\BZ)^W$ is the discrete Picard groupoid (i.e., abelian group) of
$W$-invariant integer-valued quadratic forms on $\Lambda$;

\item $\pi_{1,\on{alg}}(G)$ is the algebraic fundamental group of $G$, i.e., $\Lambda/\Lambda_{\on{sc}}$,
where $\Lambda_{\on{sc}}$ is the coweight lattice of the simply-connected cover of the derived group of $G$;

\item $\ul\Hom(-,-)$ is the space of maps of the corresponding Picard categories
(i.e., $1$-truncated commutative groups in $\Spc$), viewed
itself as a Picard category. 

\end{itemize}

For an object $\CL\in \on{FactPic}(\Gr_{G,\Ran})$, the corresponding quadratic form 
$$\kappa\in \on{Quad}(\Lambda,\BZ)^W$$
is called the \emph{level} of $\CL$.

\sssec{}

Let $\CL$ be an object of $\on{FactPic}(\Gr_{G,\Ran})$. To it we can associate a \emph{factorization algebra}
$\CA_\CL$ in $\Dmod(\Ran)$ by setting
$$\CA_\CL:=q^{\IndCoh}_*(\CL\otimes \omega_{\Gr_{G,\Ran}}).$$

We have the following fundamental result, which is an affine version of the Borel-Weil-Bott theorem, 
see \cite[Theorem 8.3.11]{Ku} (we include another proof by Gaitsgory in
\secref{s:BBW}): 

\begin{thm} \label{t:affine BBW} \hfill
\begin{enumerate}[label={(\alph*)}]
\item
Suppose that the restriction of the level $\kappa$ to $\Lambda_{\on{sc}}$
is non-positive definite. Then the restriction of $\CA_\CL$ to $X\to \Ran$,
viewed as a D-module on $X$, lives in single cohomological degree $-1$.

\item
Suppose that the restriction of the level $\kappa$ to one of the simple factors in $\Lambda_{\on{sc}}$
is positive-definite. Then $\CA_\CL=0$.

\item
Let $G$ be semi-simple and simply-connected and $\kappa=0$.
Then the unit section $\Ran\to \Gr_{G,\Ran}$ gives rise to an isomorphism
$$\omega_{\Ran}\to \CA_\CL.$$
\end{enumerate}
\end{thm}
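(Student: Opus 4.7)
The plan is to reduce all three assertions to a local statement at a single point $x \in X$ via factorization, and then to apply Kashiwara-Tanisaki localization at positive level.

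First I would use the factorization structure of $\CA_\CL$ to reduce to a single fiber. By the definition $\CA_\CL = q^{\IndCoh}_*(\CL \otimes \omega_{\Gr_{G,\Ran}})$ and base change along $x: \mathrm{pt} \to \Ran$, the stalk of $\CA_\CL|_X$ at any $x \in X$ is $R\Gamma^{\IndCoh}(\Gr_{G,x}, \CL_x \otimes \omega_{\Gr_{G,x}})$, where $\CL_x$ denotes the restriction of $\CL$ to the fiber $\Gr_{G,x}$. Translation invariance along $X$ (encoded in the factorization structure) then reduces parts (a) and (b) to checking that this stalk lives in the predicted cohomological degree / vanishes, and reduces (c), via the factorization unit axiom, to checking at a single point that the map $k \to R\Gamma^{\IndCoh}(\Gr_{G,x}, \omega_{\Gr_{G,x}})$ induced by the unit section is an isomorphism after the appropriate shift.

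The main technical input is positive-level Kashiwara-Tanisaki localization. On the relevant subcategory, the functor $R\Gamma^{\IndCoh}(\Gr_{G,x}, \CL_x \otimes -)$ identifies with a functor landing in representations of $\widehat{\mathfrak{g}}$ at level $-\kappa$, up to a cohomological shift by $[-\dim X] = [-1]$ accounting for the difference between the $\omega$-normalization on $\Gr_{G,x}$ and on $X$. Applied to the trivialization of $\CL_x \otimes \omega_{\Gr_{G,x}}$ as a twisted dualizing sheaf, this identifies the cohomology in question with (the contragredient of) the integrable quotient of the vacuum Weyl module of $\widehat{\mathfrak{g}}$ at level $-\kappa$. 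Standard affine Kac-Moody representation theory then yields the three parts: (a) at a dominant dual level the integrable quotient is a classical (underived) vector space, hence concentrated in a single cohomological degree, which becomes $-1$ after the shift; (b) at a level anti-dominant on some simple factor the vacuum Weyl module has no integrable quotient, yielding the desired vanishing; (c) for $G$ simply-connected semi-simple the extension from \cite{TZ} forces $\CL$ to be trivial (as $\pi_{1,\mathrm{alg}}(G) = 0$), and the integrable quotient of the level-zero vacuum module is one-dimensional, from which the identification with the unit section is a matter of tracing through the adjunction.

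The hard part is the positive-level Kashiwara-Tanisaki input itself. Kumar's original proof proceeds through Frobenius splitting of affine Schubert varieties; Gaitsgory's approach instead develops the localization functor at positive level directly, where the absence of a BGG-type resolution and the failure of exactness of global sections make the analysis substantially more delicate than at negative level. Once this input is in place, the remaining steps are formal manipulations with factorization algebras and base change.
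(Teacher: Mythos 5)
Your reduction to a single point $x\in X$ via factorization and translation invariance, and your identification of positive-level Kashiwara--Tanisaki localization as the main input, both match the paper's strategy. However, there is a real gap in how you close the argument, and it concerns precisely the substantive content of the theorem.

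You write that ``standard affine Kac-Moody representation theory'' yields that the integrable quotient of the vacuum module at a dominant dual level is ``a classical (underived) vector space, hence concentrated in a single cohomological degree.'' But this is not a standard fact that one can invoke: it \emph{is} the affine Borel--Weil--Bott theorem (or rather its reformulation). The object in question is $\BV_{-\kappa}^{\mathrm{int}} = \omega_{\Gr_{G,x}}\star \BV_{-\kappa}$, a convolution of the vacuum module against $\omega_{\Gr_{G,x}}$, which is a priori a genuinely derived object; representation theory identifies its $H^0$ with the classical maximal integrable quotient, but it does not by itself tell you that the higher cohomology vanishes. Kashiwara--Tanisaki localization (t-exactness of global sections at positive level) is necessary, but by itself only transfers the question: one must still identify \emph{which} D-module on the thick flag variety localizes to $\BV_{-\kappa}^{\mathrm{int}}$ and then verify that that D-module lies in the heart. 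The paper does exactly this: after localization, $\BV_{-\kappa}^{\mathrm{int}}$ becomes $\omega_{\Gr_{G,x}}\star j_!(\underline{k}_{\mathrm{pt}/N})[-\dim N]$ on $\Bun_G(\BP^1)\times_{\mathrm{pt}/G}\mathrm{pt}/N$, which by Hecke-equivariance equals $\pi_!(\omega_{\Gr_{G,x}})[\dim N]$, and then the \emph{contractibility of the space of rational maps} $\BP^1 - x \to G$ (twisted by $N$) shows that $\pi_!(\omega_{\Gr_{G,x}}) = \omega$ on a smooth stack of dimension $-\dim N$, which is manifestly in the heart after the shift. This geometric input --- essentially a version of the contractibility result of \cite{Contr} --- is where the theorem is actually proved, and it is absent from your proposal.

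A smaller issue in part (b): showing that the vacuum module ``has no integrable quotient'' only controls $H^0(\BV_{-\kappa}^{\mathrm{int}})$. To conclude vanishing of the full complex, one should argue, as the paper does, that \emph{every} cohomology group $H^i(\BV_{-\kappa}^{\mathrm{int}})$ is naturally an object of the heart of $G(\CK_x)\mathrm{-mod}_{-\kappa}$, and that this abelian category is zero when $\kappa$ is positive-definite on some simple factor. (The paper's Remark after part (b) explains a subtlety here worth noting: $\BV_{-\kappa}^{\mathrm{int}}$ itself is nonzero in the Kac--Moody category --- only its image under the non-conservative forgetful functor to $\mathrm{Vect}$ vanishes.)
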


The above theorem implies that for $\kappa$ satisfying the non-positive definiteness
assumption of the theorem, the object $\CA_\CL$ can be considered as a \emph{classical}
factorization algebra; i.e., $\CA_\CL|_X[-1]$ is a classical chiral algebra in the sense of
\cite{BD}.

\begin{rem} \label{r:int}
The proof of \thmref{t:affine BBW}(a) in \secref{s:BBW} shows that when $G$ is semi-simple and simply connected
(in which case, $\CL$ can be uniquely recovered from that of $\kappa$), and $\kappa$
non-positive, the chiral algebra $$\CA_{\CL,X}:=\CA_\CL|_X[-1]$$ is the 
integrable quotient of the Kac-Moody chiral algebra corresponding to $\fg$ and level $-\kappa$.

\medskip

At the other extreme, when $G=T$ is a torus, $\CA_{\CL,X}$ is the lattice chiral algebra
of \cite[Sect. 3.10]{BD}. 

\end{rem}

\sssec{}

Consider the vector space
$$\Gamma_{c,\Ran}(\CA_\CL).$$ 

By construction
\begin{equation} \label{e:pre WZW}
\Gamma_{c,\Ran}(\CA_\CL)\simeq \Gamma^\IndCoh(\Gr_{G,\Ran},\CL\otimes \omega_{\Gr_{G,\Ran}}).
\end{equation} 

By definition, $\Gamma_{c,\Ran}(\CA_\CL)$ is the factorization (a.k.a. chiral) homology of the (DG) chiral algebra  $\CA_{\CL,X}:=\CA_\CL|_X[-1]$, to be denoted also
$$\on{C}^{\on{Fact}}_\cdot(X,\CA_{\CL,X}).$$

\begin{rem} 
In light of Remark \ref{r:int}, when $G$ is semi-simple and simply-connected and $\kappa$
non-positive definite, the object $\on{C}^{\on{Fact}}_\cdot(X,\CA_{\CL,X})\in \Vect$ is the (vacuum) chiral homology
of the integrable quotient of the Kac-Moody chiral algebra corresponding at level $-\kappa$.

\medskip

By definition, its $0$th cohomology, denoted
$$H^{\on{Fact}}_0(X,\CA_{\CL,X}),$$
is the space of global (vacuum) coinvariants of $\CA_{\CL,X}$. Thus, its linear dual 
$$\left(H^{\on{Fact}}_0(X,\CA_{\CL,X})\right)^\vee$$
is the space of (vacuum) conformal blocks of the WZW model at level $\kappa$.
\end{rem} 

\ssec{(Higher) WZW conformal blocks as (derived) global sections}

\sssec{}

Let $\CL$ be as above. According to \cite[Sect. 2.4]{K2}, there exists a canonically defined line bundle, to be denoted
$\CL_{\on{glob}}$ on $\Bun_G$ such that
\begin{equation} \label{e:pullback global}
\CL\simeq \pi^*(\CL_{\on{glob}}).
\end{equation} 

\begin{rem}
By \corref{c:uniformization}, the data of $\CL_{\on{glob}}$ equipped with an isomorphism \eqref{e:pullback global} is unique. 
\end{rem}

\sssec{}

Thus, from \thmref{t:uniformization}, we obtain:

\begin{thm} \label{t:WZW}
The map 
$$\Gamma^\IndCoh(\Gr_{G,\Ran},\CL\otimes \omega_{\Gr_{G,\Ran}}) \to 
\Gamma_c(\Bun_G,\CL_{\on{glob}} \otimes \omega_{\Bun_G}),$$
induced by the functor $\pi^!$, is an isomorphism. 
\end{thm}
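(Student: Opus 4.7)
\textbf{Proof plan for \thmref{t:WZW}.} The plan is to deduce the asserted isomorphism directly from the uniformization result \thmref{t:uniformization} together with the standard compatibility between $!$-pullback and tensoring with perfect objects. The main point is that both sides of the map can be characterized by the same universal property once one translates global sections into $\CHom$'s with $\omega$, and the uniformization map becomes an isomorphism of $\CHom$'s by \thmref{t:uniformization}.

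First I will identify $\pi^!(\CL_{\on{glob}}\otimes \omega_{\Bun_G})$ with $\CL\otimes \omega_{\Gr_{G,\Ran}}$. This uses two inputs: the fact that $\pi$ is $\IndCoh$-affine enough that $\pi^!(\omega_{\Bun_G})=\omega_{\Gr_{G,\Ran}}$ (which holds tautologically since both are $!$-pullbacks from a point), and the $\QCoh$-linearity of $\pi^!$, which gives $\pi^!(\CL_{\on{glob}}\otimes \omega_{\Bun_G})=\pi^*(\CL_{\on{glob}})\otimes \omega_{\Gr_{G,\Ran}}=\CL\otimes \omega_{\Gr_{G,\Ran}}$ using \eqref{e:pullback global}. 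With this identification, the natural map of \thmref{t:WZW} is the composite of the counit of the $(\pi^{\IndCoh}_*,\pi^!)$-adjunction for $\CL_{\on{glob}}\otimes \omega_{\Bun_G}$ with $(p_{\Bun_G})^{\IndCoh}_*$.

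Next I will translate both sides of the desired isomorphism into Hom-spaces with $\omega$. For the $\Gr_{G,\Ran}$ side, ind-properness of $\Gr_{G,\Ran}\to \Ran$ together with pseudo-properness of $\Ran$ ensures that $\Gamma^{\IndCoh}(\Gr_{G,\Ran},-)$ is left adjoint to $p^!_{\Gr_{G,\Ran}}$. For the $\Bun_G$ side, the left adjoint $\Gamma_{c,\Bun_G}$ to $p^!_{\Bun_G}$ is defined as in \sssref{sss:qcoh gamma-c} (via Gaitsgory's contractibility result). Applying these adjunctions, we obtain, for every $V\in \Vect$, natural isomorphisms
\begin{align*}
\CHom_\Vect(\Gamma^{\IndCoh}(\Gr_{G,\Ran},\CL\otimes\omega_{\Gr_{G,\Ran}}),V)
&\simeq \CHom_{\IndCoh(\Gr_{G,\Ran})}(\CL\otimes \omega_{\Gr_{G,\Ran}},V\otimes \omega_{\Gr_{G,\Ran}}),\\
\CHom_\Vect(\Gamma_c(\Bun_G,\CL_{\on{glob}}\otimes \omega_{\Bun_G}),V)
&\simeq \CHom_{\IndCoh(\Bun_G)}(\CL_{\on{glob}}\otimes \omega_{\Bun_G},V\otimes \omega_{\Bun_G}).
\end{align*}

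Finally, I will invoke \thmref{t:uniformization} applied to $\CF:=\CL_{\on{glob}}\otimes \omega_{\Bun_G}$: the map $\pi^!$ induces an isomorphism between the right-hand sides of the two displayed identities. By Yoneda, this gives the desired isomorphism of global sections, and one checks (by chasing the counit of the $(\pi^{\IndCoh}_*,\pi^!)$-adjunction through the various adjunction isomorphisms) that the resulting map is precisely the one of the theorem. The only non-cosmetic step is verifying that the map produced by this diagram chase coincides with the one in the statement; this is a routine but necessary compatibility check, and it is the only place the proof can go subtly wrong. All other ingredients are formal consequences of \thmref{t:uniformization} and the existence of $\Gamma_{c,\Bun_G}$.
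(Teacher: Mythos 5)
Your proposal is correct and follows essentially the same route as the paper, which simply derives \thmref{t:WZW} as an immediate consequence of \thmref{t:uniformization} (the paper literally writes ``Thus, from \thmref{t:uniformization}, we obtain:''). Your unwinding of the implicit steps — the identification $\pi^!(\Upsilon_{\Bun_G}(\CL_{\on{glob}}))\simeq \Upsilon_{\Gr_{G,\Ran}}(\CL)$ via compatibility of $\Upsilon$ with $!$-pullback and \eqref{e:pullback global}, the recognition that both $\Gamma^{\IndCoh}(\Gr_{G,\Ran},-)$ (by pseudo-properness of $\Gr_{G,\Ran}\to\pt$) and $\Gamma_{c}(\Bun_G,-)$ (by \secref{sss:qcoh gamma-c}) are left adjoints to the respective $p^!$, the Yoneda step, and the final compatibility check — is all accurate and is precisely what the paper leaves to the reader.
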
 

Combining with \eqref{e:pre WZW}, we can reformulate \thmref{t:WZW} as follows:

\begin{thm}  \label{t:WZW'}
There exists a canonical isomorphism
$$\on{C}^{\on{Fact}}_\cdot(X,\CA_{\CL,X})\to \Gamma_c(\Bun_G,\CL_{\on{glob}}\otimes \omega_{\Bun_G}).$$
\end{thm}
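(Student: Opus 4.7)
The plan is to deduce the statement directly from \thmref{t:WZW} by unraveling the definition of factorization homology; the paper itself advertises \thmref{t:WZW'} as a ``reformulation'' of \thmref{t:WZW}, and indeed there is essentially no additional content beyond that theorem.

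First, I would unwind the left-hand side. By construction $\CA_\CL := q^{\IndCoh}_*(\CL \otimes \omega_{\Gr_{G,\Ran}})$, where $q: \Gr_{G,\Ran} \to \Ran$ is the structure map. Since $G$ is reductive, $\pi: \Gr_{G,\Ran} \to \Bun_G$ is ind-proper, and consequently so is $q$. Thus $q^{\IndCoh}_*$ is left adjoint to $q^!$, so the composite $\Gamma_{c,\Ran} \circ q^{\IndCoh}_*$ is left adjoint to $q^! \circ p_\Ran^! \simeq p_{\Gr_{G,\Ran}}^!$. By the definition of $\Gamma^\IndCoh(\Gr_{G,\Ran}, -)$ as the left adjoint to $p_{\Gr_{G,\Ran}}^!$, this yields the identification \eqref{e:pre WZW}:
$$
\Gamma_{c,\Ran}(\CA_\CL) \simeq \Gamma^\IndCoh(\Gr_{G,\Ran}, \CL \otimes \omega_{\Gr_{G,\Ran}}).
$$
Combining with the definitional identity $\on{C}^{\on{Fact}}_\cdot(X, \CA_{\CL,X}) := \Gamma_{c,\Ran}(\CA_\CL)$, the left-hand side of the theorem is rewritten as $\Gamma^\IndCoh(\Gr_{G,\Ran}, \CL \otimes \omega_{\Gr_{G,\Ran}})$.

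Second, I would invoke \thmref{t:WZW}, which provides a canonical isomorphism
$$
\Gamma^\IndCoh(\Gr_{G,\Ran}, \CL \otimes \omega_{\Gr_{G,\Ran}}) \simeq \Gamma_c(\Bun_G, \CL_{\on{glob}} \otimes \omega_{\Bun_G}),
$$
induced by the functor $\pi^!$ (via the identification $\pi^!(\CL_{\on{glob}} \otimes \omega_{\Bun_G}) \simeq \CL \otimes \omega_{\Gr_{G,\Ran}}$ coming from $\CL \simeq \pi^* \CL_{\on{glob}}$ and $\pi^!\omega_{\Bun_G} \simeq \omega_{\Gr_{G,\Ran}}$). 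Composing the two identifications gives the desired isomorphism.

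The substantive content lies entirely in \thmref{t:WZW}, whose proof in turn rests on the uniformization result \thmref{t:uniformization}, hence ultimately on the main theorem \thmref{t:main} of the paper (via \thmref{t:Hecke equiv}). Consequently the only real ``obstacle'' in the proof of \thmref{t:WZW'} itself is bookkeeping: one must verify that the canonical map produced by chaining the above identifications agrees with the one induced by $\pi^!$, so that the resulting isomorphism is truly canonical. What \thmref{t:WZW'} delivers beyond \thmref{t:WZW} is a clean packaging in the language of chiral (factorization) homology, after which the derived Verlinde formula and \corref{c:WZW pos pos} in the simply-connected case follow by $k$-linear dualization and Teleman's vanishing \cite{Te}.
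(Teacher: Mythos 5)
Your proposal is correct and follows essentially the same route as the paper: \thmref{t:WZW'} is obtained by chaining the identification \eqref{e:pre WZW} of $\Gamma_{c,\Ran}(\CA_\CL)$ with $\Gamma^{\IndCoh}(\Gr_{G,\Ran},\CL\otimes\omega_{\Gr_{G,\Ran}})$, the definitional equality $\on{C}^{\on{Fact}}_\cdot(X,\CA_{\CL,X})=\Gamma_{c,\Ran}(\CA_\CL)$, and \thmref{t:WZW}. The only minor infelicity is the logical ordering in your ind-properness remark (ind-properness of $q:\Gr_{G,\Ran}\to\Ran$ is the basic fact rather than a consequence of ind-properness of $\pi$), but this does not affect the argument.
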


Passing to the duals, and noting that
\begin{multline*}
\Gamma_c(\Bun_G,\CL_{\on{glob}}\otimes \omega_{\Bun_G})^\vee \simeq 
\CHom_{\IndCoh(\Bun_G)}(\CL\otimes \omega_{\Bun_G},\omega_{\Bun_G}) \simeq \\
\simeq  \CHom_{\QCoh(\Bun_G)}(\CL,\CO_{\Bun_G})\simeq \Gamma(\Bun_G,\CL^{-1}),
\end{multline*}
we obtain:

\begin{cor} \label{c:WZW}
There exists a canonical isomorphism
$$\Gamma(\Bun_G,\CL^{-1})\simeq \on{C}^{\on{Fact}}_\cdot(X,\CA_{\CL,X})^\vee.$$
\end{cor}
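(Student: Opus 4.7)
The plan is to deduce Corollary \ref{c:WZW} as a formal consequence of Theorem \ref{t:WZW'} combined with standard Hom-space manipulations on $\Bun_G$. Applying linear dualization to the canonical isomorphism of Theorem \ref{t:WZW'} gives
$$ \on{C}^{\on{Fact}}_\cdot(X,\CA_{\CL,X})^\vee \simeq \Gamma_c(\Bun_G,\CL_{\on{glob}}\otimes \omega_{\Bun_G})^\vee,$$
so it suffices to identify the right-hand side with $\Gamma(\Bun_G,\CL_{\on{glob}}^{-1})$.

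For this identification, I would first invoke the $(\Gamma_{c,\Bun_G}, p_{\Bun_G}^!)$-adjunction from \secref{sss:qcoh gamma-c} together with $p_{\Bun_G}^!(k)\simeq \omega_{\Bun_G}$ to obtain
$$ \Gamma_c(\Bun_G,\CL_{\on{glob}}\otimes \omega_{\Bun_G})^\vee \simeq \CHom_{\IndCoh(\Bun_G)}(\CL_{\on{glob}}\otimes \omega_{\Bun_G},\omega_{\Bun_G}).$$
Since $\CL_{\on{glob}}$ is a line bundle on $\Bun_G$, the endofunctor $(-)\otimes \CL_{\on{glob}}$ of $\IndCoh(\Bun_G)$ is an autoequivalence with inverse $(-)\otimes \CL_{\on{glob}}^{-1}$; applying this inverse to both arguments rewrites the Hom space as $\CHom_{\IndCoh(\Bun_G)}(\omega_{\Bun_G},\CL_{\on{glob}}^{-1}\otimes \omega_{\Bun_G})$.

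Finally, I would use the identification $\Upsilon_{\Bun_G}(M) = M\otimes \omega_{\Bun_G}$ to rewrite the previous Hom as $\CHom_{\IndCoh(\Bun_G)}(\Upsilon_{\Bun_G}(\CO_{\Bun_G}),\Upsilon_{\Bun_G}(\CL_{\on{glob}}^{-1}))$, and then invoke the fact that $\Upsilon_{\Bun_G}:\QCoh(\Bun_G)\to \IndCoh(\Bun_G)$ is fully faithful when restricted to $\on{Perf}(\Bun_G)$ (which contains all line bundles). This yields
$$ \CHom_{\QCoh(\Bun_G)}(\CO_{\Bun_G},\CL_{\on{glob}}^{-1}) \simeq \Gamma(\Bun_G,\CL_{\on{glob}}^{-1}),$$
completing the argument. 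The only non-formal input beyond Theorem \ref{t:WZW'} is the fully-faithfulness of $\Upsilon_{\Bun_G}$ on the perfect subcategory, which is standard for laft Artin stacks (and can be extracted from \cite[Theorem 10.1.1]{IndSch} after local analysis); no genuine obstacles arise, and the whole deduction is essentially the dualization chain already sketched in the discussion preceding the corollary.
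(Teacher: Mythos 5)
Your proposal is correct and follows the same route as the paper's own (inline) deduction: dualize Theorem~\ref{t:WZW'}, apply the $(\Gamma_{c,\Bun_G},p^!_{\Bun_G})$-adjunction to rewrite the dual of compactly supported cohomology as a Hom into $\omega_{\Bun_G}$, and then pass from $\IndCoh$ to $\QCoh$ using full faithfulness of $\Upsilon_{\Bun_G}$ on perfect objects. The only cosmetic difference is that you twist explicitly by $\CL_{\on{glob}}^{-1}$ before invoking $\Upsilon$, whereas the paper elides this into the single step $\CHom_{\IndCoh}(\CL\otimes\omega_{\Bun_G},\omega_{\Bun_G})\simeq\CHom_{\QCoh}(\CL,\CO_{\Bun_G})$.
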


\begin{rem}
Suppose that $G$ is semi-simple and simply-connected. Then at the level of $0$-th
cohomology, the isomorphism
$$H^0(\Bun_G,\CL^{-1}) \simeq \left(H^{\on{Fact}}_0(X,\CA_{\CL,X})\right)^\vee$$
is well-known.

\medskip

The point of \corref{c:WZW} is that it shows that this isomorphism extends (at the derived level)
to the full chiral homology. 

\medskip

Note that for $G=T$ a torus, the assertion of \corref{c:WZW} coincides with that of \cite[Theorem 4.9.3]{BD}.

\end{rem} 

\sssec{}

Let's consider some particular cases of \corref{c:WZW}.
First, by combining \corref{c:WZW} with \thmref{t:affine BBW}(b,c), we obtain: 

\begin{cor}
\hfill
\begin{enumerate}[label={(\alph*)}]
\item
Suppose that the restriction of the level $\kappa$ to one of the simple factors in $\Lambda_{\on{sc}}$ is 
negative definite. Then $\Gamma(\Bun_G,\CL)=0$.
\item
Suppose that $G$ is semi-simple and simply-connected. Then
$k\to \Gamma(\Bun_G,\CO_{\Bun_G})$
is an isomorphism. 
\end{enumerate}
\end{cor}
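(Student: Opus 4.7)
The plan is to derive both parts directly from Corollary~\ref{c:WZW} by specializing to the appropriate factorizable line bundles and invoking the corresponding cases of Theorem~\ref{t:affine BBW}. The key observation is that Corollary~\ref{c:WZW} gives an isomorphism $\Gamma(\Bun_G,\CL^{-1})\simeq \on{C}^{\on{Fact}}_\cdot(X,\CA_{\CL,X})^{\vee}$, so vanishing or computation of $\Gamma(\Bun_G,-)$ is reduced to vanishing or computation of the factorization homology of $\CA_\CL$.

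For part (a), given $\CL$ whose level $\kappa$ is negative definite on some simple factor of $\Lambda_{\on{sc}}$, I would apply Corollary~\ref{c:WZW} not to $\CL$ itself but to the factorizable line bundle $\CL^{-1}$, whose level is $-\kappa$ and hence positive definite on that simple factor. Theorem~\ref{t:affine BBW}(b) then forces $\CA_{\CL^{-1}}=0$, so its factorization homology vanishes, and Corollary~\ref{c:WZW} (with $\CL$ replaced by $\CL^{-1}$) yields $\Gamma(\Bun_G,\CL)\simeq \on{C}^{\on{Fact}}_\cdot(X,\CA_{\CL^{-1},X})^{\vee}=0$.

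For part (b), I specialize to $\CL=\CO_{\Bun_G}$, the trivial line bundle, which corresponds to the factorizable line bundle of level $\kappa=0$ on $\Gr_{G,\Ran}$ under \cite{TZ} (here the hypothesis that $G$ is semisimple and simply connected is used, because in that case $\ul{\on{Hom}}(\pi_{1,\on{alg}}(G),\on{Pic}(X))$ vanishes and the level determines $\CL$ uniquely). By Theorem~\ref{t:affine BBW}(c), the unit section induces an isomorphism $\omega_{\Ran}\isom \CA_\CL$. Consequently
\[
\on{C}^{\on{Fact}}_\cdot(X,\CA_{\CL,X}) \simeq \Gamma_{c,\Ran}(\CA_\CL) \simeq \Gamma_{c,\Ran}(\omega_{\Ran}),
\]
and the latter is $k$ by universal homological contractibility of $\Ran(X)$ (Theorem~\ref{t:Ran contr}). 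Dualizing and invoking Corollary~\ref{c:WZW} gives $\Gamma(\Bun_G,\CO_{\Bun_G})\simeq k^{\vee}\simeq k$.

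The only substantive thing to check — and the main (minor) obstacle — is that under these identifications the resulting isomorphism $k\to \Gamma(\Bun_G,\CO_{\Bun_G})$ actually coincides with the canonical unit map. This amounts to tracing the construction of the isomorphism in Corollary~\ref{c:WZW} (which factors through Theorem~\ref{t:uniformization} and hence through $\pi^{!}$) and observing that the unit section of $\Gr_{G,\Ran}\to \Bun_G$ is compatible, via the contractibility of $\Ran$, with the obvious unit $k\to \Gamma(\Bun_G,\CO_{\Bun_G})$; this is a formal compatibility of adjunctions.
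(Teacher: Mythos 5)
Your proposal is correct and is exactly the argument the paper intends (the paper only says "combining \corref{c:WZW} with \thmref{t:affine BBW}(b,c)" without spelling it out): the passage through $\CL^{-1}$ in part~(a) is forced by the inverse built into \corref{c:WZW}, and part~(b) just plugs the trivial factorization line bundle into \corref{c:WZW} together with \thmref{t:affine BBW}(c) and the contractibility of $\Ran(X)$. Your remark about checking that the resulting isomorphism agrees with the canonical unit $k\to\Gamma(\Bun_G,\CO_{\Bun_G})$ is a fair point of diligence; it indeed reduces to a formal compatibility of the unit section $\Ran\to\Gr_{G,\Ran}$ with the uniformization map, which the paper leaves implicit.
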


Next, we consider the case the case of tori. 

\begin{cor} Let $G=T$ be a torus.
\begin{enumerate}[label={(\alph*)}]

\item
If $\CL$ is the trivial factorization line bundle, then $\on{C}^{\on{Fact}}_\cdot(X,\CA_{\CL,X})$
is isomorphic to $\Lambda$-many copies of (the exterior algebra) $\Sym(H^0(X,\omega_X)[1]\otimes \ft)$,
where $\ft:=\Lie(T)$. 

\item
If $\kappa=0$ but $\CL$ is non-trivial, then $\on{C}^{\on{Fact}}_\cdot(X,\CA_{\CL,X})=0$. 

\item
If $\kappa$ is non-degenerate, then $\on{C}^{\on{Fact}}_\cdot(X,\CA_{\CL,X})$ is finite-dimensional.

\item
If $\kappa$ is negative-definite, then $\on{C}^{\on{Fact}}_\cdot(X,\CA_{\CL,X})$ is concentrated in cohomological
degree $0$.

\item
If $\kappa$ is positive-definite, then $\on{C}^{\on{Fact}}_\cdot(X,\CA_{\CL,X})$ is concentrated in cohomological
degree $-\dim(T)\cdot g$, where $g$ is the genus of $X$.
\end{enumerate}
\end{cor}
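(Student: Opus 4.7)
The plan is to combine Corollary~\ref{c:WZW}, which gives $\on{C}^{\on{Fact}}_\cdot(X, \CA_{\CL,X})^\vee \simeq \Gamma(\Bun_T, \CL^{-1}_{\on{glob}})$, with an explicit description of $\Bun_T$ and of the line bundle $\CL_{\on{glob}}$. Identifying $BT \simeq (B\Gm)^{\dim T}$, one has $\Bun_T \simeq \on{Pic}(X) \otimes_\BZ \Lambda$ as commutative group stacks; in particular $\pi_0(\Bun_T) = \Lambda$ and each connected component $\Bun_T^\lambda$ is (non-canonically) a product $J \times BT$, where $J := \on{Jac}(X) \otimes_\BZ \Lambda$ is an abelian variety of dimension $g \cdot \dim T$. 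Hence
$$\on{C}^{\on{Fact}}_\cdot(X, \CA_{\CL,X})^\vee \;\simeq\; \prod_{\lambda \in \Lambda} \Gamma\bigl(\Bun_T^\lambda,\, \CL_{\on{glob}}^{-1}|_{\Bun_T^\lambda}\bigr),$$
reducing the problem to computing each factor and then dualizing.

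The key step is to identify $\CL_{\on{glob}}$ on each component. On $\Bun_T^\lambda$, the line bundle $\CL_{\on{glob}}$ carries a well-defined $BT$-weight $\chi_\lambda \in \Lambda^\vee$; since $\Gamma(BT, \chi) = 0$ for $\chi \ne 0$, only components with $\chi_\lambda = 0$ contribute. Unwinding the passage $\CL \mapsto \CL_{\on{glob}}$ of \eqref{e:pullback global} in light of the classification of \cite{TZ}, one obtains $\chi_\lambda = \kappa(\lambda, -)$, where $\kappa$ is the symmetric bilinear form associated to the level. Moreover, the restriction of $\CL_{\on{glob}}$ to $J$ is a line bundle whose first Chern class, viewed as a bilinear form on $H_1(J, \BZ) \simeq H_1(\on{Jac}(X), \BZ) \otimes \Lambda$, is the tensor product of $\kappa$ with the principal polarization of $\on{Jac}(X)$; when $\kappa = 0$ the restriction is translation-invariant and is determined by the homomorphism $\Lambda \to \on{Pic}(X)$ that classifies $\CL$.

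Granting this description, each case follows from standard computations. For (a), $\CL_{\on{glob}}$ is trivial, $\chi_\lambda = 0$, and each component contributes $\Gamma(J, \CO_J) = \Lambda^\bullet H^1(J, \CO_J)$, which by Serre duality equals $\Sym\bigl((H^0(X, \omega_X)^\vee \otimes \ft^\vee)[-1]\bigr)$; taking the graded dual of the product over $\Lambda$ produces the claimed direct sum of copies of $\Sym(H^0(X, \omega_X)[1] \otimes \ft)$. For (b), $\chi_\lambda = 0$ but $\CL_{\on{glob}}^{-1}|_J$ is a non-trivial element of $\on{Pic}^0(J)$, whose cohomology vanishes by the classical vanishing theorem for non-trivial degree-zero line bundles on abelian varieties. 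For (c)--(e), non-degeneracy of $\kappa$ forces $\lambda = 0$, so the product collapses to a single finite-dimensional factor, proving (c); Mumford's theorem on line bundles with non-degenerate first Chern class then concentrates the cohomology in a single degree equal to the index, which is $0$ when $\kappa$ is negative-definite (so $\CL_{\on{glob}}^{-1}|_J$ is ample), giving (d), and is $g \cdot \dim T$ when $\kappa$ is positive-definite, yielding homological degree $-g \cdot \dim T$ after dualizing, giving (e).

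The main obstacle is the explicit identification of $\CL_{\on{glob}}$ on each component of $\Bun_T$: namely the formula $\chi_\lambda = \kappa(\lambda, -)$ for the $BT$-weight and the determination of the signature of the first Chern class of the restriction to $J$. Both require unwinding the construction of $\CL_{\on{glob}}$ from \cite{K2} via \eqref{e:pullback global}; once these are established, everything else reduces to classical cohomology computations on abelian varieties.
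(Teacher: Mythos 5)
Your proof follows the same route as the paper's: invoke \corref{c:WZW} to reduce to computing $\Gamma(\Bun_T,\CL_{\on{glob}}^{-1})$, use the character $\kappa(\lambda,-)$ by which $T$ (acting by $2$-automorphisms) acts on $\CL_{\on{glob}}|_{\Bun_T^\lambda}$ to kill components with $\lambda\neq 0$ when $\kappa$ is nondegenerate, and then reduce to cohomology of line bundles on the abelian variety $\on{Jac}(X)\otimes_\BZ\Lambda$. The paper is terse at exactly the points where you pause (the formula $\chi_\lambda=\kappa(\lambda,-)$ is ``easy to see,'' the sign of the Chern class is ``well-known''), so your version simply fleshes these out: you record the identification $\Bun_T^\lambda\simeq J\times BT$ explicitly, compute $H^1(J,\CO_J)\simeq H^0(X,\omega_X)^\vee\otimes\ft^\vee$ and hence the exterior algebra in (a), identify $c_1(\CL_{\on{glob}}|_J)$ as $\kappa$ tensored with the principal polarization, and invoke Mumford's index theorem for (e), where the paper only says ``anti-ample.'' One small imprecision: in (a), the product $\prod_\lambda\Gamma(\Bun_T^\lambda,\CO)$ over the infinite index set $\Lambda$ does not dualize to a direct sum; the cleaner route is to use \thmref{t:WZW'} directly, which expresses $\on{C}^{\on{Fact}}_\cdot$ as $\Gamma_c(\Bun_T,\CL_{\on{glob}}\otimes\omega_{\Bun_T})$, and $\Gamma_c$ turns the disjoint union over $\Lambda$ into a direct sum, giving ``$\Lambda$-many copies'' in the intended sense.
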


\begin{proof}

We use \corref{c:WZW} to calculate $\on{C}^{\on{Fact}}_\cdot(X,\CA_{\CL,X})$. 

\medskip

In case (a), we are calculating 
the predual of the space of
(derived) global sections of the structure sheaf of $\Bun_T$, whence the result.

\medskip

In case (b), we are calculating the space of global sections of a line bundle of degree $0$; hence we obtain $0$,
unless the line bundle in question is trivial.

\medskip

In case (c), it is easy to see that the group $T$, which acts by $2$-automorphisms of $\Bun_T$, 
acts by character 
$\kappa(\lambda,-)$ on the restriction of $\CL$ to the connected component $\Bun_T^\lambda$
of $\Bun_T$. Hence, for $\lambda\neq 0$, we obtain 
$$\Gamma(\Bun^\lambda_T,\CL)=0.$$

Hence,
\begin{equation} \label{e:Pic 0}
\Gamma(\Bun^\lambda_T,\CL)\simeq \Gamma(\Bun^0_T,\CL),
\end{equation} 
while $\Bun_T^0$ is the quotient of the proper scheme 
$$\on{Jac}(X)\underset{\BZ}\otimes \Lambda$$
by the trivial action of $T$. Hence, (derived) global sections
of a coherent sheaf on $\Bun^0_T$ are finite-dimensional. 

\medskip

In cases (d) and (e), still using \eqref{e:Pic 0}, it suffices to show that $\CL|_{\on{Jac}(X)\underset{\BZ}\otimes \Lambda}$ is ample/anti-ample,
which is well-known. 

\end{proof}

\ssec{Vanishing of higher conformal blocks}

\sssec{} 

Now, suppose that $G$ is semi-simple and $\kappa$ is non-negative definite. In this case we have
the following fundamental result from \cite{Te}:

\begin{thm}[Teleman] \label{t:Teleman}
The vector space $H^0(\Bun_G,\CL_{\on{glob}})$ is finite-dimensional, and $H^i(\Bun_G,\CL_{\on{glob}})=0$
for $i>0$.
\end{thm}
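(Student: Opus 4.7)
My proposal is to attack the theorem via the uniformization machinery developed in this paper combined with representation-theoretic input on the affine side, which is in the same spirit as Teleman's original strategy. The first step is a reduction to the simply connected case: if $\tilde G \to G$ is the simply connected cover of the derived group (composed with the inclusion of the derived group, in the semisimple case this is just the simply connected cover), the induced map $\Bun_{\tilde G} \to \Bun_G$ is surjective with connected fibers identified with a group of torsion $\pi_1$-twists; and the pullback of $\CL_{\on{glob}}$ decomposes according to characters of the finite kernel, so it suffices to handle each component of $\Bun_{\tilde G}$, reducing to the case $G$ simply connected. After this reduction one can replace $\CL$ by its factorizable counterpart and work entirely with the level $\kappa$ (which, being non-negative definite on the simply connected part, is an ordinary non-negative integer level per simple factor).

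Next I would pull everything back to the Ran Grassmannian $\Gr_{G,\Ran}$ using \thmref{t:uniformization} and \corref{c:uniformization}. Concretely, since the line bundle $\CL_{\on{glob}}$ is perfect, $\corref{c:uniformization}$ identifies $\Gamma(\Bun_G,\CL_{\on{glob}})$ with $\Gamma(\Gr_{G,\Ran},\pi^*\CL_{\on{glob}}) = \Gamma(\Gr_{G,\Ran},\CL)$, so the problem becomes one of computing derived global sections of the factorizable line bundle $\CL$ on $\Gr_{G,\Ran}$. The ``zero-th'' cohomology should then be identified, via a Plücker-style argument, with the dual of the vacuum integrable representation $L_\kappa$ of the corresponding affine Kac-Moody algebra at level $\kappa$; this is the vertex-algebraic incarnation of the classical identification of conformal blocks with invariants in the integrable representation. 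The required finite-dimensionality of $H^0$ follows from the Weyl-Kac character formula once one has the vanishing of higher cohomology, since the weight multiplicities of a vacuum integrable representation decay suitably.

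The heart of the argument, and the main obstacle, is the vanishing of $H^i$ for $i>0$. My plan is to establish this by working on a finite-type approximation of $\Gr_{G,\Ran}$: restrict to $\Gr_{G,X^I}$ for a fixed finite set $I$, and then to the union of affine Schubert varieties $\ol{\Gr}{}^\lambda$. On each $\ol{\Gr}{}^\lambda$, the line bundle $\CL|_{\ol{\Gr}{}^\lambda}$ at non-negative (in fact here it will be anti-ample when we pass to the dual, so we must be careful about signs) level admits global cohomology vanishing: this is the Kumar-Mathieu theorem proved by Frobenius splitting of affine Schubert varieties, or, as the authors' appendix notes, via the Kashiwara-Tanisaki theory of localization at positive level, which is already invoked in \secref{s:BBW}. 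Passing to the colimit over $\lambda$ and then over the Ran space (using the universal homological contractibility of $\Ran(X)$ of \thmref{t:Ran contr}) converts these finite-dimensional vanishing statements into the desired global statement. The genuinely hard step here is to control the cohomological behavior of the colimit: one must verify that the higher cohomology vanishes \emph{uniformly}, so that the limit over Schubert strata does not produce a $\varprojlim^1$ contribution. This is where Teleman's use of positive energy and rigidity of integrable loop group representations enters in his original proof, and it is the step that cannot be shortcut by the general formalism of this paper.

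Finally, to deduce the theorem as stated (with $\CL_{\on{glob}}$ rather than $\CL_{\on{glob}}^{-1}$), I would dualize: \corref{c:WZW} of the paper gives $\Gamma(\Bun_G,\CL_{\on{glob}}^{-1}) \simeq \on{C}^{\on{Fact}}_\cdot(X,\CA_{\CL,X})^\vee$, so vanishing of higher $R\Gamma(\Bun_G,\CL_{\on{glob}})$ becomes equivalent to concentration of $\on{C}^{\on{Fact}}_\cdot$ in a single degree. To avoid the apparent circularity with \corref{c:WZW pos pos}, the proof above must proceed purely on the Grassmannian side (using Kumar-Mathieu/Kashiwara-Tanisaki as external input, not \corref{c:WZW pos pos}), and only then feed back into \corref{c:WZW} to recover the chiral homology statement. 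The anticipated obstacle, as noted, is controlling higher cohomology in the passage to the infinite-type limit, which is exactly the content of Teleman's rigidity input.
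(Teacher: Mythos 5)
The paper does not prove this theorem; it is quoted as a black-box result from Teleman's paper \cite{Te}, and the paper's own contribution here is only to \emph{combine} Teleman's theorem with \corref{c:WZW} to deduce \corref{c:WZW pos} and \corref{c:WZW pos pos}. You should have flagged that this is externally cited rather than attempting a proof.

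That said, your sketch does propose a genuinely different route (Teleman works with the single-point uniformization $\Bun_G \simeq G(\CO_{X-x})\backslash \Gr_{G,x}$, not with $\Gr_{G,\Ran}$), but as a \emph{proof} it has concrete gaps beyond the one you already acknowledge. First, the appeal to ``universal homological contractibility of $\Ran(X)$'' (\thmref{t:Ran contr}) does not help you here: that statement concerns $\Gamma_{c}$ for D-modules and is used in the paper to control \emph{compactly supported} de Rham pushforwards, not derived global sections of quasi-coherent sheaves. The derived limit $\Gamma(\Gr_{G,\Ran},\CL) \simeq \lim_{I \in (\on{Fin}^s)^{\on{op}}} \Gamma(\Gr_{G,X^I},\CL_I)$ is over a non-filtered diagram whose transition maps are restrictions along closed embeddings (diagonals), and surjectivity of these restrictions at the level of $H^0$ is a serious positivity claim, not something that follows from Kumar--Mathieu vanishing on Schubert cells of a \emph{single} Grassmannian. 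Second, your route to finite-dimensionality of $H^0$ is not sound as stated: $\Gamma(\Gr_{G,x},\CL)$ is the (dual of the) vacuum integrable representation $L_\kappa$, which is \emph{infinite}-dimensional, and Weyl--Kac character asymptotics do not make its dual finite-dimensional. Finite-dimensionality of $\Gamma(\Bun_G,\CL_{\on{glob}})$ requires identifying it with a finite-dimensional space of invariants/coinvariants (conformal blocks), i.e. passing from $L_\kappa^\vee$ to $L_\kappa^{\vee, G(\CO_{X-x})}$ or its Ran-space analogue — and controlling this passage is exactly where Teleman's positive-energy rigidity input enters. In short, the reduction via \corref{c:uniformization} is valid, but it does not simplify the hard cohomological content; it merely relocates it, and the relocated problem is not addressed.
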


Combining with \corref{c:WZW}, we obtain:

\begin{cor} \label{c:WZW pos}
Let $G$ be semi-simple and $\kappa$ non-positive definite. Then 
$\on{C}^{\on{Fact}}_\cdot(X,\CA_{\CL,X})$ is finite-dimensional and is concentrated in 
cohomological degree $0$. 
\end{cor}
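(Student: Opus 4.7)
The plan is essentially a two-line chain of invocations: combine Corollary~\ref{c:WZW} with Teleman's theorem (\thmref{t:Teleman}). I would first record that, under the correspondence $\CL \simeq \pi^*(\CL_{\on{glob}})$ from \secref{ss:lin fact}, the line bundle $\CL_{\on{glob}}^{-1}$ on $\Bun_G$ has level $-\kappa$; so the hypothesis that $\kappa$ is non-positive definite translates into the fact that $\CL_{\on{glob}}^{-1}$ has \emph{non-negative} definite level. This is precisely the hypothesis of Teleman's theorem applied to $\CL_{\on{glob}}^{-1}$.

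Next I would apply \thmref{t:Teleman} to $\CL_{\on{glob}}^{-1}$ to conclude that
\[
\Gamma(\Bun_G,\CL_{\on{glob}}^{-1})
\]
is concentrated in cohomological degree $0$ and is finite-dimensional over $k$. Then I would invoke Corollary~\ref{c:WZW}, which identifies this with the $k$-linear dual of the chiral homology:
\[
\on{C}^{\on{Fact}}_\cdot(X,\CA_{\CL,X})^\vee \;\simeq\; \Gamma(\Bun_G,\CL_{\on{glob}}^{-1}).
\]

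Finally, since the right-hand side is finite-dimensional and sits in cohomological degree $0$, the same holds for $\on{C}^{\on{Fact}}_\cdot(X,\CA_{\CL,X})^\vee$, and hence (because $k$-linear duality on a finite-dimensional complex concentrated in a single degree preserves both properties) also for $\on{C}^{\on{Fact}}_\cdot(X,\CA_{\CL,X})$ itself. There is no genuine obstacle in this argument; the entire nontrivial content has been absorbed into \corref{c:WZW} (which in turn rested on \thmref{t:uniformization} and ultimately \thmref{t:Hecke equiv}) and into the Borel--Weil--Bott-type result of Teleman. The only mild care needed is keeping track of the sign convention on the level (i.e.\ that dualizing the line bundle negates $\kappa$), so that ``non-positive definite $\kappa$'' on the chiral-algebra side is correctly matched with the ``non-negative definite'' hypothesis in \thmref{t:Teleman}.
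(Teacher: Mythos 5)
Your proof is correct and is the same argument the paper gives implicitly ("Combining with \corref{c:WZW}, we obtain:"). You correctly handle the sign flip — Teleman's theorem (\thmref{t:Teleman}) is stated with $\kappa$ non-negative definite, so you apply it to $\CL_{\on{glob}}^{-1}$, whose level is $-\kappa$ (non-negative definite under the hypothesis of \corref{c:WZW pos}), and then pass to duals via \corref{c:WZW}. One small slip: the correspondence $\CL \simeq \pi^*(\CL_{\on{glob}})$ is introduced in the subsection on WZW conformal blocks as derived global sections (citing \cite[Sect.~2.4]{K2}), not in \secref{ss:lin fact}, which is the subsection on linear factorization sheaves in the Ran-space part of the paper.
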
 

\begin{cor} \label{c:WZW pos pos}
Let $G$ be semi-simple and simply-connected and $\kappa$ a positive level. Then the chiral homology of the
integrable quotient of the Kac-Moody chiral algebra at level $\kappa$ is concentrated in
cohomological degree $0$. 
\end{cor}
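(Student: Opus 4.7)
The plan is to deduce this directly from \corref{c:WZW pos} by translating the statement about integrable Kac-Moody quotients into the factorization-line-bundle language used throughout this section. Since the corollary is essentially a reinterpretation, the argument will be short, and the only thing to watch is the sign convention.

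First, I would invoke Remark \ref{r:int}, which identifies the chiral algebra $\CA_{\CL,X}$ attached to a factorization line bundle $\CL$ on $\Gr_{G,\Ran}$ of level $\kappa'$ (for $G$ semi-simple and simply-connected) with the integrable quotient of the Kac-Moody chiral algebra at level $-\kappa'$. Under the hypothesis that $G$ is simply-connected we have $\pi_{1,\on{alg}}(G) = 0$, so the fiber sequence recalled in \secref{sss:fact line bundles} degenerates to an equivalence of Picard groupoids
\[
\on{FactPic}(\Gr_{G,\Ran}) \simeq \on{Quad}(\Lambda,\BZ)^W,
\]
and consequently a factorization line bundle realizing a prescribed level exists and is unique up to unique isomorphism. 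In particular, for the given positive level $\kappa$, there is a canonical $\CL \in \on{FactPic}(\Gr_{G,\Ran})$ of level $-\kappa$.

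With this $\CL$ in hand, the integrable quotient of the Kac-Moody chiral algebra at level $\kappa$ is precisely $\CA_{\CL,X}$. Since $\kappa$ is positive, the level $-\kappa$ of $\CL$ is negative definite and in particular non-positive definite, so $\CL$ satisfies the hypothesis of \corref{c:WZW pos}. That corollary then yields that $\on{C}^{\on{Fact}}_\cdot(X,\CA_{\CL,X})$ is finite-dimensional and concentrated in cohomological degree $0$, which is the claim (the finite-dimensionality is a free bonus).

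I do not expect any substantive obstacle; the only point requiring care is the sign flip in Remark \ref{r:int}, so that "positive level $\kappa$" for the Kac-Moody side corresponds to "negative definite $-\kappa$" on the line-bundle side, thereby placing us in the regime of \thmref{t:Teleman} that underlies \corref{c:WZW pos}.
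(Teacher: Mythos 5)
Your argument is correct and is precisely the reasoning the paper has in mind (the paper states Corollary \ref{c:WZW pos pos} as an immediate consequence of Corollary \ref{c:WZW pos} without elaboration): for simply-connected $G$ the factorization line bundle of level $-\kappa$ exists and is unique, Remark \ref{r:int} identifies $\CA_{\CL,X}$ with the integrable quotient at level $\kappa$, and since $-\kappa$ is non-positive definite, Corollary \ref{c:WZW pos} applies. You have handled the sign convention correctly.
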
 

\begin{rem} 
The assertion of \corref{c:WZW pos pos} gives a positive answer in the case of the WZW model to the following
general question.  Let $\CB$ be a rational VOA.  For any curve $X$, we have the corresponding chiral algebra $\CB_X$.
Is the factorization homology $\on{C}^{\on{Fact}}_\cdot(X,\CB_X)$ is concentrated in cohomological degree $0$?
\end{rem} 

\appendix

\section{Finiteness conditions} \label{s:finiteness}

The purpose of this section is to provide a framework for the notion of \emph{sectional left-ness} introduced
in \secref{ss:sect laft}. 

\medskip

We begin with a discussion of coherence in a DG category with a t-structure, 
which will be used to formulate the appropriate finiteness condition on the relative deformation theory.

\ssec{Coherence and renormalization}

\sssec{} \label{sss:unif bdd}

Let $\bC$ be a DG category with a t-structure\footnote{Recall that all DG categories, unless stated otherwise, are assumed to be presentable.  
Similarly, t-structures on a DG category are assumed to be accessible (see \cite[Definition 1.4.4.12]{HA})}.  We will denote by $\bC^b \subset \bC$, 
$\bC^- \subset \bC$, and $\bC^+ \subset \bC$ the
full subcategories of bounded,  bounded above (i.e., eventually connective), and bounded below (i.e., eventually coconnective) objects, respectively.  

\medskip

Let $I$ be an index category.  We will say that a diagram $F: I \to \bC$ is \emph{uniformly bounded}
if there exist $m,n \in \mathbb{Z}$ such that
$$ F(i) \in \bC^{\leq m, \geq n} $$
for all $i\in I$.

\medskip

We will say that the t-structure on $\bC$ is \emph{weakly compatible with filtered colimits} if for every
uniformly bounded filtered diagram $F: I \to \bC$, we have
$$\underset{i\in I}{\on{colim}}\  F(i) \in \bC^b .$$

\begin{rem}

Recall that the t-structure on $\bC$ is \emph{compatible with filtered colimits} if the subcategory
$\bC^{\geq 0}$ is preserved by filtered colimits. This formally implies that all the subcategories
$\bC^{\leq n,\geq m}$ are preserved by filtered colimits. 

\end{rem}

\sssec{}

We will say that $x\in \bC$ is \emph{coherent} if it is bounded and for every uniformly bounded filtered diagram 
$F: I \to \bC$, the natural map
$$\underset{i\in I}{\on{colim}}\  \on{Maps}(x, F(i)) \to \on{Maps}(x, \underset{i\in I}{\on{colim}}\  F(i)) $$
is an isomorphism.

\medskip

Note that if the t-structure on $\bC$ is compatible with filtered colimits, an object $x\in \bC$ is coherent if and only if it is 
bounded and for every $n \in \mathbb{N}$, $\tau^{\geq -n}(x)$ is compact as an object of $\bC^{\geq -n}$.

\sssec{}

We will denote by
$$\on{Coh}(\bC) \subset \bC $$
the full subcategory of coherent objects; this is a stable subcategory of $\bC$.  Consider the category 
$$\on{Ind}(\on{Coh}(\bC)).$$

\medskip

Ind-extension of the inclusion $\on{Coh}(\bC) \to \bC$ defines a functor
$$ \Psi_{\bC}: \Ind(\Coh(\bC)) \to \bC $$
The DG category $\Ind(\Coh(\bC))$ admits a t-structure such that $M \in \Ind(\Coh(\bC))$ is connective iff $\Psi_{\bC}(M)$ is connective.  In particular,
$\Psi_{\bC}$ is right t-exact.

\begin{lem} \label{l:coh t}
Let $\bD$ be another DG category with a t-structure, and let $F: \bC \to \bD$ be a t-bounded functor with a continuous right adjoint.  
Then $F$ preserves coherent objects.  In particular, we have a commutative square
$$ \xymatrix{
\Ind(\Coh(\bC)) \ar[r]^{\tilde{F}}\ar[d]_{\Psi_\bC} & \Ind(\Coh(\bD)) \ar[d]^{\Psi_{\bD}}\\
\bC \ar[r]^F & \bD
}$$
where $\tilde{F}: \Ind(\Coh(\bC)) \to \Ind(\Coh(\bD))$ is the ind-extension of the restriction of $F$ to $\on{Coh}(\bC)$.
\end{lem}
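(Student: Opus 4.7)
The plan is to upgrade the $t$-boundedness of $F$ to joint $t$-boundedness of the adjoint pair $(F, G)$, where $G$ denotes the continuous right adjoint of $F$, and then to exploit this via adjunction to reduce coherence of $F(x)$ to coherence of $x$. The commutativity of the square is then a formal consequence of the colimit-preservation properties of the functors involved.

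First, I would observe that $G$ is automatically $t$-bounded. Suppose $F(\bC^{\leq 0}) \subset \bD^{\leq a}$ and $F(\bC^{\geq 0}) \subset \bD^{\geq -b}$. Then for any $d \in \bD^{\geq a+1}$ and any $x \in \bC^{\leq 0}$ one has
$$\on{Maps}_\bC(x, G(d)) \simeq \on{Maps}_\bD(F(x), d) = 0,$$
forcing $G(d) \in \bC^{\geq 1}$, i.e., $G(\bD^{\geq 0}) \subset \bC^{\geq -a}$; the symmetric argument gives $G(\bD^{\leq 0}) \subset \bC^{\leq b}$. In particular, $G$ sends uniformly bounded diagrams to uniformly bounded diagrams.

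Next, fix $x \in \Coh(\bC)$ and a uniformly bounded filtered diagram $\{d_j\}_{j \in J}$ in $\bD$. Clearly $F(x) \in \bD^b$, since $F$ is $t$-bounded and $x$ is bounded. To verify the colimit condition on $F(x)$, I would run the following chain: by the preceding step $\{G(d_j)\}$ is uniformly bounded in $\bC$; continuity of $G$ gives $\on{colim}_j G(d_j) \simeq G(\on{colim}_j d_j)$, which lies in $\bC^b$ by weak compatibility of the $t$-structure on $\bC$ with filtered colimits; and coherence of $x$ applied to $\{G(d_j)\}$ yields
$$\on{colim}_j \on{Maps}_\bC(x, G(d_j)) \xrightarrow{\sim} \on{Maps}_\bC(x, \on{colim}_j G(d_j)).$$
Translating each side through the $(F, G)$-adjunction then gives the analogous equivalence
$$\on{colim}_j \on{Maps}_\bD(F(x), d_j) \xrightarrow{\sim} \on{Maps}_\bD(F(x), \on{colim}_j d_j),$$
so $F(x) \in \Coh(\bD)$.

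For the commutative square, the restriction $F|_{\Coh(\bC)}$ now factors through $\Coh(\bD)$, so its $\Ind$-extension $\tilde{F}$ is well-defined and preserves colimits by construction. Both $\Psi_\bD \circ \tilde{F}$ and $F \circ \Psi_\bC$ are therefore colimit-preserving functors $\Ind(\Coh(\bC)) \to \bD$ (using that $F$ itself preserves colimits, having a right adjoint), and they agree tautologically on the generating subcategory $\Coh(\bC)$. The universal property of $\Ind$-completion then forces them to agree globally. The only nonformal ingredient in the whole argument is the $t$-boundedness of $G$; everything else is adjunction bookkeeping, and I do not expect any serious obstacle.
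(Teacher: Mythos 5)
The gap is in the opening step: the assertion that $G$ is automatically t-bounded. Your adjunction argument correctly shows that $F(\bC^{\leq 0}) \subset \bD^{\leq a}$ forces $G(\bD^{\geq 0}) \subset \bC^{\geq -a}$, but there is no "symmetric argument" for the other inequality. The adjunction $\Maps_\bC(x, G(d)) \simeq \Maps_\bD(F(x), d)$ only controls maps \emph{into} $G(d)$, hence only its coconnectivity; to bound $G(d)$ above you would need to control maps \emph{out of} $G(d)$, which is what a \emph{left} adjoint to $G$ would give you, and that is $F$ again — the hypothesis $F(\bC^{\geq 0}) \subset \bD^{\geq -b}$ tells you nothing about $G(\bD^{\leq 0})$. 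Concretely, for $F(x) \in \bD^{\geq -b}$ and $d \in \bD^{\leq -b-1}$ one does not have $\Maps_\bD(F(x), d) = 0$. In general $G$ is only left t-exact up to a shift, not t-bounded; this is also how the paper treats it in the proof of \propref{p:coherent right adjoint}, where only left t-exactness of $F^R$ is deduced and then used by restricting to eventually coconnective subcategories.

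Once $G$ fails to be t-bounded, the diagram $\{G(d_j)\}$ is uniformly bounded \emph{below} but not above, so you cannot apply coherence of $x$ to it directly. The fix is to truncate: if $x \in \bC^{\leq q}$, replace $G(d_j)$ by $\tau^{\leq q}G(d_j)$. The diagram $\{\tau^{\leq q}G(d_j)\}$ \emph{is} uniformly bounded, using only the left t-exactness of $G$ for the lower bound, and the fiber sequence $\tau^{\leq q}G(d_j) \to G(d_j) \to \tau^{\geq q+1}G(d_j)$ shows $\Maps(x, \tau^{\leq q}G(d_j)) \xrightarrow{\sim} \Maps(x, G(d_j))$ since $\Maps(x, \tau^{\geq q+1}G(d_j))$ is contractible. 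Coherence of $x$ then identifies $\colim_j\Maps(x, G(d_j))$ with $\Maps(x, \colim_j \tau^{\leq q}G(d_j))$, and the remaining step is to compare this with $\Maps(x, \colim_j G(d_j))$: the cofiber of $\colim_j\tau^{\leq q}G(d_j) \to \colim_j G(d_j)$ is $\colim_j \tau^{\geq q+1}G(d_j)$, and one needs $\Maps(x, \colim_j \tau^{\geq q+1}G(d_j))$ to vanish, which holds when $\bC^{\geq q+1}$ is closed under filtered colimits (i.e., the t-structure on $\bC$ is compatible with filtered colimits — a condition that holds in all of the paper's applications). Also note that your use of weak compatibility to conclude $\colim_j G(d_j) \in \bC^b$ is unnecessary: the definition of coherence does not require the colimit of the test diagram to be bounded. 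The argument for the commutative square is fine once the first part is repaired.
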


\sssec{}

Let $\bC$ be a DG category with a t-structure.  We will say that $\bC$ is \emph{has enough coherent objects} if
every object $x \in \bC^b$ can be expressed as a uniformly bounded filtered colimit of coherent objects.


\sssec{}

Let $\bC$ be a DG category with a t-structure. We will say that $\bC$ is \emph{coherent} if:
\begin{itemize}
\item
The t-structure is right complete and compatible with filtered colimits.
\item
$\bC$ has enough coherent objects.
\item
The subcategory $\on{Coh}(\bC) \subset \bC$ is closed with respect to truncations; i.e., if $x \in \on{Coh}(\bC)$
so is $\tau^{\leq n}(x)$ for all $n\in \mathbb{Z}$.
\end{itemize}

\sssec{}

%

For our purposes, the main consequence of coherence is the following (see \cite[Chapter 4, Proposition 1.2.2]{Vol1}):

\begin{prop} \label{p:coh coh t}
Let $\bC$ be a coherent DG category with t-structure.  Then:
\begin{enumerate}[label={(\alph*)}]
\item
An object $x \in \bC$ is coherent if and only if it is bounded and for every $n\in \mathbb{Z}$, the object
$$ H^n(x) \in \bC^{\heartsuit} $$
is compact \emph{as an object of} $\bC^{\heartsuit}$.
\item
The functor
$$ \Psi_{\bC}: \Ind(\Coh(\bC)) \to \bC $$
is t-exact and induces an equivalence $\Ind(\Coh(\bC))^+ \simeq \bC^+$.  
\end{enumerate}
\end{prop}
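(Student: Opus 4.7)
The plan for part (a) is to prove each direction using the Postnikov filtration of bounded objects. For the "only if" direction, starting from $x \in \Coh(\bC)$, boundedness is immediate, and the hypothesis that $\Coh(\bC)$ is closed under truncations gives $H^n(x) \in \Coh(\bC) \cap \bC^{\heartsuit}$ for each $n$. That such an object is automatically compact in $\bC^{\heartsuit}$ follows by unwinding definitions: any filtered diagram $w_i$ in $\bC^{\heartsuit}$ is uniformly bounded in $\bC$, its colimit in the heart is $H^0$ of its colimit in $\bC$ (here one uses that $\bC^{\geq 0}$ and $\bC^{\leq 0}$ are both preserved by filtered colimits), and coherence of $H^n(x)$ combined with $\pi_0$ of the resulting $\Hom$-isomorphism gives compactness in $\bC^{\heartsuit}$.

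For the "if" direction, given $x \in \bC^b$ with each $H^n(x)$ compact in $\bC^{\heartsuit}$, the Postnikov tower exhibits $x$ as a finite iterated extension of the shifts $H^n(x)[-n]$. Since $\Coh(\bC) \subset \bC$ is stable, it suffices to show any $y \in \bC^{\heartsuit}$ compact in $\bC^{\heartsuit}$ is coherent in $\bC$. For a uniformly bounded diagram $z_i \in \bC^{\leq m, \geq n}$, I would induct on $m-n$ via the cofiber sequences $\tau^{\leq m-1} z_i \to z_i \to H^m(z_i)[-m]$; compatibility of the t-structure with filtered colimits allows truncation to commute with $\on{colim}$ at each step. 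The base case reduces to checking that $\Ext^k_{\bC}(y, -)$ commutes with filtered colimits on $\bC^{\heartsuit}$ for all $k \geq 0$. The case $k = 0$ is exactly compactness in the heart, and $k > 0$ is handled inductively by resolving the target by compacts in $\bC^{\heartsuit}$, which are available because the "enough coherent objects" hypothesis, applied inside the heart, implies that $\bC^{\heartsuit}$ has enough compact objects.

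For part (b), right t-exactness of $\Psi_{\bC}$ is built into the construction of the t-structure on $\Ind(\Coh(\bC))$. Left t-exactness follows because $\Psi_{\bC}$ is continuous and $\Ind(\Coh(\bC))^{\geq 0}$ is generated under filtered colimits by $\Coh(\bC) \cap \bC^{\geq 0}$, while $\bC^{\geq 0}$ is closed under filtered colimits by hypothesis. For the equivalence $\Ind(\Coh(\bC))^+ \simeq \bC^+$, fully faithfulness follows from Yoneda once one observes that each $y \in \Coh(\bC) \cap \bC^+$ becomes compact when viewed as an object of $\bC^+$: any filtered diagram in $\bC^+$ bounded below by a fixed integer is, for the purpose of computing $\Hom$ out of $y$ (which is itself bounded), effectively uniformly bounded, reducing to part (a). Essential surjectivity uses "enough coherent objects" combined with right completeness: any $x \in \bC^+$ is realized as the filtered colimit of its coherent approximations, which lifts canonically to $\Ind(\Coh(\bC))^+$ and maps isomorphically to $x$ under $\Psi_{\bC}$.

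The main obstacle I anticipate is the inductive base case in the "if" direction of (a), specifically controlling the higher Ext groups $\Ext^k_{\bC}(y,v)$ for $y, v \in \bC^{\heartsuit}$ with $y$ compact in the heart. This is the substantive use of coherence as opposed to mere compactness in the heart, requiring one to leverage the availability of enough compact-in-the-heart objects (furnished by the "enough coherent objects" hypothesis) to produce resolutions translating higher Ext continuity into degree-zero $\Hom$ continuity.
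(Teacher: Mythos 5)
The ``only if'' direction of (a) and the reduction at the start of the ``if'' direction (Postnikov filtration plus stability of $\Coh(\bC)$ reduces you to showing that $y\in\bC^{\heartsuit}$ compact in $\bC^{\heartsuit}$ is coherent) are both fine. The gap is exactly where you flag it: your proposed finish for the base case does not work, and no amount of care with resolutions will save it as stated.

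Two specific problems. First, ``enough coherent objects applied inside the heart'' gives that every $v\in\bC^{\heartsuit}$ is a \emph{filtered colimit} of compacts, not a quotient of a single compact; passing to a coproduct to get a surjection destroys compactness, so you don't actually have a short exact sequence $0\to v'\to P\to v\to 0$ with $P$ compact to induct with. Second, even granting such a $P$, the long exact sequence places $\Ext^k_{\bC}(y,v)$ between $\Ext^k_{\bC}(y,P)$ and $\Ext^{k+1}_{\bC}(y,v')$ --- neither term is in a lower cohomological degree, so there is no induction on $k$. The classical way to run this kind of induction is to resolve $y$, not $v$, by a surjection $Q\twoheadrightarrow y$ with $Q$ compact and then use that $\Ext^k(y,-)\simeq\Ext^{k-1}(\ker,-)$ for $k\geq 2$; but to continue you need $\ker(Q\to y)$ to again be compact in $\bC^{\heartsuit}$, and establishing that in a merely coherent (rather than locally Noetherian) $\bC$ already requires knowing $\Coh(\bC)\cap\bC^{\heartsuit}$ equals the compacts of $\bC^{\heartsuit}$, which is what you are trying to prove. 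So the resolution strategy is either unavailable or circular.

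The intended argument is a retract argument, and it short-circuits all of this. Given $y\in\bC^{\heartsuit}$ compact in $\bC^{\heartsuit}$: by ``enough coherent objects'' write $y=\operatorname{colim}_j y_j$ as a uniformly bounded filtered colimit with $y_j\in\Coh(\bC)$; applying $H^0$ (which commutes with this colimit by compatibility of the t-structure with filtered colimits) and using closure of $\Coh(\bC)$ under truncations gives $y=\operatorname{colim}_j H^0(y_j)$ with $H^0(y_j)\in\Coh(\bC)\cap\bC^{\heartsuit}$. By the already-established ``only if'' direction each $H^0(y_j)$ is compact in $\bC^{\heartsuit}$, so $y$ is a filtered colimit of compacts in $\bC^{\heartsuit}$; compactness of $y$ then forces $\operatorname{id}_y$ to factor through some $H^0(y_{j_0})$, exhibiting $y$ as a retract of $H^0(y_{j_0})\in\Coh(\bC)$. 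Finally, $\Coh(\bC)$ is closed under retracts (boundedness is retract-stable, and commutation of $\operatorname{Maps}(-,\operatorname{colim})$ with filtered colimits is a retract-stable property of an object), so $y\in\Coh(\bC)$. This is essentially the proof in the cited reference; the paper itself does not reprove the proposition but refers to Gaitsgory--Rozenblyum. Once (a) is in place, your sketch of (b) is reasonable, though the identification of $\Ind(\Coh(\bC))^{\geq 0}$ as generated under filtered colimits by $\Coh(\bC)\cap\bC^{\geq 0}$ (used for left t-exactness) deserves an explicit argument rather than an assertion.
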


\sssec{Locally Noetherian categories}

Sometimes it is more convenient to work with an even stronger notion: that of locally Noetherian categories.  

\medskip

We will say that that a DG category $\bC$ with a t-structure is locally Noetherian (cf. \cite[Definition C.6.9.1]{SAG}) if
\begin{itemize}
\item
The t-structure on $\bC$ is right complete and compatible with filtered colimits;
\item
The abelian category $\bC^{\heartsuit}$ is locally Noetherian; 
\item
Every compact object $x\in \bC^{\heartsuit}$ is coherent as an object in $\bC$.
\end{itemize}

\medskip

%

\sssec{}

The following is not difficult: 

\begin{lem}\label{l:Noeth coherent}
A locally Noetherian DG category with a t-structure is coherent.
\end{lem}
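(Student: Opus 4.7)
My plan is to prove the stronger statement that $\Coh(\bC)$ coincides with the full subcategory $\bC^{\mathrm{lN}} \subset \bC^b$ consisting of bounded objects all of whose cohomologies $H^i(-)$ are compact (equivalently, Noetherian) in $\bC^\heartsuit$. From this equality, the three remaining defining conditions of a coherent category are immediate: the t-structure is right complete and compatible with filtered colimits by hypothesis; closure under truncations holds because $\tau^{\leq n}$ and $\tau^{\geq n}$ preserve cohomology objects individually; and ``enough coherent objects'' will drop out once I show every bounded object is a uniformly bounded filtered colimit of objects in $\bC^{\mathrm{lN}}$, using the fact that every object of a locally Noetherian Grothendieck abelian category is the filtered union of its Noetherian subobjects.

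The inclusion $\Coh(\bC) \subseteq \bC^{\mathrm{lN}}$ is formal. For $x \in \Coh(\bC)$ and a filtered system $\{N_\alpha\}$ in $\bC^\heartsuit$, the shifted system $\{N_\alpha[-i]\}$ is uniformly bounded, and applying $\pi_0$ to the coherence isomorphism
\[
\underset{\alpha}{\colim}\, \Maps_{\bC}(x, N_\alpha[-i]) \isom \Maps_{\bC}\bigl(x, \underset{\alpha}{\colim}\, N_\alpha[-i]\bigr)
\]
shows that $\Hom_{\bC^\heartsuit}(H^i(x), -)$ commutes with filtered colimits in $\bC^\heartsuit$, i.e., $H^i(x)$ is Noetherian.

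For the reverse inclusion $\bC^{\mathrm{lN}} \subseteq \Coh(\bC)$, I will first observe that $\Coh(\bC)$ is a stable subcategory of $\bC$, since fibers in $\on{Spc}$ commute with filtered colimits and boundedness is preserved by fibers and cofibers. Given $x \in \bC^{\mathrm{lN}}$ with amplitude $[a,b]$, the Postnikov fiber sequences $\tau^{\leq n-1}(x) \to \tau^{\leq n}(x) \to H^n(x)[-n]$ realize $x$ as an iterated extension of the shifts $H^n(x)[-n]$; by the locally Noetherian hypothesis each $H^n(x)$ is coherent in $\bC$, and induction on $n$ from $a$ to $b$ yields $x \in \Coh(\bC)$.

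Finally, for ``enough coherent objects'' I will induct on the cohomological amplitude of $x \in \bC^{[a,b]}$. The base case $x \in \bC^\heartsuit[-a]$ is the locally Noetherian property. For the inductive step, write $H^b(x) = \colim_\alpha M_\alpha$ with $M_\alpha$ Noetherian in the heart and, by induction, $\tau^{\leq b-1}(x) = \colim_\beta y_\beta$ as a uniformly bounded filtered colimit of coherents. Using coherence of $M_\alpha[-b]$ together with uniform boundedness of $\{y_\beta[1]\}$, I will lift each restricted extension class $M_\alpha[-b] \to \tau^{\leq b-1}(x)[1]$ to a map $\widetilde\phi_\alpha \colon M_\alpha[-b] \to y_{\beta(\alpha)}[1]$, and for $\beta \geq \beta(\alpha)$ define $z_{\alpha,\beta} := \on{fib}\bigl(M_\alpha[-b] \to y_\beta[1]\bigr)$. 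Each $z_{\alpha,\beta}$ is coherent and lies in $\bC^{[a,b]}$, and commuting the fiber with the two filtered colimits (first over $\beta$, then over $\alpha$) gives $\colim_{(\alpha,\beta)} z_{\alpha,\beta} \simeq x$. The one genuinely technical point is this double-colimit bookkeeping of the compatibility system for the extension classes; everything else reduces to formal manipulations with Postnikov towers, the adjunction between $\bC^\heartsuit$ and $\bC$, and the structure theory of locally Noetherian abelian categories.
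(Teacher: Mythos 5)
The overall strategy is sound, and several steps are handled cleanly: the identification $\Coh(\bC) = \{x \in \bC^b : H^i(x) \text{ is compact for all } i\}$ is correct (the forward inclusion via $\pi_0\Maps(x, N[-i]) \simeq \Hom_{\bC^\heartsuit}(H^i(x),N)$ and the reverse via the Postnikov tower and stability of $\Coh(\bC)$), closure under truncation is then immediate, and the first condition of coherence is a hypothesis. The base case of the induction for ``enough coherent objects'' correctly uses that every object of a locally Noetherian Grothendieck abelian category is the filtered union of its Noetherian subobjects, and that compatibility of the t-structure with filtered colimits ensures heart-colimits and $\bC$-colimits agree.

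However, the inductive step of ``enough coherent objects'' has a genuine gap that you flag but do not resolve, and it is not mere bookkeeping — it is the nontrivial content of the lemma. You choose, for each $\alpha$ separately, a lift $\widetilde\phi_\alpha : M_\alpha[-b] \to y_{\beta(\alpha)}[1]$ of the restricted extension class, and then define $z_{\alpha,\beta} := \mathrm{fib}(M_\alpha[-b] \to y_\beta[1])$. But the lifts $\widetilde\phi_\alpha$ exist only up to non-canonical homotopy (they are points of a filtered colimit of mapping \emph{spaces}), and nothing forces the assignment $\alpha \mapsto \widetilde\phi_\alpha$ to be compatible along morphisms $\alpha \to \alpha'$, let alone homotopy-coherently. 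Without that, $(\alpha,\beta) \mapsto z_{\alpha,\beta}$ is not a diagram at all, so ``commuting the fiber with the two filtered colimits'' has no meaning. A clean way to close the gap is to avoid choosing lifts: let $\CK$ be the category of triples $(M,y,\widetilde\phi)$ with $M$ a Noetherian subobject of $H^b(x)$, $y$ a coherent approximant of $\tau^{\leq b-1}(x)$ in amplitude $[a,b-1]$, and $\widetilde\phi : M[-b] \to y[1]$ together with a homotopy identifying its pushforward with $\psi|_M$ in $\Maps(M[-b],\tau^{\leq b-1}(x)[1])$. Then $z := \mathrm{fib}(\widetilde\phi)$ is automatically a functor on $\CK$, and one shows $\CK$ is filtered with cofinal projections to the two original posets, using that for fixed $M$ the fiber over $\psi|_M$ becomes contractible in the colimit over $y$ (this is precisely the coherence of $M[-b]$ applied to a pullback diagram of spaces). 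An alternative route that sidesteps extension classes altogether: show directly that $\Coh(\bC)^{[a,b]}_{/x}$ is filtered (given a finite diagram of coherents over $x$, take the finite colimit, which maps to $x$ since $x \in \bC^{\geq a}$ it factors through $\tau^{\geq a}$, and this truncation is again coherent by closure under truncations) and that its tautological colimit is $x$. For reference, the paper itself offers no proof of this lemma, so there is no paper route to compare against.
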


\sssec{An example} \label{sss:IndCoh sch}
Let $X$ be a laft scheme. Then the category $\QCoh(X)$, equipped with its t-structure
is locally Noetherian, and hence coherent. 

\medskip

We will denote the
category of coherent objects in $\QCoh(X)$ simply by $\Coh(X)$, and
the resulting category $\Ind(\Coh(X))$ by $\IndCoh(X)$. 

\medskip 

The resulting functor $\Psi_{\QCoh(X)}$ is the functor
$$\Psi_X:\IndCoh(X)\to \QCoh(X)$$
of \cite[Chapter 4, Sect. 1.1.2]{Vol1}.

%

\ssec{The \emph{laft} subcategory} \label{ss:laft}

A construction introduced in this subsection plays a key technical role in the main body of the paper.

\sssec{}\label{sss:pro laft cat}

Let $\bD$ be a DG category (not necessarily assumed cocomplete). Recall that the pro-completion of $\bD$,
denoted $\on{Pro}(\bD)$, can be thought of as the category \emph{opposite} to that of (accessible) exact functors
$\bD\to \Vect$. 

\medskip

For $\CF\in \on{Pro}(\bD)$ and $M\in \bD$, we will write $\CF(M)\in \Vect$ for the value of the corresponding functor.

\medskip

The Yoneda embedding $Y_\bD:\bD\to  \on{Pro}(\bD)$ maps $M'\in \bD$ to the functor
$$M\mapsto \CHom_\bD(M',M).$$

\sssec{} \label{sss:pro-functor}

Let $\Phi:\bD\to \bD'$ be a functor. Restriction along $\Phi$ defines a functor
$$\Res_\Phi:\on{Pro}(\bD')\to \on{Pro}(\bD).$$

The functor $\Res_\Phi$ admits a \emph{right} adjoint, given by \emph{left} Kan extension of functors with values in $\Vect$;
we will denote it by 
$$\on{Pro}(\Phi):\on{Pro}(\bD)\to \on{Pro}(\bD'),$$
so that we have a commutative diagram 
$$
\CD 
\bD @>{\Phi}>> \bD' \\
@V{Y_\bD}VV @VV{Y_{\bD'}}V \\
\on{Pro}(\bD) @>{\on{Pro}(\Phi)}>> \on{Pro}(\bD'). 
\endCD
$$

Alternatively, $\on{Pro}(\Phi)$ is the \emph{right} Kan extension of the composite $Y_{\bD'}\circ \Phi$ along the functor $Y_{\bD}$. 

\medskip

If $\Phi$ admits a right adjoint, then we can we can calculate $\on{Pro}(\Phi)$ explicitly by
$$\left(\on{Pro}(\Phi)(\CF)\right)(M')\simeq \CF(\Phi^R(M')), \quad M'\in \bD',\,\, \CF\in \on{Pro}(\bD).$$

\sssec{}

Let $\bC$ be a cocomplete DG category, equipped with a t-structure. 
Suppose that the t-structure on $\bC$ is weakly compatible with filtered colimits. In this case, we introduce the full subcategory
$$\on{Pro}(\bC^{-})_{\on{laft}} \subset \on{Pro}(\bC^{-}) $$
to consist of functors $F: \bC^{-} \to \Vect$ such that:
\begin{enumerate}
\item
$F$ is \emph{convergent}; i.e., for every $x \in \bC^{-}$, the natural map
$$ F(\underset{n}{\varprojlim}\  \tau^{\geq -n} (x)) \to \underset{n}{\varprojlim}\  F(\tau^{\geq -n} (x)) $$
is an isomorphism.  Equivalently, $F$ is the right Kan extension of a functor
$$ F_{| \bC^{b}}: \bC^b \to \Vect .$$
\item
$F$ commutes with uniformly bounded filtered colimits; i.e., if $I\ni i \mapsto x_i$ is a uniformly bounded
filtered diagram in $\bC$, the natural map
$$ \underset{i\in I}{\on{colim}}\  F(x_i) \to F(\underset{i\in I}{\on{colim}}\  x_i) $$
is an isomorphism. 
\end{enumerate}

\sssec{}

The next proposition shows that the subcategory $\on{Pro}(\bC^{-})_{\on{laft}}$ can be expressed as the pro-completion 
of the category of coherent objects of $\bC$, provided that $\bC$ has enough coherent objects: 

\begin{prop}\label{p:pro-coh}
Suppose that $\bC$ is a DG category with a t-structure, which is weakly compatible with filtered colimits.  
If $\bC$ has enough coherent objects, then the restriction functor
$$ \on{Pro}(\bC^{-})_{\on{laft}} \to \on{Pro}(\on{Coh}(\bC)) $$
is an equivalence.
\end{prop}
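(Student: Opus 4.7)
The approach is to show the restriction functor is both fully faithful and essentially surjective.

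\textbf{Fully faithfulness.} Any functor $F \in \on{Pro}(\bC^-)_{\on{laft}}$ is reconstructible from its restriction to $\on{Coh}(\bC)$: convergence (condition (1)) gives $F(x) \simeq \lim_n F(\tau^{\geq -n}x)$ for every $x \in \bC^-$, reducing $F$ to $F|_{\bC^b}$; and condition (2) together with the ``enough coherent objects'' hypothesis gives $F(y) \simeq \on{colim}_i F(c_i)$ for any uniformly bounded filtered presentation $y \simeq \on{colim}_i c_i$ in $\on{Coh}(\bC)$, reducing $F|_{\bC^b}$ to $F|_{\on{Coh}(\bC)}$. The same argument applies to morphisms between any two such functors.

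\textbf{Essential surjectivity.} Given an exact $G : \on{Coh}(\bC) \to \Vect$, I would build the extension $F$ in two stages. \emph{Stage A:} Extend $G$ to an exact, uniformly-bounded-filtered-colimit-preserving functor $F_b : \bC^b \to \Vect$. For each pair of integers $m \leq n$, I claim that the inclusion $\on{Coh}(\bC) \cap \bC^{\leq n, \geq m} \hookrightarrow \bC^{\leq n, \geq m}$ exhibits the target as the Ind-completion of the source; granting this, Ind-extension of $G$ yields a filtered-colimit-preserving exact functor $F_{m,n} : \bC^{\leq n, \geq m} \to \Vect$. These are compatible under widening of $[m,n]$ (the inclusions $\bC^{\leq n, \geq m} \hookrightarrow \bC^{\leq n', \geq m'}$ for $[m,n] \subseteq [m',n']$ preserve filtered colimits) and glue to $F_b$ on $\bC^b$. \emph{Stage B:} Define $F(x) := \lim_n F_b(\tau^{\geq -n}x)$ for $x \in \bC^-$. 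Convergence is built in; for a uniformly bounded filtered colimit $x = \on{colim}_j x_j$ with all $x_j \in \bC^{\geq -n_0}$, weak compatibility of the $t$-structure ensures $x \in \bC^{\geq -n_0}$, so the defining inverse system stabilizes and property (2) reduces to the corresponding property of $F_b$. Exactness of $F$ follows from exactness of $F_b$ together with exactness of sequential limits of fiber sequences in the stable category $\Vect$.

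\textbf{Main obstacle.} The key technical input to Stage A is the Ind-completion identification $\bC^{\leq n, \geq m} \simeq \Ind(\on{Coh}(\bC) \cap \bC^{\leq n, \geq m})$. The ``enough coherent objects'' hypothesis supplies essential surjectivity (every $x \in \bC^{\leq n, \geq m}$ is \emph{some} uniformly bounded filtered colimit of coherents), but the full Ind-completion statement additionally requires filteredness of the slice categories $(\on{Coh}(\bC) \cap \bC^{\leq n, \geq m})_{/x}$. I would establish this via a refinement argument: given a finite diagram of coherents in $\bC^{\leq n, \geq m}$ mapping to $x$, its colimit exists in $\bC^{\leq n, \geq m}$ (using stability of $\bC$ and truncation back into the window $[m,n]$), and by ``enough coherent objects'' applied to this colimit, one extracts a cofinal coherent refinement within amplitude $[m,n]$ dominating the original diagram. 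The careful bookkeeping of truncations, stability of $\on{Coh}(\bC)$ under finite (co)limits, and weak compatibility of the $t$-structure with filtered colimits is the technical crux of the proof.
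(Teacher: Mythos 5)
Your strategy mirrors the paper's: the paper defines the inverse as the composite $\on{Pro}(\on{Coh}(\bC)) \xrightarrow{\on{LKE}} \on{Pro}(\bC^b) \xrightarrow{\on{RKE}} \on{Pro}(\bC^-)$, exactly your Stage~A followed by Stage~B, and your fully-faithfulness argument is the intended one. However, the route you lay out in the ``main obstacle'' paragraph has a gap. The claim $\bC^{\leq n,\geq m}\simeq \on{Ind}(\on{Coh}(\bC)\cap\bC^{\leq n,\geq m})$ is questionable under the proposition's hypotheses, and your proposed refinement argument does not establish it. The subcategory $\on{Coh}(\bC)\cap\bC^{\leq n,\geq m}$ is \emph{not} closed under finite colimits computed in $\bC^{\leq n,\geq m}$: such a colimit is $\tau^{\leq n}\tau^{\geq m}$ of the colimit in $\bC$, and nothing in the proposition's hypotheses says $\on{Coh}(\bC)$ is closed under truncations (that would require the stronger ``$\bC$ is coherent'' hypothesis, not assumed here). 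Consequently the slices $(\on{Coh}(\bC)\cap\bC^{\leq n,\geq m})_{/x}$ need not be filtered, and the ``cofinal coherent refinement within amplitude $[m,n]$'' you invoke is unavailable: applying ``enough coherent objects'' to the truncated colimit produces coherents in \emph{some} uniform window, but with no control ensuring they live in $[m,n]$.

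The cleaner way to make Stage~A rigorous, closer to what the paper's ``LKE'' actually means, is to avoid the windowed slices entirely and take the left Kan extension over the full slice: set $F_b(y):=\on{colim}_{(c,f)\in\on{Coh}(\bC)_{/y}}G(c)$ for $y\in\bC^b$. This slice \emph{is} filtered, because $\on{Coh}(\bC)\subset\bC$ is a stable full subcategory (closed under finite colimits in $\bC$, no truncations needed); hence $F_b$ commutes with finite colimits and so is exact. To see that $F_b$ commutes with uniformly bounded filtered colimits and that it is accessible, observe that for any uniformly bounded filtered presentation $y\simeq\on{colim}_{i\in I}c_i$ by coherents (which exists by ``enough coherent objects''), the functor $I\to\on{Coh}(\bC)_{/y}$ is cofinal: given coherent $c\to y$, coherence of $c$ applied to the uniformly bounded diagram $\{c_i\}$ factors $c\to y$ through some $c_i$. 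So $F_b(y)\simeq\on{colim}_I G(c_i)$ for any such presentation, which gives both accessibility and the required commutation. Stage~B as you wrote it then goes through: the truncations $\{\tau^{\geq -n}x\}$ form an initial diagram in $(\bC^b)_{x/}$, so your $\lim_n F_b(\tau^{\geq -n}x)$ computes the right Kan extension, which is exact (the over-categories are cofiltered).
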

\begin{proof}
Consider the composite functor
$$ \on{Pro}(\on{Coh}(\bC)) \overset{\on{LKE}}{\longrightarrow} \on{Pro}(\bC^b) \overset{\on{RKE}}{\longrightarrow} \on{Pro}(\bC^-) .$$
It is fully faithful and by the hypotheses on $\bC$, the essential image is exactly given by $\on{Pro}(\bC^-)_{\on{laft}}$.
\end{proof}

\sssec{}

Applying \propref{p:pro-coh} to the
example in \secref{sss:IndCoh sch}, we obtain:

\begin{cor} \label{c:extended Serre}
Let $S$ be an affine scheme almost of finite type. Then the Serre duality anti self-equivalence
$$\BD^{\on{Serre}}:(\Coh(S))^{\on{op}}\simeq \Coh(S)$$
gives rise to an equivalence
$$\IndCoh(S)^{\on{op}} \simeq \on{Pro}(\QCoh(S)^{-})_{\on{laft}}.$$
\end{cor}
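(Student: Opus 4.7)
The strategy is to chain together three equivalences, all of which are already at our disposal. Since $S$ is an affine scheme almost of finite type, the category $\QCoh(S)$ with its standard t-structure is locally Noetherian (hence coherent, by \lemref{l:Noeth coherent}) and has enough coherent objects, and its coherent subcategory coincides with $\Coh(S)$ as recalled in \secref{sss:IndCoh sch}. In particular, \propref{p:pro-coh} applies and yields the first equivalence
\[
\on{Pro}(\QCoh(S)^{-})_{\on{laft}} \simeq \on{Pro}(\Coh(S)).
\]

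Next, I would invoke the general formal identification between ind- and pro-completions: for any small DG category $\mathcal{D}$, one has a canonical equivalence $\on{Ind}(\mathcal{D})^{\on{op}} \simeq \on{Pro}(\mathcal{D}^{\on{op}})$ (obtained by passing to opposites of exact functors to $\Vect$). Applying this to $\mathcal{D}=\Coh(S)$, and using $\IndCoh(S) := \on{Ind}(\Coh(S))$ from \secref{sss:IndCoh sch}, gives
\[
\IndCoh(S)^{\on{op}} \simeq \on{Pro}(\Coh(S)^{\on{op}}).
\]

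Finally, Serre duality provides the anti self-equivalence $\BD^{\on{Serre}}\colon \Coh(S)^{\on{op}} \simeq \Coh(S)$. Passing to pro-categories via the functoriality recalled in \secref{sss:pro-functor}, this induces an equivalence
\[
\on{Pro}(\Coh(S)^{\on{op}}) \simeq \on{Pro}(\Coh(S)).
\]
Composing the three equivalences yields the desired identification $\IndCoh(S)^{\on{op}} \simeq \on{Pro}(\QCoh(S)^{-})_{\on{laft}}$.

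There is no real obstacle here beyond bookkeeping: the only point requiring a modicum of care is checking that $\QCoh(S)$ satisfies the hypotheses of \propref{p:pro-coh} (weak compatibility of the t-structure with filtered colimits and having enough coherent objects), and that the identification $\Coh(S) = \on{Coh}(\QCoh(S))$ holds in the sense of the abstract definition of coherent object. Both are immediate from $S$ being almost of finite type and from \propref{p:coh coh t}. The explicit shape of the composite equivalence is the one in the statement: a coherent sheaf $\CF \in \Coh(S)$, viewed as a compact generator of $\IndCoh(S)$, corresponds to the pro-object $M \mapsto \CHom_{\QCoh(S)}(\BD^{\on{Serre}}(\CF), M)$, which recovers the formula for Serre-Verdier duality used throughout \secref{ss:Theta}.
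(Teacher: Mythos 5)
Your proof is correct and is exactly the argument the paper has in mind: the paper cites \propref{p:pro-coh} and the example of \secref{sss:IndCoh sch}, leaving implicit the two formal steps you spell out, namely the duality $\on{Ind}(\cD)^{\on{op}}\simeq\on{Pro}(\cD^{\on{op}})$ and the pro-extension of $\BD^{\on{Serre}}$. Your closing remark identifying the resulting functor with $\CF\mapsto(M\mapsto\CHom_{\QCoh(S)}(\BD^{\on{Serre}}(\CF),M))$ matches the explicit description of $\BD^{\on{SV}}$ used in \corref{c:Serre Verdier} and \secref{ss:Theta}.
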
  

\ssec{Coherence in D-module categories}

\sssec{Recollections on D-modules}\label{sss:dmod recall}

Let $X$ be a scheme almost of finite type, and consider its de Rham prestack $X_\dr$. The category of
(right) D-modules on $X$, is defined by 
$$\Dmod(X):=\IndCoh(X_{\on{dR}}).$$

\medskip

The natural map $p_{X,\dr}: X \to X_{\on{dR}}$ gives the induction functor
$$ \on{ind} := (p_{X,\dr})_*^{\on{IndCoh}}: \IndCoh(X) \to \IndCoh(X_{\on{dR}})$$
which has a conservative right adjoint $\on{oblv}:= (p_{X,\dr})^!$.  
In particular, since $\IndCoh(X)$ is compactly generated by $\on{Coh}(X)$, the category $\IndCoh(X_{\on{dR}})$ is compactly 
generated by objects $\on{ind}(M)$ for $M \in \on{Coh}(X)$.

\medskip

Being the IndCoh category of an inf-scheme, the category $\Dmod(X)$ carries a canonically defined t-structure.
It has the feature that the (a priori, right t-exact) functor $\on{ind}$ is t-exact. In particular, compact objects of
$\Dmod(X)$ are bounded, and hence, coherent. 

\medskip

The above t-structure has particularly favorable finiteness properties. For one thing, the category $\Dmod(X)$ is locally Noetherian. 
But in fact, something much stronger is true: objects of $\Dmod(X)^\heartsuit$ have uniformly bounded cohomological dimension
(in fact, by $\dim(X)$), which implies that the inclusion 
$$\Dmod(X)^c\subset \Coh(\Dmod(X))$$
is an equality. 

\sssec{}

For our purposes, we will need a relative variant of the above properties. Specifically, let $S$ be an affine scheme almost
of finite type. Our object of interest is the category
$$\IndCoh(S\times X_\dr).$$

\sssec{}

First, recall that for a pair of inf-schemes $\CX_1$ and $\CX_2$, external tensor product gives rise to a t-exact equivalence
$$\IndCoh(\CX_1)\otimes \IndCoh(\CX_2)\simeq \IndCoh(\CX_1\times \CX_2).$$

In particular, we obtain an equivalence
$$\IndCoh(S)\otimes \Dmod(X)\simeq \IndCoh(S \times X_\dr).$$

Recall (see \cite[Proposition D.5.6.1]{SAG}) that the property of being locally Noetherian is preserved by tensor
products. From here we obtain:

\begin{cor}
The DG category $\IndCoh(S \times X_\dr)$, equipped with its t-structure, is locally Noetherian, and
in particular, coherent. 
\end{cor}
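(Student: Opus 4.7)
The plan is to reduce the statement to the two cited inputs: the preservation of local Noetherianness under tensor products (\cite[Proposition D.5.6.1]{SAG}) and \lemref{l:Noeth coherent}. First I would observe that both factors appearing in the external product decomposition $\IndCoh(S \times X_\dr) \simeq \IndCoh(S) \otimes \Dmod(X)$ are themselves locally Noetherian: for $\IndCoh(S)$ this is the example in \secref{sss:IndCoh sch}, applied to the laft affine scheme $S$; for $\Dmod(X) = \IndCoh(X_\dr)$ this is part of the recollection in \secref{sss:dmod recall}, where moreover the t-structure is shown to be compatible with filtered colimits and the compact objects are coherent.

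Next I would note that the t-structure on the tensor product $\IndCoh(S) \otimes \Dmod(X)$ is the natural one whose connective part is generated under colimits by $\bC_1^{\leq 0} \otimes \bC_2^{\leq 0}$, and that the external tensor product equivalence with $\IndCoh(S \times X_\dr)$ is t-exact (this t-exactness is exactly the content of the fact cited just above the corollary). With this in place, the cited SAG proposition applies directly: the tensor product of two locally Noetherian DG categories with t-structures compatible with filtered colimits is again locally Noetherian. This gives the first assertion. The second assertion, ``in particular, coherent,'' is then an immediate application of \lemref{l:Noeth coherent}. I do not anticipate any genuine obstacle: this is essentially a bookkeeping step that packages the two cited inputs, and no new verification beyond checking the hypotheses of \cite[Proposition D.5.6.1]{SAG} is required.
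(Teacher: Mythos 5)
Your proof is correct and follows exactly the route the paper takes: identify $\IndCoh(S\times X_\dr)$ with $\IndCoh(S)\otimes \Dmod(X)$ via the t-exact external tensor product equivalence, invoke local Noetherianness of each factor, apply \cite[Proposition D.5.6.1]{SAG} to the tensor product, and conclude coherence from \lemref{l:Noeth coherent}.
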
 

Since the functor $\on{ind}$ is t-exact, the same is true for the functor
$$(\on{Id}\otimes \on{ind}): \IndCoh(S)\otimes \IndCoh(X)\to \IndCoh(S)\otimes \Dmod(X).$$

From here we obtain that the compact objects of $\IndCoh(S \times X_\dr)$ are bounded, and hence
coherent. 

\sssec{}

We now state the main result of this subsection:  

\begin{thm} \label{t:finiteness product}
The inclusion $\IndCoh(S \times X_\dr)^c$ into the category of coherent objects of $\IndCoh(S \times X_\dr)$
is an equality.
\end{thm}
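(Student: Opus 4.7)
The inclusion $\IndCoh(S\times X_\dr)^c \subseteq \Coh(\IndCoh(S\times X_\dr))$ has already been recorded in the paragraphs preceding the statement: via the t-exact equivalence $\IndCoh(S\times X_\dr)\simeq \IndCoh(S)\otimes\Dmod(X)$ and t-exactness of $\on{Id}\otimes\on{ind}$, we have a compact generating family consisting of bounded objects, and bounded compact objects of a locally Noetherian DG category are coherent by \propref{p:coh coh t}. So only the reverse inclusion requires real work.

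For the reverse inclusion, let $z\in \bC := \IndCoh(S\times X_\dr)$ be coherent. By \propref{p:coh coh t}(a), $z$ is bounded with each $H^i(z)\in\bC^\heartsuit$ heart-compact, and the finite Postnikov tower of $z$ expresses it as an iterated extension of the shifts $H^i(z)[-i]$. Since compactness in $\bC$ is preserved by finite extensions and shifts, the plan is to reduce to the following key statement: every heart-compact $M\in\bC^\heartsuit$ is compact in $\bC$. To establish this, I would try to construct a finite resolution
\begin{equation*}
0\to P_d\to P_{d-1}\to\cdots\to P_0\to M\to 0
\end{equation*}
in $\bC^\heartsuit$ of length $d := \dim X$, in which each $P_k$ is a finite direct sum of external products $N_S\boxtimes N_X$ with $N_S\in\Coh(S)$ and $N_X\in\Dmod(X)^c$. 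Such $N_S\boxtimes N_X$ is compact in $\bC$ under the factorization $\bC\simeq \IndCoh(S)\otimes\Dmod(X)$, so each $P_k$ is compact, and a finite iterated extension of compact objects is compact, giving the desired compactness of $M$.

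The hard part is producing this finite resolution, and it is here that the asymmetry between the two tensor factors becomes essential. Since $\bC^\heartsuit$ is locally Noetherian (\cite[Proposition D.5.6.1]{SAG}) and generated under filtered colimits by the external products $N_S\boxtimes N_X$ listed above, Noetherianity of the heart-compact $M$ yields iteratively a (possibly infinite) resolution $P_\bullet\to M$ by finite direct sums of such generators. Termination at length $d=\dim X$ is the crucial subtle point: while $\IndCoh(S)^\heartsuit$ can have infinite cohomological dimension when $S$ is singular, $\Dmod(X)^\heartsuit$ has cohomological dimension bounded by $\dim X$ (see \secref{sss:dmod recall}). A K\"unneth-type analysis of Ext groups in the tensor-product heart, combined with this bound, forces the $d$-th syzygy to already lie in the additive envelope of the chosen generators, allowing truncation of the resolution at step $d$. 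I expect this K\"unneth-type vanishing, rather than the surrounding bookkeeping, to be the technical heart of the argument.
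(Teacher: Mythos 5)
The easy inclusion and the reduction to heart objects via Postnikov towers and \propref{p:coh coh t}(a) are fine. The genuine gap is exactly where you flag uncertainty: you assert that after $d=\dim X$ syzygies (with respect to a resolution by finite sums of external products $N_S\boxtimes N_X$), the kernel lands in the additive envelope of such products. The bounded cohomological dimension of $\Dmod(X)^\heartsuit$ (for $X$ smooth — you would first need to reduce to that case via Kashiwara's lemma, as the paper does) does give you that the $d$-th syzygy $K_d$ is \emph{$\CD_X$-projective}, since the terms $N_S\boxtimes\CD_X$ are $\CD_X$-free. But $\CD_X$-projectivity only produces a $\CD_X$-linear splitting of $K_d\hookrightarrow P_{d-1}$; there is no reason for that splitting to be $\CO_S$-linear, so $K_d$ need not be an $\IndCoh(S)\otimes\Dmod(X)$-direct summand of $P_{d-1}$, let alone a sum of external products. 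There is no ``K\"unneth-type vanishing'' available here: the K\"unneth formula for $\Ext$ applies between external products, whereas the syzygies of $M$ are not external products, and when $S$ is singular the ring $\CO_S\otimes\CD_X$ genuinely has infinite global dimension, so no naive cohomological-dimension bound truncates the resolution. Your strategy would work verbatim if $\CO_S\otimes\CD_X$ had finite global dimension, which happens exactly when $S$ is regular; the whole difficulty of the theorem is the singular-$S$ case, and finiteness in the $X$-direction alone is not the mechanism that handles it.

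The paper exploits the same moral input (finite cohomological dimension of $\CD_X$) but via a genuinely different mechanism. After reducing to $X$ smooth affine by Kashiwara's lemma, it performs Noetherian induction on $S$. The base case is $S$ smooth, classical, regular, where $\bD_S\simeq(\CO_S\otimes\CD_X)\mod$ and the ring $\CO_S\otimes\CD_X$ literally has finite global dimension, so your resolution argument does work there. The inductive step handles singular $S$ by choosing a closed $T\hookrightarrow S$ with open affine complement $U$, invoking the inductive hypothesis on $T$ and $U$, and glueing along the recollement to show $\Xi_S$ is essentially surjective. It is this induction on $S$ — not any direct construction of bounded-length resolutions — that absorbs the singularities; the finite global dimension of $\CD_X$ enters only at the smooth base case.
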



\ssec{Proof of \thmref{t:finiteness product}} \label{ss:proof finiteness product}

\sssec{}

The proof of \thmref{t:finiteness product} will use the following assertion:

\medskip

Let $F: \bC \to \bD$ be a t-bounded functor between coherent DG categories which admits a continuous right adjoint $F^R$.  
Then, by \lemref{l:coh t}(b), we have a functor $\tilde{F}: \Ind(\Coh(\bC)) \to \Ind(\Coh(\bD))$ which preserves compact objects.  
In particular, $\tilde{F}$ admits a continuous right adjoint $\tilde{F}^R$.

\medskip

\begin{prop}\label{p:coherent right adjoint}
In the situation above, we have a commutative diagram
$$ \xymatrix{
\Ind(\Coh(\bD))\ar[r]^{\tilde{F}^R}\ar[d]_{\Psi_{\bD}} & \Ind(\Coh(\bC)) \ar[d]^{\Psi_{\bC}} \\
\bD \ar[r]^{F^R} & \bC
}$$
obtained by adjunction from \lemref{l:coh t}(b).
Moreover, if $F^R$ is fully faithful, so is $\tilde{F}^R$.
\end{prop}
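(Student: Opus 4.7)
The plan is to construct the comparison transformation as a mate and then verify it is an isomorphism by checking on compact generators, where the t-boundedness hypothesis reduces the question to a computation of mapping spaces.

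First, starting from the isomorphism $\Psi_\bD \circ \tilde F \simeq F \circ \Psi_\bC$ provided by \lemref{l:coh t}, I form the mate under the adjunctions $(\tilde F, \tilde F^R)$ and $(F, F^R)$. Concretely, this produces the natural transformation
$$\eta \colon \Psi_\bC \circ \tilde F^R \to F^R \circ \Psi_\bD$$
as the composite of the unit of $(F, F^R)$ applied to $\Psi_\bC \tilde F^R$, the given isomorphism $F \Psi_\bC \simeq \Psi_\bD \tilde F$, and the counit of $(\tilde F, \tilde F^R)$. To show $\eta$ is an isomorphism, I exploit that $\Ind(\Coh(\bD))$ is compactly generated by $\Coh(\bD)$ and that both functors in question are continuous (on the target side, $F^R$ is continuous by hypothesis and $\Psi_\bD$ is continuous by construction), so it suffices to check $\eta(d)$ is an isomorphism for every $d\in\Coh(\bD)$.

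Since $F$ is t-bounded, so is $\tilde F$, because the t-structure on $\Ind(\Coh(\bC))$ is defined through $\Psi_\bC$ and $\Psi_\bC$ is t-exact. Passing to right adjoints, both $F^R$ and $\tilde F^R$ are left t-exact up to a shift. In particular, for $d\in\Coh(\bD)$ we have $F^R(d)\in\bC^+$ and $\tilde F^R(d)\in\Ind(\Coh(\bC))^+$, and by \propref{p:coh coh t}(b) the functor $\Psi_\bC$ restricts to an equivalence $\Ind(\Coh(\bC))^+\simeq\bC^+$. Thus, for any $c\in\Coh(\bC)$, using the compactness of $c$ in $\Ind(\Coh(\bC))$ and the identification $\tilde F(c)=F(c)$ as an object of $\Coh(\bD)\subset\Ind(\Coh(\bD))$, I obtain
$$\Maps_\bC(c,\Psi_\bC\tilde F^R(d))\simeq\Maps_{\Ind(\Coh(\bC))}(c,\tilde F^R(d))\simeq\Maps_\bD(F(c),d)\simeq\Maps_\bC(c,F^R(d)).$$
Since $\bC$ has enough coherent objects and both $\Psi_\bC\tilde F^R(d)$ and $F^R(d)$ are in $\bC^+$, this identification, which by construction agrees with the map induced by $\eta(d)$, implies that $\eta(d)$ is an isomorphism.

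For the last assertion, full faithfulness of $\tilde F^R$ is equivalent to the counit $\tilde F\tilde F^R\to\id$ being an isomorphism. Again it suffices to check this on $d\in\Coh(\bD)$, where both $\tilde F\tilde F^R(d)$ and $d$ lie in $\Ind(\Coh(\bD))^+$ by the t-boundedness estimates above. Since $\Psi_\bD$ is an equivalence on this subcategory, hence conservative, I apply $\Psi_\bD$ to the counit: by the commutative square just established, it is identified with $FF^R\Psi_\bD(d)\to\Psi_\bD(d)$, which is the counit of $(F,F^R)$ and is an isomorphism by hypothesis. The one point needing some care throughout---and which I expect to be the main formal obstacle---is verifying that all identifications made via $\Psi$ and the adjunctions match the canonical mate maps coherently; this ultimately reduces to the triangle identities for the two adjunctions and the construction of $\eta$ as a mate.
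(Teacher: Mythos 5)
Your proof is correct and follows essentially the same route as the paper's: both reduce to coherent generators $d\in\on{Coh}(\bD)$, use that $F^R$ and $\tilde F^R$ are left t-exact (after a cohomological shift) so their values land in the eventually coconnective subcategories, and then exploit that $\Psi_\bC$ and $\Psi_\bD$ restrict to equivalences there by \propref{p:coh coh t}(b). The only difference is cosmetic: the paper transports the adjunction directly along these equivalences, whereas you verify that the mate $\Psi_\bC\tilde F^R\to F^R\Psi_\bD$ is an isomorphism by an explicit mapping-space computation against $c\in\on{Coh}(\bC)$; the ingredients are identical (and incidentally, the "compactness of $c$" is not what makes your first mapping-space identification go through --- it is the equivalence of $\Psi_\bC$ on $\bC^+$, as you correctly note just beforehand).
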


\begin{proof}
Since $\tilde{F}^R$ and $\tilde{F}^R$ are both continuous, it suffices to prove that the natural map obtained by adjunction
$$ \Psi_{\bC} \circ \tilde{F}^R (x) \to F^R \circ \Psi_{\bD}(x)$$
is an isomorphism for $x \in \on{Coh}(\bD) \subset \Ind(\Coh(\bD))^+$.
By composing $F$ with a cohomological shift, we can assume without loss of generality that $F$ is right t-exact.
In particular,
$$F_{|\on{Coh}}: \on{Coh}(\bC) \to \on{Coh}(\bD)$$
is t-bounded and right t-exact; therefore, so is
$$ \tilde{F} : \Ind(\Coh(\bC)) \to \Ind(\Coh(\bD)). $$
By adjunction, the functors $F^R$ and $\tilde{F}^R$ are left t-exact.  Thus, the adjunctions $(F, F^R)$ and $(\tilde{F}, \tilde{F}^R)$ 
restrict to adjunctions between the subcategories of eventually coconnective objects.  
By \propref{p:coh coh t}(b), $\Psi_{\bC}$ and $\Psi_{\bD}$ restrict to equivalences on the subcategories of eventually coconnective objects.  
Therefore, for $x\in \Ind(\Coh(\bD))^+$, we have
$$ \tilde{F}^R(x) \simeq ((\Psi_{\bC})_{| \bC^{+}})^{-1}\circ F^R \circ \Psi_{\bD}(x) \in \Ind(\Coh(\bC))^+ ,$$
as desired.

\medskip

Now, suppose that $F^R$ is fully faithful.  Since $\tilde{F}^R$ commutes with colimits, to show that $\tilde{F}^R$ is fully faithful, it suffices to show that the counit map
$$(\tilde{F} \circ \tilde{F}^R)(x) \to x $$
is an isomorphism for $x \in \on{Coh}(\bD)$.  Since both sides are eventually coconnective, it suffices to show that this map is an isomorphism after applying $\Psi_\bD$; i.e., that
$$ \Psi_{\bD}(\tilde{F} \circ \tilde{F}^R)(x) \simeq (F \circ F^R)(\Psi_D (x)) \to \Psi_D (x) $$
is an isomorphism.  This is indeed the case because $F^R$ is fully faithful.

\end{proof}

\sssec{}

We now begin the proof of  \thmref{t:finiteness product}, by making a series of reduction steps.

\medskip

We need to show that every coherent object of $\IndCoh(S \times X_{\on{dR}})$ is compact.  Clearly, the assertion is Zariski local on $X$ and 
so we can assume without loss of generality that $X$ is an affine scheme.

\medskip

Now, let $i: X \to Y$ be a closed embedding of $X$ into a smooth affine scheme $Y$.  By Kashiwara's lemma, the functor 
$$ i^\IndCoh_{*}: \IndCoh(S \times X_{\on{dR}}) \to \IndCoh(S \times Y_{\on{dR}}) $$
is fully-faithful t-exact with a continuous right adjoint.  

\medskip

In particular, by \lemref{l:coh t}(b), $M \in \IndCoh(S \times X_{\on{dR}})$ 
is coherent if and only if $i^\IndCoh_*(M)$ is coherent, and similarly for compact.  Therefore, replacing $X$ by $Y$, 
we can assume without loss of generality that $X$ is a smooth affine scheme.

\sssec{}

In what follows, denote
$$ \bD_{S}:= \IndCoh(S\times X_{\on{dR}}) .$$
By the above $\bD_S$ is a locally Noetherian DG category with t-structure which is compactly generated by bounded objects.  We wish to show that the fully-faithful functor
$$ \Xi_S: \bD_S \to \Ind(\Coh(\bD_S)) $$
given by ind-extension of the inclusion of compact objects, and left adjoint to the functor $\Psi_S:=\Psi_{\bD_S}$, 
is an equivalence.

\medskip

We will prove this by Noetherian induction on $S$.  The base case is that $S$ is a smooth (and, in particular, classical and regular) scheme.  In this case
$$ \bD_S \simeq (\mathcal{O}_S \otimes \mathcal{D}_X)\on{-mod},$$
where $\mathcal{D}_X$ is the ring of differential operators on $X$. 

\medskip

The (ordinary) associative ring $\mathcal{O}_S \otimes \mathcal{D}_X$ has a finite cohomological
dimension. This implies that every compact object (i.e., finitely generated module) $M \in (\mathcal{O}_S 
\otimes \mathcal{D}_X)^{\heartsuit}$ is a compact object of $(\mathcal{O}_S \otimes 
\mathcal{D}_X)\on{-mod}$.  Therefore, by \propref{p:coh coh t}(a), every coherent object is compact, as desired.

\sssec{}

Now suppose that we have a closed subscheme $T \overset{i}{\hookrightarrow} S$ with open affine complement $U \overset{j}{\hookrightarrow} S$
such that the functors
$$ \Xi_T: \bD_T \to \Ind(\Coh(\bD_T))  \mbox{ and } \Xi_U: \bD_U \to \Ind(\Coh(\bD_U)) $$
are equivalences.  We wish to show that $\Xi_S: \bD_S \to \Ind(\Coh(\bD_S)) $ is also an equivalence.  Since $\Xi_S$ is fully-faithful, it will suffice to show that the image of $\Xi_S$ generates $\Ind(\Coh(\bD_S)) $.

\medskip
We have the functor $j^!: \bD_S \to \bD_U$ which is t-exact and admits a fully-faithful continuous right adjoint
$j_*: \bD_U \to \bD_S$.  Therefore, by \lemref{l:coh t}(b), $j^!$ preserves both compact and coherent objects, and we have a commutative square 
$$ \xymatrix{
\bD_S \ar[r]^{j^!} \ar[d]_{\Xi_S} & \bD_U \ar[d]^{\Xi_U} \\
\Ind(\Coh(\bD_S)) \ar[r]^{\tilde{j}^!} & \Ind(\Coh(\bD_U)) 
} $$
By \propref{p:coherent right adjoint}, the functor $\tilde{j}^!$ admits a continuous fully-faithful right adjoint $\tilde{j}_*$.
By the inductive hypothesis, $\Xi_U$ is an equivalence.  Clearly,
$\Ind(\Coh(\bD_S)) $ is generated by $\on{ker}(\tilde{j}^!)$ and $\on{Im}(\tilde{j}_*)$.  

\medskip

For $M \in \Ind(\Coh(\bD_U)) $, we have 
$$ \on{cone}(\Xi_S\circ j_* \circ (\Xi_U)^{-1}(M) \to \tilde{j}_*(M)) \in \on{ker}(\tilde{j}^!) .$$
Therefore, it suffices to show that
$$\on{ker}(\tilde{j}^!) \subset \on{Im}(\Xi_S) .$$

\sssec{}

Now, the functor $i_{*,\dr}: \bD_T \to \bD_S$ is t-exact and admits a continuous right adjoint. Therefore, as before, 
it preserves coherent and compact objects and we have a commutative square
$$
\xymatrix{
\bD_T \ar[r]^{i_{*,\dr}}\ar[d]_{\Xi_T} & \bD_S \ar[d]^{\Xi_S}\\
\Ind(\Coh(\bD_T))  \ar[r]^{\tilde{i}_{*,\dr}} & \Ind(\Coh(\bD_S))
}
$$
Clearly, we have $\on{Im}(\tilde{i}_{*,\dr})\subset \on{ker}(\tilde{j}^!)$.
By the inductive hypothesis, $\Xi_T$ is an equivalence.  Therefore, $\on{Im}(\tilde{i}_{*,\dr}) \subset \on{Im}(\Xi_S)$.  
Thus, it suffices to show that $\on{Im}(\tilde{i}_{*,\dr})$ generates $\on{ker}(\tilde{j}^!)$.

\sssec{}

Since $\tilde{j}^!$ is t-exact, the subcategory $\on{ker}(\tilde{j}^!) \subset \Ind(\Coh(\bD_S)) $ inherits a t-structure. Clearly, $\on{ker}(\tilde{j}^!)$ is 
generated by objects of the form
$$ \on{cone}(M \to \tilde{j}_* \circ \tilde{j}^!(M)), \quad M \in \on{Coh}(\bD_S).$$

Since the above objects belong to 
$$\on{ker}(\tilde{j}^!)^+:= \Ind(\Coh(\bD_S)) ^+ \cap \on{ker}(\tilde{j}^!),$$
we obtain that $\on{ker}(\tilde{j}^!)$ is generated by $\on{ker}(\tilde{j}^!)^+$. Furthermore, since the t-structure 
on $\on{ker}(\tilde{j}^!)$ is right-complete and $\tilde{j}^!$ is t-exact, we obtain that $\on{ker}(\tilde{j}^!)$ is generated by
$$\on{ker}(\tilde{j}^!)^b:= \Ind(\Coh(\bD_S))^b \cap \on{ker}(\tilde{j}^!).$$
Similarly, we make a reduction to
$$\on{ker}(\tilde{j}^!)^\heartsuit:= \Ind(\Coh(\bD_S))^b \cap \on{ker}(\tilde{j}^!).$$

\medskip

Thus, it suffices to show that every object of
$\on{ker}(\tilde{j}^!)^{\heartsuit}$
admits an exhaustive filtration with associated graded in the image of $\tilde{i}_{*,\dr}: \Ind(\Coh(\bD_T)) ^{\heartsuit} \to \Ind(\Coh(\bD_S))^{\heartsuit}$.  

\medskip

This follows from the commutativity of the diagram
$$\xymatrix{
\Ind(\Coh(\bD_T)) ^{\heartsuit} \ar[r]^{\tilde{i}_{*,\dr}}\ar[d]_{\Psi_T}^{\sim} & \Ind(\Coh(\bD_S)) ^{\heartsuit}\ar[d]^{\Psi_S}_{\sim} \ar[r]^{\tilde{j}^!} &
\Ind(\Coh(\bD_U)) ^{\heartsuit}\ar[d]^{\Psi_U}_{\sim} \\
\bD_T^{\heartsuit} \ar[r]^{i_{*,\dr}} & \bD_S^{\heartsuit} \ar[r]^{j^!} & \bD_U^{\heartsuit}
}$$
and the corresponding statement about $\on{ker}(j^!)^{\heartsuit}$.

\qed[\thmref{t:finiteness product} ]

\begin{rem}
As the reader could notice, the proof of \thmref{t:finiteness product} given above applies more generally: instead of
the category $\Dmod(X)$, we could take any DG category $\bC$ with a t-structure, which is locally Noetherian and
such that objects from $\bC^\heartsuit$ have a bounded cohomological dimension.
\end{rem} 

\ssec{Left vs right D-modules}

\sssec{} \label{sss:Psi and Ups}

Recall that for $X\in \Sch_{\on{aft}}$ we have canonically defined functors
$$\Psi_X:\IndCoh(X)\to \QCoh(X) \text{ and } \Upsilon_X:\QCoh(X)\to \IndCoh(X).$$

The functor $\Psi_X$ has already appeared in \secref{sss:IndCoh sch}; it is the ind-extension
of the embedding $\Coh(X)\hookrightarrow \QCoh(X)$.

\medskip

The functor $\Psi_X$ is explicitly given by
$$\CF\mapsto \CF\otimes \omega_X,$$
where $\omega_X\in \IndCoh(X)$ is the dualizing object. 

\begin{rem}
The functors $\Psi_X$ and $\Upsilon_X$ are mutually dual with respect to the 
self-dualities
$$\QCoh(X)\simeq \QCoh(X)^\vee \text{ and } \IndCoh(X)\simeq \IndCoh(X)^\vee$$
that are given on compact objects by
$$\CE\mapsto \CE^\vee \text{ and } \CF\mapsto \BD^{\on{Serre}}(\CF),$$
where $\CE^\vee$ is the monoidal dual of $\CE\in \QCoh(X)^c=\on{Perf}(X)$ and
$\BD^{\on{Serre}}$ is the Serre duality equivalence $\Coh(X)^{\on{op}}\to \Coh(X)$. 
\end{rem}

\sssec{}

For a map $f:X\to Y$ we have commutative diagrams
$$
\CD
\IndCoh(X) @>{\Psi_X}>> \QCoh(X) \\
@V{f^\IndCoh_*}VV @VV{f_*}V \\
\IndCoh(Y) @>{\Psi_Y}>> \QCoh(Y) 
\endCD
$$
and 
$$
\CD
\IndCoh(X) @<{\Upsilon_X}<< \QCoh(X) \\
@A{f^!}AA @ AA{f^*}A \\
\IndCoh(Y) @<{\Upsilon_Y}<< \QCoh(Y).
\endCD
$$

The second of these diagrams allows to define a functor $\Upsilon_\CX$ for $\CX\in \on{PreStk}_{\on{laft}}$, so
that for a morphism $f:\CX\to \CY$, the diagram
$$
\CD
\IndCoh(\CX) @<{\Upsilon_\CX}<< \QCoh(\CX) \\
@A{f^!}AA  @AA{f^*}A \\
\IndCoh(\CY) @<{\Upsilon_\CY}<< \QCoh(\CY)
\endCD
$$
commutes. 

\sssec{} 

Let $X$ be a scheme of finite type. Consider the category of \emph{left} D-modules on $X$:
$$\Dmod^l(X) := \QCoh(X_{\on{dR}}).$$

However, as shown in \cite[Proposition 2.4.4]{Crys}, the functor
$$ \Upsilon_{X_{\on{dR}}}: \Dmod^l(X) = \QCoh(X_{\on{dR}}) \to \IndCoh(X_{\on{dR}}) = \Dmod(X) $$
given by tensoring by the dualizing complex $\omega_{X_{\on{dR}}}$ is an equivalence of categories.

\sssec{}

Recall that if $\CX$ is any prestack, the category $\QCoh(\CX)$ carries a canonically defined t-structure:
an object of $\QCoh(\CX)$ is connective if its pullback to any affine scheme is connective. We remark
that, in general, this t-structure may be quite ill-behaved. 

\medskip

We obtain that the category $\Dmod^l(X)$ also carries a t-structure. The equivalence $\Upsilon_{X_{\on{dR}}}$ is \emph{not} t-exact, 
but by \cite[Proposition 4.4.4]{Crys}, it is t-bounded (if $X$ is smooth of dimension $d$, then this functor is t-exact up to $[d]$). 
In particular, the functor $\Upsilon_{X_{\on{dR}}}$ induces an equivalence
$$\Upsilon_{X_{\on{dR}}}:\Coh(\Dmod^l(X))\to \Coh(\Dmod(X)).$$

\sssec{}

Let $S$ be an affine scheme and $\CX$ an arbitrary prestack. By \cite[Chapter 3, Proposition 3.1.7]{Vol1}, the external tensor product functor 
\begin{equation} \label{e:ext ten prod}
\QCoh(S) \otimes \QCoh(\CX) \to \QCoh(S \times \CX)
\end{equation} 
is an equivalence. We observe: 

\begin{lem}\label{l:tensor t-equiv}
The equivalence \eqref{e:ext ten prod} is t-exact (where t-structure on the left hand side is the tensor product t-structure).
\end{lem}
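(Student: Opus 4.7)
The plan is to exploit that $S$ is affine, say $S=\Spec R$, in order to identify both sides of \eqref{e:ext ten prod} with the common category $R\mod(\QCoh(\CX))$ of $R$-module objects in $\QCoh(\CX)$, and then to verify that each of the two t-structures coincides with the `underlying-connective' t-structure on that common description: an object is connective iff its image under the forgetful functor $\oblv: R\mod(\QCoh(\CX)) \to \QCoh(\CX)$ is connective. On the left-hand side, $\QCoh(S)=R\mod$ yields $\QCoh(S)\otimes \QCoh(\CX)\simeq R\mod(\QCoh(\CX))$ directly; on the right-hand side, the same identification is given by recognising $\QCoh(S\times \CX)$ as modules over $R$ in $\QCoh(\CX)$, with the $R$-action coming from $p_S^{\ast}$.

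For the tensor product t-structure, I would use that its connective part is the colimit closure of pure tensors $A \boxtimes B$ with $A\in \QCoh(S)^{\leq 0}$ and $B\in \QCoh(\CX)^{\leq 0}$. Since $\CO_S = R$ is a compact connective generator of $\QCoh(S)$, this reduces to the colimit closure of free $R$-modules $R\otimes B$ with $B\in \QCoh(\CX)^{\leq 0}$, whose underlying object in $\QCoh(\CX)$ is the connective $B$. Conversely, any $R$-module $M$ with connective underlying object admits a bar-type resolution by free $R$-modules on connective objects, showing that $M$ lies in the connective part of the tensor product t-structure.

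For the natural t-structure on $\QCoh(S\times \CX)$, the key observation is that the projection $p_\CX:S\times \CX\to \CX$ is affine (being a base change of $S \to \pt$), so $(p_\CX)_{\ast}$ is t-exact and conservative, and under the identification above it corresponds precisely to $\oblv$. The computation to carry out is that $\CF\in \QCoh(S\times \CX)$ is connective iff $(p_\CX)_{\ast}\CF$ is: t-exactness gives one direction, and for the converse one pulls back along any affine $T=\Spec A\to \CX$ and notes that an $(R\otimes A)$-module is connective iff its underlying $A$-module is, since $R\otimes A$ is a connective ring. The main subtlety, though not a genuine obstacle, is the passage from the colimit-generation description of the tensor product t-structure to the underlying-connective description; this uses compatibility of the t-structure on $\QCoh(\CX)$ with colimits together with the compactness and connectivity of $R$ in $\QCoh(S)$.
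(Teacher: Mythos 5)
Your proof is correct and uses the same essential mechanism as the paper: both t-structures on the two sides of \eqref{e:ext ten prod} are detected by a conservative t-exact pushforward to $\QCoh(\CX)$ (what you call the forgetful functor $\oblv:R\mod(\QCoh(\CX))\to\QCoh(\CX)$; what the paper calls pushforward from $S$ to the point), and the equivalence intertwines these functors. The paper states the triangle and the conservativity/t-exactness directly, while you unwind these claims via the explicit identification with $R\mod(\QCoh(\CX))$ and the bar resolution; this is additional verification of the same key input rather than a genuinely different route.
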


\begin{proof}
We have a commutative triangle
$$ \xymatrix{
\QCoh(S) \otimes \QCoh(\CX) \ar[rd] \ar[rr] && \QCoh(S \times \CX)\ar[ld] \\
& \QCoh(\CX) 
}$$
with vertical arrows given by pushforward from $S$ to point.  The desired t-exactness follows from the fact that both vertical arrows are conservative and t-exact.
\end{proof}

\sssec{}

Thus, we obtain that external tensor product defines a t-exact equivalence
$$\QCoh(S)\otimes \QCoh(X_{\on{dR}}) \to \QCoh(S \times X_{\on{dR}}).$$

Now, consider the functor
\begin{multline} \label{e: Psi Ups}
\IndCoh(S \times X_{\on{dR}}) \simeq \IndCoh(S)\otimes \IndCoh(X_{\on{dR}}) \overset{\Psi_S \otimes \Upsilon^{-1}_{X_\dr}}{\longrightarrow} \\
\to \QCoh(S) \otimes \QCoh(X_{\on{dR}}) \simeq \QCoh(S \times X_{\on{dR}}) 
\end{multline}

We claim:

\begin{prop} \label{p:Psi Ups} \hfill
\begin{enumerate}[label={(\alph*)}]
\item
The functor \eqref{e: Psi Ups} induces an equivalence on the eventually coconnective (=bounded below)
subcategories. 

\item
The category $\QCoh(S \times X_{\on{dR}})$ is coherent.

\item The functor \eqref{e: Psi Ups} defines an equivalence from the category of
coherent objects in $\IndCoh(S \times X_{\on{dR}})$ to the category of coherent objects in 
$\QCoh(S \times X_{\on{dR}})$. 
\end{enumerate}
\end{prop}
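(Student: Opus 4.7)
I will treat part (b) first, since parts (a) and (c) depend on it. For part (b), the plan is to show that $\QCoh(S\times X_\dr)$ is in fact locally Noetherian, so that \lemref{l:Noeth coherent} gives coherence. The category $\QCoh(S)$ is locally Noetherian since $S$ is a Noetherian affine scheme, and $\QCoh(X_\dr) = \Dmod^l(X)$ is locally Noetherian as well: $\Dmod(X)$ is locally Noetherian (as recalled in \secref{sss:dmod recall}), and the t-bounded equivalence $\Upsilon_{X_\dr}$ transports this property across. Local Noetherianity is preserved under the Lurie tensor product of presentable DG categories with t-structure; combined with \lemref{l:tensor t-equiv}, which identifies $\QCoh(S)\otimes \QCoh(X_\dr)$ with $\QCoh(S\times X_\dr)$ as t-structured categories, this yields the conclusion.

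For part (a), the plan is to factor \eqref{e: Psi Ups} as
$$
\IndCoh(S)\otimes \IndCoh(X_\dr) \xrightarrow{\Psi_S \otimes \on{id}} \QCoh(S)\otimes \IndCoh(X_\dr) \xrightarrow{\on{id}\otimes \Upsilon^{-1}_{X_\dr}} \QCoh(S)\otimes \QCoh(X_\dr).
$$
The second arrow is an equivalence of underlying DG categories (since $\Upsilon_{X_\dr}$ is) and is t-bounded, hence restricts to an equivalence on $+$-subcategories. For the first arrow, $\Psi_S$ is t-exact with $\Psi_S^+$ an equivalence (by \propref{p:coh coh t}(b) applied to $\bC=\QCoh(S)$). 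Consequently $\Psi_S^{\heartsuit}$ is an equivalence of Grothendieck abelian hearts; tensoring with the heart of $\IndCoh(X_\dr)$ yields an equivalence on the hearts of the tensor product categories. Since both sides are right t-complete with t-structures compatible with filtered colimits, and the functor is t-exact, the heart equivalence propagates to each truncation subcategory $\bC^{\geq -n}$ by induction using the truncation triangles, and thus to $\bC^+ = \bigcup_n \bC^{\geq -n}$.

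Part (c) then follows by combining (a) and (b). Both $\IndCoh(S\times X_\dr)$ and $\QCoh(S\times X_\dr)$ are coherent, and \eqref{e: Psi Ups} is a t-bounded functor that induces an equivalence on $+$-subcategories. By \propref{p:coh coh t}(a), coherent objects are characterized as bounded objects all of whose cohomologies are compact in the heart. Boundedness is preserved and reflected under a t-bounded functor; compactness of cohomology transfers via the induced equivalence on hearts (up to appropriate shifts), using that compact subquotients of compact objects remain compact in locally Noetherian Grothendieck abelian categories.

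The main obstacle is in part (a): the verification that $\Psi_S \otimes \on{id}$ is an equivalence on $+$-subcategories. While the reduction to hearts is conceptually clean, a careful treatment must address how the tensor product t-structure interacts with truncations and filtered colimits, and must verify that the heart equivalence $\Psi_S^{\heartsuit}$ tensors correctly with the heart of $\IndCoh(X_\dr)$ to give an equivalence of hearts of the tensor product categories.
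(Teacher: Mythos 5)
Your reduction to the functor $\Psi_S\otimes\on{Id}$ (by bounding the cohomological amplitude of $\Upsilon_{X_\dr}$) matches the paper's first step exactly, and your independent proof of (b) via local Noetherianity of the tensor product is a valid alternative to the paper's deduction of (b) from (a). However, for part (a), your ``heart equivalence plus induction over truncations'' strategy has a genuine gap, and you correctly suspect as much in your final paragraph.

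The issue is twofold. First, the heart of the tensor-product t-structure on $\QCoh(S)\otimes\IndCoh(X_\dr)$ is \emph{not} obviously the (Deligne) tensor product of the two hearts — this is a non-trivial claim, not a formal consequence. Second, and more fundamentally, ``propagating a heart equivalence to $\bC^{\geq -n}$ by induction on the truncation triangles'' does not go through unconditionally: the inductive step requires knowing that $F$ induces isomorphisms on $\on{Ext}$-groups between heart objects, and these $\on{Ext}$-groups are computed in the ambient stable categories and a priori depend on more than the hearts (this is precisely where $\IndCoh$ and $\QCoh$ of a singular scheme genuinely differ as $\infty$-categories, even though their hearts agree). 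The failure of left-completeness of $\IndCoh(S)$ is exactly the thing one has to work around.

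The paper sidesteps this by observing that $\Psi_S$ is the left-completion functor for $\IndCoh(S)$, citing \cite[Prop.~C.4.6.1]{SAG} that left completion is compatible with tensor products of Grothendieck prestable categories, and using the general fact that the left-completion map $\bC\to\widehat{\bC}$ always restricts to an equivalence $\bC^+\simeq\widehat{\bC}^+$. This is the clean formulation of what your sketch is aiming at: the content is not an elementary induction but the compatibility of left completion with the Lurie tensor product. If you replace your heart-level argument by this left-completion argument, the proof closes up and is essentially the one in the paper.
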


\begin{proof}

Since the functor $\Upsilon_{X_\dr}$ has a bounded cohomological amplitude, it suffices to prove the 
proposition for the functor 
$$\IndCoh(S)\otimes \IndCoh(X_{\on{dR}}) \overset{\Psi_S\otimes \on{Id}}\longrightarrow 
\QCoh(S)\otimes \IndCoh(X_{\on{dR}}).$$

Now, since the functor $\Psi_S:\IndCoh(S)\to \QCoh(S)$ induces an equivalence of the left completions, by 
\cite[Proposition C.4.6.1]{SAG}, the same is true for the functor $\Psi_S\otimes \on{Id}$. In particular, we obtain
that the latter functor induces a t-exact equivalence on eventually coconnective (=bounded below) subcategories.

\medskip

This implies all three parts of the proposition. 

\end{proof} 

\ssec{Serre-Verdier duality}

\sssec{}

Being the IndCoh category of an inf-scheme, the category $\IndCoh(S\times X_{\on{dR}})$ is canonically self-dual. In terms
of the identification
$$\IndCoh(S\times X_{\on{dR}})\simeq \IndCoh(S)\otimes \Dmod(X),$$
this self-duality is the tensor product of the Serre self-duality of $\IndCoh(S)$ and the Verdier 
self-duality of $\Dmod(X)$.

\medskip

In particular, we obtain a canonical anti-equivalence of the subcategory of compact objects
$$ \mathbb{D}^{\on{SV}}: \IndCoh(S\times X_{\on{dR}})^c \simeq (\IndCoh(S \times X_{\on{dR}})^c)^{\on{op}} , $$
which is given by Serre duality on $S$ and Verdier duality for D-modules on $X$; i.e., for $M \in \on{Coh}(S)$ and $N \in \Dmod(X)^c$, we have
$$ \mathbb{D}^{\on{SV}}(M \boxtimes N) = \mathbb{D}^{\on{Serre}}(M) \boxtimes \mathbb{D}^{\on{Verdier}}(N).$$

\sssec{}

Combining \propref{p:Psi Ups} and \thmref{t:finiteness product}, we obtain an equivalence between 
$(\IndCoh(S \times X_{\on{dR}})^c)^{\on{op}}$ and the category of coherent objects in 
$\QCoh(S \times X_{\on{dR}})$.

\medskip

Combining further with \propref{p:pro-coh}, we obtain:

\begin{cor} \label{c:Serre Verdier}
We have a canonical equivalence
$$\BD^{\on{SV}}:\IndCoh(S \times X_{\on{dR}})^{\on{op}}\simeq \on{Pro}(\QCoh(S\times X_{\on{dR}})^-)_{\on{laft}},$$
explicitly given as follows: for $\CF\in \IndCoh(S \times X_{\on{dR}})$, the resulting functor 
$$\QCoh(S\times X_{\on{dR}})^-\to \Vect$$
maps $M\in \QCoh(S\times X_{\on{dR}})^-$ to
$$\underset{n}{\on{lim}}\, \Gamma^{\IndCoh}\left(S\times X_{\on{dR}},\CF\sotimes\left((\Psi_S\otimes \Upsilon^{-1}_{X_\dr})^{-1}(\tau^{\geq -n}(M))\right)\right),$$
where $(\Psi_S\otimes \Upsilon^{-1}_{X_\dr})^{-1}$ is the inverse of the equivalence
$$\left(\IndCoh(S)\otimes \IndCoh(X_{\on{dR}})\right)^+ \overset{\Psi_S\otimes \Upsilon^{-1}_{X_\dr}}\longrightarrow 
\left(\QCoh(S)\otimes \QCoh(X_{\on{dR}})\right)^+\simeq \QCoh(S\times X_{\on{dR}})^+.$$
\end{cor}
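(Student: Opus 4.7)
The plan is to obtain the equivalence by chaining together three previously established results: \thmref{t:finiteness product}, \propref{p:Psi Ups}, and \propref{p:pro-coh}. First, by \thmref{t:finiteness product}, the subcategory $\IndCoh(S\times X_{\on{dR}})^c$ coincides with $\Coh(\IndCoh(S\times X_{\on{dR}}))$; since $\IndCoh(S\times X_{\on{dR}})$ is compactly generated, taking opposite categories yields
$$\IndCoh(S\times X_{\on{dR}})^{\on{op}}\simeq \on{Pro}\bigl((\IndCoh(S\times X_{\on{dR}})^c)^{\on{op}}\bigr).$$
Next, the Serre-Verdier duality $\BD^{\on{SV}}$ provides an anti-equivalence on compacts, and \propref{p:Psi Ups}(c) transports the resulting category of coherent objects across $\Psi_S\otimes \Upsilon^{-1}_{X_\dr}$ into $\Coh(\QCoh(S\times X_{\on{dR}}))$. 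Pro-extending and invoking \propref{p:Psi Ups}(b) together with \propref{p:pro-coh}, we obtain the chain
$$\IndCoh(S\times X_{\on{dR}})^{\on{op}}\simeq \on{Pro}(\Coh(\IndCoh(S\times X_{\on{dR}})))\simeq \on{Pro}(\Coh(\QCoh(S\times X_{\on{dR}})))\simeq \on{Pro}(\QCoh(S\times X_{\on{dR}})^-)_{\on{laft}},$$
which is the claimed abstract equivalence.

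The second task is to verify the explicit formula. I would first check it for $\CF\in \IndCoh(S\times X_{\on{dR}})^c$, and then deduce the general case by continuity, since both sides (as functors of $\CF$) carry filtered colimits to the appropriate (co)limits in $\on{Pro}(\QCoh(S\times X_{\on{dR}})^-)$. For coherent $\CF$, the composite equivalence sends $\CF$ to the Yoneda image of $(\Psi_S\otimes \Upsilon^{-1}_{X_\dr})(\BD^{\on{SV}}(\CF))\in \Coh(\QCoh(S\times X_{\on{dR}}))$, whose value on $M\in \QCoh(S\times X_{\on{dR}})^-$ is, by convergence of coherent functors,
$$\underset{n}{\on{lim}}\ \CHom_{\QCoh(S\times X_{\on{dR}})}\bigl((\Psi_S\otimes \Upsilon^{-1}_{X_\dr})(\BD^{\on{SV}}(\CF)),\, \tau^{\geq -n}(M)\bigr).$$
Since $\tau^{\geq -n}(M)$ lies in $\QCoh(S\times X_{\on{dR}})^+$, \propref{p:Psi Ups}(a) allows us to transfer this Hom across the equivalence $\Psi_S\otimes \Upsilon^{-1}_{X_\dr}$ on bounded-below subcategories; combined with the fact that the Serre self-duality of $\IndCoh(S\times X_{\on{dR}})$ is the pairing $(\CF,\CG)\mapsto \Gamma^{\IndCoh}(S\times X_{\on{dR}},\CF\sotimes \CG)$, the Hom term rewrites as $\Gamma^{\IndCoh}(S\times X_{\on{dR}},\CF\sotimes (\Psi_S\otimes \Upsilon^{-1}_{X_\dr})^{-1}(\tau^{\geq -n}(M)))$, yielding the stated formula.

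The main obstacle I anticipate is the last bookkeeping step: carefully matching the Serre-Verdier pairing on $\IndCoh$ with the $\QCoh$-valued Hom pairing across the (non-t-exact, only t-bounded) equivalence $\Psi_S\otimes \Upsilon^{-1}_{X_\dr}$ on bounded-below subcategories. Concretely, one must verify that the identification $\CHom_{\IndCoh}(\BD^{\on{SV}}(\CF),\CG)\simeq \Gamma^{\IndCoh}(\CF\sotimes \CG)$ induced by Serre-Verdier duality is compatible with the passage between $\IndCoh$ and $\QCoh$ on eventually coconnective objects, and that the limit over $n$ indeed computes a convergent functor lying in $\on{Pro}(\QCoh(S\times X_{\on{dR}})^-)_{\on{laft}}$ (the convergence and uniformly-bounded-colimit axioms of \secref{sss:pro laft cat}). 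Both checks are essentially formal consequences of \propref{p:Psi Ups}, but require careful juggling of the two t-structures and the !-tensor product.
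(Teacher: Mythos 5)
Your proof follows essentially the same route as the paper: you chain together \thmref{t:finiteness product} (compacts $=$ coherents in $\IndCoh(S\times X_\dr)$), \propref{p:Psi Ups} (the $\Psi_S\otimes\Upsilon^{-1}_{X_\dr}$ identification of coherent subcategories), and \propref{p:pro-coh} (pro-completion of coherents yields the laft subcategory), exactly as the paper does. Your verification of the explicit formula — reducing to compact $\CF$, transporting $\CHom$ across $\Psi_S\otimes\Upsilon^{-1}_{X_\dr}$ on eventually coconnective objects, and invoking $\CHom_{\IndCoh}(\BD^{\on{SV}}(\CF),\CG)\simeq\Gamma^{\IndCoh}(\CF\sotimes\CG)$ — is a correct and slightly more detailed unwinding of what the paper leaves implicit.
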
 

%
%
%

\section{Deformation theory and mapping prestacks} \label{s:fake}

In this section we will review, mostly based on \cite[Chapter 1]{Vol2}, several notions from deformation
theory, which are also used in the definition of the notion of sectional laft-ness in the main body of the
paper. 

\ssec{The ``fake" pro-category} \label{ss:fake}

\sssec{} \label{sss:fake}

Let $\CY$ be a prestack. Denote
\begin{equation} \label{e:pro fake}
\on{Pro}(\QCoh(\CY)^-)^{\on{fake}}:= \underset{S \in \affSch_{/\CY}}{\on{lim}}\, \on{Pro}(\QCoh(S)^-).
\end{equation} 

In the above formula, in the formation of the limit, for a map of affine schemes $$g: S \to S'$$ the transition functor 
$\on{Pro}(\QCoh(S')^-)\to \on{Pro}(\QCoh(S)^-)$ is 
$g^\#:=\on{Pro}(g^*)$, see \secref{sss:pro-functor}. 

%

\medskip

For a given $(f: S \to \CY') \in \affSch_{/\CY'}$, denote by
$f^{\#}$ the evaluation functor 
$$\on{Pro}(\QCoh(\CY)^-)^{\on{fake}}\to \on{Pro}(\QCoh(S)^-).$$

\sssec{} \label{sss:oblv fake}

Given an object $\CF=\{\CF_S\}$ in the limit \eqref{e:pro fake}, we can evaluate it an on object 
$M\in \QCoh(\CY)^-$. Indeed, representing $M$ by 
$$\{M_S\}\in \underset{S \in \affSch_{/\CY}}{\on{lim}}\, \QCoh(S)^-,$$
we set
$$\CF(\CM):=\underset{S \in \affSch_{/\CY}}{\on{lim}}\, \CF_S(\CM_S).$$

\medskip

This defines a functor
\begin{equation}
\oblv^{\on{fake}}: \on{Pro}(\QCoh(\CY)^{-})^{\on{fake}} \to \on{Pro}(\QCoh(\CY)^-) ,
\end{equation}
which is in general \emph{not} an equivalence.  However, by \cite[Chapter 1, Lemma 4.6.2]{Vol2}, $\oblv^{\on{fake}}$ is an
equivalence whenever $\CY$ is a quasi-compact quasi-separated scheme.

\sssec{}

Given a map of prestacks $f: \CY' \to \CY$, we have a tautologically defined functor
$$ f^{\#}: \on{Pro}(\QCoh(\CY)^-)^{\on{fake}} \to \on{Pro}(\QCoh(\CY')^-)^{\on{fake}}.$$

\medskip

The functor $f^{\#}$ admits a \emph{left} adjoint (cf. \secref{sss:pro-functor})
$$ f_{\#}: \on{Pro}(\QCoh(\CY')^-)^{\on{fake}} \to \on{Pro}(\QCoh(\CY)^-)^{\on{fake}}, $$
which can be described as follows.  Let $\mathcal{F} \in \on{Pro}(\QCoh(\CY')^-)^{\on{fake}}$ and
let $(g: S \to \CY) \in \affSch_{/\CY}$. Consider the pullback square
$$
\xymatrix{
\CZ \ar[r]^{\tilde{f}} \ar[d]_{h} & S \ar[d]^g \\
\CY' \ar[r]^f & \CY
}
$$
Then $g^{\#}(f_{\#}(\mathcal{F})) \in \on{Pro}(\QCoh(S)^-)$ is given by
$$ M \mapsto \oblv^{\on{fake}}(h^{\#}(\mathcal{F}))(\tilde{f}^*(M)),$$
for $M \in \QCoh(S)^-$.

\sssec{}

The functors $f^{\#}$ and $f_{\#}$ tautologically satisfy base change in full generality:

\begin{lem}\label{l:pro base change}
Suppose that we have a pullback square of prestacks
$$
\xymatrix{
\CZ' \ar[d]_{f'}\ar[r]^h & \CZ \ar[d]^f \\
\CY' \ar[r]^g & \CY
}
$$
Then the natural transformation (given by adjunction)
$$ f'_{\#} \circ h^{\#} \to g^{\#}\circ f_{\#} : \on{Pro}(\QCoh(\CZ)^{-})^{\on{fake}} \to \on{Pro}(\QCoh(\CY')^-)^{\on{fake}} $$
is a natural isomorphism.
\end{lem}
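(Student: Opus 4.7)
The plan is to verify the isomorphism pointwise, i.e., after applying $T^{\#}$ for every test $(t:T\to \CY')\in \affSch_{/\CY'}$. Since $\on{Pro}(\QCoh(\CY')^-)^{\on{fake}}$ is by definition the limit $\underset{T\in \affSch_{/\CY'}}{\on{lim}}\, \on{Pro}(\QCoh(T)^-)$, a morphism there is an isomorphism iff its image under each $t^{\#}$ is. So the whole question reduces to an assertion in $\on{Pro}(\QCoh(T)^-)$, where $\oblv^{\on{fake}}$ is an equivalence (by the remark in \secref{sss:oblv fake}).

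Next, I would use the explicit description of $f_{\#}$ recalled just before the lemma statement. Given a test point $s:S\to \CY$, the evaluation $s^{\#}(f_{\#}(\CF))$ is computed by forming the pullback square with $f$ and applying the composite of $\oblv^{\on{fake}}\circ(\text{restriction})$ followed by $(\tilde{f})^{*}$. Applying this to the composite $T\xrightarrow{t}\CY'\xrightarrow{g}\CY$, the pullback used to compute $t^{\#}\circ g^{\#}\circ f_{\#}(\CF)$ is $T\underset{\CY}\times \CZ$, and the map to $\CZ$ along which $\CF$ is restricted is the canonical one. Applying the same description to $f'_{\#}$ with the test point $t:T\to \CY'$, the evaluation $t^{\#}\circ f'_{\#}\circ h^{\#}(\CF)$ is computed by restricting $h^{\#}(\CF)$ to $T\underset{\CY'}\times \CZ'$ and pulling back along the top map.

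Now the key geometric observation: since the original square is Cartesian, composing the two pullback squares
\[
\xymatrix{
T\underset{\CY'}\times \CZ' \ar[r]\ar[d] & T\ar[d]^{t} \\
\CZ' \ar[r]^{f'}\ar[d]_{h} & \CY' \ar[d]^{g} \\
\CZ \ar[r]^{f} & \CY
}
\]
yields a Cartesian outer rectangle, hence a canonical identification $T\underset{\CY'}\times \CZ' \simeq T\underset{\CY}\times \CZ$ compatible with the projections to $T$ and with the composite map to $\CZ$. Under this identification, the restriction of $h^{\#}(\CF)$ to $T\underset{\CY'}\times \CZ'$ is, tautologically, the restriction of $\CF$ to $T\underset{\CY}\times \CZ$, and the two pullback maps from $T$ coincide. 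The desired isomorphism then follows by functoriality of $\oblv^{\on{fake}}$ and of pullback, with the comparison map being the one induced by adjunction.

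The argument is essentially tautological, and there is no real obstacle — it is a formal unwinding of the definition of $f_{\#}$. The only point requiring mild care is verifying that the comparison 2-cell built from the unit/counit of the adjunction $(f_{\#},f^{\#})$ actually agrees with the identification of fake pro-objects produced by the pasting of Cartesian squares; this is handled by noting that both are determined, after applying $\oblv^{\on{fake}}$ on the $T$-level (where everything collapses to ordinary $\on{Pro}(\QCoh(T)^-)$), by the same universal property.
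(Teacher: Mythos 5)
Your argument is correct and is precisely the unwinding the paper has in mind: the paper offers no explicit proof, stating only that $f^{\#}$ and $f_{\#}$ "tautologically satisfy base change," and your reduction to test points, followed by pasting of Cartesian squares and the explicit formula for $f_{\#}$ given just before the lemma, is exactly that tautology made explicit. The one point worth stating a touch more carefully is that the comparison map you are asked to verify is the mate/Beck–Chevalley transformation built from the units and counits of the $(\,\cdot\,_{\#}, \,\cdot\,^{\#})$-adjunctions, and that after evaluating at $T$ and applying $\oblv^{\on{fake}}$ both sides reduce to the \emph{same} formula $M\mapsto \oblv^{\on{fake}}(\tilde{s}^{\#}(\CF))(\tilde{f}^{*}(M))$, so the map in question is forced to be the identity by the universal property of the limit defining the fake pro-category — which is what you say in your closing paragraph.
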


\sssec{}

For the sequel, we record the following properties of $\on{Pro}(\QCoh(\CY)^-)^{\on{fake}}$ (which easily follow from the definitions):

\begin{lem}\label{l:pro oblv functors}
Let $f:\CY' \to \CY$ be a map of prestacks.  Then
\begin{enumerate}[label={(\alph*)}]
\item
We have a commutative diagram
$$ \xymatrix{
\on{Pro}(\QCoh(\CY')^-)^{\on{fake}} \ar[r]^{f_{\#}} \ar[d]_{\oblv^{\on{fake}}} & \on{Pro}(\QCoh(\CY)^-)^{\on{fake}} \ar[d]^{\oblv^{\on{fake}}} \\
\on{Pro}(\QCoh(\CY')^-) \ar[r]^{\Res_{f^*}} & \on{Pro}(\QCoh(\CY)^-)
},$$
where the bottom horizontal functor is given by precomposition with $f^*: \QCoh(\CY)^- \to \QCoh(\CY')^{-}$.

\item
If $f$ is schematic, and is quasi-compact quasi-separated, we have a commutative diagram
$$ \xymatrix{
\on{Pro}(\QCoh(\CY)^-)^{\on{fake}} \ar[r]^{f^{\#}} \ar[d]_{\oblv^{\on{fake}}} & \on{Pro}(\QCoh(\CY')^-)^{\on{fake}} \ar[d]^{\oblv^{\on{fake}}} \\
\on{Pro}(\QCoh(\CY)^-) \ar[r] & \on{Pro}(\QCoh(\CY')^-)
},$$
where the bottom horizontal functor is given by precomposition with $$f_*: \QCoh(\CY')^- \to \QCoh(\CY)^-$$
(which preserves the eventually connective subcategories by the assumption on $f$).
\end{enumerate}
\end{lem}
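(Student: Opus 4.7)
The plan is to prove both parts by direct unwinding of the definitions, followed by cofinality/base change arguments. Throughout, I will think of an object of $\on{Pro}(\QCoh(\CY)^-)^{\on{fake}}$ as a compatible family $\{\CF_S\}_{S \in \affSch_{/\CY}}$, and recall that $\oblv^{\on{fake}}(\CF)(M) = \underset{(S \to \CY)}{\lim}\, \CF_S(M_{|S})$ for $M \in \QCoh(\CY)^-$.

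For part (a), fix $\CF \in \on{Pro}(\QCoh(\CY')^-)^{\on{fake}}$ and $M \in \QCoh(\CY)^-$. I would first expand the left-hand side $\oblv^{\on{fake}}(f_{\#}(\CF))(M)$ as a limit over $(g:S \to \CY) \in \affSch_{/\CY}$ of $g^{\#}(f_{\#}(\CF))(g^*(M))$. By the explicit base-change description of $f_{\#}$ given in the excerpt, this equals $\oblv^{\on{fake}}(h^{\#}(\CF))(\widetilde{f}{}^*(g^*(M)))$, where $h:\CZ \to \CY'$ and $\widetilde{f}:\CZ \to S$ are the base changes of $f$ and $g$ along the square with top vertex $\CZ := S \times_\CY \CY'$. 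Expanding $\oblv^{\on{fake}}$ once more, this becomes an iterated limit over pairs $(g:S \to \CY,\, h':T \to \CZ)$ with $T$ affine, of $(h\circ h')^{\#}(\CF)((\widetilde{f}\circ h')^*(g^*(M)))$. Such a pair is the same as the datum of an affine $T$, a map $g':T\to \CY'$ (namely $h\circ h'$), together with a factorization of $f\circ g':T \to \CY$ through an intermediate affine $S$; and the value only depends on $g'$ (since $\widetilde{f}\circ h'$ is the map $T \to S$ underlying the factorization, and $g^*(M)$ pulled back is $(f\circ g')^*(M) = (g')^*(f^*(M))$). The intermediate factorization data form a contractible (in fact filtered) category, so the forgetful functor to $\affSch_{/\CY'}$ is cofinal, and the double limit collapses to $\underset{(g':T\to \CY')}{\lim}\, (g')^{\#}(\CF)((g')^*(f^*(M)))$, which is precisely $\oblv^{\on{fake}}(\CF)(f^*(M))$. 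This is the content of the claim.

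For part (b), the argument is parallel but in the opposite direction and uses the qcqs schematic hypothesis. Fix $\CF \in \on{Pro}(\QCoh(\CY)^-)^{\on{fake}}$ and $N \in \QCoh(\CY')^-$. By the tautological definition of $f^{\#}$, the left-hand side $\oblv^{\on{fake}}(f^{\#}(\CF))(N)$ is the limit over $(g':S'\to \CY') \in \affSch_{/\CY'}$ of $(f\circ g')^{\#}(\CF)((g')^*(N))$. I would then reindex this by considering, for each $(g:S\to \CY) \in \affSch_{/\CY}$, the fiber product $S\times_\CY \CY'$ and affine opens mapping to it. Using that $f$ is schematic qcqs, $f_*$ preserves bounded-below objects and satisfies base change against $g^*$, giving $g^*(f_*(N)) = \widetilde{f}_*(h^*(N))$ in the notation of (a). A cofinality argument, similar to the one in part (a) but for the opposite direction (factorizations $S' \to S \times_\CY \CY'$ living over $S$), then identifies the limit with $\underset{(g:S \to \CY)}{\lim}\, g^{\#}(\CF)(g^*(f_*(N))) = \oblv^{\on{fake}}(\CF)(f_*(N))$.

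The main subtle point, and likely the one requiring the most care, is the cofinality step in part (a): one must verify that passing from the category of pairs $(g:S\to\CY,\, h':T\to S\times_\CY \CY')$ with $T$ affine to the category $\affSch_{/\CY'}$ of $(g':T\to \CY')$ is cofinal. The fiber over a fixed $g'$ consists of factorizations of $f\circ g'$ through an affine $S$, and one checks this fiber is filtered (one can always refine two factorizations through their product over $\CY$, replaced by an affine cover, etc.), hence contractible. The analogous argument in (b) uses the qcqs schematic hypothesis precisely to ensure $f_*(N) \in \QCoh(\CY)^-$ (eventual connectivity is preserved) and to permit the base-change formula $g^* \circ f_* \simeq \widetilde{f}_* \circ h^*$.
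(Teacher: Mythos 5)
The paper gives no proof of this lemma — it is recorded as something that ``easily follow[s] from the definitions'' — so what I am comparing against is the intended definitional unwinding, and your proposal does take exactly that route.  Your overall plan (expand $\oblv^{\on{fake}}$ as a limit over $\affSch_{/(-)}$, substitute the base-change description of $f_{\#}$, and collapse the iterated limit by a cofinality argument) is the right one and yields the stated commutativity.

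The one step I would flag is your justification of the cofinality in part (a).  You claim the fiber of the forgetful functor over a fixed $(g':T\to\CY')$ --- i.e., the category of factorizations of $f\circ g'$ through an affine scheme mapping to $\CY$ --- is filtered because ``one can always refine two factorizations through their product over $\CY$, replaced by an affine cover.''  This does not work as stated: $\CY$ is an arbitrary prestack, so $S_1\underset{\CY}{\times}S_2$ need not be a scheme at all, and even when it is, replacing it by an affine cover does not produce a single object of the slice to refine through.  What actually makes the cofinality go through is something simpler and more robust: the comma category in question has an initial (or terminal, depending on your choice of variance for the limit) object, namely the ``trivial'' factorization $T\overset{\on{id}}\to T\overset{f\circ g'}\to \CY$ equipped with the graph section $T\to T\underset{\CY}{\times}\CY'$.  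Every other factorization $(S, T\to S)$ receives a unique map from it, given by the map $T\to S$ itself.  This is the clean way to get contractibility, and it avoids any appeal to fiber products over $\CY$.

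For part (b), the proposal is only a sketch, but the plan is sound.  The qcqs schematic hypothesis is used exactly where you say (so that $f_*$ preserves $\QCoh(-)^-$ and satisfies base change along $g^*$); in carrying out the collapsing of the resulting iterated limit, it is convenient to invoke the fact already recorded in the paper that $\oblv^{\on{fake}}$ is an equivalence for a quasi-compact quasi-separated scheme (here applied to the schemes $S\underset{\CY}{\times}\CY'$).  Your comma/cofinality argument in this direction should again be phrased via the existence of an initial/terminal object (factoring $S'\to \CY'$ through $S'\underset{\CY}{\times}\CY'$ with $S = S'$) rather than a filteredness claim involving products over $\CY$.
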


\ssec{Recollections on deformation theory} \label{ss:def}

Recall from \cite[Chapter 1, Corollary 7.2.6]{Vol2} that a prestack $\CY$ \emph{admits deformation theory} if:
\begin{itemize}
\item
$\CY$ is \emph{convergent}; i.e., for every $A \in \on{ComAlg}^{\leq 0}$, the map
$$ \CY(\Spec(A)) \to \underset{n}{\varprojlim}\ \CY(\Spec(\tau^{\geq -n} (A))) $$
is an isomorphism.
\item
For every push-out square
\begin{equation}\label{e:sq zero push-out}
 \xymatrix{
S_1 \ar[r]\ar[d] & S_2 \ar[d] \\
S_1' \ar[r] & S_2'
}
\end{equation}
of affine schemes such that the map $S_1 \to S_1'$ has the structure of a square zero extension (see e.g.,
\cite[Chapter 1, Section 5.1]{Vol2}), we have a pullback square
$$ \xymatrix{
\CY(S_2') \ar[r]\ar[d] & \CY(S_2) \ar[d] \\
\CY(S_1') \ar[r] & \CY(S_1)
}$$
\end{itemize}

\sssec{}\label{sss:cotangent complex}
Suppose that $\CY$ is a prestack with deformation theory.  In this case, $\CY$ admits a cotangent complex,
which is an object
$$ T^*(\CY) \in \underset{S \in \affSch_{/\CY}}{\on{lim}}\ \on{Pro}(\QCoh(S)^-)=: \on{Pro}(\QCoh(\CY)^-)^{\on{fake}}.$$ 

\medskip

For each map $f:S \to \CY$ from an affine scheme, the object 
$$ f^{\#} (T^*(\CY)) \in \on{Pro}(\QCoh(S)^-) $$
is such that for $M \in \QCoh(S)^{\leq 0}$,
$$ f^{\#} (T^*(\CY))(M) \simeq \on{Maps}_{S/}(S_M, \CY),$$
where $S\to S_M$ is the split square zero extension of $S$ by $M$.

\sssec{}\label{sss:prestack square zero}

Suppose we have a map of prestacks $f: \CY' \to \CY$ such that $\CY$ admits deformation theory.
Given $M \in \QCoh(\CY')^{\leq 0}$, we can consider the corresponding split square zero extension (see \cite[Chapter 1, Section 10.3.1]{Vol2})
$$ \CY' \to \CY'_M .$$
Explicitly,
$$ \CY'_M = \underset{(f: S\to \CY') \in \affSch_{/\CY'}}{\on{colim}}\ S_{f^*(M)} .$$
Tautologically, we have
$$ \oblv^{\on{fake}}(f^{\#}(T^*(\CY))) (M) \simeq \on{Maps}_{\CY'/}(\CY'_M, \CY).$$

\sssec{}
In what follows, it will be convenient to work in the relative setting.  Namely, suppose that we have a map
of prestacks $\CY \to \CX$.  Then $\CY$ \emph{admits deformation theory relative to $\CX$} if for every affine
scheme $S \to \CX$ over $\CX$, the pullback $\CY \underset{\CX}{\times} S$ admits deformation theory.

\medskip
Unraveling the definitions, we have:

\begin{prop}\label{p:relative def}
Let $\CY \to \CX$ be a map of prestacks.  Then $\CY$ admits defomration theory relative to $\CX$ if and only if

\begin{enumerate}[label={(\roman*)}]
\item
For every affine scheme $S \to \CX$ over $\CX$, the fiber product $\CY \underset{\CX}{\times} S$ is convergent.
\item
For every push-out square \eqref{e:sq zero push-out} mapping to $\CX$, we have a pullback square
$$ \xymatrix{
\on{Maps}_{/\CX}(S_2', \CY) \ar[r]\ar[d] & \on{Maps}_{/\CX}(S_2, \CY) \ar[d] \\
\on{Maps}_{/\CX}(S_1', \CY) \ar[r] & \on{Maps}_{/\CX}(S_1', \CY).
}$$
\end{enumerate}
\end{prop}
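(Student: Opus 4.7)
The plan is to unwind the definitions. By construction, $\CY$ admits deformation theory relative to $\CX$ means that for every $(S \to \CX) \in \affSch_{/\CX}$, the fiber product $\CY_S := \CY \underset{\CX}\times S$ admits deformation theory in the absolute sense of \cite[Chapter 1, Corollary 7.2.6]{Vol2}, which decomposes as (a) convergence of $\CY_S$ and (b) the square-zero pushout property for $\CY_S$. The work is to match (a) with (i) and (b) with (ii).

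Matching (a) with (i) is immediate: convergence of $\CY_S$ for every $S \to \CX$ is literally (i), so no further argument is needed. The substance lies in matching (b) with (ii). I would use the tautological identity
\[
\CY_S(T) \simeq \CY(T) \underset{\CX(T)}\times S(T), \quad T \in \affSch,
\]
together with the observation that $S$, being an affine scheme, automatically sends pushout squares of affine schemes to pullback squares of sets. Given a pushout square \eqref{e:sq zero push-out}, base-changing the pullback condition
\[
\CY_S(S_2') \simeq \CY_S(S_2) \underset{\CY_S(S_1)}\times \CY_S(S_1')
\]
over a chosen $T$-point in $S(S_2')$ (which determines compatible maps $S_i \to S$ and hence $S_i \to \CX$ by post-composition) reduces it to the assertion that
\[
\on{Maps}_{/\CX}(S_2', \CY) \simeq \on{Maps}_{/\CX}(S_2, \CY) \underset{\on{Maps}_{/\CX}(S_1, \CY)}\times \on{Maps}_{/\CX}(S_1', \CY)
\]
is an isomorphism.

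Finally I would verify that the two formulations quantify over the same data. Every pushout square of affines equipped with a compatible family of maps $S_i \to \CX$ arises from the above procedure by taking $S := S_2'$ with its tautological structure map to $\CX$, and conversely any lift of $S_2' \to \CX$ through some $S \to \CX$ produces a pullback square in $\on{Maps}_{/\CX}$ via the rearrangement of iterated fiber products above. Consequently (b) for all $S$ is equivalent to (ii).

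I do not anticipate a real obstacle, since both directions reduce to formal manipulations of iterated fiber products of spaces; the only care required is bookkeeping of the $\CX$-structure maps to ensure that fiberwise reformulations are valid. The brief calculation above is all that is required.
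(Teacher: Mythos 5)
Your argument is correct and is exactly what the paper intends by its terse "unraveling the definitions" — both matchings, (a) with (i) and (b) with (ii), proceed as you describe, with the key move being that the square-zero-pushout condition for $\CY \underset{\CX}\times S$ can be checked fiberwise over $\Hom(S_2', S) \simeq \Hom(S_2, S) \underset{\Hom(S_1, S)}\times \Hom(S_1', S)$, and that every pushout square with a compatible map to $\CX$ arises by taking $S := S_2'$. One small slip: "a chosen $T$-point in $S(S_2')$" should read simply "a chosen point of $S(S_2') = \Hom(S_2', S)$"; there is no auxiliary test object $T$ in play.
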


\sssec{}
From this we immediately obtain:

\begin{cor}
Suppose $f: \CY \to \CX$ is a map of prestacks such that $\CX$ admits deformation theory.  Then $\CY$ admits deformation theory if and only if it admits deformation theory relative to $\CX$.
\end{cor}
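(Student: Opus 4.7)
The plan is to use \propref{p:relative def} as the characterization of relative deformation theory and verify each direction of the biconditional by decomposing the statement into the two parts (i)/(i') (convergence) and (ii)/(ii') (infinitesimal gluing), exploiting the fibration $\Maps(-,\CY) \to \Maps(-,\CX)$ to pass between absolute and relative conditions.

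First I would handle the easier direction: assuming $\CY$ admits deformation theory, I verify the relative conditions (i'),(ii') from \propref{p:relative def}. For (i'), given any $S \to \CX$ with $S$ affine, the prestack $\CY \underset{\CX}{\times} S$ is a finite limit of convergent prestacks ($\CY$ by hypothesis, $\CX$ by its own deformation theory, and $S$ trivially), hence convergent. For (ii'), given a pushout square of affines mapping compatibly to $\CX$, the square $\Maps_{/\CX}(-, \CY)$ is the homotopy fiber of the morphism between the squares $\Maps(-, \CY) \to \Maps(-, \CX)$ over the fixed maps to $\CX$; since both squares are pullback squares (by deformation theory of $\CY$ and of $\CX$ respectively), the homotopy fiber is also a pullback square.

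For the nontrivial direction, assume $\CY$ admits deformation theory relative to $\CX$. I must verify the two conditions for $\CY$ to admit deformation theory in the absolute sense. For convergence: given $A \in \on{ComAlg}^{\leq 0}$, I consider the map of fibrations
\begin{equation*}
\xymatrix{
\CY(\Spec A) \ar[r]\ar[d] & \underset{n}{\lim}\, \CY(\Spec \tau^{\geq -n}(A))\ar[d] \\
\CX(\Spec A) \ar[r]^-{\sim} & \underset{n}{\lim}\, \CX(\Spec \tau^{\geq -n}(A)),
}
\end{equation*}
whose bottom arrow is an equivalence by convergence of $\CX$. To show the top arrow is an equivalence it suffices to check it on fibers over each $x \in \CX(\Spec A)$. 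The fiber on the left is $\Maps_{/\CX}(\Spec A, \CY)$, while the fiber on the right is $\underset{n}{\lim}\, \Maps_{/\CX}(\Spec \tau^{\geq -n}(A), \CY)$ (with structure maps to $\CX$ the truncations $x_n$ of $x$). Identifying these fibers with the fibers of $(\CY\underset{\CX}{\times}\Spec A)(\Spec A)$ and its limit analogue over the identity/truncation sections of the structure map to $\Spec A$, the desired equality follows from convergence of $\CY\underset{\CX}{\times}\Spec A$ (which is (i')) together with convergence of the affine scheme $\Spec A$ itself. For condition (ii), given a square-zero pushout of affines, the same fibration argument reduces the pullback property of the $\Maps(-, \CY)$ square to the combination of (a) the pullback property of the $\Maps(-, \CX)$ square, which is deformation theory for $\CX$, and (b) fiber-by-fiber the pullback property of the $\Maps_{/\CX}(-, \CY)$ squares over compatible maps to $\CX$, which is exactly condition (ii'). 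The main technical care needed is in the convergence step, to correctly align the fiber descriptions under the truncation maps $t_n: \Spec \tau^{\geq -n}(A) \to \Spec A$; but this is a formal manipulation of limits in $\Spc$, and no deeper obstruction arises.
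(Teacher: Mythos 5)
Your proof is correct and takes exactly the approach the paper intends: the paper leaves the proof implicit with the phrase "From this we immediately obtain," meaning one is expected to deduce the corollary directly from \propref{p:relative def} by matching up the absolute and relative versions of the two conditions (convergence and infinitesimal gluing) via the fibration $\Maps(-,\CY)\to\Maps(-,\CX)$, which is precisely what you spell out.
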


\sssec{}
The discussion of \secref{sss:cotangent complex} immediately generalizes to the relative setting using \propref{p:relative def}.  Namely, in this case, we have the relative cotangent complex
$$ T^*(\CY/\CX) \in \on{Pro}(\QCoh(\CY)^-)^{\on{fake}} .$$
Moreover, in the case that $\CX$ admits deformation theory, we have a cofiber sequence in $\on{Pro}(\QCoh(\CY)^{-})^{\on{fake}}$
\begin{equation}
f^{\#}(T^*(\CX)) \to T^*(\CY) \to T^*(\CY/\CX) .
\end{equation}

\ssec{Relative mapping prestacks}

In the main body of the paper, we study the deformation theory of relative mapping stacks and Weil restrictions.

\sssec{}

Let $\CX$ be a prestack and let $\CY \to \CX$ and $\CZ \to \CX$ be prestacks over $\CX$.  We define the relative mapping prestack
$$\underline{\on{Maps}}_{/\CX}(\CY, \CZ)\in \on{PreStk}_{/\CX},$$
whose value on an affine scheme $S\in  \affSch_{/\CX}$ is 
$$\underline{\on{Maps}}_{/\CX}(\CY, \CZ)(S):=\on{Maps}_{/S}(\CY \underset{\CX}{\times} S, \CZ \underset{\CX}{\times} S).$$

\begin{prop}\label{p:mapping has defs}
Suppose that $\CY \to \CX$ and $\CZ \to \CX$ are prestacks over $\CX$ and that $\CZ$ admits deformation theory relative to $\CX$.  Then the relative 
mapping prestack $\underline{\on{Maps}}_{/\CX}(\CY, \CZ)$ admits deformation theory relative to $\CX$.
\end{prop}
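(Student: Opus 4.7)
The plan is to verify the two conditions of \propref{p:relative def} for the prestack $\underline{\on{Maps}}_{/\CX}(\CY, \CZ)$ over $\CX$. For any affine $T \to \CX$, we have the tautological identification
$$ \underline{\on{Maps}}_{/\CX}(\CY, \CZ)(T) \;\simeq\; \on{Maps}_{/\CX}(\CY \times_{\CX} T, \CZ), $$
so both conditions become assertions about $\on{Maps}_{/\CX}(\CY \times_{\CX} T, \CZ)$ as a functor of $T$. Expressing any prestack $W$ over $\CX$ as a colimit of its affine atlas yields
$$ \on{Maps}_{/\CX}(W, \CZ) \;\simeq\; \underset{U \to W,\, U \in \affSch_{/\CX}}{\lim}\,\on{Maps}_{/\CX}(U, \CZ), $$
and both conditions are preserved by limits. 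The hypothesis that $\CZ$ admits deformation theory relative to $\CX$ provides the corresponding properties of $\on{Maps}_{/\CX}(U, \CZ)$ for each affine $U$ over $\CX$, which are assembled via the above limit.

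For condition (i), fix an affine $T \to \CX$ with truncation tower $T \simeq \lim_n T_n$. Since $T$ is convergent, every affine $U \to \CY \times_{\CX} T$ over $\CX$ factors compatibly through $\CY \times_{\CX} T_n$ for some $n$, and the indexing category of affine atlases for $\CY \times_{\CX} T$ assembles from compatible atlases for the $\CY \times_{\CX} T_n$. Applying the relative convergence of $\CZ$ at each such $U$ and passing to the limit over the atlas yields
$$ \on{Maps}_{/\CX}(\CY \times_{\CX} T, \CZ) \;\simeq\; \lim_n \on{Maps}_{/\CX}(\CY \times_{\CX} T_n, \CZ). $$

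For condition (ii), given a push-out square of affines over $\CX$ with $S_1 \to S_1'$ a square-zero extension, we produce a cofinal system of compatible affine atlases for the $\CY \times_{\CX} S_i^{(\prime)}$. Starting from an affine $V_2' \to \CY \times_{\CX} S_2'$ over $\CX$ and setting $V_i := V_2' \times_{S_2'} S_i$ and $V_1' := V_2' \times_{S_2'} S_1'$, we obtain a diagram of affines over $\CX$. Since $V_2' \times_{S_2'} (-)$ is a left adjoint on the category of affines over $S_2'$ (it is the derived tensor product of rings), it preserves push-outs, so $V_1 \to V_i$, $V_1 \to V_1'$, $V_i, V_1' \to V_2'$ is a push-out square of affines, and $V_1 \to V_1'$ inherits the square-zero extension structure (classified by the pulled-back derivation). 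Applying condition (ii) for $\CZ$ to this pulled-back square, and then taking the limit over the cofinal system of atlases, gives the desired Cartesian square for $\underline{\on{Maps}}_{/\CX}(\CY, \CZ)$.

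The main technical work is assembling compatible affine atlases of $\CY \times_{\CX} S_i^{(\prime)}$ realizing the push-out and square-zero extension structure at the affine level, and interchanging the limits over atlases with the conditions on $\CZ$. Once these stability properties of affine base change—well-known consequences of the cotangent-complex formalism of \cite[Chapter 1, Sect. 5]{Vol2}—are in place, the remainder is formal limit bookkeeping.
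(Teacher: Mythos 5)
The reductions you make (to affine base $\CX$, and the observation that admitting relative deformation theory is stable under limits of prestacks) mirror the paper's. However, your verification of condition (i) has a genuine gap. The assertion that ``every affine $U \to \CY \times_{\CX} T$ over $\CX$ factors compatibly through $\CY \times_{\CX} T_n$ for some $n$'' is false: a morphism of affine schemes $U \to T$ factors through $T_n = \Spec(\tau^{\geq -n}\mathcal{O}_T)$ only when $\mathcal{O}_U$ is itself $n$-coconnective, and the affine atlas of $\CY \times_{\CX} T$ contains arbitrary affines, not only truncated ones (note also that $T$ is a colimit, not a limit, of its truncations in the relevant sense). More fundamentally, even after reducing to $\CY = T'$ affine over $\CX = U$ affine (which the paper does, and which your limit observation already supports), the required statement is
$$\CZ\bigl(\Spec(A \underset{\mathcal{O}_U}{\otimes} \mathcal{O}_{T'})\bigr) \;\simeq\; \underset{n}{\varprojlim}\ \CZ\bigl(\Spec(\tau^{\geq -n}(A) \underset{\mathcal{O}_U}{\otimes} \mathcal{O}_{T'})\bigr),$$
and here $\tau^{\geq -n}(A) \underset{\mathcal{O}_U}{\otimes} \mathcal{O}_{T'}$ is \emph{not} in general the truncation $\tau^{\geq -n}(A \underset{\mathcal{O}_U}{\otimes} \mathcal{O}_{T'})$, so convergence of $\CZ$ cannot be invoked directly. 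The paper bridges precisely this interchange with an auxiliary statement (\lemref{l:convergence}) about inverse systems of rings whose truncations eventually stabilize cohomology by cohomology; your proposal never confronts this, and it is the real technical content of condition (i).

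Your argument for condition (ii), by contrast, is in the right spirit: pull back the push-out square and use that square-zero extensions persist under base change, which the paper cites as \cite[Chapter 1, Proposition 5.3.2]{Vol2}. But as written you atlas $\CY \times_{\CX} S_2'$ rather than $\CY$ itself, and the cofinality of the family $\{V_2' \times_{S_2'} S_i\}$ inside the full affine atlas of $\CY \times_{\CX} S_i$ is asserted, not checked. The paper avoids this bookkeeping by making the reduction ``we may assume $\CY$ affine'' explicit, after which all four corners of the base-changed square are affine schemes and no cofinality argument is needed.
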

\begin{proof}
By definition, we need to show that for every $U \in \affSch_{/\CX}$, the fiber product
$$\underline{\on{Maps}}_{/\CX}(\CY, \CZ) \underset{\CX}{\times} U \simeq \underline{\on{Maps}}_{/U}(\CY \underset{\CX}{\times} U, \CZ \underset{\CX}{\times} U) $$
admits deformtion theory.  Thus, we can assume that $\CX=U$ is an affine scheme.  Moreover, the condition of
admitting relative deformation theory is manifestly stable with respect to taking limits of prestacks.
Therefore, we can assume that $\CY = T \in \affSch_{/U}$.

\medskip

Thus,  suppose that we have a push-out square \eqref{e:sq zero push-out} mapping to $U$.  In this case,
by \cite[Chapter 1, Proposition 5.3.2]{Vol2}, the map
$$ S_1 \underset{U}{\times} T \to S_1' \underset{U}{\times} T $$
admits a canonical structure of a square-zero extension.  Moreover, we have a push-out square of affine schemes over $U$:
$$
 \xymatrix{
S_1 \underset{U}{\times} T \ar[r]\ar[d] & S_2 \underset{U}{\times} T \ar[d] \\
S_1' \underset{U}{\times} T \ar[r] & S_2' \underset{U}{\times} T
}
$$
Therefore, since $\CZ$ admits relative deformation theory over $U$, by \propref{p:relative def}, we obtain a pullback square
$$ \xymatrix{
\underline{\on{Maps}}_{/U}(T, \CZ)(S_2') = \on{Maps}_{/U}(S_2' \underset{U}{\times} T, \CZ) \ar[r]\ar[d] & \on{Maps}_{/U}(S_2 \underset{U}{\times} T, \CZ)=\underline{\on{Maps}}_{/U}(T, \CZ)(S_2) \ar[d] \\
\underline{\on{Maps}}_{/U}(T, \CZ)(S_1') = \on{Maps}_{/U}(S_1' \underset{U}{\times} T, \CZ) \ar[r] & 
\on{Maps}_{/U}(S_1 \underset{U}{\times} T, \CZ) = \underline{\on{Maps}}_{/U}(T, \CZ)(S_1),
}$$
as required.

\medskip

For convergence, it suffices to show that for every $\on{Spec}(A) \to U$, the natural map
$$ \CZ(A \underset{\mathcal{O}_U}\otimes \mathcal{O}_T) \to \underset{n}{\varprojlim}\ \CZ( (\tau^{\geq -n} A) \underset{\mathcal{O}_U}{\otimes} \mathcal{O}_T) $$
is an isomorphism.  This follows from the convergence of $\CZ$ and the following assertion, applied to
$$ \underset{n}{\varprojlim}\ ((\tau^{\geq -n} A) \underset{\mathcal{O}_U}{\otimes} \mathcal{O}_T) .$$
\end{proof}

\begin{lem}\label{l:convergence}
Suppose that $\CZ$ is a convergent prestack.
Let
$$A = \underset{n}{\varprojlim}\ A_n \in \on{ComAlg}^{\leq 0}$$
be an inverse limit such that for every $m\geq 0$, there
exists $N$ such that for $n>N$, the map $\tau^{\geq -m}(A) \to \tau^{\geq -m}(A_n)$ is an isomorphism.  Then
the map
$$ \CZ(A) \to \varprojlim \CZ(A_n) $$
is an isomorphism.
\end{lem}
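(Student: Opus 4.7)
The plan is to reduce everything to truncations using the convergence hypothesis on $\CZ$, and then swap the order of the two inverse limits. First, since $\CZ$ is convergent, I would rewrite both sides in terms of truncations:
\[
\CZ(A) \simeq \varprojlim_m \CZ(\tau^{\geq -m}(A)) \quad \text{and} \quad \CZ(A_n) \simeq \varprojlim_m \CZ(\tau^{\geq -m}(A_n))
\]
for each $n$. Consequently,
\[
\varprojlim_n \CZ(A_n) \simeq \varprojlim_n \varprojlim_m \CZ(\tau^{\geq -m}(A_n)) \simeq \varprojlim_m \varprojlim_n \CZ(\tau^{\geq -m}(A_n)),
\]
using that limits commute with limits.

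Next, I would fix $m \geq 0$ and invoke the hypothesis: there exists $N$ such that for all $n > N$ the transition map $\tau^{\geq -m}(A) \to \tau^{\geq -m}(A_n)$ is an isomorphism. Since the tail of the tower $\{\tau^{\geq -m}(A_n)\}_n$ is essentially constant with value $\tau^{\geq -m}(A)$, the induced tower $\{\CZ(\tau^{\geq -m}(A_n))\}_n$ is also essentially constant with value $\CZ(\tau^{\geq -m}(A))$. Hence
\[
\varprojlim_n \CZ(\tau^{\geq -m}(A_n)) \simeq \CZ(\tau^{\geq -m}(A)).
\]

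Combining these steps, we obtain
\[
\varprojlim_n \CZ(A_n) \simeq \varprojlim_m \CZ(\tau^{\geq -m}(A)) \simeq \CZ(A),
\]
where the last isomorphism is again convergence of $\CZ$. A quick check that the identifications are compatible with the natural map $\CZ(A) \to \varprojlim_n \CZ(A_n)$ finishes the argument. There is no real obstacle here; the only subtle point is making sure the essentially-constant tail argument is done correctly in the $\infty$-categorical sense (a tower whose transition maps are eventually isomorphisms has limit given by any sufficiently late term), but this is a standard fact about limits in $\Spc$.
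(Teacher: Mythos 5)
Your proof is correct and follows essentially the same route as the paper's: use convergence of $\CZ$ to replace $\CZ(A)$ and each $\CZ(A_n)$ by limits over truncations, swap the two inverse limits, and then use the eventually-constant hypothesis to collapse the inner limit over $n$.
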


\begin{proof}
By hypothesis, the natural map
$$ \CZ(\tau^{\geq -m}(A)) \to \underset{n}{\varprojlim}\ \CZ(\tau^{\geq -m}(A_n)) $$
is an isomorphism.
Now, since $\CZ$ is convergent, we have
$$ \CZ(A) \simeq \underset{m}{\varprojlim}\ \CZ(\tau^{\geq -m}(A)) \simeq
\underset{m}{\varprojlim}\  \underset{n}{\varprojlim}\ \CZ(\tau^{\geq -m}(A_n)) \simeq
\underset{n}{\varprojlim}\  \underset{m}{\varprojlim}\ \CZ(\tau^{\geq -m}(A_n)) \simeq
\underset{n}{\varprojlim}\ \CZ(A_n),
$$
as desired.
\end{proof}

\sssec{}
We can describe the relative cotangent complex of relative mapping prestacks as follows.  Suppose, as before,
we have $\CY, \CZ \in \on{PreStk}_{/\CX}$ and that $\CZ$ admits deformation theory relative to $\CX$.

\medskip

Consider the correspondence
$$ \xymatrix{
 \underline{\on{Maps}}_{/\CX}(\CY, \CZ) \underset{\CX}{\times} \CY \ar[r]^-{\on{ev}}\ar[d]_{p_\CY} & \CZ  \\
\underline{\on{Maps}}_{/\CX}(\CY, \CZ)
}$$

\begin{prop}\label{p:mapping cotangent}
In the situation above,
$$ T^*(\underline{\on{Maps}}_{/\CX}(\CY, \CZ)/\CX) \simeq (p_\CY)_{\#} \circ \on{ev}^{\#}(T^*(\CZ/\CX)).$$
\end{prop}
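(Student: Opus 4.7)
My plan is to verify the claimed isomorphism pointwise, by evaluating both sides at test data and exhibiting a natural equivalence. Specifically, for each $(f: S \to \underline{\on{Maps}}_{/\CX}(\CY, \CZ)) \in \affSch_{/\underline{\on{Maps}}_{/\CX}(\CY, \CZ)}$ and each $M \in \QCoh(S)^{\leq 0}$, I will produce a natural isomorphism
$$ f^{\#}\bigl(T^*(\underline{\on{Maps}}_{/\CX}(\CY, \CZ)/\CX)\bigr)(M) \simeq f^{\#}\bigl((p_\CY)_{\#} \on{ev}^{\#}(T^*(\CZ/\CX))\bigr)(M),$$
functorial in $(f, M)$. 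Together with convergence for objects of $\on{Pro}(\QCoh(-)^-)$, this determines the sought isomorphism in $\on{Pro}(\QCoh(\underline{\on{Maps}}_{/\CX}(\CY, \CZ))^-)^{\on{fake}}$.

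To compute the left-hand side, I will apply the defining property of the relative cotangent complex (cf.\ \secref{sss:cotangent complex} and \secref{sss:prestack square zero}, reformulated over $\CX$ via \propref{p:relative def}): this value is
$$ \on{Maps}_{S/,\, /\CX}(S_M, \underline{\on{Maps}}_{/\CX}(\CY, \CZ)), $$
where $S_M$ denotes the split square-zero extension of $S$ by $M$, equipped with its canonical map to $\CX$. By the universal property defining the relative mapping prestack, this rewrites as
$$ \on{Maps}_{S \underset{\CX}{\times} \CY/,\, /\CX}(S_M \underset{\CX}{\times} \CY,\, \CZ). $$
At this point the key geometric input is the one already exploited in the proof of \propref{p:mapping has defs}: base change along $\CY \to \CX$ takes split square-zero extensions to split square-zero extensions, and more precisely
$$ S_M \underset{\CX}{\times} \CY \;\simeq\; (S \underset{\CX}{\times} \CY)_{q_S^*(M)}, $$
where $q_S : S \underset{\CX}{\times} \CY \to S$ is the projection. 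Applying the defining property of $T^*(\CZ/\CX)$ at the evaluation map $e_S := \on{ev}\circ h: S \underset{\CX}{\times} \CY \to \CZ$, the left-hand side becomes
$$ \oblv^{\on{fake}}\bigl(e_S^{\#}(T^*(\CZ/\CX))\bigr)\bigl(q_S^*(M)\bigr). $$

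For the right-hand side, I will invoke the explicit formula for the functor $(p_\CY)_{\#}$ recalled before \lemref{l:pro base change}, applied to the Cartesian square
$$\xymatrix{
S \underset{\CX}{\times} \CY \ar[r]^-{q_S}\ar[d]_{h} & S \ar[d]^{f} \\
\underline{\on{Maps}}_{/\CX}(\CY, \CZ) \underset{\CX}{\times} \CY \ar[r]^-{p_\CY} & \underline{\on{Maps}}_{/\CX}(\CY, \CZ)\,.
}$$
This gives
$$ f^{\#}\bigl((p_\CY)_{\#} \on{ev}^{\#}(T^*(\CZ/\CX))\bigr)(M) \simeq \oblv^{\on{fake}}\bigl(h^{\#} \on{ev}^{\#}(T^*(\CZ/\CX))\bigr)(q_S^*(M)), $$
and since $\on{ev}\circ h = e_S$ by definition of $e_S$, this matches the expression obtained for the left-hand side. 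The identification is manifestly natural in $(f, M)$, concluding the argument.

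The main obstacle is mere bookkeeping: I must verify that the base-change identification of split square-zero extensions is natural in $(S, M)$ and compatible with the definitions of $f^{\#}$, $(p_\CY)_{\#}$, and $\on{ev}^{\#}$, so that the two computations glue to a single natural isomorphism rather than just a levelwise equivalence. There is no substantive technical difficulty beyond tracking the identifications carefully; all the needed inputs---relative deformation theory, the base-change behavior of split square-zero extensions, and the explicit formula for the left adjoint $f_{\#}$---have been recorded above.
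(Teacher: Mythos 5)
Your proposal is correct and follows essentially the same route as the paper's proof: evaluate both sides pointwise on $(S \to \underline{\on{Maps}}_{/\CX}(\CY,\CZ),\, M)$, use the universal property of the relative mapping prestack to reduce the left-hand side to maps out of $S_M \underset{\CX}{\times}\CY$, identify this fiber product with a split square-zero extension of $S\underset{\CX}{\times}\CY$, and then match against the right-hand side via the explicit (base-change) description of $(p_\CY)_{\#}$. The only cosmetic difference is that you apply the explicit formula for $(p_\CY)_{\#}$ directly at the base-changed square, whereas the paper first applies the formula over $S$ and then invokes \lemref{l:pro base change} to commute $(p_\CY)_{\#}$ past $(s\times\on{id})^{\#}$; these are the same manipulations in a different order.
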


\begin{proof}
By definition, given an affine scheme $S$ over $\CX$ with a map $s: S \to \underline{\on{Maps}}_{/\CX}(\CY, \CZ)$, 
for $M \in \QCoh(S)^{\leq 0}$,
$$s^{\#}(T^*(\underline{\on{Maps}}_{/\CX}(\CY, \CZ)/\CX))(M) \simeq \on{Maps}_{S//\CX}(S_M, \underline{\on{Maps}}_{/\CX}(\CY, \CZ)) .$$
By definition,
$$ \on{Maps}_{S//\CX}(S_M, \underline{\on{Maps}}_{/\CX}(\CY, \CZ)) \simeq \on{Maps}_{S \underset{\CX}{\times} \CY//\CX}(S_M \underset{\CX}{\times} \CY, \CZ). $$
We have a commutative diagram
$$\xymatrix{
S \underset{\CX}{\times} \CY \ar[r]^-{s \times \on{id}}\ar[d]_{p_\CY} &  \underline{\on{Maps}}_{/\CX}(\CY, \CZ)\underset{\CX}{\times} \CY \ar[r]^-{\on{ev}}\ar[d]^{p_\CY} & \CZ \\
S \ar[r]^-s & \underline{\on{Maps}}_{/\CX}(\CY, \CZ)
}
$$
where the square is Cartesian. Now, the prestack $S_M \underset{\CX}{\times} \CY$ is a split square zero extension of $S \underset{\CX}{\times} \CY$ 
(see \secref{sss:prestack square zero}) by $p_\CY^*(M)$. Therefore, 

$$ \on{Maps}_{S \underset{\CX}{\times} \CY//\CX}(S_M \underset{\CX}{\times} \CY, \CZ) \simeq
\oblv^{\on{fake}}((s\times \on{id})^{\#} \circ  \on{ev}^{\#}(T^*(\CZ/\CX)))(p_\CY^*(M)) \simeq $$
$$\simeq (p_\CY)_{\#}\circ (s\times \on{id})^{\#} \circ  \on{ev}^{\#}(T^*(\CZ/\CX))(M) 
\simeq s^{\#}\circ (p_\CY)_{\#} \circ \on{ev}^{\#}(T^*(\CZ/\CX))(M)$$
(the last isomorphism is the base change from \lemref{l:pro base change}), as desired.
\end{proof}

\sssec{Weil restriction} \label{sss:Weil restr}
A slight variant of relative mapping stacks is Weil restriction.  Namely, suppose that we have a map of
prestacks
$$ f: \CX \to \CX' $$
and a prestack $\CZ \in \on{PreStk}_{/\CX}$ over $\CX$.  We can then define the Weil restriction of $\CZ$
along $f$, to be denoted
$$\Res^{\CX}_{\CX'}(\CZ) \text{ or } \Res_f(\CZ),$$
and defined by
$$ \on{Maps}_{/\CX'}(S, \Res^{\CX}_{\CX'}(\CZ)):= \on{Maps}_{/\CX}(S \underset{\CX'}{\times} \CX, \CZ), $$
for $S \in \affSch_{/\CX'}$.

\medskip

In other words,
$$\Res^{\CX}_{\CX'}(\CZ)\simeq  \underline{\on{Maps}}_{/\CX'}(\CX, \CZ) \underset{ \underline{\on{Maps}}_{/\CX'}(\CX, \CX)}\times \CX',$$
where $\underline{\on{Maps}}_{/\CX'}(\CX, \CZ)\to  \underline{\on{Maps}}_{/\CX'}(\CX, \CX)$
is induced by the map $\CZ\to \CX$, and 
$$\CX'\to  \underline{\on{Maps}}_{/\CX'}(\CX, \CX)$$
corresponds to the identity map on $\CX$. 

\medskip

Hence, from \propref{p:mapping cotangent}, we obtain: 

\begin{cor}\label{c:Weil def}
Suppose that $f:\CX \to \CX'$ is a map of prestacks and $\CZ \to \CX$ is a prestack over $\CX$ that admits deformation theory relative to $\CX$.  Then the Weil restriction $f_*(\CZ)$ admits deformation theory
relative to $\CX'$ and
$$ T^*(\Res^{\CX}_{\CX'}(\CZ)/\CX') = (\on{id}\times f)_{\#} \circ \on{ev}^{\#}(T^*(\CZ/\CX)), $$
where
$$\on{ev}: \Res^{\CX}_{\CX'}(\CZ)\underset{\CX'}{\times}\CX \to \CZ $$
is the tautological evaluation map. 
\end{cor}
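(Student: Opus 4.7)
The plan is to deduce the corollary directly from \propref{p:mapping cotangent} using the expression of the Weil restriction in terms of a relative mapping prestack, but with a careful treatment of the fact that $\CZ$ is assumed to admit deformation theory only relative to $\CX$, rather than relative to $\CX'$. First, I would observe the identification
$$\Res^{\CX}_{\CX'}(\CZ) \simeq \underline{\on{Maps}}_{/\CX'}(\CX, \CZ) \underset{\underline{\on{Maps}}_{/\CX'}(\CX, \CX)}{\times} \CX'$$
used earlier in \secref{sss:Weil restr}, and then verify directly the two conditions of \propref{p:relative def} for the Weil restriction over $\CX'$.

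For the existence of deformation theory, I would mimic the argument of \propref{p:mapping has defs}: given a push-out square as in \eqref{e:sq zero push-out} mapping to $\CX'$ with $S_1 \to S_1'$ a square-zero extension, base change along $f: \CX \to \CX'$ produces a push-out square of prestacks over $\CX$ in which $S_1 \times_{\CX'} \CX \to S_1' \times_{\CX'} \CX$ is a square-zero extension (the latter being a standard fact about compatibility of split and non-split square-zero extensions with base change, as in \cite[Chapter 1, Proposition 5.3.2]{Vol2}). The required pullback square of mapping spaces into $\Res^{\CX}_{\CX'}(\CZ)$ then translates, by definition of Weil restriction, into the pullback square of mapping spaces into $\CZ$ over $\CX$, which exists by the assumed relative deformation theory of $\CZ$ over $\CX$. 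Convergence follows exactly as at the end of the proof of \propref{p:mapping has defs}, using \lemref{l:convergence}.

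For the formula for the cotangent complex, fix $s: S' \to \Res^{\CX}_{\CX'}(\CZ)$ with $S' \in \affSch_{/\CX'}$ and $M' \in \QCoh(S')^{\leq 0}$, and let $\tilde{s}: S' \times_{\CX'} \CX \to \CZ$ be the adjoint map over $\CX$. Unwinding the definition and using that $S'_{M'} \times_{\CX'} \CX$ is the split square-zero extension of $S' \times_{\CX'} \CX$ by $p^*(M')$ (where $p: S' \times_{\CX'} \CX \to S'$ is the projection), I obtain
$$s^{\#}(T^*(\Res^{\CX}_{\CX'}(\CZ)/\CX'))(M') \simeq \oblv^{\on{fake}}(\tilde{s}^{\#}(T^*(\CZ/\CX)))(p^*(M')) \simeq p_{\#}(\tilde{s}^{\#}(T^*(\CZ/\CX)))(M'),$$
exactly as in the proof of \propref{p:mapping cotangent} (making use of \secref{sss:prestack square zero} to allow prestack-valued ``affine'' sources). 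Since $\tilde{s} = \on{ev} \circ (s \times \on{id})$, base change (\lemref{l:pro base change}) applied to the Cartesian square with vertical arrows $p$ and $(\on{id} \times f)$ yields $p_{\#} \circ (s \times \on{id})^{\#} \simeq s^{\#} \circ (\on{id}\times f)_{\#}$, and combining these gives the claimed identity after $s^{\#}$ for every affine $s$.

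The main obstacle will be the first step, namely confirming that the hypothesis of deformation theory of $\CZ$ over $\CX$ is enough to produce deformation theory of the Weil restriction over $\CX'$: one must check that the base change $S' \times_{\CX'} \CX$ of an affine $\CX'$-scheme behaves well with respect to square-zero extensions even though it need not itself be affine. This is handled by the observation that the notion of square-zero extension, together with the formalism of \secref{sss:prestack square zero} and the ``fake'' pro-category of \secref{ss:fake}, extend cleanly to arbitrary prestacks over $\CX$; once this is in place the formula is simply a combination of the universal property of $p_{\#}$ and base change.
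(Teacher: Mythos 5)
Your proof is correct and, at bottom, is the argument the paper intends: re-run the proof of \propref{p:mapping cotangent} mutatis mutandis for the Weil restriction, where the fact that the base change $S' \times_{\CX'} \CX$ is a general prestack rather than an affine scheme is already handled by the formalism of \secref{sss:prestack square zero} and the ``fake'' pro-category, exactly as in the proof of \propref{p:mapping cotangent} itself (which likewise deals with $S \times_\CX \CY$ for $\CY$ a prestack). You are also right to flag that the corollary cannot be obtained by a black-box application of \propref{p:mapping cotangent} to the fiber-product expression for $\Res^{\CX}_{\CX'}(\CZ)$: that would require deformation theory of $\CZ$ relative to $\CX'$ (not merely $\CX$) and would produce $T^*(\CZ/\CX')$ rather than $T^*(\CZ/\CX)$; the direct computation with maps over $\CX$ is what yields the stated formula. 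The derivation of the cotangent formula via $\tilde{s} = \on{ev} \circ (s \times \on{id})$ and the base-change isomorphism $p_{\#} \circ (s \times \on{id})^{\#} \simeq s^{\#} \circ (\on{id}\times f)_{\#}$ from \lemref{l:pro base change} is exactly right.
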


\section{Categorical prestacks} \label{s:categ prestacks}

In this section, we describe what we need from the theory of categorical prestacks; i.e., prestacks valued in categories.

\ssec{Basic notions}

\sssec{}
A categorical prestack is a(n accessible) functor
$$ (\affSch)^{\on{op}}  \to \on{Cat} $$
Let $\on{CatPreStk}:=\on{Funct}((\affSch)^{\on{op}} , \on{Cat})$ denote the (2-)category of categorical prestacks.

\medskip

The inclusion $\Spc \to \on{Cat}$ gives an inclusion $\on{PreStk} \hookrightarrow \on{CatPreStk}$.

\sssec{}

Let $f:\CX\to \CY$ be a map of categorical prestacks. We shall say that $f$ is a level-wise
Cartsian/coCartesian fibration (resp., fibration in groups) if for every $S\in (\affSch)^{\on{op}}$,
the corresponding functor
$$\CX(S)\to \CY(S)$$
has this property. 

\sssec{}
We will say that a categorical prestack $\CX \in \on{CatPreStk}$ is locally almost of finite type (laft) if the corresponding functor
$$ \CX: (\affSch)^{\on{op}}  \to \on{Cat} $$
satisfies:

\begin{enumerate}[label=(\roman*)]

\item It is convergent, i.e., for $A \in \on{ComAlg}^{\leq 0}$
the natural map
$$\CX(\Spec(A)) \to \underset{n}{\varprojlim}\ \CX(\Spec(\tau^{\geq -n} (A))) $$
is an equivalence;

\item For each $n\geq 0$, the functor
$$A \mapsto \CX(\Spec(A)), \quad A\in \on{ComAlg}^{\leq 0, \geq -n}$$
commutes with filtered colimits.

\end{enumerate}


\sssec{}

For the most part, we will only be concerned with laft categorical prestacks.  However, the following recognition 
result will be useful:
%
%
%

\begin{lem}\label{l:laft cat fibration}
Let $f: \CX \to \CY$ be a map in $\on{CatPreStk}$.
\begin{enumerate}[label={(\alph*)}]

\item[(a)]
Suppose that $\CY$ is convergent and for any $S\to \CY$ 
(resp., $[1]\times S\to \CY$), the fiber product $\CX \underset{\CY}{\times} S$
(resp., $\CX \underset{\CY}{\times} ([1]\times S)$) is convergent. Then $\CX$
is convergent. 

\item[(a')]
Suppose that $f$ is a level-wise Cartesian or coCartesian fibration.
Then in (a) it suffices to require that the fiber products $\CX \underset{\CY}{\times} S$
be convergent. 

\item[(b)]
Suppose that $\CY$ is laft, and every map 
$S \to \CY$ (resp., $[1]\times S\to \CY$), with $S\in {}^{<\infty}\!\affSch_{\on{aft}}$
the fiber product $\CX \underset{\CY}{\times} S$
(resp., $\CX \underset{\CY}{\times} ([1]\times S)$) is laft. Then $\CX$
is laft. 

\item[(b')] Suppose that $f$ is a level-wise Cartesian or coCartesian fibration.
Then in (b) it suffices to require that the fiber products $\CX \underset{\CY}{\times} S$
be laft.  
\end{enumerate}
\end{lem}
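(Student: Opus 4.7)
The plan is to check convergence and laft-ness by checking them on the spaces of objects $\CX(\Spec A)^{\on{grpd}}$ and of arrows $\on{Fun}([1],\CX(\Spec A))^{\on{grpd}}$; that these two suffice to detect equivalences in $\on{Cat}$ is the Segal picture, since $\on{Fun}([n],-)^{\on{grpd}}$ is a limit of $\on{Fun}([0],-)^{\on{grpd}}$ and $\on{Fun}([1],-)^{\on{grpd}}$ for $n\ge 2$. By Yoneda, these two spaces are identified with $\Map_{\on{CatPreStk}}([k]\times\Spec A,\CX)$ for $k=0,1$, where $[k]$ is the constant categorical prestack with value the $k$-simplex.

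For (a), convergence of $\CX$ reduces to convergence of the prestacks $A\mapsto\Map([k]\times\Spec A,\CX)$ for $k=0,1$. Each such mapping prestack fits into a fibre sequence over $A\mapsto\Map([k]\times\Spec A,\CY)$ whose fibre over a chosen $g\colon[k]\times\Spec A\to\CY$ is the space of sections of $\CX\underset{\CY}{\times}([k]\times\Spec A)\to[k]\times\Spec A$ (the fibre product being formed via $g$). Convergence of the base follows from convergence of $\CY$ applied to $\on{Fun}([k],-)^{\on{grpd}}$, while convergence of the fibre follows from the assumed convergence of the categorical prestack $\CX\underset{\CY}{\times}([k]\times\Spec A)$, by evaluating on $\Spec A$ and fibering over the identity section. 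Part (b) follows the same strategy, replacing convergence by laft-ness throughout, since the filtered-colimit condition in the definition of laft is also preserved by the fibre/base decomposition.

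Parts (a') and (b') use the (co)Cartesian fibration hypothesis to dispense with the separate assumption on $[1]\times S$-fibres. The mechanism is the standard description of arrows in the total space of a coCartesian fibration $p\colon\CE\to\CD$: an arrow of $\CE$ lying over $\alpha\colon d_1\to d_2$ in $\CD$ is determined by objects $e_i\in p^{-1}(d_i)$ together with a single morphism $\alpha_!(e_1)\to e_2$ in $p^{-1}(d_2)$ (and dually for Cartesian fibrations). Consequently $\on{Fun}([1],\CX(\Spec A))^{\on{grpd}}$ is assembled from $\on{Fun}([1],\CY(\Spec A))^{\on{grpd}}$, which is convergent/laft in $A$ by hypothesis on $\CY$, together with mapping spaces inside the fibres $\CX\underset{\CY}{\times}\Spec A$, which are convergent/laft because the fibres themselves are.

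The main technical obstacle is the bookkeeping around the identification ``sections of the pulled-back fibre product $=$ fibre of $\Map([k]\times\Spec A,\CX)\to\Map([k]\times\Spec A,\CY)$ at $g$'', and its compatibility with truncations of $A$ and with the filtered colimits entering the laft condition. Once this is established by base change for fibre products of categorical prestacks and the commutation of $\Map(-,\CW)$ with limits in its first variable, the lemma assembles from the fibre-by-fibre analysis above.
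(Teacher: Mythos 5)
The paper states this lemma without proof, so there is no reference argument to compare against; I will assess the proposal on its own terms.

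Your strategy for (a) and (b) is sound. Reducing an equivalence test in $\on{Cat}$ to the space of objects and the space of arrows (via the Segal condition), identifying $\on{Fun}([k],\CX(\Spec A))^{\on{grpd}}$ with $\Map([k]\times\Spec A,\CX)$, and then fibering over the corresponding mapping prestack for $\CY$ and invoking the hypotheses on $\CX\times_\CY S$ and $\CX\times_\CY([1]\times S)$, does the job for both the Postnikov-limit condition and the filtered-colimit condition. The only bookkeeping one should spell out is the identification of the fiber over $g\colon[k]\times\Spec A\to\CY$ with the fiber of $(\CX\times_\CY([k]\times\Spec A))(\Spec A)\to [k]\times\Map(\Spec A,\Spec A)$ over the identity, and the fact that this pullback commutes with the relevant limits/colimits, but these are routine.

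There is, however, a genuine gap in your treatment of (a') and (b'). The decomposition
$$\on{Fun}([1],\CE)^{\on{grpd}}_\alpha\;\simeq\;\CE_{d_1}^{\on{grpd}}\underset{\CE_{d_2}^{\on{grpd}}}{\times}\on{Fun}([1],\CE_{d_2})^{\on{grpd}}$$
via $\alpha_!$ is valid for a single coCartesian fibration, but in the argument you need to take a limit (for convergence) or a filtered colimit (for laftness) of these decomposed spaces over the tower $\CX(\Spec A)\to\CX(\Spec\tau^{\geq -n}A)$ (resp.\ over a filtered diagram $\CX(\Spec A_\alpha)$). For the limit/colimit to pass through the fiber product, the gluing functors $\alpha_{n,!}$ must be compatible with the restriction maps $\CX(\Spec\tau^{\geq -(n+1)}A)\to\CX(\Spec\tau^{\geq -n}A)$, i.e., these restrictions must send coCartesian arrows to coCartesian arrows. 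The definition of ``level-wise coCartesian fibration'' in the paper (Appendix~\ref{s:categ prestacks}) is purely level-wise and does \emph{not} impose this compatibility, and in general a functor between coCartesian fibrations over the same base that is a fiberwise equivalence need not be an equivalence (one can construct counterexamples already over $[1]$ by twisting with a functor between the fiber categories). Your phrase ``assembled from \dots together with \dots'' glosses over exactly this point. The issue disappears when the fibration is \emph{in groupoids}: for left (resp.\ right) fibrations every arrow is coCartesian (resp.\ Cartesian), so any functor over the base automatically preserves them, and your argument goes through verbatim. All of the paper's applications of (a')/(b') (via \corref{c:fibration laft-def}) are to level-wise (co)Cartesian fibrations \emph{in groupoids}, so for the purposes of this paper the argument suffices; but as a proof of the lemma as literally stated (for general level-wise (co)Cartesian fibrations) you would need either to add the hypothesis that restriction preserves (co)Cartesian arrows, or to supply a different argument that does not route through the $\alpha_!$-decomposition.
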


\sssec{} \label{sss:formal compl categ}

Let $\CX \in \on{CatPreStk}$ be a categorical prestack and let the corresponding deRham categorical prestack $\CX_{\dr} \in \on{CatPreStk}$ be defined as
$$ \CX_{\dr}(S) := \CX(S^{\on{red}}) ,$$
as in the case of ordinary prestacks.

\medskip 

Given a map of categorical prestacks $f: \CX \to \CY$, we define the formal completion
$$ \CY^{\wedge}_{\CX}:= \CY \underset{\CY_{\on{dR}}}{\times} \CX_{\on{dR}} ,$$
as in the case of ordinary prestacks.

\medskip

We record the following:

\begin{lem}\label{l:form compl fibration}
Let $\CX \to \CY$ be a level-wise (co-)Cartesian fibration in groupoids of categorical prestacks.  Then the corresponding maps
$$ \CX \to \CY^{\wedge}_{\CX} \to \CY $$
are also level-wise (co-)Cartesian fibrations in groupoids.
\end{lem}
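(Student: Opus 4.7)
The plan is to prove the lemma level-wise, so fix a test affine scheme $S$. Since the two cases (Cartesian vs. coCartesian) are formally dual, I will only write the argument for Cartesian fibrations in groupoids, i.e., right fibrations. Throughout, the key input is the hypothesis that for every test scheme $T$, the functor $\CX(T) \to \CY(T)$ is a right fibration, in particular for $T=S$ and $T=S^{\on{red}}$.

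For the map $(\CY^{\wedge}_{\CX})(S) \to \CY(S)$: by definition, $(\CY^{\wedge}_{\CX})(S) = \CY(S) \times_{\CY(S^{\on{red}})} \CX(S^{\on{red}})$, so this map is the base change of $\CX(S^{\on{red}}) \to \CY(S^{\on{red}})$ along the restriction functor $\CY(S) \to \CY(S^{\on{red}})$. Since right fibrations are stable under base change, the conclusion follows immediately.

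For the map $\CX(S) \to (\CY^{\wedge}_{\CX})(S)$: I will verify the defining lifting property, namely that for every $x \in \CX(S)$ and every morphism $\phi : (y', x', \alpha) \to (f(x), x|_{S^{\on{red}}}, \on{can})$ in $(\CY^{\wedge}_{\CX})(S)$, the space of lifts of $\phi$ to a morphism in $\CX(S)$ with target $x$ is contractible. Unpacking the fiber product, such a $\phi$ is a pair $(\beta_{\CY}, \beta_{\CX})$ where $\beta_{\CY} : y' \to f(x)$ lies in $\CY(S)$, $\beta_{\CX} : x' \to x|_{S^{\on{red}}}$ lies in $\CX(S^{\on{red}})$, and $f(\beta_{\CX})$ agrees with $\beta_{\CY}|_{S^{\on{red}}}$ via $\alpha$. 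Applying the right fibration property of $\CX(S) \to \CY(S)$, the space of pairs $(\tilde{x}, \tilde{\beta} : \tilde{x} \to x)$ in $\CX(S)$ lifting $\beta_{\CY}$ is contractible; pick such a canonical pair. Restricting to $S^{\on{red}}$ yields a lift $(\tilde{x}|_{S^{\on{red}}}, \tilde{\beta}|_{S^{\on{red}}})$ of $\beta_{\CY}|_{S^{\on{red}}}$ in $\CX(S^{\on{red}})$ with target $x|_{S^{\on{red}}}$. By the right fibration property of $\CX(S^{\on{red}}) \to \CY(S^{\on{red}})$, this is canonically identified with the given pair $(x', \beta_{\CX})$. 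Assembling, the space of lifts of $\phi$ in $\CX(S)$ is contractible, as desired.

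There is no real obstacle; the only subtlety is unpacking the fiber-product description of $(\CY^{\wedge}_{\CX})(S)$ cleanly enough to apply the right-fibration property of $\CX \to \CY$ at both $S$ and $S^{\on{red}}$ in turn. Once that unpacking is done, the two contractibility statements combine straightforwardly to give the required contractibility of lifts. The coCartesian case proceeds verbatim by reversing the direction of morphisms throughout.
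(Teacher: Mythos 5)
The paper states this lemma without proof, so there is nothing to compare your argument against directly; I will simply assess it. Your proof is correct. The pointwise reduction is right, and the duality reducing the coCartesian case to the Cartesian case is sound because the formal completion construction manifestly commutes with passing to opposite categories levelwise. The first half (that $\CY^{\wedge}_{\CX} \to \CY$ is a right fibration levelwise) is the natural base-change argument, and the observation that right fibrations model the homotopy pullback correctly (since they are categorical fibrations) makes it airtight.

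For the second half, your direct verification of the lifting property is correct, though the final ``assembling'' step could be tightened: the space of lifts of $\phi$ maps to the (contractible) space of lifts of $\beta_\CY$ over $\CY(S)$, with fibers given by path spaces in the (contractible) space of lifts of $\beta_\CY|_{S^{\on{red}}}$ over $\CY(S^{\on{red}})$, hence is itself contractible. A cleaner route would be to invoke the cancellation property of right (resp.\ left) fibrations: if $g$ and $g\circ f$ are right fibrations, then so is $f$. Applied levelwise with $g:\CY^{\wedge}_{\CX}(S)\to\CY(S)$ (a right fibration by your first half) and $g\circ f:\CX(S)\to\CY(S)$ (a right fibration by hypothesis), this immediately yields that $f:\CX(S)\to\CY^{\wedge}_{\CX}(S)$ is a right fibration, with no further unpacking of the fiber product needed. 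Your argument is essentially this cancellation argument written out by hand and it does go through.
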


\sssec{Weil restriction for categorical prestacks} \label{sss:Weil categ}

Let $\gamma:\CX'\to \CX$ be a level-wise \emph{coCartesian} fibration in groupoids, and let $\CZ'\to \CX'$ be 
a level-wise \emph{Cartesian} fibration in groupoids. In this case, we can define the categorical prestack
$$\Res_\gamma(\CZ')$$
over $\CX$, which is the right adjoint to the functor
$$ \on{CatPreStk}_{/\CX} \to \on{CatPreStk}_{/\CX'} $$
given by $(-) \underset{\CX}{\times}\CX'$.

\medskip

By definition
$$(\Res_\gamma(\CZ'))^{\on{grpd}}:=\Res_\gamma((\CZ')^{\on{grpd}}),$$
where in the right-hand side $\gamma$ stands for the map $(\CX')^{\on{grpd}}\to \CX^{\on{grpd}}$.

\medskip

Unlike the case of prestacks, the functor $\Res$ does not always exist for an arbitrary map of categorical prestacks.  However, it is easy to see that that the conditions on $\gamma$ and $\CZ'\to \CX'$ guarantee that 
$\Res_\gamma(\CZ')$ is well-defined and is a level-wise coCartesian fibration in groupoids over $\CX$. 

\ssec{Pseudo-proper morphisms} \label{ss:pseudo-proper}

In this subsection, we review the theory of pseudo-proper morphisms, following \cite[Sect. 1.5]{AB}.

\sssec{}

Given an affine scheme $S\in \affSch_{\on{aft}}$, we define the category of pseudo-proper prestacks over $S$ to be the smallest full subcategory
$$(\on{PreStk}_{\on{aft}})_{/S}^{\on{ps-pr}} \subset (\on{PreStk}_{\on{aft}})_{/S} $$
closed under colimits that contains all schemes $X\to S$, for which the map $X\to S$ is proper.  By \cite[Chapter 2, Cor. 4.1.4]{Vol2}, this includes all ind-inf-schemes $X \to S$ ind-proper over $S$.

\medskip

For a general laft prestack $\CX$, we define the category of pseudo-proper prestacks over $\CX$
$$ \on{PreStk}_{/\CX}^{\on{ps-pr}} \subset \on{PreStk}_{/\CX} $$
to consist of those laft prestacks $f: \CY \to \CX$ such that for any map $S \to \CX$ from an affine scheme $S\in \affSch_{\on{aft}}$,
the fiber product
$$ \CY \underset{\CX}{\times} S \to S $$
is pseudo-proper.

\sssec{}

We record the following variant of \cite[Prop. 7.4.2]{AB} (which is proved identically):

\begin{prop}\label{p:ps-pr colimit}
Let $F: I \to \on{infSch}$ be a diagram of inf-schemes such that for each $i \to j \in I$, the corresponding map
$F(i) \to F(j)$ is proper.  Then for each $i \in I$, the map of prestacks
$$ F(i) \to \on{colim} F $$
(where the colimit it taken in $\on{PreStk}$) is pseudo-proper.
\end{prop}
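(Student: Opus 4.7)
The goal is to show that for any $S \in \affSch_{\on{aft}}$ equipped with a map $s: S \to \on{colim}_I F$, the prestack $F(i) \times_{\on{colim}_I F} S$ belongs to the subcategory $(\on{PreStk}_{\on{aft}})_{/S}^{\on{ps-pr}}$. My plan is to exhibit this fiber product as a colimit (over the index category $I$) of prestacks that are manifestly pseudo-proper over $S$, and then invoke the fact that the pseudo-proper subcategory is closed under colimits by its very definition.

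The first step is the formal reduction. Since all colimits and finite limits in $\on{PreStk}$ are computed pointwise on affine test schemes, finite limits commute with arbitrary colimits. Set $S_j := S \times_{\on{colim}_I F} F(j)$ for $j \in I$; the diagram $j \mapsto S_j$ is the base change of $F$ along $S \to \on{colim}_I F$, and commutation of colimits with the pullback $- \times_{\on{colim} F} S$ gives $\on{colim}_j S_j \simeq S$. Applying the same commutation to the pullback along $F(i) \to \on{colim} F$ yields
$$
F(i) \times_{\on{colim}_I F} S \;\simeq\; \on{colim}_{j \in I}\, \bigl(F(i) \times_{F(j)} S_j\bigr).
$$

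The second step is to show that each term $F(i) \times_{F(j)} S_j$ is pseudo-proper over $S$. For each morphism $i \to j$ in $I$ (i.e., object of $I_{i/}$), the transition map $F(i) \to F(j)$ is a proper map of inf-schemes by hypothesis, so its base change $F(i) \times_{F(j)} S_j \to S_j$ is (ind-)proper in the appropriate sense. Composing with the structure map $S_j \to S$ and using the two basic stability properties of pseudo-properness — stability under base change and under composition, invoked elsewhere in the paper via \lemref{l:pseudo-proper ops} — shows that $F(i) \times_{F(j)} S_j \to S$ is pseudo-proper. Objects $j \in I$ that do not receive a map from $i$ contribute terms that pull back, after forming $S_j$, to components of $S$ itself (as in the disjoint union toy case), which are proper over $S$.

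The third and final step is to package these observations: the assignment $j \mapsto F(i) \times_{F(j)} S_j$ is a diagram in $(\on{PreStk}_{\on{aft}})_{/S}^{\on{ps-pr}}$, and taking its colimit — which by the first step is $F(i) \times_{\on{colim}_I F} S$ — lands in $(\on{PreStk}_{\on{aft}})_{/S}^{\on{ps-pr}}$ by closure of the latter subcategory under colimits. The main obstacle is the second step: verifying pseudo-properness of $F(i) \times_{F(j)} S_j \to S$ requires keeping track of two layers of the structure (the proper map inside the inf-schemes, and the non-representable map $S_j \to S$), and rests on the prior stability results for pseudo-proper morphisms; assuming those standard lemmas, the rest of the argument is formal.
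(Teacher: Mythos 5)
Your overall plan---express $F(i) \times_{\on{colim} F} S$ as a colimit over $I$ of prestacks pseudo-proper over $S$ and appeal to the closure of $(\on{PreStk}_{\on{aft}})_{/S}^{\on{ps-pr}}$ under colimits---is the natural first idea, but the decomposition displayed in your first step is not what universality of colimits actually gives, and this is a genuine gap. Base-changing $\on{colim}_j S_j \simeq S$ along $F(i) \to \on{colim} F$ yields
$$ F(i) \underset{\on{colim} F}{\times} S \;\simeq\; \underset{j\in I}{\on{colim}}\,\Bigl(F(i) \underset{\on{colim} F}{\times} S_j\Bigr), $$
with each term still a fiber product over $\on{colim} F$, not over $F(j)$. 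For a morphism $i\to j$ there is a natural comparison map $F(i)\times_{F(j)} S_j \to F(i)\times_{\on{colim} F} S_j$, but it is an isomorphism only when the leg $F(j) \to \on{colim} F$ is a monomorphism; and for those $j$ that do not receive a morphism from $i$, the expression $F(i)\times_{F(j)} S_j$ is not even defined. Your remark that such $j$ ``contribute components of $S$'' could at best be made rigorous when $I$ is filtered, so that $I_{i/}\to I$ is cofinal; but the use the paper makes of this proposition is for $\Ran(X) = \on{colim}_{I\in(\on{Fin}^s)^{\on{op}}} X^I_\dr$ (see \propref{p:ran pres}), where $(\on{Fin}^s)^{\on{op}}$ is \emph{not} filtered (the identity and the swap of a two-element set are parallel surjections not coequalized by any further surjection) and the legs $X^I_\dr \to \Ran(X)$ are not monomorphisms (they forget the labelling by $I$). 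So neither of the escapes your argument leans on is available.

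There is a further circularity in your second step: to conclude that the composite $F(i)\times_{F(j)} S_j \to S_j \to S$ is pseudo-proper by stability under composition, you would need the map $S_j = F(j)\times_{\on{colim} F} S \to S$ to already be pseudo-proper---which is exactly the proposition under proof, applied to $j$ instead of $i$. The real content of the statement lies in controlling the fiber products $F(i)\times_{\on{colim} F} F(j)$, and the purely formal commutation of colimits with pullback does not by itself reduce these to the properness of the transition maps. Note also that the paper gives no self-contained argument here: it cites [AB, Prop.~7.4.2] with the remark that the present statement is ``proved identically,'' and your reconstruction does not recover what makes that argument work.
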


\sssec{}

We have the following basic fact about pseudo-proper morphisms of prestacks.

\begin{lem}\label{l:adj ps-pr prestacks}
Let $f:\CX \to \CY$ be a pseudo-proper map of laft prestacks.  Then the symmetric monoidal functor
$$ f^!: \IndCoh(\CY) \to \IndCoh(\CX) $$
admits a left adjoint $f_!$ such that:

\begin{enumerate}[label={(\alph*)}]
\item
$f_!$ is compatible with base change; i.e., for any $g: S \to \CY$, $S \in \affSch_{\on{aft}}$ and $\CX':=\CX \underset{\CY}{\times} S$, the natural transformation
$$ f'_! \circ g'^! \to g^! \circ f_! ,$$
arising from the Cartesian diagram
$$ \xymatrix{
\CX' \ar[r]^{g'}\ar[d]_{f'} & \CX \ar[d]^f\\
S \ar[r]^g & \CY
}$$
is an isomorphism.

\item
$f_!$ is a \emph{strict} functor of $\IndCoh(\CY)$-module categories; 
in other words, $f_!$ satisfies the projection formula: for $M \in \IndCoh(\CX)$ and $N \in \IndCoh(\CY)$, the natural map
$$ f_!(M \otimes f^!(N)) \to f_!(M) \otimes N $$
is an isomorphism.
\end{enumerate}
\end{lem}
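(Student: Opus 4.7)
The plan is to prove the lemma by reducing to the case of proper maps of inf-schemes (where the statement is classical) and then extending by taking colimits and base-changing.

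First, I would handle the case when the target is an affine scheme $S \in \affSch_{\on{aft}}$ and $\CX = X$ is an (ind-)inf-scheme (ind-)proper over $S$. Here the left adjoint to $f^!$ is $f_! := f^{\IndCoh}_*$; its existence is guaranteed by \cite[Chapter 3, Sect. 4]{Vol2}, base change against arbitrary $!$-pullbacks is the content of proper base change in that framework, and the projection formula is standard for proper pushforwards of $\IndCoh$.

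Next, I would treat the case of an arbitrary pseudo-proper $\CX \to S$ with $S$ affine almost of finite type. By the very definition of $(\on{PreStk}_{\on{aft}})^{\on{ps-pr}}_{/S}$ as the smallest subcategory closed under colimits containing proper $X \to S$, we can pick a presentation $\CX \simeq \underset{i \in I}{\on{colim}}\, X_i$ in $\on{PreStk}_{/S}$ with each $X_i \to S$ proper. Since $\IndCoh$ converts such colimits of prestacks into limits under $!$-pullback, we have $\IndCoh(\CX) \simeq \underset{i \in I^{\on{op}}}{\lim}\, \IndCoh(X_i)$. Equivalently, via the ind-$\IndCoh$-pushforward description, $\IndCoh(\CX) \simeq \underset{i \in I}{\on{colim}}\, \IndCoh(X_i)$, where the transition functors are $(g_{ij})^{\IndCoh}_*$ for $g_{ij}:X_i\to X_j$ (these are themselves proper, so the pushforwards are left adjoint to the $!$-pullbacks and hence compatible with the colimit/limit reinterpretation). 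I would then define $f_!$ as the induced functor from this colimit, so that on the $i$-th term it is $(f_i)_!=(f_i)^{\IndCoh}_*$. The adjunction with $f^!$ follows termwise, base change follows since each $(f_i)_!$ satisfies it and base change is preserved by passing to colimits, and the projection formula is inherited similarly (the key point being that tensoring with a fixed object commutes with colimits in $\IndCoh(\CX)$).

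Finally, for a general laft target $\CY$, I would use the fact that $\IndCoh$ satisfies $!$-descent in the sense that $\IndCoh(\CY) \simeq \underset{(S \to \CY) \in ((\affSch_{\on{aft}})_{/\CY})^{\on{op}}}{\lim}\, \IndCoh(S)$. For each $g:S\to \CY$ with $S$ affine almost of finite type, set $\CX_S := \CX \underset{\CY}{\times} S$ and $f_S: \CX_S \to S$; by the hypothesis that $f$ is pseudo-proper, each $f_S$ falls under the scope of the previous step, so the functors $(f_S)_!$ exist together with their base change isomorphisms. These isomorphisms are precisely the data needed to assemble a global functor $f_!:\IndCoh(\CX) \to \IndCoh(\CY)$ left adjoint to $f^!$; the projection formula on $\CY$ then reduces, by passing to each $S \to \CY$, to the projection formula for $(f_S)_!$.

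The main obstacle I anticipate is bookkeeping the higher coherences in the colimit step: one must show that the pointwise adjunctions $((f_i)_!, f_i^!)$ assemble into an adjunction of presentable $\infty$-categories and not merely a degree-wise compatibility. This is handled cleanly by recognizing that $f^!$ is a limit of the $f_i^!$ and invoking the general fact that a limit of right adjoints (with compatible base change data giving Beck--Chevalley) has a left adjoint computed as the corresponding colimit; formally this is an application of the adjoint functor theorem in the presentable $\infty$-categorical setting, together with the Beck--Chevalley conditions supplied by proper base change in Step 1.
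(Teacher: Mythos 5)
The paper does not prove this lemma: the surrounding subsection opens with ``we review the theory of pseudo-proper morphisms, following \cite[Sect.~1.5]{AB}'', and the statement is recorded without argument. So there is no in-paper proof to compare against; what you have done is reconstruct the argument that the paper defers to Gaitsgory's Atiyah--Bott paper, and your three-step reduction (proper schemes over affine; arbitrary pseudo-proper prestacks over affine via a colimit presentation; general laft $\CY$ via $!$-descent) is the standard route and matches that source in outline.

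Two points in Step 2 deserve to be made explicit rather than asserted in passing. First, you claim the transition maps $g_{ij}\colon X_i\to X_j$ in the colimit presentation are proper; this is true, but it is worth noting the (one-line) reason: $X_j\to S$ is separated (being proper), so any $S$-morphism from the proper $X_i$ is automatically proper. The definition of $(\on{PreStk}_{\on{aft}})^{\on{ps\text{-}pr}}_{/S}$ a priori says nothing about transition maps, so this observation is what makes the ``colimit along pushforwards'' reinterpretation licit. Second, you should justify that every object of $(\on{PreStk}_{\on{aft}})^{\on{ps\text{-}pr}}_{/S}$ actually has a presentation as a colimit of a diagram \emph{valued in proper schemes over $S$}, and not merely of other pseudo-proper prestacks: the category is defined only as the smallest full subcategory closed under colimits containing the proper $X\to S$, so one needs the standard ``unfolding'' fact that such a closure consists precisely of colimits of diagrams with values in the generating objects. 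These are both routine, but they are exactly the kind of thing that should be said explicitly. Your treatment of the coherence issue at the end --- recognizing that one is gluing left adjoints across a limit of categories and that Beck--Chevalley is what makes the glued left adjoint exist and behave --- is the genuine crux and you identified it correctly.
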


\sssec{}\label{sss:pseudo proper def}

We will say that a map of laft categorical prestacks $f: \CX \to \CY$ is pseudo-proper if:
\begin{itemize}
\item
$f$ is a level-wise coCartesian fibration in groupoids;
\item
For every $S\to \CY$ with $S\in \affSch_{\on{aft}}$, the fiber product
$$ \CX \underset{\CY}{\times} S \to S$$
is a pseudo-proper map.
\end{itemize}

\sssec{}
The class of pseudo-proper maps of categorical prestacks enjoys a number favorable of persistence properties.  Namely, we have:

\begin{lem}\label{l:pseudo-proper ops} \hfill 

\begin{enumerate}[label={(\alph*)}]

\item
Pseudo-proper morphisms are stable under base change; namely, if we have a Cartesian square
$$ \xymatrix{
\CY' \ar[r]\ar[d] & \CY \ar[d] \\
\CX' \ar[r] & \CX
}$$
such that the morphism $\CY \to \CX$ is pseudo-proper then the morphism $\CY'\to \CX'$ is also pseudo-proper.

\item
If $\CZ \to \CY$ and $\CY\to \CX$ are pseudo-proper morphisms then so is the composite $\CZ \to \CX$.

\item
If $\CY \to \CX$ and $\CY' \to \CX'$ are pseudo-proper morphisms, then so is the product
$$ \CY \times \CY' \to \CX \times \CX' .$$

\item
If $\CY \to \CX$ is pseudo-proper and $\CX'$ has pseudo-proper diagonal, then for any morphism $\CY \to \CX'$, the corresponding morphism
$$ \CY \to \CX \times \CX' $$
is pseudo-proper.
\end{enumerate}
\end{lem}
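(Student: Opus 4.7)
The proof strategy is to verify the two defining conditions of pseudo-properness at each step: (i) being a level-wise coCartesian fibration in groupoids, and (ii) having pseudo-proper fibers over affine schemes almost of finite type. Condition (i) is a purely categorical property, preserved under base change, composition, and finite products by standard arguments about coCartesian fibrations in groupoids, so in each part I only sketch the verification of (ii). I would prove the four parts in the given order, with each building on the previous.

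Part (a) is immediate: given $\CX' \to \CX$ and an affine test scheme $S \to \CX'$, the iterated fiber product satisfies $(\CY \underset{\CX}{\times} \CX') \underset{\CX'}{\times} S \simeq \CY \underset{\CX}{\times} S$ as prestacks over $S$, so the fiber condition is inherited verbatim.
For part (b), given $\CZ \to \CY \to \CX$ both pseudo-proper and $S \to \CX$ affine almost of finite type, I factor the total fiber as $\CZ \underset{\CX}{\times} S \to \CY \underset{\CX}{\times} S \to S$. The second map is pseudo-proper by assumption, so I choose a presentation $\CY \underset{\CX}{\times} S \simeq \underset{i}{\colim}\, Y_i$ with each $Y_i$ a scheme proper over $S$. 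Since fiber products commute with colimits in $\on{PreStk}$, we obtain $\CZ \underset{\CX}{\times} S \simeq \underset{i}{\colim}\, (\CZ \underset{\CY}{\times} Y_i)$, and by part (a) each $\CZ \underset{\CY}{\times} Y_i \to Y_i$ is pseudo-proper. Composing with the proper map $Y_i \to S$ and using that the subcategory of pseudo-proper prestacks over $S$ is closed under colimits by definition, I conclude that $\CZ \underset{\CX}{\times} S \to S$ is pseudo-proper.

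Parts (c) and (d) follow formally. For (c), the map $\CY \times \CY' \to \CX \times \CX'$ factors as $\CY \times \CY' \to \CX \times \CY' \to \CX \times \CX'$, and each factor is the base change of one of the given pseudo-proper maps along a projection; hence both are pseudo-proper by (a), and their composite is pseudo-proper by (b). For (d), I factor $(f,g):\CY \to \CX \times \CX'$ as $\CY \xrightarrow{(\on{id},g)} \CY \times \CX' \xrightarrow{f \times \on{id}} \CX \times \CX'$. The second is pseudo-proper by (c). The first is the graph of $g$ and fits into a Cartesian square with the diagonal $\Delta_{\CX'}:\CX' \to \CX' \times \CX'$ as the bottom row (pulled back along $(g \circ \on{pr}_1, \on{pr}_2):\CY \times \CX' \to \CX' \times \CX'$), so it is pseudo-proper by (a) and the hypothesis on $\CX'$. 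Another application of (b) finishes the proof.

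The only technical subtlety, and the main thing to verify carefully, lies in the colimit step of (b): one needs to know that $\CZ \underset{\CY}{\times} Y_i \to Y_i$ is pseudo-proper even though $Y_i$ is a proper but possibly non-affine scheme over $S$. This reduces to the corresponding assertion for an affine Zariski cover of $Y_i$ together with the observation that pseudo-properness over $S$ is a colimit-closed condition; equivalently, one uses the natural extension of pseudo-proper to non-affine bases as already developed in the cited theory. Once this point is handled, the whole lemma unfolds by the above formal manipulations.
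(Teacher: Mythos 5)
The paper itself gives no proof of this lemma, so I am comparing against first principles. Parts (a), (c), and (d) are fine: (a) is immediate from the definitions, and the factoring tricks you use for (c) and (d) (projections, then the graph trick against the diagonal) are correct and standard. The issue is in part~(b).

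Your argument for (b) reduces the statement to the following special case: if $Y$ is a scheme proper over an affine aft $S$, and $W \to Y$ is pseudo-proper, then $W$ is pseudo-proper over $S$. Note that this is not a ``smaller'' statement than (b) — it \emph{is} (b) in the case $\CX = S$, $\CY = Y$, $\CZ = W$ — so it genuinely must be handled independently. You correctly flag it as the crux, but your proposed resolution does not go through. Prestacks do not satisfy Zariski descent: for a finite affine cover $\{U_a\}$ of $Y$, the map $|\check{C}(\{U_a\})| \to Y$ is \emph{not} an isomorphism in $\on{PreStk}$ (a $T$-point of $Y$ need not factor through any single $U_a$), and hence $W$ is not the colimit of the restrictions $W|_{U_a}$. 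So the colimit-closure of pseudo-properness over $S$ cannot be applied. Moreover, even granting the colimit, each $U_a \to S$ is affine rather than proper, so being pseudo-proper over $U_a$ would not give pseudo-properness over $S$ anyway; the appeal to ``the natural extension to non-affine bases'' is exactly the definition via affine fibers, which is what creates the circularity rather than resolving it. Filling this gap requires showing that the affine-fiber characterization of pseudo-properness over a qcqs scheme $Y$ implies a presentation of $W$ as a colimit of schemes proper over $Y$ (whence proper over $S$); that is a real argument and is not supplied. A secondary, smaller point: the paper's definition of pseudo-proper over affine $S$ is ``smallest colimit-closed subcategory containing proper schemes,'' so the existence of a presentation $\CY\underset{\CX}{\times}S\simeq\underset{i}{\colim}\,Y_i$ with $Y_i$ proper schemes (which you use as the first step) already requires the nontrivial fact — proved in \cite{AB} — that the class of such colimits is itself colimit-closed.
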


\ssec{Sheaves on categorical prestacks}

\sssec{}
Let $\IndCoh, \Dmod \in \on{CatPreStk}_{\on{laft}}$ denote the laft categorical prestacks
$$ S \mapsto \IndCoh(S) \mbox{ and } S \mapsto \Dmod(S), $$
respectively, for $S \in \affSch_{\on{aft}}$.  Note that the functoriality is given by !-pullback.

\medskip

For a laft categorical prestack $\CX \in \on{CatPreStk}_{\on{laft}}$, set
$$ \IndCoh(\CX):= \on{Funct}_{\on{CatPreStk}_{\on{laft}}}(\CX, \IndCoh)$$
and
$$\Dmod(\CX):= \on{Funct}_{\on{CatPreStk}_{\on{laft}}}(\CX, \Dmod) .$$

\sssec{}

Concretely, the data of $\CF\in \IndCoh(\CX)$ assigns to an object $x\in \CX(S)$ with $S\in \affSch$ an
object $x^!(\CF)\in \IndCoh(S)$ and for any $\alpha:x_s\to x_t$ in $\CX(S)$ a map
$$x_s^!(\CF)\to x_t^!(\CF)$$
in $\IndCoh(S)$.

\medskip

These data must satisfy a homotopy-coherent system of compatibilities with respect to pullbacks
along maps $S'\to S$. 

\sssec{}

As with the case of ordinary prestacks, D-modules on categorical prestacks can be described in terms of ind-coherent sheaves on the deRham stack: 

\begin{lem}
Let $\CX \in \on{CatPreStk}_{\on{laft}}$.  Then
$$ \Dmod(\CX) \simeq \IndCoh(\CX_{\dr}).$$
\end{lem}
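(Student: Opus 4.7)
The plan is to identify both sides of the asserted equivalence as spaces of natural transformations out of $\CX$ and $\CX_\dr$ respectively, and then to reduce to the representable case by density. The whole argument is essentially formal once one observes that $(-)_\dr$ commutes with colimits of categorical prestacks.

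First I would unwind the definitions. By construction, $\Dmod(\CX) = \on{Funct}_{\on{CatPreStk}_{\on{laft}}}(\CX,\Dmod)$ and $\IndCoh(\CX_\dr) = \on{Funct}_{\on{CatPreStk}_{\on{laft}}}(\CX_\dr,\IndCoh)$, where $\Dmod,\IndCoh\in\on{CatPreStk}_{\on{laft}}$ denote the categorical prestacks $S\mapsto\Dmod(S)$ and $S\mapsto\IndCoh(S)$ equipped with $!$-pullback functoriality. For an affine test scheme $S\in\affSch_{\on{aft}}$ the Yoneda lemma gives $\on{Funct}(S,\Dmod)=\Dmod(S)$ and $\on{Funct}(S_\dr,\IndCoh)=\IndCoh(S_\dr)$, and these two categories agree by the very definition of $\Dmod(S)$ recalled in \secref{sss:dmod recall}. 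Thus the lemma holds whenever $\CX$ is representable.

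Next, any $\CX\in\on{CatPreStk}_{\on{laft}}$ is the canonical colimit $\CX\simeq\underset{(S,s)\in\affSch_{\on{aft}}^{/\CX}}{\on{colim}}\,S$ over its category of elements, with the colimit taken in $\on{CatPreStk}_{\on{laft}}$. The key geometric observation is that the de Rham construction $\CX\mapsto\CX_\dr$ commutes with arbitrary colimits of categorical prestacks: by definition it is given by precomposition with the reduction functor $(-)^{\on{red}}:\affSch\to\affSch$, and since colimits in the functor category $\on{Funct}(\affSch^{\on{op}},\on{Cat})$ are computed pointwise, precomposition preserves them. Consequently $\CX_\dr\simeq\underset{(S,s)}{\on{colim}}\,S_\dr$ in $\on{CatPreStk}_{\on{laft}}$.

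Finally, the functors $\on{Funct}(-,\Dmod)$ and $\on{Funct}(-,\IndCoh)$ on $\on{CatPreStk}_{\on{laft}}^{\on{op}}$ turn colimits into limits, being (co)representable. Combining the three ingredients yields
$$\Dmod(\CX)\simeq\underset{(S,s)}{\lim}\,\Dmod(S)\simeq\underset{(S,s)}{\lim}\,\IndCoh(S_\dr)\simeq\IndCoh\bigl(\underset{(S,s)}{\on{colim}}\,S_\dr\bigr)\simeq\IndCoh(\CX_\dr),$$
as desired. There is no real obstacle here; the single point requiring genuine attention is the commutation of $(-)_\dr$ with colimits of categorical prestacks, and this is a formal consequence of its definition as precomposition. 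The remaining content is bookkeeping to verify that the various colimit/limit interchanges take place in the correct ambient category (namely $\on{CatPreStk}_{\on{laft}}$), for which laft-ness of $\CX$ and of $\Dmod,\IndCoh$ is used.
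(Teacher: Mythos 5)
You claim to identify $\IndCoh(\CX_\dr)$ with $\on{Funct}(\CX_\dr,\IndCoh)$ via Yoneda and then use the colimit presentation of $\CX_\dr$. For this to yield the desired identification you need $\CX_\dr$ to be laft, and you then need $\IndCoh(\CX_\dr)$ (defined as functors out of $\CX_\dr$) to commute with the colimit $\CX_\dr\simeq\on{colim}\,S_\dr$. The laft-ness of $\CX_\dr$ does indeed follow from laft-ness of $\CX$ (the paper records this for ordinary prestacks and the categorical case is the same). But there is a subtlety in your final chain of isomorphisms that you pass over: after showing $\Dmod(\CX)\simeq\lim\IndCoh(S_\dr)$, you want to rewrite this as $\IndCoh(\on{colim}\,S_\dr)$, which requires that $\IndCoh$ on categorical prestacks send the particular colimit $\on{colim}\,S_\dr$ to the corresponding limit. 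This is not automatic: $\IndCoh(\CY)$ is defined as $\on{Funct}_{\on{CatPreStk}_{\on{laft}}}(\CY,\IndCoh)$, and functors out of a colimit form a limit only when the colimit diagram is a colimit diagram \emph{of the given shape} in $\on{CatPreStk}_{\on{laft}}$ and the mapping object is taken in the same ambient $(\infty,2)$-category with the correct variance. You assert this but should verify it; the colimit $\CX_\dr \simeq \on{colim}\, S_\dr$ is over $\affSch_{\on{aft}}^{/\CX}$ (not over $\affSch_{\on{aft}}^{/\CX_\dr}$), so one needs to know either that this diagram is still a colimit presentation of $\CX_\dr$ in the relevant category, or, more simply, to avoid the intermediate step altogether.

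The cleaner and more direct argument — which is almost certainly what the paper has in mind given it states this as a bare lemma — is to observe that both sides are by definition categories of natural transformations, and the functor $(-)_\dr$ on $\on{CatPreStk}$ is precomposition with $S\mapsto {}^{\on{red}}\!S$, with left adjoint the inclusion of those prestacks invariant under $(-)_\dr$. Then $\on{Funct}(\CX_\dr,\IndCoh)\simeq\on{Funct}(\CX,\IndCoh\circ(-)_\dr)$, and $\IndCoh\circ(-)_\dr$ is \emph{exactly} the categorical prestack $\Dmod$ (this is the definition $\Dmod(S)=\IndCoh(S_\dr)$). This gives the identification in one line without invoking any colimit presentation or any commutation of $\IndCoh(-)$ with colimits. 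Your approach is correct in spirit and the gap is fillable, but you introduce an unnecessary detour and a point that requires more care than you give it; the direct adjunction argument is both shorter and avoids the issue.
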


\sssec{}

Tautologically, for a map $f:\CX\to \CY$, we have the pullback functor
$$f^!:\IndCoh(\CY)\to \IndCoh(\CX).$$

In general, pushforward of sheaves on categorical prestacks is a very delicate opertaton.  
However, in the case of pseudo-proper maps, we have a well-behaved left adjoint to the pullback functor:

\begin{lem}\label{l:pseudo-proper pushforward}
Let $f:\CX \to \CY$ be a pseudo-proper map of laft categorical prestacks.  The pullback functor
$$ f^!: \IndCoh(\CY) \to \IndCoh(\CX) $$
admits a left adjoint $f_!$ such that:
\begin{enumerate}[label={(\alph*)}]
\item
$f_!$ is compatible with base change; i.e., for any $g: \CY' \to \CY$, $\CY' \in \on{CatPreStk}_{\on{laft}}$ and $\CX':=\CX \underset{\CY}{\times} S$, 
the natural transformation
$$ f'_! \circ g'^! \to g^! \circ f_! ,$$
arising from the Cartesian diagram
$$ \xymatrix{
\CX' \ar[r]^{g'}\ar[d]_{f'} & \CX \ar[d]^f\\
\CY' \ar[r]^g & \CY
}$$
is an isomorphism.

\item
$f_!$ is a \emph{strict} functor of $\IndCoh(\CY)$-module categories; in other words, $f_!$ satisfies the 
projection formula: for $M \in \IndCoh(\CX)$ and $N \in \IndCoh(\CY)$, the natural map
$$ f_!(M \otimes f^!(N)) \to f_!(M) \otimes N $$
is an isomorphism.
\end{enumerate}

\end{lem}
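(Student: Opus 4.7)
The plan is to reduce the statement to its ordinary-prestack counterpart, \lemref{l:adj ps-pr prestacks}, by exploiting the fact that $f$ is required to be a level-wise coCartesian fibration in groupoids.

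First, I would recall that for a laft categorical prestack $\CY$ there is a presentation
$$\IndCoh(\CY) \simeq \underset{(S,y)}{\on{lim}}\, \IndCoh(S),$$
where the limit is indexed by the Grothendieck construction of the functor $S \mapsto \CY(S)$ on $\affSch_{\on{aft}}$, and the transition functors are !-pullbacks. Because $f$ is a level-wise coCartesian fibration in groupoids, every $y:S \to \CY$ with $S \in \affSch_{\on{aft}}$ produces a fiber $\CX_{S,y} := \CX \underset{\CY}{\times} S$, which is an \emph{ordinary} laft prestack, and by the hypothesis on $f$ the induced map $f_{S,y}: \CX_{S,y} \to S$ is pseudo-proper in the sense of \secref{ss:pseudo-proper}. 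Exactly as for $\CY$, one obtains $\IndCoh(\CX) \simeq \underset{(S,y)}{\on{lim}}\, \IndCoh(\CX_{S,y})$, and under these identifications the functor $f^!$ is computed pointwise as $(f_{S,y})^!$.

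Next, for each $(S,y)$ we have \lemref{l:adj ps-pr prestacks}, which produces a left adjoint $(f_{S,y})_!$ to $(f_{S,y})^!$ that is compatible with base change and satisfies the projection formula. Given $\CF \in \IndCoh(\CX)$, with pointwise restrictions $\CF_{S,y} \in \IndCoh(\CX_{S,y})$, I would define $f_!(\CF)$ by setting its restriction at $(S,y)$ to be $(f_{S,y})_!(\CF_{S,y})$. The base-change isomorphisms supplied by \lemref{l:adj ps-pr prestacks}(a) provide precisely the compatibility data needed for this assignment to descend to an object of the limit defining $\IndCoh(\CY)$. The adjunction $(f_!, f^!)$ is then immediate from the pointwise adjunctions, as are parts (a) and (b) of the claim: base change for Cartesian squares of categorical prestacks is checked after further pullback to an affine test scheme, reducing it to the prestack case, and the projection formula follows from the pointwise projection formulas together with the fact that the $\IndCoh(\CY)$-module structure on $\IndCoh(\CX)$ is itself defined pointwise via !-pullback and $\sotimes$.

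The main obstacle is bookkeeping the full categorical (not merely groupoid) structure: ensuring that $f_!$ respects morphisms $\alpha: y_s \to y_t$ in $\CY(S)$, and not just objects. Such an $\alpha$ together with the level-wise coCartesian-fibration-in-groupoids property of $f$ determines a map of pseudo-proper prestacks $\CX_{S,y_s} \to \CX_{S,y_t}$ over $S$, and the required compatibility for $f_!$ reduces to applying the base-change statement of \lemref{l:adj ps-pr prestacks} to this map. The remaining coherence data in the limit diagram — morphisms, $2$-morphisms, and so on — is handled by the general formalism of limits of adjunctions in presentable $\infty$-categories, using the functoriality of the construction in \lemref{l:adj ps-pr prestacks} and \lemref{l:pseudo-proper ops}(a). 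No new geometric input is needed beyond what is already packaged in the ordinary-prestack statement.
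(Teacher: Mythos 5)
The paper states this lemma without proof, so there is no argument in the text to compare against; your reduction to the ordinary‑prestack Lemma~\ref{l:adj ps-pr prestacks} via the presentation $\IndCoh(\CX)\simeq\lim_{(S,y)}\IndCoh(\CX_{S,y})$ is the expected strategy, and the pointwise definition $f_!(\CF)_{S,y}:=(f_{S,y})_!(\CF_{S,y})$, glued by base change along maps $S'\to S$, is correct.

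The last paragraph, however, misidentifies the ingredient that handles vertical morphisms, and this is the step where the categorical (rather than groupoid-valued) nature of $\CY$ actually enters. For $\alpha:y_s\to y_t$ in $\CY(S)$, the map $g_\alpha:\CX_{S,y_s}\to\CX_{S,y_t}$ fits in a \emph{triangle} over $S$, not a Cartesian square, so Lemma~\ref{l:adj ps-pr prestacks}(a) does not apply to it. The required structure map $(f_{S,y_s})_!\CF_{S,y_s}\to(f_{S,y_t})_!\CF_{S,y_t}$ is built from two inputs your sketch omits: (i) the map $\CF_{S,y_s}\to g_\alpha^!(\CF_{S,y_t})$, which is part of the data of $\CF$ being an object of $\IndCoh$ on the categorical prestack $\CX$ over the morphism $\alpha$ — it records how $\CF$ varies along morphisms in fibers and is not visible from the family $\{\CF_{S,y}\}$ alone — and (ii) the canonical transformation $(f_{S,y_s})_!\circ g_\alpha^!\to(f_{S,y_t})_!$, which comes by adjunction from the unit $\on{Id}\to(f_{S,y_t})^!\circ(f_{S,y_t})_!$ (using $f_{S,y_s}=f_{S,y_t}\circ g_\alpha$), not from base change. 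One should also note that since these vertical-morphism maps are \emph{lax} (not required to be isomorphisms), the relevant ``limit'' is a partially lax one and $\int\CX\to\int\CY$ is neither Cartesian nor coCartesian, so the identification $\IndCoh(\CX)\simeq\lim_{(S,y)}\IndCoh(\CX_{S,y})$ requires a little more justification than ``exactly as for $\CY$''. With (i)--(ii) supplied, the adjunction, base change, and projection formula then do reduce pointwise as you describe.
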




\sssec{}

As in the case of prestacks, we can give a general condition so that ind-coherent sheaves on the product are the tensor product of ind-coherent sheaves:

\begin{lem}\label{l:product dualizable}
Let $\CX, \CY \in \on{CatPreStk}_{\on{laft}}$.  Suppose that $\IndCoh(\CX)$ is dualizable.  Then
$$ \IndCoh(\CX \times \CY) \simeq \IndCoh(\CX) \otimes \IndCoh(\CY) .$$
\end{lem}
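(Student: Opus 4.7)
The strategy is to reduce everything to the affine case, where the corresponding external product statement $\IndCoh(S \times T) \simeq \IndCoh(S) \otimes \IndCoh(T)$ is already known for $S, T \in \affSch_{\on{aft}}$, and then to commute the resulting limits through tensor products using dualizability.

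First, I would express $\IndCoh$ on categorical prestacks as a limit. Unwinding the defining formula $\IndCoh(\CX) = \on{Funct}_{\on{CatPreStk}_{\on{laft}}}(\CX, \IndCoh)$ via the fact that every laft categorical prestack is the colimit (in $\on{CatPreStk}_{\on{laft}}$) of its category of affine elements, one obtains
$$ \IndCoh(\CX) \simeq \underset{(S,x)}{\on{lim}}\ \IndCoh(S), \qquad \IndCoh(\CY) \simeq \underset{(T,y)}{\on{lim}}\ \IndCoh(T), $$
where the transition functors are $!$-pullbacks and the indexing categories run over pairs of an affine $S \in \affSch_{\on{aft}}$ together with an object in $\CX(S)$ (resp.\ $\CY(T)$). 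The same procedure, together with the fact that the category of affine elements of $\CX \times \CY$ is the product of those of $\CX$ and $\CY$, gives
$$ \IndCoh(\CX \times \CY) \simeq \underset{(S,x),(T,y)}{\on{lim}}\ \IndCoh(S \times T). $$

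Second, I would combine this with the known equivalence $\IndCoh(S \times T) \simeq \IndCoh(S) \otimes \IndCoh(T)$ for $S, T \in \affSch_{\on{aft}}$ (a standard fact, following from local Noetherianity and compact generation). This rewrites
$$ \IndCoh(\CX \times \CY) \simeq \underset{(S,x),(T,y)}{\on{lim}}\ \bigl(\IndCoh(S) \otimes \IndCoh(T)\bigr). $$
On the other side, I want to compare this with $\IndCoh(\CX) \otimes \IndCoh(\CY) = \bigl(\lim_{(S,x)} \IndCoh(S)\bigr) \otimes \bigl(\lim_{(T,y)} \IndCoh(T)\bigr)$.

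The key step, and the only place where the dualizability hypothesis on $\IndCoh(\CX)$ enters, is commuting tensor products past these limits. Since $\IndCoh(T)$ is dualizable for each $T \in \affSch_{\on{aft}}$ (being compactly generated by $\Coh(T)$), the functor $(-) \otimes \IndCoh(T): \on{DGCat} \to \on{DGCat}$ preserves limits, so
$$ \IndCoh(\CX) \otimes \IndCoh(T) \simeq \underset{(S,x)}{\on{lim}}\ \bigl(\IndCoh(S) \otimes \IndCoh(T)\bigr) \simeq \IndCoh(\CX \times T). $$
Then, using the assumed dualizability of $\IndCoh(\CX)$, the functor $\IndCoh(\CX) \otimes (-)$ preserves limits as well, yielding
$$ \IndCoh(\CX) \otimes \IndCoh(\CY) \simeq \underset{(T,y)}{\on{lim}}\ \bigl(\IndCoh(\CX) \otimes \IndCoh(T)\bigr) \simeq \underset{(T,y)}{\on{lim}}\ \IndCoh(\CX \times T). $$
Finally, reassembling the double limit gives the desired identification with $\IndCoh(\CX \times \CY)$.

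The main point to be careful about is the indexing category for the limit presenting $\IndCoh$ of a categorical prestack, since $\CX$ is valued in $\on{Cat}$ rather than $\on{Spc}$; I expect this will go through by applying the Grothendieck construction to present $\CX$ as a colimit of representables in $\on{CatPreStk}$, but one must verify that the $\on{laft}$-ness of $\CX$ lets us restrict the indexing category to affines almost of finite type so that the affine external product statement is applicable. Once this bookkeeping is in place, the proof is a formal manipulation using only dualizability and the affine case.
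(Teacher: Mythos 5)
The paper states \lemref{l:product dualizable} without proof, so there is no written argument to compare against; this appears to be treated as a formal fact whose proof is left to the reader. Your proposal is correct and carries out the expected argument.

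The structure is sound: write $\IndCoh(\CX) \simeq \lim_{(S,x)} \IndCoh(S)$ over the (2-categorical) Grothendieck construction of $\CX$ — which for a categorical prestack must include the morphisms \emph{within} each $\CX(S)$ in the indexing category, since $\IndCoh(\CX) = \on{Funct}_{\on{CatPreStk}_{\on{laft}}}(\CX, \IndCoh)$ is an end — and similarly for $\CY$; use that products in $\on{CatPreStk}$ commute with colimits (being computed levelwise in the Cartesian-closed $\infty$-category $\on{Cat}$) to present $\CX \times \CY$ as a colimit of $S\times T$ and hence $\IndCoh(\CX\times\CY)$ as a double limit of $\IndCoh(S\times T) \simeq \IndCoh(S)\otimes\IndCoh(T)$; then pull the tensor through the inner limit using dualizability of $\IndCoh(T)$ for each affine $T$, and through the outer limit using the hypothesis that $\IndCoh(\CX)$ is dualizable. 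The asymmetry of the hypothesis — only $\IndCoh(\CX)$ is assumed dualizable — matches exactly the asymmetry in your argument, where the $\CY$-side limit is commuted through only once and only using the assumed dualizability. Your closing caveat about the indexing category for a $\on{Cat}$-valued prestack is exactly the right place to be careful, but the end formula does reduce to a limit over the Grothendieck construction, so it goes through.
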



%

\ssec{Deformation theory for categorical prestacks}

\sssec{}

Let $\CX$ be a categorical prestack. We shall say that $\CX$ \emph{admits deformation theory} if the following 
conditions hold:

\begin{enumerate}

\item[(A)]
$\CX^{\on{grpd}}$ admits deformation theory (see \secref{ss:def} for a reminder of what this means); 

\item[(B)]
The prestack $\on{Funct}([1],\CX)^{\on{grpd}}$ admits deformation theory;

\item[(C)] The restriction map $\partial_s:\on{Funct}([1],\CX)^{\on{grpd}}\to \CX^{\on{grpd}}$
corresponding to $\{0\}\hookrightarrow [1]$ induces an \emph{isomorphism} 
$$\on{Pro}(\partial_s^*)(T^*(\CX^{\on{grpd}}))\to T^*(\on{Funct}([1],\CX)^{\on{grpd}}).$$

\end{enumerate}

\begin{rem} 

Note that in the above definition, we can replace conditions (B) and (C) by any of the following equivalent ones: 

\begin{enumerate}

\item[(BC)] $\CX$ is convergent and for any square-zero embedding of affine schemes $S\hookrightarrow S'$,
the map $\CX(S')\to \CX(S)$ is a level-wise coCartesian fibration in groupoids;

\item[(BC')] $\CX$ is convergent and for any nilpotent embedding of affine schemes $S\hookrightarrow S'$,
the map $\CX(S')\to \CX(S)$ is a level-wise coCartesian fibration in groupoids;

\item[(BC'')]  the map
$$ \CX \to \CX_{\on{dR}} $$
is a level-wise coCartesian fibration in groupoids.
\end{enumerate}

\end{rem}

\sssec{} 

Let $\CX$ admit deformation theory, and fix a morphism $\alpha:x_s\to x_t$ in the category $\CX(S)$ for
some $S\in \affSch$, i.e., an object in $\on{Funct}([1],\CX)(S)$. We obtain that $\alpha$ induces a map
$$T^*_{x_t}(\CX)\to T^*_{x_s}(\CX)$$
in $\on{Pro}(\QCoh(S)^-)$.

\medskip
More functorially, in this case, the prestack $\CX$ admits a cotangent complex
$$ T^*(\CX) \in \on{Funct}_{\on{CatPreStk}}(\CX^{op}, \on{Pro}(\QCoh^-)). $$
It is characterized by the following: for $S\in \affSch$, the corresponding functor
$$ T^*(\CX)_S: \CX(S)^{op} \to \on{Pro}(\QCoh(S)^-) $$
is given by
$$T^*(\CX)_S(f)(M) = \CX(S_M) \underset{\CX(S)}{\times} \{f\} $$
for $f\in \CX(S)$ and $M \in \QCoh(S)^{\leq 0}$, where $S_M$ is the split square zero extension of $S$ by $M$.

\sssec{}

We will say that a categorical prestack is laft-def if it is laft and admits deformation theory. 

\medskip

In particular, if $\CX$ is a laft-def categorical prestack, then $\CX^{\on{grpd}}$ is a laft-def
prestack. Then for $\alpha:x_s\to x_t$ as above with $S\in \affSch_{\on{aft}}$, 
we obtain a well-defined map
$$T_{x_s}(\CX)\to T_{x_t}(\CX)$$
in $\IndCoh(S)$. 

\medskip

This defines the tangent complex of $\CX$:
$$ T(\CX) \in \IndCoh(\CX) = \on{Funct}_{\on{CatPreStk}_{\on{laft}}}(\CX, \IndCoh).$$
Specifically, for each $S \in \affSch_{\on{aft}}$, the tangent complex
$$ T(\CX)_S: \CX(S) \to \IndCoh(S) $$
is given by
$$ T(\CX)(f)(M) = \CX(\on{RealSplitSqZ}(M)) \underset{\CX(S)}{\times} \{f\} ,$$
for $f\in \CX(S)$ and $M \in \IndCoh(S)$, where $\on{RealSplitSqZ}(M)$ is the $\IndCoh$ split square
zero extension functor defined in \cite[Chapter 7, Sect. 3.7]{Vol2}.

%
%
%

\sssec{}

Finally, we have:

\begin{lem} \label{l:fibration def theory}
Let $f:\CX \to \CY$ be a level-wise coCartesian fibration in groupoids of categorical prestacks. Suppose that
$\CY$ admits deformation theory and for any $S \in \affSch_{\on{aft}}$, the prestack 
$S \underset{\CY}{\times} \CX$ admits deformation theory. Then:
\begin{enumerate}[label={(\alph*)}]
\item  $\CX$ admits deformation theory;
\item The restriction map
$$T^*(\CX/\CY)|_{\CX^{\on{grpd}}}\to T^*(\CX^{\on{grpd}}/\CY^{\on{grpd}})$$
is an isomorphism, where
$$T^*(\CX/\CY):=\on{Cone}\left(\on{Pro}(f^*)(T^*(\CY))\to T^*(\CX)\right).$$
\end{enumerate}
\end{lem}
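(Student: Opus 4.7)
The plan for (a) is to verify the equivalent condition (BC''): that $\CX$ is convergent and that $\CX \to \CX_{\dr}$ is a level-wise coCartesian fibration in groupoids. Convergence is immediate from Lemma \ref{l:laft cat fibration}(a'), since $\CY$ admits deformation theory (hence is convergent), $f$ is a level-wise coCartesian fibration in groupoids, and each fiber $S \underset{\CY}{\times} \CX$ admits deformation theory (hence is convergent) by hypothesis. For the coCartesian fibration condition, I would fix $S \in \affSch$ and factor the restriction map as
$$\CX(S) \xrightarrow{\alpha} \CX(S^{\on{red}}) \underset{\CY(S^{\on{red}})}{\times} \CY(S) \xrightarrow{\beta} \CX(S^{\on{red}}).$$
The map $\beta$ is a base change of $\CY(S) \to \CY(S^{\on{red}})$, which is a coCartesian fibration in groupoids since $\CY$ satisfies (BC''). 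The map $\alpha$ is a map of coCartesian fibrations in groupoids over $\CY(S)$, so it suffices to check the statement fiberwise; the fiber over $y \in \CY(S)$ is the restriction map $\CX_y(S) \to \CX_y(S^{\on{red}})$ for the prestack $\CX_y := S \underset{\CY, y}{\times} \CX$, which is a coCartesian fibration in groupoids because $\CX_y$ admits deformation theory. Composing, the full map $\CX(S) \to \CX(S^{\on{red}})$ is a coCartesian fibration in groupoids.

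For (b), the key observation is that since $f$ is a level-wise coCartesian fibration in groupoids, its fibers over affine test-schemes are \emph{already} groupoids, hence already prestacks. This implies that passage to groupoid completion preserves the fibration picture on the nose: $\CX^{\on{grpd}} \to \CY^{\on{grpd}}$ is a level-wise coCartesian fibration in groupoids, and for any $y: S \to \CY$ the natural map
$$S \underset{\CY}{\times} \CX \to S \underset{\CY^{\on{grpd}}}{\times} \CX^{\on{grpd}}$$
is an isomorphism of prestacks, since the fiber on the right is the groupoid completion of the fiber on the left, which is identity on a groupoid. Now the relative cotangent complex $T^*(\CX/\CY)$, restricted to a point $x: S \to \CX$ with image $y = f(x)$ in $\CY$, is by construction the cotangent complex of the fiber prestack $\CX_y$ at $x$; likewise $T^*(\CX^{\on{grpd}}/\CY^{\on{grpd}})$ restricted to the corresponding point of $\CX^{\on{grpd}}$ is the cotangent complex of the fiber $(\CX^{\on{grpd}})_{y^{\on{grpd}}}$ at $x^{\on{grpd}}$. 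Under the fiber identification above, these agree.

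The main obstacle is the bookkeeping in (b): I must show that the pointwise identification of relative cotangent complexes is functorial in $x$ and assembles into an isomorphism in $\IndCoh(\CX^{\on{grpd}})$, not just at each test point. This reduces to checking that the base-change functoriality for the relative cotangent complex (in the sense of the relative analogue of \secref{sss:cotangent complex} provided by Proposition \ref{p:relative def}) commutes with the passage $(-) \mapsto (-)^{\on{grpd}}$, which in turn follows from the fiber identification being natural in $y$. A secondary step I would need to justify carefully is that the definition of admitting deformation theory via (A)+(B)+(C) is consistent with the characterization (BC'') used in the proof of (a); this is essentially the equivalence of the three conditions (BC), (BC'), (BC'') asserted in the remark in \secref{ss:def}, applied in the categorical-prestack setting.
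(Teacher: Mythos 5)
The paper does not record a proof of this lemma, so I evaluate your argument on its own terms. The main substantive issue is in (a): the definition of ``admits deformation theory'' for a categorical prestack is condition (A) together with (B)+(C), and the remark in \secref{ss:def} only authorizes replacing (B)+(C) by (BC''); condition (A) --- that $\CX^{\on{grpd}}$ admits deformation theory \emph{as a prestack} --- is a separate requirement, and it is precisely the part that (BC'') cannot see (for a prestack valued in groupoids, (BC'') is vacuous, since any map of spaces is a coCartesian fibration in groupoids). You acknowledge this worry at the end of your write-up but do not close it. The fix is the following observation, which also serves (b): since $f$ is a level-wise coCartesian fibration in groupoids, a morphism in $\CX(S)$ is invertible if and only if its image in $\CY(S)$ is; hence $\CX^{\on{grpd}}\simeq \CX\underset{\CY}{\times}\CY^{\on{grpd}}$ level-wise. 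In particular, for any $y:S\to\CY^{\on{grpd}}$ one has $S\underset{\CY^{\on{grpd}}}{\times}\CX^{\on{grpd}}\simeq S\underset{\CY}{\times}\CX$, which admits deformation theory by hypothesis; since $\CY^{\on{grpd}}$ admits deformation theory, \propref{p:relative def} and the corollary following it then give (A).

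For (b), the sentence ``the fiber on the right is the groupoid completion of the fiber on the left'' uses the wrong operation --- in this paper $(-)^{\on{grpd}}$ is the underlying groupoid (core), not the localization at all morphisms (cf.\ the definition of $\Ran(\sfX)$ in \secref{s:Ran}, which would be contractible under the other reading) --- and is also not the statement you need. The right statement is the base-change identity $\CX^{\on{grpd}}\simeq\CX\underset{\CY}{\times}\CY^{\on{grpd}}$ above, from which $S\underset{\CY}{\times}\CX\simeq S\underset{\CY^{\on{grpd}}}{\times}\CX^{\on{grpd}}$ follows by composing pullbacks. Once that identification is in place, your pointwise comparison of relative cotangent complexes has the correct shape, and the naturality bookkeeping you flag but do not carry out is immediate because the identification is an honest isomorphism of prestacks, natural in the pair $(S,y)$.

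A smaller remark on (a): the fiberwise check you invoke to see that $\alpha$ is a coCartesian fibration in groupoids is not where the hypothesis on $S\underset{\CY}{\times}\CX$ enters. Any morphism of coCartesian fibrations in groupoids over a common base is itself one, and in any case the fibers of $\alpha$ are maps of spaces and hence coCartesian fibrations in groupoids automatically. The real content supplied by that hypothesis is condition (A) --- exactly the step that is missing.
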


\sssec{}

From here, using \lemref{l:laft cat fibration}, we obtain:

\begin{cor}\label{c:fibration laft-def}
Let $\CX \to \CY$ be a level-wise coCartesian fibration in groupoids of categorical prestacks.  
Suppose that $\CY$ is laft-def and for every $S \in \affSch_{\on{aft}}$, the prestack $S \underset{\CY}{\times} \CX$ is laft-def.
Then:
\begin{enumerate}[label={(\alph*)}]
\item $\CX$ is laft-def;
\item The restriction map
$$T(\CX^{\on{grpd}}/\CY^{\on{grpd}})\to T(\CX/\CY)|_{\CX^{\on{grpd}}}$$
is an isomorphism in $\IndCoh(\CX^{\on{grpd}})$, where
$$T(\CX/\CY):=\on{Fib}\left(T(\CX)\to f^!(T(\CY))\right).$$
\end{enumerate}

\end{cor}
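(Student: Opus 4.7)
The plan is to derive both parts by assembling the two preceding lemmas from this appendix: part (a) falls out by combining \lemref{l:fibration def theory}(a) with \lemref{l:laft cat fibration}(b'), while part (b) is obtained from the cotangent statement of \lemref{l:fibration def theory}(b) by Serre--Verdier dualization on affine test schemes.

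First I would verify (a). The hypothesis that each $S \underset{\CY}\times \CX$ is laft-def supplies, in particular, deformation theory for every such fiber product; combined with the fact that $\CY$ admits deformation theory, \lemref{l:fibration def theory}(a) yields that $\CX$ admits deformation theory. To promote this to laft-def, I would invoke \lemref{l:laft cat fibration}(b'): since $f : \CX \to \CY$ is by hypothesis a level-wise coCartesian fibration in groupoids and $\CY$ is laft, it suffices to check that each fiber product $\CX \underset{\CY}{\times} S$ with $S \in {}^{<\infty}\!\affSch_{\on{aft}} \subset \affSch_{\on{aft}}$ is laft. This is guaranteed because these fiber products are laft-def. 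Thus $\CX$ is laft and admits deformation theory.

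For (b), I would apply \lemref{l:fibration def theory}(b), which provides an isomorphism
\[
T^*(\CX^{\on{grpd}}/\CY^{\on{grpd}}) \;\simeq\; T^*(\CX/\CY)\big|_{\CX^{\on{grpd}}}
\]
in the relevant category of fake pro-objects over $\CX^{\on{grpd}}$. Now that $\CX$ and $\CY$ are known to be laft-def, the relative tangent complex $T(\CX/\CY) \in \IndCoh(\CX)$ (and its groupoid-level counterpart) is well-defined, and for each point $s : S \to \CX^{\on{grpd}}$ with $S \in \affSch_{\on{aft}}$ the value $s^!(T(\CX/\CY))$ is obtained from $s^{\#}(T^*(\CX/\CY))$ by the Serre-duality identification underlying the tangent complex of a laft-def prestack. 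These identifications are functorial in $S$, and they carry the cotangent isomorphism of \lemref{l:fibration def theory}(b) over to the required tangent isomorphism in $\IndCoh(\CX^{\on{grpd}})$.

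The only delicate step is the last one: checking that the tangent-level restriction map is literally obtained by Serre-dualizing the cotangent-level restriction map of \lemref{l:fibration def theory}(b). This is where I expect the main bookkeeping to lie, since it requires unwinding the construction of the tangent complex for laft-def (categorical) prestacks in terms of the associated fake pro-cotangent complex and confirming that the comparison map commutes with passage to groupoid-cores. In practice this is formal, because both sides are characterized by the same universal property after pullback along $S \in \affSch_{\on{aft}}$, and the coCartesian fibration hypothesis ensures that this pullback behavior is preserved by the restriction $\CX^{\on{grpd}} \hookrightarrow \CX$.
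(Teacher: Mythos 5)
Your proposal is correct and follows the paper's intended argument: the paper itself offers only the one-line pointer ``From here, using \lemref{l:laft cat fibration}, we obtain,'' and your assembly of \lemref{l:fibration def theory}(a) with \lemref{l:laft cat fibration}(b') for part (a), and \lemref{l:fibration def theory}(b) with the passage from pro-cotangent to tangent complexes for part (b), is exactly what is meant. One small terminological correction: the anti-equivalence you need for part (b) is the Serre duality $\IndCoh(S)^{\on{op}} \simeq \on{Pro}(\QCoh(S)^-)_{\on{laft}}$ of \corref{c:extended Serre}, not the Serre--Verdier duality of \corref{c:Serre Verdier}, since the latter is specific to $\IndCoh(S\times X_\dr)$ and involves the Verdier factor in the D-module direction, which plays no role in the general tangent-complex construction for laft-def prestacks.
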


\section{Proof of the affine Borel-Weil-Bott theorem} \label{s:BBW}
\centerline{By Dennis Gaitsgory}

\bigskip

This Appendix is devoted to a representation-theoretic proof
of the affine Borel-Weil-Bott Theorem (\thmref{t:affine BBW}). The   
main tool is the Kashiwara-Tanisaki localization theorem at
the positive level onto the ``thick" affine flag scheme. 

\ssec{Reduction to a representation-theoretic statement}

\sssec{}

It is easy to see that the statement of theorem is \'etale-local with respect to the curve $X$. 
Hence, we can assume that $X=\BA^1$. 

\medskip

In this case of $\BA^1$, the D-module $\CA_{\CL,X}:=\CA_\CL|_X[-1]$ is translation-invariant. So, it is enough to show
that its !-fiber 
$$\CA_{\CL,x}:=\CA_{\CL,X}|_x[1]\simeq \Gamma^\IndCoh(\Gr_{G,x},\CL\otimes \omega_{\Gr_{G,x}}),$$
at some/any point $x\in X$ lives in cohomological degree $0$, and
vanishes if the restriction of $\kappa$ to one of the simple factors is positive-definite. 

\begin{rem}

Note that when $\kappa$ is negative-definite on each simple factor, the line bundle $\CL$
is \emph{anti-ample} on $\Gr_{G,x}$. The pro-finite dimensional vector space dual to
$\Gamma^\IndCoh(\Gr_{G,x},\CL\otimes \omega_{\Gr_{G,x}})$ is
$$\Gamma(\Gr_{G,x},\CL^{-1}),$$
where $\CL^{-1}$ is ample. 

\medskip

Thus, point (a) of \thmref{t:affine BBW} can be viewed as saying that the line bundle $\CL^{-1}$
does not have nigher cohomology groups on $\Gr_{G,x}$ as long as $\kappa$ is non-positive 
definite.

\medskip

Suppose for a moment that $G$ is simple, and suppose that $\kappa$ is positiver-definite, so that
point (b) of \thmref{t:affine BBW} says that $\Gamma^\IndCoh(\Gr_{G,x},\CL\otimes \omega_{\Gr_{G,x}})=0$.

\medskip

Let us observe that this is natural (but still not automatic) from the above perspective: in this case the line bundle 
$\CL$ is ample, while the object $\omega_{\Gr_{G,x}}$ is ``infinitely connective", i.e., it belongs to
$\IndCoh(\Gr_{G,x})^{\leq -n}$ for any $n$.

\end{rem}

\sssec{}

Let $\fg(\CK_x)_{-\kappa}$ be the category of Kac-Moody modules at level $-\kappa$, and let 
$$\KL(G)_{-\kappa}:=\fg(\CK_x)_{-\kappa}\mod^{G(\CO_x)}$$
be the Kazhdan-Lusztig category.  We refer the reader to \cite[Appendix]{FG} for a discussion of the notion of groups
acting on categories.

\medskip

The category $\fg(\CK_x)_{-\kappa}$ carries a (strong) action of the loop group $G(\CK_x)$, which induces an action of 
the spherical Hecke category 
$$\Sph_{G,x}:=\Dmod(G(\CO_x)\backslash G(\CK_x)/G(\CO_x)$$
on $\KL(G)_{-\kappa}$; we will denote it by
$$\CF,\CM\mapsto \CF\star \CM.$$

\medskip

We have a $G(\CK_x)$-equivariant functor\footnote{In the formula below, the inversion of the sign of the level is due to the 
fact that we are considering \emph{right} twisted D-modules.}
$$\Gamma^\IndCoh(\Gr_{G,x},\CL\otimes -):\Dmod(\Gr_{G,x})\to \fg(\CK_x)_{-\kappa},$$
which induces a functor
$$\Dmod(\Gr_{G,x})^{G(\CO_x)}\to \KL(G)_{-\kappa},$$
compatible with the $\Sph_{G,x}$-actions.

\sssec{} \label{sss:int rep}

Consider the object 
$$\omega_{\Gr_{G,x}}\in \Dmod(\Gr_{G,x})^{G(\CO_x)}\simeq \Sph_{G,x}.$$
It has a natural structure of algebra in the monoidal category $\Sph_{G,x}$. In particular, the endofunctor of $\KL(G)_{-\kappa}$ given by 
\begin{equation} \label{e:conv omega}
\CM\mapsto \omega_{\Gr_{G,x}}\star \CM
\end{equation}
has a structure of monad. 

\medskip

Define
$$\KL(G)_{-\kappa}^{\on{Hecke}}:=\omega_{\Gr_{G,x}}\mod(\KL(G)_{-\kappa}).$$

It is easy to see that we can equivalently define $\KL(G)_{-\kappa}^{\on{Hecke}}$
as the category $$\fg(\CK_x)_{-\kappa}\mod^{G(\CK_x)}$$ of (strongly) $G(\CK_x)$-equivariant
objects in $\fg(\CK_x)_{-\kappa}$. In light of this, we will use one more notation for this category,
namely, $G(\CK_x)\mod_{-\kappa}$. This is the (derived) definition of the category of 
integrable loop group representations at level $\kappa$. 

\medskip

We have a pair of adjoint functors
\begin{equation} \label{e:Hecke adj}
\ind^{\on{Hecke}}:\KL(G)_{-\kappa}\rightleftarrows \KL(G)_{-\kappa}^{\on{Hecke}}:\oblv^{\on{Hecke}}.
\end{equation} 

\medskip

Since the endofunctor of $\KL(G)_{-\kappa}$ underlying the monad \eqref{e:conv omega} is
right t-exact with respect to the natural t-structure, the category $\KL(G)_{-\kappa}^{\on{Hecke}}$
acquires a t-structure, for which the forgetful functor $\oblv^{\on{Hecke}}$ is t-exact.

\medskip

The abelian category 
$$(\KL(G)_{-\kappa}^{\on{Hecke}})^\heartsuit=(G(\CK_x)\mod_{-\kappa})^\heartsuit$$
is the usual abelian category of integrable loop group representations at level $\kappa$. 

\sssec{}

We have
$$\Gamma^\IndCoh(\Gr_{G,x},\CL\otimes \delta_{1,\Gr})\simeq \BV_{-\kappa},$$
as objects of $\KL(G)_{-\kappa}$, where $\BV_{-\kappa}$ is the vacuum module
and $\delta_{1,\Gr}\in \Dmod(\Gr_{G,x})$ is the dellta-function at the unit point
 of $\Gr_{G,x}$. 

\medskip

Denote
$$\BV_{-\kappa}^{\on{int}}:=\omega_{\Gr_{G,x}}\star \BV_{-\kappa}.$$

By construction, the object $\CA_{\CL,x}$ is the image of $\BV_{-\kappa}^{\on{int}}$ under the forgetful functor
$$\KL(G)_{-\kappa}\to \Vect.$$

By \secref{sss:int rep}, the object $\BV_{-\kappa}^{\on{int}}$ naturally upgrades to an object 
of the category $\KL(G)_{-\kappa}^{\on{Hecke}}$. 

\sssec{} \label{sss:max int}
 
We have
$$\BV_{-\kappa}^{\on{int}} \in (G(\CK_x)\mod_{-\kappa})^{\leq 0}.$$

Furthermore,  it follows from \eqref{e:Hecke adj} that the object
\begin{equation} \label{e:max int}
H^0(\BV_{-\kappa}^{\on{int}})\in (G(\CK_x)\mod_{-\kappa})^\heartsuit
\end{equation} 
is universal (within $(G(\CK_x)\mod_{-\kappa})^\heartsuit$) 
with respect to the property of receiving a map from $\BV_{-\kappa}$.

\medskip

In particular, it follows that when $G$ is semi-simple and
simply-connected, the object \eqref{e:max int}
is the classical maximal integrable quotient of $\BV_{-\kappa}$. 

\sssec{}

At this stage we can already prove point (b) of \thmref{t:affine BBW}. Namely, we claim that if $\kappa$
is positive-definite on one of the simple factors, then all the cohomologies of $\BV_{-\kappa}^{\on{int}}$ are
zero.

\medskip

Indeed, each of these cohomologies naturally upgrades to an object of the abelian category $(G(\CK_x)\mod_{-\kappa})^\heartsuit$,
while it is known that when the level is negative on one of the simple factors, the above category is zero (it violates the condition
of dominance of highest weights). 

\begin{rem}
We have just shown that if $\kappa$ is positive-definite on one of the simple factors, then the object 
$\BV_{-\kappa}^{\on{int}}$, regarded either as an object of $G(\CK_x)\mod_{-\kappa}$ or
$\KL(G)_{-\kappa}$, is such that all of its cohomologies (with respect to the natural
t-structure) are zero, or, equivalently, that its image under the forgetful functor $G(\CK_x)\mod_{-\kappa}\to \KL(G)_{-\kappa}\to \Vect$
vanishes. 

\medskip

However, the object $\BV_{-\kappa}^{\on{int}}$ itself is \emph{non-zero}. For example, one can show that
$$\CHom_{\KL(G)_{-\kappa}}(\BV_{-\kappa},\BV_{-\kappa}^{\on{int}})\simeq k.$$

This does not cause a contradiction because the t-structure on $\KL(G)_{-\kappa}$ is \emph{not} left-separated 
(equivalently, the forgetful functor $\KL(G)_{-\kappa}\to \Vect$ is \emph{not conservative}). 

\end{rem}

\sssec{}

We proceed to the proof of points (a) and (c) of  \thmref{t:affine BBW}. Thus, we will assume that $\kappa$
is non-positive definite on each simple factor. We will show that in this case the object 
$\BV_{-\kappa}^{\on{int}}$, viewed as an object of $\KL(G)_{-\kappa}$, belongs to the heart of the t-structure.

\medskip

In particular, this will show that when $G$ is semi-simple and simply-connected, the object 
$\BV_{-\kappa}^{\on{int}}$ is the maximal integrable quotient of $\BV_{-\kappa}$ (see \secref{sss:max int}).
In particular, for $\kappa=0$, we will obtain that $\BV_{-\kappa}^{\on{int}}$ is the trivial representation of
$G(\CK_x)$, as desired. 

\ssec{Kashiwara-Tanisaki localization}

Our proof that $\BV_{-\kappa}^{\on{int}}$ lies in the heart of the t-structure will rely 
on the localization theory of Kashiwara-Tanisaki at the \emph{positive level}. 

\sssec{}

Consider the \emph{thick} base affine space:
$$\widetilde{\Fl}^{\on{thick}}_G:=\Bun_G(\BP^1)\underset{\on{pt}/G(\CO_x) \times \on{pt}/G}\times (\on{pt} \times \on{pt}/N),$$
where the map 
$$\Bun_G(\BP^1)\to \on{pt}/G(\CO_x) \times \on{pt}/G$$
corresponds to restricting a $G$-bundle to the formal neighborhood of $x\in \BA^1\subset \BP^1$ and the point 
$\infty\in \BP^1$. 

\sssec{}

We have a $G(\CK_x)$-equivariant localization functor 
$$\Delta_{\CL^{-1}}:\fg(\CK_x)_{-\kappa}\to \Dmod^l(\widetilde\Fl^{\on{thick}}_G),$$
and its right adjoint
\begin{equation} \label{e:sects thick}
\Dmod^l(\widetilde\Fl^{\on{thick}}_G)\to \fg(\CK_x)_{-\kappa}, \quad 
\CM \mapsto \Gamma(\widetilde{\Fl}^{\on{thick}}_G,\CL^{-1}\otimes \CM),
\end{equation} 
where $\CL$ denotes the pullback of the same-named line bundle along the forgetful map
$$\widetilde{\Fl}^{\on{thick}}_G\to \Bun_G.$$

\sssec{}

Consider the categories 
$$\Dmod^l(\widetilde\Fl^{\on{thick}}_G)^I \text{ and } \fg(\CK_x)_{-\kappa}^I,$$
where $I\subset G(\CO_x)$ is the Iwahori subgroup. The functor \eqref{e:sects thick} induces a functor
\begin{equation} \label{e:Iwahori localization}
\Dmod^l(\widetilde\Fl^{\on{thick}}_G)^I\to \fg(\CK_x)_{-\kappa}^I.
\end{equation} 



\medskip

We are now ready to quote the version of the localization theorem that we will need
(\cite[Theorem 5.2.1]{Ka}): 

\begin{thm} \label{t:Ka-Ta}
Suppose that $\kappa$ is non-positive definite on every simple factor. Then the functor 
\eqref{e:Iwahori localization} is t-exact and maps standard objects to standard objects. 
\end{thm}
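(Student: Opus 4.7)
The plan is to sketch a proof of the Kashiwara--Tanisaki localization theorem (Theorem \ref{t:Ka-Ta}) directly, following the strategy of analyzing the Iwahori-orbit stratification of $\widetilde{\Fl}^{\on{thick}}_G$ and reducing both t-exactness and the standard-to-standard assertion to cell-by-cell computations. The two claims (t-exactness and preservation of standards) are not independent: since the Iwahori-equivariant category $\Dmod^l(\widetilde{\Fl}^{\on{thick}}_G)^I$ is generated under extensions and filtered colimits by the standard objects $j_{w,!}$ (indexed by $w$ in the affine Weyl group modulo the finite Weyl group), and since Verma modules are in the heart of $\fg(\CK_x)^I_{-\kappa}$, it suffices to show that $\Gamma(\widetilde{\Fl}^{\on{thick}}_G, \CL^{-1}\otimes j_{w,!})$ is concentrated in cohomological degree $0$ and canonically identifies with the Verma module $\mathbb{M}_{w\ast(-\kappa)}$ at the appropriate shifted weight.

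First, I would set up the stratification: use the uniformization presentation of $\Bun_G(\BP^1)$ by $G\ppart$ modulo $G[t^{-1}]$-action, which identifies $\widetilde{\Fl}^{\on{thick}}_G$ with a double quotient whose $I$-orbits, the ``thick'' affine Schubert cells $\Fl^w$, are labelled by the affine Weyl group and whose closures are projective ind-schemes with \emph{infinite-dimensional} cellular structure (this is the essential difference from the ``thin'' affine flag variety used for negative-level localization). For each finite-codimensional truncation, one obtains a tower of finite-dimensional Schubert varieties, and I would prove the theorem on each level and then pass to the limit using continuity of the functor $\Gamma(\CL^{-1}\otimes -)$.

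Second, for a fixed $w$, I would compute $\Gamma(\widetilde{\Fl}^{\on{thick}}_G, \CL^{-1}\otimes j_{w,!}(\omega_{\Fl^w}))$ using the standard recipe: !-extension from an affine cell followed by a cellular filtration of its closure. The non-positive definiteness hypothesis on $\kappa$ says precisely that $\CL^{-1}$ is ``positive in the affine direction,'' so on each finite-dimensional approximation $\overline{\Fl^{w'}}\subset \overline{\Fl^w}$ one gets Kempf-type vanishing: $H^i(\overline{\Fl^{w'}}, \CL^{-1}\otimes j_{w,!})=0$ for $i>0$. The key input here is the Kempf/Demazure vanishing theorem applied to line bundles on finite-dimensional Bott--Samelson desingularizations of affine Schubert varieties, with the non-positivity condition on $\kappa$ ensuring the relevant weights are dominant. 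Passage to the limit preserves concentration in degree $0$ because the transition maps in the pro-system are surjective (a Mittag--Leffler argument).

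Third, to identify the result with the Verma module $\mathbb{M}_{w\ast(-\kappa)}$, I would exhibit the $\fg(\CK_x)$-action on $\Gamma(\CL^{-1}\otimes j_{w,!})$ using the infinitesimal action of the loop group on $\widetilde{\Fl}^{\on{thick}}_G$, and check that the image of the tautological generator (the section of $\CL^{-1}\otimes j_{w,!}$ supported on the closure of $\Fl^w$) is a highest-weight vector of the correct weight on which $I$ acts by the appropriate character. Universality of the Verma module then gives a map $\mathbb{M}_{w\ast(-\kappa)}\to \Gamma(\CL^{-1}\otimes j_{w,!})$, and a character computation (comparing the graded dimensions with respect to the rotation action of $\mathbb{G}_m$ and the torus) shows it is an isomorphism.

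The main obstacle, which makes this genuinely subtle and not just a formal exercise, is the \emph{positive-level} twist: one is taking global sections of an object $\CL^{-1}\otimes j_{w,!}$ that behaves very differently from the negative-level case (where one would use the ``thin'' flag variety and $j_{w,*}$). The closures of Iwahori orbits in $\widetilde{\Fl}^{\on{thick}}_G$ are ind-projective in a direction opposite to the standard affine Grassmannian, and one must verify that the relevant pro-system of cohomology groups has no higher $\varprojlim$--contributions; this is where the non-positive definiteness of $\kappa$ (as opposed to merely non-positivity on one factor) is essential, since it rules out infinite strings of vanishing/non-vanishing cancellations that would produce $\varprojlim^1$ terms. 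Controlling this limit is the heart of the Kashiwara--Tanisaki argument, and any proof sketch must ultimately confront it directly via an explicit description of the inverse system on each Bott--Samelson resolution.
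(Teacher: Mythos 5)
The paper does not prove Theorem~\ref{t:Ka-Ta}; it quotes it verbatim from Kashiwara's paper \cite[Theorem 5.2.1]{Ka} and uses it as a black box in the proof of the affine Borel--Weil--Bott theorem. So there is no ``paper's own proof'' against which to compare your argument --- you are attempting to re-derive a cited external result from scratch. On that footing, your outline has the right overall shape (stratify the thick flag variety by Iwahori orbits, reduce to a cell-by-cell computation, use positivity of $\CL^{-1}$ to establish cohomology vanishing, pass to a limit), and the indexing of the orbits should be by the full affine Weyl group, not by cosets modulo the finite Weyl group --- that coset indexing is correct for $\Gr_{G,x}$ but not for $\widetilde{\Fl}^{\on{thick}}_G$, where the extra $\pt/N$ factor at $\infty$ breaks up each $\Gr$-cell.

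There are two genuine gaps. First, your reduction is asymmetric: you argue that because the standard objects $j_{w,!}$ generate $\Dmod^l(\widetilde{\Fl}^{\on{thick}}_G)^I$ under extensions and filtered colimits, showing $\Gamma(\CL^{-1}\otimes j_{w,!})$ lies in the heart suffices for t-exactness. But this controls only the connective half $D^{\le 0}$ and hence only \emph{right} t-exactness. Left t-exactness requires a separate argument --- either showing that the costandard objects $j_{w,*}$ also land in the heart (and that these cogenerate $D^{\ge 0}$), or, dually, showing that the left adjoint $\Delta_{\CL^{-1}}$ sends Verma modules to perverse standards and invoking adjunction. Your sketch never mentions $j_{w,*}$ or $\Delta_{\CL^{-1}}$ at all, so the claim that the two assertions of the theorem ``are not independent'' in the precise sense you describe is not justified as stated.

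Second, and more fundamentally, you acknowledge in your final paragraph that the $\varprojlim^1$-control on the pro-system of cohomologies over the finite-dimensional Bott--Samelson approximations is ``the heart of the Kashiwara--Tanisaki argument'' and ``must be confronted directly,'' and then you do not confront it: you assert a Mittag--Leffler surjectivity of the transition maps without proving it, and this is precisely where the hypothesis that $\kappa$ is non-positive definite on \emph{every} simple factor must enter. Without this step the argument is not a proof but a plan for one, and since the passage to the limit is exactly the point where the thick (positive-level) flag variety diverges from the familiar thin (negative-level) case, it cannot be deferred. If you wish to give an actual proof of this theorem rather than a re-derivation modulo the hard step, you would need to either establish the surjectivity of the transition maps on each cohomological degree explicitly (e.g.\ via Demazure character formula and weight-multiplicity comparisons that show the graded pieces stabilize), or reformulate the limit as a derived inverse limit and control it by exhibiting a compatible system of injective resolutions --- and either route is substantially longer than the sketch you have written.
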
 

\sssec{}


\medskip

The functor \eqref{e:sects thick} also induces a functor 
\begin{equation} \label{e:KL localization}
\Dmod^l(\widetilde\Fl^{\on{thick}}_G)^{G(\CO_x)}\to \KL(G)_{-\kappa}.
\end{equation}
Since the functor \eqref{e:sects thick} was $G(\CK_x)$-equivariant, the functor \eqref{e:Iwahori localization}
intertwines the actions of $\Sph_{G,x}$ on both sides. 

\medskip

From \thmref{t:Ka-Ta} we formally obtain:

\begin{cor} \label{c:Ka-Ta}
Suppose that $\kappa$ is non-positive definite on every simple factor. Then 
the functor \eqref{e:KL localization} is t-exact and maps standard objects to standard objects. 
\end{cor}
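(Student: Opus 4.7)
The plan is to deduce Corollary \ref{c:Ka-Ta} from Theorem \ref{t:Ka-Ta} by an equivariant descent argument along the inclusion $I \subset G(\CO_x)$. The guiding principle is that the functor \eqref{e:KL localization} is obtained from \eqref{e:Iwahori localization} by taking equivariants for the quotient prestack $G(\CO_x)/I$, which is the finite flag variety $G/B$ (in particular, proper and smooth of known dimension).

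First I would set up the relevant commutative square: the forgetful functors
\[
\on{oblv}^{G(\CO_x)/I}_{\on{geom}}:\Dmod^l(\widetilde\Fl^{\on{thick}}_G)^{G(\CO_x)}\to \Dmod^l(\widetilde\Fl^{\on{thick}}_G)^I
\]
and
\[
\on{oblv}^{G(\CO_x)/I}_{\on{rep}}:\KL(G)_{-\kappa}\to \fg(\CK_x)_{-\kappa}^I
\]
intertwine the two localization functors, by $G(\CK_x)$-equivariance of \eqref{e:sects thick}. Both forgetful functors are t-exact and conservative, since passing from $I$-equivariant to $G(\CO_x)$-equivariant objects amounts to taking totalizations of a cobar resolution along the smooth proper affine morphism $\on{pt}/I \to \on{pt}/G(\CO_x)$ (equivalently, $G/B$-equivariants) and since $G/B$ is proper of bounded cohomological dimension. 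Granted this, t-exactness of \eqref{e:Iwahori localization} from Theorem \ref{t:Ka-Ta} immediately forces t-exactness of \eqref{e:KL localization}: for $\CM \in \Dmod^l(\widetilde\Fl^{\on{thick}}_G)^{G(\CO_x)}$ concentrated in degree zero, its image under $\on{oblv}^{G(\CO_x)/I}_{\on{geom}}$ is in degree zero, hence so is the image under \eqref{e:Iwahori localization}, and by conservativity plus t-exactness of $\on{oblv}^{G(\CO_x)/I}_{\on{rep}}$ the claim follows.

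For the compatibility with standards, I would use the fact that $G(\CO_x)$-orbits on $\widetilde\Fl^{\on{thick}}_G$ are obtained from $I$-orbits by partitioning according to the (right) action of the finite Weyl group $W$ on $W_{\on{aff}}$. Concretely, the standard $G(\CO_x)$-equivariant object labelled by an element of $W_{\on{aff}}/W$ is the !-pushforward of an $I$-equivariant standard along the proper map collapsing the corresponding $G/B$-bundle; on the representation-theoretic side, the KL standard (Weyl module) associated to a dominant affine weight $\lambda$ is the parabolic induction from the Iwahori-level standard (affine Verma) labelled by the same $\lambda$, which likewise is a !-pushforward along $\on{pt}/B \to \on{pt}/G(\CO_x)$ composed with the natural induction. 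Since \eqref{e:sects thick} is $G(\CK_x)$-equivariant and in particular commutes with the $G(\CO_x)/I = G/B$-averaging operation, applying \eqref{e:Iwahori localization} to an $I$-equivariant standard and then averaging to the $G(\CO_x)$-equivariant setting yields the corresponding KL standard, which is the content of the second assertion.

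The main obstacle I anticipate is pinning down the precise parameterization and normalization of standards on both sides so that the $G/B$-averaging procedure matches them on the nose (and in particular preserves the labelling by elements of $W_{\on{aff}}/W$ together with the correct cohomological shift coming from $\dim G/B$). This is a bookkeeping issue rather than a conceptual one, but it is where care is required: one has to verify that the shift absorbed by the averaging on the geometric side matches the shift produced by parabolic induction on the representation-theoretic side, and that the non-dominant affine weights (for which the averaging produces zero) correspond exactly to the vanishing of the associated KL standards. Once this matching is verified for a single standard, the general case follows by $\Sph_{G,x}$-equivariance and the compatibility already recorded in Theorem \ref{t:Ka-Ta}.
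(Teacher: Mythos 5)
Your overall strategy (reducing to the Iwahori case via the forgetful functors along $\on{pt}/I \to \on{pt}/G(\CO_x)$) is sound and is presumably what the paper means by ``formally obtain,'' since the paper gives no explicit argument. The t-exactness deduction is correct: the two forgetful functors commute with the localization functors by $G(\CK_x)$-equivariance, both are t-exact, and both are conservative; given that, if $\eqref{e:Iwahori localization}$ is t-exact then so is $\eqref{e:KL localization}$, since a t-exact conservative functor reflects heart membership.

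However, note one slip in the justification of conservativity: you describe $\on{pt}/I \to \on{pt}/G(\CO_x)$ as a ``smooth proper affine morphism.'' The fibers are $G/B$, which is proper but \emph{not} affine (unless $G$ is solvable), so ``affine'' is wrong. Conservativity should instead be deduced from the fact that the fibers $G/B$ are proper, connected, and non-empty; equivalently, the forgetful functor is a retract of $\oblv \circ \on{Av}_*^{G/B} \circ \oblv$ via the fundamental class of $G/B$, or one invokes descent along the surjective smooth cover $\on{pt}/I \to \on{pt}/G(\CO_x)$.

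For the standards assertion, the mechanism you propose --- that the $G(\CO_x)$-standard is obtained by $!$-averaging along $G/B$ from a single $I$-standard --- does work, but the shifts require more care than your sketch acknowledges. The $!$-pushforward of an $I$-standard labelled by $w$ in a given $W$-coset returns a $G(\CO_x)$-standard shifted by $[-2\ell(w')]$ for a suitable length $\ell(w')$ (since $\on{Av}^{G/B}_!$ integrates compactly supported cohomology over a Schubert cell of $G/B$), so one must choose the \emph{minimal}-length $I$-orbit in each $G(\CO_x)$-orbit to get the unshifted standard. The corresponding statement on the representation side (that parabolic/Zuckerman $!$-induction of the appropriate Verma module gives the Weyl module) also holds for the minimal element, but is not automatic for arbitrary elements of the coset. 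You flag this as a bookkeeping obstacle, which is fair; just be aware that the identification is not uniform across the coset, and the matching with the labelling by $W_{\on{aff}}/W$ is only on the nose for the distinguished (minimal) coset representatives.
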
 

\ssec{End of the proof}

Recall that our goal is to show that the object $\BV_{-\kappa}^{\on{int}}$ lies in the heart of the t-structure.

\sssec{}

Note that the stack $\widetilde\Fl^{\on{thick}}_G/G(\CO_x)$ identifies with 
$$\Bun_G(\BP^1)\underset{\on{pt}/G}\times \on{pt}/N,$$
where the map $\Bun_G(\BP^1)\to \on{pt}/G$ is given by taking the fiber
of a $G$-bundle at $\infty\in \BP^1$.

\medskip

Let us denote by $\on{pr}$ the projection 
$$\widetilde\Fl^{\on{thick}}_G\to \Bun_G(\BP^1)\underset{\on{pt}/G}\times \on{pt}/N.$$

\medskip

We have a t-exact the identification 
$$\Dmod^l(\widetilde\Fl^{\on{thick}}_G)^{G(\CO_x)}\simeq \Dmod(\Bun_G(\BP^1)\underset{\on{pt}/G}\times \on{pt}/N),$$
so that the action of
$\Sph_{G,x}$ on $\Dmod^l(\widetilde\Fl^{\on{thick}}_G)^{G(\CO_x)}$
corresponds to the Hecke action on $\Dmod(\Bun_G(\BP^1)\underset{\on{pt}/G}\times \on{pt}/N)$
at $x$.

\sssec{}

Consider the open embedding
$$\on{pt}/N \simeq \on{pt}/G\underset{\on{pt}/G}\times \on{pt}/N
\overset{j}\hookrightarrow \Bun_G(\BP^1)\underset{\on{pt}/G}\times \on{pt}/N,$$
and consider the corresponding standard object
$$\on{pr}^*(j_!(\underline{k}_{\on{pt}/N}))[-\dim(N)]\in \Dmod^l(\widetilde\Fl^{\on{thick}}_G)^{G(\CO_x)}.$$

By \corref{c:Ka-Ta}, we have
$$\BV_{-\kappa} \simeq \Gamma\left(\widetilde\Fl^{\on{thick}}_G,
\CL^{-1}\otimes \on{pr}^*(j_!(\underline{k}_{\on{pt}/N}))[-\dim(N)]\right).$$

Hence,
$$\BV_{-\kappa}^{\on{int}}\simeq
\Gamma\left(\widetilde\Fl^{\on{thick}}_G,
\CL^{-1}\otimes \on{pr}^*(\omega_{\Gr_{G,x}}\star  j_!(\underline{k}_{\on{pt}/N}))[-\dim(N)]\right).$$

Thus, applying \corref{c:Ka-Ta}, we conclude that it suffices to show that the object 
\begin{equation} \label{e:loc int}
\omega_{\Gr_{G,x}}\star  j_!(\underline{k}_{\on{pt}/N})[-\dim(N)] \in \Dmod(\Bun_G(\BP^1)\underset{\on{pt}/G}\times \on{pt}/N)
\end{equation}
lies in the heart of the t-structure.

\sssec{}

We can rewrite the object $j_!(\underline{k}_{\on{pt}/N})\in \Dmod(\Bun_G(\BP^1)\underset{\on{pt}/G}\times \on{pt}/N)$
also as
$$\wt{j}_!(k)[2\dim(N)],$$
where $\wt{j}$ is the map
$$\on{pt}\to \on{pt}/N \overset{j}\to \Bun_G(\BP^1)\underset{\on{pt}/G}\times \on{pt}/N.$$

\medskip

Thus, we have to show that the object
$$\omega_{\Gr_{G,x}}\star \wt{j}_!(k)[\dim(N)]\in \Dmod(\Bun_G(\BP^1)\underset{\on{pt}/G}\times \on{pt}/N)$$
lies in the heart of the t-structure.

\sssec{}

Let $\pi$ denote the natural projection
$$\Gr_{G,x}\to \Bun_G(\BP^1)\underset{\on{pt}/G}\times \on{pt}/N.$$

The corresponding functor
$$\pi_!:\Dmod(\Gr_{G,x})\to \Dmod(\Bun_G(\BP^1)\underset{\on{pt}/G}\times \on{pt}/N)$$
intertwines the $\Sph_{G,x}$-action on $\Dmod(\Gr_{G,x})$ by right convolutions and the
Hecke action at $x$ on $\Dmod(\Bun_G(\BP^1)\underset{\on{pt}/G}\times \on{pt}/N)$. 

\medskip

Hence, 
$$\omega_{\Gr_{G,x}}\star \wt{j}_!(k)\simeq \pi_!(\omega_{\Gr_{G,x}}).$$

\sssec{}

Note now that the natural map 
$$\pi_!(\omega_{\Gr_{G,x}})\to 
\omega_{\Bun_G(\BP^1)\underset{\on{pt}/G}\times \on{pt}/N}$$
is an isomorphism. 

\medskip

Indeed, the map $\pi$ is a locally trivial fibration with typical fiber
\begin{equation} \label{e:rat maps}
\underline{\Maps}(\BP^1-x,G)\underset{G}\times N,
\end{equation} 
which is contractible: indeed, there exists a $\BG_m$-action that contracts the 
ind-scheme \eqref{e:rat maps} to a point. 

\sssec{}

Hence, we obtain that 
$$\omega_{\Gr_{G,x}}\star \wt{j}_!(k)\simeq \omega_{\Bun_G(\BP^1)\underset{\on{pt}/G}\times \on{pt}/N}[\dim(N)].$$

\medskip

This gives the desired result since the object
$\omega_{\Bun_G(\BP^1)\underset{\on{pt}/G}\times \on{pt}/N}[\dim(N)]$
indeed lies in the heart of the t-structure: the stack $\Bun_G(\BP^1)\underset{\on{pt}/G}\times \on{pt}/N$
is smooth of dimension $-\dim(N)$. 

\qed[\thmref{t:affine BBW}]

\begin{rem} 

One can use a similar argument to show that the category $G(\CK_x)\mod_{-\kappa}$ is semi-simple. 
(The fact that the abelian category $(G(\CK_x)\mod_{-\kappa})^\heartsuit$ is semi-simple is known;
so the content of the assertion is that $G(\CK_x)\mod_{-\kappa}$ is the derived category of its heart.) 

\medskip

For example, let us show that $\BV_{-\kappa}^{\on{int}}\in G(\CK_x)\mod_{-\kappa}$
(which we already know lies in the heart of the t-structure) does not have 
higher self Exts. 

\medskip

By adjunction,
$$\CHom_{G(\CK_x)\mod_{-\kappa}}(\BV_{-\kappa}^{\on{int}},\BV_{-\kappa}^{\on{int}})
\simeq \CHom_{\KL(G)_{-\kappa}}(\BV_{-\kappa},\BV_{-\kappa}^{\on{int}}).$$

Now, one can show that \cite[Theorem 5.2.1]{Ka} implies that the functor 
\eqref{e:Iwahori localization}, and hence, \eqref{e:KL localization} is fully faithful. 

\medskip

Hence, we can rewrite the latter expression as
\begin{multline*} 
\CHom_{\Dmod(\Bun_G(\BP^1)\underset{\on{pt}/G}\times \on{pt}/N)}\left(\wt{j}_!(k)[\dim(N)],
\omega_{\Bun_G(\BP^1)\underset{\on{pt}/G}\times \on{pt}/N})[\dim(N)]\right) \simeq \\
\simeq \CHom_{\Dmod(\on{pt})}\left(k,\wt{j}^!(\omega_{\Bun_G(\BP^1)\underset{\on{pt}/G}\times \on{pt}/N})\right)
\simeq k,
\end{multline*} 
as desired. 

\end{rem}

\newpage

\end{document}